\def\e#1\e{\begin{equation}#1\end{equation}}
\def\ea#1\ea{\begin{align}#1\end{align}}
\def\eq#1{{\rm(\ref{#1})}}
\theoremstyle{plain}
\newtheorem{thm}{Theorem}[section]
\newtheorem{lem}[thm]{Lemma}
\newtheorem{prop}[thm]{Proposition}
\newtheorem{cor}[thm]{Corollary}
\theoremstyle{definition}
\newtheorem{dfn}[thm]{Definition}
\newtheorem{ex}[thm]{Example}
\newtheorem{rem}[thm]{Remark}
\numberwithin{figure}{section}
\numberwithin{equation}{section}
\def\dim{\mathop{\rm dim}\nolimits}
\def\vdim{\mathop{\rm vdim}\nolimits}
\def\vir{{\rm vir}}
\def\inc{\mathop{\rm inc}\nolimits}
\def\Ker{\mathop{\rm Ker}}
\def\Coker{\mathop{\rm Coker}}
\def\Im{\mathop{\rm Im}}
\def\In{\mathop{\rm In}\nolimits}
\def\Ex{\mathop{\rm Ex}\nolimits}
\def\depth{\mathop{\rm depth}\nolimits}
\def\rank{\mathop{\rm rank}}
\def\Hom{\mathop{\rm Hom}\nolimits}
\def\qcoh{\mathop{\rm qcoh}}
\def\PreSh{\mathop{\rm PreSh}}
\def\Sh{\mathop{\rm Sh}}
\def\id{\mathop{\rm id}\nolimits}
\def\Id{{\mathop{\rm Id}\nolimits}}
\def\max{{\mathop{\rm max}\nolimits}}
\def\exp{\mathop{\rm exp}\nolimits}
\def\tf{{\rm tf}}
\def\sa{{\rm sa}}
\def\gp{{\rm gp}}
\def\rsm{{\rm sm}}
\def\rin{{\rm in}}
\def\rex{{\rm ex}}
\def\ain{{\text{\rm a-in}}}
\def\aex{{\text{\rm a-ex}}}
\def\Spec{\mathop{\rm Spec}\nolimits}
\def\MSpec{\mathop{\rm MSpec}\nolimits}
\def\Specc{\mathop{\rm Spec^c}\nolimits}
\def\Speccin{\mathop{\rm Spec^c_{in}}\nolimits}
\def\Top{{\mathop{\bf Top}}}
\def\CCRings{{\mathop{\bf CC^{\bs\iy}Rings}}}
\def\CRings{{\mathop{\bf C^{\bs\iy}Rings}}}
\def\CRingsco{{\mathop{\bf C^{\bs\iy}Rings_{co}}}}
\def\CRingslo{{\mathop{\bf C^{\bs\iy}Rings_{lo}}}}
\def\CRingscinlo{{\mathop{\bf C^{\bs\iy}Rings^c_{in,lo}}}}
\def\CRingsc{{\mathop{\bf C^{\bs\iy}Rings^c}}}
\def\PCRingsac{{\mathop{\bf PC^{\bs\iy}Rings^{ac}}}}
\def\CRingsac{{\mathop{\bf C^{\bs\iy}Rings^{ac}}}}
\def\CRingsclo{{\mathop{\bf C^{\bs\iy}Rings^c_{lo}}}}
\def\CRingscin{{\mathop{\bf C^{\bs\iy}Rings^c_{in}}}}
\def\CRingscfiin{{\mathop{\bf C^{\bs\iy}Rings^c_{fi,in}}}}
\def\CRingscfg{{\mathop{\bf C^{\bs\iy}Rings^c_{fg}}}}
\def\CRingscfp{{\mathop{\bf C^{\bs\iy}Rings^c_{fp}}}}
\def\PCRingsc{{\mathop{\bf PC^{\bs\iy}Rings^c}}}
\def\CPCRingscin{{\mathop{\bf CPC^{\bs\iy}Rings^c_{in}}}}
\def\CPCRingsc{{\mathop{\bf CPC^{\bs\iy}Rings^c}}}
\def\PCRingscin{{\mathop{\bf PC^{\bs\iy}Rings^c_{in}}}}
\def\CRingscfi{{\mathop{\bf C^{\bs\iy}Rings^c_{fi}}}}
\def\CRingscsc{{\mathop{\bf C^{\bs\iy}Rings^c_{sc}}}}
\def\CRingscscin{{\mathop{\bf C^{\bs\iy}Rings^c_{sc,in}}}}
\def\CRingscZ{{\mathop{\bf C^{\bs\iy}Rings^c_{\Z}}}}
\def\CRingscDe{{\mathop{\bf C^{\bs\iy}Rings^c_{\De}}}}
\def\CRingscsa{{\mathop{\bf C^{\bs\iy}Rings^c_{sa}}}}
\def\CRingscto{{\mathop{\bf C^{\bs\iy}Rings^c_{to}}}}
\def\CRingsctf{{\mathop{\bf C^{\bs\iy}Rings^c_{tf}}}}
\def\CRS{{\mathop{\bf C^{\bs\iy}RS}}}
\def\LCRS{{\mathop{\bf LC^{\bs\iy}RS}}}
\def\CRSc{{\mathop{\bf C^{\bs\iy}RS^c}}}
\def\CRScin{{\mathop{\bf C^{\bs\iy}RS^c_{in}}}}
\def\LCRSc{{\mathop{\bf LC^{\bs\iy}RS^c}}}
\def\LCRScin{{\mathop{\bf LC^{\bs\iy}RS^c_{in}}}}
\def\LCRScsa{{\mathop{\bf LC^{\bs\iy}RS^c_{sa}}}}
\def\LCRSctf{{\mathop{\bf LC^{\bs\iy}RS^c_{tf}}}}
\def\LCRScZ{{\mathop{\bf LC^{\bs\iy}RS^c_{\Z}}}}
\def\LCRScinex{{\mathop{\bf LC^{\bs\iy}RS^c_{in,ex}}}}
\def\LCRScsaex{{\mathop{\bf LC^{\bs\iy}RS^c_{sa,ex}}}}
\def\LCRSctfex{{\mathop{\bf LC^{\bs\iy}RS^c_{tf,ex}}}}
\def\LCRScZex{{\mathop{\bf LC^{\bs\iy}RS^c_{\Z,ex}}}}
\def\ACSch{{\mathop{\bf AC^{\bs\iy}Sch}}}
\def\ACSchc{{\mathop{\bf AC^{\bs\iy}Sch^c}}}
\def\ACSchcin{{\mathop{\bf AC^{\bs\iy}Sch^c_{in}}}}
\def\CSch{{\mathop{\bf C^{\bs\iy}Sch}}}
\def\CSchc{{\mathop{\bf C^{\bs\iy}Sch^c}}}
\def\CSchac{{\mathop{\bf C^{\bs\iy}Sch^{ac}}}}
\def\CSchcfp{{\mathop{\bf C^{\bs\iy}Sch_{fp}^c}}}
\def\CSchcfg{{\mathop{\bf C^{\bs\iy}Sch_{fg}^c}}}
\def\CSchcfi{{\mathop{\bf C^{\bs\iy}Sch_{fi}^c}}}
\def\CSchcin{{\mathop{\bf C^{\bs\iy}Sch^c_{in}}}}
\def\CSchcfiin{{\mathop{\bf C^{\bs\iy}Sch^c_{fi,in}}}}
\def\CSchcZ{{\mathop{\bf C^{\bs\iy}Sch^c_{\Z}}}}
\def\CSchcDe{{\mathop{\bf C^{\bs\iy}Sch^c_{\De}}}}
\def\CSchcsa{{\mathop{\bf C^{\bs\iy}Sch^c_{sa}}}}
\def\CSchcto{{\mathop{\bf C^{\bs\iy}Sch^c_{to}}}}
\def\CSchctf{{\mathop{\bf C^{\bs\iy}Sch^c_{tf}}}}
\def\CSchcinex{{\mathop{\bf C^{\bs\iy}Sch^c_{in,ex}}}}
\def\CSchcZex{{\mathop{\bf C^{\bs\iy}Sch^c_{\Z,ex}}}}
\def\CSchcDeex{{\mathop{\bf C^{\bs\iy}Sch^c_{\De,ex}}}}
\def\CSchcsaex{{\mathop{\bf C^{\bs\iy}Sch^c_{sa,ex}}}}
\def\CSchctoex{{\mathop{\bf C^{\bs\iy}Sch^c_{to,ex}}}}
\def\CSchctfex{{\mathop{\bf C^{\bs\iy}Sch^c_{tf,ex}}}}
\def\CSchcfiinex{{\mathop{\bf C^{\bs\iy}Sch^c_{fi,in,ex}}}}
\def\CSchcfiZex{{\mathop{\bf C^{\bs\iy}Sch^c_{fi,\Z,ex}}}}
\def\CSchcfitfex{{\mathop{\bf C^{\bs\iy}Sch^c_{fi,tf,ex}}}}
\def\CSchcfiZ{{\mathop{\bf C^{\bs\iy}Sch^c_{fi,\Z}}}}
\def\CSchcfitf{{\mathop{\bf C^{\bs\iy}Sch^c_{fi,tf}}}}
\def\CSta{{\mathop{\bf C^{\bs\iy}Sta}}}
\def\CStac{{\mathop{\bf C^{\bs\iy}Sta^c}}}
\def\DMCSta{{\mathop{\bf DMC^{\bs\iy}Sta}}}
\def\DMCStac{{\mathop{\bf DMC^{\bs\iy}Sta^c}}}
\def\DMCStacin{{\mathop{\bf DMC^{\bs\iy}Sta^c_{in}}}}
\def\DMCStacto{{\mathop{\bf DMC^{\bs\iy}Sta^c_{to}}}}
\def\DMCStacfi{{\mathop{\bf DMC^{\bs\iy}Sta^c_{fi}}}}
\def\DMCStacfiin{{\mathop{\bf DMC^{\bs\iy}Sta^c_{fi,in}}}}
\def\Sta{{\mathop{\bf Sta}\nolimits}}
\def\Man{{\mathop{\bf Man}}}
\def\Manc{{\mathop{\bf Man^c}}}
\def\Manac{{\mathop{\bf Man^{ac}}}}
\def\Manacin{{\mathop{\bf Man^{ac}_{in}}}}
\def\Mancac{{\mathop{\bf Man^{c,ac}}}}
\def\Mangc{{\mathop{\bf Man^{gc}}}}
\def\Mancin{{\mathop{\bf Man^c_{in}}}}
\def\Mancwe{{\mathop{\bf Man^c_{we}}}}
\def\Mancst{{\mathop{\bf Man^c_{st}}}}
\def\Mangcin{{\mathop{\bf Man^{gc}_{in}}}}
\def\cManc{{\mathop{\bf\check{M}an^c}}}
\def\cMancin{{\mathop{\bf\check{M}an^c_{in}}}}
\def\cMangc{{\mathop{\bf\check{M}an^{gc}}}}
\def\cMangcin{{\mathop{\bf\check{M}an^{gc}_{in}}}}
\def\Mon{{\mathop{\bf Mon}}}
\def\Orb{{\mathop{\bf Orb}}}
\def\Orbc{{\mathop{\bf Orb^c}}}
\def\Euc{{\mathop{\bf Euc}}}
\def\Eucc{{\mathop{\bf Euc^c}}}
\def\Euccin{{\mathop{\bf Euc^c_{in}}}}
\def\Eucac{{\mathop{\bf Euc^{ac}}}}
\def\Eucacin{{\mathop{\bf Euc^{ac}_{in}}}}
\def\Sets{{\mathop{\bf Sets}}}
\def\bs{\boldsymbol}
\def\ge{\geqslant}
\def\le{\leqslant\nobreak}
\def\O{{\mathcal O}}
\def\mk{\mathfrak{m}}
\def\bO{\boldsymbol{\mathcal O}}
\def\K{{\mathbin{\mathbb K}}}
\def\N{{\mathbin{\mathbb N}}}
\def\Q{{\mathbin{\mathbb Q}}}
\def\R{{\mathbin{\mathbb R}}}
\def\Z{{\mathbin{\mathbb Z}}}
\def\fC{{\mathbin{\mathfrak C}\kern.05em}}
\def\fD{{\mathbin{\mathfrak D}}}
\def\fE{{\mathbin{\mathfrak E}}}
\def\fF{{\mathbin{\mathfrak F}}}
\def\fG{{\mathbin{\mathfrak G}}}
\def\fH{{\mathbin{\mathfrak H}}}
\def\bfC{{\mathbin{\boldsymbol{\mathfrak C}}\kern.05em}}
\def\bfD{{\mathbin{\boldsymbol{\mathfrak D}}}}
\def\bfE{{\mathbin{\boldsymbol{\mathfrak E}}}}
\def\bfF{{\mathbin{\boldsymbol{\mathfrak F}}}}
\def\bfG{{\mathbin{\boldsymbol{\mathfrak G}}}}
\def\bfH{{\mathbin{\boldsymbol{\mathfrak H}}}}
\def\bfI{{\mathbin{\boldsymbol{\mathfrak I}}}}
\def\cA{{\mathbin{\cal A}}}
\def\cC{{\mathbin{\cal C}}}
\def\cD{{\mathbin{\cal D}}}
\def\cE{{\mathbin{\cal E}}}
\def\cF{{\mathbin{\cal F}}}
\def\cJ{{\mathbin{\cal J}}}
\def\cM{{\mathbin{\cal M}}}
\def\cP{{\mathbin{\cal P}}}
\def\cS{{\mathbin{\cal S}}}
\def\cT{{\cal T}}
\def\oM{{\mathbin{\smash{\,\,\overline{\!\!\mathcal M\!}\,}}}}
\def\fCmod{{\mathbin{{\mathfrak C}\text{\rm -mod}}}}
\def\fCmodco{{\mathbin{{\mathfrak C}\text{\rm -mod}_{\rm co}}}}
\def\fDmod{{\mathbin{{\mathfrak D}\text{\rm -mod}}}}
\def\bfCmod{{\mathbin{\bs{\mathfrak C}\text{\rm -mod}}}}
\def\bfDmod{{\mathbin{\bs{\mathfrak D}\text{\rm -mod}}}}
\def\OWmod{{\mathbin{\O_W\text{\rm -mod}}}}
\def\OXmod{{\mathbin{\O_X\text{\rm -mod}}}}
\def\bOWmod{{\mathbin{\bO_W\text{\rm -mod}}}}
\def\bOXmod{{\mathbin{\bO_X\text{\rm -mod}}}}
\def\bOYmod{{\mathbin{\bO_Y\text{\rm -mod}}}}
\def\m{{\mathfrak m}}
\def\ub{{\underline{b\kern -0.1em}\kern 0.1em}{}}
\def\ue{{\underline{e}}{}}
\def\uf{{\underline{f\!}\,}{}}
\def\ug{{\underline{g\!}\,}{}}
\def\uh{{\underline{h\!}\,}{}}
\def\ui{{\underline{i\kern -0.07em}\kern 0.07em}{}}
\def\uj{{\underline{j\kern -0.1em}\kern 0.1em}{}}
\def\uk{{\underline{k\kern -0.1em}\kern 0.1em}{}}
\def\um{{\underline{m\kern -0.1em}\kern 0.1em}{}}
\def\un{{\underline{n\kern -0.1em}\kern 0.1em}{}}
\def\ulp{{\underline{p\kern -0.15em}\kern 0.15em}{}}
\def\uq{{\underline{q\kern -0.15em}\kern 0.15em}{}}
\def\ur{{\underline{r\kern -0.15em}\kern 0.15em}{}}
\def\us{{\underline{s\kern -0.15em}\kern 0.15em}{}}
\def\ut{{\underline{t\kern -0.1em}\kern 0.1em}{}}
\def\uu{{\underline{u\kern -0.1em}\kern 0.1em}{}}
\def\uU{{\underline{U\kern -0.25em}\kern 0.2em}{}}
\def\uV{{\underline{V\kern -0.25em}\kern 0.2em}{}}
\def\uW{{\underline{W\!\!}\,\,}{}}
\def\uX{{\underline{X\!}\,}{}}
\def\uY{{\underline{Y\!\!}\,\,}{}}
\def\uZ{{\underline{Z\!}\,}{}}
\def\bS{{\bs S}}
\def\bT{{\bs T}}
\def\bU{{\bs U}}
\def\bV{{\bs V}}
\def\bW{{\bs W}}
\def\bX{{\bs X}}
\def\bY{{\bs Y}}
\def\bZ{{\bs Z}}
\def\al{\alpha}
\def\be{\beta}
\def\ga{\gamma}
\def\de{\delta}
\def\io{\iota}
\def\ep{\epsilon}
\def\la{\lambda}
\def\ka{\kappa}
\def\th{\theta}
\def\ze{\zeta}
\def\vp{\varphi}
\def\si{\sigma}
\def\om{\omega}
\def\De{\Delta}
\def\Om{\Omega}
\def\Ga{\Gamma}
\def\pd{\partial}
\def\ts{\textstyle}
\def\st{\scriptstyle}
\def\sst{\scriptscriptstyle}
\def\sm{\setminus}
\def\sh{\sharp}
\def\op{\oplus}
\def\op{\oplus}
\def\ot{\otimes}
\def\ov{\overline}
\def\bigot{\bigotimes}
\def\iy{\infty}
\def\es{\emptyset}
\def\lb{\llbracket}
\def\rb{\rrbracket}
\def\ra{\rightarrow}
\def\rra{\rightrightarrows}
\def\Ra{\Rightarrow}
\def\ab{\allowbreak}
\def\longra{\longrightarrow}
\def\hookra{\hookrightarrow}
\def\t{\times}
\def\bu{\bullet}
\def\ci{\circ}
\def\ti{\tilde}
\def\d{{\rm d}}
\def\ha{{\ts\frac{1}{2}}}
\def\an#1{\langle #1 \rangle}
\def\ban#1{\bigl\langle #1 \bigr\rangle}
\def\md#1{\vert #1 \vert}
\def\bmd#1{\big\vert #1 \big\vert}
\def\Gac{\Gamma^{\mathrm{c}}}
\def\Gacin{\Gamma^{\mathrm{c}}_\rin}
\def\simc{\mathbin{\sim}}
\DeclareMathOperator{\Prc}{Pr}
\begin{document}
\title{$C^\iy$-algebraic geometry with corners}
\author{Kelli Francis-Staite and Dominic Joyce}
\date{}
\maketitle

\begin{abstract}
If $X$ is a manifold then the set $C^\iy(X)$ of smooth functions $f:X\ra\R$ is a $C^\iy$-{\it ring}, a rich algebraic structure with many operations. $C^\iy$-{\it schemes\/} are schemes over $C^\iy$-rings, a way of using Algebro-Geometric techniques in Differential Geometry. They include smooth manifolds, but also many singular and infinite-dimensional spaces. They have applications to Synthetic Differential Geometry, and to derived manifolds.

In this book, a sequel to \cite{Joyc9}, we define and study new categories of $C^\iy$-{\it rings with corners\/} and $C^\iy$-{\it schemes with corners}, which generalize manifolds with corners in the same way that $C^\iy$-rings and $C^\iy$-schemes generalize manifolds. These will be used in future work \cite{Joyc4} as the foundations of theories of derived manifolds and derived orbifolds with corners.
\end{abstract}

\setcounter{tocdepth}{2}
\tableofcontents

\section{Introduction}
\label{cc1}

If $X$ is a manifold then the $\R$-algebra $C^\iy(X)$ of smooth functions $c:X\ra\R$ is a $C^\iy$-{\it ring}. That is, for each smooth function $f:\R^n\ra\R$ there is an $n$-fold operation $\Phi_f:C^\iy(X)^n\ra C^\iy(X)$ acting by $\Phi_f:c_1,\ldots,c_n\mapsto f(c_1,\ldots,c_n)$, and these operations $\Phi_f$ satisfy many natural identities. Thus, $C^\iy(X)$ actually has a far richer structure than the obvious $\R$-algebra structure.

$C^\iy$-{\it algebraic geometry\/} \cite{Dubu2,Joyc2,Joyc9,Kock,MoRe2} is a version of algebraic geometry in which rings are replaced by $C^\iy$-rings. Our primary reference is the second author's monograph \cite{Joyc9}. Its basic objects are $C^\iy$-{\it schemes}, which form a category $\CSch$. Any smooth manifold $X$ determines a $C^\iy$-scheme $\uX=\Spec C^\iy(X)$, giving an embedding $\Man\subset\CSch$ as a full subcategory. But $C^\iy$-schemes are far more general than manifolds, and $\CSch$ contains many singular or infinite-dimensional objects. $C^\iy$-algebraic geometry has been applied in the foundations of Synthetic Differential Geometry \cite{Dubu2,Kock,MQR,MoRe1,MoRe2}, and Derived Differential Geometry \cite[\S 4.5]{Luri}, \cite{Joyc3,Joyc4,Spiv}.

Manifolds with corners $\Manc$ are a generalization of manifolds locally modelled on $[0,\iy)^k\t\R^{n-k}$ rather than on $\R^n$. The goal of this book, which is a sequel to \cite{Joyc9}, is to extend $C^\iy$-algebraic geometry to manifolds with boundary and corners (and more generally `things with boundary and corners'). We introduce notions of $C^\iy$-{\it rings with corners}, and $C^\iy$-{\it schemes with corners}, which form a category $\CSchc$ with a full embedding~$\Manc\subset\CSchc$. 

One of our motivations for introducing $C^\iy$-schemes with corners is to provide the foundations for a theory of {\it derived manifolds and derived orbifolds with corners}, which will be done by the second author \cite{Joyc4}, by regarding them as special examples of derived $C^\iy$-schemes with corners. These will have applications in Symplectic Geometry, and elsewhere, as the natural geometric structures on some important classes of moduli spaces. Our results could also be applied to develop a theory of Synthetic Differential Geometry with corners.

In the rest of this introduction we summarize the contents of the chapters.

\subsubsection*{Chapter \ref{cc2}. $C^\iy$-rings and $C^\iy$-schemes}

We begin with background on $C^\iy$-rings and $C^\iy$-schemes, mostly following \cite{Joyc9}. A {\it categorical\/ $C^\iy$-ring\/} is a product-preserving functor $F:\Euc\ra\Sets$, where $\Euc\subset\Man$ is the full subcategory of the category of smooth manifolds $\Man$ with objects the Euclidean spaces $\R^n$ for $n\ge 0$. These form a category $\CCRings$, with morphisms natural transformations, which is equivalent to the category $\CRings$ of $C^\iy$-rings by mapping~$F\mapsto\fC=F(\R)$. 

Here a $C^\iy$-{\it ring\/} is data $\bigl(\fC,(\Phi_f)_{f:\R^n\ra\R\,\, C^\iy}\bigr)$, where $\fC$ is a set and $\Phi_f:\fC^n\ra\fC$ a map for all smooth maps $f:\R^n\ra\R$, satisfying some axioms. Usually we write this as $\fC$, leaving the $C^\iy$-operations $\Phi_f$ implicit. The motivating examples for $X$ a smooth manifold are $\fC=C^\iy(X)$, the set of smooth functions $c:X\ra\R$, with operations $\Phi_f(c_1,\ldots,c_n)(x)=f(c_1(x),\ldots,c_n(x))$. We discuss $\fC$-{\it modules}, which are just modules over $\fC$ considered as an $\R$-algebra, and the {\it cotangent module\/} $\Om_\fC$, generalizing $\Ga^\iy(T^*X)$, the vector space of smooth sections of $T^*X$, as a module over~$C^\iy(X)$.

We then define the subcategory $\CRS$ of $C^\iy$-{\it ringed spaces\/} $\uX=(X,\O_X)$ where $X$ is a topological space and $\O_X$ a sheaf of $C^\iy$-rings on $X$, and the full subcategory $\LCRS\subset\CRS$ of {\it local\/ $C^\iy$-ringed spaces\/} for which the stalks $\O_{X,x}$ are local $C^\iy$-rings. The global sections functor $\Ga:\LCRS\ra\CRings^{\bf op}$ has a right adjoint $\Spec:\CRings^{\bf op}\ra\LCRS$, the {\it spectrum functor}. An object $\uX\in\LCRS$ is called a $C^\iy$-{\it scheme\/} if we may cover $X$ with open $U\subseteq X$ with $(U,\O_X\vert_U)\cong\Spec\fC$ for $\fC\in\CRings$. These form a full subcategory $\CSch\subset\LCRS$.

There is a full embedding $\Man\hookra\CSch$ mapping $X\mapsto \uX=(X,\O_X)$, where $\O_X$ is the sheaf of local smooth functions $c:X\ra\R$.

In classical algebraic geometry $\Ga\ci\Spec\cong\Id:\mathop{\bf Rings}^{\bf op}\ra\mathop{\bf Rings}^{\bf op}$, so $\mathop{\bf Rings}^{\bf op}$ is equivalent to the category $\mathop{\bf ASch}$ of affine schemes. In $C^\iy$-algebraic geometry it is not true that $\Ga\ci\Spec\cong\Id$, but we do have $\Spec\ci\Ga\ci\Spec\cong\Spec$. Define a $C^\iy$-ring $\fC$ to be {\it complete\/} if $\fC\cong\Ga\ci\Spec\fD$ for some $\fD$. Write $\CRingsco\subset\CRings$ for the full subcategory of complete $C^\iy$-rings. Then $(\CRingsco)^{\bf op}$ is equivalent to the category $\ACSch$ of affine $C^\iy$-schemes.

We define $\O_X$-{\it modules\/} on a $C^\iy$-scheme $\uX$, and the {\it cotangent sheaf\/} $T^*\uX$, which generalizes cotangent bundles of manifolds.

\subsubsection*{Chapter \ref{cc3}. Manifolds with (g-)corners}

We discuss {\it manifolds with corners}, locally modelled on $[0,\iy)^k\t\R^{n-k}$. The definition of manifold with corners $X,Y$ is an obvious generalization of the definition of manifold. However, the definition of smooth map $f:X\ra Y$ is {\it not\/} obvious: what conditions should we impose on $f$ near the boundary and corners of $X,Y$? There are several non-equivalent definitions of morphisms of manifolds with corners in the literature. The one we choose is due to Melrose \cite[\S 1.12]{Melr2}, \cite[\S 1]{KoMe}, who calls them {\it b-maps}. This gives a category~$\Manc$. 

A smooth map $f:X\ra Y$ is called {\it interior\/} if $f(X^\ci)\subset Y^\ci$, where $X^\ci$ is the {\it interior\/} of $X$ (that is, if $X$ is locally modelled on $[0,\iy)^k\t\R^{n-k}$, then $X^\ci$ is locally modelled on $(0,\iy)^k\t\R^{n-k}$). Write $\Mancin\subset\Manc$ for the subcategory with only interior morphisms.

A manifold with corners $X$ has a {\it boundary\/} $\pd X$, a manifold with corners of dimension $\dim X-1$, with a (non-injective) morphism $\Pi_X:\pd X\ra X$. For example, if $X=[0,\iy)^2$, then $\pd X$ is the disjoint union $(\{0\}\t[0,\iy))\amalg([0,\iy)\t\{0\})$ with the obvious map $\Pi_X$, so two points in $\pd X$ lie over $(0,0)\in X$. There is a natural, free action of the symmetric group $S_k$ on the $k^{\rm th}$ boundary $\pd^kX$, permuting the boundary strata. The $k$-{\it corners\/} is $C_k(X)=\pd^kX/S_k$, of dimension $\dim X-k$. The {\it corners\/} of $X$ is $C(X)=\coprod_{k=0}^{\dim X}C_k(X)$, an object in the category $\cManc$ of {\it manifolds with corners of mixed dimension}.

We call $X$ a {\it manifold with faces\/} if $\Pi_X\vert_F:F\ra X$ is injective for each connected component $F$ of $\pd X$ (called a {\it face\/}).

Now boundaries are not functorial: if $f:X\ra Y$ is smooth, there is generally no natural morphism $\pd f:\pd X\ra\pd Y$. However, there is a natural, interior morphism $C(f):C(X)\ra C(Y)$, giving the {\it corner functor\/} $C:\Manc\ra\cMancin$ or $C:\cManc\ra\cMancin$. We explain that $C$ is right adjoint to the inclusion $\inc:\cMancin\hookra\cManc$. Thus, from a categorical point of view, boundaries and corners in $\Manc$ are determined uniquely by the inclusion $\Mancin\hookra\Manc$, that is, by the comparison between interior and smooth maps.

A manifold with corners $X$ has two notions of (co)tangent bundle: the {\it tangent bundle\/} $TX$ and its dual the {\it cotangent bundle\/} $T^*X$, and the {\it b-tangent bundle\/} ${}^bTX$ and its dual the {\it b-cotangent bundle\/} ${}^bT^*X$. Here $TX$ is the obvious notion of tangent bundle. There is a morphism $I_X:{}^bTX\ra TX$ which identifies $\Ga^\iy({}^bTX)$ with the subspace of vector fields $v\in \Ga^\iy(TX)$ which are tangent to every boundary stratum of $X$. Tangent bundles are functorial over smooth maps $f:X\ra Y$ (that is, $f$ lifts to $Tf:TX\ra TY$ linear on the fibres). B-tangent bundles are functorial over interior maps~$f:X\ra Y$.

We also discuss the categories $\Mangcin\subset\Mangc$ of {\it manifolds with generalized corners}, or {\it manifolds with g-corners}, introduced by the second author in \cite{Joyc6}. These allow more complicated local models than $[0,\iy)^k\t\R^{n-k}$. The local models $X_P$ are parametrized by {\it weakly toric monoids\/} $P$, with $X_P=[0,\iy)^k\t\R^{n-k}$ when $P=\N^k\t\Z^{n-k}$. So we provide an introduction to the theory of monoids. (All monoids in this book are commutative.)

The simplest manifold with g-corners which is not a manifold with corners is $X=\bigl\{(w,x,y,z)\in[0,\iy)^4:wx=yz\bigr\}$. Most of the theory above extends to manifolds with g-corners, except that tangent bundles $TX$ are not well behaved, and we no longer have~$C_k(X)\cong\pd^kX/S_k$. 

We call morphisms $g:X\ra Z$, $h:Y\ra Z$ in $\Mancin$ or $\Mangcin$ {\it b-transverse\/} if ${}^bT_xg\op{}^bT_yh:{}^bT_xX\op{}^bT_yY\ra{}^bT_zZ$ is surjective for all $x\in X$, $y\in Y$ with $g(x)=h(y)=z$. Manifolds with g-corners have the nice property that all b-transverse fibre products exist in $\Mangcin$, whereas fibre products only exist in $\Mancin,\Manc$ under rather restrictive conditions. We can think of $\Mangcin$ as a kind of closure of $\Mancin$ under b-transverse fibre products.

\subsubsection*{Chapter \ref{cc4}. (Pre) $C^\iy$-rings with corners}

We introduce $C^\iy$-{\it rings with corners}. To decide on the correct definition, the obvious starting point is categorical $C^\iy$-rings, i.e.\ product-preserving functors $F:\Euc\ra\Sets$. We define a {\it categorical pre $C^\iy$-ring with corners\/} to be a product-preserving functor $F:\Eucc\ra\Sets$, where $\Eucc\subset\Manc$ is the full subcategory with objects $[0,\iy)^m\t\R^n$ for $m,n\ge 0$. These form a category $\CPCRingsc$, with morphisms natural transformations.

Then we define the category $\PCRingsc$ of {\it pre $C^\iy$-rings with corners}, with objects $\bigl(\fC,\fC_\rex,(\Phi_f)_{f:\R^m\t[0,\iy)^n\ra\R\,\, C^\iy},(\Psi_g)_{g:\R^m\t[0,\iy)^n\ra[0,\iy)\,\, C^\iy}\bigr)$ for $\fC,\ab\fC_\rex$ sets and $\Phi_f:\fC^m\t\fC_\rex^n\ra\fC$, $\Psi_g:\fC^m\t\fC_\rex^n\ra\fC_\rex$ maps, satisfying some axioms. Usually we write this as $\bfC=(\fC,\fC_\rex)$, leaving the $C^\iy$-operations $\Phi_f,\Psi_g$ implicit. There is an equivalence $\CPCRingsc\ra\PCRingsc$ mapping $F\mapsto(\fC,\fC_\rex)=(F(\R),F([0,\iy)))$. The motivating example, for $X$ a manifold with (g-)corners, is $(\fC,\fC_\rex)=(C^\iy(X),\Ex(X))$, where $C^\iy(X)$ is the set of smooth maps $c:X\ra\R$, and $\Ex(X)$ the set of {\it exterior\/} (i.e. smooth, but not necessarily interior) maps~$c':X\ra[0,\iy)$.

We define the category of $C^\iy$-{\it rings with corners\/} $\CRingsc$ to be the full subcategory of $(\fC,\fC_\rex)$ in $\PCRingsc$ satisfying an extra condition, that if $c'\in\fC_\rex$ is invertible in $\fC_\rex$ then $c'=\Psi_{\exp}(c)$ for some $c\in\fC$. In terms of spaces $X$, this says that if $c':X\ra[0,\iy)$ is smooth and invertible (hence positive) then $c'=\exp c$ for some smooth~$c=\log c':X\ra\R$.

Now we could also have started with $\Mancin$. So we write $\Euccin\subset\Mancin$ for the full subcategory with objects  $[0,\iy)^m\t\R^n$, and define the category $\CPCRingscin$ of {\it categorical interior pre $C^\iy$-rings with corners\/} to be product-preserving functors $F:\Euccin\ra\Sets$, with morphisms natural transformations. We define a subcategory $\PCRingscin\subset\PCRingsc$ of {\it interior pre $C^\iy$-rings with corners}, and an equivalence $\CPCRingscin\ra\PCRingscin$ mapping $F\mapsto(\fC,\fC_\rex)=(F(\R),F([0,\iy))\amalg\{0\})$. The category $\CRingscin$ of {\it interior $C^\iy$-rings with corners\/} is the intersection $\PCRingscin\cap\CRingsc$ in $\PCRingsc$. The motivating example, for $X$ a manifold with (g-)corners, is $(\fC,\fC_\rex)=(C^\iy(X),\In(X)\amalg\{0\})$, for $\In(X)$ the interior maps~$c':X\ra[0,\iy)$.

Although a $C^\iy$-ring with corners $\bfC=(\fC,\fC_\rex)$ has a huge number of $C^\iy$-operations $\Phi_f,\Psi_g$, it is helpful much of the time to focus on a small subset of these, giving $\fC,\fC_\rex$ smaller, more manageable structures. In particular, we often think of $\fC$ as an $\R$-{\it algebra}, with addition and multiplication given by $\Phi_{f_+},\Phi_{f_\cdot}:\fC\t\fC\ra\fC$ for $f_+,f_\cdot:\R^2\ra\R$ given by $f_+(x,y)=x+y$, $f_\cdot(x,y)=xy$. And we think of $\fC_\rex$ as a {\it monoid}, with multiplication given by $\Psi_{g_\cdot}:\fC_\rex\t\fC_\rex\ra\fC_\rex$ for $g_\cdot:[0,\iy)^2\ra[0,\iy)$ given by $g_\cdot(x,y)=xy$. Note that monoids also control the corner structure of manifolds with g-corners, in the same way.

We define various important subcategories of $C^\iy$-rings with corners by imposing conditions on $\fC_\rex$ as a monoid. For example, a $C^\iy$-ring with corners $\bfC=(\fC,\fC_\rex)$ is {\it interior\/} if $\fC_\rex=\fC_\rin\amalg\{0\}$ where $\fC_\rin$ is a submonoid of $\fC_\rex$, that is, $\fC_\rex$ has no zero divisors. In terms of spaces $X$, we interpret $\fC_\rin$ as the monoid $\In(X)$ of interior maps $c':X\ra[0,\iy)$. If $\bfC,\bfD$ are interior, a morphism $\bs\phi=(\phi,\phi_\rex):\bfC\ra\bfD$ is {\it interior\/} if $\phi_\rex:\fC_\rex\ra\fD_\rex$ maps $\fC_\rin\ra\fD_\rin$. 

The subcategory $\CSchcfi\subset\CSchc$ of {\it firm\/} $C^\iy$-rings with corners are those whose sharpening $\fC_\rex^\sh$ is finitely generated.

\subsubsection*{Chapter \ref{cc5}. $C^\iy$-schemes with corners}

We define the subcategory $\CRSc$ of $C^\iy$-{\it ringed spaces with corners\/} $\bX=(X,\bO_X)$ where $X$ is a topological space and $\bO_X=(\O_X,\O_X^\rex)$ a sheaf of $C^\iy$-rings with corners on $X$, and the full subcategory $\LCRSc\subset\CRSc$ of {\it local\/ $C^\iy$-ringed spaces with corners\/} for which the stalks $\bO_{X,x}$ for $x\in X$ are local $C^\iy$-rings with corners. The global sections functor $\Gac:\LCRSc\ra(\CRingsc)^{\bf op}$ has a right adjoint $\Specc:(\CRingsc)^{\bf op}\ra\LCRSc$, the {\it spectrum functor}. An object $\bX$ in $\LCRSc$ is called a $C^\iy$-{\it scheme with corners\/} if we may cover $X$ with open $U\subseteq X$ with $(U,\bO_X\vert_U)\cong\Specc\bfC$ for $\bfC$ in $\CRingsc$. These form a full subcategory~$\CSchc\subset\LCRSc$.

One might expect that to define corresponding subcategories $\CRScin\subset\CRSc$, $\LCRScin\subset\LCRSc$ of {\it interior\/} (local) $C^\iy$-ringed spaces with corners, we should just replace $\CRingsc$ by $\CRingscin\subset\CRingsc$. However, as $\inc:\CRingscin\hookra\CRingsc$ does not preserve limits, a sheaf of interior $C^\iy$-rings with corners is only a presheaf of $C^\iy$-rings with corners. So we define $\bX=(X,\bO_X)\in\CRSc$ to be {\it interior\/} if $\bO_X$ is the sheafification, as a sheaf valued in $\CRingsc$, of a sheaf valued in $\CRingscin$. Then the stalks $\bO_{X,x}$ of $\bO_X$ lie in $\CRingscin$, so $\O_{X,x}^\rex=\O_{X,x}^\rin\amalg\{0\}$.

We define $\Gacin:\LCRScin\ra(\CRingscin)^{\bf op}$ by $\Gacin(\bX)=(\Ga(\O_X),\ab\Ga(\O_X^\rin)\ab\amalg\{0\})$, where $\Ga(\O_X^\rin)\subset\Ga(\O_X^\rex)$ is the subset of sections $s$ with $s\vert_x\in \O_{X,x}^\rin\subset\O_{X,x}^\rex$ for each $x\in X$. Then $\Gacin$ has a right adjoint $\Speccin:(\CRingscin)^{\bf op}\ra\LCRScin$, where $\Speccin=\Specc\vert_{(\CRingscin)^{\bf op}}$.
An object $\bX\in\LCRScin$ is called an {\it interior\/ $C^\iy$-scheme with corners\/} if we may cover $X$ with open $U\subseteq X$ with $(U,\bO_X\vert_U)\cong\Speccin\bfC$ for $\bfC\in\CRingscin$. These form a full subcategory $\CSchcin\subset\LCRScin$, with~$\CSchcin\subset\CSchc$.

We define full embeddings $\Manc,\Mangc\hookra\CSchc$ or $\Mancin,\Mangcin\hookra\CSchcin$ mapping $X\mapsto(X,(\O_X,\O_X^\rex))$ or $X\mapsto(X,(\O_X,\O_X^\rin\amalg\{0\}))$, where $\O_X,\O_X^\rex,\O_X^\rin$ are the sheaves of local smooth functions $X\ra\R$, and exterior functions $X\ra[0,\iy)$, and interior functions $X\ra[0,\iy)$, respectively. Thus, manifolds with (g-)corners may be regarded as special examples of $C^\iy$-schemes with corners. Manifolds with (g-)faces map to {\it affine\/} $C^\iy$-schemes with corners.

For $C^\iy$-schemes, $\Ga\ci\Spec\not\cong\Id$ but $\Spec\ci\Ga\ci\Spec\cong\Spec$. Thus we can define a full subcategory $\CRingsco\subset\CRings$ of {\it complete\/} $C^\iy$-rings, with $(\CRingsco)^{\bf op}$ equivalent to the category $\ACSch$ of affine $C^\iy$-schemes. 

For $C^\iy$-schemes with corners, the situation is worse: we have both $\Gac\ci\Specc\not\cong\Id$ and $\Specc\ci\Gac\ci\Specc\not\cong\Specc$. To see why, note that if $X$ is a manifold with corners then smooth functions $c:X\ra\R$ are essentially local objects --- they can be glued using partitions of unity. 

However, smooth functions $c':X\ra[0,\iy)$ have some {\it strictly global\/} behaviour: there is a locally constant function $\mu_{c'}:\pd X\ra\N\amalg\{\iy\}$ giving the order of vanishing of $c'$ along the boundary $\pd X$. So the behaviour of $c'$ near distant points $x,y$ in $X$ is linked, if $x,y$ lie in the image of the same connected component of $\pd X$. This means that smooth functions $c':X\ra[0,\iy)$ cannot always be glued using partitions of unity, and localizing a $C^\iy$-ring with corners $\bfC=(\fC,\fC_\rex)$ at an $\R$-point $x:\fC\ra\R$, as one does to define $\Specc$, does not see only local behaviour around~$x$.

Since $\Specc\ci\Gac\ci\Specc\not\cong\Specc$, we cannot define a subcategory of `complete' $C^\iy$-rings with corners equivalent to affine $C^\iy$-schemes with corners. As a partial substitute, we define {\it semicomplete\/} $C^\iy$-rings with corners $\bfC=(\fC,\fC_\rex)$, such that $\Gac\ci\Specc$ is an isomorphism on $\fC$ and injective on~$\fC_\rex$.

If $\bX,\bY\in\CSchc$, a morphism $\bs f:\bX\ra\bY$ in $\CSchc$ is a morphism in $\LCRSc$. Although locally we can write $\bX\cong\Specc\bfC$, $\bX\cong\Specc\bfD$, because of the lack of a good notion of completeness, we do {\it not\/} know that locally we can write $\bs f=\Specc\bs\phi$ for some $\bs\phi:\bfD\ra\bfC$ in $\CRingsc$. One problem this causes is that $\bs g:\bX\ra\bZ$, $\bs h:\bY\ra\bZ$ are morphisms in $\CSchc$, we do not know that the fibre product $\bX\t_{\bs g,\bZ,\bs h}\bY$ in $\LCRSc$ (which always exists) lies in $\CSchc$, if $\bs g,\bs h$ are not locally of the form $\Specc\bs\phi$. So we do not know that all fibre products exist in~$\CSchc$.

To get round this, we introduce the full subcategory $\CSchcfi\subset\CSchc$ of {\it firm\/} $C^\iy$-schemes with corners $\bX$, which are locally of the form $\Specc\bfC$ for $\bfC$ a firm $C^\iy$-ring with corners. Morphisms $\bs f:\bX\ra\bY$ in $\CSchcfi$ are always locally of the form $\Specc\bs\phi$, so we can prove that $\CSchcfi$ is closed under fibre products in $\LCRSc$, and thus all fibre products exist in~$\CSchcfi$.

In general, $\CSchc$ contains a huge variety of objects, many of which are very singular and pathological, and do not fit with our intuitions about manifolds with corners. So it can be helpful to restrict to smaller subcategories of better behaved objects in $\CSchc$, such as $\CSchcfi$. For example, for $\bX$ to be firm means that locally $\bX$ has only finitely many boundary strata, which seems likely to hold in almost every interesting application.

We define several subcategories of $\CSchcin,\CSchc$, and prove results such as existence of reflection functors between them, and existence of fibre products and finite limits in them. Two particularly interesting and well behaved examples are the full subcategories $\CSchcto\subset\CSchcin$, $\CSchctoex\subset\CSchc$ of {\it toric\/} $C^\iy$-schemes with corners, whose corner structure is controlled by toric monoids in the same way that manifolds with g-corners are.

\subsubsection*{Chapter \ref{cc6}. Boundaries, corners, and the corner functor}

One of the most important properties of manifolds with corners $X$ is the existence of boundaries $\pd X$, and clearly we want to generalize this to $C^\iy$-schemes with corners. Our starting point, as in Chapter \ref{cc3}, is that the boundary $\pd X=C_1(X)$ is part of the corners $C(X)=\coprod_{k=0}^{\dim X}C_k(X)$, and the corner functor $C:\cManc\ra\cMancin$ is right adjoint to the inclusion~$\inc:\cMancin\hookra\cManc$.

We construct a right adjoint {\it corner functor\/} $C:\LCRSc\ra\LCRScin$ to the inclusion $\inc:\LCRScin\hookra\LCRSc$. We prove that the restriction to $\CSchc$ maps to $\CSchcin$, giving $C:\CSchc\ra\CSchcin$ right adjoint to $\inc:\CSchcin\hookra\CSchc$. For $\bX$ in $\LCRSc$, points in $C(X)$ are pairs $(x,P)$ for $x\in X$ and $P\subset\O_{X,x}^\rex$ a prime ideal in the monoid $\O_{X,x}^\rex$ of the stalk $\bO_{X,x}$ of $\bO_X$ at $x$. This is an analogue, for $X\in\Manc$, of a point in $C(X)$ being $(x,\ga)$ for $x\in X$ and $\ga$ a local corner component of $X$ at~$x$.

The corner functors for $\Manc,\Mangc$ and $\CSchc$ commute with the embeddings $\Manc,\Mangc\hookra\CSchc$.

To get an analogue of the decomposition $C(X)=\coprod_{k\ge 0}C_k(X)$ for $C^\iy$-schemes with corners, we restrict to the full subcategories $\CSchcfi\subset\CSchc$, $\CSchcfiin\subset\CSchcin$ of {\it firm\/} $C^\iy$-schemes with corners, where $C$ maps $\CSchcfi\ra\CSchcfiin$. For $\bX$ firm there is a locally constant sheaf $\check M^\rex_{C(X)}$ of finitely generated monoids on $C(\bX)$, with $\check M^\rex_{C(X)}\vert_{(x,P)}=\O_{X,x}^\rex/[c'=1$ if $c'\in\O_{X,x}^\rex\sm P]$. We define a decomposition $C(\bX)=\coprod_{k\ge 0}C_k(\bX)$ with $C_k(\bX)$ open and closed in $C(\bX)$, by saying that $(x,P)\in C_k(\bX)$ if the maximum length of a chain of prime ideals in $\check M^\rex_{C(X)}\vert_{(x,P)}$ is $k+1$. This recovers the usual decomposition $C(X)=\coprod_{k\ge 0}C_k(X)$ if $X$ is a manifold with (g-)corners. We define the {\it boundary\/}~$\pd\bX=C_1(\bX)$. 

For toric $C^\iy$-schemes with corners $C$ maps $\CSchctoex\ra\CSchcto$, and $C$ preserves fibre products, and all fibre products exist in $\CSchcto$. We use this to give criteria for when fibre products exist in $\CSchctoex$. This is an analogue of results in \cite{Joyc6} giving criteria for when fibre products exist in $\Mangc$, given that b-transverse fibre products exist in~$\Mangcin$.

\subsubsection*{Chapter \ref{cc7}. Modules, and sheaves of modules}

If $\bfC=(\fC,\fC_\rex)$ is a $C^\iy$-ring with corners, we define a $\bfC$-{\it module\/} to be a module over $\fC$ as an $\R$-algebra. Similarly, if $\bX=(X,(\O_X,\O_X^\rex))$ is a $C^\iy$-scheme with corners, we consider $\O_X$-{\it modules\/} on $X$, which are just modules on the underlying $C^\iy$-scheme $\uX=(X,\O_X)$. So the theory of modules over $C^\iy$-rings with corners and $C^\iy$-schemes with corners lifts immediately from modules over $C^\iy$-rings and $C^\iy$-schemes in \cite[\S 5]{Joyc9}, with no extra work to do.

If $X$ is a manifold with corners then as in Chapter \ref{cc3} we have two notions of cotangent bundle $T^*X,{}^bT^*X$, where ${}^bT^*X$ is functorial only under interior morphisms. Similarly, if $\bfC=(\fC,\fC_\rex)$ is a $C^\iy$-ring with corners, we have the {\it cotangent module\/} $\Om_\fC$ of $\fC$ from \cite[\S 5]{Joyc9}. If $\bfC$ is interior we also define the {\it b-cotangent module\/} ${}^b\Om_\bfC$, which uses the corner structure. If $X$ is a manifold with (g-)faces and $\bfC=\bs C^\iy_\rin(X)$ then $\Om_\fC=\Ga^\iy(T^*X)$ and ${}^b\Om_\bfC=\Ga^\iy({}^bT^*X)$. We show that b-cotangent modules are functorial under interior morphisms and have exact sequences for pushouts and coequalizers.

If $\bX$ is a $C^\iy$-scheme with corners we define the {\it cotangent sheaf\/} $T^*\bX$, and if $\bX$ is interior the {\it b-cotangent sheaf\/} ${}^bT^*\bX$, by sheafifying the (b-)cotangent modules of $\bO_X(U)$ for open $U\subset X$. If $\bX=F_\Manc(X)$ for $X\in\Manc$ these are the sheaves of sections of $T^*X$ and ${}^bT^*X$. We show that Cartesian squares in subcategories such as $\CSchcto,\CSchcfiin\subset\CSchcin$ yield exact sequences of b-cotangent sheaves. On the corners $C(\bX)$ we construct an exact sequence relating ${}^bT^*C(\bX)$, $\bs\Pi_\bX^*({}^bT^*\bX)$ and~$\check M^\rex_{C(X)}\ot_\N\O_{C(X)}$.

\subsubsection*{Chapter \ref{cc8}. Further generalizations}

Finally we propose three directions in which this book can be generalized:
\smallskip

\noindent{\bf $C^\iy$-stacks with corners.} In classical algebraic geometry, schemes are generalized to (Deligne--Mumford or Artin) stacks. The second author \cite{Joyc9} extended the theory of $C^\iy$-schemes to $C^\iy$-{\it stacks}, including {\it Deligne--Mumford\/ $C^\iy$-stacks}. This corresponds to generalizing manifolds to orbifolds.

We discuss a theory of $C^\iy$-{\it stacks with corners}, including  {\it Deligne--Mumford\/ $C^\iy$-stacks with corners}. These generalize {\it orbifolds with (g-)corners}. Much of the theory follows from \cite{Joyc9} with only cosmetic changes.
\smallskip

\noindent{\bf $C^\iy$-rings and $C^\iy$-schemes with a-corners.} Our theory starts with the categories $\Mancin\subset\Manc$ of manifolds with corners defined in Chapter \ref{cc3}. The second author \cite{Joyc7} also defined categories $\Manacin\subset\Manac$ of {\it manifolds with analytic corners}, or {\it manifolds with a-corners}. Even the simplest objects $\lb 0,\iy)$ in $\Manac$ and $[0,\iy)$ in $\Manc$ have different smooth structures. 

Manifolds with a-corners have applications to p.d.e.s with boundary conditions of asymptotic type, and to moduli spaces with boundary and corners, such as moduli spaces of Morse flow lines, in which (we argue) manifolds with a-corners give the correct smooth structure.

This entire book could be rewritten over $\Manacin\subset\Manac$ rather than $\Mancin\subset\Manc$. We explain the first steps in this.
\smallskip

\noindent{\bf Derived manifolds and derived orbifolds with corners.} Classical Algebraic Geometry has been generalized to Derived Algebraic Geometry, which is now a major area of mathematics. It is less well known that classical Differential Geometry can be generalized to {\it Derived Differential Geometry}, the study of {\it derived manifolds\/} and {\it derived orbifolds}, regarded as special examples of {\it derived\/ $C^\iy$-schemes\/} and {\it derived\/ $C^\iy$-stacks}. Some references are Lurie \cite[\S 4.5]{Luri}, Spivak \cite{Spiv}, Borisov and Noel \cite{Bori,BoNo}, and the second author~\cite{Joyc3,Joyc4}. 

It is desirable to extend the subject to {\it derived manifolds with corners\/} and {\it derived orbifolds with corners}, regarded as special examples of {\it derived\/ $C^\iy$-schemes with corners\/} and {\it derived\/ $C^\iy$-stacks with corners}. This will be done by the second author in \cite{Joyc4}, with this book as its foundations. Derived orbifolds with corners will have important applications in Symplectic Geometry, as (we argue) they are the correct way to make Fukaya--Ohta--Oh--Ono's `Kuranishi spaces with corners' \cite{FOOO} into well behaved geometric spaces.
\smallskip

\noindent{\it Acknowledgements.} This book is based on the first author's PhD thesis \cite{Fran}, supervised by the second author. Some previous work on the subject was done by Elana Kalashnikov in her MSc thesis \cite{Kala}, also supervised by the second author. The first author would like to thank the Rhodes Trust for funding during her PhD.

\section{\texorpdfstring{Background on $C^\iy$-schemes}{Background on C∞-schemes}}
\label{cc2}

We begin with background material and results on $C^\iy$-rings and $C^\iy$-schemes, which we will later generalize to $C^\iy$-rings with corners and $C^\iy$-schemes with corners. Our main reference is the second author \cite[\S 2--\S 4]{Joyc9}, see also \cite{Joyc2}, Dubuc \cite{Dubu2}, Moerdijk and Reyes \cite{MoRe2}, and Kock~\cite{Kock}.

\subsection{\texorpdfstring{$C^\iy$-rings}{C∞-rings}}
\label{cc21}

Here are two equivalent definitions of $C^\iy$-ring.

\begin{dfn} Write $\Man$ for the category of manifolds, and $\Euc$ for the full subcategory of
$\Man$ with objects the Euclidean spaces $\R^n$. That is, the
objects of $\Euc$ are $\R^n$ for $n=0,1,2,\ldots,$ and the morphisms
in $\Euc$ are smooth maps $f:\R^m\ra\R^n$. Write $\Sets$ for the category of sets.
In both $\Euc$ and $\Sets$ we have notions of (finite) products of
objects (that is, $\R^{m+n}=\R^m\t\R^n$, and products $S\t T$ of
sets $S,T$), and products of morphisms. 

Define a {\it categorical\/ $C^\iy$-ring\/} to be a product-preserving functor $F:\Euc\ra\Sets$. Here $F$ should also preserve the empty product, that is, it maps $\R^0$ in $\Euc$ to the terminal object in $\Sets$, the point $*$. If $F,G:\Euc\ra\Sets$ are categorical $C^\iy$-rings, a {\it morphism\/} $\eta:F\ra G$ is a natural transformation $\eta:F\Ra G$. We write $\CCRings$ for the category of categorical $C^\iy$-rings. 
\label{cc2def1}
\end{dfn}

Categorical $C^\iy$-rings in this sense are an example of an {\it algebraic theory\/} in the sense of Ad\'amek, Rosick\'y and Vitale \cite{ARV}, and many of the basic categorical properties of $C^\iy$-rings follow from this.

\begin{dfn} A $C^\iy$-{\it ring\/} is a set $\fC$ together with
operations
\begin{equation*}
\smash{\Phi_f:\fC^n={\buildrel {\!\ulcorner\,\text{$n$ copies
}\,\urcorner\!} \over {\fC\t\cdots \t \fC}}\longra \fC}
\end{equation*}
for all $n\ge 0$ and smooth maps $f:\R^n\ra\R$, where by convention
when $n=0$ we define $\fC^0$ to be the single point $\{\es\}$. These
operations must satisfy the following relations: suppose $m,n\ge 0$,
and $f_i:\R^n\ra\R$ for $i=1,\ldots,m$ and $g:\R^m\ra\R$ are smooth
functions. Define a smooth function $h:\R^n\ra\R$ by
\begin{equation*}
h(x_1,\ldots,x_n)=g\bigl(f_1(x_1,\ldots,x_n),\ldots,f_m(x_1
\ldots,x_n)\bigr),
\end{equation*}
for all $(x_1,\ldots,x_n)\in\R^n$. Then for all
$(c_1,\ldots,c_n)\in\fC^n$ we have
\begin{equation*}
\Phi_h(c_1,\ldots,c_n)=\Phi_g\bigl(\Phi_{f_1}(c_1,\ldots,c_n),
\ldots,\Phi_{f_m}(c_1,\ldots,c_n)\bigr).
\end{equation*}
We also require that for all $1\le j\le n$, defining
$\pi_j:\R^n\ra\R$ by $\pi_j:(x_1,\ldots,x_n)\mapsto x_j$, we have
$\Phi_{\pi_j}(c_1,\ldots,c_n)=c_j$ for
all~$(c_1,\ldots,c_n)\in\fC^n$.

Usually we refer to $\fC$ as the $C^\iy$-ring, leaving the $C^\iy$-operations $\Phi_f$ implicit.

A {\it morphism\/} between $C^\iy$-rings $\bigl(\fC,(\Phi_f)_{
f:\R^n\ra\R\,\,C^\iy}\bigr)$, $\bigl({\mathfrak
D},(\Psi_f)_{f:\R^n\ra\R\,\,C^\iy}\bigr)$ is a map
$\phi:\fC\ra{\mathfrak D}$ such that $\Psi_f\bigl(\phi
(c_1),\ldots,\phi(c_n)\bigr)=\phi\ci\Phi_f(c_1,\ldots,c_n)$ for all
smooth $f:\R^n\ra\R$ and $c_1,\ldots,c_n\in\fC$. We will write
$\CRings$ for the category of $C^\iy$-rings.
\label{cc2def2}
\end{dfn}

Each $C^\iy$-ring has an underlying $\R$-algebra structure.

\begin{prop}
\label{cc2prop1}
There is an equivalence of categories from $\CRings$ to\/ $\CCRings,$ identifying $\fC$ in $\CRings$ with\/ $F:\Euc\ra\Sets$ in $\CCRings$ such that\/ $F\bigl(\R^n\bigr)=\fC^n$ for\/~$n\ge 0$.
\end{prop}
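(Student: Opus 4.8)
The plan is to construct explicit functors in both directions and check they are mutually quasi-inverse. Given a $C^\iy$-ring $\fC$ in the sense of Definition \ref{cc2def2}, I would define $F=F_\fC:\Euc\ra\Sets$ on objects by $F(\R^n)=\fC^n$, and on a morphism $f=(f_1,\ldots,f_n):\R^m\ra\R^n$ in $\Euc$ (so each $f_j:\R^m\ra\R$ is smooth) by $F(f)=(\Phi_{f_1},\ldots,\Phi_{f_n}):\fC^m\ra\fC^n$. The functoriality $F(g\ci f)=F(g)\ci F(f)$ is exactly the composition relation in Definition \ref{cc2def2}, and $F(\id_{\R^n})=\id_{\fC^n}$ is the projection relation $\Phi_{\pi_j}(c_1,\ldots,c_n)=c_j$. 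That $F$ preserves products is immediate since $F(\R^m\t\R^n)=\fC^{m+n}=\fC^m\t\fC^n=F(\R^m)\t F(\R^n)$, with the projections going to projections (again by the $\Phi_{\pi_j}$ relation), and $F(\R^0)=\fC^0=\{\es\}=*$ is the terminal object; so $F_\fC$ is a categorical $C^\iy$-ring. For a morphism $\phi:\fC\ra\fD$ of $C^\iy$-rings I would define $\eta_\phi:F_\fC\Ra F_\fD$ to have components $(\eta_\phi)_{\R^n}=\phi^n:\fC^n\ra\fD^n$; naturality of $\eta_\phi$ with respect to $f:\R^m\ra\R^n$ is precisely the condition $\Psi_{f_j}(\phi(c_1),\ldots,\phi(c_m))=\phi(\Phi_{f_j}(c_1,\ldots,c_m))$ from the definition of morphism. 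This gives a functor $\CRings\ra\CCRings$.

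In the other direction, given a categorical $C^\iy$-ring $F:\Euc\ra\Sets$, I would first use that $F$ preserves the product $\R^n=\R\t\cdots\t\R$ to obtain a canonical bijection $F(\R^n)\cong F(\R)^n$ for all $n\ge 0$ (including $n=0$, where $F(\R^0)=*=F(\R)^0$); set $\fC:=F(\R)$. For a smooth $f:\R^n\ra\R$, which is a morphism in $\Euc$, define $\Phi_f:\fC^n\cong F(\R^n)\xrightarrow{F(f)}F(\R)=\fC$. The relations in Definition \ref{cc2def2} then follow from functoriality of $F$ together with the compatibility of the canonical isomorphisms $F(\R^n)\cong\fC^n$ with projections: the composition relation is $F(g\ci(f_1,\ldots,f_m))=F(g)\ci F((f_1,\ldots,f_m))$ combined with the fact that $F((f_1,\ldots,f_m)):F(\R^n)\ra F(\R^m)$ corresponds under the canonical isomorphisms to $(\Phi_{f_1},\ldots,\Phi_{f_m})$ (because each component $\pi_j\ci(f_1,\ldots,f_m)=f_j$ and $F$ is a functor), and the projection relation $\Phi_{\pi_j}=\pi_j$ is built into the definition of the canonical isomorphism. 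On morphisms, a natural transformation $\eta:F\Ra G$ gives $\phi:=\eta_\R:F(\R)\ra G(\R)$, and naturality against all $f:\R^n\ra\R$ yields the $C^\iy$-ring morphism axiom (using that $\eta_{\R^n}$ corresponds to $\phi^n$ under the canonical isomorphisms, which follows from naturality against the projections $\pi_j:\R^n\ra\R$). This gives a functor $\CCRings\ra\CRings$.

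Finally I would check the two composites are naturally isomorphic to the identities. Starting from a $C^\iy$-ring $\fC$, forming $F_\fC$ and then extracting $F_\fC(\R)=\fC$ with operations $F_\fC(f)=\Phi_f$ recovers $\fC$ on the nose, so that composite is literally the identity. Starting from a categorical $C^\iy$-ring $F$, forming $\fC=F(\R)$ and then $F_\fC$, we get $F_\fC(\R^n)=\fC^n$, which is canonically isomorphic — not equal — to $F(\R^n)$; the family of these canonical isomorphisms $F(\R^n)\cong\fC^n=F_\fC(\R^n)$ is natural in the object $\R^n$ and commutes with the functors' action on morphisms by construction, giving a natural isomorphism $F\cong F_\fC$. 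Assembling these into natural isomorphisms of the composite functors (rather than just objectwise) is the only point requiring a little care, but it is formal. The statement that under this equivalence $\fC$ corresponds to $F$ with $F(\R^n)=\fC^n$ is then exactly the description of the functor $\CRings\ra\CCRings$ above.

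**Main obstacle.** None of the steps is genuinely hard; the only thing to be careful about is the distinction between "$F$ preserves products" (an isomorphism $F(\R^m\t\R^n)\cong F(\R^m)\t F(\R^n)$, a priori not an equality) and the strict equalities $\fC^{m+n}=\fC^m\t\fC^n$ in $\Sets$, so that the inverse functor lands in $C^\iy$-rings with the bookkeeping of the canonical isomorphisms $F(\R^n)\cong F(\R)^n$ done consistently and compatibly with the projection morphisms $\pi_j:\R^n\ra\R$ in $\Euc$. Once that bookkeeping is fixed, everything else is a direct unwinding of the two definitions, and one does not need the general machinery of algebraic theories, though it gives a slicker high-level proof.
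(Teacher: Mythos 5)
Your proposal is correct. Note that the paper gives no proof of this proposition at all: it is stated as a standard fact, implicitly justified by the remark that categorical $C^\iy$-rings are the algebras of an algebraic theory in the sense of Ad\'amek--Rosick\'y--Vitale (with $\Euc$ the theory), so the equivalence between the ``operations and relations'' presentation and the product-preserving-functor presentation is an instance of the general correspondence for algebraic theories, and is also in Moerdijk--Reyes. Your explicit two-functor construction is exactly what that machinery packages: $\fC\mapsto F_\fC$ with $F_\fC(\R^n)=\fC^n$ and $F_\fC(f)=(\Phi_{f_1},\ldots,\Phi_{f_n})$, and conversely $F\mapsto F(\R)$ with operations $\Phi_f=F(f)$ transported along the canonical isomorphisms $F(\R^n)\cong F(\R)^n$ induced by the projections. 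You correctly identify the one genuine point of care, namely that product preservation gives isomorphisms rather than equalities, so the canonical identifications must be chosen compatibly with the $F(\pi_j)$, and that one composite is the identity on the nose while the other is only naturally isomorphic to it. This buys you a self-contained, elementary proof at the cost of some bookkeeping; the paper's (implicit) route buys brevity and the further consequences of the algebraic-theories framework (limits, colimits, free objects) used later, e.g.\ in Proposition \ref{cc2prop2}.
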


The following example motivates these definitions:

\begin{ex}
\label{cc2ex1}
{\bf(a)} Let $X$ be a manifold. Define a functor $F_X:\Euc\ra\Sets$ by $F_X(\R^n)=\Hom_\Man(X,\R^n)$, and $F_X(g)=g\ci:\Hom_\Man(X,\R^m)\ra\Hom_\Man(X,\R^n)$ for each morphism $g:\R^m\ra\R^n$ in $\Euc$. Then $F_X$ is a categorical $C^\iy$-ring. If $f:X\ra Y$ is a smooth map of manifolds define a natural transformation $F_f:F_Y\Ra F_f$ by $F_f(\R^n)=\ci f:\Hom_\Man(Y,\R^n)\ra\Hom_\Man(X,\R^n)$. Then $F_f$ is a morphism in $\CCRings$. Define a functor $F_\Man^\CCRings:\Man\ra\CCRings^{\bf op}$ to map $X\mapsto F_X$ and $f\mapsto F_f$.
\smallskip

\noindent{\bf(b)} Let $X$ be a manifold. Write $C^\iy(X)$ for the set of smooth functions $c:X\ra\R$. For non-negative integers $n$ and smooth $f:\R^n\ra\R$, define $C^\iy$-operations $\Phi_f:C^\iy(X)^n\ra C^\iy(X)$ by composition
\e
\bigl(\Phi_f(c_1,\ldots,c_n)\bigr)(x)=f\bigl(c_1(x),\ldots,c_n(x)\bigr),
\label{cc2eq1}
\e
for all $c_1,\ldots,c_n\in C^\iy(X)$ and $x\in X$. The composition and projection relations follow directly from the definition of $\Phi_f$, so that $C^\iy(X)$ forms a $C^\iy$-ring. If we consider the $\R$-algebra structure of $C^\iy(X)$ as a $C^\iy$-ring, this is the canonical $\R$-algebra structure on $C^\iy(X)$. If $f:X\ra Y$ is a smooth map of manifolds, then $f^*:C^\iy(Y)\ra C^\iy(X)$ mapping $c\mapsto c\ci f$ is a morphism of $C^\iy$-rings. 

Define $F_\Man^\CRings:\Man\ra\CRings^{\bf op}$ to map $X\mapsto C^\iy(X)$ and $f\mapsto f^*$. Moerdijk and Reyes \cite[Th.~I.2.8]{MoRe2} show that $F_\Man^\CRings$ is full and faithful, and takes transverse fibre products in $\Man$ to fibre products in~$\CRings^{\bf op}$.
\end{ex}

There are many more $C^\iy$-rings than those that come from manifolds. For example, if $X$ is a smooth manifold $X$ of $\dim X>0$, then the set $C^k(X)$ of $k$-differentiable maps $f:X\ra \R$ is a $C^\iy$-ring with operations $\Phi_f$ defined as in \eq{cc2eq1}, and each of these $C^\iy$-rings is different for all~$k=0,1,\ldots.$ 

\begin{ex} 
\label{cc2ex2}
Consider $X=*$ the point, so $\dim X=0$, then $C^\iy(*)=\R$ and Example \ref{cc2ex1} shows the $C^\iy$-operations $\Phi_f:\R^n\ra\R$ given by $\Phi_f(x_1,\ldots,x_n)=f(x_1,\ldots,x_n)$ make $\R$ into a $C^\iy$-ring. This is the initial object in~$\CRings$, and the simplest nonzero example of a $C^\iy$-ring. The zero $C^\iy$-ring is the set $\{0\}$ where all $C^\iy$-operations $\Phi_f:\{0\}\ra\{0\}$ send $0\mapsto 0$, and this is the final object in $\CRings$. 
\end{ex}

By Moerdijk and Reyes \cite[p.~21--22]{MoRe2} and Ad\'amek et al.\ \cite[Prop.s 1.21, 2.5 \& Th.~4.5]{ARV} we have:

\begin{prop} 
\label{cc2prop2}
The category $\CRings$ of\/ $C^\iy$-rings has all small limits and all small colimits. The forgetful functor\/ $\Pi:\CRings\ra\Sets$ preserves limits and directed colimits, and can be used to compute such (co)limits, however it does not preserve general colimits such as pushouts.
\end{prop}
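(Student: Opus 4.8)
The plan is to deduce everything from the presentation of $\CRings$ as an algebraic category. By Proposition \ref{cc2prop1}, $\CRings\simeq\CCRings$, the category of product-preserving functors $\Euc\ra\Sets$; this exhibits $\CRings$ as the category of models of an algebraic theory, equivalently of algebras for a finitary monad on $\Sets$, so completeness, cocompleteness, and the stated behaviour of the forgetful functor $\Pi$ are instances of general results on algebraic categories, which I would cite from Ad\'amek--Rosick\'y--Vitale \cite{ARV} and Moerdijk--Reyes \cite[pp.~21--22]{MoRe2}. I would nevertheless also give the direct verifications for limits and directed colimits, as these are short and are used repeatedly later in the book, and I would exhibit the failure of pushout-preservation with a concrete example.

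\textbf{Limits.} First I would show $\Pi$ creates all small limits. Given a small diagram $(\fC_i)_{i\in I}$ in $\CRings$, set $L=\lim_i\Pi(\fC_i)\subseteq\prod_{i\in I}\fC_i$, the limit in $\Sets$. For $n\ge 0$ and smooth $f:\R^n\ra\R$ define $\Phi_f^L:L^n\ra L$ componentwise by $\Phi_f^L\bigl((c_1^i)_i,\ldots,(c_n^i)_i\bigr)=\bigl(\Phi_f^{\fC_i}(c_1^i,\ldots,c_n^i)\bigr)_i$. This tuple lies in $L$ because the transition maps of the diagram are $C^\iy$-ring morphisms and hence commute with $\Phi_f$, and the composition and projection relations of Definition \ref{cc2def2} hold in $L$ because they hold componentwise in each $\fC_i$. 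This is the unique $C^\iy$-ring structure on $L$ making the projections $L\ra\fC_i$ into morphisms, and $L$ with these projections is then easily checked to be the limit of $(\fC_i)$ in $\CRings$. In particular $\CRings$ has all small limits and $\Pi$ preserves, indeed creates, them.

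\textbf{Colimits.} For existence I would invoke the monadic picture: $\Pi$ has a left adjoint (the free $C^\iy$-ring functor, equal to $C^\iy(\R^m)$ on a set of size $m$, and a directed colimit of these in general), $\Pi$ is monadic, and the induced monad on $\Sets$ is finitary because every $\Phi_f$ has finite arity; hence $\CRings$, being the category of algebras for a finitary monad on the cocomplete category $\Sets$, is cocomplete \cite{ARV}. Alternatively one builds colimits by hand: coproducts and coequalizers as free $C^\iy$-rings modulo the $C^\iy$-congruence generated by the evident relations, and then all colimits from these. For directed colimits I would show $\Pi$ creates them: if $(\fC_i)_{i\in I}$ is a directed system and $C=\mathop{\rm colim}_i\Pi(\fC_i)$ in $\Sets$, then any finitely many elements of $C$ lift, by directedness, to a common $\fC_i$; applying $\Phi_f^{\fC_i}$ and projecting to $C$ gives a well-defined operation $\Phi_f^C$, and the $C^\iy$-ring relations pass to the colimit for the same reason; so $C$ is the colimit in $\CRings$ and $\Pi$ preserves directed colimits.

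\textbf{Failure for pushouts, and the hard part.} A coproduct in $\CRings$ is in particular a pushout over the initial object $\R$ (Example \ref{cc2ex2}), so it suffices to find a coproduct not preserved by $\Pi$. Since $\R^2=\R\t\R$ is a (transverse) fibre product over the point $*$ in $\Man$, Example \ref{cc2ex1}(b) gives that $C^\iy(\R^2)$ is the coproduct $C^\iy(\R)\amalg C^\iy(\R)$ in $\CRings$, with coprojections $f\mapsto[(x,y)\mapsto f(x)]$ and $g\mapsto[(x,y)\mapsto g(y)]$. The canonical comparison map to $\Pi(C^\iy(\R^2))=C^\iy(\R^2)$ from the $\Sets$-pushout of $\Pi(C^\iy(\R))\leftarrow\Pi(\R)\ra\Pi(C^\iy(\R))$ has image the functions on $\R^2$ depending on at most one coordinate, so it omits $(x,y)\mapsto xy$ and is not surjective; hence $\Pi$ does not preserve this pushout. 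The hard part will be the colimit half: both the existence of colimits and the fact that $\Pi$ creates directed colimits rely on the finite arity of the operations $\Phi_f$ --- this is what lets one evaluate an operation on a directed colimit by pulling its finitely many arguments back to a single stage, and what makes the associated monad finitary --- whereas the limit statement and the pushout counterexample are routine.
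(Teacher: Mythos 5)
Your proposal is correct and follows essentially the same route as the paper, which states this result without proof by citing the general theory of algebraic theories in Ad\'amek--Rosick\'y--Vitale and Moerdijk--Reyes, exactly the sources you invoke. Your added direct verifications (creation of limits and directed colimits by $\Pi$, cocompleteness via the finitary monad) and your pushout counterexample $C^\iy(\R)\ot_\iy C^\iy(\R)\cong C^\iy(\R^2)$, whose underlying set strictly contains the image of the $\Sets$-pushout since $(x,y)\mapsto xy$ depends on both coordinates, are all sound and consistent with the discussion the paper gives immediately after the proposition.
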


For the pushout of morphisms $\phi:\fC\!\ra\!\fD$, $\psi:\fC\!\ra\!\fE$ in $\CRings$, we write $\fD\amalg_{\phi,\fC,\psi}\fE$ or $\fD\amalg_\fC\fE$. In the special case $\fC\!=\!\R$ the coproduct $\fD\amalg_\R\fE$ will be written as $\fD\ot_\iy\nobreak\fE$. Recall that coproduct of $\R$-algebras $A,B$ is the tensor product $A\ot B$, however $\fD\ot_\iy\fE$ is usually different from their tensor product $\fD\ot\fE$. For example, for $m,n>0$, then $C^\iy(\R^m)\ot_\iy C^\iy(\R^n)\cong C^\iy(\R^{m+n})$ as in \cite[p.~22]{MoRe2}, which contains $C^\iy(\R^m)\ot C^\iy(\R^n)$ but is larger than this, as it includes elements such as $\exp(fg)$ for $f\in C^\iy(\R^m)$ and~$g\in C^\iy(\R^n)$. 

The next definition and proposition come from Ad\'amek et al.~\cite[Rem.~11.21, Prop.s 11.26, 11.28, 11.30 \& Cor. 11.33]{ARV}.

\begin{dfn}
\label{cc2def3}
If $A$ is a set then by \cite[Rem.~11.21]{ARV} we can define the {\it free $C^\iy$-ring $\fF^A$ generated by\/} $A$. We may think of $\fF^A$ as $C^\iy(\R^A)$, where $\R^A=\bigl\{(x_a)_{a\in A}:x_a\in\R\bigr\}$. Explicitly, we define $\fF^A$ to be the set of maps $c:\R^A\ra\R$ which depend smoothly on only finitely many variables $x_a$, and operations $\Phi_f$ are defined as in \eq{cc2eq1}. Regarding $x_a:\R^A\ra\R$ as functions for $a\in A$,we have $x_a\in\fF^A$, and we call $x_a$ the {\it generators\/} of~$\fF^A$.

Then $\fF^A$ has the property that if $\fC$ is any $C^\iy$-ring then a choice of map $\al:A\ra\fC$ uniquely determines a morphism $\phi:\fF^A\ra\fC$ with $\phi(x_a)=\al(a)$ for $a\in A$. When $A=\{1,\ldots,n\}$ we have $\fF^A\cong C^\iy(\R^n)$.	
\end{dfn}

\begin{prop}
\label{cc2prop3}
{\bf(a)} Every object\/ $\fC$ in $\CRings$ admits a surjective morphism $\phi:\fF^A\ra\fC$ from some free $C^\iy$-ring $\fF^A$. We call\/ $\fC$ \begin{bfseries}finitely generated\end{bfseries} if this holds with\/ $A$ finite.
\smallskip

\noindent{\bf(b)} Every object\/ $\fC$ in $\CRings$ fits into a \begin{bfseries}coequalizer diagram\end{bfseries}
\e
\xymatrix@C=50pt{  \fF^B \ar@<.1ex>@/^.3pc/[r]^{\al} \ar@<-.1ex>@/_.3pc/[r]_{\be} & \fF^A \ar[r]^{\phi} & \fC, }
\label{cc2eq2}
\e
that is, $\fC$ is the colimit of\/ $\fF^B\rightrightarrows\fF^A$ in $\CRings,$ where $\phi$ is automatically surjective. We call\/ $\fC$ \begin{bfseries}finitely presented\end{bfseries} if this holds with\/ $A,B$ finite.
\end{prop}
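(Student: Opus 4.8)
### Proof Proposal

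The plan is to prove both parts by directly combining the existence of a surjective morphism $\phi:\fF^A\ra\fC$ (part (a)) with the fact, recorded in Proposition \ref{cc2prop2}, that $\CRings$ has all small colimits, so that the coequalizer in \eq{cc2eq2} makes sense. For part (a), I would first invoke the general algebraic-theory machinery: by \cite[Rem.~11.21]{ARV}, for the underlying set $\fC$ (or rather the set $A$ of all elements of $\fC$) there is a free $C^\iy$-ring $\fF^A$ with a canonical map $A\ra\fC$ sending each generator $x_a$ to the element $a$ itself. By the universal property of $\fF^A$ in Definition \ref{cc2def3}, this choice of map $A\ra\fC$ extends uniquely to a $C^\iy$-ring morphism $\phi:\fF^A\ra\fC$ with $\phi(x_a)=a$. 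Since every element $a\in\fC$ is visibly in the image of $\phi$, the morphism $\phi$ is surjective. The notion of finitely generated is then just the case $|A|<\iy$.

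For part (b), having fixed a surjective $\phi:\fF^A\ra\fC$, I would form the kernel pair of $\phi$ in $\CRings$: the object $\fF^A\t_{\phi,\fC,\phi}\fF^A$ together with its two projections $\pi_1,\pi_2$ to $\fF^A$. This fibre product exists by Proposition \ref{cc2prop2}, and the key structural fact — true in any algebraic theory, and what \cite[Prop.s 11.26, 11.28, 11.30 \& Cor.~11.33]{ARV} provide — is that in a (finitary) variety the kernel pair of a surjection is itself effective, i.e. $\phi$ is the coequalizer of its kernel pair. Applying part (a) again, choose a surjection $\psi:\fF^B\ra\fF^A\t_{\phi,\fC,\phi}\fF^A$ from a free $C^\iy$-ring, and set $\al=\pi_1\ci\psi$, $\be=\pi_2\ci\psi$. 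Since $\psi$ is surjective, the coequalizer of $\al,\be$ agrees with the coequalizer of $\pi_1,\pi_2$, which is $\phi:\fF^A\ra\fC$; hence \eq{cc2eq2} is a coequalizer diagram. The finitely presented case is the one where $A$ and $B$ may both be chosen finite — this requires the additional input that when $\fC$ is finitely presented (in the sense of being a finitely presentable object, or equivalently having such a finite $A$) the kernel-pair object is itself finitely generated, again a standard consequence of the algebraic-theory framework.

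The main obstacle is really bookkeeping rather than mathematical depth: one must be careful that the "standard" facts about varieties of algebras — free objects generated by a set, effectiveness of kernel pairs of surjections, and the compatibility of these with finiteness conditions — genuinely apply to $\CRings$. The cleanest route is to lean entirely on the identification of $\CRings$ with the category of algebras for the algebraic theory $\Euc$, so that all four cited results from \cite{ARV} transfer verbatim; this is exactly the viewpoint flagged after Definition \ref{cc2def1}. No explicit computation with the operations $\Phi_f$ is needed. One small subtlety worth a remark is that although $\phi$ in \eq{cc2eq2} is automatically surjective (being the coequalizer of a pair of maps out of a free object that already surjects onto $\fC$), the maps $\al,\be$ need not be, and one should not conflate "finitely presented" with the naive notion of finitely many relations among the $\Phi_f$; the statement as given sidesteps this by phrasing everything in terms of the coequalizer diagram.
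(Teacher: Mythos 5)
Your proposal is correct and takes essentially the same approach as the paper: the paper gives no independent proof of Proposition \ref{cc2prop3}, citing instead the general algebraic-theories results of Ad\'amek--Rosick\'y--Vitale \cite{ARV}, and your construction (the free $C^\iy$-ring $\fF^A$ on the underlying set of $\fC$, then a free cover $\fF^B$ of the kernel pair of $\phi$, using that in a variety every surjection is the coequalizer of its kernel pair and that precomposing with a surjection does not change the coequalizer) is precisely the standard argument those citations encapsulate. The only cosmetic point is that the automatic surjectivity of $\phi$ in \eq{cc2eq2} is cleanest to see from the fact that coequalizers (regular epimorphisms) in a variety are surjective, rather than from your parenthetical remark.
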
 

Actually as any relation $f=g$ in $\fC$ is equivalent to the relation $f-g=0$, we can simplify \eq{cc2eq2} by taking $\be$ to map $x_b\mapsto 0$ for all $b\in B$. But for the analogue for $C^\iy$-rings with corners in \S\ref{cc45}, this will not work. 

\begin{dfn}
\label{cc2def4}
Recall that a {\it local\/ $\R$-algebra}, is an $\R$-algebra $R$ with a unique maximal ideal $\mk$. The residue field of $R$ is the field isomorphic to $R/\mk$. A $C^\iy$-ring $\fC$ is called {\it local\/} if, regarded as an $\R$-algebra, $\fC$ is a local $\R$-algebra with residue field $\R$. The quotient morphism gives a (necessarily unique) morphism of $C^\iy$-rings $\pi:\fC\ra\R$ with the property that $c\in \fC$ is invertible if and only if $\pi(c)\ne 0$. Equivalently, if such a morphism $\pi:\fC\ra \R$ exists with this property, then $\fC$ is local with maximal ideal $\m_\fC\cong \Ker\pi$. Write $\CRingslo\subset\CRings$ for the full subcategory of local $C^\iy$-rings.

Usually morphisms of local rings are required to send maximal ideals into maximal ideals. However, if $\phi:\fC\ra\fD$ is any morphism of local $C^\iy$-rings, we see that $\phi^{-1}(\m_\fD)=\m_\fC$ as the residue fields in both cases are $\R$, so there is no difference between local morphisms and morphisms for $C^\iy$-rings.
\end{dfn}

The next proposition may be found in Moerdijk and Reyes~\cite[\S I.3]{MoRe2}.

\begin{prop}
\label{cc2prop4}
All finite colimits exist in $\CRingslo,$ and agree with the corresponding colimits in $\CRings$. 
\end{prop}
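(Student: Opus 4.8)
The plan is to show that the inclusion $\CRingslo\hookra\CRings$ creates finite colimits, so that any finite colimit computed in $\CRings$ of a diagram of local $C^\iy$-rings automatically lands in $\CRingslo$ and serves as the colimit there. Since finite colimits are generated by initial objects, binary coproducts (pushouts over $\R$), and coequalizers — or more simply by the initial object and pushouts — it suffices to treat these cases. The initial object $\R$ of $\CRings$ is manifestly local, so the real content is: (i) if $\phi:\fC\ra\fD$, $\psi:\fC\ra\fE$ are morphisms in $\CRingslo$, then the pushout $\fD\amalg_\fC\fE$ in $\CRings$ is local; and (ii) if $\al,\be:\fC\rra\fD$ are morphisms of local $C^\iy$-rings, the coequalizer in $\CRings$ is local. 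In each case, since the subcategory is full, once we know the $\CRings$-colimit is local, the universal property in $\CRingslo$ is immediate from that in $\CRings$.

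First I would recall (from Definition \ref{cc2def4}) the characterization: a $C^\iy$-ring $\fC$ is local iff there is a morphism $\pi:\fC\ra\R$ such that $c\in\fC$ is invertible iff $\pi(c)\ne 0$, and that such $\pi$ is unique and every $C^\iy$-ring morphism automatically respects it. So to show a colimit $\fG$ is local, I must (a) produce a morphism $\pi_\fG:\fG\ra\R$, and (b) verify the invertibility criterion. For (a): $\R$ is terminal among local $C^\iy$-rings in a suitable sense — more precisely, the residue maps $\pi_\fD:\fD\ra\R$, $\pi_\fE:\fE\ra\R$ agree after composing with $\phi,\psi$ (both equal $\pi_\fC$, by uniqueness), so by the universal property of the pushout in $\CRings$ they induce $\pi_\fG:\fD\amalg_\fC\fE\ra\R$; similarly in the coequalizer case $\pi_\fD\ci\al=\pi_\fD\ci\be$ (both equal $\pi_\fC$) gives a factorization through the coequalizer. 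The harder half is (b), the invertibility criterion for $\fG$.

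For step (b), I would exploit that elements of a pushout or coequalizer of $C^\iy$-rings have a concrete presentation. Using Proposition \ref{cc2prop2} and the description of generators and relations (Definition \ref{cc2def3}, Proposition \ref{cc2prop3}): every element of $\fG$ is the image of some $c\in\fF^A$ under a surjection $\fF^A\ra\fG$, i.e. can be written as $\Phi_f(d_1,\dots,d_m)$ for a smooth $f:\R^m\ra\R$ and elements $d_i$ coming from $\fD$ or $\fE$ (resp. from $\fD$). The key move is the standard $C^\iy$-ring "near-zero / invertibility" trick: if $g\in\fG$ has $\pi_\fG(g)\ne 0$, choose a smooth function $h:\R\ra\R$ with $h(t)=1/t$ on a neighbourhood of $\pi_\fG(g)$; then $\Phi_h(g)$ is a candidate inverse, and one checks $g\cdot\Phi_h(g)=1$ by writing both sides through the presentation and using that the element $1-g\cdot\Phi_h(g)$ maps to $0$ under $\pi_\fG$ — hence, by a Hadamard-type argument, lies in an ideal one can show is trivial, or more cleanly, one reduces to the fact that in $\fD$ (or $\fE$) the analogous statement holds because $\fD,\fE$ are local. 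I expect the main obstacle to be precisely this: controlling invertibility in the pushout, because the pushout $\fD\ot_\iy\fE$ is genuinely larger than the $\R$-algebra tensor product and elements are not simple finite sums $\sum d_i\ot e_i$. The clean way around it is to use the adjunction/localization characterization of local $C^\iy$-rings: a $C^\iy$-ring is local iff it is in the image of (or rather, iff) the natural map $\fG\ra\fG_\mk$ localizing at the kernel of $\pi_\fG$ is an isomorphism, combined with the fact (implicit in \cite{MoRe2}, \S I.3) that localization of $C^\iy$-rings commutes with finite colimits; then $\fG\cong\fG_{\Ker\pi_\fG}$ follows formally from $\fD\cong\fD_{\m_\fD}$, $\fE\cong\fE_{\m_\fE}$, $\fC\cong\fC_{\m_\fC}$. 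I would present the argument this second way, citing Moerdijk--Reyes \cite[\S I.3]{MoRe2} for the behaviour of $C^\iy$-localization under colimits, and spell out the pushout and coequalizer cases explicitly.
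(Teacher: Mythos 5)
Your skeleton is sound: since $\CRingslo\subset\CRings$ is full, it suffices to show that the $\CRings$-colimit of a finite diagram of local $C^\iy$-rings is local; the point $\pi_\fG:\fG\ra\R$ is induced exactly as you say (uniqueness of the points makes them a cocone); and the coequalizer case is genuinely easy, since the coequalizer of $\al,\be:\fC\rra\fD$ is $\fD/I$ with $I$ the ideal generated by the $\al(c)-\be(c)$, which lies in $\Ker\pi_\fD$, and a quotient of a local $C^\iy$-ring by an ideal inside its maximal ideal is clearly local. (For comparison: the paper offers no argument at all for this proposition, citing Moerdijk--Reyes \cite[\S I.3]{MoRe2}.) The gap is in the only hard case, invertibility in a binary coproduct/pushout, and neither of your two routes closes it. In route (i), writing $g=\Phi_f(\phi(d_1),\ldots,\phi(d_m),\psi(e_1),\ldots,\psi(e_n))$ with $f(a,b)=\pi_\fG(g)\ne 0$ and choosing $h$ with $fh=1$ near $(a,b)$, you must show $\Phi_{fh-1}(\cdots)$ vanishes or can be cancelled; the natural move is to multiply by $\Phi_\chi(\cdots)$ for a bump function $\chi$ with $\chi(a,b)=1$ and $\chi\cdot(fh-1)\equiv 0$, but cancelling $\Phi_\chi(\cdots)$ requires it to be invertible in $\fG$, which is precisely the invertibility criterion for $\fG$ you are proving -- and ``reducing to locality of $\fD$ or $\fE$'' does not happen by itself, because $g$ lies in the image of neither leg. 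Route (ii) begs the question: the version of ``localization commutes with colimits'' that is formally true (localization adds generators and relations, and colimits commute with colimits) says only that the colimit of the $\fC_j(S_j^{-1})$ is $\fG$ localized at the \emph{images} of the $S_j$; since your $\fC_j$ are already local this tells you nothing new, and the statement you actually need -- that inverting images of point-nonzero elements of the legs already inverts every $g\in\fG$ with $\pi_\fG(g)\ne 0$, i.e.\ $\fG\cong\fG_{\Ker\pi_\fG}$ -- is literally the proposition, not an independent citable fact (indeed \cite[\S I.3]{MoRe2} is the paper's reference for the proposition itself).

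The missing idea is to make the invertibility witness split as a product of one element from each leg, so that locality of $\fD$ and $\fE$ can be used separately. With $g$ as above and $a=(\pi_\fD(d_1),\ldots,\pi_\fD(d_m))$, $b=(\pi_\fE(e_1),\ldots,\pi_\fE(e_n))$, choose open $U\ni a$, $V\ni b$ with $f\ne 0$ on $U\t V$, bump functions $\chi_1,\chi_2$ with $\supp\chi_1\subset U$, $\supp\chi_2\subset V$, $\chi_1(a)=\chi_2(b)=1$, and let $h$ be the smooth function on $\R^{m+n}$ with $f(x,y)h(x,y)=\chi_1(x)\chi_2(y)$ (namely $\chi_1\chi_2/f$ extended by zero, smooth since $\supp\chi_1\t\supp\chi_2$ is a closed subset of $\{f\ne 0\}$). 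Then $g\cdot\Phi_h(\cdots)=\Phi_{\chi_1(x)\chi_2(y)}(\cdots)=\phi\bigl(\Phi_{\chi_1}(d_1,\ldots,d_m)\bigr)\cdot\psi\bigl(\Phi_{\chi_2}(e_1,\ldots,e_n)\bigr)$, and each factor is the image of an element with nonzero point-value, hence invertible by locality of $\fD$, respectively $\fE$; so $g$ is invertible in $\fG$. (That every element of the pushout has the stated form follows because the pushout is a quotient of $\fD\ot_\iy\fE$, which is generated by the images of $\fD$ and $\fE$ under the $C^\iy$-operations.) With this lemma inserted, your reduction to the initial object, pushouts and coequalizers does yield the proposition.
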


Localizations of $C^\iy$-rings were studied in \cite{Dubu1,Dubu2,MQR,MoRe1}, \cite[p.~23]{MoRe2} and~\cite{Joyc9}.

\begin{dfn}
\label{cc2def5}
A {\it localization\/} $\fC(s^{-1}:s\in S)=\fD$ of a $C^\iy$-ring $\fC$ at a subset $S\subset \fC$ is a $C^\iy$-ring $\fD$ and a morphism $\pi:\fC\ra\fD$ such that $\pi(s)$ is invertible in $\fD$ for all $s\in S$, which has the universal property that for any morphism of $C^\iy$-rings $\phi:\fC\ra\fE$ such that $\phi(s)$ is invertible in $\fE$ for all $s\in S$, there is a unique morphism $\psi:\fD\ra\fE$ with $\phi=\psi\ci\pi$. We call $\pi:\fC\ra\fD$ the {\it localization morphism\/} for~$\fD$. 
 
By adding an extra generator $s^{-1}$ and extra relation $s\cdot s^{-1}-1=0$ for each $s\in S$ to $\fC$, it can be shown that localizations $\fC(s^{-1}:s\in S)$ always exist and are unique up to unique isomorphism. When $S=\{c\}$ then $\fC(c^{-1})\cong (\fC\ot_\iy C^\iy(\R))/I$, where $I$ is the ideal generated by $\io_1(c)\cdot\io_2(x)-1$, $x$ is the generator of $C^\iy(\R)$, and $\io_1,\io_2$ are the coproduct morphisms $\io_1:\fC\ra\fC\ot_\iy C^\iy(\R)$ and~$\io_2:C^\iy(\R)\ra\fC\ot_\iy C^\iy(\R)$.
\end{dfn}

An example of this is that if $f\in C^\iy(\R^n)$ is a smooth function, and $U=f^{-1}(\R\setminus\{0\})\subseteq\R^n$, then using partitions of unity one can show that $C^\iy(U)\cong C^\iy(\R^n)(f^{-1})$, as in~\cite[Prop.~I.1.6]{MoRe2}.

\begin{dfn}
\label{cc2def6}
A $C^\iy$-ring morphism $x:\fC\ra\R$, where $\R$ is regarded as a $C^\iy$-ring as in Example \ref{cc2ex2}, is called an $\R$-{\it point\/}. Note that a map $x:\fC\ra\R$ is a morphism of $C^\iy$-rings whenever it is a morphism of the underlying $\R$-algebras, as in \cite[Prop.~I.3.6]{MoRe2}. We define $\fC_x$ as the localization $\fC_x=\fC(s^{-1}:s\in\fC$, $x(s)\ne 0)$, and denote the projection morphism by $\pi_x:\fC\ra\fC_x$. Importantly, \cite[Lem.~1.1]{MoRe1} shows $\fC_x$ is a local $C^\iy$-ring. 
\end{dfn}

We can describe $\fC_x$ explicitly as in the second author \cite[Prop.~2.14]{Joyc9}.

\begin{prop}
\label{cc2prop5}
Let\/ $x:\fC\ra\R$ be an $\R$-point of a $C^\iy$-ring $\fC,$ and consider the projection morphism $\pi_x:\fC\ra\fC_x$. Then $\fC_x\cong\fC/\Ker{\pi_x}$. This kernel is $\Ker{\pi_x}=I$ where
\e
I=\bigl\{\text{$c\in\fC:$ there exists\/ $d\in\fC$ with\/ $x(d)\ne 0$ in $\R$ and\/ $c\cdot d=0$ in $\fC$}\bigr\}.
\label{cc2eq3}
\e
\end{prop}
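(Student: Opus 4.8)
The plan is to identify the localization $\fC_x$ with the quotient $\fC/I$ directly, by showing $\fC/I$ together with its projection $q:\fC\ra\fC/I$ satisfies the universal property of Definition \ref{cc2def5}. First I would check that $I$ in \eq{cc2eq3} is an ideal of $\fC$ regarded as an $\R$-algebra: it contains $0$ (take $d=1$); if $c_1,c_2\in I$ with witnesses $d_1,d_2$ then $(c_1+c_2)d_1d_2=0$ with $x(d_1d_2)=x(d_1)x(d_2)\ne 0$; and if $c\in I$ with witness $d$ and $e\in\fC$ then $(ec)d=e(cd)=0$. Since a quotient of a $C^\iy$-ring by an $\R$-algebra ideal is again a $C^\iy$-ring (via Hadamard's lemma, see \cite{Joyc9}), $\fC/I$ is a $C^\iy$-ring and $q$ a morphism.

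Next I would verify the two halves of the universal property. \textbf{(a)} If $d\in\fC$ with $a:=x(d)\ne 0$, then $q(d)$ is invertible in $\fC/I$: choose a smooth $f:\R\ra\R$ equal to $t\mapsto 1/t$ on an open neighbourhood $U$ of $a$ with $0\notin U$, and a bump function $e:\R\ra\R$ with $e(a)=1$ and $\supp e\subseteq U$. Then $g(t):=tf(t)$ equals $1$ on $U$, so $e\cdot(g-1)\equiv 0$ on $\R$, whence by the composition relations $\Phi_e(d)\cdot\bigl(\Phi_g(d)-1\bigr)=0$ in $\fC$ while $x(\Phi_e(d))=e(a)=1\ne 0$; thus $\Phi_g(d)-1\in I$, and in $\fC/I$ we get $q(\Phi_f(d))\cdot q(d)=q(\Phi_g(d))=1$. \textbf{(b)} If $\phi:\fC\ra\fE$ is a morphism with $\phi(s)$ invertible whenever $x(s)\ne 0$, then for $c\in I$ with witness $d$ we have $\phi(c)\phi(d)=0$ with $\phi(d)$ invertible, so $\phi(c)=0$; hence $I\subseteq\Ker\phi$ and $\phi$ factors through $q$, uniquely since $q$ is surjective.

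By (a) and (b), $\fC/I$ with $q$ satisfies the defining universal property of $\fC_x=\fC(s^{-1}:s\in\fC,\ x(s)\ne 0)$, so there is a canonical isomorphism $\fC_x\cong\fC/I$ identifying $\pi_x$ with $q$. As $q$ is surjective so is $\pi_x$, giving $\fC_x\cong\fC/\Ker\pi_x$, and $\Ker\pi_x=\Ker q=I$, as claimed. (This is also consistent with $\fC_x$ being local with maximal ideal $\Ker\pi_x$, as recorded in Definition~\ref{cc2def6}.)

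The one substantive step is (a): it is precisely here that the full $C^\iy$-ring structure, not merely the underlying $\R$-algebra, is used, via the bump-function trick showing that the formal inverses adjoined in forming $\fC_x$ are redundant modulo $I$ --- the affine-local analogue of the partition-of-unity arguments used elsewhere for $C^\iy$-rings (compare the identification $C^\iy(U)\cong C^\iy(\R^n)(f^{-1})$). The remaining points --- that $I$ is an ideal, that quotients of $C^\iy$-rings by ideals are $C^\iy$-rings, and the factorization in (b) --- are routine.
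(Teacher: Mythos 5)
Your proof is correct. The paper does not prove this proposition itself but quotes it from \cite[Prop.~2.14]{Joyc9}, and your argument — checking that $I$ is an ideal, that the bump-function trick makes $q(d)$ invertible in $\fC/I$ whenever $x(d)\ne 0$, and that any morphism inverting such elements kills $I$, so $\fC/I$ satisfies the universal property of the localization $\fC_x$ — is essentially the standard argument used there.
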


While this localization morphism $\pi_x:\fC\ra\fC_x$ is surjective, general localizations of $C^\iy$-rings need not have surjective localization morphisms. 

\begin{ex}
\label{cc2ex3}
Let $C^\iy_p(\R^n)$ be the set of germs of smooth functions $c:\R^n\ra\R$ at $p\in\R^n$ for $n\ge 0$ and $p\in\R^n$. We give $C^\iy_p(\R^n)$ a $C^\iy$-ring structure by using \eq{cc2eq1} on germs of functions. There are several equivalent definitions:
\begin{itemize}
\setlength{\itemsep}{0pt}
\setlength{\parsep}{0pt}
\item[(i)] $C^\iy_p(\R^n)$ is the set of $\sim$-equivalence classes $[U,c]$ of pairs $(U,c)$, where $p\in U\subseteq\R^n$ is open and $c:U\ra\R$ is smooth, and $(U,c)\sim(U',c')$ if there exists open $p\in U''\subseteq U\cap U'$ with~$c\vert_{U''}=c'\vert_{U''}$.
\item[(ii)] $C^\iy_p(\R^n)\cong C^\iy(\R^n)/I_p$, where $I_p\subset C^\iy(\R^n)$ is the ideal of functions vanishing near $p$. 
\item[(iii)] $C^\iy_p(\R^n)\cong C^\iy(\R^n)(f^{-1}:f\in C^\iy(\R^n)$, $f(p)\ne 0)$.
\end{itemize}
Then $C^\iy_p(\R^n)$ is local, with maximal ideal~${\mathfrak m}_{p}=\bigl\{[U,c]\in C^\iy_p(\R^n):c(x)=0\bigr\}$.
\end{ex}

We prove some facts about exponentials and logs in (local) $C^\iy$-rings.

\begin{prop}
\label{cc2prop6}
{\bf(a)} Let\/ $\fC$ be a $C^\iy$-ring. Then the\/ $C^\iy$-operation $\Phi_{\exp}:\fC\ra\fC$ induced by $\exp:\R\ra\R$ is injective.
\smallskip

\noindent{\bf(b)} Let\/ $\fC$ be a local\/ $C^\iy$-ring, with morphism $\pi:\fC\ra\R$. If\/ $a\in\fC$ with\/ $\pi(a)>0$ then there exists $b\in\fC$ with\/ $\Phi_{\exp}(b)=a$. This $b$ is unique by\/~{\bf(a)}.
\end{prop}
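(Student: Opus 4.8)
The plan is to prove (a) first and then use it to establish both the existence assertion and the uniqueness assertion in (b). For (a), the key point is that $\exp:\R\ra\R$ has a one-sided inverse on its image: the map $\log:(0,\iy)\ra\R$ is smooth, and one can extend it to a globally smooth function $L:\R\ra\R$ with $L(\exp t)=t$ for all $t\in\R$ (for instance, pick any smooth $L:\R\ra\R$ agreeing with $\log$ on $(0,\iy)$, which exists since $\log$ and all its derivatives extend continuously from the right at any point of $(0,\iy)$ — concretely one just needs $L|_{[\ep,\iy)}=\log$ for small $\ep>0$ and then taper off smoothly on $(-\iy,\ep)$, the values on $(-\iy,0]$ being irrelevant). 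Since $\exp$ maps into $(0,\iy)$, we have the identity $L\circ\exp=\id_\R$ of smooth maps $\R\ra\R$. Applying the composition axiom for $C^\iy$-rings (Definition \ref{cc2def2}) to this identity gives $\Phi_L\circ\Phi_{\exp}=\Phi_{\id_\R}=\id_\fC$ on $\fC$, so $\Phi_{\exp}$ has a left inverse and is therefore injective. This proves (a), and the uniqueness clause in (b) is then immediate.

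For the existence statement in (b), let $a\in\fC$ with $\pi(a)>0$, where $\pi:\fC\ra\R$ is the structure morphism of the local $C^\iy$-ring $\fC$. The strategy is to replace $\log$ (which is only defined and smooth on $(0,\iy)$) by a smooth function on $\R$ that agrees with $\log$ on a neighbourhood of $\pi(a)$, exploiting that $a$ is "close to $\pi(a)$" in the sense measured by the maximal ideal $\m_\fC=\Ker\pi$. Concretely: choose $\ep>0$ with $\pi(a)>2\ep$, and choose a smooth function $f:\R\ra\R$ with $f(x)=\log x$ for $x>\ep$. Set $b:=\Phi_f(a)\in\fC$. I claim $\Phi_{\exp}(b)=a$. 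To see this, consider the smooth function $g(x):=\exp(f(x))-x$ on $\R$. For $x>\ep$ we have $g(x)=\exp(\log x)-x=0$, so $g$ vanishes on the open set $(\ep,\iy)$, which contains $\pi(a)$. By the composition axiom, $\Phi_{\exp}(b)-a=\Phi_{\exp}(\Phi_f(a))-a=\Phi_g(a)$, and applying $\pi$ gives $\pi(\Phi_g(a))=g(\pi(a))=0$, so at least $\Phi_g(a)\in\m_\fC$. But more is true: because $g$ vanishes identically on a neighbourhood of $\pi(a)$, the element $\Phi_g(a)$ actually vanishes in $\fC_{x}$ for the $\R$-point $x=\pi$; equivalently, $\Phi_g(a)\in\Ker\pi_\pi$. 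Since $\fC$ is already local with $\fC\cong\fC_\pi$ (the localization of a local $C^\iy$-ring at its unique $\R$-point is itself), in fact $\Phi_g(a)=0$ in $\fC$, giving $\Phi_{\exp}(b)=a$ as desired.

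Let me make the crucial step more self-contained, since "$g$ vanishes near $\pi(a)$ forces $\Phi_g(a)=0$" is the heart of the argument. Pick a bump function $\chi\in C^\iy(\R)$ with $\chi\equiv 1$ near $\pi(a)$ and $\supp\chi\subset(\ep,\iy)$, so that $\chi\cdot g=g$ identically on $\R$ yet $1-\chi$ vanishes near $\pi(a)$. Then $\Phi_g(a)=\Phi_{\chi}(a)\cdot\Phi_g(a)$ and $\pi(\Phi_{1-\chi}(a))=(1-\chi)(\pi(a))=1\ne 0$, so $d:=\Phi_{1-\chi}(a)=1-\Phi_\chi(a)$ is invertible in $\fC$ (as $\fC$ is local and $\pi(d)\ne 0$). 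Hence $\Phi_\chi(a)=1-d$ with $d$ invertible... but this does not immediately kill $\Phi_g(a)$. The correct device is instead: choose $\chi$ with $\chi\equiv 1$ on $\supp(g)\cap(\text{a neighbourhood})$ — more precisely, observe $g=\bar\chi\cdot g$ where $\bar\chi\in C^\iy(\R)$ satisfies $\bar\chi\equiv 0$ near $\pi(a)$ (possible since $g\equiv 0$ near $\pi(a)$, so we may take $\bar\chi$ supported away from $\pi(a)$ and equal to $1$ on $\supp g$). Then $\Phi_g(a)=\Phi_{\bar\chi}(a)\cdot\Phi_g(a)$, and $\pi(\Phi_{\bar\chi}(a))=\bar\chi(\pi(a))=0$, so $e:=\Phi_{\bar\chi}(a)\in\m_\fC$, giving $\Phi_g(a)=e\cdot\Phi_g(a)$, i.e. $(1-e)\Phi_g(a)=0$; since $1-e$ is invertible (local ring, $\pi(1-e)=1$), we conclude $\Phi_g(a)=0$. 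The main obstacle is exactly this locality manipulation — getting from pointwise vanishing of $g$ at $\pi(a)$ to the honest equation $\Phi_g(a)=0$ in $\fC$ — and it is resolved by the multiplicative trick above, which only uses that $\fC$ is local with residue field $\R$.
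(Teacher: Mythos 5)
Your part (a) contains a genuine gap. You need a smooth $L:\R\ra\R$ with $L\ci\exp=\id_\R$, but no such function exists: $L(\exp t)=t$ for all $t\in\R$ forces $L(x)=\log x$ for every $x\in(0,\iy)$, and since $\log x\ra-\iy$ as $x\ra 0^+$, such an $L$ cannot even be continuous at $0$, let alone smooth. Your hedged construction with $L\vert_{[\ep,\iy)}=\log$ only gives $L(\exp t)=t$ for $t\ge\log\ep$, so $L\ci\exp\ne\id_\R$ and the composition axiom yields $\Phi_L\ci\Phi_{\exp}=\Phi_{L\ci\exp}$, which is not $\id_\fC$ (already in $\fC=C^\iy(\R)$, taking $a$ to be the coordinate function $x$, one gets $\Phi_{L\ci\exp}(x)=L\ci\exp\ne x$). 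So injectivity of $\Phi_{\exp}$ is not established; the impossibility of smoothly extending $\log$ past $0$ is exactly why this statement is not a one-line retraction argument. The paper's proof of (a) circumvents this by using \emph{two} elements: if $b=\Phi_{\exp}(a)$ then $b$ determines its inverse $b^{-1}=\Phi_{\exp}(-a)$ uniquely, and $t\mapsto\exp(t)-\exp(-t)$ is a diffeomorphism of $\R$ onto all of $\R$ with globally smooth inverse $e$; setting $f(x,y)=e(x-y)$ one has $f(\exp t,\exp(-t))=t$ identically, whence $\Phi_f(b,b^{-1})=a$ and $a$ is recovered from $b$. You would need this (or some comparable device producing an exact identity of smooth functions on all of $\R$ or $\R^2$) to repair (a); note the gap also undercuts the uniqueness clause you invoke in (b).

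Your existence argument in (b) is correct and is essentially the paper's: the paper chooses $g$ with $g(x)=\log x$ for $x\ge\ha\pi(a)$ and a cutoff $h$ vanishing on $(-\iy,\ha\pi(a)]$ with $h(\pi(a))>0$, and shows $\Phi_h(a)\cdot(\Phi_{\exp}(\Phi_g(a))-a)=0$ with $\Phi_h(a)$ invertible because $\fC$ is local; your identity $(1-e)\Phi_g(a)=0$ with $e=\Phi_{\bar\chi}(a)\in\m_\fC$ and $1-e$ invertible is the same mechanism with $1-\bar\chi$ playing the role of $h$. Be aware, though, that your first attempted shortcut — ``$\Phi_g(a)$ vanishes in the localization $\fC_\pi\cong\fC$'' — is circular as written: verifying $\Phi_g(a)\in\Ker\pi_\pi$ via the description of Proposition \ref{cc2prop5} requires precisely the bump-function manipulation you supply afterwards, so only the self-contained version of the argument should be kept.
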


\begin{proof} For (a), let $a\in\fC$ with $b=\Phi_{\exp}(a)\in\fC$. Then $\Phi_{\exp}(-a)$ is the inverse $b^{-1}$ of $b$. The map $t\mapsto \exp(t)-\exp(-t)$ is a diffeomorphism $\R\ra\R$. Let $e:\R\ra\R$ be its inverse. Define smooth $f:\R^2\ra\R$ by $f(x,y)=e(x-y)$. Then $f(\exp t,\exp(-t))=t$. Hence in the $C^\iy$-ring $\fC$ we have
\begin{equation*}
\Phi_f(b,b^{-1})=\Phi_f(\Phi_{\exp}(a),\Phi_{\exp\ci -}(a))=\Phi_{f\ci(\exp,\exp\ci -)}(a)=\Phi_{\id}(a)=a.
\end{equation*}
But $b$ determines $b^{-1}$ uniquely, so $\Phi_f(b,b^{-1})=a$ implies that $b=\Phi_{\exp}(a)$ determines $a$ uniquely, and $\Phi_{\exp}:\fC\ra\fC$ is injective.

For (b), choose smooth $g,h:\R\ra\R$ with $g(x)=\log x$ for $x\ge\ha\pi(a)>0$, $h(\pi(a))>0$, and $h(x)=0$ for $x\le \ha\pi(a)$. Set $b=\Phi_g(a)$ and $c=\Phi_h(a)$. Then
\begin{equation*}
c\cdot(\Phi_{\exp}(b)-a)=\Phi_h(a)\cdot(\Phi_{\exp\ci g}(a)-a)=\Phi_{h(x)\cdot(\exp\ci g(x)-x)}(a)=0,
\end{equation*}
since $h(x)\cdot(x-\exp\ci g(x))=0$ as $h(x)=0$ for $x\le \ha\pi(a)$ and $\exp\ci g(x)-x=0$ for $x\ge\ha\pi(a)$. Also $\pi(c)=h(\pi(a))>0$, so $c$ is invertible. Thus~$\Phi_{\exp}(b)=a$.
\end{proof}

\subsection{\texorpdfstring{Modules and cotangent modules of $C^\iy$-rings}{Modules and cotangent modules of C∞-rings}}
\label{cc22}

We discuss modules and cotangent modules for $C^\iy$-rings, following~\cite[\S 5]{Joyc9}.

\begin{dfn}
\label{cc2def7}
A {\it module\/ $M$\/} over a $C^\iy$-ring $\fC$ is a module over $\fC$ as a commutative $\R$-algebra, and morphisms of $\fC$-modules are the usual morphisms of $\R$-algebra modules. Denote $\mu_M:\fC\t M\ra M$ the multiplication map, and write $\mu_M(c,m)=c\cdot m$ for $c\in\fC$ and $m\in M$. The category $\fCmod$ of  $\fC$-modules is an abelian category.

If a $\fC$-module $M$ fits into an exact sequence $\fC\ot\R^n\ra M\ra 0$ in $\fCmod$ then it is {\it finitely generated\/}; if it further fits into an exact sequence $\fC\ot\R^m\ra \fC\ot\R^n\ra M\ra 0$ it is {\it finitely presented\/}. This second condition is not automatic from the first as $C^\iy$-rings are generally not Noetherian.

For a morphism $\phi:\fC\ra\fD$ of $C^\iy$-rings and $M$ in $\fCmod$ we have $\phi_*(M)=M\ot_\fC\fD$ in $\fDmod$, giving a functor $\phi_*:\fCmod\ra\fDmod$. For $N$ in $\fDmod$ there is a $\fC$-module $\phi^*(N)=N$ with $\fC$-action $\mu_{\phi^*(N)}(c,n)=\mu_N(\phi(c),n)$. This gives a functor~$\phi^*:\fDmod\ra\fCmod$.
\end{dfn}

\begin{ex}
\label{cc2ex4}
Let $\Ga^\iy(E)$ be the collection of smooth sections $e$ of a vector bundle $E\ra X$ of a manifold $X$, so $\Ga^\iy(E)$ is a vector space and a module over $C^\iy(X)$. If $\la:E\ra F$ is a morphism of vector bundles over $X$, then there is a morphism of $C^\iy(X)$-modules $\la_*:\Ga^\iy(E)\ra \Ga^\iy(F)$ where $\la_*:e\mapsto \la\ci e$.

For each smooth map of manifolds $f:X\ra Y$ there is a morphism of $C^\iy$-rings $f^*:C^\iy(Y)\ra C^\iy(X)$. Each vector bundle $E\ra Y$ gives a vector bundle $f^*(E)\ra X$. Using $(f^*)_*:C^\iy(Y)$-mod$\,\ra C^\iy(X)$-mod from Definition \ref{cc2def7}, then $(f^*)_*\bigl(\Ga^\iy(E) \bigr)=\Ga^\iy(E)\ot_{C^\iy(Y)}C^\iy(X)$ is isomorphic to~$\Ga^\iy\bigl(f^*(E)\bigr)$ in~$C^\iy(X)$-mod.
\end{ex}

The definition of $\fC$-module only used the commutative $\R$-algebra structure of $\fC$, however the {\it cotangent module\/} $\Om_\fC$ of $\fC$ uses the full $C^\iy$-ring structure. 

\begin{dfn}
\label{cc2def8}
Take a $C^\iy$-ring $\fC$ and $M\in \fCmod$, then a $C^\iy$-{\it derivation} is a map $\d:\fC\ra M$ that satisfies the following: for any smooth  $f:\R^n\ra\R$ and elements $c_1,\ldots,c_n\in\fC$, then
\e
\d\Phi_f(c_1,\ldots,c_n)-\ts\sum\limits_{i=1}^n\Phi_{\frac{\pd f}{\pd x_i}}(c_1,\ldots,c_n)\cdot \d c_i=0.
\label{cc2eq4}
\e
This implies that $\d$ is $\R$-linear and is a derivation of $\fC$ as a commutative $\R$-algebra, that is, $\d(c_1c_2)=c_1\cdot\d c_2+c_2\cdot\d c_1$ for all~$c_1,c_2\in\fC$.

The pair $(M,\d)$ is called a {\it cotangent module\/} for $\fC$ if it is universal in the sense that for any $M'\in \fCmod$ with $C^\iy$-derivation $\d':\fC\ra M'$, there exists a unique morphism of $\fC$-modules $\la:M\ra M'$ with $\d'=\la\ci\d$. Then a cotangent module is unique up to unique isomorphism. We can explicitly construct a cotangent module for $\fC$ by considering the free $\fC$-module over the symbols $\d c$ for all $c\in\fC$, and quotienting by all relations \eq{cc2eq4} for smooth $f:\R^n\ra\R$ and elements $c_1,\ldots,c_n\in\fC$. We call this construction `the' cotangent module of $\fC$, and write it as~$\d_\fC:\fC\ra\Om_\fC$.

If we have a morphism of $C^\iy$-rings $\fC\ra\fD$ then $\Om_\fD=\phi^*(\Om_\fD)$ can be considered as a $\fD$-module with $C^\iy$-derivation $\d_\fD\ci\phi:\fC\ra\Om_\fD$. The universal property of $\Om_\fC$ gives a unique morphism $\Om_\phi:\Om_\fC\ra\Om_\fD$ of $\fC$-modules such that $\d_\fD\ci\phi=\Om_\phi\ci\d_\fC$. From this we have a morphism of $\fD$-modules $(\Om_\phi)_*:\Om_\fC\ot_\fC\fD\ra\Om_\fD$. If we have two morphisms of $C^\iy$-rings $\phi:\fC\ra\fD$, $\psi:\fD\ra\fE$ then uniqueness implies that~$\Om_{\psi\ci\phi}=
\Om_\psi\ci\Om_\phi:\Om_\fC\ra\Om_\fE$.
\end{dfn}

\begin{ex}
\label{cc2ex5}
As in Example \ref{cc2ex4}, if $X$ is a manifold, 
its cotangent bundle $T^*X$ is a vector bundle over $X$, and its global sections $\Ga^\iy(T^*X)$ form a $C^\iy(X)$-module, with $C^\iy$-derivation $\d:C^\iy(X)\ra \Ga^\iy(T^*X)$, $\d:c\mapsto\d c$ the usual exterior derivative and
equation \eq{cc2eq4} following from the chain rule. 

One can show that $(\Ga^\iy(T^*X),\d)$ has the universal property in Definition \ref{cc2def8}, and so form a cotangent module for $C^\iy(X)$. This is stated in \cite[Ex.~5.4]{Joyc9}, and proved in greater generality in Theorem \ref{cc7thm1}(a) below.

If we have a smooth map of manifolds $f:X\ra Y$, then $f^*(T^*Y),\ab T^*X$ are vector bundles over $X$, and the derivative $\d f:f^*(T^*Y)\ra T^*X$ is a vector bundle morphism. This induces a morphism of $C^\iy(X)$-modules $(\d f)_*:\Ga^\iy(f^*(T^*Y))\ra \Ga^\iy(T^*X)$, which is identified with $(\Om_{f^*})_*$ from Definition~\ref{cc2def8}
using that $\Ga^\iy(f^*(T^*Y))\cong \Ga^\iy(T^*Y)\ot_{C^\iy(Y)}C^\iy(X)$.
\end{ex}

This example shows that Definition \ref{cc2def8} abstracts the notion of sections of a cotangent bundle of a manifold to a concept that is well defined for any $C^\iy$-ring.

\begin{ex}
\label{cc2ex6}
{\bf(a)} Let $A$ be a set and $\fF^A$ the free $C^\iy$-ring from Definition \ref{cc2def3}, with generators $x_a\in\fF^A$ for $a\in A$. Then there is a natural isomorphism
\begin{equation*}
\Om_{\fF^A}\cong\ban{\d x_a:a\in A}_\R\ot_\R\fF^A.
\end{equation*}

\noindent{\bf(b)} Suppose $\fC$ is defined by a coequalizer diagram \eq{cc2eq2} in $\CRings$. Then writing $(x_a)_{a\in A}$, $(\ti x_b)_{b\in B}$ for the generators of $\fF^A,\fF^B$, we have an exact sequence
\begin{equation*}
\xymatrix@C=30pt{\ban{\d \ti x_b:b\in B}_\R\ot_\R\fC \ar[r]^{\ga} & \ban{\d x_a:a\in A}_\R\ot_\R\fC \ar[r]^(0.7)\de & \Om_\fC \ar[r] & 0 }
\end{equation*}
in $\fCmod$, where if $\al,\be$ in \eq{cc2eq2} map $\al:\ti x_b\!\mapsto\! f_b\bigl((x_a)_{a\in A}\bigr)$, $\be:\ti x_b\!\mapsto\! g_b\bigl((x_a)_{a\in A}\bigr)$, for $f_b,g_b$ depending only on finitely many $x_a$, then $\ga,\de$ are given by
\begin{equation*}
\ga(\d \ti x_b)=\sum_{a\in A}\phi\Bigl(\frac{\pd f_b}{\pd x_a}\bigl((x_a)_{a\in A}\bigr)-\frac{\pd g_b}{\pd x_a}\bigl((x_a)_{a\in A}\bigr)\Bigr)\d x_a,\;\>\de(\d x_a)=\d\ci\phi(x_a).
\end{equation*}
Hence as in \cite[Prop.~5.6]{Joyc9}, if $\fC$ is finitely generated (or finitely presented) in the sense of Proposition \ref{cc2prop3}, then $\Om_\fC$ is finitely generated (or finitely presented).
\end{ex}

Cotangent modules behave well under localization,~\cite[Prop.~5.7]{Joyc9}:

\begin{prop}
\label{cc2prop7}
Let\/ $\fC$ be a $C^\iy$-ring, $S\subseteq\fC,$ and\/ $\fD=\fC(s^{-1}:s\in S)$ be the localization of\/ $\fC$ at\/ $S$ with projection $\pi:\fC\ra\fD,$ as in Definition\/ {\rm\ref{cc2def5}}. Then 
$(\Om_\pi)_*:\Om_\fC\ot_\fC\fD\ra\Om_\fD$ is
an isomorphism of\/ $\fD$-modules.	
\end{prop}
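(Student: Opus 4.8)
The plan is to construct explicitly a $C^\iy$-derivation $\d':\fD\ra\Om_\fC\ot_\fC\fD$ which, together with the natural map $(\Om_\pi)_*:\Om_\fC\ot_\fC\fD\ra\Om_\fD$ coming from $\pi$, exhibits $\Om_\fC\ot_\fC\fD$ as satisfying the universal property of the cotangent module $\Om_\fD$; uniqueness of cotangent modules then forces $(\Om_\pi)_*$ to be an isomorphism. The point where the $C^\iy$-structure (rather than just the $\R$-algebra structure) genuinely enters is in showing that $\d'$ is a $C^\iy$-derivation on all of $\fD$, and this is where the localization hypothesis is used.

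First I would reduce to the case $S=\{c\}$ a single element, since a general localization is the composite (directed colimit) of localizations at one element, and both $\Om_{(-)}$ and $\ot_\fC\fD$ commute with the relevant colimits. Writing $\fD=\fC(c^{-1})$ with projection $\pi:\fC\ra\fD$, set $c^{-1}\in\fD$ for the inverse of $\pi(c)$. I would then define $\d':\fD\ra\Om_\fC\ot_\fC\fD$ on the ``obvious'' elements by $\d'(\pi(a))=\d_\fC a\ot 1$ for $a\in\fC$ and $\d'(c^{-1})=-c^{-2}\cdot(\d_\fC c\ot 1)$, and check this extends consistently to a well-defined map on $\fD$. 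The cleanest route: every element of $\fD$ can be written as $\pi(a)\cdot c^{-k}$ for some $a\in\fC$, $k\ge 0$ (using the explicit description $\fD\cong(\fC\ot_\iy C^\iy(\R))/I$ from Definition \ref{cc2def5}, or more conveniently Proposition \ref{cc2prop5}-style generators-and-relations), and I would define $\d'$ by the Leibniz-style formula forced by these assignments, then verify the defining identity \eqref{cc2eq4} of a $C^\iy$-derivation.

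The main obstacle — and the step deserving real care — is verifying that $\d'$ satisfies \eqref{cc2eq4} for an \emph{arbitrary} smooth $f:\R^n\ra\R$ applied to elements of $\fD$, not merely the polynomial/Leibniz identities. The standard trick here is a Hadamard-type argument: given $d_1,\dots,d_n\in\fD$, write $d_i=\pi(a_i)\cdot c^{-k}$ with a common power $k$, and express $\Phi_f(d_1,\dots,d_n)$ in terms of a smooth function of the $\pi(a_i)$ and $c^{-1}$; then use that $\Om_\fC$ already satisfies \eqref{cc2eq4} for $\fC$, together with the chain rule for the partial derivatives of the composite smooth function, to match both sides. Concretely, one exploits that for $x(c)\ne 0$ one has, on a neighbourhood where $c$ is invertible, a factorization letting $f(y_1/t^k,\dots,y_n/t^k)$ be realized as $g(y_1,\dots,y_n,t)$ for smooth $g$ on a suitable open set — this is exactly where invertibility of $\pi(c)$ in $\fD$ (and hence positivity away from a zero set) is needed to stay inside the domain of smoothness. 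Once $\d'$ is shown to be a $C^\iy$-derivation, the universal property of $\Om_\fC$ yields a factorization, and one checks the two induced maps $\Om_\fC\ot_\fC\fD\rightleftarrows\Om_\fD$ are mutually inverse by checking on generators $\d_\fC a\ot 1$ and $\d_\fD(\pi(a))$ respectively.

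Alternatively, and perhaps more economically, I would invoke Example \ref{cc2ex6}: present $\fC$ by a coequalizer \eqref{cc2eq2}, note that $\fD=\fC(c^{-1})$ is then presented by the same diagram with one extra generator $x_\infty$ and one extra relation $\tilde c\cdot x_\infty-1=0$ (where $\tilde c$ is a lift of $c$), apply the exact-sequence description of $\Om_\fD$ from Example \ref{cc2ex6}(b), and compare term-by-term with $\Om_\fC\ot_\fC\fD$ computed the same way; the extra generator contributes $\d x_\infty$ and the extra relation kills it in terms of $\d\tilde c$ and a unit, so the two presentations of modules coincide. I expect either approach to work; the universal-property approach is conceptually cleaner but the Hadamard verification of \eqref{cc2eq4} is the crux either way, since it is the only place the full $C^\iy$-ring structure and the nature of localization interact nontrivially.
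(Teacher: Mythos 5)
Your primary route breaks down at the claimed normal form. In the $C^\iy$ setting, $\fD=\fC(c^{-1})$ is generated by $\pi(\fC)\cup\{\pi(c)^{-1}\}$ only as a $C^\iy$-ring, i.e.\ under all the operations $\Phi_f$, not as a commutative algebra: a general element is $\Phi_f\bigl(\pi(a_1),\ldots,\pi(a_n),\pi(c)^{-1}\bigr)$ for arbitrary smooth $f$, and it is false that every element can be written as $\pi(a)\cdot c^{-k}$. Concretely, $C^\iy(\R)(x^{-1})\cong C^\iy(\R\sm\{0\})$ (see the remark after Definition \ref{cc2def5}), and $e^{1/x}$ lies in this localization but is not of the form $a(x)x^{-k}$ with $a\in C^\iy(\R)$, since $x^ke^{1/x}\ra\iy$ as $x\ra 0^+$. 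So the map $\d'$ you propose to "define by the Leibniz-style formula forced by these assignments" is not defined on all of $\fD$; and even on the elements it does reach, nothing in your argument addresses well-definedness across different representations (note that Proposition \ref{cc2prop5} describes $\Ker\pi$ only for localization at an $\R$-point, not for a general localization). That well-definedness, not the Hadamard/chain-rule verification of \eq{cc2eq4}, is the genuine content of the universal-property route, and it is missing.

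Your alternative route in the final paragraph is the correct one, and it is essentially the argument the paper relies on: Proposition \ref{cc2prop7} is quoted from \cite[Prop.~5.7]{Joyc9}, and the paper proves its corners analogue, Proposition \ref{cc7prop3}, in exactly this way using the generators-and-relations exact sequence of Proposition \ref{cc7prop2} (the analogue of Example \ref{cc2ex6}(b)). Writing $\fD$ as $\fC$ with one extra generator $x_s$ and one extra relation $s\cdot x_s=1$ for each $s\in S$ (no reduction to $\md{S}=1$ is needed), the extra relations map onto the extra generators' summand by the invertible element $\pi(s)$, up to a term landing in $\Om_\fC\ot_\fC\fD$; hence the new generators are eliminated and exactness forces $(\Om_\pi)_*:\Om_\fC\ot_\fC\fD\ra\Om_\fD$ to be an isomorphism. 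One small correction to your closing remark: on this route no separate Hadamard-type verification is required, since that work is already packaged into the exact sequence of Example \ref{cc2ex6}(b); the "crux" you identify only arises if you insist on constructing the derivation by hand.
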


Here is \cite[Th.~5.8]{Joyc9}:

\begin{thm}
\label{cc2thm1}
Suppose we are given a pushout diagram of\/ $C^\iy$-rings:
\begin{equation*}
\xymatrix@C=60pt@R=14pt{ *+[r]{\fC} \ar[r]_\be \ar[d]^\al & *+[l]{\fE} \ar[d]_\de \\
*+[r]{\fD} \ar[r]^\ga & *+[l]{\fF,\!} }
\end{equation*}
so that\/ $\fF=\fD\amalg_\fC\fE$. Then the following sequence of\/
$\fF$-modules is exact:
\e
\xymatrix@C=17pt{ \Om_\fC\ot_{\fC,\ga\ci\al}\fF
\ar[rrr]^(0.5){(\Om_\al)_*\op -(\Om_\be)_*} &&&
{\raisebox{5pt}{$\begin{subarray}{l}\ts \Om_\fD\ot_{\fD,\ga}\fF\,\, \op\\
\ts\,\,\,\Om_\fE\ot_{\fE,\de}\fF \end{subarray}$}}
\ar[rrr]^(0.65){(\Om_\ga)_*\op(\Om_\de)_*} &&& \Om_\fF \ar[r] & 0. }
\label{cc2eq5}
\e
Here\/ $(\Om_\al)_*:\Om_\fC\ot_{\fC,\ga\ci\al}\fF\ra
\Om_\fD\ot_{\fD,\ga}\fF$ is induced by\/ $\Om_\al:\Om_\fC\ra
\Om_\fD,$ and so on. Note the sign of\/ $-(\Om_\be)_*$
in\/~\eq{cc2eq5}.
\end{thm}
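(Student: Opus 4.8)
Write $u$ for the first map $(\Om_\al)_*\op-(\Om_\be)_*$ and $v$ for the second map $(\Om_\ga)_*\op(\Om_\de)_*$ in \eqref{cc2eq5}. Exactness at $\Om_\fF$ and vanishing of $v\ci u$ are formal; the real content is exactness at the middle term, which I plan to get by identifying the cokernel of $u$ with $\Om_\fF$ via its universal property. Throughout I use the basic fact that, since $\fF=\fD\amalg_\fC\fE$ is a pushout, $\fF$ is generated as a $C^\iy$-ring by $\ga(\fD)\cup\de(\fE)$ (e.g.\ $\fF$ is a quotient of $\fD\ot_\iy\fE$, which by Proposition \ref{cc2prop3} receives a surjection from a free $C^\iy$-ring on generators mapping into $\ga(\fD)\cup\de(\fE)$), together with the observation that any subset of $\fF$ closed under the $C^\iy$-operations $\Phi_h$ is determined by which generators it contains, thanks to the derivation identity \eqref{cc2eq4}.

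\textbf{The easy directions.} For surjectivity of $v$: $\Om_\fF$ is generated as an $\fF$-module by $\{\d_\fF f':f'\in\fF\}$, and the set of $f'\in\fF$ with $\d_\fF f'\in\Im v$ is closed under $C^\iy$-operations by \eqref{cc2eq4} and contains $\ga(\fD)$, $\de(\fE)$ because $\d_\fF\ga(d)=(\Om_\ga)_*(\d_\fD d\ot 1)$ and $\d_\fF\de(e)=(\Om_\de)_*(\d_\fE e\ot 1)$; hence $\Im v=\Om_\fF$. For $v\ci u=0$: functoriality $\Om_{\psi\ci\phi}=\Om_\psi\ci\Om_\phi$ of Definition \ref{cc2def8}, combined with compatibility of the base-change maps $(-)_*$ with composition, gives $(\Om_\ga)_*\ci(\Om_\al)_*=(\Om_{\ga\ci\al})_*$ and $(\Om_\de)_*\ci(\Om_\be)_*=(\Om_{\de\ci\be})_*$ as maps $\Om_\fC\ot_{\fC,\ga\al}\fF\to\Om_\fF$, and these agree since $\ga\ci\al=\de\ci\be$, so $v\ci u=(\Om_{\ga\al})_*-(\Om_{\de\be})_*=0$.

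\textbf{The crux: middle exactness.} Put $N=\Coker u$, an $\fF$-module, with quotient map $q$; by the previous step $v$ factors as $v=\pi\ci q$ for an $\fF$-linear $\pi\colon N\to\Om_\fF$, and it suffices to produce a two-sided inverse for $\pi$. The plan is to build a $C^\iy$-derivation $\d'\colon\fF\to N$ and then invoke the universal property of $\Om_\fF$ to get an $\fF$-linear $\la\colon\Om_\fF\to N$ with $\d'=\la\ci\d_\fF$. To build $\d'$ I use the square-zero extension $C^\iy$-ring $\fF\op N$ (with $C^\iy$-operations $\Phi_h\bigl((f_1',n_1),\dots,(f_k',n_k)\bigr)=\bigl(\Phi_h(f_1',\dots),\ts\sum_i\Phi_{\pd h/\pd x_i}(f_1',\dots)\cdot n_i\bigr)$, a $C^\iy$-ring by the chain rule), for which $C^\iy$-derivations $\fF\to N$ correspond to $C^\iy$-ring sections of $\fF\op N\to\fF$. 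Define $\chi_\fD\colon\fD\to\fF\op N$, $d\mapsto(\ga(d),q(\d_\fD d\ot 1,0))$ and $\chi_\fE\colon\fE\to\fF\op N$, $e\mapsto(\de(e),q(0,\d_\fE e\ot 1))$; these are $C^\iy$-morphisms because $\d_\fD$, $\d_\fE$ are $C^\iy$-derivations composed with $\fF$-linear maps into $N$. Using $\d_\fD\ci\al=\Om_\al\ci\d_\fC$, $\d_\fE\ci\be=\Om_\be\ci\d_\fC$ and the definition of $u$, one checks $\chi_\fD\ci\al=\chi_\fE\ci\be$, so by the universal property of $\fF=\fD\amalg_\fC\fE$ there is a unique $C^\iy$-morphism $\Theta\colon\fF\to\fF\op N$ with $\Theta\ci\ga=\chi_\fD$, $\Theta\ci\de=\chi_\fE$; composing with $\fF\op N\to\fF$ gives a $C^\iy$-endomorphism of $\fF$ fixing the generators $\ga(\fD)\cup\de(\fE)$, hence $\id_\fF$, so $\Theta=(\id_\fF,\d')$ for a $C^\iy$-derivation $\d'$. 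Finally, on the generators $\ga(d)$, $\de(e)$ one computes $\pi\ci\d'=\d_\fF$, and since two $C^\iy$-derivations into $\Om_\fF$ agreeing on a generating set coincide (again by \eqref{cc2eq4}), $\pi\ci\la\ci\d_\fF=\d_\fF$, whence $\pi\ci\la=\id_{\Om_\fF}$ by uniqueness for $\Om_\fF$; and $N$ is generated as an $\fF$-module by the $q(\d_\fD d\ot1,0)$ and $q(0,\d_\fE e\ot1)$, each fixed by $\la\ci\pi$, so $\la\ci\pi=\id_N$. Thus $\pi$ is an isomorphism and \eqref{cc2eq5} is exact.

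\textbf{Main obstacle.} The one non-formal point is the construction of $\d'\colon\fF\to N$: it needs the square-zero $C^\iy$-ring $\fF\op N$ (equivalently, that $C^\iy$-derivations into an $\fF$-module are corepresented compatibly with the $C^\iy$-structure), and it needs the compatibility $\chi_\fD\ci\al=\chi_\fE\ci\be$, which is exactly where the relations defining $N=\Coker u$, and hence the sign $-(\Om_\be)_*$ in \eqref{cc2eq5}, get used. An alternative avoiding square-zero extensions is to take coequalizer presentations of $\fC,\fD,\fE$ as in Proposition \ref{cc2prop3}, splice them into a presentation of the pushout $\fF$, and compare the four cotangent modules via the presentations in Example \ref{cc2ex6}; this is more computational but uses only material already in the excerpt.
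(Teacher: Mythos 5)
Your proof is correct, and it takes a genuinely different route from the one the authors rely on. The paper does not prove Theorem \ref{cc2thm1} itself (it is quoted from \cite[Th.~5.8]{Joyc9}), but the proof it gives of the corners analogue, Theorem \ref{cc7thm2}, is precisely the ``alternative'' you sketch at the end: choose coequalizer presentations of $\fC,\fD,\fE$ as in Proposition \ref{cc2prop3}, splice them into a presentation of the pushout $\fF$ (the corners version is Proposition \ref{cc4prop10}), express all four cotangent modules by the presentation formula of Example \ref{cc2ex6}(b) (Proposition \ref{cc7prop1}(b) in the corners case), and finish by a diagram chase. Your argument instead identifies $\Coker u$ with $\Om_\fF$ via universal properties: the verification that $\chi_\fD,\chi_\fE$ are $C^\iy$-morphisms into the square-zero extension $\fF\op N$ is the chain-rule identity \eq{cc2eq4}, the compatibility $\chi_\fD\ci\al=\chi_\fE\ci\be$ is exactly the statement that $u(\d_\fC c\ot 1)$ dies in $N$ (which is where the sign $-(\Om_\be)_*$ enters), and the two composites $\pi\ci\la$, $\la\ci\pi$ are identities because derivations, respectively module maps, agreeing on the generating set $\ga(\fD)\cup\de(\fE)$, respectively on the elements $q(\d_\fD d\ot 1,0)$, $q(0,\d_\fE e\ot 1)$, must coincide. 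What each approach buys: yours is presentation-free and conceptually cleaner, exhibiting $\Om_\fF$ directly as the pushout of cotangent modules, at the cost of introducing the square-zero extension $\fF\op N$, which the paper never sets up (though checking it is a $C^\iy$-ring is routine); the presentation approach is more computational but reuses machinery the paper needs anyway, and it transfers almost verbatim to b-cotangent modules of interior $C^\iy$-rings with corners and to the categories $\CRingscZ,\CRingsctf,\CRingscsa$ (where the proof of Theorem \ref{cc7thm2} additionally invokes Proposition \ref{cc7prop4} to handle the reflection functors), which is presumably why the authors organize it that way. Incidentally, your square-zero construction would also adapt to the corners setting, but one would have to check that $\fF\op N$ can be made an (interior) $C^\iy$-ring with corners and that the relevant universal properties persist, so the saving is less clear there.
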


\subsection{Sheaves}
\label{cc23}

We explain presheaves and sheaves with values in a (nice) category $\cA$, following Godement \cite{Gode} and Mac Lane and Moerdijk \cite{MaMo}. Throughout we suppose $\cA$ is {\it complete}, that is, all small limits exist in $\cA$, and {\it cocomplete}, that is, all small colimits exist in $\cA$. The categories of sets, abelian groups, rings, $C^\iy$-rings, monoids etc.\ all satisfy this, as will (interior) $C^\iy$-rings with corners. Sometimes it is helpful to suppose that objects of $\cA$ are sets with extra structure, so there is a faithful functor $\cA\ra\Sets$ taking each object to its underlying set.

\begin{dfn}
\label{cc2def9}
A {\it presheaf\/} $\cE$ on a topological space $X$ valued in $\cA$ gives an object $\cE(U)\in \cA$ for every open set $U\subseteq X$, and a morphism 
$\rho_{UV}:\cE(U)\ra\cE(V)$ in $\cA$ called the {\it restriction map\/} for
every inclusion $V\subseteq U\subseteq X$ of open sets, satisfying
the conditions that
\begin{itemize}
\setlength{\itemsep}{0pt}
\setlength{\parsep}{0pt}
\item[(i)] $\rho_{UU}=\id_{\cE(U)}:\cE(U)\ra\cE(U)$ for all open
$U\subseteq X$; and
\item[(ii)] $\rho_{UW}=\rho_{VW}\ci\rho_{UV}:\cE(U)\ra\cE(W)$ for all
open~$W\subseteq V\subseteq U\subseteq X$.
\end{itemize}

A presheaf $\cE$ is called a {\it sheaf\/} if for all open covers $\{U_i\}_{i\in I}$ of $U$, then 
\begin{equation*}
\cE(U)\ra \prod_{i\in I}\cE(U_i)\rightrightarrows \prod_{i,j\in I}\cE(U_i\cap U_j)
\end{equation*}
forms an equalizer diagram in $\cA$. This implies:
\begin{itemize}
\setlength{\itemsep}{0pt}
\setlength{\parsep}{0pt}
\item[(iii)] $\cE(\es)=0$ where $0$ is the final object in $\cA$.
\end{itemize}
If there is a faithful functor $F:\cA\ra \Sets$ taking an object of $\cA$ to its underlying set that preserves limits, then a presheaf $\cE$ valued in $\cA$ on $X$ is a sheaf if it equivalently satisfies the following:
\begin{itemize}
\setlength{\itemsep}{0pt}
\setlength{\parsep}{0pt}
\item[(iv)](Uniqueness.) If $U\subseteq X$ is open, $\{V_i:i\in I\}$ is an open
cover of $U$, and $s,t\in F(\cE(U))$ with $F(\rho_{UV_i})(s)=F(\rho_{UV_i})(t)$ in
$F(\cE(V_i))$ for all $i\in I$, then $s=t$ in $F(\cE(U))$; and
\item[(v)](Gluing.) If $U\subseteq X$ is open, $\{V_i:i\in I\}$ is an open cover of
$U$, and we are given elements $s_i\in F(\cE(V_i))$ for all $i\in I$
such that $F(\rho_{V_i(V_i\cap V_j)})(s_i)=F(\rho_{V_j(V_i\cap
V_j)})(s_j)$ in $F(\cE(V_i\cap V_j))$ for all $i,j\in I$, then there
exists $s\in F(\cE(U))$ with $F(\rho_{UV_i})(s)=s_i$ for all $i\in I$.
\end{itemize}
If $s\in F(\cE(U))$ and $V\subseteq U$ is open we write~$s\vert_V=F(\rho_{UV})(s)$. 

If $\cE,\cF$ are presheaves or sheaves valued in $\cA$ on $X$, then a {\it morphism\/} $\phi:\cE\ra\cF$ is a morphism $\phi(U):\cE(U)\ra\cF(U)$ in $\cA$ for all open $U\subseteq X$ such that the following diagram commutes for all open
$V\subseteq U\subseteq X$
\begin{equation*}
\xymatrix@C=90pt@R=14pt{
*+[r]{\cE(U)} \ar[r]_{\phi(U)} \ar[d]^{\rho_{UV}} & *+[l]{\cF(U)}
\ar[d]_{\rho_{UV}'} \\ *+[r]{\cE(V)} \ar[r]^{\phi(V)} & *+[l]{\cF(V),\!} }
\end{equation*}
where $\rho_{UV}$ is the restriction map for $\cE$, and $\rho_{UV}'$
the restriction map for $\cF$. We write $\PreSh(X,\cA)$ and $\Sh(X,\cA)$ for the categories of presheaves and sheaves on a topological space $X$ valued in~$\cA$.
\end{dfn}

\begin{dfn}
\label{cc2def10}
For $\cE$ a presheaf valued in $\cA$ on a topological space $X$, then we can define the {\it stalk\/} $\cE_x\in\cA$ at a point $x\in X$ to be the direct limit of the $\cE(U)$ in $\cA$ for all $U\subseteq X$ with $x\in U$, using the restriction maps $\rho_{UV}$. 

If there is a faithful functor $F:\cA\ra \Sets$ taking an object of $\cA$ to its underlying set that preserves colimits, then explicitly it can be written as a set of equivalence classes of sections $s\in F(\cE(U))$ for any open $U$ which contains $x$, where the equivalence relation is such that $s_1\sim s_2$ for $s_1\in F(\cE(U))$ and $s_2\in F(\cE(V))$ with $x\in U, V$ if there is an open set $W\subset V\cap U$ with $x\in W$ and $s_1\vert_W=s_2\vert_W$ in~$F(\cE(W))$. 

The stalk is an object of $\cA$, and the restriction morphisms give rise to morphisms $\rho_{U,x}:\cE(U)\ra\cE_x$. A morphism of presheaves $\phi:\cE\ra\cF$ induces morphisms $\phi_x:\cE_x\ra\cF_x$ for all $x\in X$. If $\cE,\cF$ are sheaves then $\phi$ is an isomorphism if and only if $\phi_x$ is an isomorphism for all~$x\in X$.
\end{dfn}

\begin{dfn}
\label{cc2def11}
There is a {\it sheafification} functor $\PreSh(X,\cA)\ra\Sh(X,\cA)$, which is left adjoint to the inclusion $\Sh(X,\cA)\hookra\PreSh(X,\cA)$. We write $\hat\cE$ for the sheafification of a presheaf $\cE$. The adjoint property gives a morphism $\pi:\cE\ra\hat\cE$ and a universal property: whenever we have a morphism $\phi:\cE\ra\cF$ of presheaves on $X$ and $\cF$ is a sheaf, then there is a unique morphism $\hat\phi:\hat\cE\ra\cF$ with $\phi=\hat\phi\ci\pi$. Thus sheafification is unique up to canonical isomorphism.

Sheafifications always exists for our categories $\cA$, and there are isomorphisms of stalks $\cE_x\cong \hat\cE_x$ for all $x\in X$. If there is a faithful functor $F:\cA\ra \Sets$ taking an object of $\cA$ to its underlying set that preserves colimits and limits, it can be constructed (as in \cite[Prop.~II.1.2]{Hart}) by defining $\hat\cE(U)$ as the subset of all functions $t:U\ra \amalg_{x\in U}\cE_x$ such that for all $x\in U$, then $t(x)=F(\rho_{V,x})(s)\in \cE_x$ for some $s\in F(\cE(V))$ for open $V\subset U$, $x\in V$. 
\end{dfn}

If $f:X\ra Y$ is a continuous map of topological spaces, we can consider {\it pushforwards\/} and {\it pullbacks\/} of
sheaves by $f$. We will use both of these definitions when defining $C^\iy$-schemes (with corners).

\begin{dfn}
\label{cc2def12}
If $f:X\ra Y$ is a continuous map of topological
spaces, and $\cE$ is a sheaf valued in $\cA$ on $X$, then the {\it
direct image\/} (or {\it pushforward\/}) sheaf
$f_*(\cE)$ on $Y$ is defined by $\bigl(f_*(\cE)\bigr)(U)=\cE\bigl(f^{-1}(U)\bigr)$ for all
open $U\subseteq V$. Here, we have restriction maps $\rho'_{UV}=
\rho_{f^{-1}(U)f^{-1}(V)}:\bigl(f_*(\cE)\bigr)(U)\ra
\bigl(f_*(\cE)\bigr)(V)$ for all open $V\subseteq U\subseteq Y$ so that $f_*(\cE)$ is a sheaf valued in $\cA$ on~$Y$.

For a morphism $\phi:\cE\ra\cF$ in $\Sh(X,\cA)$ we can define $f_*(\phi):f_*(\cE)\ra f_*(\cF)$ by
$\bigl(f_*(\phi)\bigr)(U)=\phi\bigl(f^{-1}(U)\bigr)$ for all open
$U\subseteq Y$. This gives a morphism $f_*(\phi)$ in $\Sh(Y,\cA)$, and a functor
$f_*:\Sh(X,\cA)\ra\Sh(Y,\cA)$. 
For two continuous maps of topological spaces, $f:X\ra Y$, $g:Y\ra Z$, then~$(g\ci f)_*=g_*\ci
f_*$.
\end{dfn}

\begin{dfn}
\label{cc2def13}
For a continuous map $f:X\ra Y$ and a sheaf $\cE$ valued in $\cA$ on $Y$, we define the {\it pullback\/} ({\it inverse image\/}) of $\cE$ under $f$ to be the sheafification of the presheaf $U\mapsto \lim_{A\supseteq f(U)}\cE(A)$ for open $U\subseteq X$, where the direct limit is taken over all open $A\subseteq Y$ containing $f(U)$, using the restriction maps $\rho_{AB}$ in $\cE$. We write this sheaf as $f^{-1}(\cE)$. If $\phi:\cE\ra\cF$ is a morphism in $\Sh(Y,\cA)$, there is a {\it pullback morphism\/}~$f^{-1}(\phi):f^{-1}(\cE)\ra f^{-1}(\cF)$.
\end{dfn}

\begin{rem}
\label{cc2rem1}
For a continuous map $f:X\ra Y$ of topological spaces we have functors $f_*:\Sh(X,\cA)\ra\Sh(Y,\cA)$, and
$f^{-1}:\Sh(Y,\cA)\ra\Sh(X,\cA)$. Hartshorne \cite[Ex.~II.1.18]{Hart} gives a natural bijection
\e
\Hom_X\bigl(f^{-1}(\cE),\cF\bigr)\cong\Hom_Y\bigl(\cE,f_*(\cF)\bigr)
\label{cc2eq6}
\e
for all $\cE\in\Sh(Y,\cA)$ and $\cF\in\Sh(X,\cA)$, so that $f_*$ is
right adjoint to $f^{-1}$. This will be important in several proofs we consider below. 
\end{rem}

\subsection{\texorpdfstring{$C^\iy$-schemes}{C∞-schemes}}
\label{cc24}

We now recall the definition of (local) $C^\iy$-ringed spaces and $C^\iy$-schemes, following the second author~\cite{Joyc9}.

\begin{dfn}
\label{cc2def14}
A {\it $C^\iy$-ringed space\/} $\uX=(X,\O_X)$ is a
topological space $X$ with a sheaf $\O_X$ of $C^\iy$-rings on $X$.

A {\it morphism\/} $\uf=(f,f^\sh):(X,\O_X)\ra (Y,\O_Y)$ of $C^\iy$ ringed spaces consists of a continuous map $f:X\ra Y$ and a morphism $f^\sh:f^{-1}(\O_Y)\ra\O_X$ of sheaves of
$C^\iy$-rings on $X$, for $f^{-1}(\O_Y)$ the inverse image sheaf as in Definition \ref{cc2def13}. From  \eq{cc2eq6}, we know $f_*$ is right
adjoint to $f^{-1}$, so there is a natural bijection
\e
\Hom_X\bigl(f^{-1}(\O_Y),\O_X\bigr)\cong\Hom_Y\bigl(\O_Y,f_*(\O_X)\bigr).
\label{cc2eq7}
\e
We will write $f_\sh:\O_Y\ra f_*(\O_X)$ for the morphism of sheaves of $C^\iy$-rings on $Y$ corresponding to the morphism $f^\sh$ under \eq{cc2eq7}, so that
\e
f^\sh:f^{-1}(\O_Y)\longra\O_X\quad \leftrightsquigarrow\quad
f_\sh:\O_Y\longra f_*(\O_X).
\label{cc2eq8}
\e

Given two $C^\iy$-ringed space morphisms $\uf:\uX\ra\uY$ and $\ug:\uY\ra\uZ$ we can compose them to form
\begin{equation*}
\ug\ci\uf=\bigl(g\ci f,(g\ci f)^\sh\bigr)=\bigl(g\ci f,f^\sh\ci
f^{-1}(g^\sh)\bigr).
\end{equation*}
If we consider $f_\sh:\O_Y\ra f_*(\O_X)$, then the composition is
\begin{equation*}
(g\ci f)_\sh=g_*(f_\sh)\ci g_\sh:\O_Z\longra (g\ci f)_*(\O_X)=g_*\ci
f_*(\O_X).
\end{equation*}

We call $\uX=(X,\O_X)$ a {\it local\/ $C^\iy$-ringed space\/} if it is $C^\iy$-ringed space for which the stalks $\O_{X,x}$
of $\O_X$ at $x$ are local $C^\iy$-rings for all $x\in X$. As in Definition \ref{cc2def4}, since morphisms of local $C^\iy$-rings are automatically local morphisms, morphisms of local $C^\iy$-ringed spaces $(X,\O_X),(Y,\O_Y)$ are just morphisms of $C^\iy$-ringed spaces without any additional locality condition.

We will follow the notation of \cite{Joyc9} and write $\CRS$ for the category of $C^\iy$-ringed spaces, and $\LCRS$ for the full subcategory of local $C^\iy$-ringed spaces. We write underlined upper case letters such as $\uX,\uY,\uZ,\ldots$ to represent $C^\iy$-ringed spaces $(X,\O_X),(Y,\O_Y),(Z,\O_Z),\ldots,$ and underlined lower case letters $\uf,\ug,\ldots$ to represent morphisms of $C^\iy$-ringed spaces $(f,f^\sh),\ab(g,g^\sh),\ab\ldots.$ When we write `$x\in\uX$'
we mean that $\uX=(X,\O_X)$ and $x\in X$. If we write `$\uU$ {\it is open in\/} $\uX$' we will mean that $\uU=(U,\O_U)$ and $\uX=(X,\O_X)$ with $U\subseteq X$ an open set and $\O_U=\O_X\vert_U$.
\end{dfn}

\begin{ex}
\label{cc2ex7}
For a manifold $X$, we have a $C^\iy$-ringed space $\uX=(X,\O_X)$ with topological space $X$ and its sheaf of smooth functions $\O_X(U)=C^\iy(U)$ for each open subset $U\subseteq X$, with $C^\iy(U)$ defined in Example \ref{cc2ex1}. If $V\subseteq U\subseteq X$ then the restriction morphisms $\rho_{UV}:C^\iy(U)\ra C^\iy(V)$ are the usual restriction of a function to an open subset~$\rho_{UV}:c\mapsto c\vert_V$.

As the stalks $\O_{X,x}$ at $x\in X$ are local $C^\iy$-rings, isomorphic to the ring of germs as in Example \ref{cc2ex3}, then $\uX$ is a local $C^\iy$-ringed space.

For a smooth map of manifolds $f:X\ra Y$ with corresponding local $C^\iy$-ringed spaces $(X,\O_X),(Y,\O_Y)$ as above we define $f_\sh(U):\O_Y(U)= C^\iy(U)\ra\O_X(f^{-1}(U))=C^\iy(f^{-1}(U))$ for each open $U\subseteq Y$ by $f_\sh(U):c\mapsto c\ci f$ for all $c\in C^\iy(U)$. This gives a morphism $f_\sh:\O_Y\ra f_*(\O_X)$ of sheaves of $C^\iy$-rings on $Y$. Then $\uf=(f,f^\sh): (X,\O_X)\ra(Y,\O_Y)$ is a morphism of (local) $C^\iy$-ringed spaces with $f^\sh:f^{-1}(\O_Y)\ra\O_X$ corresponding to $f_\sh$ under~\eq{cc2eq8}.
\end{ex}

\begin{dfn}
\label{cc2def15}
Let $\fC$ be a $C^\iy$-ring, and write $X_\fC$ for the set of all $\R$-points $x$ of $\fC$, as in Definition \ref{cc2def5}. Write ${\cal T}_\fC$ for the topology on $X_\fC$ that has basis of open sets $U_c=\bigl\{x\in X_\fC:x(c)\ne 0\bigr\}$ for all~$c\in\fC$. For each $c\in\fC$ define a map $c_*:X_\fC\ra\R$ such that $c_*:x\mapsto x(c)$. 

For a morphism $\phi:\fC\ra\fD$ of $C^\iy$-rings, we can define $f_\phi:X_\fD\ra X_\fC$ by $f_\phi(x)=x\ci\phi$, which is continuous.
\end{dfn}

From \cite[Lem.~4.15]{Joyc9}, this definition implies that $\cT_\fC$ is the weakest topology on $X_\fC$ such that the $c_*:X_\fC\ra\R$ are continuous for all $c\in\fC$. Also $(X_\fC,\cT_\fC)$ is a regular, Hausdorff topological space.

\begin{dfn}
\label{cc2def16}
For a $C^\iy$-ring $\fC$, we will define the {\it spectrum\/} of $\fC$, written $\Spec\fC$. Here, $\Spec\fC$ is a $C^\iy$-ringed space $(X,\O_X)$, with $X$ the topological space $X_\fC$ from Definition \ref{cc2def15}. For open $U\subseteq X$, then $\O_{X}(U)$ is the set of functions $s:U\ra \coprod_{x\in U}\fC_x$, where we write $s_x$ for the image of $x$ under $s$, such that around each point $x\in U$ there is an open subset $x\in W\subseteq U$ and element $c\in\fC$ with $s_y=\pi_y(c)\in\fC_y$ for all $y\in W$. This is a $C^\iy$-ring with the operations $\Phi_f$ on $\O_{X}(U)$ defined using the operations $\Phi_f$ on $\fC_x$ for~$x\in U$. 

For $s\in \O_{X}(U)$, the restriction map of functions $s\mapsto s\vert_V$ for open $V\subseteq U\subseteq X$ is a morphism of $C^\iy$-rings, giving the restriction map $\rho_{UV}:\O_{X}(U)\ra\O_{X}(V)$. The stalk $\O_{X,x}$ at $x\in X$ is isomorphic to $\fC_x$, which is a local $C^\iy$-ring. Hence $(X,\O_{X})$ is a local $C^\iy$-ringed space.

For a morphism $\phi:\fC\ra\fD$ of $C^\iy$-rings, we have an induced morphism of local $C^\iy$-rings, $\phi_x:\fC_{f_\phi(x)}\ra\fD_x$. If we let $(X,\O_X)=\Spec\fC$, $(Y,\O_Y)=\Spec\fD$, then for open $U\subseteq X$ define $(f_\phi)_\sh(U):\O_{X}(U)\ra\O_{Y}(f_\phi^{-1}(U))$ by $(f_\phi)_\sh(U)s:x\mapsto \phi_x(s_{f_\phi(x)})$. This gives a morphism $(f_\phi)_\sh:\O_{X}\ra (f_\phi)_*(\O_{Y})$ of sheaves of $C^\iy$-rings on $X$. 
Then $\uf_\phi=(f_\phi,f_\phi^\sh): (Y,\O_{Y})\ra (X,\O_{X})$ is a morphism of local $C^\iy$-ringed spaces, where $f_\phi^\sh$ corresponds to $(f_\phi)_\sh$ under \eq{cc2eq8}. Then  $\Spec$ is a functor $\CRings^{\bf op}\ra\LCRS$, called the {\it spectrum functor}, where $\Spec\phi:\Spec\fD\ra\Spec\fC$ is defined by $\Spec\phi=\uf_\phi$. 
\end{dfn}

\begin{ex}
\label{cc2ex8}
For a manifold $X$ then $\Spec C^\iy(X)$ is isomorphic to the local $C^\iy$-ringed space $\uX$ constructed in Example \ref{cc2ex7}.
\end{ex}

Here is~\cite[Lem~4.28]{Joyc9}:

\begin{lem}
\label{cc2lem1}
Let\/ $\fC$ be a $C^\iy$-ring, set\/ $\uX=\Spec\fC=(X,\O_X),$ and let\/ $c\in\fC$. If we write $U_c=\{x\in X:x(c)\ne 0\}$ as in Definition\/ {\rm\ref{cc2def15},} then $U_c\subseteq X$ is open and\/~$\uU_c=(U_c,\O_X\vert_{U_c})\cong\Spec\fC(c^{-1})$. 
\end{lem}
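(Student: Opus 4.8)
The plan is to exhibit the required isomorphism as a restriction of $\uf_\pi=\Spec\pi:\Spec\fC(c^{-1})\ra\Spec\fC=\uX$, where $\pi:\fC\ra\fC(c^{-1})$ is the localization morphism of Definition~\ref{cc2def5} and $\Spec\pi$ is as in Definition~\ref{cc2def16}. That $U_c$ is open in $X$ is immediate, being a basic open set of $\cT_\fC$ (Definition~\ref{cc2def15}). By Definition~\ref{cc2def10}, a morphism of sheaves on a fixed space is an isomorphism iff it is an isomorphism on all stalks; so it suffices to prove (i) $f_\pi:X_{\fC(c^{-1})}\ra X$ is a homeomorphism onto the open subset $U_c$, and (ii) for each $\R$-point $y$ of $\fC(c^{-1})$, the stalk map of $f_\pi^\sh$ at $y$ is an isomorphism. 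Granting these, identifying $X_{\fC(c^{-1})}$ with $U_c$ via (i) turns $f_\pi^\sh$ into an isomorphism of sheaves on $U_c$, so $\uf_\pi$ is an isomorphism onto~$\uU_c$.

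For (i): by the universal property of $\fC(c^{-1})$ (Definition~\ref{cc2def5}) applied with target $\R$, the $\R$-points $y$ of $\fC(c^{-1})$ correspond bijectively, via $y\mapsto y\ci\pi$, to the $\R$-points $x$ of $\fC$ with $x(c)\ne0$, i.e.\ to the points of $U_c$; this correspondence is $f_\pi$, which is continuous by Definition~\ref{cc2def15}. For openness, note that $\fC(c^{-1})$ is generated as a $C^\iy$-ring by $\pi(\fC)\cup\{\pi(c)^{-1}\}$ (clear from the description $\fC(c^{-1})\cong(\fC\ot_\iy C^\iy(\R))/I$ in Definition~\ref{cc2def5}), so any $d\in\fC(c^{-1})$ equals $\Phi_g(\pi(a_1),\ldots,\pi(a_k),\pi(c)^{-1})$ for some smooth $g:\R^{k+1}\ra\R$ and $a_i\in\fC$. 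Then $f_\pi$ carries the basic open set $U_d$ onto $\{x\in U_c:g(x(a_1),\ldots,x(a_k),x(c)^{-1})\ne0\}$, which is open in $U_c$ (hence in $X$) because $x\mapsto(x(a_1),\ldots,x(a_k),x(c)^{-1})$ is a continuous map $U_c\ra\R^{k+1}$, using that each $(a_i)_*$ and $c_*$ is continuous and $c_*$ is nonvanishing on~$U_c$.

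For (ii): writing $x=y\ci\pi\in U_c$, the stalk of $f_\pi^\sh$ at $y$ is the induced morphism of local $C^\iy$-rings $\pi_y:\fC_x=\O_{X,x}\ra\fC(c^{-1})_y$ of Definition~\ref{cc2def16}, where $\fC(c^{-1})_y$ is the stalk of $\O_{\Spec\fC(c^{-1})}$ at $y$, and we must show $\pi_y$ is an isomorphism. This is ``transitivity of localization''. Since $x(c)\ne0$, the localization morphism $\fC\ra\fC_x$ inverts $c$, so factors as the composite of $\pi$ with some $\psi:\fC(c^{-1})\ra\fC_x$. As $\fC_x$ is a local $C^\iy$-ring (Definition~\ref{cc2def6}), comparing projections to $\R$ and using uniqueness in the universal property of $\fC(c^{-1})$ shows the projection $\fC_x\ra\R$ composed with $\psi$ equals $y$; hence $\psi$ inverts every $t\in\fC(c^{-1})$ with $y(t)\ne0$, so it factors through the localization morphism $\fC(c^{-1})\ra\fC(c^{-1})_y$ via some $\chi:\fC(c^{-1})_y\ra\fC_x$. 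A short diagram chase, using that localization morphisms are epimorphisms in $\CRings$, then shows $\chi$ and $\pi_y$ are mutually inverse, giving~(ii).

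The main obstacle is step~(ii), the identification of the localizations $\fC_x$ and $\fC(c^{-1})_y$: it is routine given the universal property of localization (Definition~\ref{cc2def5}) and Definition~\ref{cc2def6}, but it is where the real content sits. Step~(i)'s openness needs the explicit generators of $\fC(c^{-1})$, and everything else is bookkeeping with the $\Spec$ construction of Definition~\ref{cc2def16}.
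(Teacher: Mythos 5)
Your proof is correct: openness of $U_c$ is immediate from the definition of $\cT_\fC$, the bijection of $\R$-points via the universal property of $\fC(c^{-1})$, the openness of $f_\pi$ on basic open sets $U_d$ using that $\fC(c^{-1})$ is generated by $\pi(\fC)\cup\{\pi(c)^{-1}\}$, and the stalk identification $\fC_x\cong\fC(c^{-1})_y$ by transitivity of localization (with the fact that localization morphisms are epimorphisms) all check out, and stalkwise isomorphism plus homeomorphism onto $U_c$ does give $\uU_c\cong\Spec\fC(c^{-1})$. Note that the paper gives no proof of this lemma, quoting it as \cite[Lem.~4.28]{Joyc9}; the argument there is essentially the one you give, so there is nothing substantive to contrast.
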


\begin{dfn}
\label{cc2def17}
The {\it global sections functor\/} $\Ga:\LCRS\ra\CRings^{\bf op}$ takes $(X,\O_X)$ to $\O_X(X)$ and morphisms $(f,f^\sh):(X,\O_X)\ra(Y,\O_Y)$ to $\Ga:(f,f^\sh)\mapsto f_\sh(Y)$, for $f_\sh$ relating $f^\sh$ as in~\eq{cc2eq8}. 

For each $C^\iy$-ring $\fC$ we can define a morphism $\Xi_\fC:\fC\ra \Ga\ci\Spec\fC$. Here, for $c\in \fC$ then $\Xi_\fC(c):X_\fC\ra\coprod_{x\in X_\fC}\fC_x$ is defined by $\Xi_\fC(c)_x=\pi_x(c)\in\fC_x$, so $\Xi_\fC(c)\in\O_{X_\fC}(X_\fC)=\Ga\ci\Spec\fC$. This $\Xi_\fC$ is a $C^\iy$-ring morphism as it is built from $C^\iy$-ring morphisms $\pi_x:\fC\ra\fC_x$, and the $C^\iy$-operations on $\O_{X_\fC}(X_\fC)$ are defined pointwise in the $\fC_x$. This defines a natural transformation $\Xi:\Id_\CRings\Ra\Ga\ci\Spec$ of functors~$\CRings\ra\CRings$.
\end{dfn}

\begin{thm} 
\label{cc2thm2}
The functor\/ $\Spec:\CRings^{\bf op}\ra\LCRS$ is \begin{bfseries}right adjoint\end{bfseries} to $\Ga:\LCRS\ra\CRings^{\bf op}$. Here, $\Xi_\fC$ is the unit of the adjunction between $\Ga$ and\/ $\Spec$. This implies $\Spec$ preserves limits as in {\rm\cite[p.~687]{Dubu2}}. Hence if we have $C^\iy$-ring morphisms $\phi:\fF\ra\fD,$ $\psi:\fF\ra\fE$ in $\CRings$ then their pushout\/ $\fC=\fD\amalg_\fF\fE$ has image that is isomorphic to the fibre product\/~$\Spec\fC\cong\Spec\fD\t_{\Spec\fF}\Spec\fE$.
\end{thm}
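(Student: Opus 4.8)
The plan is to establish the adjunction by showing that the natural transformation $\Xi:\Id_\CRings\Ra\Ga\ci\Spec$ of Definition~\ref{cc2def17} is a universal arrow: for every $C^\iy$-ring $\fC$, every $\uX=(X,\O_X)$ in $\LCRS$, and every morphism $\phi:\fC\ra\O_X(X)=\Ga(\uX)$ in $\CRings$, there is a \emph{unique} morphism $\uf=(f,f^\sh):\uX\ra\Spec\fC$ in $\LCRS$ with $f_\sh(X_\fC)\ci\Xi_\fC=\phi$. This makes $\uf\mapsto f_\sh(X_\fC)\ci\Xi_\fC$ a bijection $\Hom_\LCRS(\uX,\Spec\fC)\cong\Hom_\CRings(\fC,\Ga(\uX))$; checking naturality in $\uX$ and $\fC$ (a routine diagram chase through composition of morphisms) then gives the adjunction $\Ga\dashv\Spec$ with unit $\Xi$. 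The remaining assertions follow formally: right adjoints preserve limits, and limits in $\CRings^{\bf op}$ exist since $\CRings$ has all small colimits (Proposition~\ref{cc2prop2}), so $\Spec$ preserves them; in particular a pushout $\fC=\fD\amalg_\fF\fE$ in $\CRings$ is a pullback in $\CRings^{\bf op}$, giving $\Spec\fC\cong\Spec\fD\t_{\Spec\fF}\Spec\fE$ in~$\LCRS$.

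First I would construct the continuous map $f:X\ra X_\fC$. For $x\in X$ the stalk $\O_{X,x}$ is a local $C^\iy$-ring, so by Definition~\ref{cc2def4} it carries a canonical morphism $\O_{X,x}\ra\R$ with kernel its maximal ideal; I define $f(x):\fC\ra\R$ to be the composite of $\phi$, the restriction-to-stalk map $\rho_{X,x}:\O_X(X)\ra\O_{X,x}$, and this residue morphism, which is an $\R$-point of $\fC$ and hence a point of $X_\fC$. Continuity is checked on the basis $\{U_c:c\in\fC\}$ of Definition~\ref{cc2def15}: one has $f(x)\in U_c$ iff the germ $\phi(c)_x\in\O_{X,x}$ is invertible, which holds on an open neighbourhood of $x$, so each $f^{-1}(U_c)$ is open.

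Next I would build $f_\sh:\O_{\Spec\fC}\ra f_*(\O_X)$ (equivalently $f^\sh$), following the scheme-theoretic argument. For open $U\subseteq X_\fC$, a section $s\in\O_{\Spec\fC}(U)$ is locally of the form $y\mapsto\pi_y(c)$ for $c\in\fC$ (Definition~\ref{cc2def16}); covering $f^{-1}(U)$ by the open sets $f^{-1}(W)$ on which $s$ has such a representative $c$, I would glue the elements $\phi(c)\vert_{f^{-1}(W)}\in\O_X(f^{-1}(W))$ into a section $f_\sh(U)(s)\in\O_X(f^{-1}(U))$. The crux is that these agree on overlaps: if $c,c'\in\fC$ both represent $s$ near $f(x)$ then $\pi_{f(x)}(c-c')=0$ in $\fC_{f(x)}$, so by the explicit description of the localization in Proposition~\ref{cc2prop5} there is $d\in\fC$ with $f(x)(d)\ne 0$ and $(c-c')d=0$ in $\fC$; applying the $\R$-algebra morphism $\phi$ and passing to the stalk at $x$, $\phi(d)_x$ is invertible and $\phi(c-c')_x\cdot\phi(d)_x=0$, whence $\phi(c)_x=\phi(c')_x$. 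One then verifies $f_\sh(U)$ is a $C^\iy$-ring morphism (the operations $\Phi_f$ on both sheaves are computed on stalks), is compatible with restrictions, and induces on stalks the localizations $\fC_{f(x)}\ra\O_{X,x}$ of $\phi$, which are automatically morphisms of local $C^\iy$-rings by Definition~\ref{cc2def4}; hence $\uf\in\LCRS$, and $f_\sh(X_\fC)\ci\Xi_\fC=\phi$ by construction.

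For uniqueness, if $\ug=(g,g^\sh):\uX\ra\Spec\fC$ also satisfies $g_\sh(X_\fC)\ci\Xi_\fC=\phi$, then compatibility with passage to stalks forces the stalk map $\fC_{g(x)}\cong(\O_{\Spec\fC})_{g(x)}\ra\O_{X,x}$ to send $\pi_{g(x)}(c_0)\mapsto\phi(c_0)_x$ for all $c_0\in\fC$; composing with the residue morphism $\O_{X,x}\ra\R$ and using that this stalk map is local (Definition~\ref{cc2def4}) identifies $g(x)=f(x)$, and then surjectivity of $\pi_{f(x)}:\fC\ra\fC_{f(x)}$ (Proposition~\ref{cc2prop5}) forces the stalk maps of $g^\sh$ and $f^\sh$ to coincide, so $\ug=\uf$. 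The step I expect to be the main obstacle is exactly the overlap/well-definedness check for $f_\sh$ above: showing the locally chosen sections $\phi(c)\vert_{f^{-1}(W)}$ are independent of the representative $c\in\fC$ and patch to a section of $\O_X$. This is where the only essential use of the $C^\iy$-ring structure enters, through Proposition~\ref{cc2prop5} and the behaviour of the stalks $\fC_{f(x)}$; continuity of $f$, the morphism property of $f_\sh(U)$, locality of the stalk maps, uniqueness, and naturality are all routine once it is settled.
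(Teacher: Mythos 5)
Your proposal is correct and follows essentially the same route as the paper: Theorem \ref{cc2thm2} is quoted from \cite[Th.~4.20]{Joyc9}, and the paper's proof of its corners analogue (Theorem \ref{cc5thm3}) is exactly your argument — send $\uf\mapsto f_\sh(X_\fC)\ci\Xi_\fC$ one way, and construct the inverse from $\phi:\fC\ra\O_X(X)$ by defining $f(x)$ via the residue map of the local stalk and building $f_\sh$ from the induced stalk maps $\fC_{f(x)}\ra\O_{X,x}$. The only cosmetic difference is that you verify well-definedness of $f_\sh$ by gluing representatives and invoking the explicit kernel description of Proposition \ref{cc2prop5}, whereas the paper obtains the stalk maps directly from the universal property of the localization $\fC_{f(x)}$; these amount to the same thing.
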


We extend this theorem to $C^\iy$-schemes with corners in~\S\ref{cc53}.

\begin{rem}
\label{cc2rem2}
Our definition of spectrum functor follows \cite{Joyc9} and Dubuc \cite{Dubu2}, and is called the {\it Archimedean spectrum\/} in Moerdijk et al.\ \cite[\S 3]{MQR}. They also show it is a right adjoint to the global sections functor as above.
\end{rem}

\begin{dfn}
\label{cc2def18}
Objects $\uX\in\LCRS$ that are isomorphic to $\Spec\fC$ for some $\fC\in\CRings$ are called {\it affine\/ $C^\iy$-schemes}. Elements $\uX\in\LCRS$ that are locally isomorphic to $\Spec\fC$ for some $\fC\in\CRings$ (depending upon the open sets) are called $C^\iy$-{\it schemes\/}. We define $\CSch$ and $\ACSch$ to be the full subcategories of $C^\iy$-schemes and affine $C^\iy$-schemes in $\LCRS$ respectively.
\end{dfn}

\begin{rem}
\label{cc2rem3}
Unlike ordinary algebraic geometry, affine $C^\iy$-schemes are very general objects. All manifolds are affine, and all their fibre products are affine. But not all manifolds with corners are affine $C^\iy$-schemes with corners.
\end{rem}

\subsection{\texorpdfstring{Complete $C^\iy$-rings}{Complete C∞-rings}}
\label{cc25}

In ordinary algebraic geometry, if $A$ is a commutative ring then $\Ga\ci\Spec A\cong A$, and $\Spec:\mathop{\bf Rings}^{\bf op}\ra\mathop{\bf ASch}$ is an equivalence of categories, with inverse $\Ga$. For $C^\iy$-rings $\fC$, in general $\Ga\ci\Spec\fC\not\cong\fC$, and $\Spec:\CRings^{\bf op}\ra\ACSch$ is neither full nor faithful. But as in \cite[Prop.~4.34]{Joyc9}, we have:

\begin{prop}
\label{cc2prop8}
For each $C^\iy$-ring $\fC,$ $\Spec\Xi_\fC:\Spec\ci\Ga\ci\Spec\fC\ra\Spec\fC$ is an isomorphism in $\LCRS$.
\end{prop}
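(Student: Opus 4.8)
The plan is to prove directly that $\Spec\Xi_\fC$ is an isomorphism in $\LCRS$ by showing its underlying continuous map is a homeomorphism and that it induces isomorphisms on all stalks. Equivalently, since one of the triangle identities of the adjunction $\Ga\dashv\Spec$ gives $\Spec\Xi_\fC\ci\eta_{\Spec\fC}=\id_{\Spec\fC}$ for the unit $\eta$, it suffices to show $\eta_{\Spec\fC}:\Spec\fC\ra\Spec\Ga\Spec\fC$ is an isomorphism. Write $\uX=\Spec\fC=(X,\O_X)$, $\fD=\Ga\Spec\fC=\O_X(X)$ and $\uY=\Spec\fD=(Y,\O_Y)$, so that by Definitions \ref{cc2def15}--\ref{cc2def16} the map $\Spec\Xi_\fC=\uf_{\Xi_\fC}:\uY\ra\uX$ has underlying map $g:Y\ra X$, $y\mapsto y\ci\Xi_\fC$, and stalks $\O_{Y,y}\cong\fD_y$, $\O_{X,x}\cong\fC_x$.

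First I would handle the spaces. Define $\Theta:X\ra Y$ by $\Theta(x)=\text{ev}_x$, the $\R$-point of $\fD=\O_X(X)$ sending $s$ to the residue of the germ $s_x$ in the local $C^\iy$-ring $\O_{X,x}\cong\fC_x$. One checks at once that $g\ci\Theta=\id_X$, since $\text{ev}_x(\Xi_\fC(c))=x(c)$. For the converse $\Theta\ci g=\id_Y$: given $y\in Y$ put $x=g(y)$; by definition of $\O_X(X)$ a section $s\in\fD$ agrees with $\Xi_\fC(c)$ on some basic open $U_d\ni x$ for suitable $c,d\in\fC$ with $x(d)\ne0$, so $r:=s-\Xi_\fC(c)$ vanishes on $U_d$. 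The key input is a \emph{bump-function lemma}: for $d\in\fC$ with $x(d)\ne0$ there is $t\in\fD$ which vanishes on $X\sm U_d$ and has $x(t)\ne0$. Granting it, $rt=0$ in $\fD$ (all germs of $rt$ vanish), and applying the $C^\iy$-ring morphisms $y$ and $\text{ev}_x$, each of which sends $t$ to a nonzero real, forces $y(r)=0=\text{ev}_x(r)$; hence $y(s)=x(c)=\text{ev}_x(s)$, and as $s$ was arbitrary $y=\text{ev}_x=\Theta(x)$. Continuity of $g$ and $\Theta$ is routine ($g^{-1}(U_c)=U_{\Xi_\fC(c)}$, and $\Theta^{-1}(U_s)$ is the open locus where $s_x\in\O_{X,x}$ is invertible), so $g$ is a homeomorphism with inverse $\Theta$.

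To prove the bump-function lemma, replace $d$ by $d^2$ so $a:=x(d)>0$ and $U_{d^2}=U_d$, choose smooth $\chi:\R\ra\R$ with $\chi(a)=1$ and $\chi\equiv0$ on $(-\iy,a/2]$, and set $t=\Xi_\fC(\Phi_\chi(d))$; then $x(t)=\chi(a)=1$, while for $x'\notin U_d$ one has $x'(d)=0$, so the germ $\pi_{x'}(d)\in\fC_{x'}$ has residue $0$ and $t_{x'}=\Phi_\chi(\pi_{x'}(d))=0$ by the elementary sub-lemma: \emph{in any local $C^\iy$-ring, $\Phi_h(e)=0$ whenever $h$ vanishes near $0$ and $e$ has residue $0$} — write $h=h\cdot k$ with $k$ smooth and $k\equiv0$ near $0$, so $\Phi_h(e)=\Phi_h(e)\cdot\Phi_k(e)$ with $\Phi_k(e)$ in the maximal ideal, and invert $1-\Phi_k(e)$. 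For the stalks, the stalk map of $\uf_{\Xi_\fC}$ at $y$ is the map $\fC_x\ra\fD_y$ induced by $\Xi_\fC$ and localization, where $x=g(y)$. Using $y=\text{ev}_x$ from the first part, Proposition \ref{cc2prop5} identifies $\fD_y$ with $\fD/\Ker\pi_y$, and I would show $\Ker\pi_y=\Ker(\rho_{X,x}:\fD\ra\O_{X,x}=\fC_x)$: the inclusion $\subseteq$ is immediate, since $st'=0$ with $y(t')\ne0$ makes $t'_x$ invertible in $\fC_x$ and hence $s_x=0$; the inclusion $\supseteq$ is again the bump-function lemma (if $s$ vanishes near $x$, produce $t'$ vanishing off a neighbourhood of $x$ with $x(t')\ne0$, so $st'=0$ and $s\in\Ker\pi_y$). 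Finally $\rho_{X,x}$ is surjective — every germ in $\fC_x$ is $\pi_x(c)$, the germ of $\Xi_\fC(c)$ — so $\fD_y=\fD/\Ker\rho_{X,x}\cong\fC_x$, and this is exactly (the inverse of) the stalk map. Thus $\Spec\Xi_\fC$ is a homeomorphism inducing isomorphisms on stalks, so an isomorphism in $\LCRS$.

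I expect the bump-function lemma to be the main obstacle: although $\fC$ itself need not admit partitions of unity, the completed ring $\fD=\Ga\Spec\fC=\O_X(X)$ has enough cut-off functions obtained by applying the $C^\iy$-operations $\Phi_\chi$ to elements of $\fC$, and combining this with the sub-lemma on $\Phi_h$ of zero-residue elements is the only nonformal step — it is what powers both the homeomorphism claim and the computation $\Ker\pi_y=\Ker\rho_{X,x}$. A secondary point requiring care is the identification of the underlying map and stalk maps of $\eta_{\Spec\fC}$ with $x\mapsto\text{ev}_x$ and the localized germ maps, but this is standard bookkeeping for the $\Ga\dashv\Spec$ adjunction.
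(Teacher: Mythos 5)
Your proposal is correct, but note that the paper itself gives no proof of this proposition — it is quoted from \cite[Prop.~4.34]{Joyc9} — so there is no internal argument to compare against; what you have written is a self-contained proof along the standard lines. The two essential points are exactly where you put them: (i) every $\R$-point $y$ of $\fD=\Ga\ci\Spec\fC$ is evaluation at $x=y\ci\Xi_\fC$, proved by manufacturing cut-off elements $t=\Xi_\fC(\Phi_\chi(d^2))$ and using that $y(t)=\chi(x(d^2))\ne 0$ because $t$ lies in the image of $\Xi_\fC$ (this is what keeps the argument non-circular), together with your sub-lemma that $\Phi_h(e)=0$ in a local $C^\iy$-ring when $h$ vanishes near $0$ and $e$ has residue $0$; and (ii) the stalk computation $\Ker\pi_y=\Ker\rho_{X,x}$, which reduces to the same bump-function device via Proposition \ref{cc2prop5}. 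Both are carried out correctly. Two small points you gloss over are genuinely routine but worth a line in a final write-up: that $s_x=0$ in $\fC_x\cong\O_{X,x}$ forces $s$ to vanish on a neighbourhood of $x$ (either by the stalk description of $\O_{X,x}$ in Definition \ref{cc2def16}, or directly from Proposition \ref{cc2prop5} applied to $\fC$), and that the isomorphism $\fD/\Ker\pi_y\cong\fC_x$ you produce really is inverse to the stalk map of $\Spec\Xi_\fC$, which follows from $\rho_{X,x}\ci\Xi_\fC=\pi_x$ and surjectivity of $\pi_x$. The appeal to the triangle identity at the start is harmless but unused, since you end up proving directly that the underlying map is a homeomorphism and the stalk maps are isomorphisms, which suffices.
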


This motivates the following definition~\cite[Def.~4.35]{Joyc9}:

\begin{dfn}
\label{cc2def19}
A $C^\iy$-ring $\fC$ is called {\it complete\/} if $\Xi_\fC:\fC\ra\Ga\ci\Spec\fC$ is an isomorphism. We define $\CRingsco$ to be the full subcategory in $\CRings$ of complete $C^\iy$-rings. By Proposition \ref{cc2prop8} we see that $\CRingsco$ is equivalent to the image of the functor $\Ga\ci\Spec:\CRings\ra\CRings$, which gives a left adjoint to the inclusion of $\CRingsco$ into $\CRings$. Write this left adjoint as the functor~$\Pi_{\rm all}^{\rm co}=\Ga\ci\Spec:\CRings\ra\CRingsco$.
\end{dfn}

An example of a non-complete $C^\iy$-ring is the quotient $\fC=C^\iy(\R^n)/I_{\rm cs}$  of $C^\iy(\R^n)$ for $n>0$ by the ideal $I_{\rm cs}$ of compactly supported functions, and $\Pi_{\rm all}^{\rm co}(\fC)=0\not\cong\fC$. The next theorem comes from~\cite[Prop.~4.11 \& Th.~4.25]{Joyc9}.
 
\begin{thm}
\label{cc2thm3}
{\bf(a)} $\Spec\vert_{(\CRingsco)^{\bf op}}:(\CRingsco)^{\bf op}\ra\LCRS$ is full and faithful, and an equivalence\/~$\Spec\vert_{\cdots}:(\CRingsco)^{\bf op}\ra\ACSch$.
\smallskip

\noindent{\bf(b)} Let\/ $\uX$ be an affine $C^\iy$-scheme. Then $\uX\cong\Spec\O_X(X),$ where $\O_X(X)$ is a complete $C^\iy$-ring.
\smallskip

\noindent{\bf(c)} The functor $\Pi_{\rm all}^{\rm co}:\CRings\ra\CRingsco$ is left adjoint to the inclusion functor\/ $\inc:\CRingsco\hookra\CRings$. That is, $\Pi_{\rm all}^{\rm co}$ is a \begin{bfseries}reflection functor\end{bfseries}.

\smallskip

\noindent{\bf(d)} All small colimits exist in $\CRingsco,$ although they may not coincide with the corresponding small colimits in $\CRings$. 
\smallskip

\noindent{\bf(e)} $\Spec\vert_{(\CRingsco)^{\bf op}}=\Spec\ci\inc:(\CRingsco)^{\bf op}\ra\LCRS$ is right adjoint to $\Pi_{\rm all}^{\rm co}\ci\Ga:\LCRS\ra(\CRingsco)^{\bf op}$. Thus $\Spec\vert_{\cdots}$ takes limits in $(\CRingsco)^{\bf op}$ (equivalently, colimits in $\CRingsco$) to limits in~$\LCRS$.
\end{thm}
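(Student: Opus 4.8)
The plan is to deduce all five parts formally from Proposition~\ref{cc2prop8} and the adjunction $\Ga\dashv\Spec$ of Theorem~\ref{cc2thm2}, the point being that $\CRingsco$ is the reflective subcategory of ``complete'' objects for the idempotent monad $\Pi_{\rm all}^{\rm co}=\Ga\ci\Spec$ on $\CRings$, and $\ACSch$ the corresponding subcategory of $\LCRS$. Regard $\Xi$ as the counit of $\Ga\dashv\Spec$ in $\CRings^{\bf op}$ and write $\eta:\Id_{\LCRS}\Ra\Spec\ci\Ga$ for the unit, so the triangle identities read $\Spec(\Xi_\fC)\ci\eta_{\Spec\fC}=\id_{\Spec\fC}$ and $\Ga(\eta_\uX)\ci\Xi_{\Ga\uX}=\id_{\Ga\uX}$. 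Proposition~\ref{cc2prop8} says $\Spec(\Xi_\fC)$ is invertible, so the first identity forces $\eta_{\Spec\fC}$ to be invertible for every $\fC$; applying $\Ga$ and using the second identity at $\uX=\Spec\fC$ then shows $\Xi_{\Ga\ci\Spec\fC}$ is invertible, i.e.\ $\Ga\ci\Spec\fC$ is complete. I will also want that $\Ga\uX$ is complete for \emph{every} $\uX\in\LCRS$: the maps $\Xi_{\Ga\uX}$ and $\Ga(\eta_\uX)$ exhibit $\Ga\uX$ as a retract of the complete ring $\Ga\ci\Spec\ci\Ga\uX$, and a short chase with naturality of $\Xi$ shows completeness is closed under retracts in $\CRings$.

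Part \textbf{(b)} is then immediate: if $\uX$ is affine, $\uX\cong\Spec\fC$, so $\eta_\uX:\uX\ra\Spec\ci\Ga(\uX)=\Spec\O_X(X)$ is an isomorphism and $\O_X(X)=\Ga\ci\Spec\fC$ is complete. For \textbf{(a)}, I would prove faithfulness and fullness of $\Spec$ on $(\CRingsco)^{\bf op}$ by noting that for complete $\fC,\fD$ the map $\Spec:\Hom_{\CRings}(\fD,\fC)\ra\Hom_{\LCRS}(\Spec\fC,\Spec\fD)$, composed with the adjunction bijection $\Hom_{\LCRS}(\Spec\fC,\Spec\fD)\cong\Hom_{\CRings}(\fD,\Ga\ci\Spec\fC)$ and the isomorphism $\Xi_\fC$ (invertible since $\fC$ is complete), equals the identity of $\Hom_{\CRings}(\fD,\fC)$; and $\Spec$ is essentially surjective onto $\ACSch$ because every affine $\uX\cong\Spec\fC$ with $\fC$ complete by (b). Composing with the full embedding $\ACSch\subset\LCRS$ gives the other claim of (a).

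For \textbf{(c)}: $\Pi_{\rm all}^{\rm co}$ takes values in $\CRingsco$ by (b), and I claim $\Xi_\fC:\fC\ra\Pi_{\rm all}^{\rm co}\fC$ is the unit of $\Pi_{\rm all}^{\rm co}\dashv\inc$: given $\psi:\fC\ra\fD$ with $\fD$ complete, $\tilde\psi:=\Xi_\fD^{-1}\ci\Pi_{\rm all}^{\rm co}(\psi)$ satisfies $\tilde\psi\ci\Xi_\fC=\psi$ by naturality of $\Xi$, and is the only such factorisation because $\Pi_{\rm all}^{\rm co}(\Xi_\fC)=\Xi_{\Pi_{\rm all}^{\rm co}\fC}$ is invertible (the monad $\Pi_{\rm all}^{\rm co}$ is idempotent, its multiplication being $\Ga(\eta_{\Spec\,\cdot})$, which we saw is invertible). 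Then \textbf{(d)} is the standard fact that a reflective subcategory of a cocomplete category is cocomplete: $\CRings$ has all small colimits by Proposition~\ref{cc2prop2}, and the colimit of a diagram $D$ in $\CRingsco$ is $\Pi_{\rm all}^{\rm co}$ applied to the colimit of $\inc\ci D$ in $\CRings$; this differs from the colimit in $\CRings$ in general precisely because $\Pi_{\rm all}^{\rm co}\not\cong\Id_{\CRings}$ — witnessed by the quotient $C^\iy(\R^n)/I_{\rm cs}$ above, which arises as a colimit of complete $C^\iy$-rings in $\CRings$ yet has $\Pi_{\rm all}^{\rm co}(C^\iy(\R^n)/I_{\rm cs})=0$.

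Finally \textbf{(e)}: assemble the natural bijections
\[
\Hom_{(\CRingsco)^{\bf op}}\bigl(\Pi_{\rm all}^{\rm co}\ci\Ga(\uX),\fF\bigr)=\Hom_{\CRingsco}\bigl(\fF,\Ga\ci\Spec\ci\Ga(\uX)\bigr)\cong\Hom_{\CRings}(\fF,\Ga\uX)\cong\Hom_{\LCRS}(\uX,\Spec\fF),
\]
where the middle isomorphism uses the invertibility of $\Xi_{\Ga\uX}$ (as $\Ga\uX$ is complete) together with fullness of $\CRingsco\subset\CRings$, and the last uses $\Ga\dashv\Spec$; naturality in $\uX$ and $\fF$ is routine, so $\Pi_{\rm all}^{\rm co}\ci\Ga\dashv\Spec\vert_{(\CRingsco)^{\bf op}}$, and since a right adjoint preserves limits we obtain the last sentence. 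I expect the only real difficulty to be the bookkeeping with opposite categories and with the directions of $\eta$ and $\Xi$ — in particular matching Proposition~\ref{cc2prop8} to a triangle identity and running the retract argument for ``$\Ga\uX$ complete for all $\uX$'' — since the rest is formal and the substantive input, Proposition~\ref{cc2prop8}, is already in hand; a secondary point is to present (d)'s non-coincidence via an explicit colimit diagram of complete $C^\iy$-rings.
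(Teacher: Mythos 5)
Your proof is correct, but it is not the paper's: the paper offers no argument for Theorem \ref{cc2thm3}, quoting it from \cite[Prop.~4.11 \& Th.~4.25]{Joyc9}, where the corresponding facts are proved by more hands-on sheaf-theoretic means (explicit descriptions of $\Spec$, germ-determined ideals, partitions of unity). You instead derive all five parts formally from the two results the paper does state, Theorem \ref{cc2thm2} and Proposition \ref{cc2prop8}, treating $\Ga\ci\Spec$ as an idempotent (co)monad whose fixed objects are the complete $C^\iy$-rings. I checked the load-bearing steps: reading $\Xi$ as the counit in $\CRings^{\bf op}$, the triangle identities are as you state; Proposition \ref{cc2prop8} forces $\eta_{\Spec\fC}$, and then $\Xi_{\Ga\ci\Spec\fC}$, to be invertible; your retract lemma is sound (with $i,r$ splitting $A$ off a complete $B$, the map $r\ci\Xi_B^{-1}\ci\Ga\Spec(i)$ is a two-sided inverse of $\Xi_A$ by naturality), so the stronger statement that $\Ga\uX$ is complete for \emph{every} $\uX\in\LCRS$ — which the paper never asserts, but which makes (e) immediate and shows $\Pi_{\rm all}^{\rm co}\ci\Ga\cong\Ga$ — does follow; and the arguments for (a) and (c) go through, uniqueness in (c) needing only that $\Ga\Spec(\Xi_\fC)$ is invertible, which is Proposition \ref{cc2prop8} plus functoriality of $\Ga$. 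Note that (e) is genuinely not a formal composite of the adjunctions in (c) and Theorem \ref{cc2thm2} (the reflector sits on the wrong side after passing to opposites), so your completeness-of-$\Ga\uX$ step, or an equivalent direct check of unit and counit, really is needed there. What your route buys is a short, self-contained proof making clear that everything follows once $\Spec\Xi_\fC$ is known to be an isomorphism; what the cited route buys is concrete information (e.g.\ which quotients $C^\iy(\R^n)/I$ are complete) that the formal argument does not provide.

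The one soft spot is the last claim in (d): you assert, without a construction, that $C^\iy(\R^n)/I_{\rm cs}$ arises as a colimit in $\CRings$ of complete $C^\iy$-rings. This is true but needs a witness, e.g.\ it is the directed colimit of the quotients $C^\iy(\R^n)/J_K$ over a compact exhaustion, where $J_K=\{f:\supp f\subseteq K\}$, and one must check each $C^\iy(\R^n)/J_K$ is complete (its ideal is determined by germs, so the criteria of \cite{Joyc9} apply); beware that the obvious alternative presentation by a coequalizer out of a free $C^\iy$-ring on infinitely many generators does not work, as such free rings are not complete. Since the theorem's ``may not coincide'' is only a caveat, this is a minor point, but as written it is the one unsupported assertion in your proposal.
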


In the following theorem we summarize results found in Dubuc \cite[Th.~16]{Dubu2}, Moerdijk and Reyes \cite[\S~II.~Prop.~1.2]{MoRe1}, and the second author~\cite[Cor.~4.27]{Joyc9}.

\begin{thm}
\label{cc2thm4}
There is a full and faithful functor $F_\Man^\ACSch:\Man\ra\ACSch$ that takes a manifold\/ $X$ to the affine $C^\iy$-scheme $\uX=(X,\O_X),$ where $\O_X(U)=C^\iy(U)$ is the usual smooth functions on $U$. Here $(X,\O_X)\cong \Spec(C^\iy(X))$ and hence $\uX$ is affine. The functor $F_\Man^\ACSch$ sends transverse fibre products of manifolds to fibre products of\/ $C^\iy$-schemes. 
\end{thm}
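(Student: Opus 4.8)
The plan is to realize $F_\Man^\ACSch$ as the composite $\Spec\ci F_\Man^\CRings:\Man\ra\CRings^{\bf op}\ra\LCRS$, where $F_\Man^\CRings:X\mapsto C^\iy(X)$ is the full and faithful functor of Example \ref{cc2ex1}(b). First I would check that this composite agrees with the explicit description in the statement. On objects this is exactly Example \ref{cc2ex8}: $\Spec C^\iy(X)\cong\uX=(X,\O_X)$ with $\O_X(U)=C^\iy(U)$, so in particular $F_\Man^\ACSch(X)$ is an affine $C^\iy$-scheme and lands in $\ACSch$. On morphisms, for smooth $f:X\ra Y$ one unwinds $\Spec(f^*)$ from Definition \ref{cc2def16} — it is the morphism with underlying continuous map $f_{f^*}$ acting on sections by $s\mapsto\bigl(x\mapsto (f^*)_x(s_{f_{f^*}(x)})\bigr)$ — and compares it with the hand-built morphism $\uf=(f,f^\sh)$ of Example \ref{cc2ex7}, which acts on sections by $c\mapsto c\ci f$; under the identifications $X_{C^\iy(X)}\cong X$ and $\O_{X,x}\cong C^\iy_x(X)$ of germs, these coincide. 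This gives a well-defined functor $F_\Man^\ACSch:\Man\ra\ACSch$ that also agrees with the stated formula.

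Next I would record that $C^\iy(X)$ is a complete $C^\iy$-ring in the sense of Definition \ref{cc2def19}. Since $\Spec C^\iy(X)\cong\uX$ and $\Ga(\uX)=\O_X(X)=C^\iy(X)$, the unit $\Xi_{C^\iy(X)}:C^\iy(X)\ra\Ga\ci\Spec C^\iy(X)$, which sends $c$ to the section $x\mapsto\pi_x(c)$, corresponds under these identifications to the identity on $C^\iy(X)$; hence $\Xi_{C^\iy(X)}$ is an isomorphism and $C^\iy(X)$ is complete. (Equivalently, this is immediate from Theorem \ref{cc2thm3}(b), as $\uX$ is affine.) Therefore $F_\Man^\CRings$ factors through the full subcategory $(\CRingsco)^{\bf op}\subseteq\CRings^{\bf op}$, and its corestriction $F':\Man\ra(\CRingsco)^{\bf op}$ is again full and faithful, because $(\CRingsco)^{\bf op}$ is a \emph{full} subcategory containing the image, so $\Hom_{(\CRingsco)^{\bf op}}(F'X,F'Y)=\Hom_{\CRings^{\bf op}}(C^\iy(X),C^\iy(Y))=\Hom_\Man(X,Y)$.

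Full faithfulness of $F_\Man^\ACSch$ then follows by composing full and faithful functors: $F_\Man^\ACSch=\Spec\vert_{(\CRingsco)^{\bf op}}\ci F'$, where $F'$ is full and faithful by the previous paragraph and $\Spec\vert_{(\CRingsco)^{\bf op}}:(\CRingsco)^{\bf op}\ra\ACSch$ is full and faithful (indeed an equivalence) by Theorem \ref{cc2thm3}(a). For transverse fibre products, the result of Moerdijk and Reyes quoted in Example \ref{cc2ex1}(b) says $F_\Man^\CRings$ sends a transverse fibre product $X\t_{g,Z,h}Y$ in $\Man$ to a fibre product in $\CRings^{\bf op}$, i.e.\ to a pushout $C^\iy(X\t_ZY)\cong C^\iy(X)\amalg_{C^\iy(Z)}C^\iy(Y)$ in $\CRings$. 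Applying $\Spec$, which preserves limits by Theorem \ref{cc2thm2}, gives $F_\Man^\ACSch(X\t_ZY)\cong F_\Man^\ACSch(X)\t_{F_\Man^\ACSch(Z)}F_\Man^\ACSch(Y)$ as a fibre product in $\LCRS$; since the left-hand side is affine, hence lies in the full subcategory $\ACSch$, this is also a fibre product in $\ACSch$, and in $\CSch$.

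I expect the main obstacle to be the input underlying Example \ref{cc2ex8} — that $\Spec C^\iy(X)\cong\uX$ — which is the one genuinely non-formal ingredient: it requires identifying the $\R$-points of $C^\iy(X)$ with the points of $X$, so that the underlying topological spaces and the structure sheaves match, and this uses that manifolds are Hausdorff and second countable together with partition-of-unity arguments (and the classification of $\R$-points). We are entitled to invoke it here. The only remaining fiddly point is the morphism-level check that $\Spec(f^*)$ coincides with the hand-built $\uf$ of Example \ref{cc2ex7}, but this is a routine unwinding of Definitions \ref{cc2def14} and \ref{cc2def16}.
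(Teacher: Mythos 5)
Your proposal is correct, and it follows essentially the route the paper intends: Theorem \ref{cc2thm4} is stated as a summary of cited results, and the standard argument behind those citations is exactly your composite $\Spec\ci F_\Man^\CRings$, using Example \ref{cc2ex8} for affineness, completeness of $C^\iy(X)$ with Theorem \ref{cc2thm3}(a) for full faithfulness, and the Moerdijk--Reyes result plus limit-preservation of $\Spec$ (Theorem \ref{cc2thm2}) for transverse fibre products. The one genuinely non-formal input, $\Spec C^\iy(X)\cong\uX$, is correctly identified and legitimately invoked.
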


\subsection{\texorpdfstring{Sheaves of $\O_X$-modules on $C^\iy$-ringed spaces}{Sheaves of Oᵪ-modules on C∞-ringed spaces}}
\label{cc26} 

This section follows \cite[\S 5.3]{Joyc9}. Our definition of $\O_X$-module is the usual definition of sheaf of modules on a ringed space as in Hartshorne \cite[\S II.5]{Hart} and Grothendieck \cite[\S 0.4.1]{Grot}, using the $\R$-algebra structure on our $C^\iy$-rings.

\begin{dfn}
\label{cc2def20}
For each $C^\iy$-ringed space $\uX=(X,\O_X)$ we define a category $\OXmod$. The objects are {\it sheaves of\/ $\O_X$-modules} (or simply $\O_X$-{\it modules}) $\cE$ on $X$. Here, $\cE$ is a functor on open sets $U\subseteq X$ such that $\cE:U\mapsto \cE(U)$ in $\O_X(U)\text{-mod}$ is a sheaf as in Definition \ref{cc2def9}. This means we have linear restriction maps $\cE_{UV}:\cE(U)\ra \cE(V)$ for each inclusion of open sets $V\subseteq U\subseteq X$, such that the following commutes
\begin{equation*}
\xymatrix@R=15pt@C=100pt{ *+[r]{\O_X(U)\t \cE(U)} \ar[d]^{\rho_{UV}\t\cE_{UV}}
\ar[r]_(0.6){} & *+[l]{\cE(U)}
\ar[d]_{\cE_{UV}} \\
*+[r]{\O_X(V)\t \cE(V)} \ar[r]^(0.6){} & *+[l]{\cE(V),\!} }
\end{equation*}
where the horizontal arrows are module multiplication. Morphisms in $\OXmod$ are sheaf morphisms $\phi:\cE\ra\cF$ commuting with the $\O_X$-actions. An $\O_X$-module $\cE$ is called a {\it vector bundle\/} if it is locally free, that is, around every point there is an open set $U\subseteq X$ with $\cE\vert_U\cong\O_X\vert_U\ot_\R\R^n$.
\end{dfn}

\begin{dfn}
\label{cc2def21}
We define the {\it pullback\/} $\uf^*(\cE)$ of a sheaf of modules $\cE$ on $\uY$ by a morphism $\uf=(f,f^\sh):\uX\ra\uY$ of $C^\iy$-ringed spaces as $\uf^*(\cE)=f^{-1}(\cE)
\ot_{f^{-1}(\O_Y)}\O_X$. Here $f^{-1}(\cE)$ is as in Definition
\ref{cc2def13}, so that $\uf^*(\cE)$ is a sheaf of modules on $\uX$.
Morphisms of $\O_Y$-modules $\phi:\cE\ra\cF$ give morphisms of $\O_X$-modules $\uf^*(\phi)=f^{-1}(\phi)\ot\id_{\O_X}:\uf^*(\cE)\ra\uf^*(\cF)$.
\end{dfn}

\begin{dfn}
\label{cc2def22}
Let $\uX=(X,\O_X)$ be a $C^\iy$-ringed space. Define a presheaf $\cP T^*\uX$ of $\O_X$-modules on $X$ such that $\cP T^*\uX(U)$ is the cotangent module $\Om_{\O_X(U)}$ of Definition \ref{cc2def8}, regarded as a module over the $C^\iy$-ring $\O_X(U)$. For open sets $V\subseteq U\subseteq X$ we have restriction morphisms $\Om_{\rho_{UV}}:\Om_{\O_X(U)}\ra\Om_{\O_X(V)}$ associated to the morphisms of $C^\iy$-rings $\rho_{UV}:\O_X(U)\ra\O_X(V)$ so that the following commutes:
\begin{equation*}
\xymatrix@R=15pt@C=110pt{ *+[r]{\O_X(U)\t \Om_{\O_X(U)}}
\ar[d]^{\rho_{UV}\t\Om_{\rho_{UV}}} \ar[r]_(0.6){\mu_{\O_X(U)}} &
*+[l]{\Om_{\O_X(U)}} \ar[d]_{\Om_{\rho_{UV}}} \\
*+[r]{\O_X(V)\t \Om_{\O_X(V)}} \ar[r]^(0.6){\mu_{\O_X(V)}} & *+[l]{\Om_{\O_X(V)}.\!} }
\end{equation*}
Definition \ref{cc2def8} implies $\Om_{\psi\ci\phi}=\Om_\psi\ci\Om_\phi$, so this is a well defined presheaf of $\O_X$-modules. The {\it cotangent sheaf\/ $T^*\uX$ of\/} $X$ is the sheafification of~$\cP T^*\uX$.

The universal property of sheafification shows that for open $U\subseteq X$ we have an isomorphism of
$\O_X\vert_U$-modules
\begin{equation*}
T^*\uU=T^*(U,\O_X\vert_U)\cong T^*\uX\vert_U.
\end{equation*}
For a morphism $\uf:\uX\ra\uY$ in $\CRS$ we have $\uf^*(T^*\uY)=f^{-1}(T^*\uY)
\ot_{f^{-1}(\O_Y)}\O_X$. The universal properties of sheafification imply that $\uf^*(T^*\uY)$ is the sheafification of the presheaf $\cP(\uf^*(T^*\uY))$, where
\begin{equation*}
U\longmapsto\cP(\uf^*(T^*\uY))(U)=
\ts\lim_{V\supseteq f(U)}\Om_{\O_Y(V)}\ot_{\O_Y(V)}\O_X(U).
\end{equation*}
This gives a presheaf morphism $\cP\Om_\uf:\cP(\uf^*(T^*\uY))\ra\cP T^*\uX$ on $X$, where
\begin{equation*}
(\cP\Om_\uf)(U)=\ts\lim_{V\supseteq f(U)}
(\Om_{\rho_{f^{-1}(V)\,U}\ci f_\sh(V)})_*.
\end{equation*}
Here, we have morphisms $f_\sh(V):\O_Y(V)\ra\O_X(f^{-1}(V))$ from $f_\sh:\O_Y\ra f_*(\O_X)$ corresponding to $f^\sh$ in $\uf$ as in \eq{cc2eq8}, and $\rho_{f^{-1}(V)\,U}:\O_X(f^{-1}(V))\ra\O_X(U)$ in $\O_X$ so that $(\Om_{\rho_{f^{-1}(V)\,U}\ci f_\sh(V)})_*:\Om_{\O_Y(V)} \ot_{\O_Y(V)}\O_X(U)\ra\Om_{\O_X(U)}=(\cP T^*\uX)(U)$ is constructed as in Definition \ref{cc2def8}. Then write $\Om_\uf:\uf^*(T^*\uY)\ra T^*\uX$ for the induced morphism of the associated sheaves. This corresponds to the morphism $\d f:f^*(T^*Y)\ra T^*X$ of vector bundles over a manifold $X$ and smooth map of manifolds $f:X\ra Y$ as in Example~\ref{cc2ex5}.
\end{dfn}

\subsection{\texorpdfstring{Sheaves of $\O_X$-modules on $C^\iy$-schemes}{Sheaves of Oᵪ-modules on C∞-schemes}}
\label{cc27}

We define a spectrum functor for modules, as in~\cite[Def.s 5.16, 5.17 \& 5.25]{Joyc9}.

\begin{dfn}
\label{cc2def23}
Let $\fC$ be a $C^\iy$-ring and set $\uX=(X,\O_X)=\Spec\fC$. Let $M\in\fCmod$ be a $\fC$-module. For each open subset $U\subseteq X$ there is a natural morphism $\fC\ra\O_X(U)$ in $\CRings$. Using this we make $M\ot_\fC\O_X(U)$ into an $\O_X(U)$-module. This assignment $U\mapsto M\ot_\fC\O_X(U)$ is naturally a presheaf $\cP\MSpec M$ of $\O_X$-modules. Define $\MSpec M\in\OXmod$ to be its sheafification. 

A morphism $\mu:M\ra N$ in $\fCmod$ induces $\O_X(U)$-module morphisms $M\ot_\fC\O_X(U)\ra N\ot_\fC\O_X(U)$ for all open $U\subseteq X$, and hence a presheaf morphism, which descends to a morphism $\MSpec\mu:\MSpec M\ra\MSpec N$ in $\OXmod$. This defines a functor $\MSpec:\fCmod\ra\OXmod$. It is an exact functor of abelian categories.

There is also a global sections functor $\Ga:\OXmod\ra\fCmod$ mapping $\Ga:\cE\mapsto\cE(X)$, where the $\O_X(X)$-module $\cE(X)$ is viewed as a $\fC$-module via the natural morphism~$\fC\ra\O_X(X)$.	

For any $M\in\fCmod$ there is a natural morphism $\Xi_M:M\ra\Ga\ci\MSpec M$ in $\fCmod$, by composing $M\ra M\ot_\fC\O_X(X)=\cP\MSpec M(X)$ with the sheafification morphism $\cP\MSpec M(X)\ra\MSpec M(X)=\Ga\ci\MSpec M$. Generalizing Definition \ref{cc2def19}, we call $M$ {\it complete\/} if $\Xi_M$ is an isomorphism. Write $\fCmodco\subseteq\fCmod$ for the full subcategory of complete $\fC$-modules.
\end{dfn}

Here is \cite[Th.s~5.19, Prop.~5.20, Th.~5.26, \& Prop.~5.31]{Joyc9}:

\begin{thm}
\label{cc2thm5}
{\bf(a)} In Definition\/ {\rm\ref{cc2def23},} $\MSpec:\fCmod\ra\OXmod$ is left adjoint to\/ $\Ga:\OXmod\ra\fCmod,$ generalizing Theorem\/~{\rm\ref{cc2thm2}}.
\smallskip

\noindent{\bf(b)} There is a natural isomorphism\/ $\MSpec\ci\Ga\Ra\Id_{\OXmod}$. This gives a natural isomorphism 
$\MSpec\ci\Ga\ci\MSpec\Ra\MSpec,$ generalizing Proposition\/~{\rm\ref{cc2prop8}}.
\smallskip

\noindent{\bf(c)} $\MSpec\vert_{\fCmodco}:\fCmodco\ra\OXmod$ is an equivalence of categories, generalizing Theorem\/~{\rm\ref{cc2thm3}(a)}.
\smallskip

\noindent{\bf(d)} The functor $\Pi_{\rm all}^{\rm co}=\Ga\ci\MSpec:\fCmod\ra \fCmodco$ is left adjoint to the inclusion functor\/ $\inc:\fCmodco\hookra\fCmod,$ generalizing Theorem\/~{\rm\ref{cc2thm3}(c)}.
\smallskip

\noindent{\bf(e)} There is a natural isomorphism $T^*\uX\cong\MSpec\Om_\fC$ in $\OXmod$.
\end{thm}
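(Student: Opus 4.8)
The plan is to treat parts (a)--(e) in order, with part~(a), the adjunction $\MSpec\dashv\Ga$, as the structural foundation, part~(b), that the counit of this adjunction is an isomorphism, as the main geometric input, and parts (c)--(e) as essentially formal consequences of (a) and (b) together with Lemma~\ref{cc2lem1} and Proposition~\ref{cc2prop7}.

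For (a), I would build the natural bijection
\[
\Hom_{\OXmod}(\MSpec M,\cE)\cong\Hom_{\fCmod}(M,\Ga(\cE))
\]
as a composite of two elementary adjunctions. First, since $\MSpec M$ is by definition the sheafification of the presheaf $\cP\MSpec M:U\mapsto M\ot_\fC\O_X(U)$, the universal property of sheafification (Definition~\ref{cc2def11}) identifies $\Hom_{\OXmod}(\MSpec M,\cE)$ with presheaf morphisms $\cP\MSpec M\ra\cE$, as $\cE$ is a sheaf. Second, a presheaf morphism $\cP\MSpec M\ra\cE$ is a compatible family of $\O_X(U)$-linear maps $M\ot_\fC\O_X(U)\ra\cE(U)$; by the tensor--hom adjunction each such map corresponds to a $\fC$-linear map $M\ra\cE(U)$, and compatibility with restriction forces the whole family to be determined by its value at $U=X$, giving a $\fC$-linear map $M\ra\cE(X)=\Ga(\cE)$. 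Conversely a $\fC$-map $\psi:M\ra\Ga(\cE)$ yields $\phi_U:m\ot a\mapsto a\cdot(\psi(m)\vert_U)$, which one checks is $\O_X(U)$-linear and compatible with restriction. Naturality in $M$ and $\cE$ is routine, and this realizes $\Xi_M$ of Definition~\ref{cc2def23} as the unit.

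The heart of the theorem is (b): the counit $\ep_\cE:\MSpec\ci\Ga(\cE)\ra\cE$ of (a) is an isomorphism for \emph{every} $\O_X$-module $\cE$. Because sheafification preserves stalks and $\O_{X,x}\cong\fC_x$, the stalk of $\MSpec\Ga(\cE)$ at $x$ is $\Ga(\cE)\ot_\fC\fC_x=\cE(X)\ot_\fC\fC_x$, so it suffices to prove the natural map $\cE(X)\ot_\fC\fC_x\ra\cE_x$ is an isomorphism for all $x\in X$. This is where the $C^\iy$-geometry enters and the classical algebro-geometric intuition breaks down: unlike the Zariski case, $\O_X$ is a \emph{fine} sheaf, since $\fC$ admits smooth partitions of unity subordinate to the basic cover $\{U_c\}$. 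I would reduce the stalk statement to the claim that for each basic open $U_c=\{x:x(c)\ne0\}$ (Lemma~\ref{cc2lem1}) restriction induces $\cE(U_c)\cong\cE(X)\ot_\fC\fC(c^{-1})$: surjectivity uses bump functions to extend a section of $\cE$ over $U_c$ by zero to all of $X$ after multiplying by a suitable power of $c$, while injectivity amounts to the statement that a global section restricting to $0$ on $U_c$ is annihilated by some power of $c$, matching the denominators in $\fC(c^{-1})$ and paralleling \eq{cc2eq3}. Passing to the direct limit over $c$ with $x(c)\ne0$ gives the stalk isomorphism. This partition-of-unity argument is the main obstacle, and the one step genuinely requiring $C^\iy$-structure rather than pure category theory. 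The second assertion of (b), the isomorphism $\MSpec\ci\Ga\ci\MSpec\Ra\MSpec$, is then immediate by evaluating $\ep$ at $\cE=\MSpec M$.

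Given (a) and (b), parts (c)--(e) are formal. By general adjunction theory the counit $\ep$ being an isomorphism (b) is equivalent to $\Ga$ being full and faithful; dually, $\MSpec$ restricted to the full subcategory on which the unit $\Xi_M$ is an isomorphism is full and faithful, and by Definition~\ref{cc2def23} this subcategory is exactly $\fCmodco$. For essential surjectivity of $\MSpec\vert_{\fCmodco}$, any $\cE$ satisfies $\cE\cong\MSpec\Ga(\cE)$ by (b), and $\Ga(\cE)$ is complete because the triangle identity $\Ga(\ep_\cE)\ci\Xi_{\Ga(\cE)}=\id$ with $\ep_\cE$ an isomorphism forces $\Xi_{\Ga(\cE)}$ to be an isomorphism; this gives (c). For (d) I would assemble
\[
\Hom_{\fCmodco}(\Ga\MSpec M,N)\cong\Hom_{\OXmod}(\MSpec M,\MSpec N)\cong\Hom_{\fCmod}(M,\Ga\MSpec N)\cong\Hom_{\fCmod}(M,N),
\]
using full faithfulness of $\MSpec\vert_{\fCmodco}$ from (c), the isomorphism $\MSpec\Ga\MSpec\cong\MSpec$ from (b), the adjunction (a), and $\Xi_N$ an isomorphism for $N$ complete; this exhibits $\Pi_{\rm all}^{\rm co}=\Ga\ci\MSpec$ as left adjoint to $\inc$. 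Finally (e) is independent and short: the base-change maps $\Om_\fC\ot_\fC\O_X(U)\ra\Om_{\O_X(U)}$ assemble into a presheaf morphism, hence a morphism $\MSpec\Om_\fC\ra T^*\uX$; on stalks this is $\Om_\fC\ot_\fC\fC_x\ra\Om_{\fC_x}$, an isomorphism by Proposition~\ref{cc2prop7} applied to the localization $\fC\ra\fC_x$, so the sheaf morphism is an isomorphism by Definition~\ref{cc2def10}.
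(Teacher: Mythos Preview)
The paper does not prove this theorem; it is stated as background from \cite[Th.s~5.19, Prop.~5.20, Th.~5.26 \& Prop.~5.31]{Joyc9}. Your overall architecture matches that reference: (a) is the formal sheafification/extension-of-scalars adjunction, (b) is the one substantive step requiring $C^\iy$-input, and (c)--(e) follow formally. Parts (a), (c), (d), (e) are correct as written; in particular your argument for (e) via Proposition~\ref{cc2prop7} on stalks is exactly the one in \cite{Joyc9}.

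The issue is the execution of (b). You correctly identify fineness of $\O_X$ as the key, but your sketch imports Zariski-style reasoning that does not literally hold for $C^\iy$-rings. The localization $\fC(c^{-1})$ is \emph{not} the algebraic ring $\fC[c^{-1}]$ with denominators powers of $c$, and for a general $\O_X$-module $\cE$ the statement ``$e\vert_{U_c}=0$ implies $c^n e=0$'' is not how the argument goes (nor is ``multiply by a power of $c$ to extend by zero''). The detour through $\cE(U_c)\cong\cE(X)\ot_\fC\fC(c^{-1})$ is also unnecessary; it is simpler to work directly at stalks. Since $\pi_x:\fC\ra\fC_x$ is \emph{surjective} by Proposition~\ref{cc2prop5}, we have $\fC_x\cong\fC/I_x$ with $I_x$ as in \eq{cc2eq3}, so $\cE(X)\ot_\fC\fC_x\cong\cE(X)/I_x\cE(X)$, and the counit on stalks is the map $\cE(X)/I_x\cE(X)\ra\cE_x$. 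For surjectivity, a germ $e_x$ is represented by $e\in\cE(U)$; choosing $\eta\in\fC$ with $\eta\equiv 1$ near $x$ and $\supp\eta\subset U$, the section $\eta\cdot e$ extends by zero to $\ti e\in\cE(X)$ with $\ti e_x=e_x$. For injectivity, if $e\in\cE(X)$ has $e\vert_U=0$ for some $U\ni x$, choose such an $\eta$; then $\eta\cdot e=0$ (it vanishes on $U$ since $e$ does, and off $\supp\eta$ since $\eta$ does), hence $e=(1-\eta)e\in I_x\cE(X)$ as $1-\eta$ vanishes near $x$. No powers of $c$ enter; the whole point is that $\fC$-bump functions exist, which is what distinguishes the $C^\iy$ case from ordinary algebraic geometry where (b) fails.
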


\begin{rem}
\label{cc2rem4}
In \cite[\S 5.4]{Joyc9}, following conventional algebraic geometry as in Hartshorne \cite[\S II.5]{Hart}, we define a notion of {\it quasicoherent sheaf\/} $\cE$ on a $C^\iy$-scheme $\uX$, which is that we may cover $\uX$ by open $\uU\subseteq\uX$ with $\uU\cong\Spec\fC$ and $\cE\vert_\uU\cong\MSpec M$ for $\fC\in\CRings$ and $M\in\fCmod$. But then \cite[Cor.~5.22]{Joyc9} uses Theorem \ref{cc2thm5}(c) to show that every $\O_X$-module is quasicoherent, that is, $\qcoh(\uX)=\OXmod$, which is not true in conventional algebraic geometry. So here we will not bother with the language of quasicoherent sheaves.	
\end{rem}

Here is \cite[Th.~5.32]{Joyc9}, where part (b) is deduced from Theorem~\ref{cc2thm1}:

\begin{thm}
\label{cc2thm6}
{\bf(a)} Let\/ $\uf:\uX\ra\uY$ and\/ $\ug:\uY\ra\uZ$ be
morphisms of\/ $C^\iy$-schemes. Then in $\OXmod$ we have
\begin{equation*}
\Om_{\ug\ci\uf}=\Om_\uf\ci \uf^*(\Om_\ug):(\ug\ci\uf)^*(T^*\uZ)\longra T^*\uX.
\end{equation*}
 
\noindent{\bf(b)} Suppose we are given a Cartesian square in\/~{\rm $\CSch$:}
\begin{equation*}
\xymatrix@C=80pt@R=14pt{ *+[r]{\uW} \ar[r]_\uf \ar[d]^\ue & *+[l]{\uY} \ar[d]_\uh \\
*+[r]{\uX} \ar[r]^\ug & *+[l]{\uZ,\!} }
\end{equation*}
so that\/ $\uW=\uX\t_\uZ\uY$. Then the following is exact in $\OWmod\!:$
\begin{equation*}
\xymatrix@C=15pt{ (\ug\ci\ue)^*(T^*\uZ)
\ar[rrrr]^(0.44){\ue^*(\Om_\ug)\op -\uf^*(\Om_\uh)} &&&&
\ue^*(T^*\uX)\op\uf^*(T^*\uY) \ar[rr]^(0.65){\Om_\ue\op \Om_\uf}
&& T^*\uW \ar[r] & 0.}
\end{equation*}
\end{thm}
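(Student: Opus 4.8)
The plan is to reduce both parts to the corresponding statements about $C^\iy$-rings and cotangent modules, checked on an affine open cover. The key inputs are: Theorem~\ref{cc2thm3}(a), so that on affine $C^\iy$-schemes every morphism is $\Spec$ of a morphism of (complete) $C^\iy$-rings; the natural isomorphism $T^*\,\Spec\fC\cong\MSpec\Om_\fC$ of Theorem~\ref{cc2thm5}(e) together with the exactness of $\MSpec$ noted in Definition~\ref{cc2def23}; and Theorem~\ref{cc2thm2}, so that $\Spec$ carries a pushout $\fF=\fC\amalg_\fE\fD$ of $C^\iy$-rings to the fibre product $\Spec\fC\t_{\Spec\fE}\Spec\fD$. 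Throughout one uses the standard compatibilities $\uf^*\ci\ug^*\cong(\ug\ci\uf)^*$ and $(g\ci f)_\sh=g_*(f_\sh)\ci g_\sh$.

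\textbf{Part (a).} Both $\Om_{\ug\ci\uf}$ and $\Om_\uf\ci\uf^*(\Om_\ug)$ are morphisms $(\ug\ci\uf)^*(T^*\uZ)\ra T^*\uX$ in $\OXmod$, so it is enough to show they agree over each member of an open cover of $\uX$. Given $x\in\uX$, set $y=f(x)$, $z=g(y)$, and choose affine opens $z\in\uV\cong\Spec\fE$ in $\uZ$, then $y\in\uV'\cong\Spec\fD$ inside $\ug^{-1}(\uV)$, then $x\in\uV''\cong\Spec\fC$ inside $\uf^{-1}(\uV')$, with $\fC,\fD,\fE$ complete. By Theorem~\ref{cc2thm3}(a) the restrictions of $\uf,\ug$ are $\Spec\phi,\Spec\psi$ for $C^\iy$-ring morphisms $\psi:\fE\ra\fD$, $\phi:\fD\ra\fC$. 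Under $T^*\,\Spec(-)\cong\MSpec\Om_{(-)}$, the morphism $\Om_{\Spec\phi}$ corresponds to $\MSpec$ of the base-changed map $(\Om_\phi)_*:\Om_\fD\ot_\fD\fC\ra\Om_\fC$, and $(\Spec\phi)^*$ corresponds to $-\ot_\fD\fC$; hence the asserted identity reduces to $\Om_{\psi\ci\phi}=\Om_\psi\ci\Om_\phi$ from Definition~\ref{cc2def8}, tensored up to $\fC$. (Alternatively, one may argue on stalks, using that cotangent modules commute with filtered colimits so $(T^*\uX)_x\cong\Om_{\O_{X,x}}$, together with the factorization $(\ug\ci\uf)^\sh_x=f^\sh_x\ci g^\sh_{f(x)}$.)

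\textbf{Part (b).} Exactness of a sequence of $\OWmod$ is local on $W$, so it suffices to cover $\uW$ by affine opens over which the displayed sequence is exact. Given $w\in\uW$ over $x\in\uX$, $y\in\uY$, $z\in\uZ$ with $\ug(x)=z=\uh(y)$, choose an affine open $z\in\uV_0\cong\Spec\fE$, then affine opens $x\in\uV_1\cong\Spec\fC$ inside $\ug^{-1}(\uV_0)$ and $y\in\uV_2\cong\Spec\fD$ inside $\uh^{-1}(\uV_0)$, with $\fC,\fD,\fE$ complete; by Theorem~\ref{cc2thm3}(a), $\ug\vert_{\uV_1}=\Spec\al$, $\uh\vert_{\uV_2}=\Spec\be$ for $\al:\fE\ra\fC$, $\be:\fE\ra\fD$. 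The open subscheme $\uW'$ of $\uW$ of points mapping into $\uV_1$ and $\uV_2$ is the fibre product $\uV_1\t_{\uV_0}\uV_2$, which by Theorem~\ref{cc2thm2} is $\Spec\fF$ for the pushout $\fF=\fC\amalg_{\al,\fE,\be}\fD$, with coprojections $\ga:\fC\ra\fF$, $\de:\fD\ra\fF$ inducing $\ue\vert_{\uW'}=\Spec\ga$, $\uf\vert_{\uW'}=\Spec\de$; these $\uW'$ cover $\uW$. Now apply the exact functor $\MSpec$ on $\Spec\fF$ to the exact sequence $\Om_\fE\ot_\fE\fF\ra(\Om_\fC\ot_\fC\fF)\op(\Om_\fD\ot_\fD\fF)\ra\Om_\fF\ra 0$ of Theorem~\ref{cc2thm1} (with the roles of its $\fC,\fD,\fE,\fF,\al,\be,\ga,\de$ played by our $\fE,\fC,\fD,\fF,\al,\be,\ga,\de$). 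Using $T^*\,\Spec(-)\cong\MSpec\Om_{(-)}$ and the compatibilities from part (a), over $\uW'$ we get $\MSpec(\Om_\fE\ot_\fE\fF)\cong(\ug\ci\ue)^*(T^*\uZ)$, $\MSpec(\Om_\fC\ot_\fC\fF)\cong\ue^*(T^*\uX)$, $\MSpec(\Om_\fD\ot_\fD\fF)\cong\uf^*(T^*\uY)$, $\MSpec\Om_\fF\cong T^*\uW$, with $\MSpec(\Om_\al)_*=\ue^*(\Om_\ug)$, $\MSpec(\Om_\be)_*=\uf^*(\Om_\uh)$, $\MSpec(\Om_\ga)_*=\Om_\ue$, $\MSpec(\Om_\de)_*=\Om_\uf$; the sign $-\uf^*(\Om_\uh)$ matches the sign $-(\Om_\be)_*$ in Theorem~\ref{cc2thm1}. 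Hence the displayed sequence is exact over $\uW'$, and so over $\uW$.

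\textbf{Main obstacle.} The genuinely new content of (b) is the $C^\iy$-ring statement Theorem~\ref{cc2thm1}; everything else is bookkeeping, and the step needing care is pinning down the dictionary under $T^*\,\Spec(-)\cong\MSpec\Om_{(-)}$: that $\Om_{\Spec\phi}$ corresponds to $\MSpec(\Om_\phi)_*$, that $(\Spec\phi)^*$ corresponds to base change, and that the fibre-product projections of $\Spec(\fC\amalg_\fE\fD)$ are induced by the pushout coprojections (naturality of Theorem~\ref{cc2thm2}), so that the morphisms — and in particular the signs — line up with Theorem~\ref{cc2thm1}. These facts hold for the underlying $C^\iy$-schemes by the results of \cite{Joyc9} and should be quoted rather than reproved.
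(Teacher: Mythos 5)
Your proposal is correct and follows essentially the route the paper intends: Theorem~\ref{cc2thm6} is quoted from \cite[Th.~5.32]{Joyc9}, with the paper noting that (b) is deduced from Theorem~\ref{cc2thm1}, exactly as you do via affine covers, $\Spec$ of complete $C^\iy$-rings (Theorem~\ref{cc2thm3}(a)), $\Spec$ taking pushouts to fibre products (Theorem~\ref{cc2thm2}), and the exact functor $\MSpec$ with $T^*\Spec\fC\cong\MSpec\Om_\fC$ (Theorem~\ref{cc2thm5}(e)). This is also the same strategy the authors reuse for the corners analogue (Theorem~\ref{cc7thm4}), where the restriction to firm objects plays the role of your use of Theorem~\ref{cc2thm3}(a).
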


\section{Background on manifolds with (g-)corners}
\label{cc3}

Next we discuss {\it manifolds with corners}, following Melrose \cite{Melr1,Melr2,Melr3} and the second author \cite{Joyc1}, \cite[\S 2]{Joyc6}, and a generalization of them, {\it manifolds with g-corners}, introduced in \cite{Joyc6}. There is more than one notion of smooth map between manifolds with corners. The one we choose was introduced by Melrose \cite{Melr1,Melr2,Melr3}, who calls them {\it b-maps}.

\subsection{Manifolds with corners}
\label{cc31}

We define manifolds with corners, following the second author \cite[\S 2]{Joyc6}. 

\begin{dfn}
\label{cc3def1}
Use the notation $\R^m_k=[0,\iy)^k\t\R^{m-k}$
for $0\le k\le m$, and write points of $\R^m_k$ as $u=(u_1,\ldots,u_m)$ for $u_1,\ldots,u_k\in[0,\iy)$, $u_{k+1},\ldots,u_m\in\R$. Let $U\subseteq\R^m_k$ and $V\subseteq \R^n_l$ be open, and $f=(f_1,\ldots,f_n):U\ra V$ be a continuous map, so that $f_j=f_j(u_1,\ldots,u_m)$ maps $U\ra[0,\iy)$ for $j=1,\ldots,l$ and $U\ra\R$ for $j=l+1,\ldots,n$. Then we say:
\begin{itemize}
\setlength{\itemsep}{0pt}
\setlength{\parsep}{0pt}
\item[(a)] $f$ is {\it weakly smooth\/} if all derivatives $\frac{\pd^{a_1+\cdots+a_m}}{\pd u_1^{a_1}\cdots\pd u_m^{a_m}}f_j(u_1,\ldots,u_m):U\ra\R$ exist and are continuous for all $j=1,\ldots,n$ and $a_1,\ldots,a_m\ge 0$, including one-sided derivatives where $u_i=0$ for $i=1,\ldots,k$.

By Seeley's Extension Theorem, this is equivalent to requiring $f_j$ to extend to a smooth function $f_j':U'\ra\R$ on open neighbourhood $U'$ of $U$ in~$\R^m$.
\item[(b)] $f$ is {\it smooth\/} if it is weakly smooth and every $u=(u_1,\ldots,u_m)\in U$ has an open neighbourhood $\ti U$ in $U$ such that for each $j=1,\ldots,l$, either:
\begin{itemize}
\setlength{\itemsep}{0pt}
\setlength{\parsep}{0pt}
\item[(i)] we may uniquely write $f_j(\ti u_1,\ldots,\ti u_m)=F_j(\ti u_1,\ldots,\ti u_m)\cdot\ti u_1^{a_{1,j}}\cdots\ti u_k^{a_{k,j}}$ for all $(\ti u_1,\ldots,\ti u_m)\in\ti U$, where $F_j:\ti U\ra(0,\iy)$ is weakly smooth and $a_{1,j},\ldots,a_{k,j}\in\N=\{0,1,2,\ldots\}$, with $a_{i,j}=0$ if $u_i\ne 0$; or 
\item[(ii)] $f_j\vert_{\smash{\ti U}}=0$.
\end{itemize}
\item[(c)] $f$ is {\it interior\/} if it is smooth, and case (b)(ii) does not occur.
\item[(d)] $f$ is {\it strongly smooth\/} if it is smooth, and in case (b)(i), for each $j=1,\ldots,l$ we have $a_{i,j}=1$ for at most one $i=1,\ldots,k$, and $a_{i,j}=0$ otherwise. 
\item[(e)] $f$ is a {\it diffeomorphism} if it is a smooth bijection with smooth inverse.
\end{itemize}
\end{dfn}

\begin{dfn}
\label{cc3def2}
Let $X$ be a second countable Hausdorff topological space. An {\it $m$-dimensional chart on\/} $X$ is a pair $(U,\phi)$, where $U\subseteq\R^m_k$ is open for some $0\le k\le m$, and $\phi:U\ra X$ is a
homeomorphism with an open set~$\phi(U)\subseteq X$.

Let $(U,\phi),(V,\psi)$ be $m$-dimensional charts on $X$. We call
$(U,\phi)$ and $(V,\psi)$ {\it compatible\/} if
$\psi^{-1}\ci\phi:\phi^{-1}\bigl(\phi(U)\cap\psi(V)\bigr)\ra
\psi^{-1}\bigl(\phi(U)\cap\psi(V)\bigr)$ is a diffeomorphism between open subsets of $\R^m_k,\R^m_l$, in the sense of Definition~\ref{cc3def1}(e).

An $m$-{\it dimensional atlas\/} for $X$ is a system
$\{(U_a,\phi_a):a\in A\}$ of pairwise compatible $m$-dimensional
charts on $X$ with $X=\bigcup_{a\in A}\phi_a(U_a)$. We call such an
atlas {\it maximal\/} if it is not a proper subset of any other
atlas. Any atlas $\{(U_a,\phi_a):a\in A\}$ is contained in a unique
maximal atlas, the set of all charts $(U,\phi)$ of this type on $X$
which are compatible with $(U_a,\phi_a)$ for all~$a\in A$.

An $m$-{\it dimensional manifold with corners\/} is a second
countable Hausdorff topological space $X$ equipped with a maximal
$m$-dimensional atlas. Usually we refer to $X$ as the manifold,
leaving the atlas implicit, and by a {\it chart\/ $(U,\phi)$ on\/}
$X$, we mean an element of the maximal atlas.

Now let $X,Y$ be manifolds with corners of dimensions $m,n$, and $f:X\ra Y$ a continuous map. We call $f$ {\it weakly smooth}, or {\it smooth}, or {\it interior}, or {\it strongly smooth}, if whenever $(U,\phi),(V,\psi)$ are charts on $X,Y$ with $U\subseteq\R^m_k$, $V\subseteq\R^n_l$ open, then $\psi^{-1}\ci f\ci\phi:(f\ci\phi)^{-1}(\psi(V))\ra V$ is weakly smooth, or smooth, or interior, or strongly smooth, respectively, as maps between open subsets of $\R^m_k,\R^n_l$ in the sense of Definition \ref{cc3def1}. We call $f:X\ra Y$ a {\it diffeomorphism\/} if it is a bijection and $f,f^{-1}$ are smooth.

Write $\Mancin,\Mancst\subset\Manc\subset\Mancwe$ for the categories with objects manifolds with corners, and morphisms interior maps, and strongly smooth maps, and smooth maps, and weakly smooth maps, respectively. We will be interested almost exclusively in the categories~$\Mancin\subset\Manc$.

We also write $\cManc$ for the category with objects disjoint unions $\coprod_{n=0}^\iy X_n$, where $X_n$ is a manifold with corners of dimension $n$ (we call these {\it manifolds with corners of mixed dimension}), allowing $X_n=\es$, and morphisms continuous maps $f:\coprod_{m=0}^\iy X_m\ra\coprod_{n=0}^\iy Y_n$, such that $f_{mn}:=f\vert_{X_m\cap f^{-1}(Y_n)}:X_m\cap f^{-1}(Y_n)\ra Y_n$ is a smooth map of manifolds with corners for all $m,n\ge 0$. We write $\cMancin\subset\cManc$ for the category with the same objects, and with morphisms $f$ such that $f_{mn}$ is interior for all $m,n\ge 0$. There are obvious full and faithful embeddings $\Manc\subset\cManc$, $\Mancin\subset\cMancin$.
\end{dfn}

\begin{rem}
\label{cc3rem1}
Some references on manifolds with corners are Cerf \cite{Cerf}, Douady \cite{Doua}, Gillam and Molcho \cite[\S 6.7]{GiMo}, Kottke and Melrose \cite{KoMe}, Margalef-Roig and Outerelo Dominguez \cite{MaOu}, Melrose \cite{Melr1,Melr2,Melr3}, Monthubert \cite{Mont}, and the second author \cite{Joyc1,Joyc6}. Just as objects, without considering morphisms, most authors define manifolds with corners $X$ as in Definition \ref{cc3def2}. However, Melrose \cite{Melr1,Melr2,Melr3} and authors who follow him impose an extra condition, that $X$ should be a {\it manifold with faces\/} in the sense of Definition \ref{cc3def8} below.

There is no general agreement in the literature on how to define smooth maps, or morphisms, of manifolds with corners: 
\begin{itemize}
\setlength{\itemsep}{0pt}
\setlength{\parsep}{0pt}
\item[(i)] Our `smooth maps' in Definitions \ref{cc3def1}--\ref{cc3def2} are due to Melrose \cite[\S 1.12]{Melr2}, \cite[\S 1]{KoMe}, who calls them {\it b-maps}. `Interior maps' are also due to Melrose.
\item[(ii)] The author \cite{Joyc1} defined and studied `strongly smooth maps' above (which were just called `smooth maps' in \cite{Joyc1}). 
\item[(iii)] Gillam and Molcho's {\it morphisms of manifolds with corners\/} \cite[\S 6.7]{GiMo} coincide with our `interior maps'.
\item[(iv)] Most other authors, such as Cerf \cite[\S I.1.2]{Cerf}, define smooth maps of manifolds with corners to be weakly smooth maps, in our notation.
\end{itemize}

We will base our theory of (interior) $C^\iy$-rings and $C^\iy$-schemes with corners on the categories $\Mancin\subset\Manc$. Section \ref{cc82} will discuss theories based on other categories of `manifolds with corners' including the category $\Manac$ of {\it manifolds with analytic corners}, or {\it manifolds with a-corners}, defined by the second author \cite{Joyc7}, which are rather different to the categories above.
\end{rem}

\begin{dfn}
\label{cc3def3}
Let $X$ be a manifold with corners (or a manifold with g-corners in \S\ref{cc33}). Smooth maps $g:X\ra[0,\iy)$ will be called {\it exterior maps}, to contrast them with interior maps. We write $C^\iy(X)$ for the set of smooth maps $f:X\ra\R$, and $\In(X)$ for the set of interior maps $g:X\ra[0,\iy)$, and $\Ex(X)$ for the set of exterior maps $g:X\ra[0,\iy)$. Thus, we have three sets:
\begin{itemize}
\setlength{\itemsep}{0pt}
\setlength{\parsep}{0pt}
\item[(a)] $C^\iy(X)$ of smooth maps $f:X\ra\R$;
\item[(b)] $\In(X)$ of interior maps $g:X\ra[0,\iy)$; and 
\item[(c)] $\Ex(X)$ of exterior (smooth) maps $g:X\ra[0,\iy)$, with $\In(X)\subseteq\Ex(X)$.
\end{itemize}
Much of the book will be concerned with algebraic structures on these three sets, and generalizations to other spaces $X$. In Chapter \ref{cc4} we give $(C^\iy(X),\In(X)\amalg\{0\})$ and $(C^\iy(X),\Ex(X))$ the (large and complicated) structure of `(pre) $C^\iy$-rings with corners'. But a lot of the time, it will be enough that $C^\iy(X)$ is an $\R$-algebra, and $\In(X),\Ex(X)$ are monoids under multiplication, as in~\S\ref{cc32}.
\end{dfn}

\subsection{Monoids}
\label{cc32}

We will use monoids to define manifolds with g-corners in \S\ref{cc33}, and in the study of $C^\iy$-rings with corners in Chapter \ref{cc4}. Here we recall some facts about monoids, in the style of log geometry. A reference is Ogus~\cite[\S I]{Ogus}. 

\begin{dfn}
\label{cc3def4}
A (commutative) monoid is a set $P$ equipped with an associative commutative binary operation $+:P\t P\ra P$ that has an identity element $0$. All monoids in this book will be commutative. A morphism of monoids $P\ra Q$ is a morphism of sets that respects the binary operations and sends the identity to the identity. Write $\Mon$ for the category of monoids. For any $n\in \N$ and $p\in P$ we will write $np=n\cdot p={\buildrel{\ulcorner\,\,\,\text{$n$ copies } \,\,\,\urcorner} \over
{\vphantom{i}\smash{p+\cdots+p}}}$, and set~$0\cdot p=0$.

The above is {\it additive notation\/} for monoids. We will also very often use {\it multiplicative notation}, in which the binary operation is written $\cdot:P\t P\ra P$, thought of as multiplication, and the identity element is written 1, and we write $p^n$ rather than $np$, with~$p^0=1$.

If $P$ is a monoid written in multiplicative notation, a {\it zero element\/} is $0\in P$ with $0\cdot p=0$ for all $p\in P$. Zero elements need not exist, but are unique if they do. One should not confuse zero elements with identities.

The rest of this definition will use additive notation.

A {\it submonoid\/} $Q$ of a monoid $P$ is a subset that is closed under the binary operation and contains the identity element. We can form the {\it quotient monoid} $P/Q$ which is the set of all $\simc$-equivalence classes $[p]$ of $p\in P$ such that $p\simc p'$ if there are $q,q'\in Q$ with $p+q=p'+q'\in P$. It has an induced monoid structure from the monoid $P$. There is a morphism $\pi:P\ra P/Q$. This quotient satisfies the following universal property: it is a monoid $P/Q$ with a morphism $\pi:P\ra P/Q$ such that $\pi(Q)=\{0\}$ and if $\mu:P\ra R$ is a monoid morphism with $\mu(Q)=\{0\}$ then $\mu=\nu\ci \pi$ for a unique morphism~$\nu:P/Q\ra R$. 

A {\it unit\/} in $P$ is an element $p\in P$ that has a (necessarily unique) inverse under the binary operation, $p'$, so that $p'+p=0$. Write $P^\t$ for the set of all units of $P$. It is a submonoid of $P$, and an abelian group. A monoid $P$ is an abelian group if and only if~$P=P^\t$.

An {\it ideal\/} $I$ in a monoid $P$ is a nonempty proper subset $\es\ne I\subsetneq P$ such that if $p\in P$ and $i\in I$ then $i+p\in I$, so it is necessarily closed under $P$'s binary operation. It must not contain any units. An ideal $I$ is called {\it prime\/} if whenever $a+b\in I$ for $a,b\in P$ then either $a$ or $b$ is in $P$. We say the complement $P\setminus I$ of a prime ideal $I$ is a {\it face\/} which is automatically a submonoid of $P$.  If we have elements $p_j\in P$ for $j$ in some indexing set $J$ then we can consider the {\it ideal generated} by the $p_j$, which we write as $\langle p_j\rangle_{j\in J}$. It consists of all elements in $P$ of the form $a+p_j$ for any $a\in P$ and any $j\in J$. Note that if any of the $p_j$ are units then the `ideal' generated by these $p_j$ is a misnomer, as $\langle p_j\rangle_{j\in J}$ is not an ideal and instead equal to~$P$.

For any monoid $P$ there is an associated abelian group $P^\gp$ and morphism $\pi^\gp:P\ra P^\gp$. This has the universal property that any morphism from $P$ to an abelian group factors through $\pi^\gp$, so $ P^\gp$ is unique up to canonical isomorphism. It can be shown to be isomorphic to the quotient monoid $(P\t P)/\De_P$, where $\De_P=\{(p,p):p\in P\}$ is the diagonal submonoid of $P\t P$, and~$\pi^\gp:p\mapsto [p,0]$.

For a monoid $P$ we have the following properties:
\begin{itemize}
\setlength{\itemsep}{0pt}
\setlength{\parsep}{0pt}
\item[(i)] If there is a surjective morphism $\N^k\ra P$ for some $k\ge 0$, we call $P$ {\it finitely generated}. This morphism can be uniquely written as $(n_1,\ldots, n_k)\mapsto n_1p_1+\cdots n_kp_k$ for some $p_1,\ldots, p_k\in P$ which we call the {\it generators} of $P$. This implies $P^\gp$ is finitely generated. If there is an isomorphism $P\cong \N^A$ for some set $A$ (e.g. if $P\cong\N^k$ for $k\ge 0$) then $P$ is called {\it free}.
\item[(ii)] If $P^\t=\{0\}$ we call $P$ {\it sharp}. Any monoid has an associated {\it sharpening\/} $P^\sh$ which is the sharp quotient monoid $P/P^\t$ with surjection~$\pi^\sh:P\ra P^\sh$.
\item[(iii)] If $\pi^\gp:P\ra P^\gp$ is injective we call $P$ {\it integral\/} or {\it cancellative}. This occurs if and only if $p+p'=p+p''$ implies $p'=p''$ for all $p,p',p''\in P$. Then $P$ is isomorphic to its image under $\pi^\gp$, so we consider it a subset of~$P^\gp$.
\item[(iv)] If $P$ is integral and whenever $p\in P^\gp$ with $np\in P\subset P^\gp$ for some $n\ge 1$ implies $p\in P$ then we call $P$ {\it saturated}.
\item[(v)] If $P^\gp$ is a torsion free group, then we call $P$ {\it torsion free}. That is, if there is $n\ge 0$ and $p\in P^\gp$ such that $np=0$ then $p=0$.
\item[(vi)] If $P$ is finitely generated, integral, saturated and torsion free then it is called {\it weakly toric}. It has {\it rank} $\rank P=\dim_\R(P\ot_\N\R)$. For a weakly toric $P$ there is an isomorphism $P^\t\cong\Z^l$ and $P^\sh$ is a toric monoid (defined below). The exact sequence $0\ra P^\t\ra P\ra P^\sh\ra 0$ splits, so that $P\cong P^\sh\t\Z^l$. Then the rank of $P$ is equal to $\rank P=\rank P^\gp=\rank P^\sh+l$. 
\item[(vii)] If $P$ is a weakly toric monoid and is also sharp we call $P$ {\it toric} (note that saturated and sharp together imply torsion free). For a toric monoid $P$ its associated group $P^\gp$ is a finitely generated, torsion-free abelian group, so $P^\gp\cong\Z^k$ for $k\ge 0$. Then the rank of $P$ is $\rank P=k$.
\end{itemize}
Parts (vi)--(vii) are not universally agreed in the literature. For example, Ogus \cite[p.~13]{Ogus}, calls our weakly toric monoids {\it toric monoids}, and our toric monoids {\it sharp toric monoids}.
\end{dfn}

\begin{ex}
\label{cc3ex1}
{\bf(a)} The most basic toric monoid is $\N^k$ under addition for $k=0,1,\ldots,$ with $(\N^k)^\gp\cong\Z^k$.
\smallskip

\noindent{\bf(b)} $\Z^k$ under addition for $k>0$ is weakly toric, but not toric.\smallskip

\noindent{\bf(c)} $\bigl([0,\iy),\cdot,1\bigr)$ under multiplication is a monoid that is not finitely generated. It has identity $1$ and zero element $0$. We have $[0,\iy)^\gp=\{1\}$, so $[0,\iy)$ is not integral, and $[0,\iy)^\t=(0,\iy)$, so $[0,\iy)$ is not sharp.
\end{ex}

\subsection{Manifolds with g-corners}
\label{cc33}

The second author \cite{Joyc6} defined the category $\Mangc$ of {\it manifolds with generalized corners}, or {\it manifolds with g-corners}. These extend manifolds with corners $X$ in \S\ref{cc31}, but rather than being locally modelled on $\R^n_k=[0,\iy)^k\t\R^{n-k}$, they are modelled on spaces $X_P$ for $P$ a weakly toric monoid in~\S\ref{cc32}.

\begin{dfn}
\label{cc3def5}
Let $P$ be a weakly toric monoid. As in \cite[\S 3.2]{Joyc6} we define $X_P=\Hom(P,[0,\iy))$ to be the set of monoid morphisms $x:P\ra[0,\iy)$, where the target is considered as a monoid under multiplication as in Example \ref{cc3ex1}(c). The {\it interior\/} of $X_P$ is defined to be $X_P^\ci=\Hom(P,(0,\iy))$ where $(0,\iy)$ is a submonoid of $[0,\iy)$, so that~$X_P^\ci\subset X_P$.

For $p\in P$ there is a corresponding function $\la_p:X_P\ra[0,\iy)$ such that $\la_p(x)=x(p)$. For any $p,q\in P$ then $\la_{p+q}=\la_p\cdot \la_q$ and $\la_0=1$. Define a topology on $X_P$ to be the weakest topology such that each $\la_p$ is continuous. Then $X_P$ is locally compact and Hausdorff and $X_P^\ci$ is an open subset of $X_P$. The {\it interior} $U^\ci$ of an open set $U\subset X_P$ is defined to be $U\cap X_P^\ci$. 

As $P$ is weakly toric, then we can take a presentation for $P$ with generators $p_1,\ldots, p_m$ and relations 
\begin{equation*}
a_1^jp_1+\cdots+a_m^jp_m=b_1^jp_1+\cdots+b_m^jp_m\quad\text{in $P$ for $j=1,\ldots,k,$}
\end{equation*}
for $a_i^j,b_i^j\in\N$, $i=1,\ldots,m$, $j=1,\ldots,k$. Then we have a continuous function $\la_{p_1}\t \cdots \t \la_{p_m}:X_P\ra [0,\iy)^m$ that is a homeomorphism onto its image \begin{equation*}X_P'=\bigl\{(x_1,\ldots,x_m)\in[0,\iy)^m:x_1^{a_1^j}\cdots x_m^{a_m^j}=x_1^{b_1^j}\cdots x_m^{b_m^j},\; j=1,\ldots,k\bigr\},\end{equation*} which is closed subset of $[0,\iy)^m$. 

Let $U$ be an open subset of $X_P$, and $U'=\la_{p_1}\t \cdots \t \la_{p_m}(U)\subset X_P'$. Then we say a continuous function $f:U\ra \R$ or $f:U\ra [0,\iy)$ is {\it smooth\/} if there exists an open neighbourhood $W'$ of $U'$ in $[0,\iy)^m$ and a smooth function $g:W'\ra \R$ or $g:W'\ra[0,\iy)$ that is smooth in the sense above, such that $f=g\ci \la_{p_1}\t \cdots \t \la_{p_m}$. As in \cite[Prop.~3.14]{Joyc6}, this is independent of the choice of generators $p_1,\ldots,p_m$ for~$P$. 

Suppose $Q$ is another weakly toric monoid, and consider open $V\subseteq X_Q$ and a continuous function $f:U\ra V$. Then $f$ is {\it smooth\/} if $\la_q\ci f:U\ra [0,\iy)$ is smooth for all $q\in Q$ in the sense above. We call smooth $f$ {\it interior\/} if $f(U^\ci)\subseteq V^\ci$, and a {\it diffeomorphism\/} if it is bijective with smooth inverse. 
\end{dfn}

\begin{ex}
\label{cc3ex2}
If $P=\N^k\t \Z^{n-k}$ then $P$ is weakly toric. We can take generators $p_1=(1,0,\ldots, 0), p_2=(0,1,0,\ldots, 0), \ldots, p_n=(0,\ldots, 0,1), p_{n+1}=(0,\ldots, 0, -1, \ldots, -1)$ with $p_{n+1}$ having $-1$ in the $k+1$ to $n+1$ entries, so the only relation is $p_{k+1}+\cdots+p_{n+1}=0$. Then $X_P$ is homeomorphic to 
\begin{equation*}
X_P'=\bigl\{(x_1,\ldots,x_{n+1})\in[0,\iy)^{n+1}:x_{k+1}\cdots x_{n+1}=1\bigr\}.
\end{equation*}
This means that for $(x_1,\ldots,x_{n+1})\in X_P'$ we have $x_{k+1},\ldots, x_{n+1}>0$ with $x_{n+1}=x_{k+1}^{-1}\cdots x_n^{-1}$. So there is a homeomorphism from $X_P\ra\R^n_k$ mapping $(x_1,\ldots,x_{n+1})\mapsto (x_1,\ldots, x_k, \log(x_{k+1}),\ldots, \log(x_n))$. In \cite[Ex.~3.15]{Joyc6} we show that this identification $X_P\cong\R^n_k$ identifies the topology, and the notion of smooth maps to $\R,[0,\iy)$ and between open subsets of $X_P\cong\R^n_k$ and $X_Q\cong\R^m_l$, in Definition \ref{cc3def1} for $\R^n_k$ and above for $X_P$. Thus, the spaces $X_P$ generalize the spaces $\R^n_k$ used as local models for manifolds with corners.
\end{ex}

Following \cite[\S 3.3]{Joyc6} we define manifolds with g-corners:

\begin{dfn}
\label{cc3def6}
Let $X$ be a topological space. Define a {\it g-chart} on $X$ to be a triple $(P,U,\phi)$, where $P$ is a weakly toric monoid, $U\subset X_P$ is open and $\phi:U\ra X$ is a homeomorphism with an open subset $\phi(U)\subset X$. If $\rank P=n$ we call $(P,U,\phi)$ {\it $n$-dimensional}. For set theory reasons (to ensure a maximal atlas is a set not a class) we suppose $P$ is a submonoid of $\Z^k$ for some $k\ge 0$.

We call $n$-dimensional g-charts $(P,U,\phi)$ and $(Q,V,\psi)$ on $X$ {\it compatible\/} if $\psi^{-1}\ci\phi:\phi^{-1}\bigl(\phi(U)\cap\psi(V)\bigr)\ra \psi^{-1}\bigl(\phi(U)\cap\psi(V)\bigr)$ is a diffeomorphism between open subsets of $X_P$ and $X_Q$. A {\it g-atlas} on $X$ is a family $\cA=\bigl\{(P_i,U_i,\phi_i):i\in I\bigr\}$ of pairwise compatible g-charts $(P_i,U_i,\phi_i)$ on $X$ with the same dimension $n$, with $X=\bigcup_{i\in I}\phi_i(U_i)$. We call $\cA$ {\it maximal\/} if it is not a proper subset of any other g-atlas. We define a {\it manifold with g-corners\/} $(X,\cA)$ to be a Hausdorff, second countable topological space $X$ with a maximal g-atlas~$\cA$. 

We define smooth maps, and interior maps, $f:X\ra Y$ between manifolds with g-corners $X,Y$ as in Definition \ref{cc3def2}. We write $\Mangc$ for the category of manifolds with g-corners and smooth maps, and $\Mangcin\subset\Mangc$ for the subcategory of manifolds with g-corners and interior maps.

As in Example \ref{cc3ex2}, if $P\cong\N^k\t \Z^{n-k}$ we may identify $X_P\cong\R^n_k$. So as in \cite[Def.~3.22]{Joyc6}, we may identify $\Manc\subset\Mangc$ and $\Mancin\subset\Mangcin$ as full subcategories, where a manifold with g-corners $(X,\cA)$ is a manifold with corners if $\cA$ has a g-subatlas of g-charts $(P_i,U_i,\phi_i)$ with~$P_i\cong\N^k\t \Z^{n-k}$.

We also write $\cMangc$ for the category with objects disjoint unions $\coprod_{n=0}^\iy X_n$, where $X_n$ is a manifold with g-corners of dimension $n$ (we call these {\it manifolds with g-corners of mixed dimension}), allowing $X_n=\es$, and morphisms continuous maps $f:\coprod_{m=0}^\iy X_m\ra\coprod_{n=0}^\iy Y_n$, such that $f_{mn}:=f\vert_{X_m\cap f^{-1}(Y_n)}:X_m\cap f^{-1}(Y_n)\ra Y_n$ is a smooth map of manifolds with g-corners for all $m,n\ge 0$. We write $\cMancin\subset\cManc$ for the category with the same objects, and with morphisms $f$ such that $f_{mn}$ is interior for all $m,n\ge 0$. There are obvious full embeddings $\Mangc\subset\cMangc$ and~$\Mangcin\subset\cMangcin$.
\end{dfn}

\begin{rem}
\label{cc3rem2}
Any weakly toric monoid $P$ is isomorphic to $P^\sh\t\Z^l$, where $P^\sh$ is toric and $l\ge 0$. Then $X_P\cong X_{P^\sh}\t X_{\Z^l}\cong X_{P^\sh}\t\R^l$. Hence manifolds with g-corners have local models $X_Q\t\R^l$ for toric monoids $Q$ and $l\ge 0$, where $X_{\N^k}\cong[0,\iy)^k$. Each toric monoid $Q$ has a natural point $\de_0\in X_Q$ called the {\it vertex\/} of $X_Q$, which acts by taking $0\in Q$ to $1\in [0,\iy)$ and all non-zero $q\in Q$ to zero. Given a manifold with g-corners $X$ and a point $x\in X$, there is a toric monoid $Q$ such that $X$ near $x$ is modelled on $X_Q\t\R^l$ near $(\de_0,0)\in X_Q\t\R^l$, where~$\rank Q+l=\dim X$.
\end{rem}

From \cite[Ex.~3.23]{Joyc6} we have the simplest example of a manifold with g-corners that is not a manifold with corners. 

\begin{ex}
\label{cc3ex3}
Let $P$ be the weakly toric monoid of rank 3 with
\begin{equation*}
P=\bigl\{(a,b,c)\in \Z^3:a\ge 0,\; b\ge 0,\; a+b\ge c\ge 0\bigr\}.
\end{equation*}
This has generators  $p_1=(1,0,0)$, $p_2=(0,1,1)$, $p_3=(0,1,0)$, and $p_4=(1,0,1)$ and one relation $p_1+p_2=p_3+p_4$. The local model it induces is
\e
X_P\cong X_P'=\bigl\{(x_1,x_2,x_3,x_4)\in[0,\iy)^4:x_1x_2=x_3x_4\bigr\}.
\label{cc3eq1}
\e

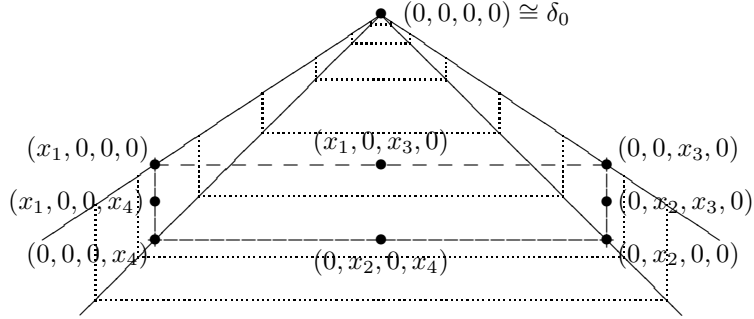
\begin{figure}[htb]
\centerline{$\splinetolerance{.8pt}
\begin{xy}
0;<1mm,0mm>:
,(0,0)*{\bu}
,(14,0)*{(0,0,0,0)\cong\de_0}
,(-30,-20)*{\bu}
,(-39,-18)*{(x_1,0,0,0)}
,(30,-20)*{\bu}
,(39.5,-18)*{(0,0,x_3,0)}
,(-30,-30)*{\bu}
,(-39,-32)*{(0,0,0,x_4)}
,(30,-30)*{\bu}
,(39.5,-32)*{(0,x_2,0,0)}
,(0,-30)*{\bu}
,(0,-33)*{(0,x_2,0,x_4)}
,(0,-20)*{\bu}
,(0,-17)*{(x_1,0,x_3,0)}
,(-30,-25)*{\bu}
,(-40.5,-25)*{(x_1,0,0,x_4)}
,(30,-25)*{\bu}
,(40.5,-25)*{(0,x_2,x_3,0)}
,(0,0);(-45,-30)**\crv{}
?(.8444)="aaa"
?(.85)="bbb"
?(.75)="ccc"
?(.65)="ddd"
?(.55)="eee"
?(.45)="fff"
?(.35)="ggg"
?(.25)="hhh"
?(.15)="iii"
?(.05)="jjj"
,(0,0);(45,-30)**\crv{}
?(.8444)="aaaa"
?(.85)="bbbb"
?(.75)="cccc"
?(.65)="dddd"
?(.55)="eeee"
?(.45)="ffff"
?(.35)="gggg"
?(.25)="hhhh"
?(.15)="iiii"
?(.05)="jjjj"
,(0,0);(-40,-40)**\crv{}
?(.95)="a"
?(.85)="b"
?(.75)="c"
?(.65)="d"
?(.55)="e"
?(.45)="f"
?(.35)="g"
?(.25)="h"
?(.15)="i"
?(.05)="j"
,(0,0);(40,-40)**\crv{}
?(.95)="aa"
?(.85)="bb"
?(.75)="cc"
?(.65)="dd"
?(.55)="ee"
?(.45)="ff"
?(.35)="gg"
?(.25)="hh"
?(.15)="ii"
?(.05)="jj"
,"a";"aa"**@{.}
,"b";"bb"**@{.}
,"c";"cc"**@{.}
,"d";"dd"**@{.}
,"e";"ee"**@{.}
,"f";"ff"**@{.}
,"g";"gg"**@{.}
,"h";"hh"**@{.}
,"i";"ii"**@{.}
,"j";"jj"**@{.}
,"a";"aaa"**@{.}
,"b";"bbb"**@{.}
,"c";"ccc"**@{.}
,"d";"ddd"**@{.}
,"e";"eee"**@{.}
,"f";"fff"**@{.}
,"g";"ggg"**@{.}
,"h";"hhh"**@{.}
,"i";"iii"**@{.}
,"j";"jjj"**@{.}
,"aa";"aaaa"**@{.}
,"bb";"bbbb"**@{.}
,"cc";"cccc"**@{.}
,"dd";"dddd"**@{.}
,"ee";"eeee"**@{.}
,"ff";"ffff"**@{.}
,"gg";"gggg"**@{.}
,"hh";"hhhh"**@{.}
,"ii";"iiii"**@{.}
,"jj";"jjjj"**@{.}
,(-30,-20);(30,-20)**@{--}
,(-30,-20);(-30,-30)**\crv{}
,(-30,-30);(30,-30)**\crv{}
,(30,-30);(30,-20)**\crv{}
\end{xy}$}
\caption{3-manifold with g-corners $X_P'\cong X_P$ in \eq{cc3eq1}}
\label{cc3fig1}
\end{figure}

Figure \ref{cc3fig1} is a sketch of $X_P'$ as a square-based 3-dimensional infinite pyramid. As in Remark \ref{cc3rem2}, $X_P'$ has vertex $(0,0,0,0)$ corresponding to $\de_0\in X_P$. It has 1-dimensional edges of points $(x_1,0,0,0),(0,x_2,0,0), \ab(0,0,x_3,0),(0,0,0,x_4)$, and 2-dimensional faces of points $(x_1,0,x_3,0),\! (x_1,0,0,x_4),\! \ab(0,x_2,x_3,0),\!\ab (0,\ab x_2,\ab 0,\ab x_4)$. Its interior $X_P^{\prime\ci}\ab\cong\R^3$ consists of points $(x_1,x_2,x_3,x_4)$ with $x_1,\ldots, x_4$ non-zero and $x_1x_2=x_3x_4$. Then $X_P\sm\{\de_0\}$ is a 3-manifold with corners, but $X_P$ is not a manifold with corners near $\de_0$, as it is not locally isomorphic to~$\R^n_k$. 
\end{ex}

We give more material on manifolds with g-corners in \S\ref{cc34}, \S\ref{cc35}, and~\S\ref{cc671}.

\subsection{Boundaries, corners, and the corner functor}
\label{cc34}

This section follows the second author \cite{Joyc1} and~\cite[\S 2.2 \& \S 3.4]{Joyc6}. 

\begin{dfn}
\label{cc3def7}
Let $X$ be a manifold with corners, or g-corners, of dimension $n$. For each $x\in X$ we define the {\it depth\/} $\depth_X(x)\in\{0,1,\ldots,n\}$ as follows: if $X$ is a manifold with corners then $\depth_X(x)=k$ if $X$ near $x$ is locally modelled on $\R^n_k$ near 0. And if $X$ is a manifold with g-corners then $\depth_X(x)=k$ if $X$ near $x$ is locally modelled on $X_P\t\R^{n-k}$ near $(\de_0,0)$, using Remark \ref{cc3rem2}, where $P$ is a toric monoid of rank $k$.

Write $S^k(X)=\{x\in X:\depth_X(x)=k\}$ for $k=0,\ldots,n$, the {\it codimension $k$ boundary stratum}. This defines a stratification $X=\coprod_{k=0}^nS^k(X)$ called the {\it depth stratification}. Then $S^k(X)$ has a natural structure of a manifold without boundary of dimension $n-k$. Its closure is $\ov{S^k(X)}=\coprod_{l=k}^nS^l(X)$. The {\it interior\/} of $X$ is~$X^\ci:=S^0(X)$.

Define a  {\it local $k$-corner component\/ $\ga$ of\/ $X$ at\/} $x\in X$ to be a local choice of connected component of $S^k(X)$ near $x$. That is, for any sufficiently small open neighbourhood $V$ of $x$ in $X$, then $\ga$ assigns a choice of connected component $W$ of $V\cap S^k(X)$ with $x\in\ov W$, and if $V',W'$ are alternative choices then $x\in \overline{W\cap W'}$. The local $1$-corner components are called {\it local boundary components\/} of~$X$.

As sets, we define the {\it boundary} and {\it $k$-corners} of $X$ for $k=0,\ldots,n$ by
\e
\begin{split}
\pd X&=\bigl\{(x,\be):\text{$x\in X$, $\be$ is a local boundary
component of $X$ at $x$}\bigr\},\\
C_k(X)&=\bigl\{(x,\ga):\text{$x\in X$, $\ga$ is a local $k$-corner 
component of $X$ at $x$}\bigr\},
\end{split}
\label{cc3eq2}
\e
for $k=0,1,\ldots, n$. This implies that $\pd X=C_1(X)$ and $C_0(X)\cong X$. In \cite[\S 2.2 \& \S 3.4]{Joyc6} we define natural structures of a manifold with corners, or g-corners (depending on which $X$ is) on $\pd X$ and $C_k(X)$, so that $\dim\pd X=n-1$ and $\dim C_k(X)=n-k$. The interiors are $(\pd X)^\ci\cong S^1(X)$ and $C_k(X)^\ci\cong S^k(X)$. We define smooth maps $\Pi_X:\pd X\ra X$, $\Pi_X:C_k(X)\ra X$ by $\Pi_X:(x,\be)\mapsto x$ and $\Pi_X:(x,\ga)\mapsto x$. These are generally neither interior, not injective.
\end{dfn}

The next definition will be important in Chapter \ref{cc5}, as a manifold with corners corresponds to an {\it affine\/} $C^\iy$-scheme with corners if it is a manifold with faces. We extend Definition \ref{cc3def8} to manifolds with g-corners in Definition~\ref{cc4def12}.

\begin{dfn}
\label{cc3def8}
A manifold with corners $X$ is called a {\it manifold with faces\/} if $\Pi_X\vert_F:F\ra X$ is injective for each connected component $F$ of $\pd X$. The {\it faces\/} of $X$ are the components of $\pd X$, regarded as subsets of $X$. Melrose \cite{Melr1,Melr2,Melr3} assumes his manifolds with corners are manifolds with faces. Write $\bf{Man^f_{in}}\subset\Mancin$, $\bf{Man^f}\subset\Manc$ for the full subcategories of manifolds with faces.
\end{dfn}

\begin{ex}
\label{cc3ex4}
Define the {\it teardrop\/} to be the subset $T=\bigl\{(x,y)\in\R^2:x\ge 0$, $y^2\le x^2-x^4\bigr\}$, as in  \cite[Ex.~2.8]{Joyc6}. As shown in Figure \ref{cc3fig2}, $T$ is a manifold with corners of dimension 2. The teardrop is not a manifold with faces as in Definition \ref{cc3def8}, since the boundary is diffeomorphic to $[0,1]$, and so is connected, but the map $\Pi_T:\pd T\ra T$ is not injective. 
\begin{figure}[htb]
\begin{xy}
,(-1.5,0)*{}
,<6cm,-1.5cm>;<6.7cm,-1.5cm>:
,(3,.3)*{x}
,(-1.2,2)*{y}
,(-1.5,0)*{\bullet}
,(-1.5,0); (1.5,0) **\crv{(-.5,1)&(.1,1.4)&(1.5,1.2)}
?(.06)="a"
?(.12)="b"
?(.2)="c"
?(.29)="d"
?(.4)="e"
?(.5)="f"
?(.6)="g"
?(.7)="h"
?(.83)="i"
,(-1.5,0); (1.5,0) **\crv{(-.5,-1)&(.1,-1.4)&(1.5,-1.2)}
?(.06)="j"
?(.12)="k"
?(.2)="l"
?(.29)="m"
?(.4)="n"
?(.5)="o"
?(.6)="p"
?(.7)="q"
?(.83)="r"
,"a";"j"**@{.}
,"b";"k"**@{.}
,"c";"l"**@{.}
,"d";"m"**@{.}
,"e";"n"**@{.}
,"f";"o"**@{.}
,"g";"p"**@{.}
,"h";"q"**@{.}
,"i";"r"**@{.}
\ar (-1.5,0);(3,0)
\ar (-1.5,0);(-3,0)
\ar (-1.5,0);(-1.5,2)
\ar (-1.5,0);(-1.5,-2)
\end{xy}
\caption{The teardrop $T$}
\label{cc3fig2}
\end{figure}
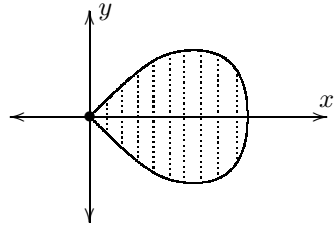
\end{ex}

\begin{rem}
\label{cc3rem3}
Let $X$ be a manifold with (g-)corners. Then smooth functions $f:X\ra\R$ are local objects: we can combine them using partitions of unity. In contrast, exterior functions $g:X\ra[0,\iy)$ as in Definition \ref{cc3def3} cannot be combined using partitions of unity, and have some non-local, global features.

To see this, observe that if $g:X\ra[0,\iy)$ is an exterior function, we may define a {\it multiplicity function\/} $\mu_g:\pd X\ra\N\cup\{\iy\}$ giving the order of vanishing of $g$ on the boundary faces of $X$. If $x\in X$ and $(x_1,\ldots,x_n)\in\R^n_k$ are local coordinates on $X$ near $x$ with $x=(0,\ldots,0)$ then either $g=x_1^{a_1}\cdots x_k^{a_k}F(x_1,\ldots,x_n)$ near $x$ for $F>0$ smooth, and we set $\mu_g(x,\{x_i=0\})=a_i$ for $i=1,\ldots,k$, or $g=0$ near $x$, when we write~$\mu_g(x,\{x_i=0\})=\iy$.

Then $\mu_g:\pd X\ra\N\cup\{\iy\}$ is locally constant, and hence constant on each connected component of $\pd X$. Also $g$ interior implies $\mu_g$ maps to $\N\subset\N\cup\{\iy\}$. If two points $x_1,x_2\in X$ lie in the image of the same connected component of $\pd X$ then the behaviour of $g$ near $x_1,x_2$ is linked even if $x_1,x_2$ are far away in $X$. Interior functions $\{g_i:i\in I\}$ can be combined with a partition of unity $\{\eta_i:i\in I\}$ to give interior $\sum_{i\in I}\eta_ig_i$ if the $\mu_{g_i}$ for $i\in I$ are all equal.
\end{rem}

The boundary $\pd X$ and corners $C_k(X)$ are not functorial under smooth or interior maps: smooth $f:X\ra Y$ generally do not lift to smooth $\pd f:\pd X\ra\pd Y$. However, as in the second author \cite[\S 4]{Joyc1} and \cite[\S 2.2 \& \S 3.4]{Joyc6}, the corners $C(X)=\coprod_{k=0}^{\dim X} C_k(X)$ do behave functorially.

\begin{dfn}
\label{cc3def9}
Let $X$ be a manifold with corners, or with g-corners. 
The {\it corners} of $X$ is the manifold with (g-)corners of mixed dimension
\begin{equation*}
C(X)=\ts\coprod_{k=0}^{\dim X} C_k(X),
\end{equation*}
as an object of $\cManc$ or $\cMangc$ in Definitions \ref{cc3def2} and \ref{cc3def6}. By \eq{cc3eq2} we have
\begin{equation*}
C(X)=\bigl\{(x,\ga):\text{$x\in X$, $\ga$ is a local $k$-corner 
component of $X$ at $x$, $k\ge 0$}\bigr\}.
\end{equation*}
Define morphisms $\Pi_X:C(X)\ra X$ and $\io_X:X\ra C(X)$ in $\cManc$ or $\cMangc$ by $\Pi_X:(x,\ga)\mapsto x$ and $\io_X:x\mapsto (x,X^\ci)$, that is, $\io_X:X\,{\buildrel\cong\over\longra}\,C^0(X)\subset C(X)$. Here $\io_X$ is interior, but $\Pi_X$ generally is not.

Now let $f:X\ra Y$ be a morphism in $\Manc$ or $\Mangc$. Suppose $\ga$ is a local $k$-corner component of $X$ at $x\in X$. For each small open neighbourhood $V$ of $x$ in $X$, $\ga$ gives a connected component $W$ of $V\cap S^k(X)$ with $x\in\overline W$. As $f$ preserves the stratifications $X=\coprod_{k\ge 0}S^k(X)$, $Y=\coprod_{l\ge 0}S^l(Y)$, we have $f(W)\subseteq S^l(Y)$ for some $l\ge 0$. Since $f$ is continuous, $f(W)$ is connected, and $f(x)\in\ov{f(W)}$. Thus there is a unique $l$-corner component $f_*(\ga)$ of $Y$ at $f(x)$, such that if $\ti V$ is a sufficiently small open neighbourhood of $f(x)$ in $Y$, then the connected component $\ti W$ of $\ti V\cap S^l(Y)$ given by $f_*(\ga)$ has $\ti W\cap f(W)\ne\es$. This $f_*(\ga)$ is independent of the choice of sufficiently small $V,\ti V$, so is well-defined.

Define a map $C(f):C(X)\ra C(Y)$ by $C(f):(x,\ga)\mapsto (f(x),f_*(\ga))$. 
As in \cite[\S 2.2 \& \S 3.4]{Joyc6}, this $C(f)$ is smooth, and interior, and so is a morphism in $\cMangcin$. Also $C(g\ci f)=C(g)\ci C(f)$ and $C(\id_X)=\id_{C(X)}$. Thus we have defined functors $C:\Manc\ra\cMancin$ and $C:\Mangc\ra\cMangcin$, which we call the {\it corner functors}. We extend them to $C:\cManc\ra\cMancin$ and $C:\cMangc\ra\cMangcin$ by $C(\coprod_{m\ge 0}X_m)=\coprod_{m\ge 0}C(X_m)$. 

Consider the inclusions of subcategories $\inc:\cMancin\hookra\cManc$ and $\inc:\cMangcin\hookra\cMangc$. The morphisms $\Pi_X:C(X)\ra X$ give a natural transformation $\Pi:\inc\ci C\Ra\Id$ in $\cManc$ or $\cMangc$. The morphisms $\io_X:X\ra C(X)$ give a natural transformation $\io:\Id\Ra C\ci\inc$ in $\cMancin$ or $\cMangcin$.
\end{dfn}

\begin{rem}
\label{cc3rem4}
If $X$ is an object in $\cManc$ or $\cMangc$, and $Y$ an object in $\cMancin$ or $\cMangcin$, it is easy to check that we have inverse 1-1 correspondences
\begin{equation*}
\xymatrix@C=80pt{ \Hom_{\begin{subarray}{l} \cManc \text{ or}\\ \cMangc  \end{subarray}}(\inc X,Y) \ar@<.5ex>[r]^{f\longmapsto C(f)\ci \io_X} & \Hom_{\begin{subarray}{l} \cMancin \text{ or}\\ \cMangcin  \end{subarray}}(X,C(Y)). \ar@<.5ex>[l]^{g\longmapsto \Pi_Y\ci g} }
\end{equation*}
Therefore $C:\cManc\ra\cMancin$ and $C:\cMangc\ra\cMangcin$ are {\it right adjoint\/} to $\inc:\cMancin\hookra\cManc$ and $\inc:\cMangcin\hookra\cMangc$, and $\Pi,\io$ are the units of the adjunction. (This is a new observation, not in~\cite{Joyc1,Joyc6}.)

This implies that the corner functors $C$, and hence the boundary $\pd X$ and corners $C_k(X)$ of a manifold with (g-)corners $X$, are not arbitrary constructions, but are determined up to natural isomorphism by the inclusions of subcategories $\Mancin\subset\Manc$ and~$\Mangcin\subset\Mangc$.
\end{rem}

\subsection{Tangent bundles and b-tangent bundles}
\label{cc35}

Finally we briefly discuss (co)tangent bundles of manifolds with (g-)corners. For a manifold with corners $X$ there are two kinds: the ({\it ordinary\/}) {\it tangent bundle\/} $TX$, the obvious generalization of tangent bundles of manifolds, and the {\it b-tangent bundle\/} ${}^bTX$ introduced by Melrose \cite[\S 2.2]{Melr2}, \cite[\S I.10]{Melr3}, \cite[\S 2]{Melr1}. The duals of the tangent bundle and b-tangent bundle are the {\it cotangent bundle\/} $T^*X$ and {\it b-cotangent bundle\/} ${}^bT^*X$. If $X$ is a manifold with g-corners, then ${}^bTX$ is well defined, but in general $TX$ is not. We follow~\cite[\S 2.3 \& \S 3.5]{Joyc6}. 

\begin{rem}
\label{cc3rem5}
{\bf(a)} Let $X$ be a manifold with corners of dimension $n$. The {\it tangent bundle\/} $TX\ra X$, {\it cotangent bundle\/} $T^*X\ra X$, {\it b-tangent bundle\/} ${}^bTX\ra X$, and {\it b-cotangent bundle\/} ${}^bT^*X\ra X$, are natural rank $n$ vector bundles on $X$, defined in detail in \cite[\S 2.3]{Joyc6}. In local coordinates, if $(x_1,\ldots,x_k,x_{k+1},\ldots,x_n)\in\R^n_k$ are local coordinates on an open set $\phi(U)\subset X$, from a chart $(U,\phi)$ with $U\subset\R^n_k$ open, then $TX,\ldots,{}^bT^*X$ are given on $\phi(U)$ by the bases of sections:
\begin{align*}
TX\vert_{\phi(U)}&=\Bigl\langle\frac{\pd}{\pd x_1},\ldots,\frac{\pd}{\pd x_k},\frac{\pd}{\pd x_{k+1}},\ldots,\frac{\pd}{\pd x_n}\Bigr\rangle_\R,\\
T^*X\vert_{\phi(U)}&=\bigl\langle \d x_1,\ldots,\d x_k,\d x_{k+1},\ldots,\d x_n\bigr\rangle_\R,\\
{}^bTX\vert_{\phi(U)}&=\Bigl\langle x_1\frac{\pd}{\pd x_1},\ldots,x_k\frac{\pd}{\pd x_k},\frac{\pd}{\pd x_{k+1}},\ldots,\frac{\pd}{\pd x_n}\Bigr\rangle_\R,\\
{}^bT^*X\vert_{\phi(U)}&=\bigl\langle x_1^{-1}\d x_1,\ldots,x_k^{-1}\d x_k,\d x_{k+1},\ldots,\d x_n\bigr\rangle_\R.
\end{align*}
Note that ``$x_i\frac{\pd}{\pd x_i}$'' is a nonzero section of ${}^bTX$  even where $x_i=0$ for $i=1,\ldots,k$, and ``$x_i^{-1}\d x_i$'' is a well defined section of ${}^bT^*X$ even where $x_i=0$. These are formal symbols, but are useful as they determine the transition functions for ${}^bTX,{}^bT^*X$ under change of coordinates $(x_1,\ldots,x_m)\rightsquigarrow(\ti x_1,\ldots,\ti x_m)$, etc.

There is a natural morphism of vector bundles $I_X:{}^bTX\ra TX$ mapping $x_i\frac{\pd}{\pd x_i}\mapsto x_i\cdot \frac{\pd}{\pd x_i}$ for $i=1,\ldots,k$ and $\frac{\pd}{\pd x_i}\mapsto \frac{\pd}{\pd x_i}$ for $i=k+1,\ldots,n$. It is an isomorphism over $X^\ci$, and has kernel of rank $k$ over~$S^k(X)$.
\smallskip

\noindent{\bf(b)} Tangent bundles $TX$ in $\Manc$ are functorial under smooth maps. That is, for any morphism $f:X\ra Y$ in $\Manc$ there is a corresponding morphism $Tf:TX\ra TY$ defined in \cite[Def.~2.14]{Joyc6}, functorial in $f$ and linear on the fibres of $TX,TY$, in a commuting diagram
\begin{equation*}
\xymatrix@C=80pt@R=15pt{ *+[r]{TX} \ar[d]^\pi \ar[r]_{Tf} &
*+[l]{TY} \ar[d]_\pi \\ *+[r]{X} \ar[r]^f & *+[l]{Y.\!} }
\end{equation*}
Equivalently, we may write $Tf$ as a vector bundle morphism $\d f:TX\ra f^*(TY)$ on $X$. It has a dual morphism $\d f^*:f^*(T^*Y)\ra T^*X$ on cotangent bundles.

Similarly, b-tangent bundles ${}^bTX$ in $\Manc$ are functorial under {\it interior\/} maps. That is, for any morphism $f:X\ra Y$ in $\Mancin$ there is a corresponding morphism ${}^bTf:{}^bTX\ra {}^bTY$ defined in \cite[Def.~2.15]{Joyc6}, functorial in $f$ and linear on the fibres of ${}^bTX,{}^bTY$, in a commuting diagram
\begin{equation*}
\xymatrix@C=30pt@R=15pt{ {{}^bTX} \ar[ddr]_(0.6)\pi \ar[dr]^{I_X}
\ar[rrr]_{{}^bTf} &&&
{{}^bTY} \ar[dr]^{I_Y} \ar[ddr]_(0.6)\pi \\
& {TX} \ar[d]^\pi \ar[rrr]^{Tf} &&&
{TY} \ar[d]^\pi \\
& {X} \ar[rrr]^f &&& {Y.\!} }
\end{equation*}
Equivalently, we have a vector bundle morphism ${}^b\d f:{}^bTX\ra f^*({}^bTY)$ on $X$, with a dual morphism ${}^b\d f^*:f^*({}^bT^*Y)\ra {}^bT^*X$ on b-cotangent bundles.
\smallskip

\noindent{\bf(c)} As in \cite[\S 3.5]{Joyc6}, tangent bundles $TX\ra X$ do not extend to manifolds with g-corners $X$. That is, for each $x\in X$ we can define a tangent space $T_xX$ with the usual functorial properties of tangent spaces, but $\dim T_xX$ need not be locally constant on $X$, so $T_xX$, $x\in X$ are not the fibres of a vector bundle.

However, b-(co)tangent bundles ${}^bTX\ra X$, ${}^bT^*X\ra X$ do extend nicely to manifolds with g-corners. If $X$ is locally modelled on $X_P$ for a weakly toric monoid $P$, as in \S\ref{cc33}, then ${}^bTX$ is locally trivial with fibre $\Hom_\Mon(P,\R)$, and ${}^bT^*X$ is locally trivial with fibre $P\ot_\N\R$. B-tangent bundles are functorial under interior maps, as in {\bf(b)}. So if $f:X\ra Y$ is a morphism in $\Mangcin$, as in \cite[Def.~3.43]{Joyc6} we have a morphism ${}^bTf:{}^bTX\ra {}^bTY$, and vector bundle morphisms ${}^b\d f:{}^bTX\ra f^*({}^bTY)$ and ${}^b\d f^*:f^*({}^bT^*Y)\ra {}^bT^*X$ on~$X$.
\smallskip

\noindent{\bf(d)} Let $X$ be a manifold with (g-)corners. Then \cite[\S 3.6]{Joyc6} constructs a natural exact sequence of vector bundles of mixed rank on $C(X)$:
\e
\smash{\xymatrix@C=18pt{ 0 \ar[r] & {}^bN_{C(X)} \ar[rr]^(0.45){{}^bi_T} && \Pi_X^*({}^bTX) \ar[rr]^(0.5){{}^b\pi_T} && {}^bT(C(X)) \ar[r] & 0. }}
\label{cc3eq3}
\e
Here ${}^bN_{C(X)}$ is called the {\it b-normal bundle\/} of $C(X)$ in $X$. It has rank $k$ on $C_k(X)$ for $k\ge 0$. Note that ${}^b\pi_T$ is {\it not\/} ${}^b\d\Pi_X$ from {\bf(c)} for $\Pi_X:C(X)\ra X$; ${}^b\d\Pi_X$ is undefined as $\Pi_X$ is not interior, and would go the opposite way to~${}^b\pi_T$.

Here is the dual complex to \eq{cc3eq3}, with ${}^bN^*_{C(X)}$ the {\it b-conormal bundle:}
\e
\smash{\xymatrix@C=18pt{ 0 \ar[r] & {}^bT^*(C(X)) \ar[rr]^(0.45){{}^b\pi^*_T} && \Pi_X^*({}^bT^*X) \ar[rr]^(0.5){{}^bi^*_T} && {}^bN^*_{C(X)} \ar[r] & 0. }}
\label{cc3eq4}
\e
In \cite[\S 3.6]{Joyc6} we define the {\it monoid bundle\/} $M_{C(X)}\ra C(X)$, which is a locally constant bundle of toric monoids with an embedding $M_{C(X)}\subset{}^bN_{C(X)}$ such that ${}^bN_{C(X)}\cong M_{C(X)}\ot_\N\R$. The monoid fibres have rank $k$ on $C_k(X)$. If $X$ is a manifold with corners then $M_{C(X)}$ has fibre $\N^k$ on $C_k(X)$, and the embedding $M_{C(X)}\subset{}^bN_{C(X)}$ is locally modelled on $\N^k\subset\R^k$. We also have a dual {\it comonoid bundle\/} $M_{C(X)}^\vee\ra C(X)$, with an embedding $M_{C(X)}^\vee\subset{}^bN^*_{C(X)}$.

If $(x,\ga)\in C(X)$, then $X$ near $x$ is locally diffeomorphic to $X_P\t Y$ for $P$ a toric monoid and $Y$ a manifold with g-corners, such that $\ga$ is identified with $\{\de_0\}\t Y$ for $\de_0$ the vertex in $X_P$. Then near $(x,\ga)$ in $C(X)$, the fibres of ${}^bN_{C(X)},{}^bN^*_{C(X)},M_{C(X)},M_{C(X)}^\vee$ are $\Hom_\Mon(P,\R),P\ot_\N\R,P^\vee,P$, respectively.
\end{rem}

\begin{ex}
\label{cc3ex5}
Let $X$ be a manifold with corners, and $(x,\ga)\in C_k(X)^\ci\subset C(X)$. We will explain the ideas of Remark \ref{cc3rem5}(d) near $(x,\ga)$. We can choose local coordinates $(x_1,\ldots,x_n)\in \R^n_k$ near $x$ in $X$, such that the local boundary component $\ga$ of $X$ at $x$ is $x_1=\cdots=x_k=0$ in coordinates. Then $(x_{k+1},\ldots,x_n)$ in $\R^{n-k}$ are local coordinates on $C_k(X)^\ci$ near $(x,\ga$), and \eq{cc3eq3}--\eq{cc3eq4} become
\begin{gather*}
\text{\begin{footnotesize}$\displaystyle
\xymatrix@!0@C=18pt@R=25pt{ *+[r]{0} \ar[rrr] &&& {}^bN_{C(X)} \ar@{=}[d] \ar[rrrrrr]_(0.45){{}^bi_T} &&&&&& \Pi_X^*({}^bTX) \ar@{=}[d] \ar[rrrrrr]_(0.5){{}^b\pi_T} &&&&&& {}^bT(C(X)) \ar@{=}[d] \ar[rrr] &&& *+[l]{0}  \\
*+[r]{\bigl\langle x_1\frac{\pd}{\pd x_1},\ldots,x_k\frac{\pd}{\pd x_k}\bigr\rangle_{\sst\R}} \ar[rrrrrrrrr] &&& {\vphantom{\bigl\langle}} &&&&&& \bigl\langle x_1\frac{\pd}{\pd x_1},\ldots,x_k\frac{\pd}{\pd x_k},\frac{\pd}{\pd x_{k+1}},\ldots,\frac{\pd}{\pd x_n}\bigr\rangle_{\sst\R} \ar[rrrrrrrrr] &&&&&& {\vphantom{\bigl\langle}} &&& *+[l]{\bigl\langle\frac{\pd}{\pd x_{k+1}},\ldots,\frac{\pd}{\pd x_n}\bigr\rangle_{\sst\R},\!\!}  }$\end{footnotesize}}\\
\text{\begin{footnotesize}$\displaystyle
\xymatrix@!0@C=9pt@R=25pt{ *+[r]{0} \ar[rrrrrr] &&&&&& {}^bT^*(C(X)) \ar@{=}[d]\ar[rrrrrrrrrrrr]_(0.45){{}^b\pi^*_T} &&&&&&&&&&&& \Pi_X^*({}^bT^*X) \ar@{=}[d]\ar[rrrrrrrrrrrr]_(0.5){{}^bi^*_T} &&&&&&&&&&&& {}^bN^*_{C(X)} \ar@{=}[d] \ar[rrrrrr] &&&&&& *+[l]{0}  \\
*+[r]{\bigl\langle \d x_{k+1},\ldots,\d x_n\bigr\rangle_{\sst\R}\!\!} \ar[rrrrrrrrrrrrrrrrr] &&&&&& {\vphantom{\bigl\langle}} &&&&&&&&&&& {\bigl\langle x_1^{-1}\d x_1,\ldots,x_k^{-1}\d x_k,\d x_{k+1},\ldots,\d x_n\bigr\rangle_{\sst\R}\!\!} \ar[rrrrrrrrrrrrrrrrrrr] & {\vphantom{\bigl\langle}} &&&&&&&&&&&& {\vphantom{\bigl\langle}} &&&&&& *+[l]{\bigl\langle x_1^{-1}\d x_1,\ldots,x_k^{-1}\d x_k\bigr\rangle_{\sst\R}.\!\!}  }$\end{footnotesize}}
\end{gather*}
Under these identifications we have
\begin{equation*}
M_{C(X)}=\ts\bigl\langle x_1\frac{\pd}{\pd x_1},\ldots,x_k\frac{\pd}{\pd x_k}\bigr\rangle_{\N},\quad
M_{C(X)}^\vee=\bigl\langle x_1^{-1}\d x_1,\ldots,x_k^{-1}\d x_k\bigr\rangle_{\N}.
\end{equation*}
\end{ex}

\section{\texorpdfstring{(Pre) $C^\iy$-rings with corners}{(Pre) C∞-rings with corners}}
\label{cc4}

Sections \ref{cc41}--\ref{cc43} study the most obvious generalization of (categorical) $C^\iy$-rings to manifolds with corners, which we will call ({\it categorical\/}) {\it pre $C^\iy$-rings with corners}. These were introduced by Kalashnikov \cite{Kala}, who called them $C^\iy$-rings with corners. But we define $C^\iy$-{\it rings with corners\/} in \S\ref{cc44} as the full subcategory of pre $C^\iy$-rings with corners $\bfC=(\fC,\fC_\rex)$ satisfying an extra condition, that invertible functions in $\fC_\rex$ should have logs in $\fC$. This will make $C^\iy$-rings and $C^\iy$-schemes with corners better behaved. Sections \ref{cc45}--\ref{cc47} develop the theory of $C^\iy$-rings with corners.

\subsection{\texorpdfstring{Categorical pre $C^\iy$-rings with corners}{Categorical pre C∞-rings with corners}}
\label{cc41}

Here is the obvious generalization of Definition~\ref{cc2def1}.

\begin{dfn} 
\label{cc4def1}
Write $\Eucc,\Euccin$ for the full subcategories of $\Manc,\Mancin$ with objects the Euclidean spaces with corners $\R^n_k\!=\![0,\iy)^k\!\t\!\R^{n-k}$ for $0\!\le\! k\!\le\! n$. We have inclusions of subcategories
\e
\Euc\subset\Euccin\subset\Eucc.
\label{cc4eq1}
\e

Define a {\it categorical pre $C^\iy$-ring with corners\/} to be a product-preserving functor $F:\Eucc\ra\Sets$. Here $F$ should also preserve the empty product, i.e.\ it maps $\R^0\t[0,\iy)^0=\{\es\}$ in $\Eucc$ to the terminal object in $\Sets$, the point~$*$.

If $F,G:\Eucc\ra\Sets$ are categorical pre $C^\iy$-rings with corners, a {\it morphism\/} $\eta:F\ra G$ is a natural transformation $\eta:F\Ra G$. Such natural transformations are automatically product-preserving. We write $\CPCRingsc$ for the category of categorical pre $C^\iy$-ring with corners. 

Define a {\it categorical interior pre $C^\iy$-ring with corners\/} to be a product-preserving functor $F:\Euccin\ra\Sets$. These form a category $\CPCRingscin$, with morphisms natural transformations. 

Define a commutative triangle of functors
\e
\begin{gathered}
\xymatrix@C=50pt@R=12pt{ \CPCRingsc \ar[rr]_{\Pi_\CPCRingsc^\CCRings} \ar[dr]_(0.4){\Pi_\CPCRingsc^\CPCRingscin\,\,\,\,\,} && \CCRings \\
& \CPCRingscin \ar[ur]_(0.6){\,\,\,\,\,\Pi_\CPCRingscin^\CCRings} }\!\!\!\!\!\!\!\!\!\!\!\!{}
\end{gathered}
\label{cc4eq2}
\e
by restriction to subcategories in \eq{cc4eq1}, so that for example $\Pi_\CPCRingsc^\CPCRingscin$ maps $F:\Eucc\ra\Sets$ to $F\vert_\Euccin:\Euccin\ra\Sets$. 
\end{dfn}

Here is the motivating example:

\begin{ex}
\label{cc4ex1}
{\bf(a)} Let $X$ be a manifold with corners. Define a categorical pre $C^\iy$-ring with corners $F:\Eucc\ra\Sets$ by $F=\Hom_\Manc(X,-)$. That is, for objects $\R^m\t[0,\iy)^n$ in $\Eucc\subset\Manc$ we have
\begin{equation*}
F\bigl(\R^m\t[0,\iy)^n\bigr)=\Hom_\Manc\bigl(X,\R^m\t[0,\iy)^n\bigr),
\end{equation*} 
and for morphisms $g:\R^m\t[0,\iy)^n\ra \R^{m'}\t[0,\iy)^{n'}$ in $\Eucc$ we have 
\begin{equation*}
F(g)=g\ci:\Hom_\Manc\bigl(X,\R^m\t[0,\iy)^n\bigr)\longra \Hom_\Manc\bigl(X,\R^{m'}\t[0,\iy)^{n'}\bigr)
\end{equation*}
mapping $F(g):h\mapsto g\ci h$. Let $f:X\ra Y$ be a smooth map of manifolds with corners, and $F,G:\Eucc\ra\Sets$ the functors corresponding to $X,Y$. Define a natural transformation $\eta:G\Ra F$ by
\begin{equation*}
\eta\bigl(\R^m\t[0,\iy)^n\bigr)=\ci f:\Hom\bigl(Y,\R^m\t[0,\iy)^n\bigr)\longra \Hom\bigl(X,\R^m\t[0,\iy)^n\bigr)
\end{equation*}
mapping $\eta:h\mapsto h\ci f$. 

Define a functor $F_\Manc^\CPCRingsc:\Manc\ra(\CPCRingsc)^{\bf op}$ to map $X\mapsto F$ on objects, and $f\mapsto\eta$ on morphisms, for $X,Y,F,G,f,\eta$ as above. 

All of this also works if $X,Y$ are manifolds with g-corners, as in \S\ref{cc33}, giving a functor~$F_\Mangc^\CPCRingsc:\Mangc\ra(\CPCRingsc)^{\bf op}$.
\smallskip

\noindent{\bf(b)} Similarly, if $X$ is a manifold with (g-)corners, define a categorical interior pre $C^\iy$-ring with corners $F:\Euccin\ra\Sets$ by $F=\Hom_\Mancin(X,-)$. This gives functors $F_\Mancin^\CPCRingscin:\Mancin\ra(\CPCRingscin)^{\bf op}$ and $F_\Mangcin^\CPCRingscin:\Mangcin\ra(\CPCRingscin)^{\bf op}$.
\end{ex}

In the language of Algebraic Theories, as in Ad\'amek, Rosick\'y and Vitale \cite{ARV}, $\Euc, \Eucc,\ab\Euccin$ are examples of {\it algebraic theories\/} (that is, small categories with finite products), and $\CCRings,\CPCRingsc,\CPCRingscin$ are the corresponding {\it categories of algebras}. Also the inclusions of subcategories \eq{cc4eq1} are {\it morphisms of algebraic theories}, and the functors \eq{cc4eq2} the corresponding morphisms. So, as for Proposition \ref{cc2prop2}, Ad\'amek et al.\ \cite[Prop.s 1.21, 2.5, 9.3 \& Th.~4.5]{ARV} give important results on their categorical properties:

\begin{thm}
\label{cc4thm1}
{\bf(a)} All small limits and directed colimits exist in the categories $\CPCRingsc,\CPCRingscin,$ and they may be computed objectwise in $\Eucc,\ab\Euccin$ by taking the corresponding small limits/directed colimits in $\Sets$.
\smallskip

\noindent{\bf(b)} All small colimits exist in $\CPCRingsc,\CPCRingscin$ though in general they are not computed objectwise in $\Eucc,\Euccin$ as colimits in $\Sets$.

\smallskip

\noindent{\bf(c)} The functors $\Pi_\CPCRingsc^\CCRings,\ldots,\Pi_\CPCRingsc^\CPCRingscin$ in \eq{cc4eq2} have left adjoints, so they preserve limits.
\end{thm}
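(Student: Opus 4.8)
The plan is to recognize that $\Eucc$, $\Euccin$ and $\Euc$ are \emph{algebraic theories} in the sense of Adámek, Rosický and Vitale \cite{ARV} --- small categories equipped with all finite products --- and that $\CPCRingsc$, $\CPCRingscin$ and $\CCRings$ are precisely their categories of algebras, i.e.\ product-preserving functors to $\Sets$ with natural transformations as morphisms. Once this identification is made, parts (a)--(c) are immediate applications of the general machinery quoted as \cite[Prop.s 1.21, 2.5, 9.3 \& Th.~4.5]{ARV}, in exactly the way Proposition \ref{cc2prop2} is deduced for $\CRings$.

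First I would check that $\Eucc$ has finite products: the product of $\R^m\t[0,\iy)^n$ and $\R^{m'}\t[0,\iy)^{n'}$ in $\Manc$ is $\R^{m+m'}\t[0,\iy)^{n+n'}$, with the evident projection and diagonal maps, and the empty product is $\R^0\t[0,\iy)^0=*$; the category is small since its objects are indexed by pairs $(m,n)\in\N^2$ and its morphisms are sets of smooth maps. The same computation works inside $\Mancin$, because the projection and diagonal maps of $\R^{m+m'}\t[0,\iy)^{n+n'}$ are interior, so $\Euccin$ is likewise an algebraic theory; and the inclusions $\Euc\subset\Euccin\subset\Eucc$ of \eq{cc4eq1} preserve finite products, hence are morphisms of algebraic theories. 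Finally one notes that the functors $\Pi_\CPCRingsc^\CCRings,\ldots,\Pi_\CPCRingsc^\CPCRingscin$ in \eq{cc4eq2} are precisely the restriction functors along these morphisms of theories.

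For (a), ARV's structure theory for categories of algebras (\cite[Prop.~1.21]{ARV} for completeness and that limits are formed in $\Sets$, and \cite[Prop.~2.5]{ARV} for cocompleteness and for directed/filtered colimits being formed in $\Sets$) shows $\CPCRingsc,\CPCRingscin$ are complete and cocomplete, with small limits and directed colimits computed objectwise in $\Eucc,\Euccin$ by the corresponding (co)limits in $\Sets$ --- the only thing to observe is that an objectwise limit, or directed colimit, of product-preserving functors is again product-preserving, since finite products commute with small limits and with directed colimits in $\Sets$. This already yields the existence half of (b); for the final sentence of (b), that general colimits such as pushouts are not computed objectwise, one cites the same phenomenon seen in Proposition \ref{cc2prop2} for $\CRings$, e.g.\ that the coproduct $C^\iy(\R^m)\ot_\iy C^\iy(\R^n)\cong C^\iy(\R^{m+n})$ is strictly larger than the objectwise coproduct in $\Sets$. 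For (c), a morphism of algebraic theories induces a restriction functor between algebra categories that always admits a left adjoint, the ``algebraic'' left Kan extension, by \cite[Th.~4.5 \& Prop.~9.3]{ARV}; possessing a left adjoint, each $\Pi$ preserves all small limits.

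The step that I expect to require the most care is the verification that $\Euccin$ is genuinely an algebraic theory in its own right: one must confirm that the finite products of the objects $\R^m\t[0,\iy)^n$ formed in $\Mancin$ really do exist there (not merely in $\Manc$) and that the universal projection and diagonal morphisms are interior, so that the inclusions in \eq{cc4eq1} are product-preserving. Everything after that is a citation. A minor bookkeeping point, not needed for the statement of Theorem \ref{cc4thm1} as phrased, is the compatibility of the equivalences $F\mapsto(F(\R),F([0,\iy)))$ with the ``elementwise'' descriptions of $\PCRingsc,\PCRingscin$ introduced later; I would defer that to where those descriptions are set up.
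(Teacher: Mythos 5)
Your proposal is correct and follows essentially the same route as the paper: identify $\Eucc,\Euccin$ (like $\Euc$) as algebraic theories, view $\CPCRingsc,\CPCRingscin,\CCRings$ as their categories of algebras, and quote \cite[Prop.s 1.21, 2.5, 9.3 \& Th.~4.5]{ARV}, exactly as the paper does in parallel with Proposition \ref{cc2prop2}. Your extra check that finite products exist in $\Euccin$ with interior projections and diagonals is a sensible verification the paper leaves implicit.
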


\subsection{\texorpdfstring{Pre $C^\iy$-rings with corners}{Pre C∞-rings with corners}}
\label{cc42}

The next definition corresponds to Definition \ref{cc4def1} as Definitions \ref{cc2def1} and \ref{cc2def2} do.

\begin{dfn}
\label{cc4def2}
A {\it pre $C^\iy$-ring with corners\/} $\bfC$ assigns the data:
\begin{itemize}
\setlength{\itemsep}{0pt}
\setlength{\parsep}{0pt}
\item[(a)] Two sets $\fC$ and $\fC_\rex$.
\item[(b)] Operations $\Phi_f:\fC^m\t\fC_\rex^n\ra\fC$ for all smooth maps $f:\R^m\t[0,\iy)^n\ra\R$.
\item[(c)] Operations $\Psi_g:\fC^m\t\fC_\rex^n\ra\fC_\rex$ for all exterior $g:\R^m\t[0,\iy)^n\ra[0,\iy)$.
\end{itemize}
Here we allow one or both of $m,n$ to be zero, and consider $S^0$ to be the single point $\{\es\}$ for any set $S$. These operations must satisfy the following relations: 
\begin{itemize}
\setlength{\itemsep}{0pt}
\setlength{\parsep}{0pt}
\item[(i)] Suppose $k,l,m,n\ge 0$, and $e_i:\R^k\t[0,\iy)^l\ra\R$ is smooth for $i=1,\ldots,m$, and $f_j:\R^k\t[0,\iy)^l\ra[0,\iy)$ is exterior for $j=1,\ldots,n$, and $g:\R^m\t[0,\iy)^n\ra\R$ is smooth. Define smooth $h:\R^k\t[0,\iy)^l\ra\R$ by
\e
\begin{split}
h(x_1,\ldots,x_k,y_1,\ldots,y_l)=g\bigl(e_1(x_1,\ldots,y_l),\ldots,
e_m(x_1,\ldots,y_l)&,\\
f_1(x_1,\ldots,y_l),\ldots,f_n(x_1,\ldots,y_l)&\bigr).
\end{split}
\label{cc4eq3}
\e
Then for all $(c_1,\ldots,c_k,c'_1,\ldots,c'_l)\in\fC^k\t\fC_\rex^l$ we have
\begin{align*}
\Phi_h(c_1,\ldots,c_k,c'_1,\ldots,c'_l)=\Phi_g\bigl(
\Phi_{e_1}(c_1,\ldots,c'_l),\ldots,\Phi_{e_m}(c_1,\ldots,c'_l)&,\\
\Psi_{f_1}(c_1,\ldots,c'_l),\ldots,\Psi_{f_n}(c_1,\ldots,c'_l)&\bigr).
\end{align*}
\item[(ii)] Suppose $k,l,m,n\ge 0$, and $e_i:\R^k\t[0,\iy)^l\ra\R$ is smooth for $i=1,\ldots,m$, and $f_j:\R^k\t[0,\iy)^l\ra[0,\iy)$ is exterior for $j=1,\ldots,n$, and $g:\R^m\t[0,\iy)^n\ra[0,\iy)$ is exterior. Define exterior $h:\R^k\t[0,\iy)^l\ra[0,\iy)$ by \eq{cc4eq3}. Then for all $(c_1,\ldots,c_k,c'_1,\ldots,c'_l)\in\fC^k\t\fC_\rex^l$ we have
\begin{align*}
\Psi_h(c_1,\ldots,c_k,c'_1,\ldots,c'_l)=\Psi_g\bigl(
\Phi_{e_1}(c_1,\ldots,c'_l),\ldots,\Phi_{e_m}(c_1,\ldots,c'_l)&,\\
\Psi_{f_1}(c_1,\ldots,c'_l),\ldots,\Psi_{f_n}(c_1,\ldots,c'_l)&\bigr).
\end{align*}
\item[(iii)] Write $\pi_i:\R^m\t[0,\iy)^n\ra\R$ for projection to the $i^{\rm th}$ coordinate of $\R^m$ for $i=1,\ldots,m$, and $\pi'_j:\R^m\t[0,\iy)^n\ra[0,\iy)$ for projection to the $j^{\rm th}$ coordinate of $[0,\iy)^n$ for $j=1,\ldots,n$. Then for all $(c_1,\ldots,c_m,c'_1,\ldots,c'_n)$ in $\fC^m\t\fC_\rex^n$ and all $i=1,\ldots,m$, $j=1,\ldots,n$ we have
\end{itemize}
\begin{equation*}
\Phi_{\pi_i}(c_1,\ldots,c_m,c'_1,\ldots,c'_n)=c_i,\;\>
\Psi_{\pi'_j}(c_1,\ldots,c_m,c'_1,\ldots,c'_n)=c'_j.
\end{equation*}

We will refer to the operations $\Phi_f, \Psi_g$ as the \/{\it $C^\iy$-operations}\/, and we often write a pre $C^\iy$-ring with corners as a pair $\bfC=(\fC,\fC_\rex)$, leaving the $C^\iy$ operations implicit.

Let $\bfC=(\fC,\fC_\rex)$ and $\bfD=(\fD,\fD_\rex)$ be pre $C^\iy$-rings with corners. A {\it morphism\/} $\bs\phi:\bfC\ra\bfD$ is a pair $\bs\phi=(\phi,\phi_\rex)$ of maps $\phi:\fC\ra\fD$ and $\phi_\rex:\fC_\rex\ra\fD_\rex$, which commute with all the operations $\Phi_f,\Psi_g$ on $\bfC,\bfD$. Write $\PCRingsc$ for the category of pre $C^\iy$-rings with corners. 
\end{dfn}

As for Proposition \ref{cc2prop1}, we have:

\begin{prop}
\label{cc4prop1}
There is an equivalence of categories from $\CPCRingsc$ to\/ $\PCRingsc,$ which identifies $F:\Eucc\ra\Sets$ in $\CPCRingsc$ with\/ $\bfC=(\fC,\fC_\rex)$ in $\PCRingsc$ such that\/ $F\bigl(\R^m\t[0,\iy)^n\bigr)=\fC^m\t\fC_\rex^n$ for\/~$m,n\ge 0$.
\end{prop}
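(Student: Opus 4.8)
The plan is to mimic exactly the proof of Proposition \ref{cc2prop1} (the equivalence $\CRings\simeq\CCRings$), adapting it to the two-sorted setting. First I would construct a functor $G:\CPCRingsc\ra\PCRingsc$. Given a categorical pre $C^\iy$-ring with corners $F:\Eucc\ra\Sets$, set $\fC=F(\R)$ and $\fC_\rex=F([0,\iy))$; since $F$ preserves finite products, the canonical maps give natural identifications $F(\R^m\t[0,\iy)^n)\cong F(\R)^m\t F([0,\iy))^n=\fC^m\t\fC_\rex^n$. For each smooth $f:\R^m\t[0,\iy)^n\ra\R$, which is a morphism in $\Eucc$, define $\Phi_f=F(f):\fC^m\t\fC_\rex^n\ra\fC$ under these identifications, and similarly $\Psi_g=F(g)$ for exterior $g:\R^m\t[0,\iy)^n\ra[0,\iy)$. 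On morphisms, a natural transformation $\eta:F\Ra G'$ gives components $\eta(\R)=\phi$ and $\eta([0,\iy))=\phi_\rex$, and naturality with respect to all morphisms in $\Eucc$ is precisely the condition that $(\phi,\phi_\rex)$ commutes with all $\Phi_f,\Psi_g$.

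Next I would check that $G(F)$ really satisfies the axioms (i)--(iii) of Definition \ref{cc4def2}. Axiom (iii) is functoriality of $F$ applied to the projection morphisms $\pi_i,\pi'_j$ in $\Eucc$, together with the fact that the product identifications $F(\R^m\t[0,\iy)^n)\cong\fC^m\t\fC_\rex^n$ are built from exactly these projections. Axioms (i) and (ii) are functoriality of $F$ applied to the composition $h=g\ci(e_1,\ldots,e_m,f_1,\ldots,f_n)$ in $\Eucc$: one has to check that the map $(e_1,\ldots,f_n):\R^k\t[0,\iy)^l\ra\R^m\t[0,\iy)^n$ is indeed a morphism in $\Eucc$ (it is smooth, and lands in the product, so it equals the product of its components under the universal property), and then $F(h)=F(g)\ci F((e_1,\ldots,f_n))=F(g)\ci(F(e_1)\t\cdots\t F(f_n))$ under the product identifications, which is exactly the stated relation. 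Conversely I would define an inverse functor $H:\PCRingsc\ra\CPCRingsc$ sending $\bfC=(\fC,\fC_\rex)$ to the functor $F$ with $F(\R^m\t[0,\iy)^n)=\fC^m\t\fC_\rex^n$ and $F(f\t g)$ for a morphism $(f_1,\ldots,f_{m'},g_1,\ldots,g_{n'}):\R^m\t[0,\iy)^n\ra\R^{m'}\t[0,\iy)^{n'}$ given componentwise by $(\Phi_{f_1},\ldots,\Phi_{f_{m'}},\Psi_{g_1},\ldots,\Psi_{g_{n'}})$; axioms (i)--(iii) guarantee this $F$ is a well-defined product-preserving functor, and a morphism $\bs\phi=(\phi,\phi_\rex)$ induces a natural transformation.

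Finally I would verify $G\ci H\cong\Id_{\PCRingsc}$ and $H\ci G\cong\Id_{\CPCRingsc}$. The first is essentially tautological: $G(H(\bfC))$ has underlying sets $\fC,\fC_\rex$ and operations $\Phi_f=H(\bfC)(f)$, which by construction of $H$ is the original $\Phi_f$. For $H\ci G$, given $F$, the functor $H(G(F))$ agrees with $F$ on objects up to the canonical product isomorphism, and agrees on morphisms because every morphism in $\Eucc$ decomposes through its coordinate functions; the product-preservation of $F$ then forces $H(G(F))(\varphi)=F(\varphi)$ on the nose, and these isomorphisms are natural in $F$. I expect the only mildly delicate point — the ``main obstacle'' such as it is — to be the bookkeeping that the natural product identifications $F(\R^m\t[0,\iy)^n)\cong F(\R)^m\t F([0,\iy))^n$ are strictly compatible with composition and with the coordinate decomposition of maps in $\Eucc$, i.e.\ the usual coherence one suppresses when passing between ``algebraic theory'' and ``operations-and-relations'' presentations; everything else is a direct transcription of Proposition \ref{cc2prop1}, and in fact the whole statement also follows formally from the general equivalence between algebras over an algebraic theory and their operational description in Ad\'amek, Rosick\'y and Vitale \cite{ARV}, applied to the algebraic theory $\Eucc$.
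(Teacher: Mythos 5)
Your proposal is correct and takes essentially the same route as the paper, which only sketches the argument: it identifies $F(f)$ with $\Phi_f$ and $F(g)$ with $\Psi_g$ and observes that the equivalence follows from $F$ being product-preserving together with Definition \ref{cc4def2} (exactly the transcription of Proposition \ref{cc2prop1}, i.e.\ the standard algebraic-theories correspondence of \cite{ARV} applied to $\Eucc$). Your write-up simply fills in the routine coherence and axiom-checking details that the paper leaves implicit.
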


Under this equivalence, for a smooth function $f:\R^n_k\ra \R$, we identify $F(f)$ with $\Phi_f$, and for an exterior function $g:\R^n_k\ra[0,\infty)$, we identify $F(g)$ with $\Psi_g$. The proof of the proposition then follows from $F$ being a product preserving functor, and the definition of pre $C^\iy$-ring with corners.

We could define `interior pre $C^\iy$-rings with corners' following Definition \ref{cc4def2}, but replacing exterior maps by interior maps throughout. Instead we will do something more complicated, and define interior pre $C^\iy$-rings with corners as special examples of pre $C^\iy$-rings with corners, and interior morphisms as special morphisms between (interior) pre $C^\iy$-rings with corners. The advantage of this is that we can work with both interior and non-interior pre $C^\iy$-rings with corners and their morphisms in a single theory. 

\begin{dfn}
\label{cc4def3}
Let $\bfC=(\fC,\fC_\rex)$ be a pre $C^\iy$-ring with corners. Then $\fC_\rex$ is a monoid and $0_{\fC_\rex}\in\fC_\rex$ with $c'\cdot 0_{\fC_\rex}=0_{\fC_\rex}$ for all $c'\in\fC_\rex$.

We call $\bfC$ an {\it interior\/} pre $C^\iy$-ring with corners if $0_{\fC_\rex}\ne 1_{\fC_\rex}$, and there do not exist $c',c''\in\fC_\rex$ with $c'\ne 0_{\fC_\rex}\ne c''$ and $c'\cdot c''=0_{\fC_\rex}$. That is, $\fC_\rex$ should have no zero divisors. Write $\fC_\rin=\fC_\rex\sm\{0_{\fC_\rex}\}$. Then $\fC_\rex=\fC_\rin\amalg\{0_{\fC_\rex}\}$, where $\amalg$ is the disjoint union. Since $\fC_\rex$ has no zero divisors, $\fC_\rin$ is closed under multiplication, and $1_{\fC_\rex}\in\fC_\rin$ as $0_{\fC_\rex}\ne 1_{\fC_\rex}$. Thus $\fC_\rin$ is a submonoid of $\fC_\rex$. We write $1_{\fC_\rin}=1_{\fC_\rex}$.

Let $\bfC,\bfD$ be interior pre $C^\iy$-rings with corners, and $\bs\phi=(\phi,\phi_\rex):\bfC\ra\bfD$ be a morphism in $\PCRingsc$. We call $\bs\phi$ {\it interior\/} if $\phi_\rex(\fC_\rin)\subseteq\fD_\rin$. Then we write $\phi_\rin=\phi_\rex\vert_{\fC_\rin}:\fC_\rin\ra\fD_\rin$.
Interior morphisms are closed under composition and include the identity morphisms. 

Write $\PCRingscin$ for the (non-full) subcategory of $\PCRingsc$ with objects interior pre $C^\iy$-rings with corners, and morphisms interior morphisms.
\end{dfn}

\begin{lem}
\label{cc4lem1}
Let\/ $\bfC=(\fC,\fC_\rex)$ be an interior pre $C^\iy$-ring with corners. Then $\fC_\rex^\t\subseteq\fC_\rin$. If\/ $g:\R^m\t[0,\iy)^n\ra[0,\iy)$ is interior, then $\Psi_g:\fC^m\t\fC_\rex^n\ra\fC_\rex$ maps\/~$\fC^m\t\fC_\rin^n\ra\fC_\rin$.
\end{lem}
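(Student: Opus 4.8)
The plan is to prove the two assertions in turn, using only the relational axioms (i)--(iii) of Definition \ref{cc4def2} and the no-zero-divisors condition of Definition \ref{cc4def3}.

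First I would show $\fC_\rex^\t\subseteq\fC_\rin$. Let $c'\in\fC_\rex^\t$, so there is $d'\in\fC_\rex$ with $c'\cdot d'=1_{\fC_\rex}$. If $c'=0_{\fC_\rex}$ then $0_{\fC_\rex}=0_{\fC_\rex}\cdot d'=1_{\fC_\rex}$, contradicting the assumption $0_{\fC_\rex}\ne 1_{\fC_\rex}$ in the definition of interior. Hence $c'\ne 0_{\fC_\rex}$, i.e.\ $c'\in\fC_\rin$. (Here multiplication on $\fC_\rex$ is $\Psi_{g_\cdot}$ for $g_\cdot(x,y)=xy$, and $1_{\fC_\rex}=\Psi_{g_1}(\ )$ for the constant exterior map $g_1\equiv 1$; the identity $0_{\fC_\rex}\cdot d'=0_{\fC_\rex}$ is the one noted at the start of Definition \ref{cc4def3}.)

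Next, for the second claim, let $g:\R^m\t[0,\iy)^n\ra[0,\iy)$ be interior and take $(c_1,\dots,c_m,c'_1,\dots,c'_n)\in\fC^m\t\fC_\rin^n$; I must show $\Psi_g(c_1,\dots,c'_n)\ne 0_{\fC_\rex}$. The key point is that an interior map $g:\R^m\t[0,\iy)^n\ra[0,\iy)$ is nowhere zero, so $g$ factors as $g=\exp\ci h$ for a smooth $h:\R^m\t[0,\iy)^n\ra\R$ (namely $h=\log g$, which is smooth because $g$ is smooth and positive). Then by axiom (ii) applied with the $m+n$ inputs $e_i=\pi_i$, $f_j=\pi'_j$ (so the composite map of \eqref{cc4eq3} is $g$ itself), together with axiom (iii), we get
\begin{equation*}
\Psi_g(c_1,\dots,c_m,c'_1,\dots,c'_n)=\Psi_{\exp\ci h}(c_1,\dots,c'_n)=\Psi_{\exp}\bigl(\Phi_h(c_1,\dots,c'_n)\bigr),
\end{equation*}
where $\Psi_{\exp}:\fC\ra\fC_\rex$ is the operation for the exterior (indeed interior) map $\exp:\R\ra[0,\iy)$. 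Now $\Psi_{\exp}(a)$ is invertible in $\fC_\rex$ for every $a\in\fC$: its inverse is $\Psi_{\exp}(-a)=\Psi_{\exp\ci(-\id)}(a)$, since $(\exp t)(\exp(-t))=1$ and axiom (ii) turns this pointwise identity of exterior maps into $\Psi_{\exp}(a)\cdot\Psi_{\exp}(-a)=\Psi_{g_1}(\ )=1_{\fC_\rex}$. Hence $\Psi_g(c_1,\dots,c'_n)\in\fC_\rex^\t\subseteq\fC_\rin$ by the first part. In particular this does not even use that the $c'_j$ lie in $\fC_\rin$ rather than $\fC_\rex$ — but I would double-check whether the intended statement is the sharper one that $\Psi_g$ restricted to $\fC^m\t\fC_\rin^n$ lands in $\fC_\rin$ as part of showing $\fC_\rin$ is closed under the interior operations; if the factorization $g=\exp\ci h$ is used it is automatic.

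The main obstacle is purely the manipulation of composition axioms: one must be careful that $\log g$ is genuinely \emph{smooth} as a map $\R^m\t[0,\iy)^n\ra\R$ in the sense of Definition \ref{cc3def1} (it is, since $g$ interior means $g>0$ everywhere and is weakly smooth, so $\log g$ is weakly smooth, and being real-valued there is no extra corner condition to check), and that the identification of the composite in \eqref{cc4eq3} with $g$ and with $\exp\ci h$ is set up with the correct projections so that axioms (ii)--(iii) apply verbatim. Everything else is routine bookkeeping.
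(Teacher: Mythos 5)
Your first claim ($\fC_\rex^\t\subseteq\fC_\rin$) is argued exactly as in the paper and is fine, as is your observation that $\Psi_{\exp}(a)$ is invertible with inverse $\Psi_{\exp}(-a)$. But the second part rests on a false claim: an interior map $g:\R^m\t[0,\iy)^n\ra[0,\iy)$ is \emph{not} nowhere zero, and need not factor as $\exp\ci h$ with $h$ smooth. Being interior (Definition \ref{cc3def1}(b),(c)) only excludes the local alternative $g\equiv 0$; it still allows $g$ to vanish along boundary faces through monomial factors. The simplest counterexample is the coordinate projection $\pi_1':[0,\iy)\ra[0,\iy)$, $y\mapsto y$, which is interior but vanishes at $y=0$, so $\log g$ is not defined there, let alone smooth. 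Your own remark that the hypothesis $c_j'\in\fC_\rin$ is never used should have been a warning sign: if your argument were correct, $\Psi_g$ would always land in $\fC_\rex^\t$, yet $\Psi_{\pi_1'}(0_{\fC_\rex})=0_{\fC_\rex}$, which is not even in $\fC_\rin$.

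The repair, which is the route the paper takes, is to use the correct global structure of interior maps on $\R^m\t[0,\iy)^n$, namely $g=y_1^{a_1}\cdots y_n^{a_n}\cdot\exp\ci h$ for $a_1,\ldots,a_n\in\N$ and smooth $h:\R^m\t[0,\iy)^n\ra\R$ (equation \eq{cc4eq4}; the exponents are globally constant since each boundary face of $\R^m\t[0,\iy)^n$ is connected). The composition axioms then give $\Psi_g(c_1,\ldots,c_m,c_1',\ldots,c_n')=c_1^{\prime a_1}\cdots c_n^{\prime a_n}\cdot\Psi_{\exp}\bigl(\Phi_h(c_1,\ldots,c_m,c_1',\ldots,c_n')\bigr)$, and now both hypotheses are genuinely needed: $c_1^{\prime a_1}\cdots c_n^{\prime a_n}\in\fC_\rin$ because $\fC_\rin$ is a submonoid of $\fC_\rex$ (this is exactly where $c_j'\in\fC_\rin$ enters), while $\Psi_{\exp}(\cdots)\in\fC_\rex^\t\subseteq\fC_\rin$ by your first part. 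So your invertibility argument supplies one of the two ingredients; the monomial factor, which you discarded, supplies the other and forces the restriction to $\fC_\rin$ inputs.
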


\begin{proof} Clearly $0_{\fC_\rex}$ is not invertible, so $0_{\fC_\rex}\notin\fC_\rex^\t$, and $\fC_\rex^\t\subseteq\fC_\rex\sm\{0_{\fC_\rex}\}=\fC_\rin$. As $g$ is interior we may write
\e
g(x_1,\ldots,x_m,y_1,\ldots,y_n)=y_1^{a_1}\cdots y_n^{a_n}\cdot \exp\ci h(x_1,\ldots,x_m,y_1,\ldots,y_n),
\label{cc4eq4}
\e
for $a_1,\ldots,a_n\in\N$ and $h:\R^m\t[0,\iy)^n\ra\R$ smooth. Then for $c_1,\ldots,c_m\in\fC$ and $c_1',\ldots,c_n'\in\fC_\rin$ we have
\begin{align*}
\Psi_g\bigl(c_1,\ldots,c_m,c_1',\ldots,c_n'\bigr)=c_1'^{a_1}\cdots c_n'^{a_n}\cdot\Psi_{\exp}\bigl[\Phi_h\bigl(c_1,\ldots,c_m,c_1',\ldots,c_n'\bigr)\bigr].
\end{align*}
Here $c_1'^{a_1}\cdots c_n'^{a_n}\in\fC_\rin$ as $\fC_\rin$ is a submonoid of $\fC_\rex$, and $\Psi_{\exp}[\cdots]\in\fC_\rin$ as $\Psi_{\exp}$ maps to $\fC_\rex^\t\subseteq\fC_\rin\subseteq\fC_\rex$. Thus $\Psi_g\bigl(c_1,\ldots,c_m,c_1',\ldots,c_n'\bigr)\in\fC_\rin$.
\end{proof}

Here is the analogue of Proposition \ref{cc4prop1}:

\begin{prop}
\label{cc4prop2}
There is an equivalence $\CPCRingscin\!\cong\PCRingscin,$ which identifies $F:\Euccin\ra\Sets$ in $\CPCRingscin$ with\/ $\bfC=(\fC,\fC_\rex)$ in $\PCRingscin$ such that\/ $F\bigl(\R^m\t[0,\iy)^n\bigr)=\fC^m\t\fC_\rin^n$ for\/~$m,n\ge 0$.
\end{prop}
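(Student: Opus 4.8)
The plan is to mimic the proof of Proposition~\ref{cc4prop1}, but keeping careful track of the interiority conditions on both objects and morphisms. Recall that Proposition~\ref{cc4prop1} gives an equivalence $\CPCRingsc\simeq\PCRingsc$ under which $F\mapsto\bfC=(F(\R),F([0,\iy)))$, with $F(f)$ identified with $\Phi_f$ and $F(g)$ with $\Psi_g$. Since $\CPCRingscin$ has the same objects-assignment on Euclidean pieces, the first step is to define the candidate functors in both directions and then check they restrict/corestrict correctly.

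First I would go from $\CPCRingscin$ to $\PCRingscin$. Given a product-preserving $F:\Euccin\ra\Sets$, set $\fC=F(\R)$ and $\fC_\rin=F([0,\iy))$, and put $\fC_\rex=\fC_\rin\amalg\{0\}$ (formally adjoining a new element $0$). Since $F$ preserves products, $F(\R^m\t[0,\iy)^n)=\fC^m\t\fC_\rin^n$. I must now extend $F$ to a pre $C^\iy$-ring with corners structure on $(\fC,\fC_\rex)$: for smooth $f:\R^m\t[0,\iy)^n\ra\R$ define $\Phi_f$ and for exterior $g:\R^m\t[0,\iy)^n\ra[0,\iy)$ define $\Psi_g$ on the larger domain $\fC^m\t\fC_\rex^n$. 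The point here is that an arbitrary exterior map $g$ is not a morphism in $\Euccin$, so $F(g)$ is not directly given; the key device (as in \cite{Kala} and the proof of Prop.~\ref{cc4prop1}) is that $g$ restricted to the locus where the relevant coordinates are positive factors through interior maps, and at the "boundary" locus $g$ vanishes on components where interiority fails. Concretely: on the input where each $c_j'\in\fC_\rin$ one uses the interior factorization \eqref{cc4eq4} $g=y_1^{a_1}\cdots y_n^{a_n}\cdot\exp\ci h$ valid wherever all $y_j>0$ (via Lemma~\ref{cc4lem1}-type reasoning applied to the component of $g$), while if some input $c_j'=0_{\fC_\rex}$ one declares $\Psi_g(\ldots)=0_{\fC_\rex}$ when $g$ genuinely vanishes on that face, and otherwise uses the value of the appropriate restricted interior map. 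I would verify that the pre $C^\iy$-ring relations (i)--(iii) of Definition~\ref{cc4def2} hold --- this reduces to functoriality of $F$ on $\Euccin$ plus the bookkeeping of how zero elements compose --- and that the resulting $\bfC$ is interior (no zero divisors, by construction $\fC_\rex=\fC_\rin\amalg\{0\}$ with $\fC_\rin$ a submonoid).

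Next I would go the other way: given $\bfC=(\fC,\fC_\rex)\in\PCRingscin$ with $\fC_\rex=\fC_\rin\amalg\{0\}$, define $G:\Euccin\ra\Sets$ by $G(\R^m\t[0,\iy)^n)=\fC^m\t\fC_\rin^n$, and for an interior morphism $g:\R^m\t[0,\iy)^n\ra\R^{m'}\t[0,\iy)^{n'}$ in $\Euccin$ set $G(g)$ using the components $\Phi_{g_i}$ into $\fC$ and $\Psi_{g_j}$ into $\fC_\rex$; by Lemma~\ref{cc4lem1} the latter land in $\fC_\rin$ since the components $g_j:\R^m\t[0,\iy)^n\ra[0,\iy)$ are interior, so $G(g)$ really maps $\fC^m\t\fC_\rin^n\ra\fC^{m'}\t\fC_\rin^{n'}$. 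Functoriality $G(g'\ci g)=G(g')\ci G(g)$ follows from relations (i),(ii) of Definition~\ref{cc4def2}. Product-preservation is immediate from the definition. Then I would check that these two constructions are mutually inverse (up to natural isomorphism): starting from $F$, building $\bfC$, then building $G$ recovers $F$ on $\Euccin$ because $G(g)$ agrees with $F(g)$ for interior $g$ (the extension to exterior $g$ was only auxiliary); and starting from $\bfC$, the round trip recovers $\fC_\rin=G([0,\iy))$ and hence $\fC_\rex=\fC_\rin\amalg\{0\}$ on the nose, with the $C^\iy$-operations matching. Finally, morphisms: a morphism $\eta:F\Ra F'$ in $\CPCRingscin$ corresponds to $(\eta(\R),\eta([0,\iy)))$, and extending $\eta([0,\iy))$ by $0\mapsto 0$ gives a morphism $(\fC,\fC_\rex)\ra(\fC',\fC_\rex')$ in $\PCRingsc$ sending $\fC_\rin\ra\fC_\rin'$, i.e.\ an interior morphism in $\PCRingscin$; conversely an interior morphism restricts to a natural transformation on $\Euccin$. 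Compatibility with all $\Phi_f,\Psi_g$ reduces, as usual, to compatibility with the generating morphisms of $\Euccin$ together with the zero-element conventions.

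The main obstacle is the extension of the $\Psi_g$ operations from interior $g$ to arbitrary exterior $g$ when passing from $\CPCRingscin$ to $\PCRingscin$: one must check this extension is well-defined (independent of how one decomposes $g$ into its interior part and its vanishing behaviour along faces) and that it satisfies the composition relations (i),(ii) of Definition~\ref{cc4def2}, especially across compositions that mix interior and non-interior maps and that create or propagate zero elements. I expect this to be handled by the same argument as in Kalashnikov \cite{Kala} and the proof of Proposition~\ref{cc4prop1}: an exterior $g:\R^m\t[0,\iy)^n\ra[0,\iy)$ is, away from the zero locus of its inputs, controlled by interior data, and the behaviour when an input is $0_{\fC_\rex}$ is forced by the requirement $c'\cdot 0_{\fC_\rex}=0_{\fC_\rex}$; so the extension is unique and the verification is routine but bookkeeping-heavy. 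Everything else is a direct transcription of the proof of Proposition~\ref{cc4prop1}, restricted to interior morphisms.
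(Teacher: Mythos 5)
Your proposal is correct and follows essentially the same route as the paper: set $\fC=F(\R)$, $\fC_\rin=F([0,\iy))$, $\fC_\rex=\fC_\rin\amalg\{0\}$, extend $\Phi_f,\Psi_g$ to inputs involving $0_{\fC_\rex}$ by restricting $f,g$ to the face where those coordinates vanish (the restriction being either interior, handled by $F$, or identically zero, giving $0_{\fC_\rex}$), and use Lemma \ref{cc4lem1} for the converse direction, with morphisms extended by $0\mapsto 0$. The paper likewise leaves the verification of Definition \ref{cc4def2}(i)--(iii) and the mutual-inverse check as a routine transcription of Proposition \ref{cc4prop1}, so your level of detail matches its proof.
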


\begin{proof} Let $F:\Euccin\ra\Sets$ be a categorical interior pre $C^\iy$-ring with corners. Define sets $\fC=F(\R)$, $\fC_\rin=F([0,\iy))$, and $\fC_\rex=\fC_\rin\amalg\{0_{\fC_\rex}\}$, where $\amalg$ is the disjoint union. Then $F\bigl(\R^m\t[0,\iy)^n\bigr)=\fC^m\t\fC_\rin^n$, as $F$ is product-preserving. Let $f:\R^m\t[0,\iy)^n\ra\R$ and $g:\R^m\t[0,\iy)^n\ra[0,\iy)$ be smooth. We must define maps $\Phi_f:\fC^m\t\fC_\rex^n\ra\fC$ and $\Psi_g:\fC^m\t\fC_\rex^n\ra\fC_\rex$.

Let $c_1,\ldots,c_m\in\fC$ and $c_1',\ldots,c_n'\in\fC_\rex$. Then some of $c_1',\ldots,c_n'$ lie in $\fC_\rin$ and the rest in $\{0_{\fC_\rex}\}$. For simplicity suppose that $c_1',\ldots,c_k'\in\fC_\rin$ and $c_{k+1}'=\cdots=c_n'=0_{\fC_\rex}$ for $0\le k\le n$. Define smooth $d:\R^m\t[0,\iy)^k\ra\R$, $e:\R^m\t[0,\iy)^k\ra[0,\iy)$ by
\e
\begin{split}
d(x_1,\ldots,x_m,y_1,\ldots,y_k)=f(x_1,\ldots,x_m,y_1,\ldots,y_k,0,\ldots,0),\\
e(x_1,\ldots,x_m,y_1,\ldots,y_k)=g(x_1,\ldots,x_m,y_1,\ldots,y_k,0,\ldots,0).
\end{split}
\label{cc4eq5}
\e
Then $F(d)$ maps $\fC^m\t\fC_\rin^k\ra\fC$. Set
\begin{equation*}
\Phi_f(c_1,\ldots,c_m,c_1',\ldots,c_k',0_{\fC_\rex},\ldots,0_{\fC_\rex})=F(d)(c_1,\ldots,c_m,c_1',\ldots,c_k').
\end{equation*}

Either $e:\R^m\t[0,\iy)^k\ra[0,\iy)$ is interior, or $e=0$. If $e$ is interior define 
\begin{equation*}
\Psi_g(c_1,\ldots,c_m,c_1',\ldots,c_k',0_{\fC_\rex},\ldots,0_{\fC_\rex})=F(e)(c_1,\ldots,c_m,c_1',\ldots,c_k').
\end{equation*}
If $e=0$ set $\Psi_g(c_1,\ldots,c_m,c_1',\ldots,c_k',0_{\fC_\rex},\ldots,0_{\fC_\rex})=0_{\fC_\rex}$. This defines $\Phi_f,\Psi_g$, and makes $\bfC=(\fC,\fC_\rex)$ into an interior pre $C^\iy$-ring with corners.

Conversely, let $\bfC=(\fC,\fC_\rex)$ be an interior pre $C^\iy$-ring with corners. Then $\fC_\rex=\fC_\rin\amalg\{0\}$ and we define a product-preserving functor $F:\Euccin\ra\Sets$ with $F\bigl(\R^m\t[0,\iy)^n\bigr)=\fC^m\t\fC_\rin^n$, using the fact from Lemma \ref{cc4lem1} that $\Psi_g:\fC^m\t\fC_\rex^n\ra\fC_\rex$ maps $\fC^m\t\fC_\rin^n\ra\fC_\rin$ for $g$ interior. The rest of the proof follows that of Proposition~\ref{cc4prop1}.
\end{proof}

It is often helpful to work with a small subset of $C^\iy$-operations. The next definition explains this small subset. Monoids were discussed in~\S\ref{cc32}.

\begin{dfn}
\label{cc4def4}
Let $\bfC=(\fC,\fC_\rex)$ be a pre $C^\iy$-ring with corners. Then (as in Definition \ref{cc4def5} below) $\fC$ is a $C^\iy$-ring, and thus a commutative $\R$-algebra. The $\R$-algebra structure makes $\fC$ into a monoid in two ways: under multiplication `$\cdot$' with identity 1, and under addition `$+$' with identity~0.

Define $g:[0,\iy)^2\ra[0,\iy)$ by $g(x,y)=xy$. Then $g$ induces $\Psi_g:\fC_\rex\t\fC_\rex\ra\fC_\rex$. Define multiplication $\cdot:\fC_\rex\t\fC_\rex\ra\fC_\rex$ by $c'\cdot c''=\Psi_g(c',c'')$. The map $1:\R^0\ra[0,\iy)$ gives an operation $\Psi_1:\{\es\}\ra\fC_\rex$. The {\it identity\/} in $\fC_\rex$ is $1_{\fC_\rex}=\Psi_1(\es)$. Then $(\fC_\rex,\cdot,1_{\fC_\rex})$ is a monoid.

The map $0:\R^0\ra[0,\iy)$ gives an operation $\Psi_0:\{\es\}\ra\fC_\rex$. This gives a distinguished element $0_{\fC_\rex}=\Psi_0(\es)$ in $\fC_\rex$, which satisfies $c'\cdot 0_{\fC_\rex}=0_{\fC_\rex}$ for all $c'\in\fC_\rex$, that is, $0_{\fC_\rex}$ is a {\it zero element\/} in the monoid $(\fC_\rex,\cdot,1_{\fC_\rex})$. We have:
\begin{itemize}
\setlength{\itemsep}{0pt}
\setlength{\parsep}{0pt}
\item[(a)] $\fC$ is a commutative $\R$-algebra.
\item[(b)] $\fC_\rex$ is a monoid, in multiplicative notation, with zero element~$0_{\fC_\rex}\in\fC_\rex$.

We write $\fC_\rex^\t$ for the group of invertible elements in~$\fC_\rex$.
\item[(c)] $\Phi_i:\fC_\rex\ra\fC$ is a monoid morphism, for $i:[0,\iy)\hookra\R$ the inclusion and $\fC$ a monoid under multiplication.
\item[(d)] $\Psi_{\exp}:\fC\ra\fC_\rex$ is a monoid morphism, for $\exp:\R\ra[0,\iy)$ and $\fC$ a monoid under addition.
\item[(e)] $\Phi_{\exp}=\Phi_i\ci\Psi_{\exp}:\fC\ra\fC$ for $\exp:\R\ra\R$ and $\Psi_{\exp}$ in (d).
\end{itemize}
Many of our definitions will use only the structures (a)--(e). When we write $\Phi_i,\Psi_{\exp},\Phi_{\exp}$ without further explanation, we mean those in (c)--(e). 
\end{dfn}

\begin{prop}
\label{cc4prop3}
Let\/ $\bfC=(\fC,\fC_\rex)$ be a pre\/ $C^\iy$-ring with corners, and suppose\/ $c'$ lies in the group\/ $\fC_\rex^\t$ of invertible elements in the monoid\/ $\fC_\rex$. Then there exists a unique\/ $c\in\fC$ such that\/ $\Phi_{\exp}(c)=\Phi_i(c')$ in\/ $\fC$.
\end{prop}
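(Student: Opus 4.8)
The plan is as follows. Uniqueness is immediate: $\fC$ is a $C^\iy$-ring (Definition~\ref{cc4def5}), so $\Phi_{\exp}:\fC\ra\fC$ is injective by Proposition~\ref{cc2prop6}(a), and hence at most one $c$ can have $\Phi_{\exp}(c)=\Phi_i(c')$. For existence, let $c''\in\fC_\rex$ be the inverse of $c'$ in the monoid $\fC_\rex$, so $\Psi_m(c',c'')=c'\cdot c''=1_{\fC_\rex}$, where $m:[0,\iy)^2\ra[0,\iy)$, $m(u,v)=uv$. The only way the hypothesis will be used is through the \emph{substitution identity}: for every smooth $\al:[0,\iy)\ra\R$, relation~\ref{cc4def2}(i) gives $\Phi_{\al\ci m}(c',c'')=\Phi_\al(\Psi_m(c',c''))=\Phi_\al(1_{\fC_\rex})=\al(1)$. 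Thus whenever a smooth function on $[0,\iy)^2$ factors as a product $(\text{smooth})\cdot(\al\ci m)$ with $\al(1)=0$, the corresponding $C^\iy$-operation sends $(c',c'')$ to $0$.

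First I would build a "partition of unity". Fix a smooth $\chi:[0,\iy)\ra\R$ with $\chi\equiv 0$ on $[0,\ha]$, $\chi\equiv 1$ on $[1,\iy)$ and $\chi>0$ on $(\ha,\iy)$, and put $p_1=\Phi_\chi(c')$, $p_2=\Phi_\chi(c'')\in\fC$. Then $p_1+p_2$ is invertible: writing $p_1+p_2=\Phi_F(c',c'')$ with $F(u,v)=\chi(u)+\chi(v)$, one has $F>0$ on $\{uv>\tfrac14\}$ (if $u,v\le\ha$ then $uv\le\tfrac14$); choosing a smooth $\la:[0,\iy)\ra\R$ supported in $(\ha,2)$ with $\la(1)=1$, the function $H=(\la\ci m)/F$ extends to a smooth function on $[0,\iy)^2$ (it is $0$ near $\{uv\le\ha\}$ and equals $(\la\ci m)/F$ on the open set $\{uv>\tfrac14\}\supseteq\supp(\la\ci m)$), and $FH=\la\ci m$, so $\Phi_F(c',c'')\cdot\Phi_H(c',c'')=\Phi_{\la\ci m}(c',c'')=\la(1)=1$. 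Write $r=(p_1+p_2)^{-1}\in\fC$.

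Next, take a smooth $\ell:[0,\iy)\ra\R$ with $\ell(s)=\log s$ for $s\ge\ha$, and set $b_1=\Phi_\ell(c')$, $b_2=-\Phi_\ell(c'')\in\fC$ (the "local logarithms" of $c'$ on the regions where $c'$, resp.\ $c''$, is bounded away from $0$). I would establish three vanishing facts. (i) $p_1\bigl(\Phi_{\exp}(b_1)-\Phi_i(c')\bigr)=\Phi_{\chi(u)(\exp\ell(u)-u)}(c')=0$, since $\chi(s)(\exp\ell(s)-s)\equiv 0$ on $[0,\iy)$ (for $s\ge\ha$ the bracket vanishes, for $s\le\ha$ the factor $\chi$ vanishes). (ii) $p_2\bigl(\Phi_{\exp}(b_2)-\Phi_i(c')\bigr)=\Phi_{H_1}(c',c'')$ with $H_1(u,v)=\chi(v)\bigl(\exp(-\ell(v))-u\bigr)$; for $v\ge\ha$ this equals $\chi(v)(1-uv)/v$, and $\chi(v)/v$ extends to a smooth function $\eta$ on $[0,\iy)^2$, so $H_1=\eta\cdot(1-m)$ and the substitution identity with $\al(w)=1-w$ gives $\Phi_{H_1}(c',c'')=0$. (iii) $p_1p_2(b_1-b_2)=\Phi_{\chi(u)\chi(v)(\ell(u)+\ell(v))}(c',c'')=0$, because on the support of $\chi(u)\chi(v)$ we have $u,v>\ha$, so $\ell(u)+\ell(v)=\log(uv)=\ti\ell(uv)$ for a smooth extension $\ti\ell$ of $\log|_{[\tfrac14,\iy)}$; thus the function factors as $\bigl((u,v)\mapsto\chi(u)\chi(v)\bigr)\cdot(\ti\ell\ci m)$ with $\ti\ell(1)=0$.

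Finally I would set $c:=rp_1b_1+rp_2b_2\in\fC$. Since $rp_1+rp_2=1$, we get $c-b_1=rp_2(b_2-b_1)$ and $c-b_2=rp_1(b_1-b_2)$, so $p_1(c-b_1)=r\cdot p_1p_2(b_2-b_1)=0$ and $p_2(c-b_2)=0$ by (iii). By Hadamard's lemma $\exp x-\exp y=(x-y)\theta(x,y)$ for a smooth $\theta:\R^2\ra\R$, so $\Phi_{\exp}(c)-\Phi_{\exp}(b_i)=(c-b_i)\cdot\Phi_\theta(c,b_i)$; multiplying by $p_i$ and using (i)--(ii) gives $p_i\Phi_{\exp}(c)=p_i\Phi_{\exp}(b_i)=p_i\Phi_i(c')$ for $i=1,2$, and adding and multiplying by $r$ yields $\Phi_{\exp}(c)=\Phi_i(c')$. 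The crux — and the reason a one-line argument fails — is that $c$ cannot be produced as $\Phi_L(c',c'')$ for a single smooth $L:[0,\iy)^2\ra\R$: the obvious candidate $L=\log u$ does not extend smoothly, since $\log u$ is unbounded along the hyperbola $\{uv=1\}$, which is exactly where the relation $c'c''=1_{\fC_\rex}$ would test it. One is therefore forced into the partition-of-unity argument, inverting the auxiliary element $p_1+p_2$, and every vanishing assertion has to be routed through the factorization $m(u,v)=uv$ together with the identity $\Phi_{\al\ci m}(c',c'')=\al(1)$; the only genuinely fiddly points are checking that $H$, $\eta$ and $\ti\ell$ really are smooth on all of $[0,\iy)^2$ (resp.\ $[0,\iy)$).
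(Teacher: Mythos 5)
Your proof is correct, and the smoothness checks you flag ($H$, $\eta$, $\ti\ell$) do go through, but your route differs from the paper's. You argue by localization: cutoffs in $u$ and $v$ give $p_1,p_2$ with $p_1+p_2$ invertible (inverted via a bump function of $uv$ together with your substitution identity $\Phi_{\al\ci m}(c',c'')=\al(1)$), local logarithms $b_1,b_2$, and the glued element $c=rp_1b_1+rp_2b_2$, with every vanishing statement funnelled through the monoid relation $c'c''=1_{\fC_\rex}$. The paper instead produces $c$ in one step: writing $e:\R\ra\R$ for the inverse of the diffeomorphism $t\mapsto\exp t-\exp(-t)$, the single smooth function $g(x,y)=e(x-y)$ on $[0,\iy)^2$ agrees with $\log x$ on the hyperbola $\{xy=1\}$, so setting $c=\Phi_g(c',c^{\prime -1})$ and factorizing $\exp\ci g(x,y)-x=h(x,y)(xy-1)$ with $h$ smooth gives $\Phi_{\exp}(c)-\Phi_i(c')=\Phi_h(c',c^{\prime -1})\cdot\bigl(\Phi_i(c'\cdot c^{\prime -1})-1_\fC\bigr)=0$ immediately. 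Your argument is longer but purely mechanical — only standard cutoff constructions, no clever global extension of $\log$ — and the same template would apply in situations where one can only invert or take logarithms locally; the paper's argument buys brevity and a reusable device (the same $e,g,h$ reappear in the proof of Theorem~\ref{cc4thm4}, where it is observed that the argument needs only $\Phi_i(c')$ invertible in $\fC$). One aside in your write-up is wrong, though harmless: you claim $c$ cannot be obtained as $\Phi_L(c',c'')$ for any single smooth $L$ on $[0,\iy)^2$, and that one is therefore forced into the partition-of-unity argument. Only the naive candidate $L(u,v)=\log u$ fails to extend; the paper's $L=g$ does the job, and indeed your own $c$ equals $\Phi_L(c',c'')$ with $L(u,v)=H(u,v)\bigl(\chi(u)\ell(u)-\chi(v)\ell(v)\bigr)$. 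Since no step of your proof uses that remark, the proof itself stands.
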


\begin{proof} Since $c'\in\fC_\rex^\t$ we have a unique inverse $c^{\prime -1}\in\fC_\rex^\t$. As in the proof of Proposition \ref{cc2prop6}(a), let $e:\R\ra\R$ be the inverse of $t\mapsto \exp(t)-\exp(-t)$. Define smooth $g:[0,\iy)^2\ra\R$ by $g(x,y)=e(x-y)$. Observe that if $(x,y)\in[0,\iy)^2$ with $xy=1$ then $x=\exp t$, $y=\exp(-t)$ for $t=\log x$, and so 
\begin{equation*}
\exp\ci g(x,y)=\exp\ci g(\exp t,\exp(-t))=\exp\ci e(\exp t-\exp(-t))=\exp(t)=x.
\end{equation*}
Therefore there is a unique smooth function $h:[0,\iy)^2\ra\R$ with
\e
\exp\ci g(x,y)-x=h(x,y)(xy-1).
\label{cc4eq6}
\e
We have operations $\Phi_g,\Phi_h:\fC_\rex^2\ra\fC$. Define $c=\Phi_g(c',c^{\prime -1})$. Then 
\begin{align*}
\Phi_{\exp}(c)-\Phi_i(c')&=\Phi_{\exp\ci g(x,y)-x}(c',c^{\prime -1})=\Phi_{h(x,y)(xy-1)}(c',c^{\prime -1})\\
&=\Phi_h(c',c^{\prime -1})\cdot(\Phi_i(c'\cdot c^{\prime -1})-1_\fC)=0,
\end{align*}
using Definition \ref{cc4def1}(i) in the first and third steps, and \eq{cc4eq6} in the second. Hence $\Phi_{\exp}(c)=\Phi_i(c')$. Uniqueness of $c$ follows from Proposition~\ref{cc2prop6}(a).
\end{proof}

\subsection{\texorpdfstring{Adjoints and (co)limits for pre $C^\iy$-rings with corners}{Adjoints and (co)limits for pre C∞-rings with corners}}
\label{cc43}

The equivalences of categories in Propositions \ref{cc2prop1}, \ref{cc4prop1} and \ref{cc4prop2} mean that facts about $\CPCRingsc,\CPCRingscin,\CCRings$ in \S\ref{cc41} correspond to fact about $\PCRingsc,\PCRingscin,\CRings$. Hence \eq{cc4eq2} should correspond to a commutative triangle of functors
\e
\begin{gathered}
\xymatrix@C=50pt@R=12pt{ \PCRingsc \ar[rr]_{\Pi_\PCRingsc^\CRings} \ar[dr]_(0.4){\Pi_\PCRingsc^\PCRingscin\,\,\,\,\,} && \CRings. \\
& \PCRingscin \ar[ur]_(0.6){\,\,\,\,\,\Pi_\PCRingscin^\CRings} }\!\!\!\!\!\!\!\!\!\!\!\!{}
\end{gathered}
\label{cc4eq7}
\e
The explicit definitions, which we give next, follow from the correspondences.

\begin{dfn}
\label{cc4def5}
The functor $\Pi_\PCRingsc^\CRings:\PCRingsc\ra\CRings$ in \eq{cc4eq7} acts on objects by $\bfC=(\fC,\fC_\rex)\mapsto\fC$, where the $C^\iy$-ring $\fC$ has $C^\iy$-operations $\Phi_f:\fC^m\ra\fC$ from smooth $f:\R^m\ra\R$ in Definition \ref{cc4def2}(b) with $n=0$, and on morphisms by $\bs\phi=(\phi,\phi_\rex)\mapsto\phi$. 

Define $\Pi_\PCRingscin^\CRings:\PCRingscin\ra\CRings$ in \eq{cc4eq7} to be the restriction of $\Pi_\PCRingsc^\CRings$ to~$\PCRingscin$.

To define $\Pi_\PCRingsc^\PCRingscin$ in \eq{cc4eq7}, let $\bfC=(\fC,\fC_\rex)$ be a pre $C^\iy$-ring with corners. We will define an interior pre $C^\iy$-ring with corners $\bs{\ti\fC}=(\fC,\ti\fC_\rex)$ where $\ti\fC_\rex=\fC_\rex\amalg\{0_{\ti\fC_\rex}\}$, and set $\Pi_\PCRingsc^\PCRingscin(\bfC)=\bs{\ti\fC}$. Here $\fC_\rex$ already contains a zero element $0_{\fC_\rex}$, but we are adding an extra $0_{\ti\fC_\rex}$ with $0_{\fC_\rex}\ne 0_{\ti\fC_\rex}$. 

Let $f:\R^m\t[0,\iy)^n\ra\R$ and $g:\R^m\t[0,\iy)^n\ra[0,\iy)$ be smooth, and write $\Phi_f,\Psi_g$ for the operations in $\bfC$. We must define maps $\ti\Phi_f:\fC^m\t\ti\fC{}_\rex^n\ra\fC$ and $\ti\Psi_g:\fC^m\t\ti\fC{}_\rex^n\ra\ti\fC_\rex$. Let $c_1,\ldots,c_m\in\fC$ and $c_1',\ldots,c_n'\in\ti\fC_\rex$. Then some of $c_1',\ldots,c_n'$ lie in $\fC_\rex$ and the rest in $\{0_{\ti\fC_\rex}\}$. For simplicity suppose that $c_1',\ldots,c_k'\in\fC_\rex$ and $c_{k+1}'=\cdots=c_n'=0_{\ti\fC_\rex}$ for $0\le k\le n$. Define smooth $d:\R^m\t[0,\iy)^k\ra\R$, $e:\R^m\t[0,\iy)^k\ra[0,\iy)$ by \eq{cc4eq5}. Set
\begin{equation*}
\ti\Phi_f(c_1,\ldots,c_m,c_1',\ldots,c_k',0_{\ti\fC_\rex},\ldots,0_{\ti\fC_\rex})=\Phi_d(c_1,\ldots,c_m,c_1',\ldots,c_k').
\end{equation*}
Either $e:\R^m\t[0,\iy)^k\ra[0,\iy)$ is interior, or $e=0$. If $e$ is interior define 
\begin{equation*}
\ti\Psi_g(c_1,\ldots,c_m,c_1',\ldots,c_k',0_{\ti\fC_\rex},\ldots,0_{\ti\fC_\rex})=\Psi_e(c_1,\ldots,c_m,c_1',\ldots,c_k').
\end{equation*}
If $e=0$ define $\ti\Psi_g(c_1,\ldots,c_m,c_1',\ldots,c_k',0_{\ti\fC_\rex},\ldots,0_{\ti\fC_\rex})=0_{\ti\fC_\rex}$. This defines the maps $\ti\Phi_f,\ti\Psi_g$. It is easy to check that these make $\bs{\ti\fC}=(\fC,\ti\fC_\rex)$ into an interior pre $C^\iy$-ring with corners.

Now let $\bs\phi:\bfC\ra\bfD$ be a morphism in $\PCRingsc$, and define $\bs{\ti\fC},\bs{\ti\fD}$ as above. Define $\ti\phi_\rex:\ti\fC_\rex\ra\ti\fD_\rex$ by $\ti\phi_\rex\vert_{\fC_\rex}=\phi_\rex$ and $\ti\phi_\rex(0_{\ti\fC_\rex})=0_{\ti\fD_\rex}$. Then $\bs{\ti\phi}=(\phi,\ti\phi_\rex):\bs{\ti\fC}\ra\bs{\ti\fD}$ is a morphism in~$\PCRingscin$.

Define $\Pi_\PCRingsc^\PCRingscin:\PCRingsc\hookra\PCRingscin$ by $\Pi_\PCRingsc^\PCRingscin:\bfC\mapsto\bs{\ti\fC}$ and $\Pi_\PCRingsc^\PCRingscin:\bs\phi\mapsto\bs{\ti\phi}$, for $\bfC,\ldots,\bs{\ti\phi}$ as above. 

We will show that $\Pi_\PCRingsc^\PCRingscin$ is right adjoint to $\inc:\PCRingscin\hookra\PCRingsc$. Suppose that $\bfC,\bfD$ are pre $C^\iy$-rings with corners with $\bfC$ interior. Then we can define a 1-1 correspondence
\begin{align*}
\Hom_\PCRingsc\bigl(\inc(\bfC),\bfD\bigr)&=\Hom_\PCRingsc(\bfC,\bfD)\\
&\cong\Hom_\PCRingscin\bigl(\bfC,\Pi_\PCRingsc^\PCRingscin(\bfD)),
\end{align*}
identifying $\bs\phi:\inc(\bfC)\ra\bfD$ with $\bs{\hat\phi}:\bfC\ra\Pi_\PCRingsc^\PCRingscin(\bfD)$, where $\bs\phi=(\phi,\phi_\rex)$ and $\bs{\hat\phi}=(\phi,\hat\phi_\rex)$ with $\hat\phi_\rex\vert_{\fC_\rin}=\phi_\rex\vert_{\fC_\rin}$ and $\hat\phi_\rex(0_{\fC_\rex})=0_{\ti\fD_\rex}$. This is functorial in $\bfC,\bfD$, and so proves that $\Pi_\PCRingsc^\PCRingscin$ is right adjoint to~$\inc$.

Also define functors $\Pi_\rsm,\Pi_\rex:\PCRingsc\ra\Sets$ by $\Pi_\rsm:\bfC\mapsto\fC$, $\Pi_\rex:\bfC\mapsto\fC_\rex$ on objects, and $\Pi_\rsm:\bs\phi\mapsto\phi$, $\Pi_\rex:\bs\phi\mapsto\phi_\rex$ on morphisms, where `$\rsm$', `$\rex$' are short for `smooth' and `exterior'. Define functors $\Pi_\rsm,\Pi_\rin:\PCRingscin\ra\Sets$ by $\Pi_\rsm:\bfC\mapsto\fC$, $\Pi_\rin:\bfC\mapsto\fC_\rin$ on objects, and $\Pi_\rsm:\bs\phi\mapsto\phi$, $\Pi_\rin:\bs\phi\mapsto\phi_\rin$ on morphisms, where `$\rin$' is short for `interior'.
\end{dfn}

Combining the equivalences in Propositions \ref{cc2prop1}, \ref{cc4prop1} and \ref{cc4prop2} with Theorem \ref{cc4thm1} and Definition \ref{cc4def5} yields:

\begin{thm}
\label{cc4thm2}
{\bf(a)} All small limits and directed colimits exist in the categories $\PCRingsc, \PCRingscin$. The functors\/ $\Pi_\rsm,\Pi_\rex:\PCRingsc\ra\Sets$ and\/ $\Pi_\rsm,\Pi_\rin:\PCRingscin\ra\Sets$ preserve limits and directed colimits.
\smallskip

\noindent{\bf(b)} All small colimits exist in $\PCRingsc,\PCRingscin,$ though in general they are  not preserved by $\Pi_\rsm,\Pi_\rex$ and\/~$\Pi_\rsm,\Pi_\rin$.
\smallskip

\noindent{\bf(c)} The functors $\Pi_\PCRingsc^\CRings,\Pi_\PCRingscin^\CRings,\Pi_\PCRingsc^\PCRingscin$ in \eq{cc4eq7} have left adjoints, so they preserve limits. The left adjoint of\/ $\Pi_\PCRingsc^\PCRingscin$ is the inclusion $\inc:\PCRingscin\hookra\PCRingsc,$ so $\inc$ preserves colimits.
\end{thm}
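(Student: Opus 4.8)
The plan is to deduce Theorem \ref{cc4thm2} entirely from the equivalences of categories in Propositions \ref{cc2prop1}, \ref{cc4prop1}, \ref{cc4prop2} together with Theorem \ref{cc4thm1}, which is the categorical-side version of the same statement; no direct manipulation of $C^\iy$-operations is needed. For part (a) and part (b), I would first observe that under the equivalence $\CPCRingsc\cong\PCRingsc$ of Proposition \ref{cc4prop1}, a diagram in $\PCRingsc$ corresponds to a diagram in $\CPCRingsc$, and (co)limits are preserved by equivalences; hence existence of all small limits and colimits, and existence and objectwise computation of directed colimits, transfers directly from Theorem \ref{cc4thm1}(a),(b). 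The same argument with Proposition \ref{cc4prop2} handles $\PCRingscin$. For the claim that $\Pi_\rsm,\Pi_\rex:\PCRingsc\ra\Sets$ preserve limits and directed colimits: under the equivalence, $\Pi_\rsm$ corresponds to the functor $\CPCRingsc\ra\Sets$, $F\mapsto F(\R)$, and $\Pi_\rex$ to $F\mapsto F([0,\iy))$; these are evaluation functors at a fixed object of $\Eucc$, and by Theorem \ref{cc4thm1}(a) limits and directed colimits in $\CPCRingsc$ are computed objectwise in $\Sets$, so evaluation at $\R$ or $[0,\iy)$ preserves them. Likewise $\Pi_\rsm,\Pi_\rin:\PCRingscin\ra\Sets$ correspond under Proposition \ref{cc4prop2} to evaluation at $\R$ and $[0,\iy)$ in $\CPCRingscin$, and Theorem \ref{cc4thm1}(a) again gives the result. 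For part (b)'s failure claim, I would simply note that the colimits are not computed objectwise on the categorical side by Theorem \ref{cc4thm1}(b), which under the equivalences means exactly that $\Pi_\rsm,\Pi_\rex,\Pi_\rin$ need not preserve them; a pointer to the standard example $C^\iy(\R^m)\ot_\iy C^\iy(\R^n)\cong C^\iy(\R^{m+n})\supsetneq C^\iy(\R^m)\ot C^\iy(\R^n)$ could be included to illustrate.

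For part (c), the functors $\Pi_\PCRingsc^\CRings$, $\Pi_\PCRingscin^\CRings$, $\Pi_\PCRingsc^\PCRingscin$ are, under Propositions \ref{cc2prop1}, \ref{cc4prop1}, \ref{cc4prop2}, identified with the restriction functors $\Pi_\CPCRingsc^\CCRings$, $\Pi_\CPCRingscin^\CCRings$, $\Pi_\CPCRingsc^\CPCRingscin$ of \eq{cc4eq2}; that these identifications commute with the equivalences is exactly the content of the explicit formulas in Definition \ref{cc4def5}, which were set up precisely so that \eq{cc4eq7} corresponds to \eq{cc4eq2}. Indeed one only needs to check that on objects $\Pi_\PCRingsc^\CRings$ sends $(\fC,\fC_\rex)$ to $\fC$ with the $n=0$ operations $\Phi_f$, which matches restricting $F:\Eucc\ra\Sets$ to $\Euc$ and reading off $F(\R^m)=\fC^m$. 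Then Theorem \ref{cc4thm1}(c) gives left adjoints to $\Pi_\CPCRingsc^\CCRings,\ldots,\Pi_\CPCRingsc^\CPCRingscin$, hence left adjoints to the functors in \eq{cc4eq7}, and functors with left adjoints preserve limits. Finally, for the assertion that the left adjoint of $\Pi_\PCRingsc^\PCRingscin$ is the inclusion $\inc:\PCRingscin\hookra\PCRingsc$, I would simply cite the adjunction bijection already constructed in Definition \ref{cc4def5}: the natural isomorphism $\Hom_\PCRingsc(\inc(\bfC),\bfD)\cong\Hom_\PCRingscin(\bfC,\Pi_\PCRingsc^\PCRingscin(\bfD))$ exhibited there, natural in $\bfC$ and $\bfD$, is precisely the adjunction; since $\inc$ is a left adjoint it preserves colimits.

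The main obstacle is purely bookkeeping rather than mathematical: one must be careful to check that the equivalences of categories of Propositions \ref{cc2prop1}, \ref{cc4prop1}, \ref{cc4prop2} actually intertwine the restriction functors on the categorical side with the forgetful/projection functors $\Pi_\rsm,\Pi_\rex,\Pi_\rin,\Pi_\PCRingsc^\CRings,\ldots$ on the algebraic side (and similarly that the diagram \eq{cc4eq7} is the image of \eq{cc4eq2} under these equivalences). Once those compatibility diagrams are recorded — and they follow immediately from the defining formulas $F(\R^m\t[0,\iy)^n)=\fC^m\t\fC_\rex^n$, resp.\ $\fC^m\t\fC_\rin^n$, in the propositions, together with the construction of $\Pi_\PCRingsc^\PCRingscin$ in Definition \ref{cc4def5} — the theorem is a formal consequence of Theorem \ref{cc4thm1}, since equivalences of categories transport (co)limits, adjunctions, and the property of preserving (co)limits. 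So the proof is short: state the three compatibility identifications, then invoke Theorem \ref{cc4thm1}(a)–(c) and the adjunction of Definition \ref{cc4def5}.
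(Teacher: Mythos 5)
Your proposal is correct and follows essentially the same route as the paper, which proves the theorem precisely by combining the equivalences of Propositions \ref{cc2prop1}, \ref{cc4prop1} and \ref{cc4prop2} with Theorem \ref{cc4thm1} and the adjunction constructed in Definition \ref{cc4def5}. The compatibility checks you flag (that the equivalences intertwine $\Pi_\rsm,\Pi_\rex,\Pi_\rin$ and the functors of \eq{cc4eq7} with evaluation and restriction on the categorical side) are exactly the implicit content of the paper's one-line proof, so nothing further is needed.
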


\begin{ex}
\label{cc4ex2}
The inclusion $\inc:\PCRingscin\hookra\PCRingsc$ in general does not preserve limits, and therefore cannot have a left adjoint. 

For example, suppose $\bfC,\bfD$ are interior pre $C^\iy$-rings with corners, and write $\bfE=\bfC\t\bfD$ and $\bfF=\bfC\t_\rin\bfD$ for the products in $\PCRingsc$ and $\PCRingscin$. Then Theorem \ref{cc4thm2}(a) implies that $\fE=\fC\t\fD$, $\fE_\rex=\fC_\rex\t\fD_\rex$, $\fF=\fC\t\fD$, $\fF_\rin=\fC_\rin\t\fD_\rin$. Since $\fC_\rex=\fC_\rin\amalg\{0_{\fC_\rex}\}$, etc., this gives
\begin{align*}
\fE_\rex&=(\fC_\rin\t\fD_\rin)\amalg (\fC_\rin\t\{0_{\fD_\rex}\})\amalg (\{0_{\fC_\rex}\}\t\fD_\rin)\amalg (\{0_{\fC_\rex}\}\t\{0_{\fD_\rex}\}),\\
\fF_\rex&=(\fC_\rin\t\fD_\rin)\amalg(\{0_{\fC_\rex}\}\t\{0_{\fD_\rex}\}).
\end{align*}
Thus $\bfE\not\cong\bfF$. Moreover in $\fE_\rex$ we have $(1_{\fC_\rex},0_{\fD_\rex})\cdot (0_{\fC_\rex},1_{\fD_\rex})=(0_{\fC_\rex},0_{\fD_\rex})$, so $\fE_\rex$ has zero divisors, and $\bfE$ is not an object in~$\PCRingscin$.
\end{ex}

\begin{prop}
\label{cc4prop4}
The functors $\Pi_\PCRingsc^\CRings,\Pi_\PCRingscin^\CRings$ in \eq{cc4eq7} have right adjoints, so they preserve colimits. 
\end{prop}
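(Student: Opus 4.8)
The plan is to construct the required right adjoints explicitly as right Kan extensions, after passing to the equivalent functorial descriptions. By Propositions \ref{cc2prop1}, \ref{cc4prop1} and \ref{cc4prop2}, under the equivalences $\CRings\simeq\CCRings$, $\PCRingsc\simeq\CPCRingsc$ and $\PCRingscin\simeq\CPCRingscin$ the functors $\Pi_\PCRingsc^\CRings$ and $\Pi_\PCRingscin^\CRings$ become the restriction functors $\iota^*$ and $\iota_\rin^*$ along the full inclusions $\iota:\Euc\hookra\Eucc$ and $\iota_\rin:\Euc\hookra\Euccin$ of \eq{cc4eq1}, between categories of product-preserving functors to $\Sets$. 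Thus it is enough to produce right adjoints to $\iota^*$ and $\iota_\rin^*$; then $\Pi_\PCRingsc^\CRings$ and $\Pi_\PCRingscin^\CRings$ are left adjoints, and so preserve all small colimits, as claimed.

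I would first write down the candidate. For a categorical $C^\iy$-ring $F:\Euc\ra\Sets$ with associated $C^\iy$-ring $\fD=F(\R)$, define $R(F):\Eucc\ra\Sets$ on objects by $R(F)(\R^m\t[0,\iy)^n)=\Hom_\CRings\bigl(C^\iy(\R^m\t[0,\iy)^n),\fD\bigr)$ and on a morphism $g$ of $\Eucc$ by precomposition with the pullback $C^\iy$-ring morphism $g^*$. Since $\Hom_\Eucc(\R^m\t[0,\iy)^n,\iota(\R^k))=C^\iy(\R^m\t[0,\iy)^n)^k$ and, as in Proposition \ref{cc2prop1}, natural transformations between product-preserving functors $\Euc\ra\Sets$ are exactly morphisms of $C^\iy$-rings, one recognises $R(F)$ as the right Kan extension $\mathrm{Ran}_\iota F$. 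Translated back through the equivalences, $R$ sends $\fD$ to $\bfD=(\fD,\fD_\rex)$ with $\fD_\rex=\Hom_\CRings\bigl(C^\iy([0,\iy)),\fD\bigr)$, whose monoid structure is induced by the comultiplication $C^\iy([0,\iy))\ra C^\iy([0,\iy))\ot_\iy C^\iy([0,\iy))$ dual to $[0,\iy)^2\ra[0,\iy)$, $(x,y)\mapsto xy$, and with operations $\Phi_f,\Psi_g$ induced by pullback along $f,g$. The case of $\iota_\rin$ is identical: every smooth map $\R^m\t[0,\iy)^n\ra\R^k$ is automatically interior as the target has empty boundary, so $\Hom_\Euccin(\R^m\t[0,\iy)^n,\R^k)=C^\iy(\R^m\t[0,\iy)^n)^k$ again, and $\mathrm{Ran}_{\iota_\rin}F$ lies in $\CPCRingscin$, hence by Proposition \ref{cc4prop2} corresponds to the interior pre $C^\iy$-ring with corners $R_\rin(\fD)=\bigl(\fD,\Hom_\CRings(C^\iy([0,\iy)),\fD)\amalg\{0\}\bigr)$.

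Granting that $R(F)$ and $R_\rin(F)$ are product-preserving — so that they genuinely lie in $\CPCRingsc$ and $\CPCRingscin$ — the adjunction is then formal: for $\bfC\in\CPCRingsc$ the universal property of the right Kan extension gives natural bijections
\[
\Hom_{\CPCRingsc}\bigl(\bfC,R(F)\bigr)\;\cong\;\mathop{\rm Nat}\bigl(\iota^*\bfC,F\bigr)\;\cong\;\Hom_{\CCRings}\bigl(\Pi_\PCRingsc^\CRings(\bfC),\fD\bigr),
\]
where the outer identifications again use that natural transformations of product-preserving functors are morphisms of (pre) $C^\iy$-rings with corners, respectively of categorical $C^\iy$-rings; the same computation with $\iota_\rin$ deals with $\Pi_\PCRingscin^\CRings$. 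One may also check directly that $R(F)(\R)=\Hom_\CRings(C^\iy(\R),\fD)$, with its transported operations, is simply $\fD$, because $C^\iy(\R)$ is free on one generator, so that the counit $\Pi_\PCRingsc^\CRings\ci R\Ra\Id_\CRings$ is an isomorphism; this is reassuring but not needed.

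The substantive point, which I expect to be the main obstacle, is precisely the product-preservation of $R(F)$: the canonical map $R(F)(R_1\t R_2)\ra R(F)(R_1)\t R(F)(R_2)$ must be a bijection for Euclidean spaces with corners $R_1,R_2$. As $R_1\t R_2$ is again of this form, by definition of $R(F)$ this is equivalent to $C^\iy(R_1\t R_2)\cong C^\iy(R_1)\ot_\iy C^\iy(R_2)$ in $\CRings$, i.e.\ to $C^\iy(-)$ carrying products of the model spaces $\R^m\t[0,\iy)^n$ to coproducts; by induction this reduces to $C^\iy([0,\iy)^2)\cong C^\iy([0,\iy))\ot_\iy C^\iy([0,\iy))$ and $C^\iy(\R\t[0,\iy))\cong C^\iy(\R)\ot_\iy C^\iy([0,\iy))$, on top of the known $C^\iy(\R^{m+n})\cong C^\iy(\R^m)\ot_\iy C^\iy(\R^n)$ of \cite{MoRe2}. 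This is the standard statement that the $C^\iy$-ideal of functions on $\R^N$ vanishing on such a product $R_1\t R_2$ is generated by the two pulled-back vanishing ideals — provable using Seeley extension and a partition of unity separating the two factors — and I would cite it (or the equivalent fact that $C^\iy(-)$ preserves products of manifolds with corners) rather than reprove it here.
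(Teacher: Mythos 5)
Your reduction to product-preserving functors and right Kan extensions is sound, and your formula $(\mathrm{Ran}\,F)(B)\cong\Hom_\CRings\bigl(C^\iy(B),\fD\bigr)$ for $B=\R^m\t[0,\iy)^n$ is correct; this is a genuinely different route from the proof in the paper, which constructs the right adjoint directly by setting $F_{\ge 0}(\fC)=(\fC,\fC_{\ge 0})$ with $\fC_{\ge 0}$ the set of $c\in\fC$ such that $\Phi_f(c,d_1,\ldots,d_{n-1})=0$ for \emph{every} smooth $f:\R^n\ra\R$ vanishing on $[0,\iy)\t\R^{n-1}$ and \emph{all} $d_1,\ldots,d_{n-1}\in\fC$, and then checks the adjunction by hand. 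The genuine gap in your proposal is exactly the point you flag and then wave through: the claim that $C^\iy(R_1\t R_2)\cong C^\iy(R_1)\ot_\iy C^\iy(R_2)$ for the model spaces is ``standard'' and ``provable using Seeley extension and a partition of unity separating the two factors''. What Seeley extension plus a partition of unity actually gives is only that a function vanishing on $[0,\iy)^k\t\R^{n-k}$ is a finite sum of functions each vanishing on a single half-space $\{x_i\ge 0\}$ --- this is Lemma \ref{cc4lem2}. What your argument needs is strictly stronger: that the ideal of functions on $\R^n$ vanishing on the half-space $\{x_1\ge 0\}$ is generated, with finitely many smooth global coefficients, by one-variable functions $g(x_1)$ vanishing on $[0,\iy)$, i.e.\ $C^\iy([0,\iy)\t\R^{n-1})\cong C^\iy([0,\iy))\ot_\iy C^\iy(\R^{n-1})$; the same fact is implicitly used when you endow $\Hom_\CRings(C^\iy([0,\iy)),\fD)$ with its monoid structure and $C^\iy$-operations.

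This is a division problem for functions flat on a half-space whose flatness rate degenerates along the non-compact directions of the boundary (consider $f(x,y)=e^{1/(x(1+y^2))}$ for $x<0$, $f=0$ for $x\ge 0$): one must produce a single flat $g(x_1)$ vanishing on $[0,\iy)$, with controlled derivatives, decaying slowly enough relative to all local rates of $f$ and its derivatives that $f/g$ is smooth. A partition of unity cannot deliver this, since ideal membership allows only finite sums with global smooth coefficients, and there is no off-the-shelf reference for the corners case ---  Moerdijk--Reyes prove $C^\iy(\R^m)\ot_\iy C^\iy(\R^n)\cong C^\iy(\R^{m+n})$ only for Euclidean spaces without boundary. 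The multi-variable condition defining $\fC_{\ge 0}$ in the paper's proof, with the arbitrary extra arguments $d_i$ built in, is engineered precisely so that only Lemma \ref{cc4lem2} is needed and this stronger generation statement is never invoked; moreover your candidate value $\Hom_\CRings(C^\iy([0,\iy)),\fD)$ coincides with $\fD_{\ge 0}$, and hence really is the right adjoint, if and only if that generation statement holds. So you must either prove this division lemma (I believe it is true, but it is a genuine analytic argument, not a citation), or replace the Kan-extension description at this point by the multi-variable definition and verify the adjunction directly, as the paper does.
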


\begin{proof} We will define a functor $F_{\ge 0}:\CRings\ra\PCRingsc$, and show that it is right adjoint to~$\Pi_\PCRingsc^\CRings$. Let $\fC$ be a $C^\iy$-ring, and define $\fC_{\ge 0}$ to be the subset of elements $c\in\fC$ that satisfy the following condition: for all smooth $f:\R^n\ra\R$ such that $f\vert_{[0,\iy)\t \R^{n-1}}=0$ and for $d_1,\ldots, d_{n-1}\in \fC$, then $\Phi_f(c,d_1,\ldots, d_{n-1})=0\in \fC$. Note that $\fC_{\ge 0}$ is non-empty, as it contains $\Phi_{\exp}(\fC)\amalg\{0\}$. Also, any pre $C^\iy$-ring with corners $(\fD,\fD_\rex)$ has~$\Phi_i(\fD_\rex)\subseteq \fD_{\ge 0}$. 

We will make $(\fC,\fC_{\ge 0})$ into a pre $C^\iy$-ring with corners. Take a smooth map $f:\R^n_k\ra [0,\iy)$. Then there is a (non-unique) smooth extension $g:\R^n\ra\R$ such that $g\vert_{\R^n_k}=i\ci f$ for $i:[0,\iy)\ra \R$ the inclusion map. We define
\begin{equation*}
\Psi_f(c_1',\ldots, c'_k,c_{k+1},\ldots, c_{n})=\Phi_g(c_1',\ldots, c'_k,c_{k+1},\ldots, c_{n})
\end{equation*}
for $c_i'\in \fC_{\ge 0}$ and $c_i\in \fC$. To show this is well defined, say $h$ is another such extension, then $(g-h)\vert_{\R^n_k}=0$. We must prove $\Phi_{g-h}(c_1',\ldots, c'_k,c_{k+1},\ldots, c_{n})=0$ for all $c_i'\in \fC_{\ge 0}$ and~$c_i\in \fC$. 

As $g-h$ satisfies the hypothesis of Lemma \ref{cc4lem2} below, we can assume $g-h=f_1+\cdots +f_k$ such that $f_i\vert_{\R^{i-1}\t [0,\iy)\t \R^{n-i-1}}=0$. Then 
\begin{align*}
&\Phi_{g-h}(c_1',\ldots, c'_k,c_{k+1},\ldots, c_{n})=\Phi_{f_1+\cdots +f_k}(c_1',\ldots, c'_k,c_{k+1},\ldots, c_{n})\\
&\quad=\Phi_{f_1}(c_1',\ldots, c'_k,c_{k+1},\ldots, c_{n})+\cdots + \Phi_{f_k}(c_1',\ldots, c'_k,c_{k+1},\ldots, c_{n})=0,
\end{align*}
as $c_i'$ are in $\fC_{\ge 0}$. Therefore $\Psi_f$ is independent of the choice of extension $g$. 

To show that $\Psi_f$ maps to $\fC_{\ge 0}\subseteq\fC$, suppose $e:\R^m\ra\R$ is smooth with $e\vert_{[0,\iy)\t \R^{m-1}}=0$, and let $d_1,\ldots,d_{m-1}\in\fC$. Then
\e
\begin{split}
\Phi_e(&\Psi_f(c_1',\ldots, c'_k,c_{k+1},\ldots, c_{n}),d_1,\ldots, d_{m-1})\\
&=\Phi_e(\Phi_g(c_1',\ldots, c'_k,c_{k+1},\ldots, c_{n}),d_1,\ldots, d_{m-1})\\
&=\Phi_{e\ci(g\t\id_{\R^{m-1}})}(c_1',\ldots, c'_k,c_{k+1},\ldots, c_{n},d_1,\ldots, d_{m-1}).
\end{split}
\label{cc4eq8}
\e
Here $e\ci(g\t\id_{\R^{m-1}}):\R^{n+m-1}\ra\R$ restricts to zero on $\R^{n+m-1}_k\subset\R^{n+m-1}$, since $(g\t\id_{\R^{m-1}})\vert_{\R^{n+m-1}_k}=f\t\id_{\R^{m-1}}$, which maps to $[0,\iy)\t\R^{m-1}$, and $e\vert_{[0,\iy)\t \R^{m-1}}=0$. Regarding $e\ci(g\t\id_{\R^{m-1}})$ as an extension of $0:\R^{n+m-1}_k\ra\R$, the argument above shows \eq{cc4eq8} is independent of extension, so we can replace $e\ci(g\t\id_{\R^{m-1}})$ by $0:\R^{n+m-1}\ra\R$, and \eq{cc4eq8} is zero. As this holds for all $e,m$, we have $\Psi_f(c_1',\ldots,c_{n})\in\fC_{\ge 0}$, as we want. 

Similarly, smooth functions $f:\R^n_k\ra \R$ give well defined $C^\iy$-operations $\Phi_f$. Hence $(\fC,\fC_{\ge 0})$ is a pre $C^\iy$-ring with corners. Set~$F_{\ge 0}(\fC)=(\fC,\fC_{\ge 0})$. 

On morphisms, $F_{\ge 0}$ sends $\phi:\fC\ra\fD$ to $(\phi,\phi_\rex):(\fC,\fC_{\ge 0})\ra (\fD,\fD_{\ge 0})$, where $\phi_\rex=\phi\vert_{\fC_{\ge 0}}$. As $\phi$ respects the $C^\iy$-operations, the image of $\phi(\fC_{\ge 0})\subseteq\fD_{\ge 0}\subseteq\fD$, so $\phi_\rex$ is well defined.

To show $F_{\ge 0}$ is right adjoint to $\Pi_\PCRingsc^\CRings$, we describe the unit and counit of the adjunction. The unit acts on $(\fC,\fC_\rex)\in\PCRingsc$ as $(\id_\fC,\Phi_i):(\fC,\fC_\rex)\ra(\fC,\fC_{\ge 0})$. The counit acts on $\fC\in\CRings$ as $\id_\fC:\fC\ra\fC$.
The compositions $F_{\ge 0}\Ra F_{\ge 0}\ci\Pi_\PCRingsc^\CRings\ci F_{\ge 0} \Ra F_{\ge 0}$ and $\Pi_\PCRingsc^\CRings\Ra \Pi _\PCRingsc^\CRings\ci F_{\ge 0}\ci \Pi_\PCRingsc^\CRings \Ra \Pi_\PCRingsc^\CRings$ are the identity.

Since $\Pi_\PCRingscin^\CRings=\Pi_\PCRingsc^\CRings\ci\inc$, it now follows from Definition \ref{cc4def5} that $\Pi_\PCRingsc^\PCRingscin\ci F_{\ge 0}$ is right adjoint to~$\Pi_\PCRingscin^\CRings$.
\end{proof}

The next lemma was used in the previous proof.

\begin{lem}
\label{cc4lem2}
If\/ $f:\R^n\ra \R$ is smooth such that\/ $f\vert_{\R^n_k}=0,$ then there are smooth\/ $f_i:\R^n\ra \R$ for $i=1,\ldots,k$ such that\/ $f_i\vert_{\R^{i-1}\t [0,\iy)\t \R^{n-i-1}}=0$ for $i=1,\ldots, k,$ and\/~$f=f_1+\ldots + f_k$.
\end{lem}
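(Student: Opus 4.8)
The plan is to prove this directly by an explicit partition of unity, with no induction on $k$; the case $k=0$ is vacuous since then $\R^n_k=\R^n$ forces $f\equiv0$. So suppose $k\ge1$. First I would record the flatness of $f$ along $\R^n_k$. Since $f$ vanishes on $\R^n_k$ it vanishes on the open set $(0,\iy)^k\t\R^{n-k}$, hence every $\pd^\be f$ vanishes there, and by continuity $\pd^\be f$ vanishes on the closure $\R^n_k$ for all multi-indices $\be$. Combined with Taylor's theorem with remainder, expanded at the nearest point of $\R^n_k$ (whose $\pd^{\be+\ga}f$ all vanish), this yields the quantitative statement: for each compact $K\subset\R^n$, each $N\ge0$ and each $\be$ there is $C$ with $\bmd{\pd^\be f(x)}\le C\,\mathrm{dist}(x,\R^n_k)^N$ for all~$x\in K$.

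Next I would build the cutoffs. Fix $\phi\in C^\iy(\R)$ with $\phi(t)=\exp(-1/t^2)$ for $t<0$ and $\phi(t)=0$ for $t\ge0$, so $\phi\ge0$, $\phi(t)>0\iff t<0$, and $\phi$ is flat at $0$. Put $\Sigma(x)=\sum_{i=1}^k\phi(x_i)$, which is $>0$ exactly on the open set $U:=\R^n\sm\R^n_k=\bigcup_{i=1}^k\{x_i<0\}$, and define $f_i(x)=\phi(x_i)\,\Sigma(x)^{-1}f(x)$ for $x\in U$ and $f_i(x)=0$ for $x\in\R^n_k$. Three properties are then immediate: on $U$, $\sum_{i=1}^k f_i=\bigl(\sum_i\phi(x_i)/\Sigma\bigr)f=f$, while on $\R^n_k$ both sides vanish, so $f=f_1+\cdots+f_k$ on all of $\R^n$; and $f_i$ vanishes wherever its $i$-th variable is $\ge0$, since there $\phi(x_i)=0$ — which is precisely the vanishing condition demanded of $f_i$. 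It remains only to check each $f_i$ is $C^\iy$ on~$\R^n$.

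On the open set $U$ the function $f_i$ is visibly smooth, so the work is smoothness across $\R^n_k$. The key estimate I would establish, by induction on $\md\al$, is that for each compact $K$ there are an absolute constant $c$ and constants $C_{\al,K}$ with $\bmd{\pd^\al\bigl(\phi(x_i)\Sigma^{-1}\bigr)(x)}\le C_{\al,K}\,\mathrm{dist}(x,\R^n_k)^{-c\md\al}$ for $x\in U\cap K$. The content is that although differentiating $\phi(x_i)$ produces factors $\phi^{(j)}(x_i)/\phi(x_i)$ that are polynomials in $1/x_i$, each term of $\pd^\al(\phi(x_i)\Sigma^{-1})$ is the product of such a polynomial with quotients $\prod_j\phi(x_j)^{m_j}/\Sigma^{M}$ where $\sum_j m_j\ge M$; these are controlled using $\phi(x_j)\le\Sigma$ and the boundedness of $t\mapsto t^{-r}\phi(t)$, so that when $\md{x_i}$ is small the potential blow-up $\md{x_i}^{-r}$ is absorbed by the exponential smallness of $\phi(x_i)$ relative to $\Sigma\ge\phi(x_{j})$ for the most negative coordinate $x_j$, whose magnitude is comparable to $\mathrm{dist}(x,\R^n_k)$. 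Granting this, Leibniz' rule plus the flatness bound give, for $x\in U\cap K$ and $N>c\md\ga$,
\begin{equation*}
\bmd{\pd^\ga f_i(x)}\le\sum_{\al\le\ga}\tbinom{\ga}{\al}C_{\al,K}\,\mathrm{dist}(x,\R^n_k)^{-c\md\al}\cdot C_{N,\ga-\al,K}\,\mathrm{dist}(x,\R^n_k)^{N}\le C'\,\mathrm{dist}(x,\R^n_k)^{\,N-c\md\ga},
\end{equation*}
which tends to $0$ as $x$ approaches $\R^n_k$. A routine difference-quotient argument then shows $f_i$ extends to a $C^\iy$ function on $\R^n$, flat along $\R^n_k$, completing the proof.

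The main obstacle is exactly this derivative estimate on $\pd^\al(\phi(x_i)\Sigma^{-1})$: one must verify that the super-polynomial flatness of $\phi$ at $0$ compensates the polynomial growth of its logarithmic derivatives, so that the cutoffs, while not having bounded derivatives, blow up only like a fixed power of $1/\mathrm{dist}(\cdot,\R^n_k)$ — slow enough to be swallowed by the infinite-order vanishing of $f$. Everything else is bookkeeping.
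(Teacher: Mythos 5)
Your proof is correct, and it is essentially the paper's argument: both decompose $f$ by multiplying it with a partition of unity subordinate to the cover $\{x_i<0\}$, $i=1,\ldots,k$, of $\R^n\sm\R^n_k$, and both rely on the fact that $f$ vanishes to infinite order along $\R^n_k$ (since it vanishes on a set with nonempty interior) to absorb the blow-up of the cutoffs' derivatives. The only difference is the construction of the cutoffs: the paper pulls back a partition of unity on the punctured sphere via $x\mapsto(x_1,\ldots,x_k)/\md{(x_1,\ldots,x_k)}$, so its cutoffs are homogeneous of degree zero and their derivatives grow like a power of $1/\md{(x_1,\ldots,x_k)}$, whereas you use the explicit quotients $\phi(x_i)/\sum_j\phi(x_j)$ with $\phi(t)=\exp(-1/t^2)$ for $t<0$; your version has the mild advantage of making fully explicit the quantitative estimate (derivatives of the cutoffs bounded by a fixed power of $1/\mathrm{dist}(\cdot,\R^n_k)$, beaten by the infinite-order vanishing of $f$) that the paper's proof leaves implicit in the phrase ``the $f_i$ are smooth and identically zero on $\R^n_k$''.
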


\begin{proof}
Let $f$ be as in the lemma. Consider the following open subset $U=\cS^{k-1}\setminus \{(x_1,\ldots, x_k):x_i\ge 0\}$ of the dimension $k-1$ unit sphere $\cS^{k-1}\subset \R^k$. Define an open cover $U_1,\ldots, U_k$ of $U$ by $U_i=\{(x_1,\ldots, x_k)\in U:x_i<0\}$. Take a partition of unity $\rho_i:U\ra[0,1]$ for $i=1,\ldots, k$, subordinate to $\{U_1,\ldots, U_n\}$, with $\rho_i$ having support on $U_i$ and~ $\sum_{i=1}^k\rho_i=1$.

Define the $f_i$ as follows
\begin{equation*}
f_i=\begin{cases} 
f(x_1,\ldots, x_n)\rho_i\left( \frac{(x_1,\dots, x_k)}{\vert(x_1,\ldots, x_k)\vert}\right ),&  \text{if } x_i<0\text{ for some } i=1,\ldots, k,\\
0, & \text{otherwise,}
\end{cases}
\end{equation*}
where $\vert(x_1,\ldots, x_k)\vert$ is the length of the vector $(x_1,\ldots, x_k)\in \R^k$. It is clear these are smooth where the $\rho_i$ are defined. The $\rho_i$ are not defined in the first quadrant of $\R^k$, where all $x_i\ge 0$, however approaching the boundary of this quadrant, the $\rho_i$ are all constant. As $f\vert_{\R^n_k}=0$, then all derivatives of $f$ are zero in this quadrant, so the $f_i$ are smooth and identically zero on $\R^n_k$. In addition, $f_i\vert_{\R^{i-1}\t [0,\iy)\t \R^{n-i-1}}=0$, as the $\rho_i$ are zero outside of $U_i$.
Finally, as $\sum{\rho_i}=1$ and $f\vert_{\R^n_k}=0$, then $f=f_1+\ldots + f_k$ as required. 
\end{proof}

\subsection{\texorpdfstring{$C^\iy$-rings with corners}{C∞-rings with corners}}
\label{cc44}

We can now define $C^\iy$-rings with corners.

\begin{dfn}
\label{cc4def6}
Let $\bfC=(\fC,\fC_\rex)$ be a pre $C^\iy$-ring with corners. We call $\bfC$ a $C^\iy$-{\it ring with corners\/} if any (hence all) of the following hold:
\begin{itemize}
\setlength{\itemsep}{0pt}
\setlength{\parsep}{0pt}
\item[(i)] $\Phi_i\vert_{\smash{\fC_\rex^\t}}:\fC_\rex^\t\ra\fC$ is injective.
\item[(ii)] $\Psi_{\exp}:\fC\ra\fC_\rex^\t$ is surjective.
\item[(iii)] $\Psi_{\exp}:\fC\ra\fC_\rex^\t$ is a bijection.
\end{itemize}
Here Proposition \ref{cc4prop3} implies that (i),(ii) are equivalent, and Definition \ref{cc4def4}(e) and Proposition \ref{cc2prop6}(a) imply that $\Psi_{\exp}:\fC\ra\fC_\rex^\t$ is injective, so (ii),(iii) are equivalent, and therefore (i)--(iii) are equivalent. Write $\CRingsc$ for the full subcategory of $C^\iy$-rings with corners in~$\PCRingsc$.

We call a $C^\iy$-ring with corners $\bfC$ {\it interior\/} if it is an interior pre $C^\iy$-ring with corners. Write $\CRingscin\subset\CRingsc$ for the subcategory of interior $C^\iy$-rings with corners, and interior morphisms between them.

Define a commutative triangle of functors 
\e
\begin{gathered}
\xymatrix@C=50pt@R=12pt{ \CRingsc \ar[rr]_{\Pi_\CRingsc^\CRings} \ar[dr]_(0.4){\Pi_\CRingsc^\CRingscin\,\,\,\,\,} && \CRings \\
& \CRingscin \ar[ur]_(0.6){\,\,\,\,\,\Pi_\CRingscin^\CRings} }\!\!\!\!\!\!\!\!\!\!\!\!{}
\end{gathered}
\label{cc4eq9}
\e
by restricting the functors in \eq{cc4eq7} to $\CRingsc,\CRingscin$. It is clear from Definition \ref{cc4def5} that the restriction of $\Pi_\PCRingsc^\PCRingscin$ to $\CRingsc$ does map to $\CRingscin$. Then $\Pi_\PCRingsc^\PCRingscin$ right adjoint to $\inc$ in Definition \ref{cc4def5} implies that $\Pi_\CRingsc^\CRingscin$ is right adjoint to $\inc:\CRingscin\hookra\CRingsc$.
\end{dfn}

\begin{rem}
\label{cc4rem1}
We can interpret the condition that $\bfC=(\fC,\fC_\rex)$ be a $C^\iy$-ring with corners as follows. Imagine there is some `space with corners' $X$, such that $\fC=\bigl\{$smooth maps $c:X\ra\R\bigr\}$, and $\fC_\rex=\bigl\{$exterior maps $c':X\ra[0,\iy)\bigr\}$. If $c'\in\fC_\rex$ is invertible then $c'$ should map $X\ra(0,\iy)$, and we require there to exist smooth $c=\log c':X\ra\R$ in $\fC$ with $c'=\exp c$. This makes $C^\iy$-rings and $C^\iy$-schemes with corners behave more like manifolds with corners.
\end{rem}

Here is the motivating example, following Example~\ref{cc4ex1}:

\begin{ex} 
\label{cc4ex3}
{\bf(a)} Let $X$ be a manifold with corners. Define a $C^\iy$-ring with corners $\bfC=(\fC,\fC_\rex)$ by $\fC=C^\iy(X)$ and $\fC_\rex=\Ex(X)$, as sets. If $f:\R^m\t[0,\iy)^n\ra\R$ is smooth, define the operation $\Phi_f:\fC^m\t\fC_\rex^n\ra\fC$ by
\e
\Phi_f(c_1,\ldots,c_m,c_1',\ldots,c_n'):x\longmapsto f\bigl(c_1(x),\ldots,c_m(x),c_1'(x),\ldots,c_n'(x)\bigr).
\label{cc4eq10}
\e
If $g:\R^m\t[0,\iy)^n\ra[0,\iy)$ is exterior, define $\Psi_g:\fC^m\t\fC_\rex^n\ra\fC_\rex$ by
\e
\Psi_g(c_1,\ldots,c_m,c_1',\ldots,c_n'):x\longmapsto g\bigl(c_1(x),\ldots,c_m(x),c_1'(x),\ldots,c_n'(x)\bigr).
\label{cc4eq11}
\e
It is easy to check that $\bfC$ is a $C^\iy$-ring with corners. We write~$\bs C^\iy(X)=\bfC$.

Suppose $f:X\ra Y$ is a smooth map of manifolds with corners, and let $\bfC,\bfD$ be the $C^\iy$-rings with corners corresponding to $X,Y$. Write $\bs\phi=(\phi,\phi_\rex)$, where $\phi:\fD\ra\fC$ maps $\phi(d)=d\ci f$ and $\phi_\rex:\fD_\rex\ra\fC_\rex$ maps $\phi(d')=d'\ci f$. Then $\bs\phi:\bfD\ra\bfC$ is a morphism in $\CRingsc$.

Define a functor $F_\Manc^\CRingsc:\Manc\ra(\CRingsc)^{\bf op}$ to map $X\mapsto\bfC$ on objects, and $f\mapsto\bs\phi$ on morphisms, for $X,Y,\bfC,\bfD,f,\bs\phi$ as above. 

All the above also works for manifolds with g-corners, as in \S\ref{cc33}, giving a functor~$F_\Mangc^\CRingsc:\Mangc\ra(\CRingsc)^{\bf op}$.
\smallskip

\noindent{\bf(b)} Similarly, if $X$ is a manifold with (g-)corners, define an interior $C^\iy$-ring with corners $\bfC=(\fC,\fC_\rex)$ by $\fC=C^\iy(X)$ and $\fC_\rex=\In(X)\amalg\{0\}$ as sets, where $0:X\ra[0,\iy)$ is the zero function, with $C^\iy$-operations as in \eq{cc4eq10}--\eq{cc4eq11}. We write~$\bs C^\iy_\rin(X)=\bfC$.

Suppose $f:X\ra Y$ is an interior map of manifolds with (g-)corners, and let $\bfC,\bfD$ be the interior $C^\iy$-rings with corners corresponding to $X,Y$. Write $\bs\phi=(\phi,\phi_\rex)$, where $\phi:\fD\ra\fC$ maps $\phi(d)=d\ci f$ and $\phi_\rex:\fD_\rex\ra\fC_\rex$ maps $\phi(d')=d'\ci f$. Then $\bs\phi:\bfD\ra\bfC$ is a morphism in~$\CRingscin$. 

Define $F_\Mancin^\CRingscin:\Mancin\ra(\CRingscin)^{\bf op}$ and $F_\Mangcin^\CRingscin:\Mangcin\ab\ra(\CRingscin)^{\bf op}$ to map $X\mapsto\bfC$ on objects, and $f\mapsto\bs\phi$ on morphisms, for $X,Y,\bfC,\bfD,f,\bs\phi$ as above. 
\end{ex}

Recall that if $\cD$ is a category and $\cC\subset\cD$ a full subcategory, then $\cC$ is a {\it reflective subcategory\/} if the inclusion $\inc:\cC\hookra\cD$ has a left adjoint $\Pi:\cD\ra\cC$, which is called a {\it reflection functor}.

\begin{prop} 
\label{cc4prop5}
The inclusion\/ $\inc:\CRingsc\hookra\PCRingsc$ has a left adjoint reflection functor\/ $\Pi_\PCRingsc^\CRingsc$. Its restriction to $\PCRingscin$ is a left adjoint $\Pi_\PCRingscin^\CRingscin$ for\/~$\inc:\CRingscin\hookra\PCRingscin$.
\end{prop}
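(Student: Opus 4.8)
The plan is to construct $\Pi_\PCRingsc^\CRingsc$ explicitly as a quotient construction, adding formal logarithms for invertible exterior elements, and then verify the universal property directly. First I would take a pre $C^\iy$-ring with corners $\bfC=(\fC,\fC_\rex)$ and consider the group $\fC_\rex^\t$ of invertible elements. By Proposition \ref{cc4prop3}, for each $c'\in\fC_\rex^\t$ there is a unique $L(c')\in\fC$ with $\Phi_{\exp}(L(c'))=\Phi_i(c')$; however $c'$ itself need not equal $\Psi_{\exp}(L(c'))$ in $\fC_\rex$, which is exactly the obstruction to $\bfC$ being a $C^\iy$-ring with corners. So I would form the quotient $\bfC'=\bfC/\!\sim$, where we impose the relations $c'=\Psi_{\exp}(L(c'))$ for all $c'\in\fC_\rex^\t$, in the category $\PCRingsc$. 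Concretely, since $\PCRingsc$ has all small colimits by Theorem \ref{cc4thm2}(b), one can realize this quotient as a coequalizer or pushout: take the free pre $C^\iy$-ring with corners on generators matching $\fC_\rex^\t$ together with two maps into $\bfC$ (one sending the generator to $c'$, the other to $\Psi_{\exp}(L(c'))$) and form the coequalizer $\bfC\to\bfC'$. Set $\Pi_\PCRingsc^\CRingsc(\bfC)=\bfC'$ and define it on morphisms by functoriality of the colimit.

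The key steps, in order, are: (1) show $\bfC'$ is actually a $C^\iy$-ring with corners, i.e. that in $\bfC'$ the map $\Psi_{\exp}:\fC'\ra\fC_\rex^{\prime\t}$ is surjective (equivalently bijective, using Definition \ref{cc4def4}(e) and Proposition \ref{cc2prop6}(a)). This requires checking that any invertible element of $\fC_\rex'$ arises from an invertible element of $\fC_\rex$ (images of invertibles are invertible, and conversely one must trace through the coequalizer construction to see no new invertibles appear unexpectedly, or that those that do are again exponentials), together with the fact that the imposed relations force $c'=\Psi_{\exp}(L(c'))$ to descend correctly — here one uses relations (i) and (ii) from Definition \ref{cc4def2} to propagate the logarithm identity through all $C^\iy$-operations. (2) Verify the universal property: given any morphism $\bs\phi:\bfC\ra\bfD$ with $\bfD$ a $C^\iy$-ring with corners, show $\bs\phi$ factors uniquely through $\bfC\ra\bfC'$. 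This is essentially formal: in $\bfD$ one already has $\phi_\rex(c')=\Psi_{\exp}(\text{something})$ for invertible $c'$ by the $C^\iy$-ring-with-corners axiom, and the logarithm is unique by Proposition \ref{cc2prop6}(a), so $\bs\phi$ respects the imposed relations and descends, uniqueness following from surjectivity of $\bfC\ra\bfC'$ on underlying sets (colimit maps need not be surjective, but $\Pi_\rsm,\Pi_\rex$ behavior via Theorem \ref{cc4thm2} should still give uniqueness from the couniversal property of the coequalizer). (3) For the interior case, observe that the coequalizer diagram defining $\bfC'$ uses only interior morphisms when $\bfC$ is interior — the generators map to $c'\in\fC_\rex^\t\subseteq\fC_\rin$ by Lemma \ref{cc4lem1} and to $\Psi_{\exp}(L(c'))\in\fC_\rin$, both landing in the interior part — so by Theorem \ref{cc4thm2}(c) (the inclusion $\inc:\PCRingscin\hookra\PCRingsc$ preserves colimits), the same colimit computed in $\PCRingscin$ gives $\bfC'$, which is therefore interior; hence $\Pi_\PCRingscin^\CRingscin:=\Pi_\PCRingsc^\CRingsc\vert_{\PCRingscin}$ is well-defined and its universal property restricts to give the left adjoint to $\inc:\CRingscin\hookra\PCRingscin$.

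The main obstacle I expect is step (1): showing that the quotient $\bfC'$ genuinely \emph{lands} in $\CRingsc$ rather than requiring a transfinite iteration. The subtlety is that after imposing $c'=\Psi_{\exp}(L(c'))$ for the original invertibles of $\fC_\rex$, the quotient monoid $\fC_\rex'$ might acquire \emph{new} invertible elements that are not manifestly exponentials, forcing one to repeat the construction. I would handle this either by (a) arguing that the natural map $\fC_\rex^\t\ra\fC_\rex^{\prime\t}$ is surjective — which should follow because an element of $\fC_\rex'$ invertible in $\fC_\rex'$ must be represented by an element of $\fC_\rex$ whose image is invertible, and a short argument using the explicit colimit (elements of $\fC_\rex'$ are equivalence classes of $\Psi_g$-expressions in generators of $\bfC$ and the formal logs) shows such a representative can be chosen invertible already in $\fC_\rex$; or (b) if that fails, defining $\Pi_\PCRingsc^\CRingsc(\bfC)$ as the colimit of the transfinite tower $\bfC\ra\bfC'\ra\bfC''\ra\cdots$, which stabilizes since each step only quotients and $\CRingsc$ is closed under the relevant colimits (being reflective would be the conclusion, not the hypothesis, so one argues directly that the tower's colimit satisfies Definition \ref{cc4def6}). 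Approach (a) is cleaner and I expect it to work, because the relation being imposed is "monoidal" — it identifies a unit with an exponential — and exponentials of sums of logs are again exponentials, so the set of exponential units is already closed; the one genuine check is that $\Phi_i$ remains injective on units after the quotient, i.e. Definition \ref{cc4def6}(i), which reduces to the uniqueness in Proposition \ref{cc2prop6}(a) for the $C^\iy$-ring $\fC'=\fC$ (note $\fC$ is unchanged by the construction, only $\fC_\rex$ is quotiented).
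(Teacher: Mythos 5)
Your construction is essentially the paper's: imposing $c'=\Psi_{\exp}(L(c'))$ for every $c'\in\fC_\rex^\t$ is the same as quotienting $\fC_\rex$ by the subgroup $\Ker\bigl(\Phi_i\vert_{\fC_\rex^\t}\bigr)$ while leaving $\fC$ unchanged, which is exactly how the paper defines $\Pi_\PCRingsc^\CRingsc(\bfC)=(\fC,\fC_\rex/\!\sim)$, checks that the operations $\Phi_f,\Psi_g$ descend, and then reads off the universal property and the interior case, just as you do. In particular the obstacle you flag in step (1) does not arise: any unit of the quotient monoid is the class of a unit of $\fC_\rex$ (a factor of a unit in a commutative monoid is a unit), so Definition \ref{cc4def6}(i) holds after a single quotient and no transfinite iteration is needed, confirming your option (a).
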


\begin{proof} Let $\bfC=(\fC,\fC_\rex)$ be a pre $C^\iy$-ring with corners. We will define a $C^\iy$-ring with corners $\bs{\hat\fC}=(\fC,\hat\fC_\rex)$. As a set, define $\hat\fC_\rex=\fC_\rex/\simc$, where $\sim$ is the equivalence relation on $\fC_\rex$ given by $c'\sim c''$ if there exists $c'''\in\fC_\rex^\t$ with $\Phi_i(c''')=1$ and $c'=c''\cdot c'''$. That is, $\hat\fC_\rex$ is the quotient of $\fC_\rex$ by the group $\Ker\bigl(\Phi_i\vert_{\smash{\fC_\rex^\t}}\bigr)\subseteq\fC_\rex^\t$. There is a natural surjective projection $\hat\pi:\fC_\rex\ra\hat\fC_\rex$.

If $f:\R^m\t[0,\iy)^n\ra\R$ is smooth and $g:\R^m\t[0,\iy)^n\ra[0,\iy)$ is exterior, it is not difficult to show there exist unique maps $\hat\Phi_f:\fC^m\t\hat\fC{}_\rex^n\ra\fC$ and $\hat\Psi_g:\fC^m\t\hat\fC{}_\rex^n\ra\hat\fC_\rex$ making the following diagrams commute:
\e
\begin{split}
\xymatrix@C=80pt@R=15pt{ *+[r]{\fC^m\t\fC_\rex^n} \ar[r]_(0.6){\Phi_f} \ar[d]^{\id^m\t \hat\pi^n} & *+[l]{\fC} \ar[d]_\id \\
*+[r]{\fC^m\t\hat\fC{}_\rex^n} \ar@{.>}[r]^(0.6){\hat\Phi_f}  & *+[l]{\fC,\!} }\quad
\xymatrix@C=80pt@R=15pt{ *+[r]{\fC^m\t\fC_\rex^n} \ar[r]_(0.6){\Psi_g} \ar[d]^{\id^m\t \hat\pi^n} & *+[l]{\fC_\rex} \ar[d]_{\hat\pi} \\
*+[r]{\fC^m\t\hat\fC{}_\rex^n} \ar@{.>}[r]^(0.6){\hat\Psi_g}  & *+[l]{\hat\fC_\rex,\!} }
\end{split}
\label{cc4eq12}
\e
and these make $\bs{\hat\fC}$ into a pre $C^\iy$-ring with corners. Clearly $\bs{\hat\fC}$ satisfies Definition \ref{cc4def6}(i). Therefore $\bs{\hat\fC}$ is a $C^\iy$-ring with corners.

Suppose $\bs\phi=(\phi,\phi_\rex):\bfC\ra\bfD$ is a morphism of pre $C^\iy$-rings with corners, and define $\bs{\hat\fC},\bs{\hat\fD}$ as above. Then by a similar argument to \eq{cc4eq12}, we find that there is a unique map $\hat\phi_\rex$ such that the following commutes:
\begin{equation*}
\xymatrix@C=80pt@R=15pt{ *+[r]{\fC_\rex} \ar[r]_{\phi_\rex} \ar[d]^{\hat\pi} & *+[l]{\fD_\rex} \ar[d]_{\hat\pi} \\
*+[r]{\hat\fC_\rex} \ar@{.>}[r]^{\hat\phi_\rex}  & *+[l]{\hat\fD_\rex,\!} }
\end{equation*}
and then $\bs{\hat\phi}=(\phi,\hat\phi_\rex):\bs{\hat\fC}\ra\bs{\hat\fD}$ is a morphism of $C^\iy$-rings with corners. Define a functor $\Pi_\PCRingsc^\CRingsc:\PCRingsc\ra\CRingsc$ to map $\bfC\mapsto\bs{\hat\fC}$ on objects and $\bs\phi\mapsto\bs{\hat\phi}$ on morphisms.

Now let $\bfC$ be a pre $C^\iy$-ring with corners, $\bfD$ a $C^\iy$-ring with corners, and $\bs\phi:\bfC\ra\bfD$ a morphism. Then we have a morphism $\bs{\hat\phi}:\bs{\hat\fC}\ra\bfD$, as $\bs{\hat\fD}=\bfD$, and $\bs\phi\leftrightarrow\bs{\hat\phi}$ gives a 1-1 correspondence
\begin{align*}
\Hom_\PCRingsc\bigl(\bfC,\inc(\bfD)\bigr)&=\Hom_\PCRingsc(\bfC,\bfD)\\
&\cong\Hom_\CRingsc\bigl(\Pi_\PCRingsc^\CRingsc(\bfC),\bfD\bigr),
\end{align*}
which is functorial in $\bfC,\bfD$. Hence $\Pi_\PCRingsc^\CRingsc$ is left adjoint to~$\inc$.

If $\bfC,\bs\phi$ above are interior then $\bs{\hat\fC},\bs{\hat\phi}$ are interior, so $\Pi_\PCRingsc^\CRingsc$ restricts to a functor $\Pi_\PCRingscin^\CRingscin:\PCRingscin\ra\CRingscin$, which is left adjoint to~$\inc:\CRingscin\hookra\PCRingscin$.
\end{proof}

We extend Theorem \ref{cc4thm2}, Example \ref{cc4ex2} and Proposition \ref{cc4prop4} to~$\CRingsc$.

\begin{thm}
\label{cc4thm3}
{\bf(a)} All small limits exist in\/ $\CRingsc,\CRingscin$. The functors\/ $\Pi_\rsm,\Pi_\rex:\CRingsc\ra\Sets$ and\/ $\Pi_\rsm,\Pi_\rin:\CRingscin\ra\Sets$ preserve limits.
\smallskip

\noindent{\bf(b)} All small colimits exist in\/ $\CRingsc,\CRingscin,$ though in general they are  not preserved by $\Pi_\rsm,\Pi_\rex$ and\/~$\Pi_\rsm,\Pi_\rin$.
\smallskip

\noindent{\bf(c)} The functors $\Pi_\CRingsc^\CRings,\Pi_\CRingscin^\CRings$ in \eq{cc4eq9} have left and right adjoints, so they preserve limits and colimits. 
\smallskip

\noindent{\bf(d)} The left adjoint of\/ $\Pi_\CRingsc^\CRingscin$ is the inclusion $\inc:\CRingscin\hookra\CRingsc,$ so $\inc$ preserves colimits. However, $\inc$ does not preserve limits, and has no left adjoint.
\end{thm}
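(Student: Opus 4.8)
The plan is to deduce Theorem~\ref{cc4thm3} from the corresponding results for pre $C^\iy$-rings with corners, Theorem~\ref{cc4thm2}, together with the reflection functors of Proposition~\ref{cc4prop5} and the adjunctions of Proposition~\ref{cc4prop4}. The organizing principle is: $\CRingsc\subset\PCRingsc$ is a reflective subcategory, so $\CRingsc$ inherits all limits from $\PCRingsc$, and colimits are computed by applying the reflection functor $\Pi_\PCRingsc^\CRingsc$ to the colimit in $\PCRingsc$; and the functors in \eqref{cc4eq9} are restrictions of the functors in \eqref{cc4eq7}, whose adjoints we already control.

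First, for part~(a): a reflective subcategory is closed under all limits that exist in the ambient category. Concretely, if $D:I\ra\CRingsc$ is a small diagram, form $\lim D$ in $\PCRingsc$ (exists by Theorem~\ref{cc4thm2}(a)); I must check this limit is actually a $C^\iy$-ring with corners, i.e.\ satisfies Definition~\ref{cc4def6}(i). Since $\Pi_\rsm,\Pi_\rex$ preserve limits (Theorem~\ref{cc4thm2}(a)), the underlying sets of $\lim D$ are the set-theoretic limits of $\fC^i,\fC^i_\rex$, and one checks that $\Phi_i\vert_{(\lim\fC_\rex)^\t}$ is injective because it is so in each component (an element of the limit mapping to $1$ under $\Phi_i$ and invertible in the limit has each component doing likewise, hence each component is $1$ by Definition~\ref{cc4def6}(i) for $D(i)$). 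Then $\lim D$ in $\PCRingsc$ serves as $\lim D$ in $\CRingsc$ since $\CRingsc$ is full. The same argument restricts to $\CRingscin$, using that $\Pi_\rin$ preserves limits and that a limit of interior pre $C^\iy$-rings with corners (computed with interior morphisms) has $\fC_\rex$ with no zero divisors — this last point needs care but follows because the $\fC^i_\rin$ form a submonoid of the limit and the limit of the $\fC^i_\rin$ is exactly the complement of the added zero. That $\Pi_\rsm,\Pi_\rex,\Pi_\rin$ preserve these limits is then immediate since the $\CRingsc$-limit coincides with the $\PCRingsc$-limit.

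For part~(b): small colimits exist in $\PCRingsc,\PCRingscin$ by Theorem~\ref{cc4thm2}(b); given a diagram in $\CRingsc$, take its colimit in $\PCRingsc$ and apply the reflection $\Pi_\PCRingsc^\CRingsc$ of Proposition~\ref{cc4prop5}. A standard fact about reflective subcategories gives that this is the colimit in $\CRingsc$ (the left adjoint preserves colimits, and on objects already in $\CRingsc$ the reflection is the identity up to isomorphism, since $\bs{\hat\fD}=\bfD$ for $\bfD\in\CRingsc$, as noted in the proof of Proposition~\ref{cc4prop5}). The failure of $\Pi_\rsm,\Pi_\rex,\Pi_\rin$ to preserve colimits is inherited from Theorem~\ref{cc4thm2}(b); indeed examples already live in $\CRingsc$ — e.g.\ the pushout in Chapter~\ref{cc2} with $C^\iy(\R^m)\ot_\iy C^\iy(\R^n)$ being larger than the set-theoretic tensor product. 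For part~(c): the functors $\Pi_\CRingsc^\CRings$, $\Pi_\CRingscin^\CRings$ are the restrictions to $\CRingsc,\CRingscin$ of $\Pi_\PCRingsc^\CRings$, $\Pi_\PCRingscin^\CRings$, which have left adjoints (Theorem~\ref{cc4thm2}(c)) and right adjoints $F_{\ge0}$, $\Pi_\PCRingsc^\PCRingscin\ci F_{\ge0}$ (Proposition~\ref{cc4prop4}). The key check is that the image of $F_{\ge 0}$ lands in $\CRingsc$: given a $C^\iy$-ring $\fC$, the pair $(\fC,\fC_{\ge 0})$ satisfies Definition~\ref{cc4def6} because an invertible element $c'$ of the monoid $\fC_{\ge 0}$ has $\Phi_i(c')$ invertible in $\fC$ with $\Phi_i(c'^{-1})$ its inverse, and using $C^\iy$-operations as in Proposition~\ref{cc2prop6}(a) one recovers $c'$ from $\Phi_i(c')$, giving injectivity of $\Phi_i\vert_{(\fC_{\ge0})^\t}$. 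The left adjoint to $\Pi_\CRingsc^\CRings$ is the composite $\Pi_\PCRingsc^\CRingsc\ci(\text{left adjoint of }\Pi_\PCRingsc^\CRings)$. Hence $\Pi_\CRingsc^\CRings,\Pi_\CRingscin^\CRings$ preserve both limits and colimits.

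For part~(d): that $\inc:\CRingscin\hookra\CRingsc$ is left adjoint to $\Pi_\CRingsc^\CRingscin$ was already recorded in Definition~\ref{cc4def6} (restriction of the adjunction of Definition~\ref{cc4def5}), hence $\inc$ preserves colimits. That $\inc$ does not preserve limits, and therefore cannot have a left adjoint, follows by exhibiting a product: the computation in Example~\ref{cc4ex2} shows $\bfC\t\bfD$ in $\PCRingsc$ differs from $\bfC\t_\rin\bfD$ in $\PCRingscin$, and the very same example works inside $\CRingsc,\CRingscin$ — one takes $\bfC,\bfD$ to be genuine interior $C^\iy$-rings with corners (e.g.\ $\bs C^\iy_\rin(X)$ for manifolds with corners $X$), observes that the product in $\CRingsc$ is the product in $\PCRingsc$ by part~(a), which by Example~\ref{cc4ex2} has zero divisors and so is not interior, while $\bfC\t_\rin\bfD$ is the product in $\CRingscin$. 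So $\inc(\bfC\t_\rin\bfD)\not\cong\bfC\t\bfD$, proving $\inc$ does not preserve products.

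The main obstacle I expect is verifying cleanly that the ambient $\PCRingsc$-limit of $C^\iy$-rings with corners satisfies the defining condition of Definition~\ref{cc4def6} — particularly in the interior case, where one must simultaneously track that $\fC_\rex$ acquires no zero divisors under the limit (with interior morphisms) and that the invertibility condition passes to the limit; all of this rests on the fact from Theorem~\ref{cc4thm2}(a) that $\Pi_\rsm,\Pi_\rex,\Pi_\rin$ compute these limits on underlying sets, so the verification reduces to an elementwise argument, but it requires setting up the right elementwise description carefully.
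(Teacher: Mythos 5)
Your proposal is correct and follows essentially the same route as the paper: limits are obtained by showing $\CRingsc,\CRingscin$ are closed under limits in $\PCRingsc,\PCRingscin$ (using that $\Pi_\rsm,\Pi_\rex,\Pi_\rin$ compute limits on underlying sets, so Definition \ref{cc4def6}(i) passes to limits elementwise), colimits by applying the reflection $\Pi_\PCRingsc^\CRingsc$ of Proposition \ref{cc4prop5} to the $\PCRingsc$-colimit, the adjoints in (c) by composing with those of Theorem \ref{cc4thm2}(c) and noting $F_{\ge 0}$ lands in $\CRingsc$, and (d) from Definition \ref{cc4def6} together with the analogue of Example \ref{cc4ex2}. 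Your extra elementwise verifications (units in a limit, the interior case, and $F_{\ge 0}(\fC)$ satisfying Definition \ref{cc4def6}(i)) just fill in details the paper leaves implicit.
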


\begin{proof} 
For (a), we claim that $\CRingsc,\CRingscin$ are closed under limits in $\PCRingsc,\PCRingscin$. To see this, note that $\Pi_\rsm,\Pi_\rex:\PCRingsc\ra\Sets$ preserve limits by Theorem \ref{cc4thm2}(a), and the conditions Definition \ref{cc4def6}(i)--(iii) are closed under limits of sets $\fC=\Pi_\rsm(\bfC)$ and $\fC_\rex=\Pi_\rex(\bfC)$. Thus part (a) follows from Theorem~\ref{cc4thm2}(a).

For (b), let $J$ be a small category and $\bs F:J\ra\CRingsc$ a functor. Then a colimit $\bfC=\varinjlim\bs F$ exists in $\PCRingsc$ by Theorem \ref{cc4thm2}(b). Since $\Pi_\PCRingsc^\CRingsc$ is a left adjoint it takes colimits to colimits, so $\Pi_\PCRingsc^\CRingsc(\bfC)=\varinjlim\Pi_\PCRingsc^\CRingsc\ci\bs F$ in $\CRingsc$. But $\Pi_\PCRingsc^\CRingsc$ acts as the identity on $\CRingsc\subset\PCRingsc$, so $\Pi_\PCRingsc^\CRingsc\ci\bs F\cong\bs F$, and $\Pi_\PCRingsc^\CRingsc(\bfC)=\varinjlim\bs F$ in $\CRingsc$. The same argument works for~$\CRingscin$.

For (c), $\Pi_\CRingsc^\CRings$ has a left adjoint by Theorem \ref{cc4thm2}(c) and Proposition \ref{cc4prop5}, as $\Pi_\CRingsc^\CRings=\Pi_\PCRingsc^\CRings\ci \inc$. It has a right adjoint as $F_{\ge 0}$ in the proof of Proposition \ref{cc4prop4} maps to $\CRingsc\subset\PCRingsc$. The same works for $\Pi_\CRingscin^\CRingscin$. For (d), $\Pi_\CRingsc^\CRingscin$ has left adjoint $\inc$ by Definition \ref{cc4def6}, and $\inc$ does not preserve limits by the analogue of Example~\ref{cc4ex2}.
\end{proof}

\begin{rem}
\label{cc4rem2}
Although $\inc:\CRingscin\hookra\CRingsc$ has no left adjoint, as in Theorem \ref{cc4thm3}(d), we will show in \S\ref{cc62} that $\inc:\CSchcin\hookra\CSchc$ {\it does\/} have a right adjoint, which is the analogous statement for spaces.
\end{rem}

\subsection{Free objects, generators, relations, and localization}
\label{cc45}

We generalize Definition~\ref{cc2def3}:

\begin{dfn} 
\label{cc4def7}
If $A,A_\rex$ are sets then by \cite[Rem.~14.12]{ARV} we can define the {\it free $C^\iy$-ring with corners $\bfF^{A,A_\rex}=(\fF^{A,A_\rex},\fF_\rex^{A,A_\rex})$ generated by\/} $(A,A_\rex)$. We may think of $\bfF^{A,A_\rex}$ as $\bs C^\iy\bigl(\R^A\t[0,\iy)^{A_\rex}\bigr)$, where $\R^A=\bigl\{(x_a)_{a\in A}:x_a\in\R\bigr\}$ and $[0,\iy)^{A_\rex}=\bigl\{(y_{a'})_{{a'}\in A_\rex}:y_{a'}\in[0,\iy)\bigr\}$. Explicitly, we define $\fF^{A,A_\rex}$ to be the set of maps $c:\R^A\t[0,\iy)^{A_\rex}\ra\R$ which depend smoothly on only finitely many variables $x_a,y_{a'}$, and $\fF^{A,A_\rex}_\rex$ to be the set of maps $c':\R^A\t[0,\iy)^{A_\rex}\ra[0,\iy)$ which depend smoothly on only finitely many variables $x_a,y_{a'}$, and operations $\Phi_f,\Psi_g$ are defined as in \eq{cc4eq10}--\eq{cc4eq11}. Regarding $x_a:\R^A\ra\R$ and $y_{a'}:[0,\iy)^{A_\rex}\ra[0,\iy)$ as functions for $a\in A$, $a'\in A_\rex$, we have $x_a\in\fF^{A,A_\rex}$ and $y_{a'}\in\fF^{A,A_\rex}_\rex$, and we call $x_a,y_{a'}$ the {\it generators\/} of~$\bfF^{A,A_\rex}$.

Then $\bfF^{A,A_\rex}$ has the property that if $\bfC=(\fC,\fC_\rex)$ is any (pre) $C^\iy$-ring with corners then a choice of maps $\al:A\ra\fC$ and $\al_\rex:A_\rex\ra\fC_\rex$ uniquely determine a morphism $\bs\phi:\bfF^{A,A_\rex}\ra\bfC$ with $\phi(x_a)=\al(a)$ for $a\in A$ and $\phi_\rex(y_{a'})=\al_\rex(a')$ for $a'\in A_\rex$. We write $\bfF^{A,A_\rex}=\bfF^{m,n}$ when $A=\{1,\ldots,m\}$ and $A_\rex=\{1,\ldots,n\}$, and then $\bfF^{m,n}\cong\bs C^\iy(\R^m\t[0,\iy)^n)=\bs C^\iy(\R^{m+n}_n)$.

The analogue of all this also holds in $\CRingscin$, with the same objects $\bfF^{m,n}$ and $\bfF^{A,A_\rin}$, which are interior $C^\iy$-rings with corners, and the difference that (interior) morphisms $\bfF^{m,n}\ra\bfC$ in $\CRingscin$ or $\PCRingscin$ are uniquely determined by elements $c_1,\ldots,c_m\in\fC$ and $c_1',\ldots,c_n'\in\fC_\rin$ (rather than $c_1',\ldots,c_n'\in\fC_\rex$), and similarly for $\bfF^{A,A_\rin}$ with~$\al_\rin:A_\rin\ra\fC_\rin$.
\end{dfn}

As for Proposition \ref{cc2prop3}, every object in $\CRingsc,\CRingscin$ can be built out of free $C^\iy$-rings with corners, in a certain sense.

\begin{prop}
\label{cc4prop6}
{\bf(a)} Every object\/ $\bfC$ in $\CRingsc$ admits a surjective morphism $\bs\phi:\bfF^{A,A_\rex}\ra\bfC$ from some free $C^\iy$-ring with corners $\bfF^{A,A_\rex}$.
\smallskip

\noindent{\bf(b)} Every object\/ $\bfC$ in $\CRingsc$ fits into a \begin{bfseries}coequalizer diagram\end{bfseries}
\e
\xymatrix@C=50pt{  \bfF^{B,B_\rex} \ar@<.1ex>@/^.3pc/[r]^{\bs\al} \ar@<-.1ex>@/_.3pc/[r]_{\bs\be} & \bfF^{A,A_\rex} \ar[r]^(0.54){\bs\phi} & \bfC, }
\label{cc4eq13}
\e
that is, $\bfC$ is the colimit of the diagram $\bfF^{B,B_\rex}\rightrightarrows\bfF^{A,A_\rex}$ in $\CRingsc,$ where $\bs\phi:\bfF^{A,A_\rex}\ra\bfC$ is automatically surjective.

The analogues of\/ {\bf(a)} and\/ {\bf(b)} also hold in $\CRingscin$.
\end{prop}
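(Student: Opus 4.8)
The plan is to follow the proof of Proposition~\ref{cc2prop3} for $C^\iy$-rings, using the free $C^\iy$-rings with corners $\bfF^{A,A_\rex}$, $\bfF^{A,A_\rin}$ of Definition~\ref{cc4def7}, the existence of all small limits in $\CRingsc$ and $\CRingscin$, and the fact that $\Pi_\rsm,\Pi_\rex:\CRingsc\ra\Sets$ and $\Pi_\rsm,\Pi_\rin:\CRingscin\ra\Sets$ preserve limits --- all from Theorem~\ref{cc4thm3}(a).

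For part~(a): given $\bfC=(\fC,\fC_\rex)$ in $\CRingsc$, I take the sets $A=\fC$ and $A_\rex=\fC_\rex$ and let $\bs\phi=(\phi,\phi_\rex):\bfF^{\fC,\fC_\rex}\ra\bfC$ be the morphism determined, by the universal property in Definition~\ref{cc4def7}, by the identity maps $A=\fC\ra\fC$ and $A_\rex=\fC_\rex\ra\fC_\rex$. Since $\phi(x_c)=c$ and $\phi_\rex(y_{c'})=c'$ for all $c\in\fC$, $c'\in\fC_\rex$, the morphism $\bs\phi$ is surjective. For the interior analogue, take $A=\fC$, $A_\rin=\fC_\rin$ and the two identity maps (the second landing in $\fC_\rin$, so giving admissible interior data), obtaining an interior $\bs\phi:\bfF^{\fC,\fC_\rin}\ra\bfC$; then $\phi,\phi_\rin$ are surjective, hence so is $\phi_\rex$ because $\fC_\rex=\fC_\rin\amalg\{0_{\fC_\rex}\}$.

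For part~(b): write $\bfE=(\fE,\fE_\rex)$ for $\bfF^{A,A_\rex}$ and let $\bs\phi:\bfE\ra\bfC$ be the surjection of~(a). I form the fibre product $\bs R=\bfE\t_{\bs\phi,\bfC,\bs\phi}\bfE$ (the kernel pair of $\bs\phi$) in $\CRingsc$, which exists by Theorem~\ref{cc4thm3}(a), with its two projections $\bs\pi_1,\bs\pi_2:\bs R\ra\bfE$, so $\bs\phi\ci\bs\pi_1=\bs\phi\ci\bs\pi_2$. Applying~(a) to $\bs R$ gives a surjection $\bs\psi:\bfF^{B,B_\rex}\ra\bs R$; set $\bs\al=\bs\pi_1\ci\bs\psi$, $\bs\be=\bs\pi_2\ci\bs\psi$. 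I claim \eq{cc4eq13} is a coequalizer in $\CRingsc$. That $\bs\phi\ci\bs\al=\bs\phi\ci\bs\be$ is immediate. For the universal property, suppose $\bs\theta:\bfE\ra\bfD$ in $\CRingsc$ satisfies $\bs\theta\ci\bs\al=\bs\theta\ci\bs\be$; as $\bs\psi$ is surjective, hence an epimorphism, this forces $\bs\theta\ci\bs\pi_1=\bs\theta\ci\bs\pi_2$. Since $\Pi_\rsm,\Pi_\rex$ preserve limits, applying them to $\bs R$ yields the set-theoretic fibre products $\fE\t_\fC\fE$ and $\fE_\rex\t_{\fC_\rex}\fE_\rex$, with $\bs\pi_1,\bs\pi_2$ the coordinate projections. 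One may therefore define $\bs\chi=(\chi,\chi_\rex):\bfC\ra\bfD$ by $\chi(c)=\theta(e)$ and $\chi_\rex(c')=\theta_\rex(e')$ for any $\bs\phi$-preimages $e$ of $c$, $e'$ of $c'$; existence of preimages uses surjectivity of $\phi,\phi_\rex$, independence of the choices uses $\bs\theta\ci\bs\pi_1=\bs\theta\ci\bs\pi_2$. A routine verification, using that $\bs\phi,\bs\theta$ respect the operations $\Phi_f,\Psi_g$ and that $\bs\phi$ is surjective, shows $\bs\chi$ is a morphism in $\CRingsc$; it has $\bs\chi\ci\bs\phi=\bs\theta$ by construction, and is the unique such morphism as $\bs\phi$ is epi. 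The $\CRingscin$ analogue is the same, now with $\bfF^{A,A_\rin}$, $\bfF^{B,B_\rin}$ and $\bs R,\bs\psi$ formed in $\CRingscin$: one uses only that $\Pi_\rsm,\Pi_\rin$ preserve limits, defines $\chi_\rin$ on $\fC_\rin$ from preimages under $\phi_\rin$ and sets $\chi_\rex(0_{\fC_\rex})=0_{\fD_\rex}$, and interiority of $\bs\chi$ follows from that of $\bs\theta$.

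The only content beyond formal bookkeeping is the assertion that a surjective morphism in $\CRingsc$ (or $\CRingscin$) is the coequalizer of its kernel pair, and the single point there needing attention --- the identification of the underlying data of the kernel pair with the set-theoretic fibre product --- is exactly the limit-preservation in Theorem~\ref{cc4thm3}(a). So I expect no genuine obstacle; the only care needed is the two-sorted structure and, in the interior case, the correct handling of the extra zero $0_{\fC_\rex}$: note in particular that $\Pi_\rex$ does \emph{not} preserve limits in $\CRingscin$ (compare Example~\ref{cc4ex2}), which is why one must argue with $\Pi_\rin$ there. As an alternative route, since $\CRingsc,\CRingscin$ are limit-closed reflective subcategories of the algebraic categories $\PCRingsc\cong\CPCRingsc$ and $\PCRingscin\cong\CPCRingscin$, one could instead deduce (a) and (b) from the general facts on presentations of algebras over an algebraic theory in Ad\'amek et al.\ \cite[\S 11, \S 14]{ARV}.
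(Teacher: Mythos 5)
Your proof is correct, but it takes a genuinely different route from the paper. The paper's proof is two lines: the analogues of (a),(b) hold in $\PCRingsc,\PCRingscin$ by the general theory of (infinitary) algebraic theories, citing Ad\'amek--Rosick\'y--Vitale \cite{ARV}, and the result is then transferred to $\CRingsc,\CRingscin$ by applying the reflection functors $\Pi_\PCRingsc^\CRingsc,\Pi_\PCRingscin^\CRingscin$ of Proposition \ref{cc4prop5}, which preserve coequalizers (being left adjoints) and act as the identity on the free objects and on $\bfC$. Your argument instead is self-contained: you take the free cover on the sets $A=\fC$, $A_\rex=\fC_\rex$ (resp.\ $A_\rin=\fC_\rin$) via the universal property of Definition \ref{cc4def7}, form the kernel pair using Theorem \ref{cc4thm3}(a), cover it by another free object, and verify directly that a surjection in $\CRingsc$ (or $\CRingscin$) is the coequalizer of its kernel pair, using that $\Pi_\rsm,\Pi_\rex$ (resp.\ $\Pi_\rsm,\Pi_\rin$) compute limits on underlying sets; your handling of the extra zero $0_{\fC_\rex}$ in the interior case, and your observation that one must argue with $\Pi_\rin$ rather than $\Pi_\rex$ there, are exactly the points that need care, and they are handled correctly. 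Your construction also yields the `$\bs\phi$ automatically surjective' clause directly, since you build $\bs\phi$ surjective from part (a). What each approach buys: yours avoids both the ARV citation and the reflection functors, at the cost of an explicit effective-epimorphism check; the paper's is shorter and makes clear that the statement is an instance of general algebraic-theory facts, which is also essentially your closing `alternative route' via the limit-closed reflective embedding $\CRingsc\subset\PCRingsc$.
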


\begin{proof} The analogue in $\PCRingsc,\PCRingscin$ holds by Ad\'amek et al.\ \cite[Prop.s 11.26, 11.28, 11.30, Cor. 11.33, \& Rem.~14.14]{ARV}, by general facts about algebraic theories. The result for $\CRingsc,\CRingscin$ follows by applying the reflection functors $\Pi_\PCRingsc^\CRingsc,\Pi_\PCRingscin^\CRingscin$ of Proposition \ref{cc4prop5}.
\end{proof}

\begin{dfn}
\label{cc4def8}
Let $\bfC$ be a $C^\iy$-ring with corners, and $A,A_\rex$ be sets. We will write $\bfC(x_a:a\in A)[y_{a'}:a'\in A_\rex]$ for the $C^\iy$-ring with corners obtained by adding extra generators $x_a$ for $a\in A$ of type $\R$ and $y_{a'}$ for $a'\in A_\rex$ of type $[0,\iy)$ to $\bfC$. That is, by definition
\e
\bfC(x_a:a\in A)[y_{a'}:a'\in A_\rex]:=\bfC\ot_\iy \bfF^{A,A_\rex},
\label{cc4eq14}
\e
where $\bfF^{A,A_\rex}$ is the free $C^\iy$-ring with corners from Definition \ref{cc4def7}, with generators $x_a$ for $a\in A$ of type $\R$ and $y_{a'}$ for $a'\in A_\rex$ of type $[0,\iy)$, and $\ot_\iy$ is the coproduct in $\CRingsc$. As coproducts are a type of colimit, Theorem \ref{cc4thm3}(b) implies that $\bfC(x_a:a\in A)[y_{a'}:a'\in A_\rex]$ is well defined. Since $\bfF^{A,A_\rex}$ is interior, if $\bfC$ is interior then \eq{cc4eq14} is a coproduct in both $\CRingsc$ and $\CRingscin$, so $\bfC(x_a:a\in A)[y_{a'}:a'\in A_\rex]$ is interior by Theorem~\ref{cc4thm3}(d).

By properties of coproducts and free $C^\iy$-rings with corners, morphisms $\bs\phi:\bfC(x_a:a\in A)[y_{a'}:a'\in A_\rex]\ra\bfD$ in $\CRingsc$ are uniquely determined by a morphism $\bs\psi:\bfC\ra\bfD$ and maps $\al:A\ra\fD$, $\al_\rex:A_\rex\ra\fD_\rex$. If $\bfC,\bfD,\bs\psi$ are interior and $\al_\rex(A_\rex)\subseteq\fD_\rin$ then $\bs\phi$ is interior.

Next suppose $B,B_\rex$ are sets and $f_b\in\fC$ for $b\in B$, $g_{b'},h_{b'}\in\fC_\rex$ for $b'\in B_\rex$. We will write $\bfC/(f_b=0:b\in B)[g_{b'}=h_{b'}:b'\in B_\rex]$ for the $C^\iy$-ring with corners obtained by imposing relations $f_b=0$, $b\in B$ in $\fC$ of type $\R$, and $g_{b'}=h_{b'}$, $b'\in B_\rex$ in $\fC_\rex$ of type $[0,\iy)$. That is, we have a coequalizer diagram
\e
\xymatrix@C=35pt{  \bfF^{B,B_\rex} \ar@<.1ex>@/^.3pc/[r]^{\bs\al} \ar@<-.1ex>@/_.3pc/[r]_{\bs\be} & \bfC \ar[r]^(0.2){\bs\pi} & \bfC/(f_b=0:b\in B)[g_{b'}=h_{b'}:b'\in B_\rex], }
\label{cc4eq15}
\e
where $\bs\al,\bs\be$ are determined uniquely by $\al(x_b)=f_b$, $\al_\rex(y_{b'})=g_{b'}$, $\be(x_b)=0$, $\be_\rex(y_{b'})=h_{b'}$ for all $b\in B$ and $b'\in B_\rex$. As coequalizers are a type of colimit, Theorem \ref{cc4thm3}(b) shows that $\bfC/(f_b=0:b\in B)[g_{b'}=h_{b'}:b'\in B_\rex]$ is well defined. If $\bfC$ is interior and $g_{b'},h_{b'}\in\fC_\rin$ for all $b'\in B_\rex$ (that is, $g_{b'},h_{b'}\ne 0_{\fC_\rex}$) then \eq{cc4eq14} is also a coequalizer in $\CRingscin$, so Theorem \ref{cc4thm3}(d) implies that $\bfC/(f_b=0:b\in B)[g_{b'}=h_{b'}:b'\in B_\rex]$ and $\bs\pi$ are interior.

Note that round brackets $(\cdots)$ denote generators or relations of type $\R$, and square brackets $[\cdots]$ generators or relations of type $[0,\iy)$. If we add generators or relations of only one type, we use only these brackets. 
\end{dfn}

We construct two explicit examples of quotients in~$\CRingsc$:

\begin{ex}
\label{cc4ex4}
{\bf (a)} Say we wish to quotient a $C^\iy$-ring with corners $(\fC,\fC_\rex)$ by an ideal $I$ in $\fC$. Quotienting the $C^\iy$-ring by the ideal will result in additional relations on the monoid. While this quotient, $(\fD,\fD_\rex)$ is the coequalizer of a diagram such as \eq{cc4eq15}, it is also equivalent to the following construction:

The quotient is a $C^\iy$-ring with corners $(\fD,\fD_\rex)$ with a morphism $\bs{\pi}=(\pi,\pi_\rex):(\fC,\fC_\rex)\ra (\fD,\fD_\rex)$ such that $I$ is contained in the kernel of $\pi$, and is universal with respect to this property. That is, if $(\fE,\fE_\rex)$ is another $C^\iy$-ring with corners with morphism $\bs{\pi'}=(\pi',\pi'_\rex):(\fC,\fC_\rex)\ra (\fE,\fE_\rex)$ with $I$ contained in the kernel of $\pi'$, then there is a unique morphism $\bs{p}:(\fD,\fD_\rex)\ra (\fE,\fE_\rex)$ such that $\bs{p}\ci \bs{\pi}=\bs{\pi'}$. 

As a coequalizer is a colimit, by Theorem \ref{cc4thm3}(c) we have $\fD\cong \fC/I$, the quotient in $C^\iy$-rings. For the monoid, we require that smooth $f:\R\ra [0,\iy)$ give well defined operations $\Psi_f:\fD\ra\fD_\rex$. This means we require that if $a-b\in I$, then $\Psi_f(a)\sim \Psi_f(b)\in \fC_\rex$, and this needs to generate a monoid equivalence relation on $\fC_\rex$, so that a quotient by this relation is well defined. If $f:\R\ra[0,\iy)$ is identically zero, this follows. If $f:\R\ra [0,\iy)$ is non-zero and smooth this means that $f$ is positive, and hence that $\log f$ is well defined with $f=\exp \ci \log f$. By Hadamard's Lemma, if $a-b\in I$, then $\Phi_g(a)-\Phi_g(b)\in I$, for all $g:\R\ra\R$ smooth, and therefore $\Phi_{\log f}(a)-\Phi_{\log f}(b)\in I$. Hence in $\fC_\rex$ we only require that if $a-b\in I$, then $\Psi_{\exp}(a)\sim \Psi_{\exp}(b)$ in $\fC_\rex$. The monoid equivalence relation that this generates is equivalent to $c_1'\simc_I c_2'\in \fC_\rex$ if there exists $d\in I$ such that $c_1'=\Psi_{\exp}(d)\cdot c_2'$. 

We claim that $(\fC/I, \fC_\rex/\simc_I)$ is the required $C^\iy$-ring with corners. If $f:[0,\iy)\ra\R$ is smooth, and $c_1'\simc_I c_2'\in \fC_\rex$, then $\Phi_f(c_1')=\Phi_f(\Psi_{\exp}(d)c_2')$ in $\fC$ for some $d\in I$. Applying Hadamard's Lemma twice, we have 
\begin{align*}
\Phi_f(c_1')-\Phi_f(c_2')&=\Phi_{(x-y)g(x,y)}(\Psi_{\exp}(d)c_2',c_2')\\
&=\Phi_i(c_2')(\Phi_{\exp}(d)-1)\Phi_{g}(\Psi_{\exp}(d)c_2',c_2')\\
&=\Phi_i(c_2')(d-0)\Phi_{h(x,y)}(d,0)\Phi_{g}(\Psi_{\exp}(d)c_2',c_2'),
\end{align*} 
for smooth maps $g,h:\R^2\ra\R$. As $d\in I$, then $\Phi_f(c_1')-\Phi_f(c_2')\in I$, and $\Phi_f$ is well defined. A similar proof shows all the $C^\iy$-operations are well defined, and so $(\fC/I,\fC_\rex/\simc_I)$ is a pre $C^\iy$-ring with corners. 

We must show that $\Psi_{\exp}:\fC/I\ra  \fC_\rex/\simc_I $ has image equal to (not just contained in) the invertible elements $(\fC_\rex/\simc_I)^\t$. Say $[c_1']\in (\fC_\rex/\simc_I)^\t$, then there is $[c_2']\in  (\fC_\rex/\simc_I)^\t$ such that $[c_1'][c_2']=[c'd']=[1]$. So there is $d\in I$ such that $c_1'c_2'=\Psi_{\exp}(d)$. However, $\Psi_{\exp}(d)$ is invertible, so each of $c_1',c_2'$ must be invertible in $\fC_\rex$, and using that $\Psi_{\exp}$ is surjective onto invertible elements in $\fC_\rex^\t$ gives the result. Thus $(\fC/I,\fC_\rex/\simc_I)$ is a $C^\iy$-ring with corners. The quotient morphisms $\fC\ra \fC/I$ and $\fC_\rex\ra\fC_\rex/\simc_I$ give the required map $\bs{\pi}$. 

To show that this satisfies the required universal property of either \eq{cc4eq15} or the above, let $(\fE,\fE_\rex)$ be another $C^\iy$-ring with corners with a morphism $\bs{\pi'}=(\pi',\pi'_\rex):(\fC,\fC_\rex)\ra (\fE,\fE_\rex)$ such that $I\subset\Ker\pi'$. Then the unique morphism $\bs{p}:(\fD,\fD_\rex)\ra (\fE,\fE_\rex)$ is defined by $\bs{p}([c],[c'])=[\pi'(c),\pi'(c')]$. The requirement that $(\fE,\fE_\rex)$ factors through each diagram shows that this morphism is well defined and unique, giving the result. 

\noindent{\bf (b)} Say we wish to quotient a $C^\iy$-ring with corners $(\fC,\fC_\rex)$ by an ideal $P$ in the monoid $\fC_\rex$. By this we mean quotient $\fC_\rex$ by the equivalence relation $c_1'\sim c_2'$ if $c_1'=c_2'$ or $c_1',c_2'\in P$. This is known as a Rees quotient of semigroups, see Rees \cite[p.~389]{Rees}. Quotienting the monoid by this ideal will result in additional relations on both the monoid and the $C^\iy$-ring, which we will now show. While this quotient, $(\fD,\fD_\rex)$ is the coequalizer of a diagram such as \eq{cc4eq15}, it is also equivalent to the following construction:

The quotient is a $C^\iy$-ring with corners $(\fD,\fD_\rex)$ with a morphism $\bs{\pi}=(\pi,\pi_\rex):(\fC,\fC_\rex)\ra (\fD,\fD_\rex)$ such that $P$ is contained in the kernel of $\pi_\rex$, and is universal with respect to this property. That is, if $(\fE,\fE_\rex)$ is another $C^\iy$-ring with corners with morphism $\bs{\pi'}=(\pi',\pi'_\rex):(\fC,\fC_\rex)\ra (\fE,\fE_\rex)$ with $P$ contained in the kernel of $\pi'_\rex$, then there is a unique morphism $\bs{p}:(\fD,\fD_\rex)\ra (\fE,\fE_\rex)$ such that $\bs{p}\ci \bs{\pi}=\bs{\pi'}$. 

Similarly to part (a), we begin by quotienting $\fC_\rex$ by $P$, and then require that the $C^\iy$-operations are well defined. As all smooth $f:[0,\iy)\ra \R$ are equal to $\hat f\ci i:[0,\iy)\ra \R$ for a smooth function $\hat f:\R\ra\R$, we need only require that if $c_1'\sim c_2'$, then $\Phi_i(c_1')\sim \Phi_i(c_2')$. This generates a $C^\iy$-ring equivalence relation on the $C^\iy$-ring $\fC$; such a $C^\iy$-ring equivalence relation is the same data as giving an ideal $I\subset \fC$ such that $c_1\sim c_2\in \fC$ whenever $c_1-c_2\in I$. Here, this equivalence relation will be given by the ideal $\langle \Phi_i(P)\rangle$, that is, the ideal generated by the image of $P$ under $\Phi_i$. Quotienting $\fC$ by this ideal generates a further condition on the monoid $\fC_\rex$, as in part (a), that is $c_1'\sim c_2'$ if there is $d\in \langle \Phi_i(P)\rangle$ such that $c_1'=\Psi_{\exp}(d)c_2'$. 

The claim then is that we may take $(\fD,\fD_\rex)$ equal to $\bigl(\fC/\langle \Phi_i(P)\rangle,\fC_\rex/\simc_P\bigr)$ where $c_1'\simc_P c_2'$ if either $c_1',c_2'\in P$ or there is $d\in \langle \Phi_i(P)\rangle$ such that $c_1'=\Psi_{\exp}(d)(c_2')$. Similar applications of Hadamard's Lemma as in (a) show that $(\fD,\fD_\rex)$ is a pre $C^\iy$-ring with corners, and similar discussions show it is a $C^\iy$-ring with corners, and is isomorphic to the quotient. We will use the notation
\begin{equation*}
(\fD,\fD_\rex)=(\fC,\fC_\rex)/\simc_P=(\fC/\langle \Phi_i(P)\rangle,\fC_\rex/\simc_P)=(\fC/\simc_P,\fC_\rex/\simc_P)
\end{equation*}
to refer to this quotient later.
\end{ex}

\begin{dfn}
\label{cc4def9}
Let $\bfC=(\fC,\fC_\rex)$ be a $C^\iy$-ring with corners, and $A\subseteq\fC$, $A_\rex\subseteq\fC_\rex$ be subsets. A {\it localization\/} $\bfC(a^{-1}:a\in A)[a^{\prime -1}:a'\in A_\rex]$ of $\bfC$ at $(A,A_\rex)$ is a $C^\iy$-ring with corners $\bfD=\bfC(a^{-1}:a\in A)[a^{\prime -1}:a'\in A_\rex]$ and a morphism $\bs\pi:\bfC\ra\bfD$ such that $\pi(a)$ is invertible in $\fD$ for all $a\in A$ and $\pi_\rex(a')$ is invertible in $\fD_\rex$ for all $a'\in A_\rex$, with the universal property that if $\bfE=(\fE,\fE_\rex)$ is a $C^\iy$-ring with corners and $\bs\phi:\bfC\ra\bfE$ a morphism with $\phi(a)$ invertible in $\fE$ for all $a\in A$ and $\phi_\rex(a')$ invertible in $\fE_\rex$ for all $a'\in A_\rex$, then there is a unique morphism $\bs\psi:\bfD\ra\bfE$ with~$\bs\phi=\bs\psi\ci\bs\pi$.

Localizations $\bfC(a^{-1}:a\in A)[a^{\prime -1}:a'\in A_\rex]$ always exist, and are unique up to canonical isomorphism. In the notation of Definition \ref{cc4def8} we may write
\ea
&\bfC(a^{-1}:a\in A)[a^{\prime -1}:a'\in A_\rex]=
\label{cc4eq16}\\
&\bigl(\bfC(x_a:a\in A)[y_{a'}:a'\in A_\rex]\bigr)\big/\bigl(a\cdot x_a=1:a\in A\bigr)\bigl[a'\cdot y_{a'}=1:a'\in A_\rex\bigr].
\nonumber
\ea
That is, we add an extra generator $x_a$ of type $\R$ and an extra relation $a\cdot x_a=1$ of type $\R$ for each $a\in A$, so that $x_a=a^{-1}$, and similarly for each~$a'\in A_\rex$.

If $\bfC$ is interior and $A_\rex\subseteq\fC_\rin$ then $\bfC(a^{-1}:a\in A)[a^{\prime -1}:a'\in A_\rex]$ makes sense and exists in $\CRingscin$ as well as in $\CRingsc$, and Theorem \ref{cc4thm3}(d) implies that the two localizations are the same.
\end{dfn}

The next lemma follows from Theorem~\ref{cc4thm3}(c).

\begin{lem}
\label{cc4lem3}
Let\/ $\bfC=(\fC,\fC_\rex)$ be a $C^\iy$-ring with corners and\/ $c\in\fC,$ and write\/ $\bfC(c^{-1})=(\fD,\fD_\rex)$. Then $\fD\cong \fC(c^{-1}),$ the localization of the $C^\iy$-ring.
\end{lem}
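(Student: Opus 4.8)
The plan is to deduce the statement directly from the adjunction/colimit machinery already set up, together with the analogous fact for $C^\iy$-rings in Lemma~\ref{cc2lem1}'s world, namely that $C^\iy$-ring localization adds one generator and one relation. Recall from Definition~\ref{cc4def9} that $\bfC(c^{-1})$ is by definition $\bigl(\bfC(x)[\,]\bigr)/(c\cdot x=1)$, i.e.\ we add a single generator $x$ of type $\R$ and impose the single relation $c\cdot x=1$ of type $\R$; there are no generators or relations of type $[0,\iy)$. So I want to identify the underlying $C^\iy$-ring $\fD$ of this construction with the ordinary $C^\iy$-ring localization $\fC(c^{-1})$ of Definition~\ref{cc2def5}.

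First I would apply the functor $\Pi_\CRingsc^\CRings:\CRingsc\ra\CRings$ of \eq{cc4eq9}, which sends $\bfC=(\fC,\fC_\rex)\mapsto\fC$ and is a left adjoint by Theorem~\ref{cc4thm3}(c), hence preserves all colimits. Now $\bfC(x)[\,]=\bfC\ot_\iy\bfF^{1,0}$ is a coproduct in $\CRingsc$ and the quotient imposing $c\cdot x=1$ is a coequalizer as in \eq{cc4eq15}; both are colimits. Therefore $\Pi_\CRingsc^\CRings$ carries the whole construction to the corresponding colimit in $\CRings$. Concretely: $\Pi_\CRingsc^\CRings(\bfF^{1,0})\cong C^\iy(\R)$, since $\bfF^{1,0}\cong\bs C^\iy(\R)$ and its underlying $C^\iy$-ring is $C^\iy(\R)$, the free $C^\iy$-ring on one generator; and $\Pi_\CRingsc^\CRings$ applied to $\ot_\iy$ gives $\ot_\iy$ of $C^\iy$-rings (it preserves the coproduct). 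Hence the underlying $C^\iy$-ring of $\bfC(x)[\,]$ is $\fC\ot_\iy C^\iy(\R)=\fC(x)$ in the notation for adjoining a generator to a $C^\iy$-ring. Imposing the relation $c\cdot x-1=0$, which is a relation of type $\R$ and so is visible to $\Pi_\CRingsc^\CRings$, then gives $\fD\cong\bigl(\fC\ot_\iy C^\iy(\R)\bigr)/(c\cdot x-1=0)$, which is exactly the explicit description of $\fC(c^{-1})$ recorded in Definition~\ref{cc2def5} (with $\io_1,\io_2$ the coproduct morphisms and $x$ the generator of $C^\iy(\R)$). This yields $\fD\cong\fC(c^{-1})$.

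The one point requiring a little care — and the step I expect to be the main obstacle — is making sure that the relation of type $\R$ behaves correctly under $\Pi_\CRingsc^\CRings$: a coequalizer of the form \eq{cc4eq15} is defined using the free object $\bfF^{B,B_\rex}$ and the two maps $\bs\al,\bs\be$, and I need to check that applying $\Pi_\CRingsc^\CRings$ to this diagram produces precisely the coequalizer presentation of $\fC(c^{-1})$ in $\CRings$ and not some further quotient or some artefact coming from the monoid part $\fD_\rex$. Since here $B=\{x\}$, $B_\rex=\es$, the free object is $\bfF^{1,0}$ whose underlying $C^\iy$-ring is $C^\iy(\R)=\fF^{\{1\}}$, and $\Pi_\CRingsc^\CRings\ci\bs\al$, $\Pi_\CRingsc^\CRings\ci\bs\be$ are the $C^\iy$-ring maps $\fF^{\{1\}}\rightrightarrows\fC$ sending the generator to $c\cdot x$ and to $1$ respectively; so the colimit is indeed $\fC(c^{-1})$. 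One should also remark that the monoid-level relations forced by part~(a) of Example~\ref{cc4ex4} (the $\simc_I$ with $I=(c\cdot x-1)$) affect only $\fD_\rex$, not $\fD$, so they do not interfere. I would phrase the proof as: $\Pi_\CRingsc^\CRings$ preserves colimits, $\bfC(c^{-1})$ is built from $\bfC$ and $\bfF^{1,0}$ by a coproduct and a coequalizer of type $\R$, $\Pi_\CRingsc^\CRings(\bfF^{1,0})=C^\iy(\R)$, and therefore $\fD=\Pi_\CRingsc^\CRings(\bfC(c^{-1}))$ is the colimit in $\CRings$ computing $\fC(c^{-1})$; done.
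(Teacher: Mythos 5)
Your proposal is correct and is essentially the paper's own argument: the paper proves this lemma simply by citing Theorem \ref{cc4thm3}(c), i.e.\ that $\Pi_\CRingsc^\CRings$ preserves colimits, applied to the presentation \eq{cc4eq16} of $\bfC(c^{-1})$ as a coproduct with $\bfF^{1,0}$ followed by a coequalizer of type $\R$, which is exactly what you spell out. (Only a wording point: Theorem \ref{cc4thm3}(c) gives that $\Pi_\CRingsc^\CRings$ has a \emph{right} adjoint, which is what makes it preserve colimits.)
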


\subsection{\texorpdfstring{Local $C^\iy$-rings with corners}{Local C∞-rings with corners}}
\label{cc46}

We now define local $C^\iy$-rings with corners. 

\begin{dfn}
\label{cc4def10}
Let $\bfC=(\fC,\fC_\rex)$ be a $C^\iy$-ring with corners. We call $\bfC$ {\it local\/} if there exists a $C^\iy$-ring morphism $\pi:\fC\ra\R$ such that each $c\in\fC$ is invertible in $\fC$ if and only if $\pi(c)\ne 0$ in $\R$, and each $c'\in\fC_\rex$ is invertible in $\fC_\rex$ if and only if $\pi\ci\Phi_i(c')\ne 0$ in $\R$. Note that if $\bfC$ is local then $\pi:\fC\ra\R$ is determined uniquely by $\Ker\pi=\{c\in\fC:c$ is not invertible$\}$, and $\fC$ is a local $C^\iy$-ring.

Write $\CRingsclo\subset\CRingsc$ and $\CRingscinlo\subset \CRingscin$ for the full subcategories of (interior) local $C^\iy$-rings with corners.
\end{dfn}
 
\begin{prop}
\label{cc4prop7}
$\CRingsclo$ is closed under colimits in $\CRingsc$. Thus all small colimits exist in $\CRingsclo$ and\/ $\CRingscinlo$.
\end{prop}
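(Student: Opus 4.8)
The plan is to exploit that $\CRingsc$ has all small colimits (Theorem~\ref{cc4thm3}(b)) and show that the colimit, computed in $\CRingsc$, of a diagram of objects of $\CRingsclo$ is again local; then ``all small colimits exist in $\CRingsclo$'' is immediate (they are computed as in $\CRingsc$), and the statement for $\CRingscinlo$ follows because $\inc:\CRingscin\hookra\CRingsc$ preserves colimits by Theorem~\ref{cc4thm3}(d), so a colimit of a diagram in $\CRingscinlo=\CRingscin\cap\CRingsclo$ taken in $\CRingscin$ agrees with the one in $\CRingsc$, hence is both interior and (by the main part) local. So let $\bs F:J\ra\CRingsclo$ be a small diagram, write $\bfC_j=\bs F(j)$, and let $\bfC=(\fC,\fC_\rex)=\varinjlim(\inc\ci\bs F)$ in $\CRingsc$, with coprojections $\bs\io_j=(\io_j,\io_{j,\rex}):\bfC_j\ra\bfC$. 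I must construct a morphism $\pi:\fC\ra\R$ witnessing locality of $\bfC$ in the sense of Definition~\ref{cc4def10}.

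First I would assemble the residue morphisms into a cocone. For each $j$, locality of $\bfC_j$ gives a unique $C^\iy$-ring morphism $\pi_j:\fC_j\ra\R$ whose kernel is the set of non-invertible elements, and $\pi_j\ci\Phi_i:\fC_{j,\rex}\ra\R$ is nonnegative: it is positive on invertibles (by Definition~\ref{cc4def4}(e) and Definition~\ref{cc4def6}(ii), since an invertible $c'$ is $\Psi_{\exp}(c)$ so $\Phi_i(c')=\Phi_{\exp}(c)$ and $\pi_j(\Phi_{\exp}(c))=\exp\pi_j(c)>0$), it is zero on non-invertibles (by Definition~\ref{cc4def10}), and it cannot be negative (that would force $c'$ invertible). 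Hence $\bs\pi_j:=(\pi_j,\,i^{-1}\ci\pi_j\ci\Phi_i):\bfC_j\ra\bs C^\iy(*)$ is a morphism in $\CRingsc$, where $\bs C^\iy(*)=(\R,[0,\iy))\in\CRingsclo$ is as in Example~\ref{cc4ex3}. Because morphisms of local $C^\iy$-rings are automatically local (Definition~\ref{cc2def4}), for any arrow $\psi:\bfC_j\ra\bfC_k$ of the diagram one gets $\pi_k\ci\psi=\pi_j$ (equal kernels, both the residue projection), hence $\bs\pi_k\ci\psi=\bs\pi_j$. So the $\bs\pi_j$ form a cocone, inducing $\bs\pi=(\pi,\pi_\rex):\bfC\ra\bs C^\iy(*)$ in $\CRingsc$, which gives the candidate $\pi:\fC\ra\R$; and since $\bs\pi$ commutes with $\Phi_i$ we have $\pi\ci\Phi_i=i\ci\pi_\rex\ge 0$ on all of $\fC_\rex$.

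It remains to check the two conditions of Definition~\ref{cc4def10} for $\pi$: (a) $c\in\fC$ is invertible iff $\pi(c)\ne 0$, and (b) $c'\in\fC_\rex$ is invertible iff $\pi(\Phi_i(c'))\ne 0$. The ``only if'' directions are automatic as morphisms preserve invertibility. For (a)'s ``if'': $\Pi_\CRingsc^\CRings$ preserves colimits (Theorem~\ref{cc4thm3}(c)), so $\fC=\varinjlim\fC_j$ in $\CRings$ with each $\fC_j$ local, and I would argue that any small colimit in $\CRings$ of local $C^\iy$-rings is local: for finite colimits this is Proposition~\ref{cc2prop4}, and the general case reduces to it since by Proposition~\ref{cc2prop2} filtered colimits in $\CRings$ are computed on underlying sets, so in a filtered colimit of local $C^\iy$-rings every element lifts and the $\pi_j$ glue to a residue map detecting invertibility, while an arbitrary colimit is a coequalizer of maps between coproducts, a coproduct being a filtered colimit of finite ones. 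Thus $\fC$ is local, its unique residue map is $\pi$, and (a) holds. For (b)'s ``if'': if $\pi(\Phi_i(c'))\ne 0$ then it is $>0$, so by Proposition~\ref{cc2prop6}(b) there is $b\in\fC$ with $\Phi_{\exp}(b)=\Phi_i(c')$; since $\Phi_{\exp}(b)=\Phi_i(\Psi_{\exp}(b))$ and $\Psi_{\exp}(b)\in\fC_\rex^\t$, the element $d':=c'\cdot\Psi_{\exp}(b)^{-1}$ satisfies $\Phi_i(d')=1_\fC$, and one concludes $d'$, hence $c'$, is invertible.

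The main obstacle I anticipate is the final step of~(b): showing that $\Phi_i(d')=1_\fC$ forces $d'\in\fC_\rex^\t$, i.e.\ that in any $C^\iy$-ring with corners $\fC_\rex^\t=\Phi_i^{-1}(\fC^\t)$. I would attack this using that $\Phi_i$ lands in the subset $\fC_{\ge 0}\subseteq\fC$ appearing in the proof of Proposition~\ref{cc4prop4}, together with invertibility of $1_\fC$ and Hadamard's Lemma (as in Example~\ref{cc4ex4}) to produce $e\in\fC$ with $\Psi_{\exp}(e)=d'$ via the axiom of Definition~\ref{cc4def6}. The secondary point, upgrading Proposition~\ref{cc2prop4} from finite to arbitrary small colimits of local $C^\iy$-rings, is routine once filtered colimits are isolated as above, but should be spelled out since Proposition~\ref{cc2prop4} as quoted covers only the finite case.
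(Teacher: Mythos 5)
Your overall strategy (compute the colimit in $\CRingsc$, induce $\pi:\fC\ra\R$ from the cocone of residue maps, deduce the interior case from Theorem \ref{cc4thm3}(d)) is the same as the paper's, and your handling of the cocone and of the $C^\iy$-ring condition is fine. The genuine gap is exactly the step you flag as your ``main obstacle'', and it cannot be repaired along the lines you propose: it is \emph{false} in a general $C^\iy$-ring with corners that $\Phi_i(d')\in\fC^\t$ (even $\Phi_i(d')=1_\fC$) forces $d'\in\fC_\rex^\t$. Definition \ref{cc4def6} only controls $\Phi_i$ on the invertibles ($\Phi_i\vert_{\fC_\rex^\t}$ injective, $\Psi_{\exp}$ onto $\fC_\rex^\t$), and what Proposition \ref{cc4prop3} gives you from invertibility of $\Phi_i(d')$ (as exploited in the proof of Theorem \ref{cc4thm4}) is only some $e\in\fC$ with $\Phi_i(\Psi_{\exp}(e))=\Phi_i(d')$; you cannot cancel $\Phi_i$, which need not be injective outside $\fC_\rex^\t$. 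Concretely, let $\bfC$ be the quotient of $\bs C^\iy([0,\iy))$ by the ideal $I=(x-1)$, $x=\Phi_i(y)$, as in Example \ref{cc4ex4}(a), with $y$ the monoid generator. Then $\fC\cong C^\iy([0,\iy))/(x-1)\cong\R$ is even a local $C^\iy$-ring and $\Phi_i([y])=[x]=1$, yet $[y]$ is not invertible in $\fC_\rex=\Ex([0,\iy))/\simc_I$: an equation $[y]\cdot[c']=[1]$ would mean $y\,c'=\Psi_{\exp}(d)$ in $\Ex([0,\iy))$ for some $d\in I$, impossible since the left side vanishes at $0$ while the right side is positive. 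So condition (b) of Definition \ref{cc4def10} cannot be deduced from condition (a) together with $\fC_{\ge 0}$, Hadamard's Lemma and Definition \ref{cc4def6}; the hypothesis that $\bfC$ is a colimit of \emph{local} objects must enter the monoid argument itself.

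This is what the paper's proof does: having induced $\pi$ from the cocone of the $\pi_j$ (whose compatibility follows, as you say, from automatic locality of morphisms of local $C^\iy$-rings), it verifies \emph{both} halves of Definition \ref{cc4def10} by the same lifting argument --- if $c'\in\fC_\rex$ with $\pi\ci\Phi_i(c')\ne 0$ comes from $f'\in F(j)_\rex$, then $\pi_j\ci\Phi_i(f')\ne 0$, so $f'$ is invertible in $F(j)_\rex$ by locality of $\bs F(j)$, and its image inverts $c'$ --- rather than trying to bootstrap the monoid condition from the ring condition. To fix your proof, drop the attempted reduction via Proposition \ref{cc2prop6}(b) and run the lifting argument for $\fC_\rex$ in parallel with the one you already use for $\fC$. (For general, non-filtered colimits the lifting of an element of the colimit to a single piece deserves a word of justification, since the colimit is only generated by the images of the $\bs F(j)$ under $C^\iy$-operations; but that refinement is needed equally by the paper's argument and by any corrected version of yours, whereas your reduction of the ring condition to finite and directed colimits via Propositions \ref{cc2prop2} and \ref{cc2prop4} sidesteps it for $\fC$ only.)
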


\begin{proof} Let $J$ be a category and $\bs F=(F,F_\rex):J\ra\CRingsclo$ a functor, and suppose a colimit $\bfC=(\fC,\fC_\rex)=\varinjlim\bs F$ exists in $\CRingsc$, with projections $\bs\phi_j:\bs F(j)\ra\bfC$. Then $\fC=\varinjlim F$ in $\CRings$ by Theorem \ref{cc4thm3}(c). For each $j\in J$ we have a unique $C^\iy$-ring morphism $\pi_j:F(j)\ra\R$ as $F(j)$ is local, with $\pi_j=\pi_k\ci F(\al)$ for all morphisms $\al:j\ra k$ in $J$. Hence by properties of colimits there is a unique morphism $\pi:\fC\ra\R$ with $\pi_j=\pi\ci\phi_j$ for all~$j\in J$.

Suppose $c\in\fC$ with $\pi(c)\ne 0$. Then $c=\phi_j(f)$ for some $j\in J$ and $f\in F(j)$, with $\pi_j(f)=\pi\ci\phi_j(f)=\pi(c)\ne 0$. Hence as $\bs F(j)$ is local, we have an inverse $f^{-1}\in F(j)$, and $\phi_j(f^{-1})=c^{-1}$ is the inverse of $c$ in $\fC$. The same argument shows that if $c'\in\fC_\rex$ with $\pi\ci\Phi_i(c')\ne 0$ then $c'$ is invertible in $\fC_\rex$. So $\bfC$ is local, and $\CRingsclo$ is closed under colimits in $\CRingsc$. The last part holds by Theorem~\ref{cc4thm3}(b),(d).
\end{proof}

\begin{lem} 
\label{cc4lem4}
If\/ $\bfC=(\fC,\fC_\rex)$ is a (pre) $C^\iy$-ring with corners and\/ $x:\fC\ra\R$ a $C^\iy$-ring morphism, then we have a morphism of (pre) $C^\iy$-rings with corners $(x,x_\rex):(\fC,\fC_\rex)\ra (\R,[0,\iy)),$ where $x_\rex(c')=x\ci \Phi_i(c')$ for\/~$c'\in\fC_\rex$.
\end{lem}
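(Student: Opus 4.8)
The plan is to verify directly that $(x,x_\rex)$ meets the two requirements for a morphism in $\PCRingsc$ (Definition \ref{cc4def2}): first that $x_\rex=x\ci\Phi_i$ really takes values in $[0,\iy)$, so is a well-defined map $\fC_\rex\ra[0,\iy)$; and second that $(x,x_\rex)$ intertwines every operation $\Phi_f,\Psi_g$ of $\bfC$ with the corresponding operation of $(\R,[0,\iy))=\bs C^\iy(*)$ (Example \ref{cc4ex3}(a), $X$ a point), where the latter operations are simply $\Phi_f=f$ and $\Psi_g=g$. Throughout I will use the composition and projection relations of Definition \ref{cc4def2}(i)--(iii), together with the identification from Definition \ref{cc4def5} that for smooth $f:\R^m\ra\R$ the operation $\Phi_f$ of $\bfC$ coincides with that of the underlying $C^\iy$-ring $\fC$, on which $x$ is a $C^\iy$-ring morphism.

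The main obstacle is the well-definedness: showing $x(\Phi_i(c'))\ge 0$ for all $c'\in\fC_\rex$; the rest is bookkeeping with the axioms. For this I will first show that $\Phi_i(c')$ lies in the ``non-negative cone'': for every smooth $\varphi:\R\ra\R$ with $\varphi\vert_{[0,\iy)}=0$ one has $\Phi_\varphi(\Phi_i(c'))=0$ in $\fC$. Indeed, applying Definition \ref{cc4def2}(i) to the composite $\varphi\ci i:[0,\iy)\ra\R$ gives $\Phi_\varphi(\Phi_i(c'))=\Phi_{\varphi\ci i}(c')$, and $\varphi\ci i=\varphi\vert_{[0,\iy)}$ is the zero function $[0,\iy)\ra\R$, whose operation sends everything to $0\in\fC$ (write the zero function $[0,\iy)\ra\R$ as the difference of the inclusion $y\mapsto y$ with itself, and use that $\Phi$ respects subtraction in the $\R$-algebra $\fC$). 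Now if $x(\Phi_i(c'))$ were negative, pick a smooth $\varphi:\R\ra\R$ vanishing on $[0,\iy)$ with $\varphi(x(\Phi_i(c')))\ne 0$; applying the $C^\iy$-ring morphism $x$ to the identity $\Phi_\varphi(\Phi_i(c'))=0$ gives $\varphi(x(\Phi_i(c')))=0$, a contradiction. Hence $x_\rex(c')=x(\Phi_i(c'))\in[0,\iy)$, so $x_\rex:\fC_\rex\ra[0,\iy)$ is well defined.

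Next I will treat the operations $\Phi_f$ for smooth $f:\R^m\t[0,\iy)^n\ra\R$. By Seeley's Extension Theorem (Definition \ref{cc3def1}(a)), after multiplying by a smooth cutoff I may extend $f$ to a smooth $\ti f:\R^{m+n}\ra\R$ with $\ti f\vert_{\R^m\t[0,\iy)^n}=f$. Applying Definition \ref{cc4def2}(i),(iii) with outer function $\ti f$ and inner functions the coordinate projections $\pi_p:\R^m\t[0,\iy)^n\ra\R$ ($p\le m$) and $i\ci\pi'_q:\R^m\t[0,\iy)^n\ra\R$ ($q\le n$) yields
\[
\Phi_f(c_1,\ldots,c_m,c'_1,\ldots,c'_n)=\Phi_{\ti f}\bigl(c_1,\ldots,c_m,\Phi_i(c'_1),\ldots,\Phi_i(c'_n)\bigr),
\]
the right-hand side now being the $C^\iy$-ring operation of $\fC$. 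Applying $x$ and using that $x$ commutes with $\Phi_{\ti f}$ gives $\ti f\bigl(x(c_1),\ldots,x(c_m),x_\rex(c'_1),\ldots,x_\rex(c'_n)\bigr)$; since each $x_\rex(c'_q)\ge 0$ by the previous step, this argument lies in $\R^m\t[0,\iy)^n$, where $\ti f=f$, so the value equals $f\bigl(x(c_1),\ldots,x_\rex(c'_n)\bigr)$, which is the intertwining condition for $\Phi_f$.

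Finally, the operations $\Psi_g$ for exterior $g:\R^m\t[0,\iy)^n\ra[0,\iy)$ reduce to the previous case: by Definition \ref{cc4def2}(i), composing with $i:[0,\iy)\ra\R$, and the definition of $x_\rex$,
\[
x_\rex\bigl(\Psi_g(c_1,\ldots,c'_n)\bigr)=x\bigl(\Phi_i(\Psi_g(c_1,\ldots,c'_n))\bigr)=x\bigl(\Phi_{i\ci g}(c_1,\ldots,c'_n)\bigr),
\]
and $i\ci g:\R^m\t[0,\iy)^n\ra\R$ is smooth, so by the $\Phi$-case this equals $(i\ci g)\bigl(x(c_1),\ldots,x_\rex(c'_n)\bigr)=g\bigl(x(c_1),\ldots,x_\rex(c'_n)\bigr)$, the intertwining condition for $\Psi_g$. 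Since no step used the extra conditions of Definition \ref{cc4def6}, the argument applies verbatim to pre $C^\iy$-rings with corners, completing the plan.
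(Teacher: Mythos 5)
Your proof is correct and follows essentially the same route as the paper: nonnegativity of $x\ci\Phi_i$ is established by a bump-function contradiction using the composition axioms of Definition \ref{cc4def2}, and compatibility with the operations $\Phi_f,\Psi_g$ is reduced, via smooth extensions of functions on $\R^m\t[0,\iy)^n$ to $\R^{m+n}$ and composition with $i$, to the fact that $x$ is a $C^\iy$-ring morphism. The only difference is one of completeness: where the paper checks a single sample operation and says ``a similar proof holds,'' you carry out the general case explicitly, which is a fine elaboration rather than a new method.
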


\begin{proof}
Let $\bfC=(\fC,\fC_\rex)$ and $x:\fC\ra\R$ be as in the statement. Take $c'\in \fC_\rex$. To show $x_\rex=x\ci \Phi_i:\fC_\rex\ra[0,\iy)$ is well defined, assume for a contradiction that $x\ci \Phi_i(c')=\ep<0\in \R$. Let $f:\R\ra\R$ be a smooth function such that $f$ is the identity on $[0,\iy)$ and it is zero on $(-\infty,\ep/2)$. Then $f\ci i=i$ for $i:[0,\iy)\ra \R$ the inclusion. So we have $0>\ep=x\ci \Phi_i(c')=x\ci \Phi_f\ci\Phi_i(c')=f(x\ci \Phi_i(c'))=f(\ep)=0$, and $x_\rex=x\ci \Phi_i:\fC_\rex\ra[0,\iy)$ is well defined.

For $(x,x_\rex)$ to be a morphism of pre $C^\iy$-rings with corners, it must respect the $C^\iy$-operations. For example, let $f:[0,\iy)\ra [0,\iy)$ be smooth, then there is a smooth function $g:\R\ra\R$ that extends $f$, so that $g\ci i=i\ci f$. Then $x_\rex(\Psi_f(c'))=x\ci \Phi_i(\Psi_f(c'))=\Phi_g(x\ci \Phi_i(c'))=\Phi_g(x_\rex(c'))$ as required. A similar proof holds for the other $C^\iy$-operations. 
\end{proof}

\begin{dfn}
\label{cc4def11}
Let $\bfC=(\fC,\fC_\rex)$ be a $C^\iy$-ring with corners. An $\R$-{\it point\/} $x$ of $\bfC$ is a $C^\iy$-ring morphism (or equivalently, an $\R$-algebra morphism) $x:\fC\ra\R$. Define $\bfC_x$ to be
the localization
\e
\bfC_x=\bfC\bigl(c^{-1}:c\in\fC,\; x(c)\ne 0\bigr)\bigl[c^{\prime -1}:c'\in\fC_\rex,\; x\ci\Phi_i(c')\ne 0\bigr],
\label{cc4eq17}
\e
with projection $\bs\pi_x:\bfC\ra\bfC_x$. If $\bfC$ is interior then $\bfC_x$ is interior by Definition \ref{cc4def9}. Theorem \ref{cc4thm4} below shows $\bfC_x$ is local. Theorem \ref{cc4thm4}(c) is the analogue of Proposition \ref{cc2prop5}. The point of the proof is to give an alternative construction of $\bfC_x$ from $\bfC$ by imposing relations, but adding no new generators.
\end{dfn}

\begin{thm}
\label{cc4thm4}
Let\/ $\bs\pi_x:\bfC\ra\bfC_x$ be as in Definition\/ {\rm\ref{cc4def11}}. Then:
\begin{itemize}
\setlength{\itemsep}{0pt}
\setlength{\parsep}{0pt}
\item[{\bf(a)}] $\bfC_x$ is a local\/ $C^\iy$-ring with corners.
\item[{\bf(b)}] $\bfC_x=(\fC_x,\fC_{x,\rex})$ and\/ $\bs\pi_x=(\pi_x,\pi_{x,\rex}),$ where $\pi_x:\fC\ra\fC_x$ is the local\/ $C^\iy$-ring associated to $x:\fC\ra\R$ in Definition\/~{\rm\ref{cc2def5}}.
\item[{\bf(c)}] $\pi_x:\fC\ra\fC_x$ and\/ $\pi_{x,\rex}:\fC_\rex\ra\fC_{x,\rex}$ are surjective.
\end{itemize}
\end{thm}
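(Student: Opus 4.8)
<br>

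The plan is to prove parts (a), (b), (c) together, since the key is the alternative construction of $\bfC_x$ by imposing relations without adding generators, as flagged in Definition \ref{cc4def11}.

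\textbf{Step 1: Reduce to the $C^\iy$-ring level and set up the candidate.} First I would recall from Definition \ref{cc4def9}, equation \eq{cc4eq16}, that $\bfC_x$ is built from $\bfC$ by adding a generator $x_c$ of type $\R$ and relation $c\cdot x_c=1$ for each $c\in\fC$ with $x(c)\ne 0$, and a generator $y_{c'}$ of type $[0,\iy)$ and relation $c'\cdot y_{c'}=1$ for each $c'\in\fC_\rex$ with $x\ci\Phi_i(c')\ne 0$. The idea is that the extra generators of type $[0,\iy)$ are redundant: since $c'\cdot y_{c'}=1$ forces $c'$ to be invertible in the localized monoid, and since in a $C^\iy$-ring with corners invertible elements of $\fC_\rex^\t$ are images of $\Psi_{\exp}$, we can instead express the needed inverse using only $\fC$-data. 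Concretely, by Proposition \ref{cc2prop5}, $\fC_x\cong\fC/\Ker\pi_x$ with $\Ker\pi_x=I$ as in \eq{cc2eq3}, and $\pi_x$ is surjective; so the underlying $C^\iy$-ring of the full localization $\bfC(c^{-1}:x(c)\ne 0)$ (ignoring the $[0,\iy)$-type generators) is already $\fC_x$ with surjective projection. Then I would argue that adding the $[0,\iy)$-type generators $y_{c'}$ does not change $\fC$: if $x\ci\Phi_i(c')\ne 0$ then $\pi_x\ci\Phi_i(c')$ is invertible in $\fC_x$ (since $\Phi_i(c')\in\fC$ has nonzero image under $x$), hence invertible in $\fC_{x}$ as a $C^\iy$-ring, so by Proposition \ref{cc2prop6}(b) applied to the local $C^\iy$-ring $\fC_x$ we have $\pi_x\ci\Phi_i(c')=\Phi_{\exp}(b)$ for a unique $b\in\fC_x$, and we may set $y_{c'}\mapsto\Psi_{\exp}(-b)$ in the corner structure. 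This shows the universal property of the full localization \eq{cc4eq17} is satisfied by the $C^\iy$-ring with corners whose $\fC$-part is $\fC_x$ and whose $\fC_\rex$-part is $\fC_\rex$ with appropriate relations imposed.

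\textbf{Step 2: Identify $\fC_{x,\rex}$ via the monoid quotient and prove surjectivity.} For the corner part, I would use Example \ref{cc4ex4}(a): quotienting $\bfC$ by the ideal $I=\Ker\pi_x\subset\fC$ gives the $C^\iy$-ring with corners $(\fC/I,\fC_\rex/\!\simc_I)=(\fC_x,\fC_\rex/\!\simc_I)$, where $c_1'\simc_I c_2'$ iff $c_1'=\Psi_{\exp}(d)\cdot c_2'$ for some $d\in I$. Then I would show that further localizing this at the elements $c'$ with $x\ci\Phi_i(c')\ne 0$ does nothing new to the monoid: by the argument in Step 1 those $c'$ already become invertible after the quotient (their images under $\Phi_i$ are invertible in $\fC_x$, so the $\Psi_{\exp}$-preimage argument makes $[c']$ invertible in $\fC_\rex/\!\simc_I$). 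Hence $\bfC_x\cong(\fC_x,\fC_\rex/\!\simc_I)$, with $\bs\pi_x$ the quotient morphism, which is manifestly surjective on both components — giving (b) and (c) simultaneously. I would double-check consistency with \eq{cc4eq17} by verifying the universal property directly: given $\bs\phi:\bfC\ra\bfE$ inverting the relevant elements, $\phi$ factors through $\fC_x$ by Proposition \ref{cc2prop5}, and $\phi_\rex$ respects $\simc_I$ because $\phi$ kills $I$ and $\Psi_{\exp}$ is natural, so $\bs\phi$ factors uniquely through $(\fC_x,\fC_\rex/\!\simc_I)$.

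\textbf{Step 3: Locality.} For (a), I would take $\pi:\fC_x\ra\R$ to be the unique $C^\iy$-ring morphism making $\fC_x$ local (from Definition \ref{cc2def6} and \cite{MoRe1}, $\fC_x$ is a local $C^\iy$-ring). It remains to check the corner condition in Definition \ref{cc4def10}: $[c']\in\fC_{x,\rex}$ is invertible in the monoid iff $\pi\ci\Phi_i([c'])\ne 0$. One direction is immediate. For the converse, suppose $\pi\ci\Phi_i([c'])\ne 0$; then $\Phi_i([c'])\in\fC_x$ is invertible (as $\fC_x$ is local), so by Proposition \ref{cc2prop6}(b) there is $b\in\fC_x$ with $\Phi_{\exp}(b)=\Phi_i([c'])$, and then $[c']$ and $\Psi_{\exp}(b)$ have the same image under $\Phi_i$; since $\bfC_x$ is a $C^\iy$-ring with corners, Definition \ref{cc4def6}(i) ($\Phi_i|_{\fC_{x,\rex}^\t}$ injective) combined with the fact that $\Psi_{\exp}(b)$ is invertible forces — via Proposition \ref{cc4prop3} applied inside $\bfC_x$ — that $[c']=\Psi_{\exp}(b)$, hence invertible.

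\textbf{Main obstacle.} The trickiest point is Step 1/Step 2: carefully showing that the $[0,\iy)$-type localization generators $y_{c'}$ are genuinely redundant, i.e. that inverting $\Phi_i(c')$ in $\fC$ automatically inverts $c'$ in the corner monoid. This relies essentially on the defining axiom of a $C^\iy$-ring with corners (Definition \ref{cc4def6}) together with the exponential/logarithm facts of Proposition \ref{cc2prop6}(b) and Proposition \ref{cc4prop3}, and one must be careful that the uniqueness of logs holds in the \emph{local} $C^\iy$-ring $\fC_x$, not just in $\fC$. Everything else is bookkeeping with the quotient construction of Example \ref{cc4ex4}(a) and the universal properties.
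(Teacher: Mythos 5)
There is a genuine gap, and it sits exactly at the point you flag as the ``main obstacle''. In Steps 1--2 you claim that once one quotients $\bfC$ by the ideal $I=\Ker\pi_x$ as in Example \ref{cc4ex4}(a), every $c'\in\fC_\rex$ with $x\ci\Phi_i(c')\ne 0$ ``already becomes invertible'' in $\fC_\rex/\simc_I$, because $\Phi_i([c'])$ is invertible in $\fC_x$ and hence equals $\Phi_{\exp}(b)$ for some $b$. But the inference from $\Phi_i([c'])=\Phi_i(\Psi_{\exp}(b))$ to $[c']=\Psi_{\exp}(b)$ is circular: Definition \ref{cc4def6}(i) gives injectivity of $\Phi_i$ only on the units $\fC_\rex^\t$, and Proposition \ref{cc4prop3} goes in the opposite direction (units have logs); to apply either you would need to know already that $[c']$ is a unit, which is what you are trying to prove. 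The claim is in fact false: take $\bfC=\bs C^\iy([0,\iy)^2)$, $x$ the $\R$-point $(1,0)$, and $c'=x_1$, so $x\ci\Phi_i(c')=1\ne 0$. Here $I$ consists of functions vanishing near $(1,0)$, and $[x_1]$ invertible in $\fC_\rex/\simc_I$ would mean $x_1\cdot c''=\Psi_{\exp}(d)$ for some $c''\in\Ex([0,\iy)^2)$ and $d\in I$, which is impossible since the right-hand side is positive everywhere while the left-hand side vanishes on $\{x_1=0\}$. So $(\fC_x,\fC_\rex/\simc_I)$ does not satisfy the universal property \eq{cc4eq17}: its monoid is too big, and your Step 3 locality argument inherits the same circular step.

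This is precisely why the paper's proof, after the first quotient $\fD_\rex=\fC_\rex/\simc_I$, performs a \emph{second} quotient $\fE_\rex=\fD_\rex/\approx$, where $\approx$ is the monoidal equivalence relation generated by $d'\approx d''$ whenever $\Phi'_i(d')=\Phi'_i(d'')$ in $\fD$ and $\ti x\ci\Phi'_i(d')\ne 0$. With that relation in place, the equality $\Phi'_i(d')=\Phi'_i\ci\Psi'_{\exp}(d)$ (obtained from a strengthened form of Proposition \ref{cc4prop3}, needing only $\Phi_i(c')$ invertible in the $C^\iy$-ring) yields $d'\approx\Psi'_{\exp}(d)$, hence invertibility of $\psi_\rex(d')$ in $\fE_\rex$ --- no circularity. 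The resulting description of $\fC_{x,\rex}$ is the more complicated relation of Proposition \ref{cc4prop8} (involving auxiliary $a',b'$ with $\Phi_i(a')-\Phi_i(b')\in I$ and $a'c_1'=b'c_2'$), not $\simc_I$; in my example $[x_1]$ becomes invertible only through such a relation with $a'\ne b'$ globally. Your ring-level identification $\fC_x\cong\fC/I$ with surjective $\pi_x$, and the overall strategy of building $\bfC_x$ by relations without new generators, do agree with the paper; the missing ingredient is this second quotient and the verification that it, combined with the first, satisfies the universal property and remains surjective on monoids.
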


\begin{proof} Proposition \ref{cc2prop5} says that the local $C^\iy$-ring $\fC_x$ is $\fC/I$, for $I\subset\fC$ the ideal defined in \eq{cc2eq3}, with $\pi_x:\fC\ra\fC_x$ the projection $\fC\ra\fC/I$. Define
\begin{equation*}
\fD=\fC_x=\fC/I\quad\text{and}\quad \fD_\rex=\fC_\rex/\simc,
\end{equation*}
where $\sim$ is the equivalence relation on $\fC_\rex$ given by $c'\sim c''$ if there exists $i\in I$ with $c''=\Psi_{\exp}(i)\cdot c'$, and $\phi:\fC\ra\fD=\fC/I$, $\phi_\rex:\fC_\rex\ra\fD_\rex=\fC_\rex/\simc$ to be the natural surjective projections. Let $f:\R^m\t[0,\iy)^n\ra\R$ be smooth and $g:\R^m\t[0,\iy)^n\ra[0,\iy)$ be exterior, and write $\Phi_f,\Psi_g$ for the operations in $\bfC$. Then  it is not difficult to show there exist unique maps $\Phi'_f,\Psi'_g$ making the following diagrams commute:
\e
\begin{gathered}
\xymatrix@C=80pt@R=15pt{ *+[r]{\fC^m\t\fC_\rex^n} \ar[r]_(0.6){\Phi_f} \ar[d]^{\phi^m\t \phi_\rex^n} & *+[l]{\fC} \ar[d]_\phi \\
*+[r]{\fD^m\t\fD{}_\rex^n} \ar@{.>}[r]^(0.6){\Phi'_f}  & *+[l]{\fD,\!} }\quad
\xymatrix@C=80pt@R=15pt{ *+[r]{\fC^m\t\fC_\rex^n} \ar[r]_(0.6){\Psi_g} \ar[d]^{\phi^m\t \phi_\rex^n} & *+[l]{\fC_\rex} \ar[d]_{\phi_\rex} \\
*+[r]{\fD^m\t\fD{}_\rex^n} \ar@{.>}[r]^(0.6){\Psi'_g}  & *+[l]{\fD_\rex,\!} }
\end{gathered}
\label{cc4eq18}
\e
and these $\Phi'_f,\Psi'_g$ make $\bfD=(\fD,\fD_\rex)$ into a $C^\iy$-ring with corners, and $\bs\phi=(\phi,\phi_\rex):\bfC\ra\bfD$ into a surjective morphism.

Suppose that $\bfF$ is a $C^\iy$-ring with corners and $\bs\chi=(\chi,\chi_\rex):\bfC\ra\bfF$ a morphism such that $\chi(c)$ is invertible in $\fF$ for all $c\in\fC$ with $x(c)\ne 0$. The definition $\fD=\fC_x=\fC(c^{-1}:c\in\fC$, $x(c)\ne 0)$ in $\CRings$ in Definition \ref{cc2def5} implies that $\chi:\fC\ra\fF$ factorizes uniquely as $\chi=\xi\ci\phi$ for $\xi:\fD\ra\fF$ a morphism in $\CRings$. Hence $\chi(i)=0$ in $\fF$ for all $i\in I$, so $\chi_\rex(\Psi_{\exp}(i))=1_{\fF_\rex}$ for all $i\in I$. Thus if $c',c''\in\fC_\rex$ with $c''=\Psi_{\exp}(i)\cdot c'$ for $i\in I$ then $\chi_\rex(c')=\chi_\rex(c'')$. Hence $\chi_\rex$ factorizes uniquely as $\chi_\rex=\xi_\rex\ci\phi_\rex$ for~$\xi_\rex:\fD_\rex\ra\fF_\rex$. 

As $\bs\chi,\bs\phi$ are morphisms in $\CRingsc$ with $\bs\phi$ surjective we see that $\bs\xi=(\xi,\xi_\rex):\bfD\ra\bfF$ is a morphism. Therefore $\bs\chi:\bfC\ra\bfF$ factorizes uniquely as $\bs\chi=\bs\xi\ci\bs\phi$. Also $\phi(c)$ is invertible in $\fD=\fC_x$ for all $c\in\fC$ with $x(c)\ne 0$, by definition of $\fC_x$ in Definition \ref{cc2def5}. Therefore we have a canonical isomorphism
\begin{equation*}
\bfD\cong\bfC\bigl(c^{-1}:c\in\fC,\; x(c)\ne 0\bigr)
\end{equation*}
identifying $\bs\phi:\bfC\ra\bfD$ with the projection $\bfC\ra\bfC\bigl(c^{-1}:c\in\fC,\; x(c)\ne 0\bigr)$. Note that $x:\fC\ra\R$ factorizes as $x=\ti x\ci\phi$ for a unique morphism~$\ti x:\fD\ra\R$.

Next define
\begin{equation*}
\fE=\fD=\fC_x\quad\text{and}\quad \fE_\rex=\fD_\rex/\approx,
\end{equation*}
where $\approx$ is the monoidal equivalence relation on $\fD_\rex$ generated by the conditions that $d'\approx d''$ whenever $d',d''\in\fD_\rex$ with $\Phi'_i(d')=\Phi'_i(d'')$ in $\fD$ and $\ti x\ci\Phi'_i(d')\ne 0$. Write $\psi=\id:\fD\ra\fE$, and let $\psi_\rex:\fD_\rex\ra\fE_\rex$ be the natural surjective projection. Suppose $f:\R^m\t[0,\iy)^n\ra\R$ is smooth and $g:\R^m\t[0,\iy)^n\ra[0,\iy)$ is exterior. Then as for \eq{cc4eq18}, we claim there are unique maps $\Phi''_f,\Psi''_g$ making the following diagrams commute:
\e
\begin{gathered}
\xymatrix@C=80pt@R=15pt{ *+[r]{\fD^m\t\fD_\rex^n} \ar[r]_(0.6){\Phi'_f} \ar[d]^{\psi^m\t \psi_\rex^n} & *+[l]{\fD} \ar[d]_\psi \\
*+[r]{\fE^m\t\fE{}_\rex^n} \ar@{.>}[r]^(0.6){\Phi''_f}  & *+[l]{\fE,\!} }\quad
\xymatrix@C=80pt@R=15pt{ *+[r]{\fD^m\t\fD_\rex^n} \ar[r]_(0.6){\Psi'_g} \ar[d]^{\psi^m\t \psi_\rex^n} & *+[l]{\fD_\rex} \ar[d]_{\psi_\rex} \\
*+[r]{\fE^m\t\fE{}_\rex^n} \ar@{.>}[r]^(0.6){\Psi''_g}  & *+[l]{\fE_\rex.\!} }
\end{gathered}
\label{cc4eq19}
\e 

To see that $\Phi''_f$ in \eq{cc4eq19} is well defined, note that as $\Phi_i':\fD_\rex\ra\fD$ is a monoid morphism and $\approx$ is a monoidal equivalence relation generated by $d'\approx d''$ when $\Phi'_i(d')=\Phi'_i(d'')$, we have a factorization $\Phi_i'=\ti\Phi_i'\ci\psi_\rex$. We may extend $f$ to smooth $\ti f:\R^{m+n}\ra\R$, and then $\Phi_f'$ factorizes as
\begin{equation*}
\xymatrix@C=60pt@R=10pt{ \fD^m\t\fD_\rex^n \ar[rr]_{\Phi'_f} \ar[dr]_{\id_\fD^m\t(\Phi'_i)^n\,\,\,} && \fD. \\
& \fD^{m+n} \ar[ur]_{\Phi'_{\ti f}} }
\end{equation*}
Using $\Phi_i'=\ti\Phi_i'\ci\psi_\rex$, we see that $\Phi_f''$ in \eq{cc4eq19} exists and is unique.

For $\Psi_g''$, if $g=0$ then $\Psi''_g=0_{\fE_\rex}=[0_{\fD_\rex}]$ in \eq{cc4eq19}. Otherwise we may write $g$ using $a_1,\ldots,a_n$ and $h:\R^m\t[0,\iy)^n\ra\R$ as in \eq{cc4eq4}, and then
\begin{equation*}
\Psi_{g'}(d_1,\ldots,d_m,d_1',\ldots,d_n')=(d_1')^{a_1}\cdots (d_n')^{a_n}\cdot \Psi'_{\exp}\bigl[\Phi'_h(d_1,\ldots,d_m,d_1',\ldots,d_n')\bigr].
\end{equation*}
Since $\psi_\rex:\fD_\rex\ra\fE_\rex$ is a monoid morphism, as it is a quotient by a monoidal equivalence relation, we see from this and the previous argument applied to $\Phi_h(d_1,\ldots,d_m,d_1',\ldots,d_n')$ that $\Psi_g''$ in \eq{cc4eq19} exists and is unique. These $\Phi''_f,\Psi''_g$ make $\bfE=(\fE,\fE_\rex)$ into a $C^\iy$-ring with corners, and $\bs\psi=(\psi,\psi_\rex):\bfD\ra\bfE$ into a surjective morphism.

We will show that there is a canonical isomorphism $\bfE\cong\bfC_x$ which identifies $\bs\psi\ci\bs\phi:\bfC\ra\bfE$ with $\bs\pi_x:\bfC\ra\bfC_x$. Firstly, suppose $c\in\fC$ with $x(c)\ne 0$. Then $\psi\ci\phi(c)$ is invertible in $\fE=\fC_x$ by definition of $\fC_x$ in Definition \ref{cc2def5}. Secondly, suppose $c'\in\fC_\rex$ with $x\ci\Phi_i(c')\ne 0$. Set $d'=\phi_\rex(c')$. Then $\Phi_i'(d')=\phi\ci\Phi_i(c')$ is invertible in $\fD$. Now in the proof of Proposition \ref{cc4prop3}, we do not actually need $c'$ to be invertible in $\fC_\rex$, it is enough that $\Phi_i(c')$ is invertible in $\fC$. Thus this proof shows that there exists a unique $d\in\fD$ with $\Phi'_i(d')=\Phi'_{\exp}(d)=\Phi'_i\ci\Psi_{\exp}'(d)$. But then $d'\approx\Psi_{\exp}'(d)$, so $\psi_\rex(d')=\psi_\rex\ci\Psi_{\exp}'(d)=\Psi_{\exp}''(\psi(d))$. Hence $\psi_\rex\ci\phi_\rex(c')=\psi_\rex(d')$ is invertible in $\fE_\rex$, with inverse~$\Psi_{\exp}''(-\psi(d))$.

Thirdly, suppose that $\bfG$ is a $C^\iy$-ring with corners and $\bs\ze=(\ze,\ze_\rex):\bfC\ra\bfG$ a morphism such that $\ze(c)$ is invertible in $\fG$ for all $c\in\fC$ with $x(c)\ne 0$ and $\ze_\rex(c')$ is invertible in $\fG_\rex$ for all $c'\in\fC_\rex$ with $x\ci\Phi_i(c')\ne 0$. Then $\bs\ze=\bs\eta\ci\bs\phi$ for a unique $\bs\eta:\bfD\ra\bfG$, by the universal property of $\bfD$. Since $\phi_\rex:\fC_\rex\ra\fD_\rex$ is surjective and $x=\ti x\ci\phi$ we see that $\eta_\rex(d')$ is invertible in $\fG_\rex$ for all $d'\in\fD_\rex$ with $\ti x\ci\Phi'_i(d')\ne 0$.

Let $d',d''\in\fD_\rex$ with $\Phi'_i(d')=\Phi'_i(d'')$ in $\fD$ and $\ti x\ci\Phi'_i(d')\ne 0$, so that $d'\approx d''$. Then $\eta_\rex(d'),\eta_\rex(d'')$ are invertible in $\fG_\rex$ with $\Phi_i\ci\eta_\rex(d')=\Phi_i\ci\eta_\rex(d'')$ in $\fG$, so Definition \ref{cc4def6}(i) for $\bfG$ implies that $\eta_\rex(d')=\eta_\rex(d'')$. Since $\eta_\rex:\fD_\rex\ra\fG_\rex$ is a monoid morphism, and $\approx$ is a monoidal equivalence relation, and $\eta_\rex(d')=\eta_\rex(d'')$ for the generating relations $d'\approx d''$, we see that $\eta_\rex$ factorizes via $\fD_\rex/\approx$. Thus there exists unique $\th_\rex:\fE_\rex\ra\fF_\rex$ with $\eta_\rex=\th_\rex\ci\psi_\rex$. Set $\th=\eta:\fE=\fD\ra\fG$. Then $\eta=\th\ci\psi$ as $\psi=\id_\fD$. As $\bs\psi$ is surjective we see that $\bs\th=(\th,\th_\rex):\bfE\ra\bfF$ is a morphism in $\CRingsc$, with $\bs\eta=\bs\th\ci\bs\psi$, so that~$\bs\ze=\bs\th\ci\bs\psi\ci\bs\phi$.

This proves that $\bs\psi\ci\bs\phi:\bfC\ra\bfE$ satisfies the universal property of $\bs\pi_x:\bfC\ra\bfC_x$ from the localization \eq{cc4eq17}, so $\bfE\cong\bfC_x$ as we claimed. Parts (b),(c) of the theorem are now immediate, as $\fE=\fC_x$ and $\bs\phi,\bs\psi$ are surjective. For (a), observe that $x:\fC\ra\R$ factorizes as $\pi\ci\pi_x$ for $\pi:\fC_x\ra\R$ a morphism. If $\bar c\in\fC_x$ with $\pi(\bar c)\ne 0$ then as $\pi_x:\fC\ra\fC_x$ is surjective by (c) we have $\bar c=\pi_x(c)$ with $x(c)\ne 0$, so $\bar c=\pi_x(c)$ is invertible in $\fC_x$ by \eq{cc4eq17}. Similarly, if $\bar c'\in\fC_{x,\rex}$ with $\pi\ci\Phi_i(\bar c')\ne 0$ then as $\pi_{x,\rex}:\fC_\rex\ra\fC_{x,\rex}$ is surjective we find that $\bar c'$ is invertible in $\fC_{x,\rex}$. Hence $\bfC_x$ is a local $C^\iy$-ring with corners.
\end{proof}

We can characterize the equivalence relations that define $\fC_{x,\rex}=\fE_\rex$ in the above proof using the following proposition.

\begin{prop}
\label{cc4prop8}
Let\/ $\bfC=(\fC,\fC_\rex)$ be a $C^\iy$-ring with corners and\/ $x:\fC\ra \R$ an $\R$-point of\/ $\fC$. Let\/ $\bs{\pi}_x:\bfC\ra\bfC_x$ be as in Definition {\rm\ref{cc4def11},} and\/ $I$ the ideal defined in \eq{cc2eq3}. For any $c_1',c_2'\in\fC_\rex,$ then $\pi_{x,\rex}(c_1')=\pi_{x,\rex}(c_2')$ if and only if there are elements $a',b'\in \fC_\rex$ such that\/ $\Phi_i(a')-\Phi_i(b')\in I,$ $x\ci\Phi_i(a')\ne 0,$ and\/ $a'c_1'=b'c_2'$. Hence $\bfC_x=(\fC/I,\fC_\rex/\simc)$ where $c_1'\sim c_2'\in \fC_\rex$ if and only if there are elements $a',b'\in \fC_\rex$ such that\/ $\Phi_i(a')-\Phi_i(b')\in I,$ $x\ci\Phi_i(a')\ne 0,$ and\/~$a'c_1'=b'c_2'$.
\end{prop}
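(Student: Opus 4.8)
The plan is to combine the explicit construction of $\bfC_x$ given in the proof of Theorem~\ref{cc4thm4} with a direct study of the relation in the statement, which I will write $\sim$. First I would record the elementary observation that $I\subseteq\Ker x$: if $c\cdot d=0$ in $\fC$ with $x(d)\ne 0$, then $x(c)x(d)=0$ in the field $\R$, so $x(c)=0$; in particular $\Phi_i(a')-\Phi_i(b')\in I$ forces $x\ci\Phi_i(a')=x\ci\Phi_i(b')$. Using this I would check that $\sim$ is a monoid congruence on $\fC_\rex$: reflexivity takes $a'=b'=1_{\fC_\rex}$; symmetry transfers the condition $x\ci\Phi_i(a')\ne 0$ to $b'$ via $I\subseteq\Ker x$; transitivity replaces two witnessing pairs $(a_1',b_1'),(a_2',b_2')$ by $(a_2'a_1',b_2'b_1')$, staying inside $I$ since $I$ is an ideal and $\Phi_i$ is multiplicative; compatibility with products is immediate. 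Thus $\fC_\rex/{\sim}$ is a well-defined monoid, and it suffices to prove that $\pi_{x,\rex}(c_1')=\pi_{x,\rex}(c_2')$ if and only if $c_1'\sim c_2'$: surjectivity of $\pi_{x,\rex}$ (Theorem~\ref{cc4thm4}(c)) together with $\fC_x\cong\fC/I$ (Proposition~\ref{cc2prop5}) then identifies $\bfC_x$ with $(\fC/I,\fC_\rex/{\sim})$ carrying the induced $C^\iy$-ring with corners structure.

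One direction is easy given the witnesses. If $a'c_1'=b'c_2'$ with $\Phi_i(a')-\Phi_i(b')\in I$ and $x\ci\Phi_i(a')\ne 0$, then $x\ci\Phi_i(b')\ne 0$ as well, so the defining localization property \eq{cc4eq17} makes $\pi_{x,\rex}(a'),\pi_{x,\rex}(b')$ invertible in $\fC_{x,\rex}$. Since $\Phi_i(a')-\Phi_i(b')\in I=\Ker\pi_x$ and $\bs\pi_x$ intertwines the operations $\Phi_i$, we get $\Phi_i\bigl(\pi_{x,\rex}(a')\bigr)=\Phi_i\bigl(\pi_{x,\rex}(b')\bigr)$ in $\fC_x$, whence $\pi_{x,\rex}(a')=\pi_{x,\rex}(b')$ by Definition~\ref{cc4def6}(i) applied to the $C^\iy$-ring with corners $\bfC_x$. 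Applying $\pi_{x,\rex}$ to $a'c_1'=b'c_2'$ and cancelling this invertible element gives $\pi_{x,\rex}(c_1')=\pi_{x,\rex}(c_2')$.

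For the converse — the main obstacle — I would unwind the proof of Theorem~\ref{cc4thm4}, where $\bfC_x\cong\bfE$ and $\bs\pi_x$ is identified with $\bs\psi\ci\bs\phi$ for surjections $\phi_\rex:\fC_\rex\twoheadrightarrow\fD_\rex=\fC_\rex/{\sim_1}$ (where $c'\sim_1 c''$ iff $c''=\Psi_{\exp}(i)c'$ for some $i\in I$) and $\psi_\rex:\fD_\rex\twoheadrightarrow\fD_\rex/{\approx}$, with $\approx$ the monoid congruence generated by the pairs $(d',d'')$ satisfying $\Phi'_i(d')=\Phi'_i(d'')$ and $\ti x\ci\Phi'_i(d')\ne 0$. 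The key point is not to describe $\approx$ by finite chains, but to exhibit a concrete congruence containing the generators: let $\sim_\fD$ be the relation on $\fD_\rex$ with $d_1'\sim_\fD d_2'$ iff there are $\alpha',\beta'\in\fD_\rex$ with $\Phi'_i(\alpha')=\Phi'_i(\beta')$, $\ti x\ci\Phi'_i(\alpha')\ne 0$ and $\alpha'd_1'=\beta'd_2'$ (this is the analogue of $\sim$ for the local ring $\bfD$, for which the corresponding ideal is $\{0\}$). The same checks as before — now trivial, since $\Phi'_i(\alpha')=\Phi'_i(\beta')$ makes the $\ti x$-condition automatically symmetric — show $\sim_\fD$ is a monoid congruence, and it contains each generating pair $(d',d'')$ by taking $\alpha'=d_2'$, $\beta'=d_1'$. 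Hence $\approx\ \subseteq\ \sim_\fD$.

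Finally, if $\pi_{x,\rex}(c_1')=\pi_{x,\rex}(c_2')$ then $\phi_\rex(c_1')\approx\phi_\rex(c_2')$, hence $\phi_\rex(c_1')\sim_\fD\phi_\rex(c_2')$; lifting witnesses $\alpha',\beta'$ along the surjection $\phi_\rex$ to $a',b'\in\fC_\rex$ and translating via $\Phi'_i\ci\phi_\rex=\phi\ci\Phi_i$, $\ti x\ci\phi=x$ and $\Ker\phi=I$ yields $\Phi_i(a')-\Phi_i(b')\in I$, $x\ci\Phi_i(a')\ne 0$ and (since $\phi_\rex$ identifies $\sim_1$-classes) $b'c_2'=\Psi_{\exp}(i_0)\,a'c_1'$ for some $i_0\in I$. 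Absorbing $\Psi_{\exp}(i_0)$ into $a'$ preserves both conditions: $\Phi_i\bigl(\Psi_{\exp}(i_0)a'\bigr)=\Phi_{\exp}(i_0)\Phi_i(a')$, $\Phi_{\exp}(i_0)-1\in I$ by Hadamard's Lemma, and $x\bigl(\Phi_{\exp}(i_0)\bigr)=\exp(x(i_0))=1$ since $i_0\in\Ker x$. Hence $(\Psi_{\exp}(i_0)a',b')$ witnesses $c_1'\sim c_2'$, which completes the proof. The only genuine subtlety is in this converse direction, and it is entirely contained in the choice of the auxiliary congruence $\sim_\fD$; everything else is bookkeeping through the quotient maps together with one application of Hadamard's Lemma.
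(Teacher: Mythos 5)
Your proof is correct, and its skeleton matches the paper's: the easy direction is the same cancellation argument (the paper deduces $\pi_{x,\rex}(a')=\pi_{x,\rex}(b')$ from the generating relation of $\approx$, you deduce it from Definition~\ref{cc4def6}(i) applied to the invertibles of $\bfC_x$), and for the hard direction both you and the paper unwind the two-step construction $\bfC_x\cong(\fC/I,(\fC_\rex/\simc)/\approx)$ from the proof of Theorem~\ref{cc4thm4}. The genuine difference is in how the generated relation $\approx$ is compared with the explicit one-witness relation. The paper writes $\approx$ out as a finite chain $d'_{i-1}=e_i'g_i'$, $d_i'=f_i'g_i'$ with $\Phi_i'(e_i')=\Phi_i'(f_i')$ and $\ti x\ci\Phi_i'(e_i')\ne 0$, strips the $\Psi_{\exp}(h_i)$ factors inside the chain, and collapses it by taking the products $a'=f_1\cdots f_n$, $b'=e_1\cdots e_n$. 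You instead observe that the one-witness relation $\sim_\fD$ on $\fD_\rex$ is itself a monoid congruence containing the generating pairs (via $\alpha'=d''$, $\beta'=d'$), so $\approx\,\subseteq\,\sim_\fD$ by minimality of the generated congruence; you then lift a single witness pair along the surjection $\phi_\rex$ and absorb the leftover $\Psi_{\exp}(i_0)$ at the end using Hadamard's lemma and $I\subseteq\Ker x$. Algebraically the two arguments coincide --- your transitivity check multiplies witnesses exactly as the paper's products $f_1\cdots f_n$, $e_1\cdots e_n$ collapse the chain --- but your packaging avoids explicit chain bookkeeping and isolates where each hypothesis ($I$ an ideal, multiplicativity of the condition $x\ci\Phi_i(a')\ne 0$) enters, at the modest cost of verifying that $\sim_\fD$ is a congruence and postponing the $\Psi_{\exp}$ bookkeeping to the final step.
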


\begin{proof}
We first show that if $\pi_{x,\rex}(c_1')=\pi_{x,\rex}(c_2')$, then there are $a',b'$ satisfying the conditions. In Theorem \ref{cc4thm4}, we constructed $C^\iy$-rings with corners $\bfD=(\fD,\fD_\rex)$ and $\bfE=(\fE,\fE_\rex)$ and surjective morphisms $\bs\phi=(\phi,\phi_\rex):\bfC\ra\bfD$, $\bs\psi=(\psi,\psi_\rex):\bfD\ra\bfE$, where
\begin{equation*}
\fE=\fD=\fC_x=\fC/I,\quad \fD_\rex=\fC_\rex/\simc\quad\text{and}\quad \fE_\rex=\fD_\rex/\approx,
\end{equation*}
and $I\subset\fC$ is the ideal defined in \eq{cc2eq3}, and $\sim,\approx$ are explicit equivalence relations. Then we showed that there is a unique isomorphism $\bfE\cong\bfC_x$ identifying $\bs\psi\ci\bs\phi:\bfC\ra\bfE$ with $\bs\pi_x:\bfC\ra\bfC_x$. As $\pi_{x,\rex}(c_1')=\pi_{x,\rex}(c_2')$ we have $\psi_\rex\ci\phi_\rex(c_1')=\psi_\rex\ci\phi_\rex(c_2')$ in $\fE_\rex$. Thus~$\phi_\rex(c_1')\approx\phi_\rex(c_2')$. 

By definition $\approx$ is the monoidal equivalence relation on $\fD_\rex$ generated by the condition that $d'\approx d''$ whenever $d',d''\in\fD_\rex$ with $\Phi'_i(d')=\Phi'_i(d'')$ in $\fD$ and $\ti x\ci\Phi'_i(d')\ne 0$, where $\Phi'_i:\fD_\rex\ra\fD$ is the $C^\iy$-operation from the inclusion $i:[0,\iy)\hookra\R$, and $x:\fC\ra\R$ factorizes as $x=\ti x\ci\phi$ for a unique morphism $\ti x:\fD\ra\R$. Hence $\phi_\rex(c_1')\approx\phi_\rex(c_2')$ means that there is a finite sequence $\phi_\rex(c_1')=d_0',d_1', d_2', \ldots, d_{n-1}',d_n'=\phi_\rex(c_2')$ in $\fD_\rex$, and elements $e'_i,f'_i,g'_i\in \fD_\rex$ such that $\Phi_i'(e'_i)=\Phi_i'(f'_i)$, $\ti x\ci \Phi_i'(e'_i)\ne 0$, and $d'_{i-1}=e'_ig'_i,$ $d'_i=f'_ig'_i$ in $\fD_\rex$ for~$i=1,\ldots,n$.

As $\phi_\rex:\fC_\rex\ra\fD_\rex$ is surjective we can choose $e_i,f_i,g_i\in\fC_\rex$ with $e'_i=\phi_\rex(e_i)$, $f'_i=\phi_\rex(f_i)$, $g'_i=\phi_\rex(g_i)$ for $i=1,\ldots,n$. Then the conditions become
\begin{gather*}
\Phi_i(e_i)-\Phi_i(f_i)\in I,\quad x\ci \Phi_i(e_i)\ne 0\in\R,\quad i=1,\ldots,n,\\
c_1'\sim e_1g_1, \quad e_{i+1}g_{i+1}\sim f_ig_i, \quad i=1,\ldots,n-1,\quad c_2'\sim f_ng_n,
\end{gather*}
since equality in $\fD_\rex$ lifts to $\sim$-equivalence in $\fC_\rex$. By definition of $\sim$, this means that there exist elements $h_0,h_1,\ldots,h_n$ in the ideal $I\subset\fC$ in \eq{cc2eq3} such that
\e
\begin{gathered}
c_1'=\Psi_{\exp}(h_0)e_1g_1,\quad c_2'=\Psi_{\exp}(h_n)f_ng_n, \\ 
\text{and}\quad e_{i+1}g_{i+1}=\Psi_{\exp}(h_i)f_ig_i, \quad i=1,\ldots,n-1.
\end{gathered}
\label{cc4eq20}
\e
It is not hard to show that for any element $h\in I$, then the conditions $\Phi_i(e_i)-\Phi_i(f_i)\in I$ and $x\ci \Phi_i(e_i)\ne 0\in\R$ hold if and only if $\Phi_i(\Psi_{\exp}(h)e_i)-\Phi_i(f_i)\in I$ and $x\ci \Phi_i(\Psi_{\exp}(h)e_i)\ne 0\in\R$ hold. So we can remove the $h_i$ in \eq{cc4eq20}. We have that $\pi_{x,\rex}(c_1')=\pi_{x,\rex}(c_2')$ if and only if there are $e_i,f_i,g_i\in \fC_\rex$ such that
\e
\begin{gathered}
c_1'=e_1g_1,\quad c_2'= f_ng_n, 
\quad e_{i+1}g_{i+1}= f_ig_i, \quad i=1,\ldots,n-1,\\
\text{and} \quad x\ci \Phi_i(e_i)\ne 0\in\R, \quad i=1,\dots, n.
\end{gathered}
\label{cc4eq21}
\e

We define $a'=f_1f_2\cdots f_n$ and $b'=e_1e_2\cdots e_n$. Then using \eq{cc4eq21}, we see that $a'c_1'=b'c_2'$, $\Phi_i(a')-\Phi_i(b')\in I$ and $x\ci \Phi_i(a')\ne 0$ as required. 

For the reverse argument, say we have $a',b'\in \fC_\rex$ with $\Phi_i(a')-\Phi_i(b')\in I$ and $x\ci \Phi_i(a')\ne 0$. Let $n=1$, $e_1=a'$, $g_1=0$ and $f_1=b'$ in \eq{cc4eq20} then we see that $\pi_{x,\rex}(a')=\pi_{x,\rex}(b')$. As $x\ci \Phi_i(a')\ne 0$, then $\pi_{x,\rex}(a')$ is invertible in $\fC_\rex$. If we also have that $a'c_1'=b'c_2'$, then as $\pi_{x,\rex}$ is a morphism, $\pi_{x,\rex}(c_1')=\pi_{x,\rex}(c_2')$ and the result follows. 
\end{proof}

\begin{rem}
\label{cc4rem3}
Suppose $0\ne c'\in\fC_\rex$ with $\pi_{x,\rex}(c')=0$ for some $\R$-point $x:\fC\ra\R$. Then Proposition \ref{cc4prop8} gives $a'\in \fC_\rex$ with $x\ci\Phi_i(a')\ne 0$ and $a'c'=0$. Thus $a',c'$ are zero divisors in $\fC_\rex$. Conversely, if $\bfC$ is interior then $\fC_\rex$ has no zero divisors, so $\pi_{x,\rex}(c')\ne 0$ for all~$0\ne c'\in\fC_\rex$.
\end{rem}

\begin{ex}
\label{cc4ex5}
Let $X$ be a manifold with corners, or a manifold with g-corners, and $x\in X$. Define a $C^\iy$-ring with corners $\bfC_x=(\fC,\fC_\rex)$ such that $\fC$ is the set of germs at $x$ of smooth functions $c:X\ra\R$, and $\fC_\rex$ is the set of germs at $x$ of exterior functions~$c':X\ra[0,\iy)$. 

That is, elements of $\fC$ are $\sim$-equivalence classes $[U,c]$ of pairs $(U,c)$, where $U$ is an open neighbourhood of $x$ in $X$ and $c:U\ra\R$ is smooth, and $(U,c)\sim(\ti U,\ti c)$ if there exists an open neighbourhood $\hat U$ of $x$ in $U\cap\ti U$ with $c\vert_{\hat U}=\ti c\vert_{\hat U}$. Similarly, elements of $\fC_\rex$ are equivalence classes $[U,c']$, where $U$ is an open neighbourhood of $x$ in $X$ and $c':U\ra[0,\iy)$ is exterior. The $C^\iy$-operations $\Phi_f,\Psi_g$ are defined as in \eq{cc4eq10}--\eq{cc4eq11}, but for germs.

As the set of germs only depends on the local behaviour, the set of germs at $x$ of exterior functions is equal to the set of germs at $x$ of interior functions and the zero function. Hence $\bfC_x$ is an interior $C^\iy$-ring with corners.

There is a morphism $\pi:\fC\ra\R$ mapping $\pi:[U,c]\mapsto c(x)$. If $\pi([U,c])\ne 0$ then $[U,c]$ is invertible in $\fC$, and if $\pi\ci\Phi_i([U,c'])=c'(x)\ne 0$ then $[U,c']$ is invertible in $\fC_\rex$. Thus $\bfC_x$ is a local $C^\iy$-ring with corners. Write~$\bs C_x^\iy(X)=\bfC_x$. 

Example \ref{cc4ex3}(a) defines a $C^\iy$-ring with corners $\bs C^\iy(X)$. The localization $\bs\pi_{x_*}:(\bs C^\iy(X)){}_{x_*}$ of $\bs C^\iy(X)$ at $x_*:C^\iy(X)\ra\R$ from Definition \ref{cc4def11} is also a local $C^\iy$-ring with corners. There is a morphism $\bs\pi_x:\bs C^\iy(X)\ra\bs C^\iy_x(X)$ mapping $c\mapsto [X,c],$ $c'\mapsto [X,c']$. By the universal property of $(\bs C^\iy(X)){}_{x_*}$ we have $\bs\pi_x=\bs\la_x\ci \bs\pi_{x_*}$ for a unique morphism~$\bs\la_x:(\bs C^\iy(X)){}_{x_*}\ra\bs C_x^\iy(X)$.

The proof of \cite[Th.~4.41]{Joyc6} implies that the $C^\iy$-ring morphism $\la_x$ in $\bs\la_x=(\la_x,\la_{x,\rex})$ is always an isomorphism. But $\la_{x,\rex}$ need not be an isomorphism, as the next proposition shows.
\end{ex}

\begin{prop}
\label{cc4prop9}
In Example\/ {\rm\ref{cc4ex5},} suppose $X$ is a manifold with faces, as in Definition\/ {\rm\ref{cc3def7}}. Then $\bs\la_x$ is an isomorphism for all\/~$x\in X$. 

If\/ $X$ is a manifold with corners, but not a manifold with faces, then $\bs\la_x$ is not an isomorphism for some\/~$x\in X$.
\end{prop}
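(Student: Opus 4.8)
The plan is to reduce everything to the exterior component $\la_{x,\rex}$ of $\bs\la_x$. Since the $C^\iy$-ring morphism $\la_x$ in $\bs\la_x=(\la_x,\la_{x,\rex})$ is always an isomorphism (by the proof of \cite[Th.~4.41]{Joyc6}), and a morphism of $C^\iy$-rings with corners is an isomorphism exactly when both components are bijections, $\bs\la_x$ is an isomorphism iff $\la_{x,\rex}$ is a bijection. I would then use Proposition~\ref{cc4prop8} to identify $(\bs C^\iy(X)){}_{x,\rex}$ with $\Ex(X)/\simc$, where $c_1'\simc c_2'$ iff there are $a',b'\in\Ex(X)$ with $a'=b'$ on a neighbourhood of $x$ (i.e.\ $\Phi_i(a')-\Phi_i(b')$ in the ideal of functions vanishing near $x$), $a'(x)\ne0$, and $a'c_1'=b'c_2'$; and $\bs C^\iy_x(X)_\rex$ with germs at $x$ of exterior functions, $\la_{x,\rex}$ sending the $\simc$-class of $c'$ to its germ. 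As the localization and germ ring only involve the connected component $X_0$ of $x$ (the idempotents cutting out $X_0$ become invertible there), I would assume $X$ is connected.

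For the forward direction, let $X$ be a connected manifold with faces. To prove $\la_{x,\rex}$ surjective: a germ $\ga$ either is $0$ (realised by the global zero function), or is the germ of an interior function, so in coordinates $(x_1,\ldots,x_n)\in\R^n_k$ at $x$ with local boundary components $\{x_i=0\}$ we have $\ga=\bigl[F\cdot x_1^{a_1}\cdots x_k^{a_k}\bigr]$ for smooth $F>0$, $a_i\in\N$; since $X$ has faces these local components lie in $k$ distinct faces $F_1,\ldots,F_k$, each with a global interior defining function $b_i:X\ra[0,\iy)$ of order $1$ on $F_i$ and order $0$ on the other faces (a standard fact for manifolds with faces, cf.\ \cite{Melr2}), and near $x$ one has $b_i=x_i\cdot G_i$ with $G_i>0$; extending the germ of $\log\bigl(F/(G_1^{a_1}\cdots G_k^{a_k})\bigr)$ to a global $h\in C^\iy(X)$ by a bump function, $e'=\Psi_{\exp}(h)\cdot b_1^{a_1}\cdots b_k^{a_k}$ has germ $\ga$ at $x$. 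To prove $\la_{x,\rex}$ injective: if $c_1',c_2'\in\Ex(X)$ agree near $x$ and their common germ is $0$, then by connectedness $c_1'=c_2'=0$; otherwise $c_1',c_2'$ are nowhere locally zero, hence interior, and near $x$ equal $F\cdot x_1^{a_1}\cdots x_k^{a_k}$, and I would set $P':=b_1^{a_1}\cdots b_k^{a_k}$ (using that $\mu_{c_j'}$ is constant $=a_i$ on the connected face $F_i$ by Remark~\ref{cc3rem3}, so $P'$ divides $c_j'$ in $\In(X)$), and $\hat c_j':=c_j'/P'$, which agrees with $F/(G_1^{a_1}\cdots G_k^{a_k})$ near $x$, so $\hat c_1'=\hat c_2'$ near $x$ and $\hat c_1'(x)\ne0$; then $(a',b')=(\hat c_2',\hat c_1')$ is valid data in Proposition~\ref{cc4prop8}, giving $\hat c_1'\simc\hat c_2'$, and multiplying the witnessing identity by $P'$ gives $c_1'\simc c_2'$. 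Hence $\la_{x,\rex}$ is a bijection and $\bs\la_x$ an isomorphism.

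For the converse, suppose $X$ is a manifold with corners but not a manifold with faces, so some connected component $F$ of $\pd X$ has $\Pi_X|_F$ non-injective. I would pick distinct points $(x,\be_1),(x,\be_2)\in F$ over a common $x\in X$, so $\be_1\ne\be_2$ are distinct local boundary components at $x$, and in coordinates $\R^n_k$ with $\be_1=\{x_1=0\}$, $\be_2=\{x_2=0\}$ consider the germ $[x_1]\in\bs C^\iy_x(X)_\rex$. Any global exterior $c'$ with germ $[x_1]$ at $x$ has $\mu_{c'}$ equal to $1$ at $(x,\be_1)$ and to $0$ at $(x,\be_2)$; but by Remark~\ref{cc3rem3} $\mu_{c'}$ is locally constant, hence constant on the connected set $F$ containing both points, a contradiction. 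So $[x_1]$ is not in the image of $\la_{x,\rex}$, and $\bs\la_x$ is not an isomorphism at this $x$.

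The hard part will be the injectivity half of the first assertion. Since exterior functions $X\ra[0,\iy)$ cannot be glued with partitions of unity, one genuinely needs the global boundary defining functions on a manifold with faces, together with the local constancy of $\mu_{c'}$ along faces, to "divide out" the zero locus near $x$ and reduce to the trivial case of exterior functions nonzero at $x$; the reduction to connected $X$ is required for the same reason, as localizing at an $\R$-point does not on its own make $\bs C^\iy(X)$ local in the exterior direction. (The analogous statement for manifolds with g-faces would run similarly, with the functions $\la_p$ of Definition~\ref{cc3def5} playing the role of boundary defining functions.)
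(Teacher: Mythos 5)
Your proposal is correct and follows essentially the same route as the paper's proof: reduce everything to the exterior component $\la_{x,\rex}$, describe the localization via Proposition~\ref{cc4prop8}, and exploit local constancy of the multiplicity functions $\mu_{c'}$ of Remark~\ref{cc3rem3} together with partition-of-unity gluing of functions with prescribed multiplicity, the faces hypothesis entering exactly as in the paper (distinct local boundary components at $x$ lie on distinct faces), with an identical argument for the converse. The only difference is cosmetic: you prove injectivity by dividing $c_1',c_2'$ by a global monomial $P'$ in face-defining functions and taking $(a',b')=(\hat c_2',\hat c_1')$, where the paper instead constructs $a',b'$ with prescribed multiplicities $\max(\mu_{c_2'}-\mu_{c_1'},0)$, $\max(\mu_{c_1'}-\mu_{c_2'},0)$ and sets $b'=a'c_1'(c_2')^{-1}$ on the interior, extended continuously.
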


\begin{proof} Suppose $X$ is a manifold with faces, and $x\in X$, giving $\bs\la_x=(\la_x,\la_{x,\rex})$. Then $\la_x$ is an isomorphism as in Example \ref{cc4ex5}. We will show $\la_{x,\rex}$ is both surjective and injective, so that $\bs\la_x$ is an isomorphism.

For surjectivity, we need to show that for any $[U,c']\in C^\iy_{x,\rex}(X)$ there exists $c''\in C^\iy_\rex(X)$ with $c'=c''$ near $x$ in $X$, so that $c'=\pi_{x,\rex}(c'')=\la_{x,\rex}(\pi_{x_*,\rex}(c''))$. If $[U,c']=0$ we take $c'=0$, so suppose $[U,c']\ne 0$, and take $c''$ to be interior. As in Remark \ref{cc3rem3}, such $c''$ has a locally constant map $\mu_{c''}:\pd X\ra\N$ such that $c''$ vanishes to order $\mu_{c''}$ along $\pd X$ locally, and $c''>0$ on $X^\ci$. We choose $\mu_{c''}$ such that $\mu_{c''}=\mu_{c'}$ on any connected component of $\pd X$ which meets $x$, and $\mu_{c''}=0$ on other components. Then local choices of interior $c''$ on $X$ with prescribed vanishing order $\mu_{c''}$ on $\pd X$ can be combined using a partition of unity on $X$, with the local choice $c''=c'$ near $x$, to get $c''$ as required.

For injectivity, let $c_1',c_2'\in C^\iy_\rex(X)$ with $\pi_{x,\rex}(c_1')=\pi_{x,\rex}(c_2')$ in $C^\iy_{x,\rex}(X)$, which means that $c'_1=c'_2$ near $x$ in $X$. We must show that $\pi_{x_*,\rex}(c'_1)=\pi_{x_*,\rex}(c'_2)$ in $C^\iy_{x_*,\rex}(X)$. By Proposition \ref{cc4prop8}, this holds if there are $a',b'\in C^\iy_\rex(X)$ such that $a'=b'$ near $x$ in $X$, and $a'(x)\ne 0$, and~$a'c_1'=b'c_2'$. 

Let $\ti X$ be the connected component of $X$ containing $x$. If $c_1'=0$ near $x$ then $c_1'\vert_{\ti X}=c_2'\vert_{\ti X}$, and we can take $a'=b'=1$ on $\ti X$ and $a'=b'=0$ on $X\sm\ti X$. Otherwise $c_1',c_2'$ are interior on $\ti X$, so $\mu_{c'_1},\mu_{c_2'}$ map $\pd\ti X\ra\N$. Define $a',b'$ to be zero on $X\sm\ti X$. On $\ti X$, define locally constant $\mu_{a'},\mu_{b'}:\pd\ti X\ra\N$ by $\mu_{a'}=\max(\mu_{c_2'}-\mu_{c_1'},0)$ and $\mu_{a'}=\max(\mu_{c_1'}-\mu_{c_2'},0)$, so that $\mu_{a'}+\mu_{c_1'}=\mu_{b'}+\mu_{c_2'}$ on $\pd\ti X$, and $\mu_{a'},\mu_{b'}=0$ near $\Pi_{\ti X}^{-1}(x)$. Let $a'\vert_{\ti X}:\ti X\ra[0,\iy)$ be arbitrary interior with order of vanishing $\mu_{a'}$ on $\pd\ti X$. Then there is a unique interior $b'\vert_{\ti X}:\ti X\ra[0,\iy)$ with $a'c_1'=b'c_2'$, defined by $b'=a'c_1'(c_2')^{-1}$ on $\ti X^\ci$, and extended continuously over $\ti X$. These $a',b'$ show $\la_{x,\rex}$ is injective.

If $X$ is a manifold with corners, but not a manifold with faces, then there exist $x\in X$ and distinct $x'_1,x'_2\in\Pi_X^{-1}(x)$ which lie in the same connected component $\pd_iX$ of $\pd X$. Then any $c''\in C^\iy_\rex(X)$ has $\mu_{c''}(x'_1)=\mu_{c''}(x_2')$, but there exists $[U,c']\in C^\iy_{x,\rex}(X)$ with $\mu_{c'}(x'_1)\ne \mu_{c'}(x_2')$, so $[U,c']$ does not lie in the image of $\pi_{x,\rex}=\la_{x,\rex}\ci\pi_{x_*,\rex}$, and hence not in the image of $\la_{x,\rex}$, as $\pi_{x_*,\rex}$ is surjective. Thus $\la_{x,\rex}$ is not surjective, and $\bs\la_x$ not an isomorphism. 
\end{proof}

The proposition justifies the following definition in the g-corners case:

\begin{dfn}
\label{cc4def12}
A manifold with g-corners $X$ is called a {\it manifold with g-faces\/} if $\bs\la_x:(\bs C^\iy(X)){}_{x_*}\ra\bs C_x^\iy(X)$ in Example \ref{cc4ex5} is an isomorphism for all~$x\in X$.

\end{dfn}

In \S\ref{cc54} we will show that a manifold with g-corners defines an {\it affine\/} $C^\iy$-scheme with corners if and only if it is a manifold with g-faces.

\begin{ex}
\label{cc4ex6}
For any weakly toric monoid $P$, one can show using an argument similar to the proof of Proposition \ref{cc4prop9} that $X_P$ in Definition \ref{cc3def4} is a manifold with g-faces. Since every manifold with g-corners $X$ can be covered by open neighbourhoods diffeomorphic to $X_P$, it follows that every manifold with g-corners can be covered by open subsets which are manifolds with g-faces.
\end{ex}

\begin{rem}
\label{cc4rem4}
Let us compare Definitions \ref{cc3def8} and \ref{cc4def12}. For a manifold with g-corners $X$ to be a manifold with g-faces it is necessary, but not sufficient, that $\Pi_X\vert_{\pd_iX}:\pd_iX\ra X$ is injective for each connected component $\pd_iX$ of $\pd X$. This is because not every locally constant map $\mu_{c'}:\pd X\ra\N$ can be the vanishing multiplicity map of an interior morphism $c':X\ra[0,\iy)$, as g-corner strata of codimension $\ge 3$ can impose extra consistency conditions on $\mu_{c'}$. The first author \cite[Ex.~5.5.4]{Fran} gives an example of a manifold with g-corners with $\Pi_X\vert_{\pd_iX}$ injective for all components $\pd_iX\subset\pd X$, which is not a manifold with g-faces.
\end{rem}

\subsection{\texorpdfstring{Special classes of $C^\iy$-rings with corners}{Special classes of C∞-rings with corners}}
\label{cc47} 

We define and study several classes of $C^\iy$-rings with corners. Throughout, if $\bfC=(\fC,\fC_\rex)$ is an (interior) $C^\iy$-ring with corners, then we regard $\fC_\rex$ (and $\fC_\rin$) as monoids under multiplication, as in~\S\ref{cc32}.

\begin{dfn}
\label{cc4def13}
Let $\bfC$ be a $C^\iy$-ring with corners, and see Proposition \ref{cc4prop6}. We call $\bfC$ {\it finitely generated\/} if there exists a surjective morphism $\bs\phi:\bfF^{A,A_\rex}\ra\bfC$ from some free $C^\iy$-ring with corners $\bfF^{A,A_\rex}$ with $A,A_\rex$ finite sets. Equivalently, $\bfC$ is finitely generated if it fits into a coequalizer diagram \eq{cc4eq13} with $A,A_\rex$ finite. We call $\bfC$ {\it finitely presented\/} if it fits into a coequalizer diagram \eq{cc4eq13} with $A,A_\rex,B,B_\rex$ finite. Finitely presented implies finitely generated.

Write $\CRingscfp\subset\CRingscfg\subset\CRingsc$ for the full subcategories of finitely presented, and finitely generated, objects in $\CRingsc$.
\end{dfn}

\begin{prop}
\label{cc4prop10}
Suppose we are given a pushout diagram in\/ {\rm$\CRingsc$:}
\begin{equation*}
\xymatrix@C=60pt@R=14pt{ *+[r]{\bfC} \ar[r]_{\bs\be} \ar[d]^{\bs\al} & *+[l]{\bfE} \ar[d]_{\bs\de} \\
*+[r]{\bfD} \ar[r]^{\bs\ga} & *+[l]{\bfF,\!} }
\end{equation*}
so that\/ $\bfF=\bfD\amalg_\bfC\bfE,$ and\/ $\bfC,\bfD,\bfE$ fit into coequalizer diagrams in\/~{\rm$\CRingsc$:}
\ea
\xymatrix@C=50pt{  \bfF^{B,B_\rex} \ar@<.1ex>@/^.3pc/[r]^{\bs\ep} \ar@<-.1ex>@/_.3pc/[r]_{\bs\ze} & \bfF^{A,A_\rex} \ar[r]^{\bs\phi} & \bfC, }
\label{cc4eq22}\\
\xymatrix@C=50pt{  \bfF^{D,D_\rex} \ar@<.1ex>@/^.3pc/[r]^{\bs\eta} \ar@<-.1ex>@/_.3pc/[r]_{\bs\th} & \bfF^{C,C_\rex} \ar[r]^{\bs\chi} & \bfD, }
\label{cc4eq23}\\
\xymatrix@C=50pt{  \bfF^{F,F_\rex} \ar@<.1ex>@/^.3pc/[r]^{\bs\io} \ar@<-.1ex>@/_.3pc/[r]_{\bs\ka} & \bfF^{E,E_\rex} \ar[r]^{\bs\psi} & \bfE. }
\label{cc4eq24}
\ea
Then $\bfF$ fits into a coequalizer diagram in\/~{\rm$\CRingsc$:}
\e
\begin{gathered}
\xymatrix@C=50pt{  \bfF^{H,H_\rex} \ar@<.1ex>@/^.3pc/[r]^{\bs\la} \ar@<-.1ex>@/_.3pc/[r]_{\bs\mu} & \bfF^{G,G_\rex} \ar[r]^{\bs\om} & \bfF, }\qquad\text{with}\\
G\!=\!C\amalg E,\; G_\rex\!=\!C_\rex\amalg E_\rex,\; H\!=\!A\amalg D\amalg F,
\; H_\rex\!=\!A_\rex\amalg D_\rex\amalg F_\rex.
\end{gathered}
\label{cc4eq25}
\e
The analogue holds in $\CRingscin$ rather than $\CRingsc$.
\end{prop}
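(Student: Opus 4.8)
The plan is to establish that $\bfF$ fits into the coequalizer diagram \eq{cc4eq25} entirely by universal properties, using that a pushout is determined by its universal property and that colimits commute with colimits: $\bfF=\bfD\amalg_\bfC\bfE$ is a colimit of the diagram assembled from the three coequalizer diagrams \eq{cc4eq22}--\eq{cc4eq24} and the pushout square, and \eq{cc4eq25} just reorganises that diagram into a single coequalizer. I will construct $\bs\la,\bs\mu,\bs\om$ explicitly and show that $\bfF$ with $\bs\om$ satisfies the universal property of the coequalizer of $\bs\la,\bs\mu$. A set-level argument will not do, since by Theorem \ref{cc4thm3}(b) colimits in $\CRingsc$ are not computed on underlying sets, but the universal-property argument is insensitive to this.

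First I would record two facts about the free objects of Definition \ref{cc4def7}. (i) A coproduct of free $C^\iy$-rings with corners is free on the disjoint union of generators, giving canonical identifications $\bfF^{G,G_\rex}\cong\bfF^{C,C_\rex}\ot_\iy\bfF^{E,E_\rex}$ and $\bfF^{H,H_\rex}\cong\bfF^{A,A_\rex}\ot_\iy\bfF^{D,D_\rex}\ot_\iy\bfF^{F,F_\rex}$; this is immediate from the universal properties of free objects and coproducts together with Yoneda. (ii) Since $\bfF^{A,A_\rex}$ is free and $\bs\chi,\bs\psi$ are surjective by Proposition \ref{cc4prop6}(b), we may choose lifts $\bs{\ti\al}:\bfF^{A,A_\rex}\ra\bfF^{C,C_\rex}$ with $\bs\chi\ci\bs{\ti\al}=\bs\al\ci\bs\phi$ and $\bs{\ti\be}:\bfF^{A,A_\rex}\ra\bfF^{E,E_\rex}$ with $\bs\psi\ci\bs{\ti\be}=\bs\be\ci\bs\phi$. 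Using (i), define $\bs\om$ to be $\bs\ga\ci\bs\chi$ on the factor $\bfF^{C,C_\rex}$ and $\bs\de\ci\bs\psi$ on the factor $\bfF^{E,E_\rex}$; define $\bs\la$ to be $\bs{\ti\al}$, $\bs\eta$, $\bs\io$ on the factors $\bfF^{A,A_\rex},\bfF^{D,D_\rex},\bfF^{F,F_\rex}$ respectively, post-composed with the coproduct inclusions into $\bfF^{G,G_\rex}$; and define $\bs\mu$ to be $\bs{\ti\be}$, $\bs\th$, $\bs\ka$ on those three factors. Then $\bs\om\ci\bs\la=\bs\om\ci\bs\mu$ follows factorwise from $\bs\chi\ci\bs\eta=\bs\chi\ci\bs\th$, $\bs\psi\ci\bs\io=\bs\psi\ci\bs\ka$ and the commutativity $\bs\ga\ci\bs\al=\bs\de\ci\bs\be$ of the pushout square.

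Next I would check the universal property. Let $\bfC'$ be a $C^\iy$-ring with corners and $\bs h:\bfF^{G,G_\rex}\ra\bfC'$ with $\bs h\ci\bs\la=\bs h\ci\bs\mu$. By (i), $\bs h$ is a pair $(\bs h_C,\bs h_E)$ with $\bs h_C:\bfF^{C,C_\rex}\ra\bfC'$, $\bs h_E:\bfF^{E,E_\rex}\ra\bfC'$, and restricting the hypothesis to the three factors of $\bfF^{H,H_\rex}$ gives $\bs h_C\ci\bs\eta=\bs h_C\ci\bs\th$, $\bs h_E\ci\bs\io=\bs h_E\ci\bs\ka$, and $\bs h_C\ci\bs{\ti\al}=\bs h_E\ci\bs{\ti\be}$. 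By the coequalizers \eq{cc4eq23} and \eq{cc4eq24}, the first two yield unique factorisations $\bs h_C=\bar{\bs h}_D\ci\bs\chi$, $\bs h_E=\bar{\bs h}_E\ci\bs\psi$ with $\bar{\bs h}_D:\bfD\ra\bfC'$, $\bar{\bs h}_E:\bfE\ra\bfC'$; the third then reads $\bar{\bs h}_D\ci\bs\al\ci\bs\phi=\bar{\bs h}_E\ci\bs\be\ci\bs\phi$, and since $\bs\phi$ is surjective and hence an epimorphism this is equivalent to $\bar{\bs h}_D\ci\bs\al=\bar{\bs h}_E\ci\bs\be$. The universal property of $\bfF=\bfD\amalg_\bfC\bfE$ now produces a unique $\bar{\bs h}:\bfF\ra\bfC'$ with $\bar{\bs h}\ci\bs\ga=\bar{\bs h}_D$ and $\bar{\bs h}\ci\bs\de=\bar{\bs h}_E$; unwinding the definition of $\bs\om$ gives $\bar{\bs h}\ci\bs\om=\bs h$, and uniqueness of $\bar{\bs h}$ with this property follows from $\bs\chi,\bs\psi$ being epimorphisms and uniqueness in the pushout property. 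Hence $\bfF$ with $\bs\om$ is the coequalizer of $\bs\la,\bs\mu$, i.e.\ \eq{cc4eq25} holds; in particular $\bs\om$ is automatically surjective, as in Proposition \ref{cc4prop6}(b).

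Finally, the case of $\CRingscin$ is handled identically: the coproducts, coequalizers, free objects and surjectivity statements all have interior analogues by Definition \ref{cc4def7} and Proposition \ref{cc4prop6}, the lifts $\bs{\ti\al},\bs{\ti\be}$ can be chosen interior since $\bs\al\ci\bs\phi,\bs\be\ci\bs\phi$ are, and surjective interior morphisms are again epimorphisms. I do not expect a genuine obstacle in this argument; the points that need a little care are the existence and interior-ness of the lifts $\bs{\ti\al},\bs{\ti\be}$, the bookkeeping of the coproduct inclusions underlying (i), and the harmless observation that the presentation \eq{cc4eq25} depends on the auxiliary choices of $\bs{\ti\al},\bs{\ti\be}$ but is a valid coequalizer diagram for any such choice.
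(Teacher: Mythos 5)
Your proposal is correct and follows essentially the same route as the paper: both rewrite the pushout over the coproduct $\bfF^{C,C_\rex}\ot_\iy\bfF^{E,E_\rex}\cong\bfF^{G,G_\rex}$, use freeness of $\bfF^{A,A_\rex}$ together with surjectivity of $\bs\chi,\bs\psi$ to lift $\bs\al\ci\bs\phi$ and $\bs\be\ci\bs\phi$, and identify coproducts of free objects as free on the disjoint unions of generators, with the same argument in the interior case. The only difference is presentational: the paper assembles the result by formal colimit manipulations (coequalizers are unchanged by precomposing with a surjection, coproducts commute with coequalizers), while you verify the universal property of \eq{cc4eq25} directly, which amounts to the same checks.
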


\begin{proof} As coproducts exist in $\CRingsc$, by general properties of colimits we may rewrite the pushout $\bfF=\bfD\amalg_\bfC\bfE$ as a coequalizer diagram
\e
\xymatrix@C=50pt{  \bfC \ar@<.5ex>[r]^(0.4){\bs\io_\bfD\ci\bs\al} \ar@<-.5ex>[r]_(0.4){\bs\io_\bfE\ci\bs\be} & \bfD\ot_\iy\bfE \ar[r]^(0.6){(\bs\ga,\bs\de)} & \bfF. }
\label{cc4eq26}
\e
Now in a diagram $\bfH\rra\bfG$, the coequalizer is the quotient of $\bfG$ by relations from each element of $\fH,\fH_\rex$. Given a surjective morphism $\bfI\twoheadrightarrow\bfH$, the coequalizers of $\bfH\rra\bfG$ and $\bfI\rra\bfG$ are the same, as we quotient $\bfG$ by the same set of relations. Hence using \eq{cc4eq22}, in \eq{cc4eq26} we can replace $\bfC$ by $\bfF^{A,A_\rex}$. Also using \eq{cc4eq23}--\eq{cc4eq24} gives a coequalizer diagram
\begin{equation*}
\xymatrix@C=20pt{  \bfF^{A,A_\rex} \ar@<.5ex>[r] \ar@<-.5ex>[r] & \mathop{\rm Coeq}\bigl(\bfF^{D,D_\rex}\rra\bfF^{C,C_\rex}\bigr)\ot_\iy\mathop{\rm Coeq}\bigl(\bfF^{F,F_\rex}\rra\bfF^{E,E_\rex}\bigr) \ar[r] & \bfF. }
\end{equation*}
As coproducts commute with coequalizers, this is equivalent to
\begin{equation*}
\xymatrix@C=20pt{  \bfF^{A,A_\rex} \ar@<.5ex>[r] \ar@<-.5ex>[r] & \mathop{\rm Coeq}\bigl((\bfF^{D,D_\rex}\ot_\iy\bfF^{F,F_\rex})\rra(\bfF^{C,C_\rex}\ot_\iy\bfF^{E,E_\rex})\bigr) \ar[r] & \bfF. }
\end{equation*}
Since $\bfF^{A,A_\rex}$ is free, the two morphisms $\bfF^{A,A_\rex}\ra\mathop{\rm Coeq}(\cdots)$ factor through the surjective morphism $\bfF^{C,C_\rex}\ot_\iy\bfF^{E,E_\rex}\ra\mathop{\rm Coeq}(\cdots)$. Thus we may combine the two coequalizers into one, giving a coequalizer diagram
\e
\xymatrix@C=20pt{ \bfF^{A,A_\rex}\ot_\iy\bfF^{D,D_\rex}\ot_\iy\bfF^{F,F_\rex} \ar@<.5ex>[r] \ar@<-.5ex>[r] & \bfF^{C,C_\rex}\ot_\iy\bfF^{E,E_\rex} \ar[r] & \bfF. }
\label{cc4eq27}
\e
But coproducts of free $C^\iy$-rings with corners are free over the disjoint unions of the generating sets. Thus \eq{cc4eq27} is equivalent to \eq{cc4eq25}. The same argument works in~$\CRingscin$.	
\end{proof}

\begin{prop}
\label{cc4prop11}
$\CRingscfg$ and\/ $\CRingscfp$ are closed under finite colimits in\/~$\CRingsc$.
\end{prop}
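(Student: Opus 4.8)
The plan is to reduce the statement to two facts: that $\CRingscfg$ and $\CRingscfp$ contain the initial object of $\CRingsc$, and that they are closed under pushouts taken in $\CRingsc$. This suffices because $\CRingsc$ has all finite colimits by Theorem \ref{cc4thm3}(b), and every finite colimit is assembled from the initial object and pushouts in finitely many steps: a finite coproduct is an iterated pushout over the initial object; the coequalizer of $\bs f,\bs g\colon\bfD\rra\bfE$ is the pushout of $[\bs f,\bs g]\colon\bfD\amalg\bfD\to\bfE$ along the fold map $\bfD\amalg\bfD\to\bfD$; and the colimit of any functor $\bs F\colon J\to\CRingsc$ from a finite category $J$ is computed as the coequalizer of the two canonical morphisms $\coprod_{\phi}\bs F(j)\rra\coprod_{j}\bs F(j)$, the coproducts indexed by the (finitely many) morphisms $\phi\colon j\to k$ and objects $j$ of $J$ respectively; all the coproducts and coequalizers involved exist in $\CRingsc$ by Theorem \ref{cc4thm3}(b).

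First I would check the initial object. The free $C^\iy$-ring with corners $\bfF^{\es,\es}=\bs C^\iy(*)$ on empty generating sets is initial in $\CRingsc$, since by Definition \ref{cc4def7} a morphism out of it is uniquely determined by the empty choice of generators; and it is finitely presented via the trivial coequalizer diagram \eq{cc4eq13} with $A=A_\rex=B=B_\rex=\es$. Hence $\bfF^{\es,\es}\in\CRingscfp\subset\CRingscfg$. The same applies verbatim in $\CRingscin$.

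Next, for pushouts I would invoke Proposition \ref{cc4prop10} directly, which is the technical heart of the matter. Given a pushout $\bfF=\bfD\amalg_\bfC\bfE$ in $\CRingsc$ with $\bfC,\bfD,\bfE$ finitely generated, I would choose coequalizer presentations \eq{cc4eq22}--\eq{cc4eq24} with $A,A_\rex,C,C_\rex,E,E_\rex$ finite, which is possible by Definition \ref{cc4def13} and Proposition \ref{cc4prop6}(b). Then \eq{cc4eq25} exhibits a surjection $\bs\om\colon\bfF^{G,G_\rex}\to\bfF$ with $G=C\amalg E$ and $G_\rex=C_\rex\amalg E_\rex$ finite, so $\bfF$ is finitely generated. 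If in addition $\bfC,\bfD,\bfE$ are finitely presented, I would take the presentations with $B,B_\rex,D,D_\rex,F,F_\rex$ finite as well; then $H=A\amalg D\amalg F$ and $H_\rex=A_\rex\amalg D_\rex\amalg F_\rex$ in \eq{cc4eq25} are also finite, so $\bfF$ is finitely presented. In $\CRingscin$ I would use the $\CRingscin$ analogue of Proposition \ref{cc4prop10} stated in its last sentence.

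Finally I would assemble the pieces: by induction on the number of factors, finite coproducts of objects of $\CRingscfg$ (resp.\ $\CRingscfp$) stay in $\CRingscfg$ (resp.\ $\CRingscfp$), using that the initial object lies there and that the class is closed under pushouts; a coequalizer of a pair of morphisms between such objects is then a further pushout and also stays there; hence the colimit of any finite diagram in $\CRingscfg$ (resp.\ $\CRingscfp$), computed in $\CRingsc$ by the formula above, stays there. Nothing in this last assembly is delicate once Proposition \ref{cc4prop10} is available; the main obstacle is that proposition, which is already established, and the remaining work is only bookkeeping of the finitely many generators and relations.
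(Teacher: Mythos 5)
Your proposal is correct and follows essentially the same route as the paper: note that $\bs C^\iy(*)=\bfF^{\es,\es}$ is an initial object lying in $\CRingscfp\subset\CRingscfg$, reduce finite colimits to iterated pushouts, and apply Proposition \ref{cc4prop10} with the observation that finiteness of the generating (and relation) sets passes to $G,G_\rex$ (and $H,H_\rex$). Your more explicit decomposition of finite colimits into coproducts and coequalizers is just bookkeeping the paper leaves implicit.
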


\begin{proof} $\bs C^\iy(*)=(\R,[0,\iy))$ is an initial object in $\CRingscfg,\CRingscfp$. As  $\CRingscfg,\CRingscfp$ have an initial object, finite colimits may be written as iterated pushouts, so it is enough to show $\CRingscfg,\CRingscfp$ are closed under pushouts. This follows from Proposition \ref{cc4prop10}, noting that if $A,A_\rex,C,C_\rex,E,E_\rex$ are finite then $G,G_\rex$ are finite, and if $A,\ldots,F_\rex$ are finite then $G,\ldots,H_\rex$ are finite.
\end{proof}

\begin{dfn}
\label{cc4def14}
We call a $C^\iy$-ring with corners $\bfC=(\fC,\fC_\rex)$ {\it firm}\/ if the sharpening $\fC_\rex^\sh$ is a finitely generated monoid. We denote by $\CRingscfi$ the full subcategory of $\CRingsc$ consisting of firm $C^\iy$-rings with corners.

If $\bfC$ is firm, then there are $c_1',\ldots,c_n'$ in $\fC_\rex$ whose images under the quotient $\fC_\rex\ra \fC_\rex^\sh$ generate $\fC_\rex^\sh$. This implies that each element in $\fC_\rex$ can be written as $\Psi_{\exp}(c)c_1^{\prime a_1}\cdots c_n^{\prime a_n}$ for some $c\in \fC$ and $a_i\ge 0$. If there exists a surjective morphism $\bs\phi:\bfF^{A,A_\rex}\ra\bfC$ with $A_\rex$ finite then $\bfC$ is firm, as $[\phi_\rex(y_{a'})]$ for $a'\in A_\rex$ generate $\fC_\rex^\sh$. Hence~$\CRingscfp\subset\CRingscfg\subset\CRingscfi$.
\end{dfn}

\begin{prop}
\label{cc4prop12}
$\CRingscfi$ is closed under finite colimits in\/~$\CRingsc$.
\end{prop}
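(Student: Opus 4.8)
The plan is to imitate the proof of Proposition \ref{cc4prop11}. First, $\bs C^\iy(*)=(\R,[0,\iy))$ --- which is the free object $\bfF^{\es,\es}$, hence an initial object of $\CRingsc$ --- is firm: its exterior monoid $[0,\iy)$ has group of units $(0,\iy)$, so its sharpening $[0,\iy)^\sh\cong\{1,0\}$ is a two-element monoid, generated by the single element $0$. Thus $\bs C^\iy(*)$ is an initial object of $\CRingscfi$, so every finite colimit in $\CRingscfi$ can be built as an iterated pushout, and it is enough to prove $\CRingscfi$ is closed under pushouts in $\CRingsc$.

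The extra input needed beyond the finitely generated/presented case is a reformulation of firmness: a $C^\iy$-ring with corners $\bfC$ is firm if and only if there exists a surjective morphism $\bs\phi:\bfF^{A,A_\rex}\ra\bfC$ with $A_\rex$ finite. The ``if'' direction is recorded in Definition \ref{cc4def14}. For ``only if'', suppose $\fC_\rex^\sh$ is generated by $[c_1'],\ldots,[c_n']$ with $c_i'\in\fC_\rex$; take $A_\rex=\{1,\ldots,n\}$, let $A$ be a generating set for the underlying $C^\iy$-ring $\fC$, and define $\bs\phi$ on generators by sending the $x_a$ to generators of $\fC$ and $y_i\mapsto c_i'$. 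Then $\phi$ is surjective by construction, and for $\phi_\rex$ one uses that every $c'\in\fC_\rex$ satisfies $[c']=\prod_i[c_i']^{a_i}$ in $\fC_\rex^\sh$ for some $a_i\ge 0$, hence $c'=\Psi_{\exp}(c)\cdot\prod_i(c_i')^{a_i}$ for some $c\in\fC$ by Definition \ref{cc4def6}(ii); writing $c=\phi(p)$ exhibits $c'$ in the image of $\phi_\rex$.

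Now let $\bfC,\bfD,\bfE$ be firm and $\bfF=\bfD\amalg_\bfC\bfE$ a pushout in $\CRingsc$. Using the reformulation, choose surjective morphisms $\bfF^{C,C_\rex}\ra\bfD$ and $\bfF^{E,E_\rex}\ra\bfE$ with $C_\rex,E_\rex$ finite, and complete each to a coequalizer presentation of the form \eq{cc4eq23}, \eq{cc4eq24} by taking $\bfF^{D,D_\rex}$, $\bfF^{F,F_\rex}$ to be free on the underlying pairs of sets of the kernel pairs of these surjections, with $\bs\eta,\bs\th$ (resp.\ $\bs\io,\bs\ka$) picking out the two projections; also pick any coequalizer presentation \eq{cc4eq22} of $\bfC$. (Here a surjection in $\CRingsc$ is the coequalizer of its kernel pair: limits in $\CRingsc$ agree with those in $\PCRingsc$ by Theorem \ref{cc4thm3}(a), surjections in the variety $\PCRingsc$ are regular epimorphisms, and the reflector $\Pi_\PCRingsc^\CRingsc$ preserves colimits and fixes objects of $\CRingsc$; and the coequalizer is unchanged if the parallel pair is precomposed with a surjection, exactly as in the proof of Proposition \ref{cc4prop10}.) Then Proposition \ref{cc4prop10} produces a coequalizer presentation \eq{cc4eq25} of $\bfF$ in which the exterior generating set $G_\rex=C_\rex\amalg E_\rex$ is finite; since the morphism $\bfF^{G,G_\rex}\ra\bfF$ in \eq{cc4eq25} is automatically surjective by Proposition \ref{cc4prop6}(b), Definition \ref{cc4def14} shows $\bfF$ is firm. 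Hence $\CRingscfi$ is closed under pushouts, and therefore under all finite colimits, in $\CRingsc$; the argument in $\CRingscin$ is identical using the interior analogue of Proposition \ref{cc4prop10}.

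The essential difficulty --- the one place where firmness rather than finite presentation has to be confronted --- is the reformulation in the second paragraph: firmness controls $\fC_\rex$ only up to units, so a finite exterior generating set for a presentation is not visible from the definition and must be manufactured using Definition \ref{cc4def6}(ii). An alternative route that avoids Proposition \ref{cc4prop10} and kernel pairs is to argue directly that $\fF_\rex$ is generated, under the $C^\iy$-operations $\Psi_g$, by $\ga_\rex(\fD_\rex)\cup\de_\rex(\fE_\rex)$, where $\bs\ga:\bfD\ra\bfF$ and $\bs\de:\bfE\ra\bfF$ are the pushout maps: the sub-$C^\iy$-ring with corners these generate contains $\Psi_{\exp}(\fF)$, hence satisfies Definition \ref{cc4def6}, hence equals $\bfF$ by the universal property of the pushout. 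Modulo units this shows $\fF_\rex^\sh$ is generated as a monoid by the images of $\fD_\rex^\sh$ and $\fE_\rex^\sh$ (note that $g=0$ contributes only $0_{\fF_\rex}=\ga_\rex(0_{\fD_\rex})$, and interior $g$ contribute monomials in the inputs times units), and these are finitely generated by hypothesis.
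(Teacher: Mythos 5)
Your main argument is correct, but it takes a genuinely different route from the paper's. The paper proves closure under pushouts by a direct computation: it observes that every element of $(\bfD\amalg_\bfC\bfE)_\rex$ is $\Psi_f$ applied to images of elements of $\fD,\fD_\rex,\fE,\fE_\rex$, writes a nonzero exterior $f$ as a monomial times $\exp F$, and reads off that $(\bfD\amalg_\bfC\bfE)_\rex^\sh$ is generated by the images of the generators of $\fD_\rex^\sh,\fE_\rex^\sh$ together with zero --- i.e.\ essentially the ``alternative route'' you sketch in your last paragraph. You instead prove the reformulation that firmness is equivalent to admitting a surjection $\bfF^{A,A_\rex}\ra\bfC$ with $A_\rex$ finite (the nontrivial direction using Definition \ref{cc4def14}'s decomposition $c'=\Psi_{\exp}(c)c_1^{\prime a_1}\cdots c_n^{\prime a_n}$ and Definition \ref{cc4def6}(ii)), upgrade such surjections to coequalizer presentations via kernel pairs and free covers, and then reuse Proposition \ref{cc4prop10} verbatim as in the finitely generated/presented case. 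This is correct: the exactness input (a surjection in the variety $\PCRingsc$ is the coequalizer of its kernel pair, the diagram lies in the full subcategory $\CRingsc$, and precomposing the parallel pair with a surjection does not change the coequalizer) all holds, and it makes the parallel with Proposition \ref{cc4prop11} structural rather than computational; the cost is heavier categorical machinery where the paper gets by with a two-line monoid calculation. One caveat on your alternative route, since it is the paper's actual argument: the phrase ``contains $\Psi_{\exp}(\fF)$'' is loose as written --- what you need is first that the sub-$C^\iy$-ring part of the subobject is all of $\fF$ (which holds because $\Pi_\CRingsc^\CRings$ preserves pushouts by Theorem \ref{cc4thm3}(c), so $\fF$ is generated as a $C^\iy$-ring by the images of $\fD,\fE$), and second that units of the sub-monoid have logarithms in the sub-ring, which follows from the construction in Proposition \ref{cc4prop3}; with those two points made, that route also closes the gap the paper leaves implicit, namely why every element of the pushout's monoid is an operation applied to images.
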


\begin{proof} As in the proof of Proposition \ref{cc4prop11}, it is enough to show $\CRingscfi$ is closed under pushouts in $\CRingsc$. Take $\bfC,\bfD,\bfE\in \CRingscfi$ with morphisms $\bfC\ra\bfD$ and $\bfC\ra\bfE$, and consider the pushout $\bfD\amalg_{\bfC}\bfE$, with its morphisms $\bs\phi:\bfD\ra \bfD\amalg_{\bfC}\bfE$ and $\bs\psi:\bfE\ra \bfD\amalg_{\bfC}\bfE$. Then every element of $(\bfD\amalg_{\bfC}\bfE)_\rex$ is of the form 
\begin{align*}
\Psi_f(&\phi(d_1),\ldots,\phi(d_m),\psi(e_1),\ldots, \psi(e_n),\\
&\phi_\rex(d_1'),\ldots,\phi_\rex(d_k'), \psi_\rex(e_1'),\ldots, \psi_\rex(e_l')),
\end{align*} 
for smooth $f:\R^{m+n}\t[0,\iy)^{k+l}\ra [0,\iy)$, where $d_i\in \fD, d_i'\in \fD_\rex, e_i\in\fE,e_i'\in \fE_\rex$, and $d_i'$ are generators of the sharpening of $\fD_\rex$, and $e_i'$ generate the sharpening of $\fE_\rex$. If $f=0$ this gives 0, otherwise we may write 
\begin{equation*}
f(x_1,\ldots, x_{m+n},y_1,\ldots, y_{k+l})=y_1^{a_1}\ldots y_{k+l}^{a_{k+l}}e^{F(x_1,\ldots, x_{m+n},y_1,\ldots, y_{k+l})},
\end{equation*}
for $F:\R^{m+n}\!\t\![0,\iy)^{k+l}\!\ra\!\R$ smooth. In $(\bfD\!\amalg_{\bfC}\!\bfE)^\sh_\rex$, the above element maps~to 
\begin{equation*}
[\phi_\rex(d_1')]^{a_1}\cdots [\phi_\rex(d_k')]^{a_k}[\psi_\rex(e_1')]^{a_{k+1}}\cdots [\psi_\rex(e_l')]^{a_{k+l}}.
\end{equation*}
Hence $(\bfD\amalg_{\bfC}\bfE)_\rex^\sh$ is generated by the images of the generators of $\fD_\rex^\sh$ and $\fE_\rex^\sh$, and zero, and so $(\bfD\amalg_{\bfC}\bfE)_\rex^\sh$ is finitely generated. Thus $\CRingscfi$ is closed under pushouts.
\end{proof}

\begin{dfn}
\label{cc4def15}
Suppose $\bfC=(\fC,\fC_\rex)$ is an interior $C^\iy$-ring with corners, and let $\fC_\rin\subseteq\fC_\rex$ be the submonoid of \S\ref{cc42}, and $\fC_\rin^\sh$ its sharpening. Then:
\begin{itemize}
\setlength{\itemsep}{0pt}
\setlength{\parsep}{0pt}
\item[(i)] We call $\bfC$ {\it integral\/} if $\fC_\rin$ is an integral monoid.
\item[(ii)] We call $\bfC$ {\it torsion-free\/} if $\fC_\rin$ is an integral, torsion-free monoid.
\item[(iii)] We call $\bfC$ {\it saturated\/} if it is integral, and $\fC_\rin$ is a saturated monoid. Note that $\fC_\rin^\t\cong\fC$ as abelian groups since $\bfC$ is a $C^\iy$-ring with corners, so $\fC_\rin^\t$ is torsion-free. Therefore $\bfC$ saturated implies that $\bfC$ is torsion-free. 

\item[(iv)] We call $\bfC$ {\it toric\/} if it is saturated and firm. This implies that $\fC_\rin^\sh$ is a toric monoid. 
\item[(v)] We call $\bfC$ {\it simplicial\/} if it is toric and $\fC_\rin^\sh\cong\N^k$ for some $k\in\N$.
\end{itemize}
We will write $\CRingscDe\subset\CRingscto\subset \CRingscsa\subset\CRingsctf\subset \CRingscZ\subset\CRingscin$ for the full subcategories of simplicial, toric, saturated, torsion-free, and integral objects in $\CRingscin$.
\end{dfn}

\begin{ex}
\label{cc4ex7}
Let $X$ be a manifold with corners, and $\bs C^\iy_\rin(X)$ be the interior $C^\iy$-ring with corners from Example \ref{cc4ex3}(b). Let $S$ be the set of connected components of $\pd X$. For each $F\in S$, we choose an interior map $c_F:X\ra[0,\iy)$ which vanishes to order 1 on $F$, and to order zero on $\pd X\sm F$, such that $c_F=1$ outside a small neighbourhood $U_F$ of $\Pi_X(F)$ in $X$, where we choose $\{U_F:F\in S\}$ to be locally finite in $X$. Then every interior map $g:X\ra[0,\iy)$ may be written uniquely as $g=\exp(f)\cdot\prod_{F\in S}c_F^{a_F}$, for $f\in C^\iy(X)$ and $a_F\in\N$, $F\in S$. 

Hence as monoids we have $\In(X)\cong C^\iy(X)\t\N^S$. Therefore $\bs C^\iy_\rin(X)$ is integral, torsion-free, and saturated, and it is simplicial and toric if and only if $\pd X$ has finitely many connected components. A more complicated proof shows that if $X$ is a manifold with g-corners then $\bs C^\iy_\rin(X)$ is integral, torsion-free, and saturated, and it is toric if $\pd X$ has finitely many connected components.
\end{ex}

The functor $\Pi_\rin:\CRingscin\ra\Sets$ mapping $\bfC\mapsto\fC_\rin$ may be enhanced to a functor $\bar\Pi_\rin:\CRingscin\ra\Mon$ by regarding $\fC_\rin$ as a monoid, and then Theorem \ref{cc4thm3}(a) implies that $\bar\Pi_\rin$ preserves limits and directed colimits.

Write $\Mon_{\bf sa,tf,\Z}\subset\Mon_{\bf tf,\Z}\subset\Mon_\Z\subset\Mon$ for the full subcategories of saturated, torsion-free integral, and torsion-free integral, and integral monoids. They are closed under limits and directed colimits in $\Mon$. Thus we deduce:

\begin{prop} $\CRingscsa,\CRingsctf$ and\/ $\CRingscZ$ are closed under limits and under directed colimits in $\CRingscin$. Thus, all small limits and directed colimits exist in $\CRingscsa,\CRingsctf,\CRingscZ$.
\label{cc4prop13}
\end{prop}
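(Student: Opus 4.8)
The plan is to deduce Proposition \ref{cc4prop13} from the corresponding facts about monoids, exactly as the paragraph preceding it suggests, by reducing statements about $\CRingscsa,\CRingsctf,\CRingscZ$ to statements about the submonoids $\fC_\rin$ via the functor $\bar\Pi_\rin:\CRingscin\ra\Mon$.

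First I would record the two inputs that are asserted just before the statement: (i) $\bar\Pi_\rin:\CRingscin\ra\Mon$ preserves limits and directed colimits (this follows from Theorem \ref{cc4thm3}(a), which says $\Pi_\rin:\CRingscin\ra\Sets$ preserves limits and directed colimits, together with the observation that the monoid structure on $\fC_\rin$ is built from the $C^\iy$-operation $\Psi_{g_\cdot}$ for $g_\cdot(x,y)=xy$, so it is automatically preserved when the underlying sets are; this is the standard algebraic-theories argument); and (ii) the full subcategories $\Mon_{\bf sa,tf,\Z}\subset\Mon_{\bf tf,\Z}\subset\Mon_\Z\subset\Mon$ are closed under limits and directed colimits in $\Mon$. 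For (ii) one checks directly: a limit (resp.\ directed colimit) of monoids has underlying set the limit (resp.\ directed colimit) of the underlying sets, and the properties ``integral'' ($p+p'=p+p''\Rightarrow p'=p''$), ``torsion-free'' ($np=0\Rightarrow p=0$ in $P^\gp$), and ``saturated'' ($np\in P$ with $n\ge 1\Rightarrow p\in P$, for $p\in P^\gp$) are each expressible by implications between equations, which pass to limits; for directed colimits one uses that any finite collection of elements and any finite collection of witnessing equations already live at some stage of the directed system, so the same implications hold in the colimit.

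Next I would run the reduction. Let $J$ be a small category (resp.\ a directed poset) and $\bs F:J\ra\CRingscsa$ a functor. By Theorem \ref{cc4thm3}(a) the limit $\bfC=\varprojlim\bs F$ (resp.\ by Theorem \ref{cc4thm2}(a)/Theorem \ref{cc4thm3} the directed colimit) exists in $\CRingscin$, and since $\bar\Pi_\rin$ preserves it we get $\fC_\rin=\varprojlim(\bar\Pi_\rin\ci\bs F)$ (resp.\ the directed colimit) in $\Mon$. Each $\bar\Pi_\rin(\bs F(j))$ is saturated by hypothesis, so by (ii) the monoid $\fC_\rin$ is saturated; hence $\bfC$ is saturated by Definition \ref{cc4def15}(iii), i.e.\ $\bfC\in\CRingscsa$. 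This shows $\CRingscsa$ is closed under limits and under directed colimits in $\CRingscin$. The identical argument with ``saturated'' replaced by ``torsion-free integral'' and ``integral'' gives the statements for $\CRingsctf$ and $\CRingscZ$. Finally, since small limits and directed colimits exist in $\CRingscin$ (Theorem \ref{cc4thm3}(a), Theorem \ref{cc4thm2}(a) as inherited) and these subcategories are closed under them, all small limits and directed colimits exist in $\CRingscsa,\CRingsctf,\CRingscZ$ themselves.

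I do not anticipate a serious obstacle here; the proposition is essentially a transport-of-structure argument. The one point that needs a line of care is item (ii) — specifically the directed-colimit case of ``saturated'', where one must remember that $P^\gp$ for a directed colimit is the directed colimit of the $P_j^\gp$ (the functor $P\mapsto P^\gp$ is a left adjoint, hence preserves colimits), so that an element of $P^\gp$ with $np\in P$ is witnessed at a finite stage and saturatedness there forces $p\in P_j\subseteq P$. The other mild point is making explicit why $\bar\Pi_\rin$ preserves limits/directed colimits given only that $\Pi_\rin$ does: one notes the monoid operations on $\fC_\rin$ are among the $C^\iy$-operations, which the forgetful-to-$\Sets$ description of (co)limits in $\CRingscin$ computes pointwise. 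Neither of these requires real work, so the proof should be short.
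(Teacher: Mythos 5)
Your proposal is correct and follows essentially the same route as the paper, which deduces the proposition from the facts that $\bar\Pi_\rin:\CRingscin\ra\Mon$ preserves limits and directed colimits and that the subcategories of saturated, torsion-free integral, and integral monoids are closed under these in $\Mon$ (the paper simply asserts these two inputs, while you supply the verifications). No gaps to report.
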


In a similar way to Proposition \ref{cc4prop5} we prove:

\begin{thm}
\label{cc4thm5}
There are reflection functors $\Pi_\rin^\Z,\Pi_\Z^\tf,\Pi_\tf^\sa,\Pi_\rin^\sa$ in a diagram
\begin{equation*}
\xymatrix@C=22pt{ \CRingscsa \ar@<-.5ex>[r]_{\inc} &
\CRingsctf \ar@<-.5ex>[r]_{\inc} \ar@<-.5ex>[l]_{\Pi_\tf^\sa} & \CRingscZ \ar@<-.5ex>[r]_{\inc} \ar@<-.5ex>[l]_{\Pi_\Z^\tf} & \CRingscin, \ar@<-.5ex>[l]_{\Pi_\rin^\Z} \ar@<-3ex>@/_.5pc/[lll]_{\Pi_\rin^\sa} }
\end{equation*}
such that each of\/ $\Pi_\rin^\Z,\Pi_\Z^\tf,\Pi_\tf^\sa,\Pi_\rin^\sa$ is left adjoint to the corresponding inclusion functor\/~$\inc$.
\end{thm}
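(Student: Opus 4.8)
The plan is to construct each reflection functor by ``saturating'' or ``integralizing'' the submonoid $\fC_\rin$ while keeping the $C^\iy$-ring $\fC$ essentially fixed, then checking the universal property. I would treat the four functors in order of increasing complexity, building $\Pi_\Z^\tf$ and $\Pi_\tf^\sa$ first, then $\Pi_\rin^\Z$, and finally obtaining $\Pi_\rin^\sa=\Pi_\tf^\sa\ci\Pi_\Z^\tf\ci\Pi_\rin^\Z$ as a composite of left adjoints (so it is automatically left adjoint to the composite inclusion). Since each target category is full in $\CRingscin$, it suffices by general nonsense to produce, for every $\bfC\in\CRingscin$, an object $\bfC'$ in the target subcategory together with an interior morphism $\bs\eta:\bfC\ra\bfC'$ that is initial among interior morphisms from $\bfC$ into that subcategory.

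For $\Pi_\Z^\tf$ (and likewise $\Pi_\rin^\Z$), the construction mimics Proposition~\ref{cc4prop5} and Example~\ref{cc4ex4}: given $\bfC=(\fC,\fC_\rex)$ interior with $\fC_\rex=\fC_\rin\amalg\{0_{\fC_\rex}\}$, I form the universal integral (resp.\ torsion-free) monoid quotient $\fC_\rin\twoheadrightarrow Q$ of $\fC_\rin$ in $\Mon$, set $\fC'_\rex=Q\amalg\{0\}$, and then must propagate the resulting identifications of monoid elements into forced relations on $\fC$, exactly as in the quotient constructions of Example~\ref{cc4ex4}. Concretely: whenever $c'_1\sim c'_2$ in $Q$ via the new relation, $\Phi_i(c'_1)$ and $\Phi_i(c'_2)$ must become equal in $\fC'$, so one quotients $\fC$ by the $C^\iy$-ideal generated by the corresponding differences $\Phi_i(c'_1)-\Phi_i(c'_2)$; this in turn (by the Hadamard's Lemma argument of Example~\ref{cc4ex4}(b)) forces further identifications in the monoid, and one iterates. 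Since $\fC_\rin^\sh$ need not be finitely generated I would phrase this as a colimit/closure construction rather than a finite procedure, or equivalently realize $\bfC'$ as the coequalizer in $\CRingscin$ of a diagram $\bfF^{B,B_\rex}\rra\bfC$ encoding all the monoid relations, using that colimits exist in $\CRingscin$ by Theorem~\ref{cc4thm3}(b),(d). One then checks that $\fC'_\rin$ really is integral/torsion-free — the delicate point is that adding relations to $\fC$ might resurrect non-cancellativity, so one must verify the construction stabilizes — and that the universal property holds: any interior $\bs\phi:\bfC\ra\bfD$ with $\bfD$ integral/torsion-free kills all the imposed relations (because $\fD_\rin$ is integral/torsion-free and $\fD$ sees the $\Phi_i$-images), hence factors uniquely through $\bs\eta$.

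For $\Pi_\tf^\sa$ the monoid side is saturation: replace the torsion-free integral monoid $\fC_\rin$ by its saturation $\fC_\rin^{\sa}=\{p\in\fC_\rin^\gp : np\in\fC_\rin\text{ for some }n\ge1\}$ inside $\fC_\rin^\gp$. The new generators of $\fC_\rin^\sa$ are roots $p$ of existing elements $np\in\fC_\rin$; correspondingly on the $C^\iy$-ring side one must adjoin, for each such $p$ with $\Phi_i$-image $q\in\fC$, a genuine $n$th root of $q$ compatible with all the $C^\iy$-operations, which is naturally done by a localization-and-quotient of $\bfC(x_p)$ imposing $x_p^n = \Phi_i$(representative of $np$) together with the monoidal relation $\Psi_{\exp}$-coherence; again one iterates/takes a colimit in $\CRingscin$. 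Note $\fC_\rin^\t\cong\fC$ as abelian groups is already torsion-free (as remarked before Definition~\ref{cc4def15}), so no torsion obstruction arises and the saturation stays integral. The universal property is then straightforward: an interior map to a saturated $\bfD$ sends an $n$th root to the unique $n$th root available in $\fD_\rin$, by integrality and saturatedness of $\fD_\rin$ and Definition~\ref{cc4def6}(i) applied in $\fD$.

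The main obstacle I expect is controlling the feedback loop between the two pieces of data: imposing a relation on the monoid $\fC_\rin$ forces a relation on the $C^\iy$-ring $\fC$, which via Example~\ref{cc4ex4}-type arguments forces further relations on $\fC_\rin$, and one must argue this closure process converges to an object actually lying in the target subcategory (integral, torsion-free, or saturated) and not merely ``asymptotically'' so. The clean way around this is to avoid the explicit iteration entirely: define $\bfC'$ as the colimit in $\CRingscin$ over the (possibly large) diagram of all interior morphisms from $\bfC$ to objects of the target subcategory that are epimorphisms on the relevant structure, or more efficiently exhibit the target subcategory as closed under the relevant colimits and use the adjoint functor theorem together with Proposition~\ref{cc4prop13} (which already gives closure under limits and directed colimits); one then only needs a solution-set condition, which holds because $\bfC'$ is a quotient of $\bfC$ in a suitable sense. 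I would present the construction concretely for firm or finitely-generated $\bfC$ (where it genuinely terminates, mirroring Proposition~\ref{cc4prop12}) and invoke the colimit/adjoint-functor argument for the general case.
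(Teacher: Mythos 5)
There is a genuine gap, and it sits exactly at the point you flag as ``the main obstacle'' and then defer. The paper's proof consists precisely of resolving that feedback loop: it performs the quotient (or, for saturation, the adjunction of generators with relations) \emph{once}, obtaining $\bfC^1$ which need not lie in the target subcategory, iterates to get a countable chain $\bfC=\bfC^0\ra\bfC^1\ra\bfC^2\ra\cdots$, and sets $\bfD=\varinjlim_{n}\bfC^n$, a directed colimit in $\CRingscin$. The key observation is that, although no finite stage need be integral (resp.\ torsion-free, saturated), the directed colimit is: since $\Pi_\rin$ preserves directed colimits (Theorem~\ref{cc4thm3}(a)), any $d',d''\in\fD_\rin$ with $\pi^\gp(d')=\pi^\gp(d'')$ come from some finite stage, the group-level equality is witnessed at some later stage $\bfC^n$, and the relations defining $\bfC^{n+1}$ then force $d'=d''$ in $\fD_\rin$. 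The universal property follows because a morphism to an integral (etc.) $\bfE$ factors compatibly through every stage. Your proposal names this convergence problem but supplies no argument for it, and that argument is the actual content of the theorem.

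Your two fallback routes do not close the gap as stated. The ``colimit over the diagram of all interior morphisms from $\bfC$ into the subcategory'' is a colimit over a possibly proper class, whose existence is exactly what is in question, so this is circular. The adjoint-functor-theorem route is viable in principle (Proposition~\ref{cc4prop13} plus completeness of $\CRingscin$ gives completeness of $\CRingscZ,\CRingsctf,\CRingscsa$, and the inclusions preserve limits), but your solution-set justification --- ``$\bfC'$ is a quotient of $\bfC$ in a suitable sense'' --- fails for $\Pi_\tf^\sa$, where the reflection adjoins new elements (roots) rather than identifying old ones; there one needs something like a cardinality bound on the saturated sub-$C^\iy$-ring with corners of $\bfE$ generated by the image of $\bfC$, which you do not provide. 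Relatedly, your explicit saturation step is wrong in detail: one must adjoin $[0,\iy)$-type generators $s_{c'}$ with \emph{interior monoid} relations $n_{c'}\cdot s_{c'}=c'$, as the paper does; adjoining an $\R$-type element $x_p$ with $x_p^n=\Phi_i(\cdot)$ in $\fC$ does not enlarge $\fC_\rin$ and so cannot make it saturated. Finally, the claim that the iteration ``genuinely terminates'' for firm or finitely generated $\bfC$ is unsubstantiated and is neither claimed nor needed in the paper's argument.
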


\begin{proof} Let $\bfC$ be an object in $\CRingscin$. We will construct an object $\bfD=\Pi_\rin^\Z(\bfC)$ in $\CRingscZ$ and a projection $\bs\pi:\bfC\ra\bfD$, with the property that if $\bs\phi:\bfC\ra\bfE$ is a morphism in $\CRingscin$ with $\bfE\in\CRingscZ$ then $\bs\phi=\bs\psi\ci\bs\pi$ for a unique morphism $\bs\psi:\bfD\ra\bfE$. Consider the diagram:
\e
\begin{gathered}
\xymatrix@C=45pt@R=15pt{ \bfC=\bfC^0 \ar@<2ex>@/^.6pc/[rrrr]^{\bs\pi=\bs\pi^0} \ar@/_.8pc/[drrrr]_(0.1){\bs\phi=\bs\phi^0} \ar[r]_(0.8){\bs\al^0} & \bfC^1 \ar[r]_(0.8){\bs\al^1} \ar@/_.6pc/[drrr]^(0.36){\bs\phi^1} \ar@<1ex>@/^.2pc/[rrr]^(0.2){\bs\pi^1} & \bfC^2 \ar[r]_(0.8){\bs\al^2} \ar@/_.4pc/[drr]^(0.5){\bs\phi^2} & \cdots \ar[r] & \bfD \ar[d]_{\bs\psi} \\
&&&&  \bfE. }
\end{gathered}
\label{cc4eq28}
\e

Define $\bfC^0=\bfC$ and $\bs\phi^0=\bs\phi$. By induction on $n=0,1,\ldots,$ if $\bfC^n,\bs\phi^n$ are defined, define an object $\bfC^{n+1}\in\CRingscin$ and morphisms $\bs\al^n:\bfC^n\ra\bfC^{n+1}$, $\bs\phi^{n+1}:\bfC^{n+1}\ra\bfE$ as follows. We have a monoid $\fC^n_\rin$, which as in \S\ref{cc32} has an abelian group $(\fC^n_\rin)^\gp$ with projection $\pi^\gp:\fC^n_\rin\ra(\fC^n_\rin)^\gp$, where $\bfC^n_\rin,\bfC^n$ are integral if $\pi^\gp$ is injective. Using the notation of Definition \ref{cc4def8}, we define
\e
\bfC^{n+1}=\bfC^n\big/\bigl[\text{$c'=c''$ if $c',c''\in \fC^n_\rin$ with $\pi^\gp(c')=\pi^\gp(c'')$}\bigr].
\label{cc4eq29}
\e
Write $\bs\al^n:\bfC^n\ra\bfC^{n+1}$ for the natural surjective projection. Then $\bfC^{n+1},\bs\al^n$ are both interior, since the relations $c'=c''$ in \eq{cc4eq29} are all interior.

We have a morphism $\bs\phi^n:\bfC^n\ra\bfE$ with $\bfE$ integral, so by considering the diagram with bottom morphism injective
\begin{equation*}
\xymatrix@C=90pt@R=15pt{ *+[r]{\fC_\rin^n} \ar[r]_(0.45){\pi^\gp} \ar[d]^{\phi^n_\rin} & *+[l]{(\fC^n_\rin)^\gp} \ar[d]_{(\phi^n_\rin)^\gp} \\
*+[r]{\fE_\rin} \ar@{^{(}->}[r]^(0.45){\pi^\gp} & *+[l]{(\fE_\rin)^\gp,\!} }
\end{equation*}
we see that if $c',c''\in \fC^n_\rin$ with $\pi^\gp(c')=\pi^\gp(c'')$ then $\phi^n_\rin(c')=\phi^n_\rin(c'')$. Thus by the universal property of \eq{cc4eq29}, there is a unique morphism $\bs\phi^{n+1}:\bfC^{n+1}\ra\bfE$ with $\bs\phi^n=\bs\phi^{n+1}\ci\bs\al^n$. This completes the inductive step, so we have defined $\bfC^n,\bs\al^n,\bs\phi^n$ for all $n=0,1,\ldots,$ where $\bfC^n,\bs\al^n$ are independent of~$\bfE,\bs\phi$.

Now define $\bfD$ to be the directed colimit $\bfD=\underrightarrow\lim_{n=0}^\iy\bfC^n$ in $\CRingscin$, using the morphisms $\bs\al^n:\bfC^n\ra\bfC^{n+1}$. This exists by Theorem \ref{cc4thm3}(a), and commutes with $\Pi_\rin:\CRingscin\ra\Sets$, where we can think of $\Pi_\rin$ as mapping to monoids. It has a natural projection $\bs\pi:\bfC\ra\bfD$, and also projections $\bs\pi^n:\bfC^n\ra\bfD$ for all $n$. By the universal property of colimits, there is a unique morphism $\bs\psi$ in $\CRingscin$ making \eq{cc4eq28} commute.

The purpose of the quotient \eq{cc4eq29} is to modify $\bfC^n$ to make it integral, since if $\bfC^n$ were integral then $\pi^\gp(c')=\pi^\gp(c'')$ implies $c'=c''$. It is not obvious that $\bfC^{n+1}$ in \eq{cc4eq29} is integral, as the quotient modifies $(\fC^n_\rin)^\gp$. However, the direct limit $\bfD$ is integral. To see this, suppose $d',d''\in\fD_\rin$ with $\pi^\gp(d')=\pi^\gp(d'')$ in $(\fD_\rin)^\gp$. Since $\fD_\rin=\underrightarrow\lim_{m=0}^\iy\fC^m_\rin$ in $\Mon$, for $m\gg 0$ we may write $d'=\pi^m_\rin(c')$, $d''=\pi^m_\rin(c'')$ for $c',c''\in\fC^m_\rin$. As $(\fD_\rin)^\gp=\underrightarrow\lim_{n=0}^\iy(\fC^n_\rin)^\gp$ and $\pi^\gp(d')=\pi^\gp(d'')$, for some $n\gg m$ we have
\begin{equation*}
\pi^\gp\ci\al^{n-1}_\rin\ci\cdots\ci\al^m_\rin(c')=\pi^\gp\ci\al^{n-1}_\rin\ci\cdots\ci\al^m_\rin(c'')\quad\text{in $(\fC^n_\rin)^\gp$.}
\end{equation*}
But then \eq{cc4eq29} implies that $\al^n_\rin\ci\cdots\ci\al^n_\rin(c')=\al^n_\rin\ci\cdots\ci\al^m_\rin(c'')$ in $\fC^{n+1}_\rin$, so $d'=d''$. Therefore $\pi^\gp:\fD_\rin\ra(\fD_\rin)^\gp$ is injective, and $\bfD$ is integral.

Set $\Pi_\rin^\Z(\bfC)=\bfD$. If $\bs\xi:\bfC\ra\bfC'$ is a morphism in $\CRingscin$, by taking $\bfE=\Pi_\rin^\Z(\bfC')$ and $\bs\phi=\bs\pi'\ci\bs\xi$ in \eq{cc4eq28} we see that there is a unique morphism $\Pi_\rin^\Z(\bs\xi)$ in $\CRingscZ$ making the following commute:
\begin{equation*}
\xymatrix@C=100pt@R=15pt{ *+[r]{\bfC} \ar[d]^{\bs\xi} \ar[r]_(0.45){\bs\pi} & *+[l]{\Pi_\rin^\Z(\bfC)} \ar[d]_{\Pi_\rin^\Z(\bs\xi)} \\  
*+[r]{\bfC'} \ar[r]^(0.45){\bs\pi'} & *+[l]{\Pi_\rin^\Z(\bfC').\!\!} }
\end{equation*}
This defines the functor $\Pi_\rin^\Z$. For any $\bfE\in\CRingscZ$, the correspondence between $\bs\phi$ and $\bs\psi$ in \eq{cc4eq28} implies that we have a natural bijection
\begin{equation*}
\Hom_\CRingscin\bigl(\bfC,\inc(\bfE)\bigr)\cong\Hom_\CRingscZ\bigl(\Pi_\rin^\Z(\bfC),\bfE\bigr).
\end{equation*}
This is functorial in $\bfC,\bfE$, and so $\Pi_\rin^\Z$ is left adjoint to $\inc:\CRingscZ\hookra\CRingscin$, as we have to prove.

The constructions of $\Pi_\Z^\tf,\Pi_\tf^\sa$ are very similar. For $\Pi_\Z^\tf$, if $\bfC$ is an object in $\CRingscZ$, the analogue of \eq{cc4eq29} is
\begin{equation*}
\bfC^{n+1}=\bfC^n\big/\bigl[\text{$c'=c''$ if $c',c''\in \fC^n_\rin$ with $\pi^{\rm tf}(c')=\pi^{\rm tf}(c'')$}\bigr],
\end{equation*}
where $\pi^{\rm tf}:\fC^n_\rin\ra(\fC^n_\rin)^\gp/\text{torsion}$ is the natural projection. For $\Pi_\tf^\sa$, if $\bfC$ is an object in $\CRingsctf$, the analogue of \eq{cc4eq29} is
\begin{align*}
\bfC^{n+1}=\bfC^n\bigl[&s_{c'}:\text{$c'\in \fC^n_\rin\subseteq(\fC^n_\rin)^\gp$ and there exists $c''\in (\fC^n_\rin)^\gp\sm \fC^n_\rin$}\\
&\text{with $c'=n_{c'}\cdot c''$, $n_c'=2,3,\ldots$}\bigr]\big/\bigl[\text{$n_{c'}\cdot s_{c'}=c'$, all $c',n_{c'},s_{c'}$}\bigr].
\end{align*}
Finally we set $\Pi_\rin^\sa=\Pi_\tf^\sa\ci\Pi_\Z^\tf\ci\Pi_\rin^\Z$. This completes the proof.
\end{proof}

As for Theorem \ref{cc4thm3}(b), we deduce:

\begin{cor}
\label{cc4cor1}
Small colimits exist in\/~$\CRingscsa,\!\CRingsctf,\!\CRingscZ$.
\end{cor}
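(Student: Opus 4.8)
The plan is to deduce Corollary \ref{cc4cor1} from Theorem \ref{cc4thm5} in exactly the way Theorem \ref{cc4thm3}(b) was deduced from Proposition \ref{cc4prop5}. The key point is the standard categorical fact that a \emph{left adjoint} preserves all colimits that exist in its source category, combined with the observation that on the reflective subcategory the reflection functor acts as the identity.

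Concretely, I would argue as follows. Fix one of the three subcategories, say $\CRingscsa\subset\CRingscin$, with reflection functor $\Pi_\rin^\sa:\CRingscin\ra\CRingscsa$ left adjoint to $\inc$ by Theorem \ref{cc4thm5}. Let $J$ be a small category and $\bs F:J\ra\CRingscsa$ a functor. By Theorem \ref{cc4thm3}(b), the colimit $\bfC=\varinjlim(\inc\ci\bs F)$ exists in $\CRingsc$; moreover, since each $\bs F(j)$ is interior and all the structure morphisms are interior, and since $\inc:\CRingscin\hookra\CRingsc$ preserves colimits by Theorem \ref{cc4thm3}(d), this colimit is computed in $\CRingscin$, i.e.\ $\bfC=\varinjlim \bs F$ in $\CRingscin$. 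Now apply $\Pi_\rin^\sa$: as a left adjoint it preserves colimits, so $\Pi_\rin^\sa(\bfC)=\varinjlim(\Pi_\rin^\sa\ci\inc\ci\bs F)$ in $\CRingscsa$. But $\Pi_\rin^\sa\ci\inc\cong\Id_{\CRingscsa}$ because $\inc$ is the inclusion of a full subcategory and the unit of the adjunction is an isomorphism on objects of $\CRingscsa$ (each saturated $\bfC$ already satisfies the defining condition, so the projection $\bs\pi:\bfC\ra\Pi_\rin^\sa(\bfC)$ is an isomorphism). Hence $\Pi_\rin^\sa(\bfC)=\varinjlim\bs F$ in $\CRingscsa$, proving the colimit exists there. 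The identical argument, using $\Pi_\Z^\tf$ or $\Pi_\rin^\Z$ in place of $\Pi_\rin^\sa$, handles $\CRingsctf$ and $\CRingscZ$.

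I do not anticipate any serious obstacle here; this is a formal consequence of the adjunctions already established. The one point requiring a line of care is the claim that the $\CRingsc$-colimit of interior objects along interior morphisms lands in $\CRingscin$ — but this is immediate from Theorem \ref{cc4thm3}(d), which says $\inc:\CRingscin\hookra\CRingsc$ preserves colimits. The other point worth stating explicitly is that $\Pi_\rin^\sa\ci\inc\cong\Id$, which follows because $\CRingscsa$ is a full subcategory and the reflection is idempotent (equivalently, because every object of $\CRingscsa$ is saturated, the relations added in the construction of $\Pi_\rin^\sa$ in the proof of Theorem \ref{cc4thm5} are already satisfied, so no quotient occurs). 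Everything else is the general nonsense that reflective subcategories are closed under (indeed, inherit) all colimits that exist in the ambient category.

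So the proof will be short: cite Theorem \ref{cc4thm5} for the reflection functors, cite Theorem \ref{cc4thm3}(b),(d) for existence of colimits in $\CRingsc$ and for $\inc$ preserving them, invoke "left adjoints preserve colimits," and use idempotence of the reflection to conclude the colimit computed via the reflection functor is the colimit in the subcategory. This mirrors verbatim the structure of the proof of Theorem \ref{cc4thm3}(b), with $\Pi_\PCRingsc^\CRingsc$ replaced by $\Pi_\rin^\Z,\Pi_\Z^\tf,\Pi_\tf^\sa$.
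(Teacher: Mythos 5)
Your argument is correct and is exactly the paper's intended deduction: the corollary is obtained "as for Theorem \ref{cc4thm3}(b)", i.e.\ by taking the colimit in $\CRingscin$ (Theorem \ref{cc4thm3}(b)), applying the reflection functors $\Pi_\rin^\Z,\Pi_\Z^\tf,\Pi_\rin^\sa$ of Theorem \ref{cc4thm5}, which preserve colimits as left adjoints and restrict to the identity on the reflective subcategories. Your detour through $\CRingsc$ via Theorem \ref{cc4thm3}(d) is harmless but unnecessary, since Theorem \ref{cc4thm3}(b) already gives colimits in $\CRingscin$ directly.
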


In Definition \ref{cc4def8} we explained how to modify a $C^\iy$-ring with corners $\bfC$ by adding generators $\bfC(x_a:a\in A)[y_{a'}:a'\in A_\rex]$ and imposing relations $\bfC/(f_b=0:b\in B)[g_{b'}=h_{b'}:b'\in B_\rex]$. This is just notation for certain small colimits in $\CRingsc$ or $\CRingscin$. Corollary \ref{cc4cor1} implies that we can also add generators and relations in $\CRingscsa,\CRingsctf,\CRingscZ$, provided the relations $g_{b'}=h_{b'}$ are interior, that is,~$g_{b'},h_{b'}\ne 0_{\fC_\rex}$.

\begin{prop}
\label{cc4prop14}
$\CRingscto$ is closed under finite colimits in\/~$\CRingscsa$.
\end{prop}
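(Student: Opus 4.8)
The plan is to reduce the statement to a property of monoids, combining Proposition~\ref{cc4prop12} (which shows $\CRingscfi$ is closed under finite colimits in $\CRingsc$) with the saturation reflection functor $\Pi_\tf^\sa$ of Theorem~\ref{cc4thm5}. Since $\bs C^\iy(*)=(\R,[0,\iy))$ is an initial object in $\CRingscto$ (it is saturated, and $[0,\iy)^\sh$ is the trivial monoid, hence finitely generated), finite colimits in $\CRingscto$ reduce to pushouts, so it suffices to show that if $\bfC,\bfD,\bfE$ are toric and $\bfC\ra\bfD$, $\bfC\ra\bfE$ are interior morphisms, then the pushout $\bfD\amalg_\bfC\bfE$ taken in $\CRingscsa$ is firm.

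First I would observe that by Corollary~\ref{cc4cor1} the pushout $\bfD\amalg_\bfC\bfE$ exists in $\CRingscsa$, and by Theorem~\ref{cc4thm5} it is obtained from the pushout $\bfF=\bfD\amalg_\bfC\bfE$ taken in $\CRingscin$ by applying the reflection functor $\Pi_\rin^\sa=\Pi_\tf^\sa\ci\Pi_\Z^\tf\ci\Pi_\rin^\Z$. So I need to show $\Pi_\rin^\sa(\bfF)$ is firm, i.e. that its exterior-part sharpening is a finitely generated monoid. The key point is that all of the reflection-functor operations (the quotients in \eqref{cc4eq29} and its analogues, plus the directed colimits) are \emph{surjective} on the interior monoids in a suitable sense: $\Pi_\rin^\Z,\Pi_\Z^\tf$ are quotients, and $\Pi_\tf^\sa$ adjoins $n$th roots $s_{c'}$ of elements $c'$ that already lie in the monoid. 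So the sharpening of the saturated monoid $\Pi_\rin^\sa(\bfF)_\rin^\sh$ is the saturation (in the monoid-theoretic sense, inside the rational vector space $\bfF_\rin^\sh\ot_\N\Q$) of the image of $\bfF_\rin^\sh$. Now by Proposition~\ref{cc4prop12} $\bfF$ is firm, so $\fF_\rex^\sh$ is finitely generated, hence so is $\fF_\rin^\sh$ (it is a submonoid of finite index type, or directly: the generators of $\fF_\rex^\sh$ together with $0$ generate it, and deleting $0$ leaves $\fF_\rin^\sh$). Then the integralization, the torsion-free quotient, and finally the saturation of a finitely generated monoid are all finitely generated — integralization and torsion-free quotient are quotients of finitely generated monoids hence finitely generated, and Gordan's Lemma says the saturation of a finitely generated (integral, torsion-free) monoid is finitely generated. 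Therefore $\Pi_\rin^\sa(\bfF)_\rin^\sh$ is a finitely generated monoid, so $\Pi_\rin^\sa(\bfF)$ is firm, hence toric, which is what we wanted.

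Concretely the proof would run: (i) reduce to pushouts using the initial object; (ii) identify the pushout in $\CRingscto$, if it exists, with $\Pi_\rin^\sa$ applied to the pushout in $\CRingscin$ — but more carefully, I would show directly that $\Pi_\rin^\sa(\bfF)$ lies in $\CRingscto$ and has the correct universal property as the pushout in $\CRingscto$; (iii) show $\bfF\in\CRingscfi$ by Proposition~\ref{cc4prop12}; (iv) trace the generators through $\Pi_\rin^\Z$, $\Pi_\Z^\tf$, $\Pi_\tf^\sa$: for the first two the interior monoid only gets a quotient, so finite generation of the sharpening is preserved; for $\Pi_\tf^\sa$, invoke Gordan's Lemma (a finitely generated integral torsion-free monoid has finitely generated saturation). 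An efficient packaging is to note that for a toric $C^\infty$-ring with corners $\bfC$ we have $\fC_\rin^\sh$ toric, so $\fC_\rin\cong \fC_\rin^\sh\times\fC$ as monoids by the splitting in Definition~\ref{cc3def4}(vi); using this, one can compute $(\bfD\amalg_\bfC\bfE)_\rin^\sh$ explicitly as a pushout of toric monoids along the induced maps, and toric monoids are closed under finite colimits in the category of saturated monoids — again by Gordan's Lemma.

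The main obstacle I expect is step (ii)/(iv): verifying that the reflection functors really do not blow up the number of generators of the sharpening, and in particular handling the directed colimit $\bfD=\underrightarrow\lim_n\bfC^n$ carefully. Since the $\bfC^n$ are successive quotients, the maps $\fC^n_\rin\ra\fC^{n+1}_\rin$ are surjective, so the colimit's interior monoid is a quotient of $\fC^0_\rin=\fF_\rin$ and its sharpening is a quotient of $\fF_\rin^\sh$, hence finitely generated — but one must be a little careful that ``sharpening of a directed colimit of quotients'' behaves as expected, which follows because $\bar\Pi_\rin$ preserves directed colimits (stated just before Proposition~\ref{cc4prop13}) and sharpening commutes with directed colimits of monoids. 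The other mild subtlety is the appeal to Gordan's Lemma, which is standard but should be cited (e.g. to Ogus~\cite{Ogus}) since it is not proved in the excerpt. Once these points are nailed down, the proof is short.
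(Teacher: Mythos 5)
Your proposal is correct and follows essentially the same route as the paper: the paper forms the finite colimit in $\CRingsc$, notes it is firm by Proposition~\ref{cc4prop12}, and applies the reflection functor $\Pi_\rin^\sa$, asserting (by inspection of the construction in Theorem~\ref{cc4thm5}) that it takes firm objects to firm objects, hence the result is firm and saturated, i.e.\ toric. Your additional steps --- reducing to pushouts via the initial object and justifying firmness-preservation of $\Pi_\rin^\Z,\Pi_\Z^\tf,\Pi_\tf^\sa$ through quotients plus Gordan's Lemma --- merely spell out the details the paper leaves to ``by construction''.
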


\begin{proof}
Let $J$ be a finite category and $\bs F:J\ra\CRingscto$ a functor. Write $\bfC=\varinjlim\bs F$ for the colimit in $\CRingsc$. As $\CRingscto\subset\CRingscfi$, Proposition \ref{cc4prop12} says $\bfC$ is firm. Now $\Pi_\rin^\sa(\bfC)$ is the colimit $\varinjlim\bs F$ in $\CRingscsa$, as $\Pi_\rin^\sa$ is a reflection functor. By construction in the proof of Theorem \ref{cc4thm5} we see that $\Pi_\rin^\sa$ takes firm objects to firm objects, so $\Pi_\rin^\sa(\bfC)$ is firm, and hence toric, as toric means saturated and firm.
\end{proof}

\section{\texorpdfstring{$C^\iy$-schemes with corners}{C∞-schemes with corners}}
\label{cc5}

We extend the theory of $C^\iy$-schemes in Chapter \ref{cc2} to $C^\iy$-schemes with corners.

\subsection{\texorpdfstring{(Local) $C^\iy$-ringed spaces with corners}{(Local) C∞-ringed spaces with corners}}
\label{cc51}

\begin{dfn}
\label{cc5def1}
A {\it $C^\iy$-ringed space with corners\/} $\bX=(X,\bs{\O}_X)$ is a topological space $X$ with a sheaf $\bs{\O}_X$ of $C^\iy$-rings with corners on $X$. That is, for each open set $U\subseteq X$, then $\bs{\O}_X(U)=(\O_X(U),\O^{\rex}_{X}(U))$ is a $C^\iy$-ring with corners and $\bs{\O}_X$ satisfies the sheaf axioms in~\S\ref{cc23}.

A {\it morphism\/} $\bs f=(f,\bs f^\sh):(X,\bO_X)\ra (Y,\bO_Y)$ of $C^\iy$-ringed spaces with corners is a continuous map $f:X\ra Y$ and a morphism $\bs f^\sh=(f^\sh,f^\sh_\rex):f^{-1}(\bs{\O}_Y)\ra\bs{\O}_X$ of sheaves of $C^\iy$-rings with corners on $X$, for $f^{-1}(\bs{\O}_Y)=(f^{-1}(\O_Y),f^{-1}(\O_Y^\rex))$ as in Definition \ref{cc2def13}. Note that $\bs f^\sh$ is adjoint to a morphism $\bs f_\sh:\bO_Y\ra f_*(\bO_X)$ on $Y$ as in~\eq{cc2eq8}.

A {\it local\/ $C^\iy$-ringed space with corners\/} $\bX=(X,\bs{\O}_X)$ is a
$C^\iy$-ringed space for which the stalks $\bs{\O}_{X,x}=(\O_{X,x},\O^{\rex}_{X,x})$ of $\bs{\O}_X$ at $x$ are local $C^\iy$-rings with corners for all $x\in X$. We define morphisms of local $C^\iy$-ringed spaces with corners $(X,\bs{\O}_X),(Y,\bs{\O}_Y)$ to be morphisms of $C^\iy$-ringed spaces with corners, without any additional locality condition. 

Write $\CRSc$ for the category of $C^\iy$-ringed spaces with corners, and $\LCRSc$ for the full subcategory of local $C^\iy$-ringed spaces with corners. 

For brevity, we will use the notation that bold upper case letters $\bX,\bY,\bZ,\ldots$ represent $C^\iy$-ringed spaces with corners $(X,\bs{\O}_X)$, $(Y,\bs{\O}_Y)$, $(Z,\bs{\O}_Z)$, $\ldots,$ and bold lower case letters $\bs f,\bs g,\ldots$ represent morphisms of $C^\iy$-ringed spaces with corners $(f,\bs{f^\sh}),\ab(g,\bs{g^\sh}),\ab\ldots.$ When we write `$x\in\bX$' we mean that $\bX=(X,\bs{\O}_X)$ and $x\in X$. When we write `$\bs{U}$ {\it is open in\/} $\bX$' we mean that $\bs{U}=(U,\bs{\O}_U)$ and $\bX=(X,\bs{\O}_X)$ with $U\subseteq X$ an open set and $\bs{\O}_U=\bs{\O}_X\vert_U$.
\end{dfn}

Let $\bX=(X,\O_X,\O_X^\rex)\in \LCRSc$, and let $U$ be open in $X$. Take elements $s\in \O_X(U)$ and $s'\in \O_X^\rex(U)$. Then $s$ and $s'$ induce functions $s:U\ra \R$, $s':U\ra [0,\iy)$, that at each $x\in U$ are the compositions 
\begin{equation*}
\O_X(U)\xrightarrow{\rho_{X,x}} \O_{X,x}\xrightarrow{\pi_x} \R, \text{ and } \O_X^\rex(U)\xrightarrow{\rho^\rex_{X,x}} \O^\rex_{X,x}\xrightarrow{\pi^\rex_x} [0,\iy).
\end{equation*}
Here, $\rho_{X,x},\rho^\rex_{X,x}$ are the restriction morphism to the stalks, and $\pi_x,\pi^\rex_x$ are the unique morphisms that exist as $\O_{X,x}$ is local for each $x\in X$, as in Definition \ref{cc4def10} and Lemma \ref{cc4lem4}. We denote by $s(x)$ and $s'(x)$ the values of $s:U\ra \R$ and $s':U\ra[0,\iy)$ respectively at the point $x\in U$. We denote by $s_x\in \O_{X,x}$ and $s'_x\in \O_{X,x}^\rex$ the values of $s$ and $s'$ under the restriction morphisms to the stalks $\rho_{X,x}$ and $\rho^\rex_{X,x}$ respectively.

\begin{dfn}
\label{cc5def2}
Let $\bX=(X,\bO_X)$ be a $C^\iy$-ringed space with corners. We call $\bX$ an \/{\it interior}\/  $C^\iy$-ringed space with corners if one (hence all) of the following conditions hold, which are equivalent by Lemma~\ref{cc5lem1}:
\begin{itemize}
\setlength{\itemsep}{0pt}
\setlength{\parsep}{0pt}
\item[(a)] For all open $U\subseteq X$ and each $s'\in \O_X^\rex(U)$, then $U_{s'}=\{x\in U:s'_x\ne0\in \O_{X,x}^\rex\}$, which is always closed in $U$, is open in $U$, and the stalks $\bO_{X,x}$ are interior $C^\iy$-rings with corners.
\item[(b)] For all open $U\subseteq X$ and each $s'\in \O_X^\rex(U)$, then $U\setminus U_{s'}=\hat U_{s'}=\{x\in U:s'_x= 0\in \O_{X,x}^\rex\}$, which is always open in $U$, is closed in $U$, and the stalks $\bO_{X,x}$ are interior $C^\iy$-rings with corners.
\item[(c)] $\O_X^\rex$ is the sheafification of a presheaf of the form $\O_X^\rin \amalg \{0\}$, where $\O_X^\rin$ is a sheaf of monoids, such that $\bO_X^\rin(U)=(\O_X(U),\O^\rin_X(U)\amalg \{0\})$ is an interior $C^\iy$-ring with corners for each open~$U\subseteq X$. 
\end{itemize}
In each case, we can define a sheaf of monoids $\O_X^\rin$, such that $\O_X^\rin(U)=\bigl\{s'\in \O_X^\rex(U):s'_x\ne 0 \in \O_{X,x}^\rex$ for all $x\in U\bigr\}$. 

We call $\bX$ an \/{\it interior}\/  local $C^\iy$-ringed space with corners if $\bX$ is both a local $C^\iy$-ringed space with corners and an interior $C^\iy$-ringed space with corners.

If $\bX,\bY$ are interior (local) $C^\iy$-ringed spaces with corners, a morphism $\bs f:\bX\ra\bY$ is called \/{\it interior}\/ if the induced maps on stalks $\bs f^\sh_x:\bs{\O}_{Y,f(x)}\ra \bs{\O}_{X,x}$ are interior morphisms of interior $C^\iy$-rings with corners for all $x\in X$. This gives a morphism of sheaves $f^{-1}(\O_Y^\rin)\ra \O_X^\rin$. Write $\CRScin \subset \CRSc$ (and $\LCRScin \subset \LCRSc$) for the non-full subcategories of interior (local) $C^\iy$-ringed spaces with corners and interior morphisms. 
\end{dfn}

\begin{lem}
\label{cc5lem1}
Parts\/ {\rm(a)--(c)} in Definition\/ {\rm\ref{cc5def2}} are equivalent.
\end{lem}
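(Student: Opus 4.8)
The plan is to prove the cycle of implications (a) $\Rightarrow$ (b) $\Rightarrow$ (c) $\Rightarrow$ (a). First note that (a) and (b) are purely set-theoretic reformulations of each other, together with the same hypothesis on stalks: for any open $U\subseteq X$ and $s'\in\O_X^\rex(U)$, the two sets $U_{s'}$ and $\hat U_{s'}$ partition $U$, so one is open if and only if the other is closed. So the content of (a) $\Leftrightarrow$ (b) is trivial once we observe that $U_{s'}$ is always closed and $\hat U_{s'}$ is always open. To see this, observe that $s'_x=0$ in $\O_{X,x}^\rex$ means that $s'$ restricts to $0_{\O_X^\rex(V)}$ on some open neighbourhood $V$ of $x$ in $U$ — this uses the description of stalks as direct limits (Definition \ref{cc2def10}) and the fact that the zero element is a distinguished element of each $\O_X^\rex(V)$ preserved by restriction maps. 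Hence $\hat U_{s'}$ is a union of such open sets $V$, so open, and $U_{s'}$ is closed.

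Next I would prove (b) $\Rightarrow$ (c). Given (b), define the presheaf $U\mapsto(\O_X(U),\O_X^\rin(U)\amalg\{0\})$ where $\O_X^\rin(U)=\{s'\in\O_X^\rex(U):s'_x\ne0\text{ for all }x\in U\}$ as in the statement. I need to check three things. First, $\O_X^\rin$ is a subpresheaf of monoids of $\O_X^\rex$: it contains $1$, is closed under multiplication (since each stalk $\O_{X,x}^\rex$ is interior, hence has no zero divisors, by the stalk hypothesis in (b)), and is preserved by restriction maps (if $s'_x\ne 0$ for all $x\in U$ then certainly $s'_x\ne 0$ for all $x$ in a smaller open set). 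Second, $(\O_X(U),\O_X^\rin(U)\amalg\{0\})$ is an interior $C^\iy$-ring with corners: the subset $\O_X^\rin(U)\amalg\{0\}$ of $\O_X^\rex(U)$ is closed under the $C^\iy$-operations $\Psi_g$ — for this I would use Lemma \ref{cc4lem1} applied stalkwise, writing an arbitrary $\Psi_g$ acting on sections and checking that the result has nonzero stalks wherever all its interior-type inputs do, using that each stalk map $\O_X^\rex(U)\to\O_{X,x}^\rex$ is a morphism of $C^\iy$-rings with corners and each $\O_{X,x}^\rex$ is interior. Third, and this is where (b) is really used, I must show that $\O_X^\rex$ is the sheafification of this presheaf $\O_X^\rin\amalg\{0\}$. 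By the universal property of sheafification and since $\O_X^\rex$ is already a sheaf, it suffices to show the inclusion of presheaves $\O_X^\rin\amalg\{0\}\hookrightarrow\O_X^\rex$ induces an isomorphism on stalks. On stalks at $x$, the map sends the class of $(U,s')$ with either $s'\in\O_X^\rin(U)$ or $s'=0$ to $s'_x$. Surjectivity: any germ in $\O_{X,x}^\rex$ is $s'_x$ for some $s'\in\O_X^\rex(U)$; by (b), $U_{s'}$ is open, so either $x\in U_{s'}$, in which case after shrinking to $U_{s'}$ we have $s'\in\O_X^\rin(U_{s'})$, or $x\in\hat U_{s'}$, in which case $s'_x=0$ and the germ is hit by the zero section. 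Injectivity: if two presheaf germs map to the same stalk element, a standard argument shrinking the open set gives equality of germs in the presheaf, using that $\O_X^\rin(U)\amalg\{0\}$ is literally a subset of $\O_X^\rex(U)$ with the restricted structure, so the direct-limit equivalence relations coincide.

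Finally, (c) $\Rightarrow$ (a). Assume $\O_X^\rex$ is the sheafification of $\O_X^\rin\amalg\{0\}$ for a sheaf of monoids $\O_X^\rin$ with $(\O_X(U),\O_X^\rin(U)\amalg\{0\})$ interior. First, the stalk $\bO_{X,x}$ is interior: stalks are unchanged by sheafification, so $\O_{X,x}^\rex\cong(\O_X^\rin\amalg\{0\})_x\cong\O_{X,x}^\rin\amalg\{0\}$, where $\O_{X,x}^\rin$ is the stalk of the sheaf of monoids $\O_X^\rin$; since each $\O_X^\rin(U)\amalg\{0\}$ has no zero divisors and this passes to the directed colimit (a directed colimit of monoids without zero divisors, along monoid maps sending the relevant nonzero submonoid into the nonzero submonoid, has no zero divisors), $\O_{X,x}^\rex$ is interior. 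Then given open $U\subseteq X$ and $s'\in\O_X^\rex(U)$, for $x\in U_{s'}$ the germ $s'_x\ne 0$ lies in $\O_{X,x}^\rin$, and by the construction of the sheafification (Definition \ref{cc2def11}) $s'$ agrees near $x$ with a section coming from $\O_X^\rin(V)$ for some open $x\in V\subseteq U$; every stalk of such a section lies in $\O_{X,x'}^\rin$, i.e.\ is nonzero, so $V\subseteq U_{s'}$, proving $U_{s'}$ is open. This establishes (a).

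I expect the main obstacle to be the careful bookkeeping in (b) $\Rightarrow$ (c), specifically verifying that $(\O_X(U),\O_X^\rin(U)\amalg\{0\})$ genuinely inherits the full structure of an interior $C^\iy$-ring with corners from $\O_X^\rex(U)$ — one must check the relevant $C^\iy$-operations $\Psi_g$ land in $\O_X^\rin(U)\amalg\{0\}$, which reduces via Lemma \ref{cc4lem1} and stalkwise analysis to the fact established in (a)/(b) that each stalk is interior, but the reduction needs the compatibility of the $C^\iy$-operations with the restriction-to-stalk maps. The stalk computations (direct limits of monoids preserving the "no zero divisors" property) are routine but need to be spelled out since directed colimits do not preserve arbitrary first-order properties; however, the absence of zero divisors is preserved here because the transition maps are monoid morphisms and the colimit of the underlying sets computes the colimit of monoids (Theorem \ref{cc4thm1}(a) / Proposition \ref{cc2prop2} analogue).
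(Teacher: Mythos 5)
Your proof is correct and follows essentially the same route as the paper: (a)$\Leftrightarrow$(b) via the local nature of vanishing in stalks, (b)$\Rightarrow$(c) by taking $\O_X^\rin(U)$ to be the sections with nowhere-vanishing stalks and showing the inclusion of the presheaf $\O_X^\rin\amalg\{0\}$ into $\O_X^\rex$ is a stalkwise isomorphism (using openness of $U_{s'}$ for surjectivity), and (c)$\Rightarrow$(a) by locally representing sections by elements of $\O_X^\rin(V)$ or $0$. The paper phrases the middle step as constructing the subsheaf $\hat\O_X^\rex$ and checking the comparison morphism is an isomorphism on stalks, which is only a cosmetic difference from your presheaf-level stalk comparison.
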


\begin{proof}
Parts (a), (b) are equivalent by definition. The set $\hat U_{s'}$ is open, as the requirement that an element is zero in the stalk is a local requirement. That is, $s'_x=0$ if and only if $s'\vert_{V}=0\in \O_X^\rex(V)$ for some~$x\in V\subseteq U$. 

Suppose (a), (b) hold. Write $\O_X^\rin(U)=\{s'\in \O_X^\rex(U):s'_x\ne 0 \in \O_{X,x}^\rex$ for all $x\in U\}$. If $s'_1,s'_2\in \O_X^\rin(U)$ then $s'_{1,x},s'_{2,x}\ne 0$ for $x\in U$, and as the stalks are interior $s'_{1,x}\cdot s'_{2,x}\ne 0\in \O_{X,x}^\rex$. So $s'_1\cdot s'_2\in \O_X^\rin(U)$, and $\O_X^\rin$ is a monoid. Thus $(\O_X(U),\O_X^\rin(U)\amalg\{0\})$ is a pre-$C^\iy$-ring with corners, where the $C^\iy$-operations come from restriction from $\bO_X(U)$. As the invertible elements of the monoid and the $C^\iy$-rings of $(\O_X(U),\O_X^\rin(U)\amalg\{0\})$ are the same as those from $\bO_X(U)$, it is a $C^\iy$-ring with corners. Let $\hat\O_X^\rex$ be the sheafification of $\O_X^\rin\amalg\{0\}$, which is a subsheaf of $\O_X^\rex$. Note that $ \O_X^\rin(U)\amalg\{0\}$ already satisfies uniqueness, so the sheafification process means $\hat\O_X^\rex$ now satisfies gluing. Then $(\O_X,\hat\O_X^\rex)$ is a sheaf of $C^\iy$-rings with corners. 

There is a morphism $(\id, \id^\sh,\io^\sh_\rex):(X,\O_X,\O_X^\rex)\ra (X,\O_X,\hat\O_X^\rex)$. This is the identity on $X$, $\O_X$. On the sheaves of monoids, we have an inclusion $\io^\sh_\rex(U):\hat\O_X^\rex(U)\ra \O_X^\rex(U)$. On stalks, any non-zero element of $\O_{X,x}^\rex$ is an equivalence class represented by a section $s'\in \O_X^\rex(U)$. As (a) holds, we can choose $U\ni x$ so that $s'_y\ne 0$ for all $y\in U$. Then $s'\in\O_{X}^\rin(U)$, so there is $s''\in \hat\O_{X}^\rex(U)$ mapping to $s'$ under $\io^\sh_\rex(U)$. So $s''_x\mapsto s'_x$, and $\io^\sh_\rex$ is surjective on stalks as $0\mapsto 0$. As $\hat\O_X^\rex$ is a subsheaf of $\O_X^\rex$, $\io^\sh_\rex$ is injective on stalks. Hence $(\id, \id^\sh,\io^\sh_\rex)$ is an isomorphism, and (c) holds. Thus (a), (b) imply~(c).

Now suppose (c) holds. If $s'\in \O_X^\rex(U)$, where $\O_X^\rex$ is the sheafification of $\O_X^\rin\amalg\{0\}$, then if $s_{x}'\ne 0\in \O_{X,x}^\rex$ there is an open $x\in V\subseteq U$ and $s''\in \O_X^\rin(V)$ representing $s'\vert_V$, and therefore $s'_{x'}\ne 0\in \O_{X,x'}^\rex$ for all $x'\in V$, and the $U_{s'}$ defined in (a) is open. If $s_1',s_2'\in \O_X^\rex(U)$, and $s_{1,x}',s_{2,x}'\ne 0\in \O_{X,x}^\rex$, then there is an open set open $x\in V\subseteq U$ and $s_1'',s_2''\in \O_X^\rin(V)$ representing $s_1'\vert_V,s_2'\vert_V$. Then $s_{1,x}''\cdot s_{2,x}''\in \O_{X,x}^\rin\reflectbox{$\notin$} 0$, so the stalk $\bO_{X,x}=(\O_{X,x},\O_{X,x}^\rin\amalg\{0\})$ is interior, and (a) holds. Hence (c) implies~(a).
\end{proof}

\begin{rem}
\label{cc5rem1}
In \S\ref{cc41} we defined interior $C^\iy$-rings with corners $\bfC=(\fC,\fC_\rex)$ as special examples of $C^\iy$-rings of corners, with $\fC_\rex=\fC_\rin\amalg\{0\}$, so that $0\in\fC_\rex$ plays a special, somewhat artificial r\^ole in the interior theory; really one would prefer to write $\bfC=(\fC,\fC_\rin)$ and exclude 0 altogether.

One place this artificiality appears is in Definition \ref{cc5def2}(a)--(c). We want interior (local) $C^\iy$-ringed spaces with corners $\bX=(X,\bO_X)$ to be special examples of (local) $C^\iy$-ringed spaces with corners. At the same time, we na\"\i vely expect that if $\bX$ is interior then $\bO_X$ should be a sheaf of interior $C^\iy$-rings with corners. But this is false: $\bO_X$ is {\it never\/} a sheaf valued in $\CRingscin$ in the sense of Definition \ref{cc2def9}. For example, $\bO_X(\es)=(\{0\},\{0\})$, where $(\{0\},\{0\})$ is the final object in $\CRingsc$, and it is not equal to $(\{0\},\{0,1\})$, the final object in $\CRingscin$, contradicting Definition~\ref{cc2def9}(iii). 

A sheaf $\bO_X^\rin$ on $X$ valued in $\CRingscin$ is also a presheaf (but not a sheaf) valued in $\CRingsc$, so it has a sheafification $(\bO_X^\rin)^{\rm sh}$ valued in $\CRingsc$. Conversely, given a sheaf $\bO_X$ on $X$ valued in $\CRingsc$, we can ask whether it is the sheafification of a (necessarily unique) sheaf $\bO_X^\rin$ valued in $\CRingscin$. Definition \ref{cc5def2}(a)--(c) characterize when this holds, and this is the correct notion of `sheaf of interior $C^\iy$-rings with corners' in our context. The extra conditions $U_{s'}$ open and $\hat U_{s'}$ closed in $U$ in Definition \ref{cc5def2}(a),(b) are surprising, but they will be essential in the construction of the corner functor in~\S\ref{cc61}.
\end{rem}

The proof of the next theorem is essentially the same as showing ordinary ringed spaces have all small limits. An explicit proof can be found in the first author \cite[Th.~5.1.10]{Fran}. Existence of small limits in $\CRSc,\CRScin$ depends on Theorem \ref{cc4thm3}(b); and $\LCRSc,\LCRScin$ being closed under limits in $\CRSc,\CRScin$ depends on Proposition \ref{cc4prop7}; and $\LCRScin,\CRScin$ being closed under limits in $\LCRSc,\CRSc$ depends on Theorem~\ref{cc4thm3}(d).

\begin{thm}
\label{cc5thm1}
The categories $\CRSc,\LCRSc,\CRScin$ and\/ $\LCRScin$ have all small limits. Small limits commute with the inclusion and forgetful functors in the following diagram:
\begin{equation*}
\xymatrix@C=40pt@R=13pt{ {\LCRScin} \ar[d]^{} \ar[r]_(0.45){} & {\CRScin} \ar[d]_{} & \\  
{\LCRSc} \ar[r]^(0.45){} & {\CRSc} \ar[r]^(0.45){}  &  {\Top.} }
\end{equation*}
\end{thm}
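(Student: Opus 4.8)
\textbf{Proof plan for Theorem \ref{cc5thm1}.}

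The plan is to mimic the classical construction of limits of ringed spaces. Given a small diagram $\bs F:J\ra\CRSc$ (or the other three categories), write $\bs F(j)=(X_j,\bO_{X_j})$ and $\bs F(\al)=(f_\al,\bs f_\al^\sh)$ for each morphism $\al:j\ra k$ in $J$. First I would form the limit $X=\varprojlim_{j\in J}X_j$ in $\Top$, which exists; let $\pi_j:X\ra X_j$ be the projections. The underlying space of the claimed limit will be $X$. For the structure sheaf, I would consider the presheaf $\cP$ on $X$ sending open $U\subseteq X$ to the colimit over the (cofiltered-dual) diagram of the $C^\iy$-rings with corners $\bO_{X_j}(V_j)$ for open $V_j\subseteq X_j$ with $\pi_j^{-1}(V_j)\supseteq U$, with transition maps built from the $\bs f_\al^\sh$; more concretely, $\cP(U)=\varinjlim \bigl(\text{colimit in }\CRingsc\text{ of the diagram }j\mapsto \varinjlim_{V_j\supseteq \pi_j(U)}\bO_{X_j}(V_j)\bigr)$, exactly as one does for pullbacks and fibre products of sheaves. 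Here I invoke Theorem \ref{cc4thm3}(b) (and its analogue Corollary \ref{cc4cor1}/Theorem \ref{cc4thm5} machinery is not needed, just (b)) to know these colimits exist in $\CRingsc$ and $\CRingscin$. Then define $\bO_X$ to be the sheafification of $\cP$, using Definition \ref{cc2def11}; set $\bX=(X,\bO_X)$, with the canonical morphisms $\bs\pi_j:\bX\ra\bs F(j)$ coming from $\pi_j$ together with the structure-sheaf maps $\pi_j^{-1}(\bO_{X_j})\ra\cP\ra\bO_X$.

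Next I would verify the universal property: given $\bY=(Y,\bO_Y)$ with a cone $\bs g_j:\bY\ra\bs F(j)$, the continuous maps $g_j:Y\ra X_j$ induce a unique $g:Y\ra X$ with $\pi_j\ci g=g_j$; on structure sheaves the compatible family of morphisms $g_j^{-1}(\bO_{X_j})\ra\bO_Y$, which are morphisms of sheaves valued in $\CRingsc$, assemble via the colimit defining $\cP$ and the adjunction \eqref{cc2eq6} into a morphism $g^{-1}(\cP)\ra\bO_Y$, hence by the universal property of sheafification a morphism $g^{-1}(\bO_X)\ra\bO_Y$; uniqueness follows because sheafification and colimits are both determined by the stalks. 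This shows $\CRSc$ has all small limits, and simultaneously that the forgetful functor to $\Top$ preserves them (the underlying space was constructed as a limit in $\Top$).

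For the three subcategories I would argue closure. For $\CRScin\subseteq\CRSc$: the stalk of $\bO_X$ at $x\in X$ is $\varinjlim_{j}\bO_{X_j,\pi_j(x)}$ in $\CRingsc$ by the stalk formula for colimit/pullback constructions; since each $\bO_{X_j,\pi_j(x)}$ is interior and $\inc:\CRingscin\hookra\CRingsc$ preserves colimits (Theorem \ref{cc4thm3}(d)), this colimit is interior, so the stalks are interior and the condition Definition \ref{cc5def2}(a) on $U_{s'}$ being open is inherited because it is a stalkwise/local condition; a cone in $\CRScin$ consists of interior morphisms, and interiority of the induced $g$ is again checked on stalks, using that colimits of interior morphisms in $\CRingscin$ are interior. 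Hence $\CRScin$ is closed under limits in $\CRSc$, and limits in $\CRScin$ agree with those in $\CRSc$ (so also commute with the forgetful functor to $\Top$). For $\LCRSc\subseteq\CRSc$ and $\LCRScin\subseteq\CRScin$: one must check the stalks $\bO_{X,x}=\varinjlim_j\bO_{X_j,\pi_j(x)}$ are local $C^\iy$-rings with corners; this is exactly Proposition \ref{cc4prop7} (colimits of local $C^\iy$-rings with corners, taken in $\CRingsc$, are again local), so $\LCRSc,\LCRScin$ are closed under limits, and since morphisms of local $C^\iy$-ringed spaces with corners carry no extra locality condition (Definition \ref{cc5def1}), the universal property is inherited verbatim. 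Assembling these closure statements gives commutativity of limits with all the inclusion functors in the displayed square, and with the forgetful functors to $\Top$.

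The main obstacle I expect is purely bookkeeping rather than conceptual: carefully setting up the presheaf $\cP$ so that the $C^\iy$-ring-with-corners colimits are taken in the right order (filtered colimit over shrinking opens $V_j\supseteq\pi_j(U)$, then the $J$-shaped colimit in $\CRingsc$), and then checking that sheafification interacts correctly with these colimits on stalks — this is where one uses that the forgetful functors $\Pi_\rsm,\Pi_\rex$ (Definition \ref{cc4def5}) preserve filtered colimits (Theorem \ref{cc4thm3}) to identify stalks concretely. Since all of this is formally identical to the classical case, as noted in the statement, I would either reproduce the standard argument or, following the paper's own remark, simply cite \cite[Th.~5.1.10]{Fran} for the detailed verification and restrict the written proof to indicating which earlier results supply the three closure claims, namely Theorem \ref{cc4thm3}(b) for existence in $\CRSc,\CRScin$, Proposition \ref{cc4prop7} for locality, and Theorem \ref{cc4thm3}(d) for compatibility with the interior condition.
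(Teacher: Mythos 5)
Your proposal is correct and follows essentially the same route as the paper, which itself only sketches the argument (mimicking the classical ringed-space construction and citing \cite[Th.~5.1.10]{Fran} for details) while naming exactly the three inputs you identify: Theorem \ref{cc4thm3}(b) for existence of the structure-sheaf colimits, Proposition \ref{cc4prop7} for closure of the local subcategories, and Theorem \ref{cc4thm3}(d) for the interior case. No substantive difference to report.
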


In contrast to Theorem \ref{cc4thm3}(d), in Theorem \ref{cc6thm1} below we show that $\inc:\LCRScin\hookra\LCRSc$ has a right adjoint $C$. This implies:

\begin{cor}
\label{cc5cor1}
The inclusion $\inc:\LCRScin\hookra\LCRSc$ preserves colimits.
\end{cor}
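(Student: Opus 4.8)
\textbf{Proof proposal for Corollary \ref{cc5cor1}.} The statement is an immediate formal consequence of Theorem \ref{cc6thm1}, which asserts that the inclusion $\inc:\LCRScin\hookra\LCRSc$ admits a right adjoint, namely the corner functor $C:\LCRSc\ra\LCRScin$. The plan is simply to invoke the standard category-theoretic fact that a functor which is a left adjoint preserves all colimits that exist in its source category. Since Theorem \ref{cc5thm1} guarantees all small limits exist in $\LCRScin$ and $\LCRSc$, and the other chapters will establish that various colimits (pushouts, coequalizers, directed colimits, etc.) exist in $\LCRScin$, the corollary tells us these colimits are computed the same way whether regarded in $\LCRScin$ or in $\LCRSc$.

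Concretely, I would write: by Theorem \ref{cc6thm1}, $\inc:\LCRScin\hookra\LCRSc$ is left adjoint to the corner functor $C:\LCRSc\ra\LCRScin$. A left adjoint preserves all colimits (see e.g.\ Mac Lane, or the analogous arguments used above for $\Pi_\PCRingsc^\PCRingscin$ and the reflection functors $\Pi_\rin^\Z$, etc.), so $\inc$ preserves colimits. Hence if $\bs F:J\ra\LCRScin$ is a diagram whose colimit $\varinjlim\bs F$ exists in $\LCRScin$, then $\inc(\varinjlim\bs F)$ is a colimit of $\inc\ci\bs F$ in $\LCRSc$; that is, the colimit computed in $\LCRScin$ agrees with the one computed in $\LCRSc$.

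There is essentially no obstacle here: the only substantive input is Theorem \ref{cc6thm1}, which is proved later in the excerpt's Chapter \ref{cc6} and which I am permitted to assume, and the purely formal "left adjoints preserve colimits" lemma. The one point worth a sentence of care is making sure the adjunction is stated in the direction claimed — the inclusion is the \emph{left} adjoint and $C$ the \emph{right} adjoint — which is exactly parallel to the adjunction $C\dashv\inc$ being right adjoint to $\inc:\cMancin\hookra\cManc$ in Remark \ref{cc3rem4}, and to the remark in Remark \ref{cc4rem2} that $\inc:\CSchcin\hookra\CSchc$ has a right adjoint. So the proof is one line modulo citing Theorem \ref{cc6thm1}.

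\begin{proof}
By Theorem \ref{cc6thm1}, the inclusion functor $\inc:\LCRScin\hookra\LCRSc$ has a right adjoint, the corner functor $C:\LCRSc\ra\LCRScin$. Any functor which is a left adjoint preserves all colimits that exist in its domain. Hence $\inc:\LCRScin\hookra\LCRSc$ preserves colimits: if $\bs F:J\ra\LCRScin$ is a diagram whose colimit exists in $\LCRScin$, then $\inc(\varinjlim\bs F)$ is a colimit of $\inc\ci\bs F$ in $\LCRSc$.
\end{proof}
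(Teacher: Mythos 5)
Your proof is correct and matches the paper's own argument: the paper derives Corollary \ref{cc5cor1} exactly by citing Theorem \ref{cc6thm1} (the corner functor $C$ is right adjoint to $\inc:\LCRScin\hookra\LCRSc$) and invoking the fact that left adjoints preserve colimits. No gaps; the adjunction direction is stated correctly.
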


\subsection{\texorpdfstring{Special classes of local $C^\iy$-ringed spaces with corners}{Special classes of local C∞-ringed spaces with corners}}
\label{cc52}

We extend parts of \S\ref{cc47} to local $C^\iy$-ringed spaces with corners.

\begin{dfn}
\label{cc5def3}
Let $\bX=(X,\bO_X)$ be an interior local $C^\iy$-ringed space with corners. We call $\bX$ {\it integral}, {\it torsion-free}, or {\it saturated}, if for each $x\in\bX$ the stalk $\bO_{X,x}$  is an integral, torsion-free, or saturated, interior local $C^\iy$-ring with corners, respectively, in the sense of Definition~\ref{cc4def15}.

For each open $U\subseteq X$, the monoid $\O_X^\rin(U)$ is a submonoid of $\prod_{x\in U}\O^\rin_{X,x}$. If $\bX$ is integral then each $\O^\rin_{X,x}$ is an integral monoid, so it is a submonoid of a group $(\O^\rin_{X,x})^\gp$, and $\O_X^\rin(U)$ is a submonoid of a group $\prod_{x\in U}(\O^\rin_{X,x})^\gp$, so it is integral, and $\bO^\rin_X(U)$ is an integral $C^\iy$-ring with corners. Conversely, if $\bO^\rin_X(U)$ is integral for all open $U\subseteq X$ one can show the stalks $\bO_{X,x}$ are integral, so $\bX$ is integral. Thus, an alternative definition is that $\bX$ is integral if $\bO^\rin_X(U)$ is integral for all open $U\subseteq X$. The analogues hold for torsion-free and saturated.

Write $\LCRScsa\subset\LCRSctf\subset\LCRScZ\subset\LCRScin$ for the full subcategories of saturated, torsion-free, and integral objects in $\LCRScin$. Also write $\LCRScsaex\!\subset\!\LCRSctfex\!\subset\!\LCRScZex\!\subset\!\LCRScinex\!\subset\!\LCRSc$ for the full subcategories of saturated, \ldots, interior objects in $\LCRSc$, but with exterior (i.e.\ all) rather than interior morphisms.	
\end{dfn}

Here is the analogue of Theorem \ref{cc4thm5}:

\begin{thm}
\label{cc5thm2}
There are coreflection functors $\Pi_\rin^\Z,\Pi_\Z^\tf,\Pi_\tf^\sa,\Pi_\rin^\sa$ in a diagram
\begin{equation*}
\xymatrix@C=22pt{ \LCRScsa \ar@<-.5ex>[r]_{\inc} &
\LCRSctf \ar@<-.5ex>[r]_{\inc} \ar@<-.5ex>[l]_{\Pi_\tf^\sa} & \LCRScZ \ar@<-.5ex>[r]_{\inc} \ar@<-.5ex>[l]_{\Pi_\Z^\tf} & \LCRScin, \ar@<-.5ex>[l]_{\Pi_\rin^\Z} \ar@<-3ex>@/_.5pc/[lll]_{\Pi_\rin^\sa} }
\end{equation*}
such that each of\/ $\Pi_\rin^\Z,\Pi_\Z^\tf,\Pi_\tf^\sa,\Pi_\rin^\sa$ is right adjoint to the corresponding inclusion functor\/~$\inc$.
\end{thm}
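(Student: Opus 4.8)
The plan is to build the coreflection functors one at a time, mirroring the construction of $\Pi_\rin^\Z,\Pi_\Z^\tf,\Pi_\tf^\sa$ on $C^\iy$-rings with corners in Theorem \ref{cc4thm5}, but now \emph{sheafified}. The key point is that each of the three inclusions $\LCRScsa\subset\LCRSctf\subset\LCRScZ\subset\LCRScin$ is a \emph{full} subcategory closed under restriction to open sets and determined by a pointwise (stalkwise) condition, by the alternative description in Definition \ref{cc5def3}; and that the reflection functors of Theorem \ref{cc4thm5} on $\CRingscin$ commute with directed colimits, hence with taking stalks. First I would treat $\Pi_\rin^\Z:\LCRScin\to\LCRScZ$ in detail, and then remark that $\Pi_\Z^\tf$ and $\Pi_\tf^\sa$ are entirely analogous, with $\Pi_\rin^\sa:=\Pi_\tf^\sa\ci\Pi_\Z^\tf\ci\Pi_\rin^\Z$.

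For $\bX=(X,\bO_X)\in\LCRScin$, I would define $\Pi_\rin^\Z(\bX)=(X,\bO_X')$ with the same underlying space, where $\bO_X'=\bO_X^{\prime\rin}\amalg\{0\}$ is obtained as follows: on each open $U\subseteq X$ form the interior $C^\iy$-ring with corners $\bO_X^\rin(U)$ (which exists by Definition \ref{cc5def2}(c)), apply the reflection $\Pi_\rin^\Z$ of Theorem \ref{cc4thm5} to get an integral $C^\iy$-ring with corners $\Pi_\rin^\Z(\bO_X^\rin(U))$ with projection $\bO_X^\rin(U)\to\Pi_\rin^\Z(\bO_X^\rin(U))$; this defines a presheaf valued in $\CRingscin$ on $X$, and $\bO_X'$ is its sheafification as a sheaf valued in $\CRingsc$ (equivalently, sheafify the underlying presheaf of monoids $U\mapsto(\Pi_\rin^\Z(\bO_X^\rin(U)))_\rin$ to get $\bO_X^{\prime\rin}$, then adjoin $\{0\}$ as in Definition \ref{cc5def2}(c)). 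The stalks are $\bO'_{X,x}\cong\Pi_\rin^\Z(\bO_{X,x})$ because stalks commute with sheafification (Definition \ref{cc2def11}) and $\Pi_\rin^\Z$ commutes with the directed colimit $\bO_{X,x}=\varinjlim_{U\ni x}\bO_X^\rin(U)$; hence $\Pi_\rin^\Z(\bX)$ is integral by Definition \ref{cc5def3}, lies in $\LCRScin$ (the stalks are local since $\Pi_\rin^\Z$ preserves locality — the underlying $C^\iy$-ring is unchanged by the construction in Theorem \ref{cc4thm5}), and there is a canonical interior morphism $\bs\pi_\bX:\Pi_\rin^\Z(\bX)\to\bX$ (note the direction: a morphism of ringed spaces is $f:X\to Y$ plus $\bO_Y\to f_*\bO_X$, so the sheaf map $\bO_X\to\bO_X'$ of $C^\iy$-rings with corners gives a morphism $\Pi_\rin^\Z(\bX)\to\bX$ of ringed spaces). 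Functoriality in $\bX$ is automatic from functoriality of $\Pi_\rin^\Z$ on $\CRingscin$ and of sheafification.

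The core of the argument is the adjunction: for $\bY\in\LCRScZ$ and any interior morphism $\bs f:\bY\to\bX$, I must factor $\bs f$ uniquely as $\bs\pi_\bX\ci\bs g$ with $\bs g:\bY\to\Pi_\rin^\Z(\bX)$ interior. On underlying spaces $g=f$. For the sheaf part, $\bs f$ supplies $f^{-1}(\bO_X^\rin)\to\bO_Y^\rin$, and since $\bY$ is integral each $\bO_Y^\rin(V)$ is an integral $C^\iy$-ring with corners; I would check that the composite on each open set factors through $\Pi_\rin^\Z$ by the universal property in Theorem \ref{cc4thm5}, assemble these into a presheaf morphism $f^{-1}(\text{presheaf }\Pi_\rin^\Z(\bO_X^\rin))\to\bO_Y^\rin$, and then use the universal property of sheafification (the target $\bO_Y^\rin$, suitably viewed, is already a sheaf, or one passes to $\bO_Y$) to get the unique $\bs g^\sh$. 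Naturality of the resulting bijection $\Hom_{\LCRScin}(\inc\bY,\bX)\cong\Hom_{\LCRScZ}(\bY,\Pi_\rin^\Z(\bX))$ in $\bX,\bY$ is a routine diagram chase. I expect the \textbf{main obstacle} to be the interplay between sheafification and the universal property: one has to be careful that applying the non-sheaf-preserving reflection $\Pi_\rin^\Z$ objectwise and then sheafifying still yields an object with the right stalks and still enjoys the universal property, since $\Pi_\rin^\Z(\bO_X^\rin(U))$ for the sheafified $\bO_X^\rin$ need not be a sheaf even if $\bO_X^\rin$ is. The resolution is exactly that used for ordinary sheafification adjunctions together with the fact that $\Pi_\rin^\Z$ preserves directed colimits (so commutes with stalks), and that the target $\bY$ being already integral means no further correction is needed on its side. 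Having established $\Pi_\rin^\Z$, the functors $\Pi_\Z^\tf$ and $\Pi_\tf^\sa$ are obtained by the identical recipe using the corresponding reflections from Theorem \ref{cc4thm5} (which likewise preserve locality and directed colimits), and $\Pi_\rin^\sa$ is their composite; the commuting triangle of adjoints follows formally from $\Pi_\rin^\sa=\Pi_\tf^\sa\ci\Pi_\Z^\tf\ci\Pi_\rin^\Z$ on $\CRingscin$.
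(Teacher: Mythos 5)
Your construction is essentially the paper's: define a presheaf $U\mapsto\Pi_\rin^\Z(\bO_X^\rin(U))$ by applying the ring-level reflection of Theorem \ref{cc4thm5} objectwise, sheafify, identify the stalks with $\Pi_\rin^\Z(\bO_{X,x})$ because the reflection preserves directed colimits (being a left adjoint), and deduce the adjunction from the ring-level one; the paper runs the adjunction argument directly on stalks where you run it on sections via the universal property of sheafification, a presentational difference only, and it likewise treats $\Pi_\Z^\tf,\Pi_\tf^\sa$ by the same recipe with $\Pi_\rin^\sa$ the composite.

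However, one supporting claim you make is false, and it sits at the only genuinely delicate point. You justify that the stalks $\Pi_\rin^\Z(\bO_{X,x})$ remain local by asserting that ``the underlying $C^\iy$-ring is unchanged by the construction in Theorem \ref{cc4thm5}''. It is not: the $[0,\iy)$-type relations $c'=c''$ imposed in (\ref{cc4eq29}) force new relations $\Phi_i(c')=\Phi_i(c'')$, together with all their $C^\iy$-consequences, in $\fC$, so the underlying $C^\iy$-ring can change drastically. For example, in Example \ref{cc6ex8} applying $\Pi_\rin^\Z$ to $\R[x,y]/[x=y,\; x=y^2]$ forces $x=y=1$, replacing the underlying $C^\iy$-ring by $C^\iy$ of a single point. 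So locality of the stalks (hence membership of $\Pi_\rin^\Z(\bX)$ in $\LCRScZ$, not merely integrality) does not follow from your argument and requires separate justification; surjectivity of $\bO_{X,x}\ra\Pi_\rin^\Z(\bO_{X,x})$ is not by itself enough, since one must check the kernel ideal lies in the maximal ideal. This is not a formality: for the stalk at the non-toric point $x=y=0$ of $\Speccin\bigl(\R[x,y]/[x=y,\;x=y^2]\bigr)$ in Example \ref{cc6ex8}, the relation $y=1$ imposed by integrality is inconsistent with the $\R$-point (where $y$ has value $0$), so the quotient collapses. The paper's own proof also passes over this point very quickly (asserting $\bs{\ti X}\in\LCRScin$ from Definition \ref{cc5def2}(c)), but your proposed justification as stated is incorrect and should be replaced by an actual argument or at least an explicit acknowledgement of what needs checking. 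The remainder of your proposal (direction of the counit, factorization of $\bs f$ through it, uniqueness via sheafification) matches the paper's argument.
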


\begin{proof} We explain the functor $\Pi_\rin^\Z$; the arguments for $\Pi_\Z^\tf,\Pi_\tf^\sa,\Pi_\rin^\sa$ are the same.

Let $\bX=(X,\bO_X)$ be an object in $\LCRScin$. We will construct an object $\bs{\ti X}=(X,\bs{\ti\O}_X)$ in $\LCRScZ$, and set $\Pi_\rin^\Z(\bX)=\bs{\ti X}$. The topological spaces $X$ in $\bs{\ti X},\bX$ are the same. Define a presheaf $\cP\bs{\ti\O}_X$ of $C^\iy$-rings with corners on $X$ by $\cP\bs{\ti\O}_X(U)=\Pi_\rin^\Z(\bO_X^\rin(U))$ for each open $U\subseteq X$, where $\bO_X^\rin(U)\in\CRingscin$ is as in Definition \ref{cc5def2}(c), and $\Pi_\rin^\Z:\CRingscin\ra\CRingscZ$ is as in Theorem \ref{cc4thm5}. For open $V\subseteq U\subseteq X$, the restriction morphism $\rho_{UV}$ in $\cP\bs{\ti\O}_X$ is $\Pi_\rin^\Z$ applied to $\rho_{UV}:\bO_X^\rin(U)\ra\bO_X^\rin(V)$. Let $\bs{\ti\O}_X$ be the sheafification of~$\cP\bs{\ti\O}_X$.

Definition \ref{cc5def2}(c) implies that $\bs{\ti X}=(X,\bs{\ti\O}_X)$ is an object in $\LCRScin$. Since stalks are given by direct limits as in Definition \ref{cc2def10}, and $\Pi_\rin^\Z:\CRingscin\ra\CRingscZ$ preserves direct limits as it is a left adjoint, we have
\begin{equation*}
\bs{\ti\O}_{X,x}\cong\cP\bs{\ti\O}_{X,x}\cong \Pi_\rin^\Z(\bO_{X,x})\qquad\text{for all $x\in X$.}
\end{equation*}
Hence $\bs{\ti X}$ is an object of $\LCRScZ$ by Definition~\ref{cc5def3}.

If $\bs f=(f,\bs f^\sh):\bX\ra\bY$ is a morphism in $\LCRScin$ and $\bs{\ti X},\bs{\ti Y}$ are as above, in an obvious way we define a morphism $\bs{\ti f}=(f,\bs{\ti f}{}^\sh):\bs{\ti X}\ra\bs{\ti Y}$ in $\LCRScZ$ such that if $x\in X$ with $f(x)=y\in Y$ then $\bs{\ti f}{}^\sh$ acts on stalks by
\begin{equation*}
\bs{\ti f}{}_x^\sh\cong\Pi_\rin^\Z(\bs f_x^\sh):\bs{\ti\O}_{Y,y}\cong\Pi_\rin^\Z(\bO_{Y,y})\longra\bs{\ti\O}_{X,x}\cong\Pi_\rin^\Z(\bO_{X,x}).
\end{equation*}
We define $\Pi_\rin^\Z(\bs f)=\bs{\ti f}$. This gives a functor $\Pi_\rin^\Z:\LCRScin\ra\LCRScZ$. We can deduce that $\Pi_\rin^\Z$ is right adjoint to $\inc:\LCRScZ\hookra\LCRScin$ from the fact that $\Pi_\rin^\Z$ is left adjoint to $\inc$ in Theorem \ref{cc4thm5}, applied to stalks $\bO_{X,x},\bO_{Y,y}$ as above for all $x\in X$ with $f(x)=y$ in~$Y$.
\end{proof}

\begin{rem}
\label{cc5rem2}
The analogue of Theorem \ref{cc5thm2} does not work for the categories $\LCRScsaex\subset\cdots\subset\LCRScinex$. This is because the functors
$\Pi_\rin^\Z,\ldots,\Pi_\rin^\sa$ in Theorem \ref{cc4thm5} do not extend to exterior morphisms.
\end{rem}

Since right adjoints preserve limits, Theorems \ref{cc5thm1} and \ref{cc5thm2} imply:

\begin{cor}
\label{cc5cor2}
All small limits exist in\/ $\LCRScsa,\LCRSctf,\LCRScZ$.

\end{cor}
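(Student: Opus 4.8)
The statement to prove is Corollary~\ref{cc5cor2}: all small limits exist in $\LCRScsa,\LCRSctf,\LCRScZ$. The plan is to derive this purely formally from the two preceding results, Theorem~\ref{cc5thm1} and Theorem~\ref{cc5thm2}, using only the standard categorical fact that right adjoint functors preserve limits. So the argument is essentially a one-line citation-style deduction, and I will organize it around making that deduction precise.

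First I would recall the setup: by Theorem~\ref{cc5thm2} there are functors $\Pi_\rin^\sa:\LCRScin\ra\LCRScsa$, $\Pi_\rin^\tf$ (i.e.\ $\Pi_\Z^\tf\ci\Pi_\rin^\Z$) landing in $\LCRSctf$, and $\Pi_\rin^\Z:\LCRScin\ra\LCRScZ$, each of which is \emph{right} adjoint to the corresponding inclusion functor $\inc$ into $\LCRScin$. Next I would invoke Theorem~\ref{cc5thm1}, which gives that all small limits exist in $\LCRScin$. Now fix one of the three subcategories, say $\mathcal{A}\in\{\LCRScsa,\LCRSctf,\LCRScZ\}$ with coreflection $\Pi:\LCRScin\ra\mathcal{A}$ right adjoint to $\inc:\mathcal{A}\hookra\LCRScin$. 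Given a small diagram $\bs F:J\ra\mathcal{A}$, form its composite $\inc\ci\bs F:J\ra\LCRScin$, which has a limit $\bX=\varprojlim(\inc\ci\bs F)$ in $\LCRScin$ by Theorem~\ref{cc5thm1}. I would then claim that $\Pi(\bX)$, together with the legs obtained by applying $\Pi$ to the limit cone and composing with the counit $\Pi\ci\inc\Ra\Id_{\mathcal A}$ (which is a natural isomorphism since $\mathcal A$ is a full coreflective subcategory, so $\Pi\ci\inc\cong\Id$), is a limit of $\bs F$ in $\mathcal A$. The verification is the standard one: for any $\bY\in\mathcal A$ and cone $\bY\Ra\bs F$, apply $\inc$ to get a cone $\inc(\bY)\Ra\inc\ci\bs F$ in $\LCRScin$, use the universal property of $\bX$ to get a unique $\inc(\bY)\ra\bX$, and transpose across the adjunction $\inc\dashv\Pi$ to get a unique $\bY\ra\Pi(\bX)$; naturality of the adjunction and fullness of $\mathcal A$ give compatibility with the cone legs and uniqueness.

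Actually, rather than reprove this from scratch, the cleanest route is to cite the general theorem that a right adjoint preserves all limits that exist in its domain — equivalently, that a reflective (here coreflective) subcategory is closed under whatever limits exist in the ambient category. Since $\inc:\mathcal A\hookra\LCRScin$ is full and faithful with right adjoint $\Pi$ by Theorem~\ref{cc5thm2}, $\mathcal A$ is a coreflective subcategory of $\LCRScin$; a reflective or coreflective full subcategory of a category with small limits again has small limits, computed by applying the (co)reflector to the ambient limit. Combined with Theorem~\ref{cc5thm1} this immediately yields the corollary for all three of $\LCRScsa$, $\LCRSctf$, $\LCRScZ$ simultaneously. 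I would phrase the proof in two sentences along these lines, citing a standard reference (e.g.\ Mac Lane) for the (co)reflective subcategory fact if desired.

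There is essentially no obstacle here: the corollary is a formal consequence of results already established in the excerpt, and the ``hard part'' — constructing the coreflectors $\Pi_\rin^\Z,\ldots,\Pi_\rin^\sa$ and showing $\LCRScin$ is complete — has already been done in Theorems~\ref{cc5thm2} and~\ref{cc5thm1}. The only minor point to be careful about is that $\LCRScsa,\LCRSctf,\LCRScZ$ are \emph{full} subcategories of $\LCRScin$ (stated in Definition~\ref{cc5def3}), which is what makes the counit of the adjunction an isomorphism and hence legitimizes computing the limit as $\Pi$ applied to the limit in $\LCRScin$; I would make sure to note this explicitly so the formal argument is airtight.
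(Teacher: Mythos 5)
Your proposal is correct and is essentially the paper's own argument: the paper deduces the corollary in one line from Theorem~\ref{cc5thm1} (completeness of $\LCRScin$) and Theorem~\ref{cc5thm2} (the coreflectors are right adjoints, hence preserve limits), exactly the formal coreflective-subcategory deduction you spell out. Your extra care about fullness and the counit being an isomorphism is fine but not something the paper makes explicit.
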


\subsection{The spectrum functor}
\label{cc53}

We now define a spectrum functor for $C^\iy$-rings with corners, in a similar way to Definition~\ref{cc2def16}. 

\begin{dfn}
\label{cc5def4}
Let $\bfC=(\fC,\fC_\rex)$ be a $C^\iy$-ring with corners, and use the notation of Definition \ref{cc4def11}. As in Definition \ref{cc2def15}, write $X_\fC$ for the set of $\R$-points of $\fC$ with topology ${\cal T}_\fC$. For each open $U\subseteq X_\fC$, define $\bs{\O}_{X_\fC}(U)=(\O_{X_\fC}(U),\O_{X_\fC}^\rex(U))$. Here $\O_{X_\fC}(U)$ is the set of functions $s:U\ra\coprod_{x\in U}\fC_x$ (where we write $s_x$ for its value at the point $x\in U$) such that $s_x\in\fC_x$ for all $x\in U$, and such that $U$ may be covered by open $W\subseteq U$ for which there exist $c\in\fC$ with $s_x=\pi_x(c)$ in $\fC_x$ for all $x\in W$. Similarly, $\O_{X_\fC}^\rex(U)$ is the set of $s':U\ra\coprod_{x\in U}\fC_{x,\rex}$ with $s'_x\in\fC_{x,\rex}$ for all $x\in U$, and such that $U$ may be covered by open $W\subseteq U$ for which there exist $c'\in\fC_\rex$ with $s'_x=\pi_{x,\rex}(c')$ in $\fC_{x,\rex}$ for all~$x\in W$. 

Define operations $\Phi_f$ and $\Psi_g$ on $\bs{\O}_{X_\fC}(U)$ pointwise in $x\in U$ using the operations $\Phi_f$ and $\Psi_g$ on $\bfC_x$. This makes $\bs{\O}_{X_\fC}(U)$ into a $C^\iy$-ring with corners. If $V\subseteq U\subseteq X_\fC$ are open, the restriction maps $\bs\rho_{UV}=(\rho_{UV},\rho_{UV,\rex}):\bs{\O}_{X_\fC}(U)\ra\bs{\O}_{X_\fC}(V)$ mapping $\rho_{UV}:s\mapsto s\vert_V$ and $\rho_{UV,\rex}:s'\mapsto s'\vert_V$ are morphisms of $C^\iy$-rings with corners. 

The local nature of the definition implies that $\bs{\O}_{X_\fC}=(\O_{X_{\fC}},\O_{X_{\fC}}^{\rex})$ is a sheaf of $C^\iy$-rings with corners on $X_\fC$. In fact, $\O_{X_\fC}$ is the sheaf of $C^\iy$-rings in Definition \ref{cc2def16}. By Proposition \ref{cc5prop1} below, the stalk $\bs{\O}_{X_\fC,x}$ at $x\in X_\fC$ is naturally isomorphic to $\bfC_x$, which is a local $C^\iy$-ring with corners by Theorem \ref{cc4thm4}(a). Hence $(X_\fC,\bs{\O}_{X_\fC})$ is a local $C^\iy$-ringed space with corners, which we call the {\it spectrum\/} of $\bfC$, and write as~$\Specc\bfC$.

Now let $\bs\phi=(\phi,\phi_\rex):\bfC\ra\bfD$ be a morphism of $C^\iy$-rings with corners. As in Definition \ref{cc2def16}, define the continuous function $f_{\phi}:X_\fD\ra X_\fC$ by $f_{\phi}(x)=x\ci\phi$. For open $U\subseteq X_\fC$ define $(\bs f_\phi)_\sh(U):\bs{\O}_{X_\fC}(U)\ra\bs{\O}_{X_\fD}(f_\phi^{-1}(U))$ to act by $\bs\phi_x:\bfC_{f_\phi(x)}\ra\bfD_x$ on stalks at each $x\in f_\phi^{-1}(U)$, where $\bs\phi_x$ is the induced morphism of local $C^\iy$-rings with corners. Then $(\bs f_\phi)_\sh:\bs{\O}_{X_{\fC}}\ra (f_\phi)_*(\bs{\O}_{X_\fD})$ is a morphism of sheaves of $C^\iy$-rings with corners on~$X_\fC$.

Let $\bs f_\phi^\sh:f_\phi^{-1}(\bs{\O}_{X_{\fC}})\ra\bs{\O}_{X_\fD}$ be the
corresponding morphism of sheaves of $C^\iy$-rings with corners on $X_\fD$ under \eq{cc2eq8}. The stalk map $\bs f_{\phi,x}^\sh:\bs{\O}_{X_{\fC},f_\phi(x)}\ra \bs{\O}_{X_\fD,x}$ of $\bs f_\phi^\sh$ at $x\in X_\fD$ is identified with $\bs\phi_x:\bfC_{f_\phi(x)}\ra\bfD_x$ under the isomorphisms $\bs{\O}_{X_{\fC},f_\phi(x)}\cong\bfC_{f_\phi(x)}$, $\bs{\O}_{X_\fD,x}\cong\bfD_x$ in Proposition \ref{cc5prop1}. Then $\bs f_\phi=(f_\phi,\bs f_\phi^\sh): (X_\fD,\bs{\O}_{X_\fD})\ra(X_\fC,\bs{\O}_{X_\fC})$ is a morphism of local $C^\iy$-ringed spaces with corners. Define $\Specc\bs\phi:\Specc\bfD\ra\Specc\bfC$ by $\Specc\bs\phi=\bs f_\phi$. Then $\Specc$ is a functor $(\CRingsc)^{\bf op}\ra\LCRSc$, the {\it spectrum functor}.
\end{dfn}

\begin{prop}
\label{cc5prop1}
In Definition\/ {\rm\ref{cc5def4},} the stalk\/ $\bs{\O}_{X_\fC,x}$ of\/ $\bs{\O}_{X_\fC}$ at\/ $x\in X_\fC$ is naturally isomorphic to\/~$\bfC_x$. 
\end{prop}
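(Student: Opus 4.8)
The plan is to mimic the proof for $C^\iy$-rings, that $\O_{X_\fC,x}\cong\fC_x$, which is already known (Definition~\ref{cc2def16} and \cite{Joyc9}), but keep track of the monoid part $\O^\rex_{X_\fC}$ as well. Recall from Definition~\ref{cc5def4} that $\O_{X_\fC}$ is literally the sheaf of $C^\iy$-rings appearing in Definition~\ref{cc2def16}, so the isomorphism $\O_{X_\fC,x}\cong\fC_x$ on the smooth part is given. Since a morphism of $C^\iy$-rings with corners is an isomorphism iff both components are, it remains to produce a natural isomorphism $\O^\rex_{X_\fC,x}\cong\fC_{x,\rex}$ of monoids compatible with the $C^\iy$-operations, and check it is the stalk map of the obvious morphism. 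By Theorem~\ref{cc4thm4}, $\bfC_x=(\fC_x,\fC_{x,\rex})$ with $\pi_{x,\rex}:\fC_\rex\to\fC_{x,\rex}$ surjective, and by Proposition~\ref{cc4prop8} we have an explicit description of $\ker$ of $\pi_{x,\rex}$ in terms of $a'c_1'=b'c_2'$ with $\Phi_i(a')-\Phi_i(b')\in I$ and $x\ci\Phi_i(a')\ne0$.

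\textbf{Construction of the map.} First I would define a morphism $\al_x:\bfC\to\bs\O_{X_\fC,x}$ of $C^\iy$-rings with corners: on the smooth part it is $c\mapsto(\text{germ at }x\text{ of }\Xi_\fC(c))$ as in Definition~\ref{cc2def17}; on the exterior part $c'\mapsto$ germ at $x$ of the section $s'$ with $s'_y=\pi_{y,\rex}(c')$. Since $\bs\O_{X_\fC,x}$ is a local $C^\iy$-ring with corners (as $\bfC_x$ is, by Theorem~\ref{cc4thm4}(a), via the identification we are proving — but really because stalks of sheaves of local $C^\iy$-rings with corners are local, a fact used implicitly in Definition~\ref{cc5def4}), and $\al_x$ sends $c$ with $x(c)\ne0$ to an invertible element, and $c'$ with $x\ci\Phi_i(c')\ne0$ to an invertible element, the universal property of the localization $\bfC_x$ in Definition~\ref{cc4def11} gives a unique morphism $\be_x:\bfC_x\to\bs\O_{X_\fC,x}$ with $\be_x\ci\bs\pi_x=\al_x$. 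I would show $\be_x$ is an isomorphism; then $\be_x$ is the required natural isomorphism, naturality in $\bfC$ following from uniqueness in the universal properties.

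\textbf{Bijectivity.} Injectivity of $\be_{x,\rex}$: suppose $c_1',c_2'\in\fC_\rex$ have germs of $s_1',s_2'$ agreeing at $x$, i.e.\ there is open $x\in W$ with $\pi_{y,\rex}(c_1')=\pi_{y,\rex}(c_2')$ for all $y\in W$. Shrinking $W$ to a basic open set $U_d=\{y:y(d)\ne0\}$ (Lemma~\ref{cc2lem1}) with $x\in U_d$, the restriction of $\bs\O_{X_\fC}$ to $U_d$ is $\Specc$ of the localization $\bfC(d^{-1})$ (this is the corners analogue of Lemma~\ref{cc2lem1}, which one should state/prove first, using Lemma~\ref{cc4lem3} and that localizing commutes with taking stalks), and the images of $c_1',c_2'$ in $\bfC(d^{-1})$ agree at every $\R$-point, so arguing as for $C^\iy$-rings one deduces they have the same image in $\fC_{x,\rex}$; surjectivity follows since germs are represented by sections which are locally $\pi_{\bu,\rex}(c')$, and $\pi_{x,\rex}$ is surjective by Theorem~\ref{cc4thm4}(c). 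The main obstacle I anticipate is establishing the corners analogue of Lemma~\ref{cc2lem1} — that $\bs\O_{X_\fC}\vert_{U_d}\cong\Specc\bfC(d^{-1})$ as sheaves of $C^\iy$-rings with corners, not merely on underlying $C^\iy$-rings — which requires knowing $\bfC(d^{-1})_x\cong\bfC_x$ canonically for $x\in U_d$ (Lemma~\ref{cc4lem3} handles the smooth part; for the monoid part one uses that adding the generator $d^{-1}$ changes nothing about invertibility conditions at $x$, by the universal properties), together with a careful comparison of the monoid-valued sheaf conditions. Once that localization fact is in hand, the proof is a routine transcription of the $C^\iy$-ring argument with the monoid part carried along, using Proposition~\ref{cc4prop8} wherever one needs explicit control of $\ker\pi_{x,\rex}$.
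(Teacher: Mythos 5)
Your overall route --- construct a morphism $\al_x:\bfC\ra\bs\O_{X_\fC,x}$, factor it through $\bs\pi_x:\bfC\ra\bfC_x$ by the universal property of the localization in Definition \ref{cc4def11}, and then show the induced $\be_x$ is bijective --- is genuinely different from the paper's proof, which goes the other way: it defines $\bs\Pi:\bs\O_{X_\fC,x}\ra\bfC_x$ by evaluating germs at $x$, gets surjectivity from Theorem \ref{cc4thm4}(c), and proves injectivity by an openness argument based on Proposition \ref{cc4prop8}. Your direction can be made to work, but as written the proposal puts the effort in the wrong places, and the step where the real content lives is missing.

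The gap: to invoke the universal property of $\bfC_x$ you must first show that $\al_{x,\rex}(c')$ is invertible in the stalk monoid $\O_{X_\fC,x}^\rex$ whenever $x\ci\Phi_i(c')\ne 0$ (and $\al_x(c)$ invertible when $x(c)\ne 0$). You assert this without argument, but it is exactly where the work is: an inverse germ must be represented by an honest section of $\O_{X_\fC}^\rex$ over a neighbourhood of $x$, i.e.\ the assignment $y\mapsto(\pi_{y,\rex}(c'))^{-1}$ must be shown to be locally of the form $\pi_{\bu,\rex}(d')$ for some $d'\in\fC_\rex$. This does not follow from the $C^\iy$-ring case; one has to take, say, $d'=\Psi_g(c')$ for a cutoff $g:[0,\iy)\ra[0,\iy)$ with $g(t)=1/t$ for $t\ge x\ci\Phi_i(c')/2$, and then use Proposition \ref{cc4prop8} (with $a'=b'=\Psi_m(c')$ for another cutoff $m$) to check $\pi_{y,\rex}(c'\cdot d')=1$ for all $y$ near $x$ --- precisely the ``equality at $x$ propagates to a neighbourhood'' mechanism that drives the paper's injectivity proof. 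By contrast, the steps you do elaborate are trivial or misdirected in your set-up: once $\be_x$ exists, evaluation of germs at $x$ retracts it, so injectivity is immediate --- indeed your hypothesis $\pi_{y,\rex}(c_1')=\pi_{y,\rex}(c_2')$ for all $y\in W$ already contains the conclusion at $y=x$, so the detour through a corners analogue of Lemma \ref{cc2lem1} is unnecessary --- and surjectivity needs only that sections are locally of the form $\pi_{\bu,\rex}(c')$. Finally, you cannot justify locality of the stalk by saying it is ``used implicitly'' in Definition \ref{cc5def4}: the paper deduces locality of $\bs\O_{X_\fC,x}$ from this very proposition, so that appeal is circular; fortunately your argument never needs locality, only the two invertibility statements above, but those do need to be proved.
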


\begin{proof} We have $\bs{\O}_{X_\fC,x}=(\O_{X_\fC,x},\O_{X_\fC,x}^\rex)$ where elements $[U,s]\in\O_{X_\fC,x}$ and $[U,s']\in\O_{X_\fC,x}^\rex$ are $\sim$-equivalence classes of pairs $(U,s)$ and $(U,s')$, where $U$ is an open neighbourhood of $x$ in $X_\fC$ and $s\in\O_{X_\fC}(U)$, $s'\in\O_{X_\fC}^\rex(U)$, and $(U,s)\sim(V,t)$, $(U,s')\sim(V,t')$ if there exists open $x\in W\subseteq U\cap V$ with $s\vert_W=t\vert_W$ in $\O_{X_\fC}(W)$ and $s'\vert_W=t'\vert_W$ in $\O_{X_\fC}^\rex(W)$. Define a morphism of $C^\iy$-rings with corners $\bs\Pi=(\Pi,\Pi_\rex):\bs{\O}_{X_\fC,x}\ra\bfC_x$ by $\Pi:[U,s]\mapsto s_x\in\fC_x$ and $\Pi_\rex:[U,s']\mapsto s'_x\in\fC_{x,\rex}$.

Suppose $c_x\in\fC_x$ and $c_x'\in\fC_{x,\rex}$. Then $c_x=\pi_x(c)$ for $c\in\fC_x$ and $c_x'=\pi_{x,\rex}(c')$ for $c'\in\fC_{x,\rex}$ by Theorem \ref{cc4thm4}(c). Define $s:X_\fC\ra\coprod_{y\in X_\fC}\fC_{y}$ and $s':X_\fC\ra\coprod_{y\in X_\fC}\fC_{y,\rex}$ by $s_y=\pi_{y}(c)$ and $s_y'=\pi_{y,\rex}(c')$. Then $s\in\O_{X_\fC}(X_\fC)$, so that $[X_\fC,s]\in\O_{X_\fC,x}$ with $\Pi([X_\fC,s])=s_x=\pi_x(c)=c_x$, and similarly $s'\in\O_{X_\fC}^\rex(X_\fC)$ with $\Pi_\rex([X_\fC,s'])=c_x'$. Hence $\Pi:\O_{X_\fC,x}\ra\fC_x$ and $\Pi_\rex:\O_{X_\fC,x}^\rex\ra\fC_{x,\rex}$ are surjective.

Let $[U_1,s_1],[U_2,s_2]\in \O_{X_\fC,x}$ with $\Pi([U_1,s_1])=s_{1,x}=s_{2,x}=\Pi([U_2,s_2])$. Then by definition of $\bs{\O}_{X_\fC}(U_1),\bs{\O}_{X_\fC}(U_2)$ there exists an open neighbourhood $V$ of $x$ in $U_1\cap U_2$ and $c_1,c_2\in\fC$ with $s_{1,v}=\pi_v(c_1)$ and $s_{2,v}=\pi_v(c_2)$ for all $v\in V$. Thus $\pi_x(c_1)=\pi_x(c_2)$ as $s_{1,x}=s_{2,x}$. Hence $c_1-c_2$ lies in the ideal $I$ in \eq{cc2eq3} by Proposition \ref{cc2prop5}. Thus there exists $d\in\fC$ with $x(d)\ne 0\in \R$ and~$d\cdot(c_1-c_2)=0\in\fC$. 

Making $V$ smaller we can suppose that $v(d)\ne 0$ for all $v\in V$, as this is an open condition. Then $\pi_v(c_1)=\pi_v(c_2)\in\fC_v$ for $v\in V$, since $\pi_v(d)\cdot\pi_v(c_1)=\pi_v(d)\cdot\pi_v(c_2)$ as $d\cdot c_1=d\cdot c_2$ and $\pi_v(d)$ is invertible in $\fC_v$. Thus $s_{1,v}=\pi_v(c_1)=\pi_v(c_2)=s_{2,v}$ for $v\in V$, so $s_1\vert_V=s_2\vert_V$, and $[U_1,s_1]=[V,s_1\vert_V]=[V,s_2\vert_V]=[U_2,s_2]$. Therefore $\Pi:\O_{X_\fC,x}\ra\fC_x$ is injective, and an isomorphism.

Let $[U_1,s_1'],[U_2,s_2']\in \O_{X_\fC,x}^\rex$ with $\Pi_\rex([U_1,s_1'])=s_{1,x}'\ab=s_{2,x}'\ab=\Pi([U_2,s_2'])$. As above there exist an open neighbourhood $V$ of $x$ in $U_1\cap U_2$ and $c_1',c_2'\in\fC_\rex$ with $s_{1,v}'=\pi_{v,\rex}(c_1')$ and $s_{2,v}'=\pi_{v,\rex}(c_2')$ for all $v\in V$. At this point we can use Proposition \ref{cc4prop8}, which says that $\pi_{x,\rex}(x_2')=\pi_{x,\rex}(c_1')$ if and only if there are $a,b\in \fC_\rex$ such that $\Phi_i(a)-\Phi_i(b)\in I_x$, $x\ci\Phi_i(a)\ne0$ and $ac_1=bc_2$, where $I_x$ is the ideal in \eq{cc2eq3}. The third condition does not depend on $x$, whereas the first two conditions are open conditions in $x$, that is, if $\Phi_i(a)-\Phi_i(b)\in I_x$, $x\ci\Phi_i(a)\ne0$, then there is an open neighbourhood of $X$ such that $\Phi_i(a)-\Phi_i(b)\in I_v$, $v\ci\Phi_i(a)\ne0$ for all $v$ in that neighbourhood. 

Making $V$ above smaller if necessary, we can suppose that these conditions hold in $V$ and thus that $\pi_{v,\rex}(c_1')=\pi_{v,\rex}(c_2')$ for all $v\in V$. Hence $s_{1,v}'=s_{2,v}'$ for all $v\in V$, and $s_1'\vert_V=s_2'\vert_V$, so that $[U_1,s_1']=[V,s_1'\vert_V]=[V,s_2'\vert_V]=[U_2,s_2']$. Therefore $\Pi_\rex:\O_{X_\fC,x}^\rex\ra\fC_{x,\rex}$ is injective, and an isomorphism. So $\bs\Pi=(\Pi,\Pi_\rex):\bs{\O}_{X_\fC,x}\ra\bfC_x$ is an isomorphism, as we have to prove.
\end{proof}

\begin{dfn}
\label{cc5def5}
As $\CRingscin$ is a subcategory of $\CRingsc$ we can define the functor $\Speccin$ by restricting $\Specc$ to $(\CRingscin)^{\bf op}$. Let $\bfC=(\fC,\fC_\rex)$ be an interior $C^\iy$-ring with corners, and $\bX=\Speccin\bfC=(X,\O_X)$. Then Definition \ref{cc4def9} implies the localizations $\bfC_x$ are interior $C^\iy$-rings with corners, and $\bfC_x\cong\bO_{X,x}$ by Proposition~\ref{cc5prop1}. 

If $s'\in \O_{X}^\rex(U)$ with $s'_x\ne 0$ in $\O^\rex_{X,x}$ at $x\in X$, then $s'_{x'}=\pi_{x',\rex}(c')$ in $\fC_{x',\rex}\cong\O^\rex_{X,x'}$ for some $c'\in\fC_\rex$ and all $x'$ in an open neighbourhood $V$ of $x$ in $U$. Then $c'\ne 0$ in $\fC_\rex$, so $c'$ is non-zero in every stalk by Remark \ref{cc4rem3} as $\bfC$ is interior, and $s'$ is non-zero at every point in $V$. Therefore $\bX$ is an interior local $C^\iy$-ringed space with corners by Definition~\ref{cc5def2}(a). 

If $\bs\phi:\bfC\ra\bfD$ is a morphism of interior $C^\iy$-rings with corners, then $\Speccin\bs\phi=(f,\bs f^\sh)$ has stalk map $\bs f^\sh_x=\bs\phi_x:\bfC_{f_\phi(x)}\ra \bfD_x$. This map fits into the commutative diagram 
\begin{gather*}
\xymatrix@C=100pt@R=15pt{ *+[r]{\bfC} \ar[d]^{\bs{\pi}_{f_\phi(x)}} \ar[r]_{\bs\phi} & *+[l]{\bfD} \ar[d]_{\bs{\pi}_x} \\
*+[r]{\bfC_{f_\phi(x)}} \ar@{.>}[r]^{\bs\phi_x} & *+[l]{\bfD_x.\!}
}
\end{gather*}
As $\bs\phi$ is interior, and the maps $\bs{\pi}_{f_\phi(x)}, \bs{\pi}_{x}$ are interior and surjective, then $\bs f^\sh_x$ is interior. This implies that $\Speccin\bs\phi$ is an interior morphism of interior local $C^\iy$-ringed spaces with corners. Hence $\Speccin:(\CRingscin)^{\bf op}\ra \LCRScin$ is a well defined functor, which we call the {\it interior spectrum functor}.
\end{dfn}

\begin{dfn}
\label{cc5def6}
The {\it global sections functor\/} $\Gac: \LCRSc\ra (\CRingsc)^{\bf op}$ takes objects $(X,\bs{\O}_X)\in \LCRSc$ to $\bs{\O}_X(X)$ and takes morphisms $(f,\bs f^\sh):(X,\bs{\O}_X)\ra (Y,\bs{\O}_Y)$ to $\Gac : (f,\bs f^\sh)\mapsto \bs f_{\sh}(Y)$. Here $\bs f_{\sh}:\bs{\O}_Y\ra f_\ast (\bs{\O}_X)$ corresponds to $\bs f^\sh$ under~\eq{cc2eq8}.

The composition $\Gac\ci\Specc$ is a functor $(\CRingsc)^{\bf op}\ra(\CRingsc)^{\bf op}$, or equivalently a functor $\CRingsc\ra\CRingsc$. For each $C^\iy$-ring with corners $\bs{\fC}$ we define a morphism $\bs\Xi_\bfC=(\Xi,\Xi_\rex):\bfC\ra \Gac\ci\Specc\bfC$ in $\CRingsc$ by $\Xi(c):X_\fC\ra\coprod_{x\in X_\fC}\O_{X_\fC,x}$, $\Xi(c):x\mapsto \pi_x(c)$ in $\fC_x\cong\O_{X_\fC,x}$ for $c\in\fC$, and $\Xi_\rex(c'):X_\fC\ra\coprod_{x\in X_\fC}\O_{X_\fC,x}^\rex$, $\Xi_\rex(c'):x\mapsto \pi_{x,\rex}(c')$ in $\fC_{x,\rex}\cong\O^\rex_{X_\fC,x}$ for $c'\in\fC_\rex$. This is functorial in $\bfC$, so that the $\bs\Xi_\bfC$ for all $\bfC$ define a natural transformation $\bs\Xi:\Id_\CRingsc\Ra\Gac\ci\Specc$ of functors~$\Id_\CRingsc,\Gac\ci\Specc:\CRingsc\ra\CRingsc$.
\end{dfn}

Here is the analogue of Lemma~\ref{cc2lem1}.

\begin{lem}
\label{cc5lem2}
Let\/ $\bfC$ be a $C^\iy$-ring with corners, and\/ $\bX=\Specc \bfC$. For any\/ $c\in\fC,$ let\/ $U_c\subseteq X$ be as in Definition\/ {\rm\ref{cc2def15}}. Then $\bU_c\cong\Specc(\bfC(c^{-1}))$. If\/ $\bfC$ is firm, or interior, then so is $\bfC(c^{-1})$.
\end{lem}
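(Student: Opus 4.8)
The strategy is to reduce everything to the corresponding statement for $C^\iy$-schemes without corners (Lemma \ref{cc2lem1}), together with the explicit description of localizations of $C^\iy$-rings with corners from Lemma \ref{cc4lem3} and Theorem \ref{cc2thm2}, and then to handle the monoid sheaf and the firm/interior conditions separately. First I would recall that $\bfC(c^{-1})=(\fD,\fD_\rex)$ is the localization from Definition \ref{cc4def9}, so by Lemma \ref{cc4lem3} we have $\fD\cong\fC(c^{-1})$, the localization of the underlying $C^\iy$-ring, and the localization morphism $\bs\pi:\bfC\ra\bfC(c^{-1})$ has underlying $C^\iy$-ring morphism equal to the standard $\pi:\fC\ra\fC(c^{-1})$. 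By Definition \ref{cc2def15}, the $\R$-points $x$ of $\fC$ with $x(c)\ne 0$ are precisely $U_c$, and they correspond bijectively (via $x\mapsto x\ci\pi^{-1}$, which makes sense since $\pi$ inverts $c$) to the $\R$-points of $\fC(c^{-1})$. This gives a homeomorphism $X_{\fC(c^{-1})}\cong U_c$ by comparing the topologies ${\cal T}_{\fC(c^{-1})}$ and ${\cal T}_\fC\vert_{U_c}$, exactly as in the proof of Lemma \ref{cc2lem1}.

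Next I would check that the sheaves agree. For the $C^\iy$-ring part, Lemma \ref{cc2lem1} already gives $(U_c,\O_X\vert_{U_c})\cong\Spec\fC(c^{-1})$, so I only need the exterior part $\O_X^\rex$. The key point is that for $x\in U_c$, the localizations $\bfC_x$ and $\bigl(\bfC(c^{-1})\bigr)_x$ (now as an $\R$-point of $\fC(c^{-1})$) are canonically isomorphic as $C^\iy$-rings with corners: this follows because inverting $c$ first and then inverting everything with $x(\cdot)\ne 0$ gives the same universal object as inverting everything with $x(\cdot)\ne 0$ directly, using the universal property in Definition \ref{cc4def9} and the fact that $x(c)\ne 0$. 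Given these stalk isomorphisms, the sheaf $\bs\O_X^\rex\vert_{U_c}$ and the sheaf $\O_{X_{\fC(c^{-1})}}^\rex$ are both defined as sheaves of functions into the disjoint union of the stalks, locally represented by elements of $\fC_\rex$ respectively $\fC(c^{-1})_\rex$, and since $\pi_\rex:\fC_\rex\ra\fC(c^{-1})_\rex=\fD_\rex$ is surjective by the explicit localization description (add a generator and relation, or by Theorem \ref{cc4thm4}-type arguments), the two local representability conditions coincide. So $\bs\O_X\vert_{U_c}\cong\bs\O_{X_{\fC(c^{-1})}}$, giving $\bU_c\cong\Specc(\bfC(c^{-1}))$. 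Alternatively, and perhaps more cleanly, one can invoke that $\Specc$ preserves limits (which we will establish, cf.\ the extension of Theorem \ref{cc2thm2} promised in \S\ref{cc53}) and realize $\bfC(c^{-1})=\bfC\ot_\iy\bs C^\iy(\R)/(c\cdot x=1)$ as a pushout, matching $\bU_c$ as an open subscheme; I would use whichever is available at this point in the text, likely the direct stalk argument since the $\Specc$ adjunction theorem may come slightly later.

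For the firmness claim: if $\bfC$ is firm, $\fC_\rex^\sh$ is finitely generated, and I would show $\fC(c^{-1})_\rex^\sh=\fD_\rex^\sh$ is a quotient of $\fC_\rex^\sh$, hence finitely generated. Indeed the localization morphism $\pi_\rex:\fC_\rex\ra\fD_\rex$ is surjective (localizing at an element of $\fC$, not of $\fC_\rex$, adds no new exterior generators — compare the construction in \eq{cc4eq16} with $A_\rex=\es$), so the induced map on sharpenings is surjective, giving the result. For the interior claim: if $\bfC$ is interior, then by Definition \ref{cc4def9} the localization $\bfC(c^{-1})$ exists in $\CRingscin$ and agrees with the one in $\CRingsc$, so $\fD_\rex$ has no zero divisors, i.e.\ $\bfC(c^{-1})$ is interior. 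The main obstacle I anticipate is the bookkeeping in identifying the two sheaves of monoids $\O_X^\rex\vert_{U_c}$ and $\O_{X_{\fD}}^\rex$ — specifically verifying that a section locally of the form $\pi_{x,\rex}(c')$ for $c'\in\fC_\rex$ matches one locally of the form $\pi_{x,\rex}(d')$ for $d'\in\fD_\rex$; this rests on the surjectivity of $\pi_\rex:\fC_\rex\ra\fD_\rex$ and on the compatibility of the localization-at-$x$ functor with the localization-at-$c$ functor, both of which are routine consequences of universal properties but need to be stated carefully.
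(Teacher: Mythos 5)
Your overall architecture is the same as the paper's: reduce the scheme-level statement to Lemma \ref{cc2lem1} via Lemma \ref{cc4lem3}, prove that for $x\in U_c$ the two localizations agree on stalks, $\bfC_x\cong(\bfC(c^{-1}))_{\hat x}$, by universal properties, and then treat firmness and interiority separately. The stalk-compatibility step and the interior claim (via Definition \ref{cc4def9} and the agreement of localizations in $\CRingscin$ and $\CRingsc$) are fine, and the latter is if anything a cleaner justification than the paper's one-line remark.

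However, there is a genuine error in a claim you lean on twice: that $\pi_\rex:\fC_\rex\ra\fD_\rex$ is surjective because ``localizing at an element of $\fC$ adds no new exterior generators''. This is false in general. Writing $\bfC(c^{-1})=\bigl(\bfC(x)\bigr)/(c\cdot x=1)$, the monoid $\fD_\rex$ acquires genuinely new elements: all $\Psi_{\exp}(d)$ for $d$ in the enlarged $C^\iy$-ring $\fD$ (e.g.\ $\Psi_{\exp}(c^{-1})$), and more generally $\Psi_g$ applied to the new $\R$-type generator for exterior $g$, none of which need lie in $\pi_\rex(\fC_\rex)$; Theorem \ref{cc4thm4}(c) gives surjectivity only for localization at an $\R$-point, where the construction is by imposing relations without adding generators, and does not apply here. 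What is true, and what the paper actually uses, is weaker: $\fD_\rex$ is generated by $\pi_\rex(\fC_\rex)$ together with invertible elements $\Psi_{\exp}(\fD)$ (compare the constructions in Propositions \ref{cc4prop12} and \ref{cc5prop2}). This suffices to repair both uses. For firmness, invertibles die in the sharpening, so $\fD_\rex^\sh$ is generated by the image of $\fC_\rex^\sh$ and hence is finitely generated — exactly the paper's argument — whereas your stated route (surjectivity of $\pi_\rex$ implying surjectivity on sharpenings) starts from a false premise. For the sheaf comparison, you do not need $\pi_\rex$ surjective at all: once you have the canonical morphism induced by $\bs\pi$ and the stalk isomorphisms $\fC_{x,\rex}\ra\fD_{\hat x,\rex}$ (which your universal-property argument provides, and where the relevant surjectivity is that of $\pi_{x,\rex}:\fC_\rex\ra\fC_{x,\rex}$ from Theorem \ref{cc4thm4}(c)), a morphism of sheaves that is an isomorphism on stalks is an isomorphism, which is how the paper concludes; matching the ``local representability'' conditions section-by-section, as you propose, would additionally require the openness arguments of Proposition \ref{cc5prop1} via Proposition \ref{cc4prop8} and is more work than needed.
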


\begin{proof}
Write $\bfC(c^{-1})=(\fD,\fD_\rex)$. By Lemma \ref{cc4lem3} we have $\fD\cong \fC(c^{-1})$. By Lemma \ref{cc2lem1}, we need only show there is an isomorphism of stalks $\fC_{x,\rex}\ra \fD_{\hat x,\rex}$. However, using the universal properties of $\bfC_x$, $\bfC(c^{-1})$ and $\bfC(c^{-1})_{\hat x}$ this follows by the same reasoning as Lemma \ref{cc2lem1}. If $\bfC$ is firm, then so is $\bfC(c^{-1})$, as $\fC(c^{-1})_\rex^\sh$ is generated by the image of $\fC_\rex^\sh$ under the morphism $\bfC\to\bfC(c^{-1})$. If $\bfC$ is interior, then $\bfC(c^{-1})$ is also interior, as otherwise zero divisors in $\fC(c^{-1})_\rex$ would have to come from zero divisors in~$\fC_\rex$.
\end{proof}

\begin{thm}
\label{cc5thm3}
The functor\/ $\Specc:(\CRingsc)^{\bf op}\ra\LCRSc$ is \begin{bfseries}right adjoint\end{bfseries} to $\Gac:\LCRSc\ra(\CRingsc)^{\bf op}$. This implies that for all\/ $\bfC$ in\/ $\CRingsc$ and all \/ $\bX$ in\/ $\LCRSc$ there are inverse bijections
\e
\xymatrix@C=150pt{ *+[r]{\Hom_\CRingsc(\bfC,\Gac(\bX))} \ar@<.5ex>[r]^{L_{\bfC,\bX}}
& *+[l]{\Hom_\LCRSc(\bX,\Specc\bfC).} \ar@<.5ex>[l]^{R_{\bfC,\bX}} }
\label{cc5eq1}
\e
If we let\/ $\bX=\Specc\bfC$ then $\bs\Xi_\bfC=R_{\bfC,\bX}(\bs\id_\bX),$ and\/ $\bs\Xi$ is the unit of the adjunction between $\Gac$ and\/~$\Specc$.
\end{thm}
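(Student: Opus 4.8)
The plan is to imitate the proof of Theorem~\ref{cc2thm2} for $C^\iy$-schemes without corners (which in turn follows Dubuc \cite{Dubu2}), using that $\Specc$ restricts $\Spec$ on the smooth part and adding the extra bookkeeping for the monoid part. First I would construct the two maps in \eq{cc5eq1}. Given $\bs\phi=(\phi,\phi_\rex):\bfC\ra\Gac(\bX)=\bs\O_X(X)$, I define $\bs f=(f,\bs f^\sh)=L_{\bfC,\bX}(\bs\phi):\bX\ra\Specc\bfC$ as follows: the continuous map $f:X\ra X_\fC$ sends $x\in X$ to the $\R$-point $\fC\xrightarrow{\phi}\O_X(X)\xrightarrow{\rho_{X,x}}\O_{X,x}\xrightarrow{\pi_x}\R$, i.e.\ $f(x)=\pi_x\ci\rho_{X,x}\ci\phi$. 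Continuity of $f$ follows because $f^{-1}(U_c)=\{x:f(x)(c)\ne0\}$ is open (it is the set where the section $\phi(c)$ is invertible in the stalk, an open condition). For the sheaf morphism, for each $x\in X$ the composition $\bfC\xrightarrow{\bs\phi}\bs\O_X(X)\to\bs\O_{X,x}$ has the property that elements of $\fC$ not killed by $f(x):\fC\ra\R$, and elements of $\fC_\rex$ not killed by $f(x)\ci\Phi_i$, map to invertible elements of $\bs\O_{X,x}$ (since $\bs\O_{X,x}$ is local and the value under $\pi_x,\pi_{x,\rex}$ is nonzero). Hence by the universal property of the localization $\bfC_{f(x)}$ in Definition~\ref{cc4def11} it factors uniquely through $\bs\phi_x:\bfC_{f(x)}\ra\bs\O_{X,x}$, and $\bfC_{f(x)}\cong\bs\O_{X_\fC,f(x)}$ by Proposition~\ref{cc5prop1}. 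These stalk maps $\bs\phi_x$ patch to a morphism of sheaves $f^{-1}(\bs\O_{X_\fC})\ra\bs\O_X$ — here I would use the standard argument that a compatible family of stalk morphisms between sheaves, each locally induced by a section-level map (namely $\bs\phi$ composed with restriction), glues to a sheaf morphism — giving $\bs f^\sh$ and hence $\bs f\in\Hom_\LCRSc(\bX,\Specc\bfC)$.

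Conversely, $R_{\bfC,\bX}:\Hom_\LCRSc(\bX,\Specc\bfC)\ra\Hom_\CRingsc(\bfC,\Gac(\bX))$ is simply $\Gac$ followed by precomposition with the unit, i.e.\ $R_{\bfC,\bX}(\bs f)=\Gac(\bs f)\ci\bs\Xi_\bfC=\bs f_\sh(X_\fC)\ci\bs\Xi_\bfC$, where $\bs\Xi_\bfC:\bfC\ra\Gac\ci\Specc\bfC$ is the morphism of Definition~\ref{cc5def6}. I would then verify $L_{\bfC,\bX}$ and $R_{\bfC,\bX}$ are mutually inverse. One composite $R\ci L=\id$: starting from $\bs\phi$, building $\bs f$, then taking global sections and precomposing with $\bs\Xi_\bfC$ recovers $\bs\phi$ because on each stalk the construction was forced by universality — concretely, $(\bs f_\sh(X_\fC)\ci\bs\Xi_\bfC)$ sends $c\in\fC$ to the section $x\mapsto\bs\phi_x(\pi_{f(x)}(c))$, which is exactly $\rho_{X,x}(\phi(c))$ by construction, so the section equals $\phi(c)$; similarly on $\fC_\rex$. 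The other composite $L\ci R=\id$: given $\bs f$, one checks the map $x\mapsto\pi_x\ci\rho_{X,x}\ci(\bs f_\sh(X_\fC)\ci\bs\Xi_\bfC)$ equals $f$ (this is the locality/commutativity of $\bs f_\sh$ with restriction to stalks, exactly as in the non-corners case, since $\bs\Xi_\bfC$ is $\pi_x$ on stalks), and that the reconstructed $\bs f^\sh$ agrees with the original on stalks hence as a sheaf morphism. Throughout, the arguments on $\O_X,\O_{X_\fC}$ are literally those of Theorem~\ref{cc2thm2}; the new content is the parallel arguments on $\O_X^\rex,\O_{X_\fC}^\rex$, which go through by Proposition~\ref{cc5prop1} (stalks of $\bs\O_{X_\fC}$ are $\bfC_x$) and Theorem~\ref{cc4thm4}(c) (the projections $\pi_{x,\rex}$ are surjective), plus the fact from Definition~\ref{cc4def11} and Proposition~\ref{cc4prop3} that $c'\in\fC_\rex$ with $x\ci\Phi_i(c')\ne0$ becomes invertible in $\bfC_x$, so the universal property of the localization applies to both coordinates simultaneously.

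Finally I would check naturality of $L_{\bfC,\bX}$ (equivalently $R_{\bfC,\bX}$) in both $\bfC$ and $\bX$, which is the formal statement that these bijections assemble into an adjunction $\Gac\dashv\Specc$; this is a diagram chase using functoriality of $\Specc$ from Definition~\ref{cc5def4} and of $\Gac$ from Definition~\ref{cc5def6}, together with the observation that $\bs\Xi$ is a natural transformation (stated in Definition~\ref{cc5def6}). The identification $\bs\Xi_\bfC=R_{\bfC,\Specc\bfC}(\bs\id_{\Specc\bfC})$ is immediate from the definition of $R$ together with $\Gac(\bs\id)=\bs\id$, and this says precisely that $\bs\Xi$ is the unit of the adjunction.

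The main obstacle is the gluing step that produces the sheaf morphism $\bs f^\sh$ (and its inverse-direction check): one must verify carefully that the stalkwise-defined maps $\bs\phi_x$ are induced, on a neighbourhood of each point, by an actual map of sections $\bs\O_{X_\fC}(V)\ra\bs\O_X(f^{-1}(V))$ compatible with restrictions, so that they define a genuine morphism of sheaves of $C^\iy$-rings with corners rather than merely a compatible family of stalk maps. In the non-corners case this is handled by the explicit description of $\Spec$'s structure sheaf in terms of locally-constant-to-stalks functions; the same description for $\bs\O_{X_\fC}^\rex$ from Definition~\ref{cc5def4} makes it work here, but it requires attention because, as emphasised in the introduction and in Remark~\ref{cc4rem3}, exterior functions have non-local behaviour — however for this theorem we only need the pull-back direction, which is local, so no completeness-type issue arises. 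I expect the write-up to be essentially a transcription of the proof of Theorem~\ref{cc2thm2} with ``$\rex$''-decorated parallels at each stage.
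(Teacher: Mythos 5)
Your proposal is correct and follows essentially the same route as the paper's proof: $R_{\bfC,\bX}(\bs f)=\Gac(\bs f)\ci\bs\Xi_\bfC$, and $L_{\bfC,\bX}(\bs\phi)$ built pointwise via the universal property of the localizations $\bfC_{f(x)}$ together with Proposition~\ref{cc5prop1}, with the stalkwise maps shown to be locally induced by section-level maps, everything reducing to the argument of Theorem~\ref{cc2thm2} (i.e.\ \cite[Th.~4.20]{Joyc9}) with parallel $\rex$-arguments. The "main obstacle" you flag (that the stalk maps glue to a genuine sheaf morphism) is exactly the point the paper checks via the local representability of sections of $\bO_{\Specc\bfC}$, so no gap remains.
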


\begin{proof} We follow the proof of \cite[Th.~4.20]{Joyc9}. Take $\bX\in \LCRSc$ and $\bfC\in\CRingsc$, and let $\bY=(Y,\bs{\O}_Y)=\Specc\bfC$. Define a functor $R_{\bfC,\bX}$ in \eq{cc5eq1} by taking $R_{\bfC,\bX}(\bs f):\bfC\ra\Gac(\bX)$ to be the composition
\begin{equation*}
\xymatrix@C=40pt{ \bfC \ar[r]^(0.3){\bs\Xi_\bfC} & \Gac\ci\Specc\bfC=\Gac(\bY) \ar[r]^(0.6){\Gac(\bs f)} & \Gac(\bX)}
\end{equation*}
for each morphism $\bs f:\bX\ra\bY$ in $\LCRSc$.
If $\bX=\Specc\bfC$ then we have $\bs\Xi_\bfC=R_{\bfC,\bX}(\bs\id_\bX)$. We see that $R_{\bfC,\bX}$ is an extension of the functor $R_{\fC,\bX}$ constructed in \cite[Th.~4.20]{Joyc9} for the adjunction between $\Spec$ and $\Ga$. This will also occur for $L_{\bfC,\bX}$.

Given a morphism $\bs\phi=(\phi,\phi_\rex):\bfC\ra\Gac(\bX)$ in $\CRingsc$ we define $L_{\bfC,\bX}(\bs\phi)=\bs g=(g,g^\sh,g^\sh_\rex)$ where $(g,g^\sh)=L_{\fC,\bX}(\phi)$ with $L_{\fC,\bX}$ constructed in \cite[Th.~4.20]{Joyc9}. Here, $g$ acts by $x\mapsto x_*\ci \phi$ where $x_*:\O_X(X)\ra\R$ is the composition of the $\si_x:\O_X(X)\ra\O_{X,x}$ with the unique morphism $\pi:\O_{X,x}\ra\R$, as $\bs{\O}_{X,x}$ is a local $C^\iy$-ring with corners. The morphisms $g^\sh,g_\rex^\sh$ are constructed as $g^\sh$ is in \cite[Th.~4.20]{Joyc9}, and we explain this explicitly now.

For $x\in X$ and $g(x)=y\in Y$, take the stalk map $\bs{\si}_x=(\si_x,\si_x^\rex):\bs{\O}_X(X)\ra\bs{\O}_{X,x}$. This gives the following diagram of $C^\iy$-rings with corners
\e
\begin{gathered}
\xymatrix@C=70pt@R=15pt{ *+[r]{\bfC} \ar[d]^{\bs{\pi}_y} \ar[r]_{\bs\phi} & *+[l]{\Gac(\bX)} \ar[d]_{\bs{\si}_x} & \\
*+[r]{\bfC_y\cong\bs{\O}_{Y,y}} \ar@{.>}[r]^{\bs\phi_x} & *+[l]{\bs{\O}_{X,x}} \ar[r]^{\bs\pi} &*+[r]{\R.}
}
\end{gathered}
\label{cc5eq2}
\e
We know $\bfC_y\cong\bs{\O}_{Y,y}$ by Proposition \ref{cc5prop1} and $\pi:\O_{X,x}\ra\R$ is the unique local morphism. If we have $(c,c')\in\bfC$ with $y(c)\ne 0$, and $y\circ \Phi_i(c')\ne 0$ then $\bs{\si}_x\ci\bs\phi(c,c')\in\bs{\O}_{X,x}$ with $\pi(\si_x\ci\phi(c))\ne 0$ and 
\begin{equation*}
\pi(\Phi_i\ci\si_x^\rex\ci\phi_\rex(c))=\pi(\si_{x}\ci\phi\ci\Phi_i(c))\ne 0.
\end{equation*}
As $\bs{\O}_{X,x}$ is a local $C^\iy$-ring with corners then $\bs{\si}_x\ci\bs\phi(c,c')$ is invertible in $\bs{\O}_{X,x}$. The universal property of $\bs{\pi}_y:\bfC\ra\bfC_y$ gives a unique morphism $\bs\phi_x:\bs{\O}_{Y,y}\ra \bs{\O}_{X,x}$ that makes \eq{cc5eq2} commute.

We define
\begin{equation*}
\bs g_\sh(V)=(g_\sh(V),g^{\rex}_\sh(V)):\bO_Y(V)\ra g_*(\bO_X)(V)=\bO_X(U)
\end{equation*} 
for each open $V\subseteq Y$ with $U=g^{-1}(V)\subseteq X$ to act by $\bs\phi_x=(\phi_x,\phi_{x,\rex})$ on stalks at each $x\in U$. We can identify elements $s\in\O_X(U)$, $s'\in\O_X^\rex(U)$ with maps $s:U\ra\coprod_{x\in U}\O_{X,x}$ and $s':U\ra\coprod_{x\in U}\O_{X,x}^\rex$, such that $s,s'$ are locally of the form $s:x\mapsto \pi_x(c)$ in $\fC_x\cong\O_{X,x}$ for $c\in\fC$ and $s':x\mapsto \pi_{x,\rex}(c')$ in $\fC_{x,\rex}\cong\O^\rex_{X,x}$ for $c'\in\fC_\rex$, and similarly for $t\in\O_Y(V)$, $t'\in\O_Y^\rex(V)$. If $t$ is locally of the form $t:y\mapsto \pi_y(d)$ in $\fD_y\cong\O_{Y,c}$ near $\ti y=g(\ti x)$ for $d\in\fD$, then $s=g_\sh(V)(t)$ is locally of the form $s:x\mapsto \pi_x(c)$ in $\fC_x\cong\O_{X,x}$ near $\ti x$ for $c=\phi(d)\in\fC$, so $g_\sh(V)$ does map $\O_Y(V)\ra\O_X(U)$, and similarly $g_\sh^\rex(V)$ maps $\O_Y^\rex(V)\ra\O_X^\rex(U)$. Hence $\bs g_\sh(V)$ is well defined.

This defines a morphism $\bs g_\sh:\bO_Y\ra g_*(\bO_X)$ of sheaves of $C^\iy$-rings with corners on $Y$, and we write $\bs g^\sh:g^{-1}(\bO_Y)\ra\bO_X$ for the corresponding morphism of sheaves of $C^\iy$-rings with corners on $X$ under \eq{cc2eq8}. At a point $x\in X$ with $g(x)=y\in Y$, the stalk map $\bs g^\sh_x:\bO_{Y,y}\ra \bO_{X,x}$ is $\bs\phi_x$. Then $\bs g=(g,\bs g^\sh)$ is a morphism in $\LCRSc$, and $L_{\bfC,\bX}(\bs\phi)=\bs g$. It remains to show that these define natural bijections, but this follows in a very similar way to~\cite[Th.~4.20]{Joyc9}.
\end{proof}

\begin{dfn}
\label{cc5def7}
We define the {\it interior global sections functor}\/ $\Gacin:\LCRScin\ab\ra (\CRingscin)^{\bf op}$ to act on objects $(X,\bs{\O}_X)$ by $\Gacin: (X,\bs{\O}_X) \mapsto (\fC,\fC_\rex)$ where $\fC=\O_X(X)$ and $\fC_\rex$ to be the set containing the zero element of $\O_X^{\rex}(X)$ and the elements of $\O_X^{\rex}(X)$ that are non-zero in every stalk. That is,
\e
\fC_\rex=\{c'\in \O_X^{\rex}(X) :  c'=0 \in \O_X^\rex(X), \text{ or } \si_x^\rex(c')\ne 0 \in\O_{X,x}^\rex \;\forall x\in X\},
\label{cc5eq3}
\e
where $\si_x^\rex$ is the stalk map $\si_x^\rex:\O_X^\rex\ra \O_{X,x}^\rex$. This is an interior $C^\iy$-ring with corners, where the $C^\iy$-ring with corners structure is given by restriction from $(\O_X(X),\O_X^{\rex}(X))$. We define $\Gacin$ to act on morphisms $(f,\bs f^\sh):(X,\bs{\O}_X)\ra (Y,\bs{\O}_Y)$ by $\Gacin : (f,\bs f^\sh)\mapsto \bs f_{\sh}(Y)\vert_{(\fC,\fC_\rex)}$ for $\bs f_{\sh}:\bs{\O}_Y\ra f_\ast (\bs{\O}_X)$ corresponding to $\bs f^\sh$ under~\eq{cc2eq8}.

As in Definition \ref{cc5def6}, for each interior $C^\iy$-ring with corners $\bfC$, we define a morphism $\bs\Xi^\rin_\bfC=(\Xi^\rin,\Xi^\rin_\rex):\bfC\ra \Gacin\ci\Speccin \bfC$ in $\CRingscin$ by $\Xi^\rin(c):X_\fC\ra\coprod_{x\in X_\fC}\O_{X_\fC,x}$, $\Xi^\rin(c):x\mapsto \pi_x(c)$ in $\fC_x\cong\O_{X_\fC,x}$ for $c\in\fC$, and $\Xi^\rin_\rex(c'):X_\fC\ra\coprod_{x\in X_\fC}\O_{X_\fC,x}^\rex$, $\Xi^\rin_\rex(c'):x\mapsto \pi_{x,\rex}(c')$ in $\fC_{x,\rex}\cong\O^\rex_{X_\fC,x}$ for $c'\in\fC_\rex$. We need to check $\Xi^\rin_\rex(c')$ lies in \eq{cc5eq3}, but this is immediate as $\bs{\pi}_x=(\pi_x,\pi_{x,\rex}):\bfC\ra\bfC_x$ is interior. The $\bs\Xi^\rin_\bfC$ for all $\bfC$ define a natural transformation $\bs\Xi^\rin:\Id_\CRingscin\Ra\Gacin \ci\Speccin$ of functors~$\Id_\CRingscin,\Gacin\ci\Speccin:\CRingscin\ra\CRingscin$.
\end{dfn}

\begin{thm}
\label{cc5thm4}
The functor\/ $\Speccin:(\CRingscin)^{\bf op}\ra\LCRScin$ is right adjoint to\/~$\Gacin:\LCRScin\ra(\CRingscin)^{\bf op}$. 
\end{thm}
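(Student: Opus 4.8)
The plan is to deduce the adjunction $\Speccin\dashv\Gacin$ from the already-established adjunction $\Specc\dashv\Gacin$ (Theorem \ref{cc5thm3}) together with the adjunction $\inc\dashv\Pi_\CRingsc^\CRingscin$ of Definition \ref{cc4def6} and Corollary \ref{cc5cor1}, rather than reconstructing the unit/counit from scratch. The key point is that for $\bfC\in\CRingscin$ and $\bX\in\LCRScin$ both $\Speccin\bfC=\Specc\bfC$ (since $\Speccin$ is by definition the restriction of $\Specc$) and there is a natural relation between $\Gac(\bX)$ and $\Gacin(\bX)$: namely $\Gacin(\bX)=(\O_X(X),\fC_\rex)$ where $\fC_\rex\subseteq\O_X^\rex(X)$ is the submonoid of \eqref{cc5eq3}, so $\Gacin(\bX)$ is an \emph{interior} $C^\iy$-ring with corners sitting inside $\Gac(\bX)$.

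First I would set up the target bijection: for $\bfC\in\CRingscin$ and $\bX\in\LCRScin$, I want inverse bijections between $\Hom_\CRingscin(\bfC,\Gacin(\bX))$ and $\Hom_\LCRScin(\bX,\Speccin\bfC)$, natural in both variables. I would define $R^{\rin}_{\bfC,\bX}$ and $L^{\rin}_{\bfC,\bX}$ by restricting the maps $R_{\bfC,\bX}$, $L_{\bfC,\bX}$ of Theorem \ref{cc5thm3}: given an interior morphism $\bs f:\bX\ra\Speccin\bfC=\Specc\bfC$ in $\LCRScin$, it is in particular a morphism in $\LCRSc$, so $R_{\bfC,\bX}(\bs f):\bfC\ra\Gac(\bX)$ is defined, and I must check it factors through $\Gacin(\bX)\hookra\Gac(\bX)$, i.e.\ that $\phi_\rex$ sends $\fC_\rex$ into the submonoid \eqref{cc5eq3}. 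This should follow because $R_{\bfC,\bX}(\bs f)=\Gac(\bs f)\ci\bs\Xi_\bfC$, the stalk maps of $\bs f$ are interior (as $\bs f$ is interior), and the projections $\bs\pi_x:\bfC\ra\bfC_x$ are interior and surjective (Theorem \ref{cc4thm4}(a),(c)); hence the image of $\fC_\rex$ under $\phi_\rex$, evaluated at each stalk, lands in $\O_{X,x}^{\rex}$ nonzero or is $0$ globally, which is exactly membership in \eqref{cc5eq3}. Conversely, given an interior morphism $\bs\phi:\bfC\ra\Gacin(\bX)$, composing with $\Gacin(\bX)\hookra\Gac(\bX)$ gives $\bs\psi:\bfC\ra\Gac(\bX)$ and I set $L^{\rin}_{\bfC,\bX}(\bs\phi)=L_{\bfC,\bX}(\bs\psi):\bX\ra\Specc\bfC$; here I must check this morphism is \emph{interior} in the sense of Definition \ref{cc5def2}, i.e.\ its stalk maps $\bs\phi_x:\bfC_{f_\phi(x)}\ra\bO_{X,x}$ are interior morphisms of interior $C^\iy$-rings with corners. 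This is where the hypothesis $\bfC\in\CRingscin$ and the interiority of $\bs\phi$ are used: $\bfC_{f_\phi(x)}$ is interior by Definition \ref{cc4def9}, $\bO_{X,x}$ is interior since $\bX\in\LCRScin$, and $\bs\phi_x$ is obtained from $\bs\phi$ through the interior surjections $\bs\pi_y$ and stalk maps $\bs\si_x$, all of which preserve the "nonzero in every stalk" condition, so a nonzero-in-stalk element is sent to something nonzero. Finally I would note $R^{\rin}$ and $L^{\rin}$ remain mutually inverse because they are restrictions of mutually inverse maps $R_{\bfC,\bX}$, $L_{\bfC,\bX}$ to subsets that correspond under the bijection, and naturality is inherited from Theorem \ref{cc5thm3}; this gives the adjunction, with unit $\bs\Xi^\rin$ as defined in Definition \ref{cc5def7}.

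The main obstacle I anticipate is the two "stays inside the interior subcategory" checks — that $R^{\rin}$ lands in $\Hom_\CRingscin(\bfC,\Gacin(\bX))$ and that $L^{\rin}$ lands in $\Hom_\LCRScin(\bX,\Speccin\bfC)$ — and, more subtly, verifying that these restricted maps are genuinely \emph{mutually inverse} rather than merely well-defined: one needs that under the bijection of Theorem \ref{cc5thm3}, interior morphisms $\bX\ra\Specc\bfC$ correspond \emph{exactly} to morphisms $\bfC\ra\Gac(\bX)$ factoring through $\Gacin(\bX)$. The forward direction (interior $\bs f$ gives a factoring $\bs\phi$) is the $R^{\rin}$ check; the reverse (a factoring $\bs\phi$ gives interior $L_{\bfC,\bX}(\bs\phi)$) is the $L^{\rin}$ check; together with the already-known bijectivity of $R_{\bfC,\bX}$, $L_{\bfC,\bX}$ these two inclusions of subsets force the restrictions to be inverse bijections. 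I expect the cleanest way to organize this is to prove a lemma stating this exact-correspondence fact, leaning on Remark \ref{cc4rem3} (in an interior $C^\iy$-ring with corners, no nonzero element dies in any stalk localization) and on Proposition \ref{cc5prop1} (stalks of $\Specc\bfC$ are the $\bfC_x$), and then the theorem follows formally.
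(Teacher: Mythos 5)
Your proposal is correct and is essentially the paper's argument: the paper also reduces to Theorem \ref{cc5thm3}, the only additional verification being that the stalk maps restricted to $\Gacin(\bX)$ are well defined and interior, which is exactly the content of your two checks for $R^{\rin}$ and $L^{\rin}$ (via Remark \ref{cc4rem3}, surjectivity of $\pi_{x,\rex}$ from Theorem \ref{cc4thm4}(c), and the definition \eq{cc5eq3} of $\Gacin$). The only blemishes are cosmetic: Theorem \ref{cc5thm3} gives $\Specc$ right adjoint to $\Gac$ (not $\Gacin$), and the appeal to $\inc\dashv\Pi_\CRingsc^\CRingscin$ and Corollary \ref{cc5cor1} plays no role in your actual argument.
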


\begin{proof}
This proof is identical to that of Theorem \ref{cc5thm3}. We need only check that the definition of $\Gacin(\bX)$, which may not be equal to $\bO_X(X)$, gives well defined maps $\bs{\si}^{\rin}_x:\Gacin(\bX)\ra \bO_{X,x}$. As $\Gacin(\bX)$ is a subobject of $\bO_X(X)$, these maps are the restriction of the stalk maps $\bs{\si}_x:\bO_X(X)\ra \bO_{X,x}$ to $\Gacin(\bX)$. The definition of $\Gacin(\bX)$ implies these maps are interior. 
\end{proof}

\subsection{\texorpdfstring{$C^\iy$-schemes with corners}{C∞-schemes with corners}}
\label{cc54}

\begin{dfn}
\label{cc5def8}
A local $C^\iy$-ringed space with corners that is isomorphic in $\LCRSc$ to $\Specc\bfC$ for some $C^\iy$-ring with corners $\bfC$ is called an \/{\it affine $C^\iy$-scheme with corners\/}. We define the category $\ACSchc$ to be the full sub-category of affine $C^\iy$-schemes with corners in~$\LCRSc$.

Let $\bX=(X,\bs{\O}_X)$ be a local $C^\iy$-ringed space with corners. We call $\bX$ a $C^\iy$-{\it scheme with corners\/} if $X$ can be covered by open sets $U\subseteq X$ such that $(U,\bs{\O}_X\vert_U)$ is a affine $C^\iy$-scheme with corners. We define the category $\CSchc$ of $C^\iy$-schemes with corners to be the full sub-category of $C^\iy$-schemes with corners in $\LCRSc$. Then $\ACSchc$ is a full subcategory of $\CSchc$.

A local $C^\iy$-ringed space with corners that is isomorphic in $\LCRScin$ to $\Speccin\bfC$ for some interior $C^\iy$-ring with corners $\bfC$ is called an \/{\it interior affine $C^\iy$-scheme with corners\/}. We define the category $\ACSchcin$ of interior affine $C^\iy$-schemes with corners to be the full sub-category of interior affine $C^\iy$-schemes with corners in $\LCRScin$, so $\ACSchcin$ is a non-full subcategory of $\ACSchc$.
We call an object $\bX\in\LCRScin$ an {\it interior $C^\iy$-scheme with corners\/} if it can be covered by open sets $U\subseteq X$ such that $(U,\bs{\O}_X\vert_U)$ is an interior affine $C^\iy$-scheme with corners. We define the category $\CSchcin$ of  interior $C^\iy$-schemes with corners to be the full sub-category of interior $C^\iy$-schemes with corners in $\LCRScin$. This implies that $\ACSchcin$ is a full subcategory of $\CSchcin$. Clearly~$\CSchcin\subset\CSchc$.

In Theorem \ref{cc5thm7}(b) below we show that an object $\bX$ in $\CSchc$ lies in $\CSchcin$ if and only if lies in $\LCRScin$. That is, the two natural definitions of `interior $C^\iy$-scheme with corners' turn out to be equivalent.
\end{dfn}

\begin{lem}
\label{cc5lem3}
Let\/ $\bX$ be an (interior) $C^\iy$-scheme with corners, and\/ $\bU\subseteq\bX$ be open. Then $\bU$ is also an (interior) $C^\iy$-scheme with corners.
\end{lem}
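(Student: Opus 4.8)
The statement is the analogue for $C^\iy$-schemes with corners of the standard fact that an open subspace of a scheme is a scheme, and the proof is local in nature. The plan is as follows. Let $\bX=(X,\bO_X)$ be a $C^\iy$-scheme with corners and $\bU=(U,\bO_X\vert_U)\subseteq\bX$ open. First I would observe that $\bU$ is automatically a local $C^\iy$-ringed space with corners, since its stalks $\bO_{X,x}$ for $x\in U$ are a subset of those of $\bX$, hence local $C^\iy$-rings with corners; and if $\bX$ is interior, then $\bU$ lies in $\LCRScin$ for the same reason (the conditions in Definition \ref{cc5def2}(a) are local in $X$, so they hold on the open subset $U$, and restriction of interior morphisms is interior). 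So it remains to check the covering-by-affines condition.

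The key step is the reduction of the problem to showing: \emph{if $\bfC$ is a $C^\iy$-ring with corners and $\bX=\Specc\bfC$, then every point of $X$ has an open neighbourhood of the form $\Specc\bfD$ for some $C^\iy$-ring with corners $\bfD$, and these basic opens form a base for the topology $\cT_\fC$.} Given this, the general case follows: cover $\bX$ by opens $\bW_i\cong\Specc\bfC_i$; then $\bU\cap\bW_i$ is open in $\Specc\bfC_i$, so by the affine case it is covered by basic opens $\Specc\bfD_{ij}$; these together cover $\bU$, showing $\bU\in\CSchc$. In the interior case one replaces $\Specc$ by $\Speccin$ throughout and uses Definition \ref{cc5def8} and Theorem \ref{cc5thm7}(b) (equivalently, works directly with interior $C^\iy$-rings with corners).

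For the affine case, the crucial tool is Lemma \ref{cc5lem2}: for any $c\in\fC$ the open set $U_c=\{x\in X:x(c)\ne 0\}$ satisfies $\bU_c\cong\Specc(\bfC(c^{-1}))$, and if $\bfC$ is interior then so is $\bfC(c^{-1})$, so $\bU_c\cong\Speccin(\bfC(c^{-1}))$ is an interior affine $C^\iy$-scheme with corners. Since by Definition \ref{cc2def15} the sets $U_c$ for $c\in\fC$ form a basis for the topology $\cT_\fC$ on $X_\fC$, every open subset of an affine $C^\iy$-scheme with corners is a union of affine $C^\iy$-schemes with corners of the form $\bU_c$, which is exactly the affine case claimed above. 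Assembling these pieces gives the result.

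The main obstacle is really just bookkeeping rather than a genuine mathematical difficulty: one must be careful that the restriction $\bO_X\vert_{U_c}$ appearing in the definition of an open $C^\iy$-ringed subspace genuinely agrees with the structure sheaf of $\Specc(\bfC(c^{-1}))$, not merely abstractly but via the isomorphism of Lemma \ref{cc5lem2}, and that the interior structure (the subsheaf $\O_X^\rin$) is respected under this identification. This is handled by the stalk computations already done in Proposition \ref{cc5prop1} and Lemma \ref{cc5lem2} together with Remark \ref{cc4rem3}, so no new work is needed; the proof is essentially a citation of Lemma \ref{cc5lem2} plus the observation that the $U_c$ form a base.
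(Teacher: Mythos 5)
Your proposal is correct and follows essentially the same route as the paper: the paper's proof is exactly the reduction to the affine case via Lemma \ref{cc5lem2} together with the fact that the sets $U_c$ of Definition \ref{cc2def15} generate the topology on $\Specc\bfC$. Your extra bookkeeping on stalks and the interior case is consistent with this and needs no changes.
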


\begin{proof}
This follows from Lemma \ref{cc5lem2} and the fact that the topology on $\Specc\bfC$ is generated by subsets $U_c$ in Definition~\ref{cc2def15}.
\end{proof}

We explain the relation with manifolds with (g-)corners in Chapter~\ref{cc3}:  

\begin{dfn}
\label{cc5def9}
Define a functor $F^\CSchc_\Manc:\Manc\ra\CSchc$ that acts on objects $X\in\Manc$ by $F^\CSchc_\Manc(X)=(X,\bO_X)$, where $\bO_X(U)=\bs C^\iy(U)=(C^\iy(U),\Ex(U))$ from Example \ref{cc4ex3}(a) for each open subset $U\subseteq X$. If $V\subseteq U\subseteq X$ are open we define $\bs{\rho}_{UV}=(\rho_{UV},\rho_{UV}^{\rex}):\bs C^\iy(U)\ra \bs C^\iy(V)$ by~$\rho_{UV}:c\mapsto c\vert_V$ and $\rho_{UV}^{\rex}:c'\mapsto c'\vert_{V}$. 

It is easy to verify that $\bO_X$ is a sheaf of $C^\iy$-rings with corners on $X$, so $\bX=(X,\bO_X)$ is a $C^\iy$-ringed space with corners. We show in Theorem \ref{cc5thm5}(b) that $\bX$ is a $C^\iy$-scheme with corners, and it is also interior.

Let $f:X\ra Y$ be a morphism in $\Manc$. Writing $F^\CSchc_\Manc(X)=(X,\bs{\O}_X)$ and $F^\CSchc_\Manc(Y)=(Y,\bs{\O}_Y)$, for all open $U\subseteq Y$, we define 
\begin{equation*}
\bs f_\sh(U):\bO_Y(U)= \bs C^\iy(U)\longra f_*(\bO_x)(U)=\bO_X(f^{-1}(U))=\bs C^\iy(f^{-1}(U))
\end{equation*}
by $f_\sh(U):c\mapsto c\ci f$ for all $c\in C^\iy(U)$ and $f_{\sh}^{\rex}(U):c'\mapsto c'\ci f$ for all $c'\in \Ex(U)$. Then $\bs f_\sh(U)$ is a morphism of $C^\iy$ rings with corners, and $\bs f_\sh:\bs{\O}_Y\ra f_*(\bs{\O}_X)$ is a morphism of sheaves of $C^\iy$-rings with corners on $Y$. Let $\bs f^\sh:f^{-1}(\bs{\O}_Y)\ra\bs{\O}_X$ correspond to $\bs f_\sh$ under \eq{cc2eq8}. Then $F^\CSchc_\Manc(f)=\bs f=(f,\bs f^\sh): (X,\bs{\O}_X)\ra(Y,\bs{\O}_Y)$ is a morphism in $\CSchc$. Define a functor $F^\CSchcin_\Mancin:\Mancin\ra\CSchcin$ by restriction of $F^\CSchc_\Manc$ to~$\Mancin$.

All the above generalizes immediately from manifolds with corners to manifolds with g-corners in \S\ref{cc33}, giving functors $F^\CSchc_\Mangc:\Mangc\ra\CSchc$ and $F^\CSchcin_\Mangcin:\Mangcin\ra\CSchcin$. It also generalizes to manifolds with \hbox{(g-)corners} of mixed dimension $\cManc,\ldots,\cMangcin$.

These functors $F^\CSchc_\Manc,\ldots,F^\CSchcin_\Mangcin$ map to various subcategories of $\CSchc,\CSchcin$ defined in \S\ref{cc56}, including the subcategories $\CSchctoex,\ab\CSchcto$ of {\it toric\/} $C^\iy$-schemes with corners. We will write $F^\CSchcto_\Manc,\ldots$ for $F^\CSchc_\Manc,\ldots$ considered as mapping to these subcategories.

We say that a $C^\iy$-scheme with corners $\bX$ {\it is a manifold with corners\/} (or {\it is a manifold with g-corners\/}) if $\bX\cong F_\Manc^\CSchc(X)$ for some $X\in\Manc$ (or $\bX\cong F_\Mangc^\CSchc(X)$ for some~$X\in\Mangc$).
\end{dfn}

\begin{thm}
\label{cc5thm5}
Let\/ $X$ be a manifold with corners, or a manifold with g-corners, and\/ $\bX=F^\CSchc_\Manc(X)$ or\/ $\bX=F^\CSchc_\Mangc(X)$. Then:
\smallskip

\noindent{\bf (a)} If\/ $X$ is a manifold with faces or g-faces, then $\bX$ is an affine $C^\iy$-scheme with corners, and is isomorphic to $\Specc\bs C^\iy(X)$. It is also an interior affine $C^\iy$-scheme with corners, with\/~$\bX\cong\Speccin\bs C_\rin^\iy(X)$.

If\/ $X$ is a manifold with corners, but not with faces, then $\bX$ is not affine.
\smallskip

\noindent{\bf (b)} In general, $\bX$ is an interior $C^\iy$-scheme with corners. 

\smallskip
\noindent{\bf (c)} The functors $F^\CSchc_\Manc,F^\CSchcin_\Mancin,F^\CSchc_\Mangc,F^\CSchcin_\Mangcin$ are fully faithful.
\end{thm}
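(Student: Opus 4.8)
\textbf{Proof proposal for Theorem \ref{cc5thm5}.}

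The plan is to prove the three parts essentially in sequence, since (a) and (b) together set up the affine/interior structure that (c) builds on, although logically (c) relies mostly on full faithfulness of the manifold embeddings into (pre) $C^\iy$-rings with corners established in Chapter \ref{cc4}. First I would prove (a). Given $X$ a manifold with faces (or g-faces), I would construct the natural morphism $\bs f:\bX\ra\Specc\bs C^\iy(X)$ using Theorem \ref{cc5thm3}: it is $R_{\bs C^\iy(X),\bX}$ applied to $\bs\id$, equivalently the morphism $L_{\bs C^\iy(X),\bX}(\bs\Xi)$ adjoint to the identity on global sections. To show $\bs f$ is an isomorphism in $\LCRSc$ it suffices to check it is a homeomorphism on underlying spaces and an isomorphism on stalks. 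On the underlying $C^\iy$-scheme level, this is exactly the classical statement $\uX\cong\Spec C^\iy(X)$ from Theorem \ref{cc2thm4}, since $\O_X$ and the underlying sheaf of $\Specc\bs C^\iy(X)$ agree as sheaves of $C^\iy$-rings. The new content is the monoid/exterior sheaf: on stalks this reduces to showing $\bs\la_x:(\bs C^\iy(X))_{x_*}\ra\bs C^\iy_x(X)$ of Example \ref{cc4ex5} is an isomorphism, which is precisely Proposition \ref{cc4prop9} using the manifold-with-faces hypothesis (respectively Definition \ref{cc4def12} in the g-corners case, via Proposition \ref{cc4prop9}'s analogue). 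Combined with Proposition \ref{cc5prop1} identifying stalks of $\Specc\bs C^\iy(X)$ with localizations $\bs C^\iy(X)_{x_*}$, this gives $\bX\cong\Specc\bs C^\iy(X)$. The interior statement $\bX\cong\Speccin\bs C^\iy_\rin(X)$ follows the same way using Definition \ref{cc5def5} and the fact that $\bs C^\iy_\rin(X)$ is interior by Example \ref{cc4ex3}(b), together with the fact (Remark \ref{cc3rem3}) that the exterior sheaf is the sheafification of $\O_X^\rin\amalg\{0\}$. For the negative statement (teardrop-type examples), if $X$ is a manifold with corners but not faces, then by Proposition \ref{cc4prop9} some $\bs\la_x$ fails to be an isomorphism (indeed $\la_{x,\rex}$ fails surjectivity), so the unit $\bs\Xi_{\bs C^\iy(X)}$ cannot be an isomorphism at that stalk, hence $\bX\not\cong\Specc\bfC$ for any $\bfC$ — here one uses that if $\bX$ were affine, $\bX\cong\Specc\Gac(\bX)$ by the analogue of Proposition \ref{cc2prop8}, contradicting the stalk computation.

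Next, part (b): an arbitrary manifold with (g-)corners $X$ is locally modelled on $\R^n_k$ or $X_P$, and by Example \ref{cc4ex6} every manifold with g-corners is covered by open sets which are manifolds with g-faces, while every manifold with corners is trivially covered by open sets $U\subseteq\R^n_k$ which are manifolds with faces. Restricting the sheaf $\bO_X$ to such a $U$ gives $F^\CSchc_\Manc(U)$ (or the g-corners analogue) by the sheaf property, so by part (a) each $(U,\bO_X\vert_U)$ is an interior affine $C^\iy$-scheme with corners. Hence $\bX$ is covered by interior affine opens, so $\bX\in\CSchcin$ (in particular $\bX\in\CSchc$). The interiority of the stalks — needed to place $\bX$ in $\LCRScin$ — follows since each stalk $\bO_{X,x}\cong\bs C^\iy_x(X)$ is interior by Example \ref{cc4ex5}.

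Finally, part (c). The cleanest route is to factor $F^\CSchc_\Manc$ through the fully faithful functors already in hand. By Example \ref{cc4ex3}, $F^\CRingsc_\Manc:\Manc\ra(\CRingsc)^{\bf op}$ is defined, and by Theorem \ref{cc5thm3} there is a counit relating $\Gac\ci\Specc$; for $X$ a manifold with faces, part (a) gives $F^\CSchc_\Manc(X)\cong\Specc\bs C^\iy(X)$ and $\Gac\ci F^\CSchc_\Manc(X)\cong\bs C^\iy(X)$, so morphisms $\bX\ra\bY$ in $\CSchc$ correspond, via the adjunction $L,R$ of \eqref{cc5eq1}, to morphisms $\bs C^\iy(Y)\ra\bs C^\iy(X)$ in $\CRingsc$ — provided $\bfC = \bs C^\iy(Y)$ is complete enough that $R$ is a bijection, which part (a)'s identification $\bX\cong\Specc\bs C^\iy(X)$ gives. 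I would then invoke that $F^\CRingsc_\Manc$ is full and faithful on manifolds with faces — which can be extracted from the classical result (Example \ref{cc2ex1}, Theorem \ref{cc2thm4}) that $C^\iy(-):\Man\ra\CRings^{\bf op}$ is full and faithful, upgraded to corners using that a morphism of $C^\iy$-rings with corners $\bs C^\iy(Y)\ra\bs C^\iy(X)$ is determined by its $C^\iy$-ring part $C^\iy(Y)\ra C^\iy(X)$ on the full subcategory where $X$ is a manifold with faces, together with the observation that such a morphism automatically respects the exterior monoid structure (via the characterization of smoothness/$b$-maps in Definition \ref{cc3def1} in terms of orders of vanishing, which are determined by the underlying real-valued pullbacks). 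To pass from manifolds with faces to general (g-)corners one uses a covering argument: morphisms of $C^\iy$-schemes are determined by their restrictions to an affine open cover, and morphisms of manifolds are determined by restriction to an open cover, so full faithfulness on the face/$g$-face case propagates. I expect the main obstacle to be precisely this last point in (c): verifying that a morphism $\bs\phi=(\phi,\phi_\rex):\bs C^\iy(Y)\ra\bs C^\iy(X)$ of $C^\iy$-rings with corners necessarily has the form $\bs\phi = F^\CRingsc_\Manc(f)$ for a (b-map) $f:X\ra Y$ — i.e. that $\phi_\rex$ is forced by $\phi$ and that the resulting $f$ is smooth in the sense of Definition \ref{cc3def2}, not merely weakly smooth. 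This is where the condition in Definition \ref{cc4def6}(i)–(iii) (invertible exterior elements have logs) and the $b$-map local normal form $g = y_1^{a_1}\cdots y_n^{a_n}\exp(h)$ from \eqref{cc4eq4} do the real work, ruling out pathological monoid morphisms and pinning down the vanishing multiplicities; this essentially re-runs the argument of \cite[Th.~4.41]{Joyc6} cited in Example \ref{cc4ex5}.
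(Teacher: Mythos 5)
Your treatment of the positive half of (a) and of (b) follows the paper's proof closely, and your route for (c) (reducing via the adjunction of Theorem \ref{cc5thm3} to full faithfulness of $X\mapsto\bs C^\iy(X)$ on manifolds with (g-)faces, then gluing over affine covers) is a reasonable reorganization of the same core content as the paper's affine-local argument, with the crux (that a morphism $\bs C^\iy(V)\ra\bs C^\iy(U)$ in $\CRingsc$ comes from a b-map) correctly identified. The genuine gap is in the ``not affine'' half of (a). You argue that if $\bX$ were affine then $\bX\cong\Specc\Gac(\bX)$ ``by the analogue of Proposition \ref{cc2prop8}''. But that analogue is precisely what fails with corners: Remark \ref{cc5rem3} and Example \ref{cc5ex1} show that $\Specc\bs\Xi_\bfC:\Specc\ci\Gac\ci\Specc\bfC\ra\Specc\bfC$ need not be an isomorphism, and this failure is the whole reason \S\ref{cc55} introduces semi-complete $C^\iy$-rings with corners; even Theorem \ref{cc5thm6}(b) only yields $\bX\cong\Specc\bfD$ with $\bfD\ra\Gac(\bX)$ injective (not surjective) on the monoid part. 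So your reduction from ``$\bX\cong\Specc\bfC$ for some $\bfC$'' to ``$\bX\cong\Specc$ of its own global sections'' is unjustified, and the stalk computation with $\bs\la_x$ alone does not exclude $\bX\cong\Specc\bfC$ for some other, unrelated $\bfC$.

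The step can be repaired without any completeness statement, and this is what the paper does: if $\bX\cong\Specc\bfC$ for any $\bfC$, then by Theorem \ref{cc4thm4}(c) the localization morphism $\bfC\ra\bfC_x\cong\bO_{X,x}=\bs C^\iy_x(X)$ is surjective, in particular on the exterior part; on the other hand this morphism factors through $\bs\Xi_\bfC:\bfC\ra\Gac(\bX)=\bs C^\iy(X)$ followed by the germ map $\bs C^\iy(X)\ra\bs C^\iy_x(X)$, whose exterior part factors through $\la_{x,\rex}$. When $X$ has corners but not faces, Proposition \ref{cc4prop9} provides a point $x$ at which $\la_{x,\rex}$ is not surjective, so the image of $\bfC_\rex$ in $C^\iy_{x,\rex}(X)$ is proper, contradicting surjectivity of the localization. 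You should replace your appeal to a corners version of Proposition \ref{cc2prop8} with this argument.
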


\begin{proof} For (a), let $X$ be a manifold with corners or a manifold with g-corners, and write $\bX=(X,\bs{\O}_X)$. Note that $\Gac(\bX)=\bO_X(X)=\bs C^\iy(X)$ by the definition of $\bO_X$. 
Consider the map $L_{\bfC,\bX}$ in \eq{cc5eq1}. In the notation of Theorem \ref{cc5thm3}, if we let $\bfC=\Gac(\bX)=\bs C^\iy(X)$, then $L_{\bs C^\iy(X),\bX}$ is a bijection 
\begin{align*}
&L_{\bs C^\iy(X),\bX}:\Hom_\CRingsc(\bs C^\iy(X),\bs C^\iy(X))\\
&\qquad \longra \Hom_\LCRSc(\bX,\Specc\bs C^\iy(X)).
\end{align*}
Write $\bY=\Specc\bs C^\iy(X)$, and define a morphism $\bs g=(g,\bs g^\sh):\bX\ra\bY$ in $\LCRSc$ by $\bs g=L_{\bs C^\iy(X),\bX}(\id_{\bs C^\iy(X)})$. 

The continuous map $g:X\ra Y$ is defined in the proof of Theorem \ref{cc5thm3} by $g(x)=x_*\ci \id_{C^\iy(X)}$, where $x_*$ is the evaluation map at the point $x\in X$. This is a homeomorphism of topological spaces as in the proof of \cite[Th.~4.41]{Joyc9}. On stalks at each $x\in X$ we have $\bs g_x^\sh=\bs\la_x:(\bs C^\iy(X))_{x_*}\ra\bs C^\iy_x(X)$, where $\bs\la_x$ is as in Example \ref{cc4ex5}. If $X$ has faces then $\bs\la_x$ is an isomorphism by Proposition \ref{cc4prop9}, and if $X$ has g-faces then $\bs\la_x$ is an isomorphism by Definition \ref{cc4def12}. Hence if $X$ has (g-)faces then $g,\bs g^\sh$ and $\bs g$ are isomorphisms, and~$\bX\cong \Specc\bs C^\iy(X)$.

Essentially the same proof for the interior case, using $\Speccin,\Gacin$ adjoint by Theorem \ref{cc5thm4}, shows that $\bX\cong\Speccin\bs C^\iy_\rin(X)$ if $X$ has (g-)faces.

If $X$ has corners, but not faces, then Proposition \ref{cc4prop9} gives $x\in X$ such that $\bs g^\sh_x=\bs\la_x$ is not an isomorphism. The proof shows that $\la_{x,\rex}$ is not surjective. If $\bX\cong\Specc\bfC$ for some $C^\iy$-ring with corners $\bfC$ then $\bfC_x\cong\bO_{X,x}=\bs C^\iy_x(X)$, and the localization morphism $\bfC\ra\bfC_x$ is surjective. But $\bfC\ra\bfC_x\cong\bs C^\iy_x(X)$ factors as $\bfC\ra\bs C^\iy(X)\,{\buildrel\bs\la_x\over\longra}\,\bs C^\iy_x(X)$, and $\la_{x,\rex}$ is not surjective, a contradiction. Hence $\bX$ is not affine, completing part~(a).

For (b), for any point $x\in X$, we can find an open neighbourhood $U$ of $x$ which is a manifold with (g-)faces, as in Example \ref{cc4ex6} in the g-corners case. Then $\bU\cong\Specc\bs C^\iy(U)\cong\Speccin\bs C^\iy_\rin(U)$ by (a). So $\bX$ can be covered by open $\bU\subseteq\bX$ which are (interior) affine $C^\iy$-schemes with corners, and $\bX$ is an (interior) $C^\iy$-scheme with corners.

For (c), if $f,g:X\ra Y$ are morphisms in $\Manc$ and $(f,\bs f^\sh)=F^\CSchc_\Manc(f)\ab =F^\CSchc_\Manc(g)=(g,\bs g^\sh)$, we see that $f=g$, so $F^\CSchc_\Manc$ is faithful. Suppose $\bs h=(h,\bs{h}^\sh):\bX\ra\bY$ is a morphism in $\CSchc$, and $x\in X$ with $h(y)=y$ in $Y$. We may choose open $y\in V\subseteq Y$ and $x\in U\subseteq h^{-1}(V)\subseteq X$ such that $\bU=(U,\bs\O_X\vert_U)$ and $\bs V=(V,\bs\O_Y\vert_V)$ are affine. Then $\bs h^\sh$ induces a morphism $\bs h_{UV}^\sh:\bs\O_Y(V)=\bs C^\iy(V)\ra\bs\O_X(U)=\bs C^\iy(U)$ in~$\CRingsc$. 

By considering restriction to points in $U,V$ we see that $\bs h_{UV}^\sh$ maps $C^\iy(V)\ra C^\iy(U)$ and $C^\iy_\rex(V)\ra C^\iy_\rex(U)$ by $c\mapsto c\ci h\vert_U$ for $c$ in $C^\iy(V)$ or $C^\iy_\rex(V)$. Thus, if $c:V\ra\R$ is smooth, or $c:V\ra[0,\iy)$ is exterior, then $c\ci h\vert_U:U\ra\R$ is smooth, or $c\ci h\vert_U:U\ra[0,\iy)$ is exterior. Since $V$ is affine, this implies that $h\vert_U:U\ra V$ is smooth, and $\bs h\vert_{\bU}=F_\Manc^\CSchc(h\vert_U)$. As we can cover $X,Y$ by such $U,V$, we see that $h$ is smooth and $\bs h=F_\Manc^\CSchc(h)$. Hence $F^\CSchc_\Manc$ is full. The argument for $F^\CSchcin_\Mancin,\ldots,F^\CSchcin_\Mangcin$ is essentially the same.
\end{proof}

Theorem \ref{cc5thm5} shows that we can regard $\Manc,\Mangc$ (or $\Mancin,\Mangcin$) as full subcategories of $\CSchc$ (or $\CSchcin$), and regard $C^\iy$-schemes with corners as generalizations of manifolds with (g-)corners.

\subsection{\texorpdfstring{Semi-complete $C^\iy$-rings with corners}{Semi-complete C∞-rings with corners}}
\label{cc55}

Proposition \ref{cc2prop8} says $\Spec\Xi_\fC:\Spec\ci\Ga\ci\Spec\fC\ra\Spec\fC$ is an isomorphism for any $C^\iy$-ring $\fC$. This was used in \S\ref{cc25} to define the category $\CRingsco$ of {\it complete\/} $C^\iy$-rings (those isomorphic to $\Ga\ci\Spec\fC$), such that $(\CRingsco)^{\bf op}$ is equivalent to the category of affine $C^\iy$-schemes $\ACSch$. This can be used to prove that fibre products and finite limits exist in~$\CSch$.

We now consider to what extent these results generalize to the corners case. Firstly, it turns out that $\Specc\bs\Xi_\bfC$ need not be an isomorphism for $C^\iy$-rings with corners $\bfC$. The next remark and example explain what can go wrong.

\begin{rem}
\label{cc5rem3}
Let $\bfC=(\fC,\fC_\rex)$ be a $C^\iy$-ring with corners. Write $\bX=(X,\bO_X)=\Specc\bfC$, an affine $C^\iy$-scheme with corners. The global sections $\Gac(\bX)=\bO_X(X)$ is a $C^\iy$-ring with corners, with a morphism $\bs\Xi_\bfC:\bfC\ra\bO_X(X)$, so we have a morphism of $C^\iy$-schemes with corners
\e
\Specc\bs\Xi_\bfC=\Specc(\Xi,\Xi_\rex):\Specc\ci\Gac\ci\Specc\bfC\longra\Specc\bfC.
\label{cc5eq4}
\e
We want to know how close \eq{cc5eq4} is to being an isomorphism.

First note that as the underlying $C^\iy$-scheme of $\bX$ is $\uX=\Spec\fC$, Proposition \ref{cc2prop8} shows \eq{cc5eq4} is an isomorphism on the level of $C^\iy$-schemes, and hence of topological spaces. Thus \eq{cc5eq4} is an isomorphism if $\Xi_\rex$ induces an isomorphism of sheaves of monoids. It is sufficient to check this on stalks. Therefore, \eq{cc5eq4} is an isomorphism if the following is an isomorphism for all~$x\in X$:
\begin{equation*}
\Xi_{x,\rex}:\fC_{x,\rex}\cong \O^\rex_{X,x}\longra(\O_X^\rex(X))_{x_*}.
\end{equation*}

Consider the diagram of monoids for $x\in X$:
\e
\begin{gathered}
\xymatrix@C=100pt@R=18pt{ *+[r]{\fC_\rex} \ar@{->>}[r]^(0.4){\pi_{x,\rex}} \ar[d]^{\Xi_\rex} & *+[l]{\fC_{x,\rex}\cong \O^\rex_{X,x_{\vphantom{(}}}} \ar@<-1ex>@{^{(}->}[d]_{\Xi_{x,\rex}} \\
*+[r]{\O_X^\rex(X)} \ar@{.>}[ru]^(0.4){\rho^\rex_{X,x}} \ar@{->>}[r]_(0.4){\hat\pi_{x,\rex}}& *+[l]{(\O_X^\rex(X))_{x_*}.} }
\end{gathered}
\label{cc5eq5}
\e
Here $\rho_{X,x}^\rex$ takes an element $s'\in \O_X^\rex(X)$ to its value in the stalk $\O_{X,x}^\rex$. It is not difficult to show that the upper left triangle and the outer rectangle in \eq{cc5eq5} commute, $\pi_{x,\rex},\hat\pi_{x,\rex}$ are surjective, and $\Xi_{x,\rex}$ is injective. However, as Example \ref{cc5ex1} shows, $\Xi_{x,\rex}$ need not be surjective (in which case \eq{cc5eq4} is not an isomorphism), and the bottom right triangle of \eq{cc5eq5} need not commute.

That is, if two elements of $\O_X^\rex(X)$ agree locally, then while they have the same image in the stalk $\O^\rex_{X,x}$ they do not necessarily have the same value in the localization of $\O_X^\rex(X)$ at $x_*$. This is because for $c',d'\in \fC_\rex$, equality in $\fC_{\rex,x}$ requires a global equality. That is, there need to be $a',b'\in \fC_\rex$ such that $a'c'=b'd'\in\fC_\rex$ with $a',b'$ satisfying additional conditions as in Proposition \ref{cc4prop8}. In the $C^\iy$-ring $\fC$, this equality is only a local equality, as the $a'$ and $b'$ can come from bump functions. However, in the monoid, bump functions do not necessarily exist, meaning this condition is stronger and harder to satisfy. 

In the following example, $\bfC$ is interior, and the above discussion also holds with $\Speccin,\Gacin$ in place of $\Specc,\Gac$. 
\end{rem}

\begin{ex}
\label{cc5ex1}
Define open subsets $U,V\subset\R^2$ by $U=(-1,1)\t(-1,\iy)$ and $V=(-1,1)\t(-\iy,1)$. The important properties for our purposes are that $\R^2\sm U$, $\R^2\sm V$ are connected, but $\R^2\sm(U\cup V)$ is disconnected, being divided into connected components $(-\iy,-1]\t\R$ and~$[1,\iy)\t\R$.

Choose smooth $\vp,\psi:\R^2\ra\R$ such that $\vp\vert_U>0$, $\vp\vert_{\R^2\sm U}=0$, $\psi\vert_V>0$, $\psi\vert_{\R^2\sm V}=0$, so $\vp,\psi$ map to $[0,\iy)$. Define a $C^\iy$-ring with corners $\bfC$ by
\begin{align*}
\bfC=\R(x_1,x_2)[y_1,\ldots &,y_6]\big/\bigl(\Phi_i(y_1)\!=\!\Phi_i(y_2)\!=\!0,\; \Phi_i(y_3)=\Phi_i(y_4)=\vp(x_1,x_2), \\
&\Phi_i(y_5)\!=\!\Phi_i(y_6)\!=\!\psi(x_1,x_2)\bigr)\bigl[y_1y_3\!=\!y_2y_4,\; y_1y_5\!=\!y_2y_6\bigr],
\end{align*}
using the notation of Definition \ref{cc4def8}. Let $\bX=(X,\bO_X)=\Specc\bfC$, an affine $C^\iy$-scheme with corners. As a topological space we write
\begin{align*}
X=\bigl\{(x_1,x_2,y_1,\ldots,y_6)\in\R^2\t[0,\iy)^6:y_1&=y_2=0,\; y_3=y_4=\vp(x_1,x_2),\\
y_5&=y_6=\psi(x_1,x_2)\bigr\}.
\end{align*}
The projection $X\ra\R^2$ mapping $(x_1,x_2,y_1,\ldots,y_6)\mapsto (x_1,x_2)$ is a homeomorphism, and we use this to identify $X\cong\R^2$. 

We will show that \eq{cc5eq4} is {\it not\/} an isomorphism, in contrast to Proposition \ref{cc2prop8} for $C^\iy$-rings and $C^\iy$-schemes.

For each $x\in X$ we have a monoid $\O_{X,x,\rex}\cong\fC_{x,\rex}$ by Proposition \ref{cc5prop1}, containing elements $\pi_{x,\rex}(y_j)$ for $j=1,\ldots,6$. If $x\in U$ then using Proposition \ref{cc4prop8} and the relations $\Phi_i(y_3)=\Phi_i(y_4)=\vp(x_1,x_2)$, $y_1y_3=y_2y_4$, and $\vp(x)>0$ we see that $\pi_{x,\rex}(y_1)=\pi_{x,\rex}(y_2)$. Similarly if $x\in V$ then  $\Phi_i(y_5)=\Phi_i(y_6)=\psi(x_1,x_2)$, $y_1y_5=y_2y_6$, and $\psi(x)>0$ give $\pi_{x,\rex}(y_1)=\pi_{x,\rex}(y_2)$. Thus as $U\cup V=(-1,1)\t\R$ we may define an element $z\in\O^\rex_X(X)$ by
\begin{equation*}
z(x_1,x_2)=\begin{cases} \pi_{x,\rex}(y_1), & x\le -1, \\
\pi_{x,\rex}(y_1)=\pi_{x,\rex}(y_2), & x\in(-1,1), \\
\pi_{x,\rex}(y_2), & x\ge 1.
\end{cases}
\end{equation*}

We will show that for $x=(-2,0)$ in $X$ there does not exist $c'\in\fC_\rex$ with $\Xi_{x,\rex}\ci\pi_{x,\rex}(c')=\hat\pi_{x,\rex}(z)$ in $(\O_X^\rex(X))_{x_*}$. Thus $\Xi_{x,\rex}:\O^\rex_{X,x}\ra (\O_X^\rex(X))_{x_*}$ is not surjective. This implies that the bottom right triangle of \eq{cc5eq5} does not commute at $z\in\O_X^\rex(X)$. Also $\Specc\bs\Xi_\bfC$ in \eq{cc5eq4} is not an isomorphism, as $\Xi_{x,\rex}$ is part of the action of $\Specc\bs\Xi_\bfC$ on stalks at $x$, and is not an isomorphism.

Suppose for a contradiction that such $c'$ does exist. Then $\hat\pi_{x,\rex}\ci\Xi_\rex(c')=\hat\pi_{x,\rex}(z)$ as the outer rectangle of \eq{cc5eq5} commutes, so by Proposition \ref{cc4prop8} there exist $a',b'\in \O_X^\rex(X)$ with $a'(x)=b'(x)\ne 0$ and $a'=b'$ near $x$, such that $a'z=b'\Xi_\rex(c')$. Locally on $X$, $a',b'$ are of the form $\exp(f(x_1,x_2,y_1,\ldots,y_6))y_1^{a_1}\ldots y_6^{a_6}$ or zero, where the transitions between different representations of this form (e.g.\ changing $a_1,\ldots,a_6$) satisfy strict rules. We will consider $a',b'$ in the regions $x_1\le -1$, $x_1\ge 1$, $U\sm V$, $V\sm U$. In each region, $a',b'$ must admit a global representation~$\exp(f)y_1^{a_1}\ldots y_6^{a_6}$.

Since $a'(x)=b'(x)\ne 0$, we see that $a',b'$ are of the form $\exp(f)$ in $x_1\le -1$. On crossing the wall $x=-1$, $y\le -1$ between $x\le -1$ and $U\sm V$, $y_3,y_4$ become invertible, so $a',b'$ may be of the form $\exp(f)y_3^{a_3}y_4^{a_4}$ in $U\sm V$. On crossing the wall $x=1$, $y\le -1$ between $U\sm V$ and $x\ge 1$, we see that $a',b'$ must be of the form $\exp(f)y_3^{a_3}y_4^{a_4}$ in $x_1\ge 1$. Similarly, on crossing the wall $x=-1$, $y\ge 1$ between $x\le -1$ and $V\sm U$, $y_5,y_6$ become invertible, so $a',b'$ may be of the form $\exp(f)y_5^{a_5}y_6^{a_6}$ in $V\sm U$. On crossing the wall $x=1$, $y\ge 1$ we see that $a',b'$ must be of the form $\exp(f)y_5^{a_5}y_6^{a_6}$ in $x_1\ge 1$. Comparing this with the previous statement gives $a_5=a_6=0$. Hence $a',b'$ are invertible at~$(2,0)$.

Now $c'$ is either zero, or of the form $\exp(f)y_1^{a_1}\ldots y_6^{a_6}$. Specializing $a'z=b'\Xi_\rex(c')$ at $(-2,0)$ and using $a',b'$ invertible there gives $c'\ne 0$ with $a_1=0$ and $a_i=0$ for $i\ne 1$. But specializing at $(2,0)$ and using $a',b'$ invertible there gives $a_2=1$ and $a_i=0$ for $i\ne 2$, a contradiction. So no such $c'$ exists.

One can prove that $\bfC$ above is toric, and hence also firm, saturated, torsion-free, integral, and interior, in the notation of \S\ref{cc47}. There are simpler examples of $\bfC$ lacking these properties for which  \eq{cc5eq4} is not an isomorphism and \eq{cc5eq5} does not commute. We chose this example to show that we cannot easily avoid the problem by restricting to nicer classes of $C^\iy$-rings with corners.
\end{ex}

Example \ref{cc5ex1} implies that we cannot generalize complete $C^\iy$-rings in \S\ref{cc25} to the corners case, such that the analogue of Theorem \ref{cc2thm3}(a) holds. But we will use the following proposition to define {\it semi-complete\/} $C^\iy$-rings with corners, which have some of the good properties of complete $C^\iy$-rings. Note that Example \ref{cc5ex1} gives an example where no choice of $\fD_\rex$ can make the canonical map $(\fD,\fD_\rex)\ra\Gac\ci\Specc(\fD,\fD_\rex)$ surjective on the monoids.

\begin{prop}
\label{cc5prop2}
Let\/ $(\fC,\fC_\rex)$ be a $C^\iy$-ring with corners and set\/ $\bX=\Specc(\fC,\fC_\rex)$. Then there is a $C^\iy$-ring with corners $(\fD,\fD_\rex)$ with\/ $\fD\cong \Ga\ci\Spec\fC$ a complete $C^\iy$-ring, such that\/ $\Specc(\fD,\fD_\rex)\cong \bX$ and the canonical map $(\fD,\fD_\rex)\ra \Gac\ci\Specc(\fD,\fD_\rex)$ is an isomorphism on $\fD,$ and injective on $\fD_\rex$. If\/ $(\fC,\fC_\rex)$ is firm, or interior, then $(\fD,\fD_\rex)$ is firm, or interior.
\end{prop}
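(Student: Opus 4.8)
The plan is to construct $(\fD,\fD_\rex)$ explicitly by taking $\fD = \Ga\ci\Spec\fC$ the complete $C^\iy$-ring associated to $\fC$, and defining $\fD_\rex$ to be the image of $\O_X^\rex(X)$ inside $\prod_{x\in X}\O_{X,x}^\rex$, that is, the set of global sections of $\bO_X$ with the local-agreement identifications quotiented out. More precisely, I would set $\fD_\rex = \O_X^\rex(X)/\!\approx$, where $s'\approx t'$ if $s'$ and $t'$ have the same image in every stalk $\O_{X,x}^\rex$; then $\fD_\rex$ injects into $\prod_{x\in X}\O_{X,x}^\rex$ by construction, which will give the desired injectivity of $(\fD,\fD_\rex)\ra\Gac\ci\Specc(\fD,\fD_\rex)$ on the monoid part. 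The $C^\iy$-operations $\Phi_f,\Psi_g$ on $(\fD,\fD_\rex)$ are defined pointwise on stalks, exactly as in Definition \ref{cc5def4}, and one checks these descend to the quotient; the condition of Definition \ref{cc4def6}(i) holds because invertibility in $\fD_\rex$ can be checked stalkwise and $\bO_{X,x}$ is a $C^\iy$-ring with corners for each $x$.

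First I would verify $\fD\cong\Ga\ci\Spec\fC$ is a complete $C^\iy$-ring — this is immediate from Proposition \ref{cc2prop8} and Definition \ref{cc2def19}, since $\Ga\ci\Spec\fC = \O_X(X)$ where $\uX = \Spec\fC$ is the underlying $C^\iy$-scheme of $\bX$. Next I would check that $(\fD,\fD_\rex)$ really is a $C^\iy$-ring with corners, using the pointwise-on-stalks definition of operations and the fact that $\bO_{X,x}\in\CRingsc$ for all $x$; the key point is that the equivalence relation $\approx$ is a congruence for all the $C^\iy$-operations, which follows because each operation is computed stalkwise. Then comes the crucial identification $\Specc(\fD,\fD_\rex)\cong\bX$: since $\Spec\fD\cong\Spec\fC$ as $C^\iy$-schemes by Proposition \ref{cc2prop8}, the underlying topological spaces and $\O_X$-sheaves agree, so I only need to show the sheaves of monoids $\O_{X_\fD}^\rex$ and $\O_X^\rex$ agree, which reduces to showing the stalks agree: I claim $\fD_{x,\rex}\cong\fC_{x,\rex}\cong\O_{X,x}^\rex$ for each $x$. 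This should follow by comparing the universal properties of the localizations $(\fD,\fD_\rex)_x$ and $(\fC,\fC_\rex)_x$ — the $\R$-point $x$ of $\fD$ restricts to the $\R$-point $x$ of $\fC$ under $\bs\Xi_\bfC$, and the localization of $\fD_\rex$ at $x$ collapses the $\approx$-relation down to the local one, recovering $\fC_{x,\rex}$ via Proposition \ref{cc4prop8}.

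After establishing $\Specc(\fD,\fD_\rex)\cong\bX$, the statement that $(\fD,\fD_\rex)\ra\Gac\ci\Specc(\fD,\fD_\rex)$ is an isomorphism on $\fD$ follows since $\fD$ is complete, and injectivity on $\fD_\rex$ follows from the bottom-right triangle of \eqref{cc5eq5}: by construction two elements of $\fD_\rex$ with the same image in some localization $(\O_X^\rex(X))_{x_*}$ already have the same image in all stalks (this is essentially the content of the diagram chase in Remark \ref{cc5rem3}, run in reverse for $\fD_\rex$ which was defined as a submonoid of the product of stalks), hence are equal in $\fD_\rex$. Finally, for the preservation of ``firm'' and ``interior'': if $\fC_\rex^\sh$ is finitely generated then $\O_{X,x}^\rex{}^\sh$ is finitely generated for each $x$ (by Lemma \ref{cc5lem2}-type arguments, or since the generators of $\fC_\rex^\sh$ map to generators stalkwise), and $\fD_\rex^\sh$ is generated by the images of global sections realizing these, so $\fD$ is firm; if $\fC$ is interior then $\bX$ is an interior $C^\iy$-scheme with corners by Definition \ref{cc5def5}, so each stalk $\O_{X,x}^\rex$ has no zero divisors, hence neither does $\fD_\rex\subseteq\prod_x\O_{X,x}^\rex$, so $(\fD,\fD_\rex)$ is interior.

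The main obstacle I expect is the identification of stalks $\fD_{x,\rex}\cong\fC_{x,\rex}$, and consequently $\Specc(\fD,\fD_\rex)\cong\bX$. The subtlety is that $\fD_\rex$ is defined as a quotient of $\O_X^\rex(X)$ by global stalkwise-equality, and one must show that localizing this at an $\R$-point $x$ — which by Proposition \ref{cc4prop8} involves finding $a',b'$ with $\Phi_i(a')-\Phi_i(b')\in I$ and $x\ci\Phi_i(a')\ne 0$ — recovers exactly $\fC_{x,\rex}$ and not something smaller or larger. Here the asymmetry noted in Remark \ref{cc5rem3} (local equality in the $C^\iy$-ring versus the stronger condition in the monoid) cuts the right way: the elements $a',b'$ witnessing the localization of $\fD_\rex$ at $x$ only need to agree near $x$, and bump-function-type arguments in $\fD = \Ga\ci\Spec\fC$ (which, being complete, has enough such functions locally) let one produce them; one then transports this back to $\fC_{x,\rex}$ using surjectivity of $\pi_{x,\rex}$ from Theorem \ref{cc4thm4}(c). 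I would handle this by carefully chasing the universal property of Definition \ref{cc4def11} for both $(\fC,\fC_\rex)_x$ and $(\fD,\fD_\rex)_x$ against an arbitrary test object, showing they corepresent the same functor.
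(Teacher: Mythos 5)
There is a genuine gap, and it lies in the very first step: your choice of $\fD_\rex$. Since $\O_X^\rex$ is a sheaf (the forgetful functor to sets preserves limits, so the monoid part of $\bO_X$ satisfies the sheaf axioms), two global sections with equal germs at every point are already equal; hence your equivalence relation $\approx$ is trivial and your $\fD_\rex$ is just $\O_X^\rex(X)$. In other words your $(\fD,\fD_\rex)$ is exactly $\Gac\ci\Specc(\fC,\fC_\rex)$, and the claim $\Specc(\fD,\fD_\rex)\cong\bX$ becomes the statement $\Specc\ci\Gac\ci\Specc\cong\Specc$ — which is precisely what fails in the corners setting. Remark \ref{cc5rem3} explains the mechanism and Example \ref{cc5ex1} gives a concrete (toric, firm, interior) $\bfC$ for which the natural map $\Xi_{x,\rex}:\O_{X,x}^\rex\ra(\O_X^\rex(X))_{x_*}$ is injective but not surjective: the localization of the full global-sections monoid at $x$ is strictly larger than the stalk $\fC_{x,\rex}$, because of globally glued sections like the element $z$ there. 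Your ``main obstacle'' paragraph hopes that bump-function arguments in $\fD$ produce the witnesses $a',b'$ required by Proposition \ref{cc4prop8}, but Remark \ref{cc5rem3} points out exactly that bump functions do not exist in the monoid $\fD_\rex$, so local agreement does not imply the global monoid identity $a'c_1'=b'c_2'$; this is where your stalk identification $\fD_{x,\rex}\cong\fC_{x,\rex}$ breaks down. The firmness claim also becomes problematic with the larger monoid, since $\O_X^\rex(X)^\sh$ need not be finitely generated even when $\fC_\rex^\sh$ is.

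The repair is to make $\fD_\rex$ smaller: the paper's proof keeps $\fD=\Ga\ci\Spec\fC=\O_X(X)$ but takes $\fD_\rex$ to be only the submonoid of $\O_X^\rex(X)$ generated by the invertibles $\Psi_{\exp}(\fD)$ together with the image of $\fC_\rex$ under the canonical map. With this choice, injectivity of $(\fD,\fD_\rex)\ra\Gac\ci\Specc(\fD,\fD_\rex)$ on $\fD_\rex$ is built in (it is a submonoid of $\O_X^\rex(X)$), firmness and interiority are immediate from the generating set, and — crucially — surjectivity of the comparison $\psi_{x,\rex}:\fC_{x,\rex}\ra\fD_{x,\rex}$ can be proved because every element of $\fD_\rex$ factors as (image of an element of $\fC_\rex$)$\,\cdot\,$(unit), the unit being handled via the isomorphism on invertibles coming from $\fC_x\cong\fD_x$; injectivity is then an application of Proposition \ref{cc4prop8} together with Theorem \ref{cc4thm4}(c) and the invertibility criterion of Definition \ref{cc4def6}(i). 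So the overall architecture you envisage (fix $\fD$ complete, compare stalks, use Proposition \ref{cc4prop8}) is the right one, but it only goes through after cutting $\fD_\rex$ down to this generated submonoid; with the full global-sections monoid the proposition is false by Example \ref{cc5ex1}.
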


\begin{proof}
We define $(\fD,\fD_\rex)$ such that $\fD=\Ga\ci\Spec\fC=\O_X(X)$, and let $\fD_\rex$ be the submonoid of $\O_X^\rex(X)$ generated by the invertible elements $\Psi_{\exp}(\fD)$ and the image $\phi_\rex(\fC_\rex)$. One can check that the $C^\iy$-operations from $(\O_X(X),\O_X^\rex(X))$ restrict to $C^\iy$-operations on $(\fD,\fD_\rex)$, and make $(\fD,\fD_\rex)$ into a $C^\iy$-ring with corners. Let $\bY=\Specc(\fD,\fD_\rex)$. If $(\fC,\fC_\rex)$ is firm, then $(\fD,\fD_\rex)$ is firm, as the sharpening $\fD_\rex^\sh$ is the image of $\fC_\rex^\sh$ under $\phi_\rex$, hence the image of the generators generates $\fD_\rex^\sh$. If $(\fC,\fC_\rex)$ is interior, then $(\fD,\fD_\rex)$ is interior, as elements in both $\Psi_{\exp}(\fD)$ and $\phi_\rex(\fC_\rex)$ have no zero divisors. 

Now the canonical morphism $(\phi,\phi_\rex):(\fC,\fC_\rex)\ra \Gac\ci\Specc(\fC,\fC_\rex)$ gives a morphism $(\psi,\psi_\rex):(\fC,\fC_\rex)\ra (\fD,\fD_\rex)$ where $\psi=\phi$, and $\psi_\rex=\phi_\rex$ with its image restricted to the submonoid $\fD_\rex$ of $\O_{X,\rex}(X)$. As $\fD$ is complete, then $\Spec(\fD)\cong\Spec(\fC)\cong (X,\O_X)$, and $\Specc(\psi,\psi_\rex):\bY\cong\Specc(\fD,\fD_\rex)\ra\Specc(\fC,\fC_\rex)\cong \bX$ is an isomorphism on the topological space and the sheaves of $C^\iy$-rings. To show that $\Specc(\psi,\psi_\rex)$ is an isomorphism on the sheaves of monoids, we show $\psi_\rex$ induces an isomorphism on the stalks $\O_{X,x}^\rex\cong\fC_{x,\rex}$ and $\O_{Y,x}^\rex\cong\fD_{x,\rex}$, for all $\R$-points $x\in X$. 

The stalk map corresponds to the morphism $\psi_{x,\rex}:\fC_{x,\rex}\ra\fD_{x,\rex}$, which is defined by $\psi_{x,\rex}(\pi_{x,\rex}(c'))=\hat\pi_{x,\rex}(\psi_\rex(c'))$, where $\pi_{x,\rex}:\fC_\rex\ra\fC_{x,\rex}$ and $\hat\pi_{x,\rex}:\fD_\rex\ra\fD_{x,\rex}$ are the localization morphisms. Now, as $\Specc(\psi,\psi_\rex)$ is an isomorphism on the sheaves of $C^\iy$-rings, we know that $\psi_{x}:\fC_x\ra\fD_x$ is an isomorphism, which implies we have an isomorphism $\psi_{x,\rex}\vert_{\fC_{x,\rex}^\t}:\fC_{x,\rex}^\t\ra\fD_{x,\rex}^\t$ of invertible elements in the monoids. This gives the following commutative diagram of monoids:
\begin{equation*}
\xymatrix@C=50pt@R=17pt{ &*+[r]{\fC_\rex} \ar[r]_{\psi_\rex} \ar@{>>}[d]^{\pi_{x,\rex}} & *+[r]{\fD_\rex\subset\O_X^\rex(X)}  \ar@{>>}[d]^{\hat\pi_{x,\rex}}& \\
*+[r]{\,\fC^{\t}_{x,\rex}\,} \ar@{^{(}->}[r] \ar@<-.8ex>@/_.9pc/[rrr]^(0.45){\cong} &*+[r]{\fC_{x,\rex}}  \ar[r]^{\psi_{x,\rex}}& *+[r]{\,\fD_{x,\rex}\,}& *+[l]{\,\fD^\t_{x,\rex}.}  \ar@{_{(}->}[l] }
\end{equation*}

To show $\psi_{x,\rex}$ is injective, suppose $a_x',b_x'\in\fC_{x,\rex}$ with $\psi_{x,\rex}(a_x')=\psi_{x,\rex}(b_x')$. As $\pi_{x,\rex}$ is surjective we have $a_x'=\pi_{x,\rex}(a')$, $b_x'=\pi_{x,\rex}(b')$ in $\fC_{x,\rex}$ for $a',b'\in \fC_\rex$. Then in $\fD_{x,\rex}$ we have 
\begin{align*}
\hat\pi_{x,\rex}(\psi_\rex(a'))&=\psi_{x,\rex}(\pi_{x,\rex}(a'))=\psi_{x,\rex}(a_x')\\
&=\psi_{x,\rex}(b_x')=\psi_{x,\rex}(\pi_{x,\rex}(b'))=\hat\pi_{x,\rex}(\psi_\rex(b')),
\end{align*}
so Proposition \ref{cc4prop8} gives $e',f'\in\fD_\rex$ such that $e'\psi_\rex(a')=f'\psi_\rex(b')$ in $\fD_\rex$, with $\Phi_i(e')-\Phi_i(f')\in I$ and $x\ci\Phi_i(e')\ne 0$. Hence
\e
\begin{split}
e'\vert_xa'_x&=e'\vert_x\pi_{x,\rex}(a')=e'\vert_x\psi_\rex(a')\vert_x=(e'\psi_\rex(a'))\vert_x\\
&=(f'\psi_\rex(b'))\vert_x=f'\vert_x\psi_\rex(b')\vert_x=f'\vert_x\pi_{x,\rex}(b')=f'\vert_xb'_x
\end{split}
\label{cc5eq6}
\e
in $\fC_{x,\rex}$. Under the maps $\Phi_{i,x}:\fC_{x,\rex}\ra\fC_x$ and $\pi:\fC_x\ra\R$ we have $\Phi_{i,x}(e'\vert_x)=\Phi_{i,x}(f'\vert_x)$ as $\Phi_i(e')-\Phi_i(f')\in I$, and $\pi\ci\Phi_{i,x}(e'\vert_x)=x\ci\Phi_i(e')\ne 0$. Hence $e'\vert_x$ is invertible in $\fC_{x,\rex}$, so $\Phi_{i,x}(e'\vert_x)=\Phi_{i,x}(f'\vert_x)$ and Definition \ref{cc4def6}(i) imply that $e'\vert_x=f'\vert_x$, and thus \eq{cc5eq6} shows that $a_x'=b_x'$, so $\psi_{x,\rex}$ is injective.

To show $\psi_{x,\rex}$ is surjective, let $d_x'\in \fD_{x,\rex}$, and write $d_x'=\hat\pi_{x,\rex}(d')$ for $d'\in \fD_\rex$. As $\fD_\rex$ is generated by $\psi_\rex(\fC_\rex)$ and $\Psi_{\exp}(\fD)$ we have $d'=\psi_\rex(c')\cdot e'$ for $c'\in \fC$ and $e'\in \Psi_{\exp}(\fD)$ invertible. 
Thus $d_x'=\psi_{x,\rex}\ci\pi_{x,\rex}(c')\cdot \hat\pi_{x,\rex}(e')$, where $\hat\pi_{x,\rex}(e')\in\fD_{x,\rex}^\t$. As $\psi_{x,\rex}\vert_{\fC_{x,\rex}^\t}:\fC_{x,\rex}^\t\ra\fD_{x,\rex}^\t$ is an isomorphism and $\pi_{x,\rex}$ is surjective, there exists $c''\in \fC_\rex$ with $\psi_{x,\rex}\ci\pi_{x,\rex}(c'')=\hat\pi_{x,\rex}(e')$. Then $d_x'=\psi_{x,\rex}[\pi_{x,\rex}(c'c'')]$, so $\psi_{x,\rex}$ is surjective. Hence $\psi_{x,\rex}$ is an isomorphism, and $\Specc(\psi,\psi_\rex):\Specc(\fD,\fD_\rex)\ra\Specc(\fC,\fC_\rex=\bX$ is an isomorphism.

Thus there is an isomorphism $\Gac\ci\Specc(\fD,\fD_\rex)\cong \Gac\ci\Specc(\fC,\fC_\rex)$, which identifies the canonical map $(\fD,\fD_\rex)\ra \Gac\ci\Specc(\fD,\fD_\rex)$ with the inclusion $(\fD,\fD_\rex)\hookra (\O_X(X),\O_X^\rex(X))=\Gac\ci\Specc(\fC,\fC_\rex)$. By definition this inclusion is an isomorphism on $\fD,$ and injective on $\fD_\rex$. This completes the proof.
\end{proof}

\begin{dfn}
\label{cc5def10}
We call a $C^\iy$-ring with corners $\bfD=(\fD,\fD_\rex)$ {\it semi-complete\/} if $\fD$ is complete, and $\bs\Xi_\bfD:(\fD,\fD_\rex)\ra \Gac\ci\Specc(\fD,\fD_\rex)$ is an isomorphism on $\fD$ (this is equivalent to $\fD$ complete) and injective on $\fD_\rex$. Write $\CRingscsc\subset\CRingsc$ and $\CRingscscin\subset\CRingscin$ for the full subcategories of (interior) semi-complete $C^\iy$-rings with corners.

Given a $C^\iy$-ring with corners $\bfC$, Proposition \ref{cc5prop2} constructs a semi-com\-pl\-ete $C^\iy$-ring with corners $\bfD$ with a morphism $\bs\psi:\bfC\ra\bfD$ such that $\Specc\bs\psi:\Specc\bfD\ra\Specc\bfC$ is an isomorphism. It is easy to show that this map $\bfC\mapsto\bfD$ is functorial, yielding a functor $\Pi_{\rm all}^{\rm sc}:\CRingsc\ra\CRingscsc$ mapping $\bfC\mapsto\bfD$ on objects, with $\Specc\cong\Specc\ci \Pi_{\rm all}^{\rm sc}$. Similarly we get $\Pi_{\rm in}^{\rm sc,in}:\CRingscin\ra\CRingscscin$ with~$\Speccin\cong\Speccin\ci \Pi_{\rm in}^{\rm sc,in}$.
\end{dfn}

Here is a partial analogue of Theorem \ref{cc2thm3}:

\begin{thm}
\label{cc5thm6}
{\bf(a)} $\Specc\vert_{\cdots}:(\CRingscsc)^{\bf op}\ra\ACSchc$ is faithful and essentially surjective, but not full.
\smallskip

\noindent{\bf(b)} Let\/ $\bX$ be an affine $C^\iy$-scheme with corners. Then $\bX\cong\Specc\bfD,$ where $\bfD$ is a semi-complete $C^\iy$-ring with corners.
\smallskip

\noindent{\bf(c)} The functor $\Pi_{\rm all}^{\rm sc}:\CRingsc\ra\CRingscsc$ is left adjoint to the inclusion\/ $\inc:\CRingscsc\hookra\CRingsc$. That is, $\Pi_{\rm all}^{\rm sc}$ is a \begin{bfseries}reflection functor\end{bfseries}.

\smallskip

\noindent{\bf(d)} All small colimits exist in $\CRingscsc,$ although they may not coincide with the corresponding small colimits in $\CRingsc$. 
\smallskip

\noindent{\bf(e)} $\Specc\vert_{\cdots}:(\CRingscsc)^{\bf op}\ra\LCRSc$ is right adjoint to $\Pi_{\rm all}^{\rm sc}\ci\Gac:\LCRSc\ra(\CRingscsc)^{\bf op}$. Thus $\Spec\vert_{\cdots}$ takes limits in $(\CRingscsc)^{\bf op}$ (equivalently, colimits in $\CRingscsc$) to limits in~$\LCRSc$.
\smallskip

The analogues of\/ {\bf(a)\rm--\bf(e)} hold in the interior case, replacing $\CRingscsc,\ab\ldots,\ab\LCRSc$ by\/~$\CRingscscin,\ldots,\LCRScin$.
\end{thm}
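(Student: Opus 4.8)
The plan is to prove Theorem \ref{cc5thm6} by closely following the proof of the $C^\iy$-ring analogue Theorem \ref{cc2thm3} in \cite{Joyc9}, using Proposition \ref{cc5prop2} as the main new input, and then to observe that every argument restricts to the interior case verbatim using $\Speccin,\Gacin$ and Theorem \ref{cc5thm4}.

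First I would establish (b): given an affine $\bX$, by definition $\bX\cong\Specc\bfC$ for some $\bfC\in\CRingsc$, and Proposition \ref{cc5prop2} produces a $\bfD\in\CRingscsc$ with $\Specc\bfD\cong\Specc\bfC\cong\bX$; semi-completeness of $\bfD$ is exactly the conclusion of that proposition (the canonical map is an isomorphism on $\fD$ and injective on $\fD_\rex$). Next, for (c), I would use the functor $\Pi_{\rm all}^{\rm sc}:\CRingsc\ra\CRingscsc$ and morphism $\bs\psi:\bfC\ra\Pi_{\rm all}^{\rm sc}(\bfC)$ constructed in Definition \ref{cc5def10}. Given $\bfC\in\CRingsc$ and $\bfD'\in\CRingscsc$, I must construct a natural bijection $\Hom_\CRingsc(\bfC,\inc(\bfD'))\cong\Hom_\CRingscsc(\Pi_{\rm all}^{\rm sc}(\bfC),\bfD')$. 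The idea: a morphism $\bs\phi:\bfC\ra\bfD'$ induces $\Gac\ci\Specc\bs\phi:\Gac\ci\Specc\bfC\ra\Gac\ci\Specc\bfD'\cong\bfD'$ (using that $\bs\Xi_{\bfD'}$ has $\fD'$-part an isomorphism and $\fD'_\rex$-part injective, together with the explicit description of $\Gac\ci\Specc\bfC$ from Proposition \ref{cc5prop2} as having underlying $C^\iy$-ring $\Ga\ci\Spec\fC$ with monoid part the submonoid generated by $\Psi_\exp$ and $\phi_\rex(\fC_\rex)$); one checks this lands in the subobject $\Pi_{\rm all}^{\rm sc}(\bfC)$ and that composition with $\bs\psi$ recovers $\bs\phi$, and conversely. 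Functoriality in $\bfC,\bfD'$ is routine. With (c) in hand, (d) follows exactly as Theorem \ref{cc2thm3}(d)/Theorem \ref{cc4thm3}(b): a left adjoint $\Pi_{\rm all}^{\rm sc}$ preserves colimits, $\CRingsc$ is cocomplete by Theorem \ref{cc4thm3}(b), and $\Pi_{\rm all}^{\rm sc}$ acts as the identity on $\CRingscsc\subset\CRingsc$, so colimits in $\CRingscsc$ are computed by applying $\Pi_{\rm all}^{\rm sc}$ to colimits in $\CRingsc$.

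For (a): essential surjectivity is immediate from (b). Faithfulness follows because $\Specc:(\CRingsc)^{\bf op}\ra\LCRSc$ restricted to semi-complete objects is injective on morphisms --- if $\Specc\bs\phi_1=\Specc\bs\phi_2$ for $\bs\phi_i:\bfD_1\ra\bfD_2$ with $\bfD_i$ semi-complete, then applying $\Gac$ and using the commuting square $\bs\Xi_{\bfD_2}\ci\bs\phi_i=\Gac(\Specc\bs\phi_i)\ci\bs\Xi_{\bfD_1}$ with $\bs\Xi_{\bfD_2}$ injective (iso on the $C^\iy$-ring, injective on the monoid) forces $\bs\phi_1=\bs\phi_2$. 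Non-fullness is witnessed by Example \ref{cc5ex1}: there the morphism $\hat\pi_{x,\rex}(z)$ gives an element of $\Gac\ci\Specc\bfC$, hence after applying $\Pi_{\rm all}^{\rm sc}$ a genuinely new element of the monoid not in the image of $\bfC$, so there are $C^\iy$-scheme morphisms between spectra of semi-complete rings not of the form $\Specc\bs\phi$; I would phrase this as: if $\Specc\vert_\cdots$ were full, then $\Specc\bs\Xi_\bfC$ would be an isomorphism for all $\bfC$, contradicting Example \ref{cc5ex1}. Finally (e): compose the adjunction $\Gac\dashv\Specc$ of Theorem \ref{cc5thm3} with the adjunction $\Pi_{\rm all}^{\rm sc}\dashv\inc$ of part (c); the composite right adjoint is $\Specc\ci\inc=\Specc\vert_\cdots$ and the composite left adjoint is $\Pi_{\rm all}^{\rm sc}\ci\Gac$, and right adjoints preserve limits, giving the last sentence.

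The main obstacle I anticipate is part (c), specifically verifying that the candidate adjunction bijection is well defined and natural --- one must check that for $\bs\phi:\bfC\ra\bfD'$ with $\bfD'$ semi-complete, the induced map really factors through the subobject $\Pi_{\rm all}^{\rm sc}(\bfC)\subseteq\Gac\ci\Specc\bfC$, which requires knowing that $\fD'_\rex$ sits inside $\Gac\ci\Specc\bfD'$ compatibly and that the generating elements (images of $\fC_\rex$ and $\Psi_\exp$) map correctly; this hinges on the precise construction of $\fD_\rex$ in Proposition \ref{cc5prop2} as the submonoid of $\O_X^\rex(X)$ generated by $\Psi_\exp(\fD)$ and $\phi_\rex(\fC_\rex)$. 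Everything else is a transcription of \cite[Prop.~4.11 \& Th.~4.25]{Joyc9} with monoid components tracked along; the interior case needs no separate work since $\Gacin,\Speccin$ are restrictions of $\Gac,\Specc$, the semi-completeness condition is stated identically, and Proposition \ref{cc5prop2} and Theorem \ref{cc5thm4} provide the interior analogues of the tools used above.
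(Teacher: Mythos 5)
Your proposal is correct and follows essentially the same route as the paper: (b) from Proposition \ref{cc5prop2}, (c) as a direct check of the reflection adjunction with unit $\bs\psi:\bfC\ra\Pi_{\rm all}^{\rm sc}(\bfC)$ and counit the identity, (d) by applying the reflection to colimits in $\CRingsc$, (e) by composing the adjunctions of (c) and Theorem \ref{cc5thm3}, faithfulness from the injectivity of $\bs\Xi$ on semi-complete objects, and non-fullness from Example \ref{cc5ex1}; the interior case is indeed a verbatim restriction. One caveat: your alternative phrasing of non-fullness (``if $\Specc\vert_{\cdots}$ were full then $\Specc\bs\Xi_\bfC$ would be an isomorphism for all $\bfC$'') is not justified and should be dropped in favour of your concrete witness, which is exactly the paper's argument with target $\bfD=\bs C^\iy([0,\iy))$, where the map on Hom-sets is identified with the non-surjective $\Xi_\rex:\fC_\rex\ra\O_X^\rex(X)$.
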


\begin{proof} For (a), if $\bs\phi:\bfD\ra\bfC$ is a morphism in $\CRingscsc$, we have a commutative diagram
\e
\begin{gathered}
\xymatrix@C=140pt@R=15pt{
*+[r]{\bfD\,} \ar[d]^{\bs\phi} \ar@{^{(}->}[r]_(0.4){\bs\Xi_\bfD} & *+[l]{\Gac\ci\Specc\bfD}\ar[d]_{\Gac\ci\Specc\bs\phi} \\
*+[r]{\bfC\,} \ar@{^{(}->}[r]^(0.4){\bs\Xi_\bfC} & *+[l]{\Gac\ci\Specc\bfC.\!} }	
\end{gathered}
\label{cc5eq7}
\e
As the rows are injective by semi-completeness, $\Gac\ci\Specc\bs\phi$ determines $\bs\phi$, so $\Specc\bs\phi$ determines $\bs\phi$. Thus $\Specc\vert_{\cdots}$ is injective on morphisms, that is, it is faithful. Essential surjectivity follows by Proposition \ref{cc5prop2}. To see that $\Specc\vert_{\cdots}$ is not full, let $\bfC$ be as in Example \ref{cc5ex1}, and set $\bfD=\bs C^\iy([0,\iy))$. Then
\begin{equation*}
\Spec^{\rm c}_{\bfC,\bfD}:\Hom_\CRingscsc(\bfD,\bfC)\longra \Hom_\ACSchc(\Specc\bfC,\Specc\bfD)
\end{equation*}
may be identified with the non-surjective map
\begin{equation*}
\Xi_\rex:\fC_\rex\longra\O_X^\rex(X).
\end{equation*}

Part (b) is immediate from Proposition \ref{cc5prop2}. Part (c) is easy to check, where the unit of the adjunction is $\bs\psi:\bfC\ra\bfD=\inc\ci \Pi_{\rm all}^{\rm sc}(\bfC)$, and the counit is $\bs\id_\bfD:\Pi_{\rm all}^{\rm sc}\ci\inc(\bfD)=\bfD\ra\bfD$. For (d), given a small colimit in $\CRingscsc$, the colimit exists in $\CRingsc$ by Theorem \ref{cc4thm3}(b), and applying the reflection functor $\Pi_{\rm all}^{\rm sc}$ gives the (possibly different) colimit in $\CRingscsc$. Part (e) holds as $\Pi_{\rm all}^{\rm sc},\Gac$ are left adjoint to $\inc,\Specc$ by (c) and Theorem \ref{cc5thm3}. The extension to the interior case is immediate.
\end{proof}

\begin{thm}
\label{cc5thm7}
{\bf(a)} Suppose\/ $\bfC\in\CRingsc$ and\/ $\bX=\Specc\bfC$ lies in\/ $\LCRScin\subset\LCRSc$. Then $\bX\cong\Speccin\bfD$ for some\/~$\bfD\in\CRingscin$.

\smallskip

\noindent{\bf(b)} An object\/ $\bX\!\in\!\CSchc$ lies in $\CSchcin$ if and only if it lies in\/~$\LCRScin$.
\end{thm}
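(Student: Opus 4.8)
The plan is to prove (a) first, since (b) follows from it by a local argument. For (a), suppose $\bfC\in\CRingsc$ with $\bX=(X,\bO_X)=\Specc\bfC$ in $\LCRScin$. By Definition \ref{cc5def2}(c), $\O_X^\rex$ is then the sheafification of a presheaf $\bO_X^\rin=(\O_X,\O_X^\rin\amalg\{0\})$ valued in $\CRingscin$, and the stalks $\bO_{X,x}\cong\bfC_x$ are interior $C^\iy$-rings with corners for all $x\in X$. The natural candidate for $\bfD$ is $\bfD=\Gacin(\bX)=(\O_X(X),\fD_\rex)$, where $\fD_\rex$ consists of the global sections of $\O_X^\rex$ that are zero or non-zero in every stalk, as in \eq{cc5eq3}. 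This is an interior $C^\iy$-ring with corners by Definition \ref{cc5def7}. There is a canonical morphism $\bs\Xi_\bfC^{\rm something}:\bfC\to\bfD$: on the smooth part it is $\Xi_\fC:\fC\to\O_X(X)$, and on the exterior part we use that $\Xi_{\rex}(c')\in\O_X^\rex(X)$ is either zero or nonzero in every stalk (because $\pi_{x,\rex}(c')$ is zero in $\fC_{x,\rex}$ for all $x$ or for no $x$, using Remark \ref{cc4rem3}: if $\bfC$ were interior this is automatic, but even in general, once we know $\bX\in\LCRScin$ the stalks $\bfC_x$ are interior, so $\pi_{x,\rex}(c')\ne0$ unless $c'$ is a zero-divisor in a way that dies in the stalk — I would need to check this carefully) so $\Xi_\rex$ factors through $\fD_\rex\subseteq\O_X^\rex(X)$.

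The key step is then to show $\Speccin\bfD\cong\bX$. Since $\fD=\O_X(X)$ is complete (it is $\Ga\ci\Spec\fC$ by Proposition \ref{cc2prop8} and Theorem \ref{cc2thm3}(b), as $\uX=\Spec\fC$ is an affine $C^\iy$-scheme), the underlying $C^\iy$-schemes agree: $\Spec\fD\cong\Spec\fC\cong\uX$. It remains to identify the sheaves of monoids, which reduces to checking the stalk maps $\fD_{x,\rex}\to\O_{X,x}^\rex$ are isomorphisms. This is exactly the content of the argument in Proposition \ref{cc5prop2}: the map $\bfD\to\Gac\ci\Specc\bfD$ is injective on $\fD_\rex$ (semi-completeness), and running the injectivity/surjectivity argument of that proof on stalks — using Proposition \ref{cc4prop8} to characterize when two elements agree in $\fC_{x,\rex}$, and using that $\bfD$ is interior so that an element of $\O_X^\rex(X)$ zero in one stalk is zero in all of them — shows $\psi_{x,\rex}:\fD_{x,\rex}\to\O_{X,x}^\rex$ is bijective. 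Concretely, I would invoke Proposition \ref{cc5prop2} directly: it constructs, from $\bfC$, a semi-complete $\bfD'$ with $\Specc\bfD'\cong\bX$, and since $\bX\in\LCRScin$ implies $\bfD'$ is interior (elements of $\fD'_\rex$ are products of invertibles and images $\phi_\rex(c')$, and zero-divisor behaviour is detected on stalks which are interior here), $\bfD'\in\CRingscin$ and $\Specc\bfD'=\Speccin\bfD'$ by Definition \ref{cc5def5}, giving (a) with $\bfD=\bfD'$.

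For (b): if $\bX\in\CSchcin$ then by definition it is covered by open sets $\bU$ with $\bU\cong\Speccin\bfC$ for $\bfC\in\CRingscin$, and each such $\bU\in\LCRScin$ by Definition \ref{cc5def5}; since $\LCRScin$ is characterized by the local conditions of Definition \ref{cc5def2}(a) on stalks and on sets $U_{s'}$, which can be checked on an open cover, $\bX\in\LCRScin$. Conversely, if $\bX\in\CSchc$ and $\bX\in\LCRScin$, cover $X$ by open $\bU$ with $\bU\cong\Specc\bfC$ for some $\bfC\in\CRingsc$. Each $\bU$, being open in $\bX\in\LCRScin$, again lies in $\LCRScin$ (the conditions of Definition \ref{cc5def2}(a) are inherited by open subsets, since stalks and the sets $U_{s'}$ are local). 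Then $\bU=\Specc\bfC$ with $\bU\in\LCRScin$, so by part (a), $\bU\cong\Speccin\bfD$ for some $\bfD\in\CRingscin$, i.e.\ $\bU$ is an interior affine $C^\iy$-scheme with corners. Hence $\bX$ is covered by interior affine $C^\iy$-schemes with corners and lies in $\LCRScin$, so $\bX\in\CSchcin$ by Definition \ref{cc5def8}.

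\textbf{Main obstacle.} The delicate point is the claim that $\bX=\Specc\bfC\in\LCRScin$ forces the relevant $C^\iy$-ring with corners (whether $\bfC$ itself or the $\bfD'$ from Proposition \ref{cc5prop2}) to be \emph{interior}, or at least to have interior stalks that let the stalk-map argument go through. Since $\bfC$ need not be interior, one must carefully track how the localizations $\bfC_x$ can acquire the interior property even when $\bfC$ does not have it — this is where Definition \ref{cc5def2}(a)'s hypotheses (the $U_{s'}$ open) are essential, as flagged in Remark \ref{cc5rem1}, and where I expect the proof to require the most care. The rest is an application of Proposition \ref{cc5prop2} and bookkeeping with open covers.
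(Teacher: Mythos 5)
Your overall skeleton (reduce to a semi-complete model, compare stalks of the monoid sheaves, deduce (b) by covering with affine opens) is the paper's, and your part (b) is essentially identical to the paper's. But part (a) has a genuine gap at its heart: neither of your two candidates for $\bfD$ works. For the first, $\bfD=\Gacin(\bX)$ fails because $\Speccin\ci\Gacin\ci\Speccin\not\cong\Speccin$: in Example \ref{cc5ex1} the $C^\iy$-ring with corners $\bfC$ is interior (even toric), so $\bX=\Specc\bfC$ lies in $\LCRScin$, yet the glued section $z\in\O_X^\rex(X)$ is nonzero in every stalk, hence lies in $\Gacin(\bX)_\rex$, and the argument there shows its class in the localization at $x=(-2,0)$ is not in the image of $\O^\rex_{X,x}$; so $\Speccin\Gacin(\bX)\not\cong\bX$. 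For the fallback, the claim that $\bX\in\LCRScin$ forces the semi-complete $\bfD'$ of Proposition \ref{cc5prop2} to be interior is false: take $\bfC=\bs C^\iy(*)\t\bs C^\iy(*)$, so $X$ is two points with interior stalks $(\R,[0,\iy))$, hence $\bX\in\LCRScin$, but $\Xi_\rex(1,0)$ and $\Xi_\rex(0,1)$ are nonzero zero divisors in $\fD'_\rex$ (their supports are disjoint open-closed sets). Interiority of the stalks does not rule out zero divisors among global sections; it only makes supports open and closed. (The same example also kills your claimed morphism $\bfC\ra\Gacin(\bX)$, since $\Xi_\rex(1,0)$ is zero in one stalk and nonzero in the other -- the point you flagged as needing care.)

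The missing idea, which is the actual content of the paper's proof, is a different construction of $\bfD$: first form the $C^\iy$-subring with corners $\bfE\subset\bO_X(X)$ whose exterior elements are those $e'\in\O_X^\rex(X)$ that agree with $\Xi_{\fC,\rex}(c_i')$ on the pieces $X_i$ of some decomposition $X=\coprod_i X_i$ into open and closed subsets (equation \eq{cc5eq8}), and then set $\fD_\rin=\O_X^\rin(X)\cap\fE_\rex$, so $\bfD=(\fD,\fD_\rin\amalg\{0\})$ is interior by construction. The open-closed gluing is doing two jobs at once: it keeps $\fE_\rex$ small enough that (unlike $\Gacin(\bX)$ in Example \ref{cc5ex1}, where connectedness of $X$ forces $\fE_\rex=\Xi_\rex(\fC_\rex)$) the stalk map $\fC_{x,\rex}\ra\fE_{x,\rex}$ is an isomorphism, and it is large enough that $\inc_{x,\rex}:\fD_{x,\rex}\ra\fE_{x,\rex}$ is surjective -- one modifies a section nonzero near $x$ by replacing it with $1$ on the open-closed set where it vanishes, which is exactly where Definition \ref{cc5def2}(a) enters. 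Both stalk comparisons are then carried out with Proposition \ref{cc4prop8}, yielding $\bX\cong\Specc\bfE\cong\Speccin\bfD$. Without this intermediate $\bfE$ and the open-closed gluing argument, the proof of (a) does not go through.
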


\begin{proof}
For (a), by Theorem \ref{cc5thm6}(b) we may take $\bfC$ to be semi-complete, so that $\bs\Xi_\bfC:(\fC,\fC_\rex)\ra(\O_X(X),\O_X^\rex(X))$ is an isomorphism on $\fC$ and injective on $\fC_\rex$. As $\bX$ lies in $\LCRScin$, the stalks $\bO_{X,x}$ are interior for $x\in X$, and writing
\begin{equation*}
\O_X^\rin(X)=\bigl\{f'\in\O_X^\rex(X):\text{$f'\vert_x\ne 0$ in $\O^\rex_{X,x}$ for all $x\in X$}\bigr\}, 
\end{equation*}
then $\bO_X^\rin(X)=(\O_X(X),\O_X^\rin(X)\amalg\{0\})$ is an interior $C^\iy$-subring with corners of $\bO_X(X)=(\O_X(X),\O_X^\rex(X))$. Define $\bfE\!=\!(\fE,\fE_\rex)$, where $\fE\!=\!\O_X(X)$ and
\e
\begin{split}
\fE_\rex=\bigl\{e'&\in\O_X^\rex(X):\text{ there exists a decomposition $X=\ts\coprod_{i\in I}X_i$ with}\\
&\text{$X_i$ open and closed in $X$ and elements $c_i'\in\fC_\rex$ for $i\in I$} \\ 
&\text{with $\Xi_{\fC,\rex}(c_i')\vert_{X_i}=e'\vert_{X_i}$ for all $i\in I$}\bigr\}.
\end{split}
\label{cc5eq8}
\e

We claim $\fE,\fE_\rex$ are closed under the $C^\iy$-operations on $\O_X(X),\ab\O_X^\rex(X)$, so that $\bfE$ is a $C^\iy$-subring with corners of $\bO_X(X)$. To see this, let $g:\R^m\t[0,\iy)^n\ra[0,\iy)$ be exterior and $e_1,\ldots,e_m\in\fE$, $e_1',\ldots,e_n'\in\fE_\rex$. Then $e_j=\Xi_\bfC(c_j)$ for unique $c_j\in\fC$, $j=1,\ldots,m$ as $\Xi_\bfC$ is an isomorphism. Also for each $k=1,\ldots,n$ we get a decomposition $X=\ts\coprod_{i\in I}X_i$ in \eq{cc5eq8} for $e_k'$. By intersecting the subsets of the decompositions we can take a common decomposition $X=\ts\coprod_{i\in I}X_i$ for all $k=1,\ldots,n$, and elements $c_{i,k}'\in\fC_\rex$ with $\Xi_{\fC,\rex}(c_{i,k}')\vert_{X_i}=e_k'\vert_{X_i}$ for all $i\in I$ and $k=1,\ldots,n$. Then $e'=\Psi_g(e_1,\ldots,e_m,e_1',\ldots,e_n')\in \O_X^\rex(X)$ satisfies the conditions of \eq{cc5eq8} with the same decomposition $X=\ts\coprod_{i\in I}X_i$, and elements $c_i'=\Psi_g(c_1,\ldots,c_m,c_{i,1}',\ldots,c_{i,n}')$, so $e'\in\fE_\rex$, and~$\bfE\in\CRingsc$.

Observe that $\bs\Xi_\fC:\bfC\ra\bO_X(X)$ factors via the inclusion $\bfE\hookra\bO_X(X)$, by taking the decomposition in \eq{cc5eq8} to be into one set~$X_1=X$.
 
Next define $\fD=\fE$ and $\fD_\rin=\O_X^\rin(X)\cap\fE_\rex$, so that $\bfD=(\fD,\fD_\rin\amalg\{0\})$ is the intersection of the $C^\iy$-subrings $\bO_X^\rin(X)$ and $\bfE$ in $\bO_X(X)$, and thus lies in $\CRingscin$, as $\bO_X^\rin(X)$ does. We now have morphisms in $\CRingsc$
\begin{equation*}
\xymatrix@C=90pt{ \bfC \ar[r]^{\bs\Xi_\bfC} & \bfE & \bfD, \ar[l]_{\inc} }
\end{equation*}
giving morphisms in $\LCRSc$
\e
\xymatrix@C=55pt{ \bX=\Specc\bfC  & \Specc\bfE \ar[l]_(0.43){\Specc\bs\Xi_\bfC} \ar[r]^(0.38){\Specc\inc} & \Specc\bfD=\Speccin\bfD. }	
\label{cc5eq9}
\e

We will show that both morphisms in \eq{cc5eq9} are isomorphisms, so that $\bX\cong\Speccin\bfD$, as we have to prove. As $\fC\cong\fD=\fE$, equation \eq{cc5eq9} is isomorphic on the level of $C^\iy$-schemes. Thus it is sufficient to prove \eq{cc5eq9} induces isomorphisms on the stalks of the sheaves of monoids. That is, for each $x\in X$ we must show that the following maps are isomorphisms:
\begin{equation*}
\xymatrix@C=90pt{ \fC_{x,\rex} \ar[r]^{\Xi_{\fC,x,\rex}} & \fE_{x,\rex} & \fD_{x,\rex}. \ar[l]_{\inc_{x,\rex}} }
\end{equation*}

Suppose $e''\in\fE_{x,\rex}$. Then $e''=\pi_{\fE_{x,\rex}}(e')$ for $e'\in\fE_\rex$. Let $X_i,c_i'$, $i\in I$ be as in \eq{cc5eq8} for $e'$. Then $x\in X_i$ for unique $i\in I$. Define $a',b'\in\fE_\rex$ by $a'\vert_{X_i}=1$, $a'\vert_{X\sm X_i}=0$ and $b'=a'$. Then $a'e'=b'\Xi_{\fC,\rex}(c_i')$, so Proposition \ref{cc4prop8} gives $e''\!=\!\pi_{\fE_{x,\rex}}(e')\!=\!\pi_{\fE_{x,\rex}}\!\ci\!\Xi_{\fC,\rex}(c_i')\!=\!\Xi_{\fC,x,\rex}\!\ci\!\pi_{\fC_{x,\rex}}(c_i')$. Thus $\Xi_{\fC,x,\rex}$ is surjective.

As the composition $\fC_{x,\rex}\,{\buildrel\Xi_{\fC,x,\rex}\over\longra}\,\fE_{x,\rex}\ra\O_X^\rex(X)_x\ra \O_{X,x}^\rex$ is an isomorphism since $\bX=\Specc\bfC$, we see that $\Xi_{\fC,x,\rex}$ is injective, and thus an isomorphism.

Suppose $0\ne e''\in\fE_{x,\rex}$. Then $e''=\pi_{\fE_{x,\rex}}(e')$ for $e'\in\fE_\rex\subseteq\O_X^\rex(X)$. Define $Y=\bigl\{y\in X:e'\vert_y\ne 0$ in $\O_{X,y}^\rex\bigr\}$. Then $Y$ is open and closed in $X$ by Definition \ref{cc5def2}(a), as $\bX$ is interior, and $x\in Y$ as $e''\ne 0$. Define $d'\in\fD_\rin$ by $d'\vert_Y=e'\vert_Y$ and $d'\vert_{X\sm Y}=1$. Define $a',b'\in\fE_\rex$ by $a'\vert_Y=1$, $a'\vert_{X\sm Y}=0$ and $b'=a'$. Then $a'd'=b'e'$ with $x\ci\Phi_i(a')=1$, so Proposition \ref{cc4prop8} gives $e''=\pi_{\fE_{x,\rex}}(e')=\pi_{\fE_{x,\rex}}\ci\inc(d')=\inc_{x,\rex}\ci\pi_{\fD_{x,\rex}}(d')$, and $\inc_{x,\rex}$ is surjective.

Suppose $d_1'',d_2''\in\fD_{x,\rex}$ with $\inc_{x,\rex}(d_1'')=\inc_{x,\rex}(d_2'')$. Then $d''_i=\pi_{\fD_{x,\rex}}(d'_i)$ for $d'_i\in\fD_\rin\amalg\{0\}$, $i=1,2$. By Proposition \ref{cc4prop8}, $\inc_{x,\rex}(d_1'')=\inc_{x,\rex}(d_2'')$ means that there exist $a',b'\in\fE_\rex$ with $\Phi_i(a')=\Phi_i(b')$ near $x$, and $x\ci\Phi_i(a')\ne 0$, and $a'd_1'=b'd_2'$ in $\fE_\rex$. If $a'd_1'\vert_x=b'd_2'\vert_x=0$ in $\O_{X,x}^\rin$ then $d_1'=d_2'=0$, so $d_1''=d_2''=0$, as we want. Otherwise 
$d_1',d_2'\ne 0$, so $d_1',d_2'\in\fD_\rin$.

Define $Y=\bigl\{y\in X:a'\vert_y\ne 0$ in $\O_{X,y}^\rex\bigr\}$. Then $Y$ is open and closed in $X$ as above, and $Y=\bigl\{y\in X:b'\vert_y\ne 0$ in $\O_{X,y}^\rex\bigr\}$, since $a'd_1'=b'd_2'$ with $d_1',d_2'\in\fD_\rin$. Define $a'',b''\in\fD_\rin$ by $a''\vert_Y=a'\vert_Y$, $a''\vert_{X\sm Y}=d_2'\vert_{X\sm Y}$, $b''\vert_Y=b'\vert_Y$, $b''\vert_{X\sm Y}=d_1'\vert_{X\sm Y}$. Then $\Phi_i(a'')=\Phi_i(b'')$ near $x$ as $x\in Y$ and $\Phi_i(a')=\Phi_i(b')$ near $x$,  and $x\ci\Phi_i(a'')\ne 0$, and $a''d_1'=b''d_2'$, so Proposition \ref{cc4prop8} says that $d''_1=\pi_{\fD_{x,\rex}}(d'_1)=\pi_{\fD_{x,\rex}}(d'_2)=d''_2$. Hence $\inc_{x,\rex}$ is injective, and thus an isomorphism. This proves (a). Part (b) follows from (a) by covering $\bX$ with open affine $C^\iy$-subschemes.
\end{proof}

\subsection{\texorpdfstring{Special classes of $C^\iy$-schemes with corners}{Special classes of C∞-schemes with corners}}
\label{cc56}

\begin{dfn}
\label{cc5def11}
Let $\bX$ be a $C^\iy$-scheme with corners. We call $\bX$ {\it finitely presented}, or {\it finitely generated}, or {\it firm}, or {\it integral}, or {\it torsion-free}, or {\it saturated}, or {\it toric}, or {\it simplicial}, if we can cover $\bX$ by open $\bU\subseteq\bX$ with $\bU\cong\Specc\bfC$ for $\bfC$ a $C^\iy$-ring with corners which is firm, integral, torsion-free, saturated,  toric, or simplicial, respectively, in the sense of Definitions \ref{cc4def13}, \ref{cc4def14}, \ref{cc4def15}. We write
\begin{equation*}
\CSchcfp\subset\CSchcfg\subset\CSchcfi\subset\CSchc
\end{equation*}
for the full subcategories of finitely presented, finitely generated, and firm $C^\iy$-schemes with corners. We write
\begin{equation*}
\CSchcDe\subset\CSchcto\subset\CSchcsa\subset \CSchctf\subset\CSchcZ\subset\CSchcin
\end{equation*}
for the full subcategories of simplicial, toric, saturated, torsion-free, and integral objects in $\CSchcin$, respectively. We write
\e
\begin{split}
&\CSchcDeex\subset\CSchctoex\subset\CSchcsaex\subset \\
&\CSchctfex\subset\CSchcZex\subset\CSchcinex\subset\CSchc
\end{split}
\label{cc5eq10}
\e
for the full subcategories of simplicial, \ldots, interior objects in $\CSchc$, but with exterior (i.e.\ all) rather than interior morphisms. We write 
\begin{align*}
\CSchcfiin&\subset\CSchcin, & \CSchcfiZ&\subset\CSchcZ, \\ \CSchcfitf&\subset\CSchctf,&
\CSchcfiinex&\subset\CSchcinex, \\ \CSchcfiZex&\subset\CSchcZex, &\CSchcfitfex&\subset\CSchctfex,
\end{align*}
for the full subcategories of objects which are also firm.

These live in a diagram of subcategories
\ea
\label{cc5eq11}\\[-15pt]
\begin{gathered}
\!\!\!\text{\begin{footnotesize}$\xymatrix@!0@C=29.5pt@R=25pt{
& \CSchcDe\!\! \ar[rr] \ar[dl] &&
\!\CSchcto\!\! \ar[dd] \ar[dl] \ar[rr] &&
\!\CSchcfitf\!\! \ar[dd] \ar[dl] \ar[rr] && \!\CSchcfiZ\!\! \ar[dd] \ar[dl] \ar[rr] && \!\CSchcfiin\!\! \ar[dd] \ar[dl] \\
\!\CSchcDeex\!\!\!\!\!\!\! \ar[rr] && \!\CSchctoex\!\!\!\!\!\!\! \ar[dd] \ar[rr] &&
\!\CSchcfitfex\!\!\!\!\!\!\! \ar[dd] \ar[rr] && \!\CSchcfiZex\!\!\!\!\!\!\! \ar[dd] \ar[rr] && \!\CSchcfiinex\!\!\!\!\!\!\!\!\!\! \ar[dd] \ar[rr] && \!\CSchcfi \ar[dd] \\
&&& \!\CSchcsa\!\! \ar[rr] \ar[dl] &&
\!\CSchctf\!\! \ar[rr] \ar[dl] && \!\CSchcZ\!\! \ar[rr] \ar[dl] && \!\CSchcin\!\! \ar[dl] \\
&& \!\!\!\!\!\CSchcsaex\!\!\! \ar[rr] &&
\!\CSchctfex\!\!\! \ar[rr] && \!\CSchcZex\!\!\!  \ar[rr] && \!\CSchcinex\!\!\!  \ar[rr] && \!\CSchc.\! }$\end{footnotesize}}
\end{gathered}
\nonumber
\ea
\end{dfn}

\begin{rem}
\label{cc5rem4}
{\bf(i)} General $C^\iy$-schemes with corners $\bX\in\CSchc$ may have `corner behaviour' which is very complicated and pathological. The subcategories of $\CSchc$ in \eq{cc5eq11} have `corner behaviour'  which is progressively nicer, and more like ordinary manifolds with corners, as we impose more conditions. 
\smallskip

\noindent{\bf(ii)} It is easy to see that if $X$ is a manifold with corners then $\bX=F_\Manc^\CSchc(X)$ in Definition \ref{cc5def9} is an object of $\CSchcDe$, since $\bs C^\iy(\R^n_k)$ is a simplicial $C^\iy$-ring with corners, and similarly if $X$ is a manifold with g-corners then $\bX=F_\Mangc^\CSchc(X)$ is an object of $\CSchcto$.

We can think of $\CSchcDe,\CSchcDeex,\CSchcto,\CSchctoex$ as good analogues of $\Mancin,\Manc,\Mangcin,\Mangc$  in the world of $C^\iy$-schemes.
\smallskip

\noindent{\bf(iii)} For a $C^\iy$-scheme with corners $\bX$ to be {\it firm\/} basically means that locally $\bX$ has only finitely many boundary faces. This is a mild condition, which will hold automatically in most interesting applications. An example of a non-firm $C^\iy$-scheme with corners is $\bs{[0,\iy)}^I:=\prod_{i\in I}\bs{[0,\iy)}$ for $I$ an infinite set.

We will need to restrict to firm $C^\iy$-schemes with corners to define fibre products in \S\ref{cc57}, and for parts of the theory of corner functors in Chapter~\ref{cc6}.
\end{rem}

Parts (b),(c) of the next theorem are similar to Theorem~\ref{cc5thm7}(b).

\begin{thm}
\label{cc5thm8}
{\bf(a)} The functor $\Pi_\rin^\Z:\LCRScin\ra\LCRScZ$ of Theorem\/ {\rm\ref{cc5thm2}} maps $\CSchcin\ra\CSchcZ$. Restricting gives a functor $\Pi_\rin^\Z:\CSchcin\ra\CSchcZ$ which is right adjoint to the inclusion $\inc:\CSchcZ\hookra\CSchcin$.

The analogues also hold for the subcategories $\CSchcsa\subset\LCRScsa$ and\/ $\CSchctf\subset\LCRSctf$ and functors $\Pi_\Z^\tf,\Pi_\tf^\sa,\Pi_\rin^\sa,$ giving a diagram
\begin{equation*}
\xymatrix@C=22pt{ \CSchcsa \ar@<-.5ex>[r]_{\inc} &
\CSchctf \ar@<-.5ex>[r]_{\inc} \ar@<-.5ex>[l]_{\Pi_\tf^\sa} & \CSchcZ \ar@<-.5ex>[r]_{\inc} \ar@<-.5ex>[l]_{\Pi_\Z^\tf} & \CSchcin. \ar@<-.5ex>[l]_{\Pi_\rin^\Z} \ar@<-3ex>@/_.5pc/[lll]_{\Pi_\rin^\sa} }
\end{equation*}

\noindent{\bf(b)} An object\/ $\bX$ in $\CSchcin$ lies in $\CSchcsa,\CSchctf$ or $\CSchcZ$ if and only if it lies in $\LCRScsa,\LCRSctf$ or $\LCRScZ,$ respectively.
\smallskip

\noindent{\bf(c)} Let\/ $\bX$ be an object in $\CSchcfi$. Then the following are equivalent:
\begin{itemize}
\setlength{\itemsep}{0pt}
\setlength{\parsep}{0pt}
\item[{\bf(i)}] $\bX$ lies in $\CSchcfiin$.
\item[{\bf(ii)}] $\bX$ lies in $\LCRScin$.
\item[{\bf(iii)}] $\bX$ may be covered by open $\bU\subset\bX$ with\/ $\bU\cong\Speccin\bfC$ for $\bfC$ a firm, interior $C^\iy$-ring with corners.
\end{itemize}
Also $\bX$ lies in $\CSchcto$ if and only if it lies in $\LCRScsa$.
\end{thm}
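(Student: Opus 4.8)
I would deduce all three parts from the already-proven results on $\CRingscin$ and its subcategories (Theorems \ref{cc4thm3}, \ref{cc4thm5}, Proposition \ref{cc4prop13}), the coreflection functors $\Pi_\rin^\Z,\Pi_\Z^\tf,\Pi_\tf^\sa,\Pi_\rin^\sa$ of Theorem \ref{cc5thm2}, and the comparison results Theorems \ref{cc5thm6} and \ref{cc5thm7}. The recurring principle is that ``being a ($C^\iy$-)scheme with corners of such-and-such a type'' is a property local on the underlying space, while all the reflection/coreflection functors leave the underlying space unchanged.

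\textbf{Part (a).} The key step is to show that $\Speccin$ intertwines the ring-level reflection $\Pi_\rin^\Z\colon\CRingscin\ra\CRingscZ$ of Theorem \ref{cc4thm5} with the space-level coreflection $\Pi_\rin^\Z\colon\LCRScin\ra\LCRScZ$ of Theorem \ref{cc5thm2}: for $\bfC\in\CRingscin$ there is a natural isomorphism $\Pi_\rin^\Z(\Speccin\bfC)\cong\Speccin(\Pi_\rin^\Z\bfC)$. By construction in Theorem \ref{cc5thm2} the left-hand side has underlying space $X_\fC$ and stalks $\Pi_\rin^\Z(\bfC_x)$; the right-hand side has the same underlying space (the unit $\bfC\ra\Pi_\rin^\Z\bfC$ is a bijection on $\R$-points, since $(\R,[0,\iy))\in\CRingscZ$ and $\Pi_\rin^\Z$ is a left adjoint) and stalks $(\Pi_\rin^\Z\bfC)_x$. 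As $\Pi_\rin^\Z$ is a left adjoint it preserves the localizations (colimits) defining $\bfC_x$, and surjectivity of the localization morphisms (Theorem \ref{cc4thm4}(c)) matches the $\R$-points, giving $(\Pi_\rin^\Z\bfC)_x\cong\Pi_\rin^\Z(\bfC_x)$, hence the isomorphism. Now for $\bX\in\CSchcin$, an affine cover by $\bU\cong\Speccin\bfC$ gives a cover of $\Pi_\rin^\Z\bX$ by $\Pi_\rin^\Z\bU\cong\Speccin(\Pi_\rin^\Z\bfC)$ with $\Pi_\rin^\Z\bfC$ integral (Proposition \ref{cc4prop13}), so $\Pi_\rin^\Z\bX\in\CSchcZ$ by Definition \ref{cc5def11}; the adjunction $\inc\dashv\Pi_\rin^\Z\colon\CSchcin\ra\CSchcZ$ is then the restriction to these full subcategories of the adjunction of Theorem \ref{cc5thm2}. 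The same argument applies verbatim to $\Pi_\Z^\tf$ and $\Pi_\tf^\sa$, and $\Pi_\rin^\sa=\Pi_\tf^\sa\ci\Pi_\Z^\tf\ci\Pi_\rin^\Z$.

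\textbf{Part (b).} The ``only if'' direction is local: if $\bX\in\CSchcZ$, cover by $\bU\cong\Specc\bfC=\Speccin\bfC$ with $\bfC$ integral; then the stalks are the localizations $\bfC_x$, which remain integral (this is routine monoid theory via the explicit description of $\fC_{x,\rex}$ in Proposition \ref{cc4prop8}, embedding $\fC_{x,\rin}$ into a localization of $\fC_\rin^\gp$), so $\bX\in\LCRScZ$ by Definition \ref{cc5def3}. Conversely, suppose $\bX\in\CSchcin$ and $\bX\in\LCRScZ$. Then $\bX$ is an object of the full subcategory $\LCRScZ\subset\LCRScin$, so the coreflection property of $\Pi_\rin^\Z$ in Theorem \ref{cc5thm2} gives $\bX\cong\Pi_\rin^\Z(\inc\bX)$ in $\LCRScin$; but $\bX\in\CSchcin$, so by part (a) $\Pi_\rin^\Z\bX\in\CSchcZ$, whence $\bX\cong\Pi_\rin^\Z\bX\in\CSchcZ$. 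The cases $\CSchcsa,\CSchctf$ are identical. This also gives the last sentence of (c): since $\CSchcto=\CSchcsa\cap\CSchcfi$ (intersecting affine covers, using Lemma \ref{cc5lem2} that firmness survives basic localization and that saturation likewise survives), for $\bX\in\CSchcfi$ one has $\bX\in\CSchcto\iff\bX\in\CSchcsa\iff\bX\in\LCRScsa$.

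\textbf{Part (c), equivalences (i)--(iii).} The implications $\CSchcfiin\subset\CSchcin\subset\LCRScin$ give (i)$\Rightarrow$(ii); conversely if $\bX\in\CSchcfi$ and $\bX\in\LCRScin$ then $\bX\in\CSchcin$ by Theorem \ref{cc5thm7}(b), and since $\CSchcfiin=\CSchcin\cap\CSchcfi$ by Definition \ref{cc5def11} we get (ii)$\Rightarrow$(i); and (iii)$\Rightarrow$(i) is immediate. The substantive implication is (ii)$\Rightarrow$(iii): given $\bX\in\CSchcfi\cap\LCRScin$, cover by $\bU\cong\Specc\bfC$ with $\bfC$ firm, refine to basic opens $\bU_c\cong\Specc(\bfC(c^{-1}))$, still firm by Lemma \ref{cc5lem2}, and run the construction in the proof of Theorem \ref{cc5thm7}(a) on $\bU_c$ to obtain $\bU_c\cong\Speccin\bfD$ with $\bfD\in\CRingscin$. \emph{The main obstacle is to arrange that $\bfD$ can be taken firm}: the interior ring produced there has monoid part consisting of sections that are locally, on a clopen decomposition of the space, of the form $\Xi_{\fC,\rex}(c')$, and a priori its sharpening need not be finitely generated when the decomposition is infinite. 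I would overcome this by shrinking $\bU_c$ further so that the relevant clopen decompositions are finite --- equivalently, to an open on which the sharpening of the structure sheaf of monoids is globally generated by the (finitely many) firm generators of $\fC_\rex^\sh$ --- and then checking directly that $\fD_\rin^\sh$ is generated by their images; this yields a cover of $\bX$ by $\Speccin$ of firm interior $C^\iy$-rings with corners, proving (iii).
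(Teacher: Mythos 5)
Your part (b) is essentially the paper's own argument (the coreflection $\Pi_\rin^\sa$ etc.\ fixes objects of the subcategory, combined with part (a)), and your elaboration of the ``only if'' direction is correct. The problem is in part (a). You assert the key commutation $\Speccin(\Pi_\rin^\Z\bfC)\cong\Pi_\rin^\Z(\Speccin\bfC)$ by matching underlying spaces and stalks, but neither justification holds up. $\R$-points of $\bfC$ are $C^\iy$-ring morphisms $\fC\ra\R$, equivalently morphisms $\bfC\ra(\R,[0,\iy))$ in $\CRingsc$ which are almost never interior (interiority would force $x\ci\Phi_i(c')\ne 0$ for every $c'\in\fC_\rin$), so the adjunction of Theorem \ref{cc4thm5}, which lives entirely in $\CRingscin$, gives no control over $\R$-points; moreover the unit $\bfC\ra\Pi_\rin^\Z\bfC$ imposes new $\R$-type consequences $\Phi_i(c')=\Phi_i(c'')$ whenever $\pi^\gp(c')=\pi^\gp(c'')$, and controlling their effect on the $\R$-point set is exactly the content that is missing. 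Likewise ``$\Pi_\rin^\Z$ preserves colimits'' does not by itself give $(\Pi_\rin^\Z\bfC)_x\cong\Pi_\rin^\Z(\bfC_x)$: the left-hand side inverts elements of $\Pi_\rin^\Z\bfC$ determined by a point of that ring, so the comparison presupposes the point/stalk correspondence you are trying to prove. The paper never makes these pointwise claims: it checks that $\Speccin,\Gacin$ restrict to the integral subcategories, observes $\inc\ci\Ga^{\rm c}_\Z=\Gacin\ci\inc$, and deduces $\Spec^{\rm c}_\Z\ci\Pi_\rin^\Z\cong\Pi_\rin^\Z\ci\Speccin$ purely formally from uniqueness of right adjoints (Theorems \ref{cc4thm5}, \ref{cc5thm2}, \ref{cc5thm4}); after that the affine-cover bookkeeping is as you describe. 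You should either adopt that formal argument or give genuine proofs of your two pointwise claims, which is a substantial task rather than a one-line consequence of adjointness.

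In part (c), (i)$\Leftrightarrow$(ii) via Theorem \ref{cc5thm7}(b) is fine and you correctly isolate (ii)$\Rightarrow$(iii) as the substantive step, but your firmness mechanism has a gap. One cannot in general ``shrink so that the relevant clopen decompositions are finite'' --- nothing prevents every neighbourhood of a point from splitting into infinitely many clopen pieces --- and without this the ring produced by the construction of Theorem \ref{cc5thm7}(a), whose monoid consists of sections glued over arbitrary clopen decompositions, has no reason to have finitely generated sharpening. The paper's actual argument is different: shrink $U$ only so that each of the finitely many $d_1',\ldots,d_n'$ generating $\fD_\rex^\sh$ is either zero in every stalk over $U$ or nonzero in every stalk over $U$ (possible because each vanishing locus is clopen, by interiority), arrange $U=U_d$, and replace $\bfD(d^{-1})$ by its semicomplete model $\Pi_{\rm all}^{\rm sc}(\bfD(d^{-1}))$ from Proposition \ref{cc5prop2}; firmness is then automatic since that monoid is generated by the image of $\fD(d^{-1})_\rex$ together with invertibles (no decompositions enter), and interiority is checked directly because each generator is either zero or a non-zero-divisor. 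Finally, your route to the last sentence of (c) via ``$\CSchcto=\CSchcsa\cap\CSchcfi$ by intersecting affine covers'' is not justified: one cover by spectra of firm rings and another by spectra of saturated rings do not combine into a cover by spectra of rings that are simultaneously firm and saturated, since a basic open of one chart need not be a basic open of the other. The clean argument is the one consistent with (a): for $\bX\in\CSchcfi\cap\LCRScsa$, use (ii)$\Rightarrow$(iii) to cover $\bX$ by $\Speccin\bfC$ with $\bfC$ firm and interior, then apply $\Pi_\rin^\sa$, which fixes $\bX$, preserves firmness, and replaces $\bfC$ by a saturated and firm, hence toric, ring.
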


\begin{proof} For (a), consider the diagram of functors
\e
\begin{gathered}
\xymatrix@C=180pt@R=15pt{
*+[r]{(\CRingscZ)^{\bf op}\,} \ar@<.5ex>[d]^{\Spec^{\rm c}_\Z} \ar@<.5ex>@{^{(}->}[r]^{\inc} & *+[l]{(\CRingscin)^{\bf op}} \ar@<.5ex>[l]^{\Pi_\rin^\Z} \ar@<.5ex>[d]^{\Speccin} \\
*+[r]{\LCRScZ\,} \ar@<.5ex>@{^{(}->}[r]^{\inc} \ar@<.5ex>[u]^{\Ga^{\rm c}_\Z} & *+[l]{\LCRScin.\!} \ar@<.5ex>[l]^{\Pi_\rin^\Z} \ar@<.5ex>[u]^{\Ga^{\rm c}_\rin} }
\end{gathered}
\label{cc5eq12}
\e
Here $\Spec^{\rm c}_\Z,\Ga^{\rm c}_\Z$ are the restrictions of $\Speccin,\Gacin$ to $(\CRingscZ)^{\bf op},
\LCRScZ$, and do map to $\LCRScZ,(\CRingscZ)^{\bf op}$. The functors $\Pi_\rin^\Z$ are from Theorems \ref{cc4thm5} and \ref{cc5thm2}, and are right adjoint to the inclusions $\inc$, noting that taking opposite categories converts left to right adjoints. Also $\Speccin$ is right adjoint to $\Gacin$ by Theorem \ref{cc5thm4}, which implies that $\Spec^{\rm c}_\Z$ is right adjoint to~$\Ga^{\rm c}_\Z$.

Clearly $\inc\ci\Ga^{\rm c}_\Z=\Gacin\ci\inc$. As $\Spec^{\rm c}_\Z\ci\Pi_\rin^\Z$, $\Pi_\rin^\Z\ci\Speccin$ are the right adjoints of these we have a natural isomorphism $\Spec^{\rm c}_\Z\ci\Pi_\rin^\Z\cong\Pi_\rin^\Z\ci\Speccin$. Thus, for any $\bfC\in\CRingscin$ we have $\Spec^{\rm c}_\Z\ci\Pi_\rin^\Z(\bfC)\cong\Pi_\rin^\Z\ci\Speccin\bfC$ in~$\LCRScZ$.

Let $\bX\in\CSchcin$. Then $\bX$ is covered by open $\bU\subset\bX$ with $\bU\cong\Speccin\bfC$ for $\bfC\in\CRingscin$. Hence $\Pi_\rin^\Z(\bX)$ is covered by open $\Pi_\rin^\Z(\bU)\subset\Pi_\rin^\Z(\bX)$ with $\Pi_\rin^\Z(\bU)\cong\Pi_\rin^\Z\ci\Speccin\bfC\cong\Spec^{\rm c}_\Z\ci\Pi_\rin^\Z(\bfC)$ for $\Pi_\rin^\Z(\bfC)\in\CRingscZ$. Thus $\Pi_\rin^\Z(\bX)\in\CSchcZ$, and $\Pi_\rin^\Z$ maps $\CSchcin\ra\CSchcZ$. That the restriction $\Pi_\rin^\Z:\CSchcin\ra\CSchcZ$ is right adjoint to $\inc$ follows from Theorem~\ref{cc5thm2}. This proves the first part of (a). The rest of (a) follows by the same argument.

For (b), the `only if' part is obvious. For the `if' part, suppose $\bX$ lies in $\CSchcin$ and $\LCRScsa$. Then $\Pi_\rin^\sa(\bX)$ lies in $\CSchcsa$ by (a). But $\Pi_\rin^\sa$ is isomorphic to the identity on $\LCRScsa\subset\LCRScin$, so $\Pi_\rin^\sa(\bX)\cong\bX$, and $\bX$ lies in $\CSchcsa$. The arguments for $\CSchctf,\CSchcZ$ are the same.

For (c), clearly (iii) $\Ra$ (i) $\Ra$ (ii), so it is enough to show that (ii) $\Ra$ (iii). Suppose $\bX$ lies in $\CSchcfi$ and $\LCRScin$, and $x\in\bX$. Then $x$ has an open neighbourhood $\bV\subset\bX$ with $\bV\cong\Specc\bfD$ for $\bfD$ firm. Let $d_1',\ldots,d_n'\in\fD_\rex$ such that $[d_1'],\ldots,[d_n']$ are generators for $\fD_\rex^\sh$. Choose an open neighbourhood $\bU$ of $x$ in $\bV$ such that for each $i=1,\ldots,n$, either $\pi_{u,\rex}(d_i')=0$ in $\fD_{u,\rex}\cong\O_{X,x}^\rex$ for all $u\in U$, or $\pi_{e,\rex}(d_i')\ne 0$ in $\fD_{u,\rex}\cong\O_{X,x}^\rex$ for all $u\in U$. As the topology of $\bV\cong \Specc\bfD$ is generated by open sets $U_d$ as in Definition \ref{cc2def15}, making $U$ smaller if necessary we can suppose that $U=U_d$ for $d\in\fD$. Then Lemma \ref{cc5lem2} says that~$\bU\cong\Specc(\bfD(d^{-1}))$. 

Define $\bfC$ to be the semi-complete $C^\iy$-ring with corners constructed from $\bfD(d^{-1})$ in Proposition \ref{cc5prop2}, so that we have a morphism $\bs\psi:\bfD(d^{-1})\ra\bfC$ with $\Specc\bs\psi$ an isomorphism, implying that $\bU\cong\Specc\bfC$. We will show that $\bfC$ is firm and interior. Write $d_1'',\ldots,d_n''$ for the images of $d_1',\ldots,d_n'$ in $\fD(d^{-1})_\rex$. Since $[d_1'],\ldots,[d_n']$ generate $\fD_\rex^\sh$, $[d_1''],\ldots,[d_n'']$ generate $\fD(d^{-1})_\rex^\sh$. As by construction $\fC_\rex$ is generated by $\psi_\rex(\fD(d^{-1})_\rex)$ and invertibles $\Psi_{\exp}(\fC)$, so $[\psi_\rex(d_1'')],\ldots,[\psi_\rex(d_n'')]$ generate $\fC_\rex^\sh$, and $\bfC$ is firm.

By definition the natural map $\Xi_\rex:\fC_\rex\ra\O_X^\rex(U)$ is injective. For each $i=1,\ldots,n$, either $\Xi_\rex\ci\psi_\rex(d_i'')\vert_u=0$ in $\O_{X,u}^\rex$ for all $u\in U$, so that $\psi_\rex(d_i'')=0$, or $\Xi_\rex\ci\psi_\rex(d_i'')\vert_u=0$ in $\O_{X,u}^\rex$ for all $u\in U$, so that $\Xi_\rex\ci\psi_\rex(d_i'')$ is not a zero divisor in $\O_X^\rex(U)$, as the $\O_{X,u}^\rex$ are interior, and thus $\psi_\rex(d_i'')$ is not a zero divisor in $\fC_\rex$. Since $[\psi_\rex(d_1'')],\ldots,[\psi_\rex(d_n'')]$ generate $\fC_\rex^\sh$, and each $\psi_\rex(d_i'')$ is either zero or not a zero divisor, we see that $\fC_\rex$ has no zero divisors. Also $0_{\fC_\rex}\ne 1_{\fC_\rex}$ as $U\ne\es$. So $\bfC$ is interior. Thus (ii) implies (iii), and the proof is complete.
\end{proof}

Let $\bs f:\bX\ra\bY$ be a morphism in $\CSchc$, and $x\in\bX$ with $\bs f(x)=y$ in $\bY$. Then by definition, $\bX,\bY$ are locally isomorphic to $\Specc\bfC$, $\Specc\bfD$ near $x,y$ for $C^\iy$-rings with corners $\bfC,\bfD$. However, it is {\it not\/} clear that $\bs f$ must be locally isomorphic to $\Specc\bs\phi$ for some morphism $\bs\phi:\bfD\ra\bfC$ in $\CRingsc$, and the authors expect this is false in some cases. This is because $\Specc:(\CRingsc)^{\bf op}\ra\ACSchc$ is not full, as in Theorem \ref{cc5thm6}(a), so morphisms $\Specc\bfC\ra\Specc\bfD$ in $\CSchc$ need not be of the form $\Specc\bs\phi$. We now show that if $\bY$ is {\it firm}, then $\bs f$ is locally isomorphic to $\Specc\bs\phi$. We use this in Theorem \ref{cc5thm9} to give a criterion for existence of fibre products in~$\CSchc$.

\begin{prop}
\label{cc5prop3}
Suppose\/ $\bs f:\bX\ra\bY$ is a morphism in $\CSchc$ with $\bY$ firm. Then for all\/ $x\in\bX$ with\/ $\bs f(x)=y\in\bY,$ there exists a commutative diagram in $\CSchc\!:$
\e
\begin{gathered}
\xymatrix@C=80pt{ *+[r]{\Specc\bfC} \ar[r]_\cong \ar[d]^{\Specc\bs\phi} & \,\bU\, \ar[d]^{\bs f\vert_\bU} \ar@{^{(}->}[r] & *+[l]{\bX} \ar[d]_{\bs f} \\
*+[r]{\Specc\bfD} \ar[r]^\cong  & \,\bV\, \ar@{^{(}->}[r] & *+[l]{\bY.\!} }
\end{gathered}
\label{cc5eq13}
\e
Here $\bU\subseteq\bX,$ $\bV\subseteq\bY$ are affine open neighbourhoods of\/ $x,y$ with\/ $\bU\subseteq\bs f^{-1}(\bV),$ and\/ $\bs\phi:\bfD\ra\bfC$ is a morphism in $\CRingscsc$ with\/ $\bfD$ firm. The analogue also holds with\/ $\CSchc,\CRingscsc$ replaced by $\CSchcin,\CRingscscin$.
\end{prop}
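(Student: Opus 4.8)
The plan is to build the diagram \eqref{cc5eq13} by first choosing a convenient affine open neighbourhood $\bV$ of $y$ in $\bY$, then shrinking it and $\bU$ so that $\bs f$ is forced to be of the form $\Specc\bs\phi$. First I would choose $\bV\cong\Specc\bfE$ for some $C^\iy$-ring with corners $\bfE$; by Theorem~\ref{cc5thm6}(b) and the construction in Proposition~\ref{cc5prop2} I may assume $\bfE=(\fE,\fE_\rex)$ is semi-complete, and since $\bY$ is firm I may, by the argument in Theorem~\ref{cc5thm8}(c) using Lemmas~\ref{cc5lem2} and~\ref{cc4lem3}, shrink $\bV$ further so that $\bfE$ is firm and semi-complete (localizations of firm rings at elements $d\in\fE$ are firm, and the semi-completion of a firm ring is firm by Proposition~\ref{cc5prop2}). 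Set $\bfD=\bfE$; then $\bV\cong\Specc\bfD$ with $\bfD\in\CRingscsc$ firm. Next choose any affine open $x\in\bU_0\subseteq\bs f^{-1}(\bV)$ in $\bX$ with $\bU_0\cong\Specc\bfC_0$.

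The heart of the proof is producing the $C^\iy$-ring-with-corners morphism $\bs\phi$. The key point is that because $\bfD$ is \emph{firm}, its monoid $\fD_\rex$ is "finitely generated up to exponentials": by Definition~\ref{cc4def14} there are $d_1',\dots,d_n'\in\fD_\rex$ whose classes generate $\fD_\rex^\sh$, so every element of $\fD_\rex$ has the form $\Psi_{\exp}(d)\,d_1'^{a_1}\cdots d_n'^{a_n}$. Likewise $\fD$, being complete (it is semi-complete), satisfies $\fD\cong\Ga\ci\Spec\fD$. Now $\bs f\vert_{\bU_0}:\bU_0\to\bV$ is a morphism of local $C^\iy$-ringed spaces with corners, and taking global sections gives a morphism of $C^\iy$-rings with corners $\bs\phi_0=(\phi_0,\phi_{0,\rex}):\bfD\to\bO_X(U_0)$ via $\bs f_\sh(V)$ as in Definition~\ref{cc5def6} (restricted to $\bV$). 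The underlying $C^\iy$-ring part gives $\phi_0:\fD\to\O_X(U_0)$, and since $\fD$ is complete and $\O_X(U_0)=\Ga\ci\Spec\fC_0$, I can compose with the surjective localization maps and argue as in the proof of Theorem~\ref{cc5thm7}(a) — intersecting the finitely many relevant open-closed decompositions of $U_0$ controlled by the images $\phi_{0,\rex}(d_i')$ — to shrink $\bU_0$ to an affine open $\bU=U_d\cong\Specc\bfC$ (using Lemma~\ref{cc5lem2}), on which each $\phi_{0,\rex}(d_i')$ lies in the image of $\fC_\rex\to\O_X^\rex(U)$. I can arrange $\bfC$ semi-complete by replacing it via Proposition~\ref{cc5prop2}, and then $\Xi_{\fC,\rex}:\fC_\rex\to\O_X^\rex(U)$ is injective, so the lifts of the $\phi_{0,\rex}(d_i')$ to $\fC_\rex$ are unique. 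Because $\fD_\rex$ is generated by the $d_i'$ together with $\Psi_{\exp}(\fD)$, and $\phi_0$ lifts through $\fC$ (as $\fC\cong\Ga\ci\Spec\fC$ and $\fD$ is complete), these choices extend uniquely to a monoid morphism $\phi_\rex:\fD_\rex\to\fC_\rex$ compatible with the $C^\iy$-operations, giving $\bs\phi=(\phi,\phi_\rex):\bfD\to\bfC$ in $\CRingsc$ with $\bs\Xi_\bfC\ci\bs\phi$ equal to the restriction of $\bs f_\sh$. Finally $\Specc\bs\phi$ agrees with $\bs f\vert_\bU$ under the identifications $\bU\cong\Specc\bfC$, $\bV\cong\Specc\bfD$ by checking on stalks, using Proposition~\ref{cc5prop1} and the universal property of the localizations $\bfC_x,\bfD_y$.

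I expect the main obstacle to be the middle step: showing that after suitable shrinking of $\bU$, the morphism $\bs f_\sh$ really does factor as $\bs\Xi_\bfC\ci\Specc(\text{something})$ at the level of rings-with-corners, i.e.\ that the monoid part $\phi_{0,\rex}$ can be lifted to a well-defined map $\fD_\rex\to\fC_\rex$ respecting all $C^\iy$-operations $\Psi_g$ and not just the multiplicative monoid structure. The firmness of $\bfD$ is exactly what makes this possible — it reduces an infinite amount of data to the finitely many generators $d_i'$ plus the completed $C^\iy$-ring $\fD$ — but the bookkeeping of the open-closed decompositions (needed because elements of $\O_X^\rex(U)$ that agree locally need not agree in a localization, as Remark~\ref{cc5rem3} warns) is delicate, and one must check that shrinking $\bU$ finitely many times (once for each generator $d_i'$, intersecting decompositions as in the proof of Theorem~\ref{cc5thm7}(a)) suffices. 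For the interior case, the same argument applies verbatim with $\Speccin,\Gacin,\CRingscscin$ in place of $\Specc,\Gac,\CRingscsc$, using Theorem~\ref{cc5thm4}, Theorem~\ref{cc5thm8}(c), and the interior analogue of Proposition~\ref{cc5prop2}, noting that interiority is preserved by the localizations and semi-completions involved by Lemma~\ref{cc5lem2} and Proposition~\ref{cc5prop2}; firmness of $\bfD$ likewise passes to the interior setting since the sharpening is unchanged.
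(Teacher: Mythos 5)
Your proposal is correct and follows essentially the same route as the paper's proof: take $\bV\cong\Specc\bfD$ with $\bfD$ firm and semi-complete, shrink $\bU$ to a basic open on which the pullbacks of the finitely many generators $d_1',\ldots,d_n'$ of $\fD_\rex^\sh$ are represented by elements $c_1',\ldots,c_n'\in\fC_\rex$, semi-complete $\bfC$, define $\phi$ on the $C^\iy$-ring level by completeness, and set $\phi_\rex\bigl(\Psi_{\exp}(d)(d_1')^{a_1}\cdots(d_n')^{a_n}\bigr)=\Psi_{\exp}(\phi(d))(c_1')^{a_1}\cdots(c_n')^{a_n}$, with well-definedness coming from injectivity of $\Xi_{\fC,\rex}$ and compatibility with $\bs f_\sh$, exactly as in the paper. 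The only cosmetic difference is your appeal to the open-closed decompositions of Theorem \ref{cc5thm7}(a) in the shrinking step, which is unnecessary: one only needs that sections of $\O_X^\rex$ over an affine are locally of the form $\Xi_\rex(c')$, so finitely many shrinkings around $x$ suffice.
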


\begin{proof} As $\bY$ is a firm $C^\iy$-scheme with corners there exists an open neighbourhood $\bV$ of $y$ in $\bY$ and an isomorphism $\bV\cong\Specc\bfD$ for $\bfD$ a firm $C^\iy$-ring with corners. By Proposition \ref{cc5prop2} we may take $\bfD$ to be semi-complete.

As $\bX$ is a $C^\iy$-scheme with corners there exists an affine open neighbourhood $\bs{\dot U}$ of $x$ in $\bX$ with $\bs{\dot U}\cong\Specc\bs{\dot{\mathfrak C}}$. Since subsets $\dot U_c=\bigl\{x\in\dot U:x(c)\ne 0\bigr\}$ for $c\in\dot{\mathfrak C}$ are a basis for the topology of $\dot U$ by Definition \ref{cc2def16}, there exists $c\in\dot{\mathfrak C}$ with $\ddot U:=\dot U_c$ an open neighbourhood of $x$ in $\dot U\cap f^{-1}(V)$. Set $\bs{\ddot{\mathfrak C}}=\bs{\dot{\mathfrak C}}(c^{-1})$. Then $\bs{\ddot U}\cong\Specc\bs{\ddot{\mathfrak C}}$ is an open neighbourhood of $x$ in $\bs f^{-1}(\bV)\subseteq\bX$ by Lemma~\ref{cc5lem2}.

As $\bfD$ is firm, the sharpening $\fD_\rex^\sh$ is finitely generated. Choose $d'_1,\ldots,d'_n$ in $\fD_\rex$ whose images generate $\fD_\rex^\sh$. Then $\Xi_\fD^\rex(d'_i)$ lies in $(\Gac\ci\Specc\bfD)_\rex\cong \O_Y^\rex(V)$ for $i=1,\ldots,n$, so $f^\sh_\rex(\ddot U)\ci\Xi_\fD^\rex(d'_i)$ lies in $(\Gac\ci\Specc\bs{\ddot{\mathfrak C}})_\rex\cong \O_X^\rex(\ddot U)$ for $i=1,\ldots,n$. By definition of the sheaf $\O_X^\rex$, any section of $\O_X^\rex$ is locally modelled on $\Xi_{\bs{\ddot{\mathfrak C}}}^\rex(c')$ for some $c'\in \ddot{\mathfrak C}_\rex$. Thus, we may choose an open neighbourhood $U$ of $x$ in $\ddot U$ and elements $c_1',\ldots,c_n'\in \ddot{\mathfrak C}_\rex$ such that $f^\sh_\rex(\ddot U)\ci\Xi_\fD^\rex(d'_i)\vert_U=\Xi_{\bs{\ddot{\mathfrak C}}}^\rex(c'_i)\vert_U$ for $i=1,\ldots,n$. Making $U$ smaller if necessary, as above we may take $U$ of the form $\ddot U_c$ for $c\in\ddot{\mathfrak C}$. Set $\bfC=\Pi_{\rm all}^{\rm sc}(\bs{\ddot{\mathfrak C}}(c^{-1}))$. Then $\bU\cong\Specc\bfC$ by Lemma \ref{cc5lem2} and Definition \ref{cc5def10}, for $\bU$ an affine open neighbourhood of $x$ in $\bX$ with $\bU\subseteq\bs f^{-1}(\bV)$, and $\bfC$ a semi-complete $C^\iy$-ring.

We will show there is a unique morphism $\bs\phi=(\phi,\phi_\rex):\bfD\ra\bfC$ such that \eq{cc5eq13} commutes. To see this, note that there is a unique $C^\iy$-ring morphism $\phi:\fC\ra\fD$ making the $C^\iy$-scheme part of \eq{cc5eq13} commute by Theorem \ref{cc2thm3}(a), as $\fC,\fD$ are complete. Let $d'\in\fD_\rex$. Since $d_1',\ldots,d_n'$ generate $\fD_\rex^\sh$ we may write $d'=\Psi_{\exp}(d)(d_1')^{a_1}\cdots(d_n')^{a_n}$ for $d\in\fD$ and $a_1,\ldots,a_n>0$. 

We claim that defining $\phi_\rex(d')=\Psi_{\exp}(\phi(d))(c_1')^{a_1}\cdots(c_n')^{a_n}$ is independent of the presentation $d'=\Psi_{\exp}(d)(d_1')^{a_1}\cdots(d_n')^{a_n}$, and yields a monoid morphism $\phi_\rex:\fD_\rex\ra\fC_\rex$ with all the properties we need. To see this, note that
\begin{align*}
\Xi_\fC^\rex&(\phi_\rex(d'))=\Xi_\fC^\rex\bigl(\Psi_{\exp}(\phi(d))(c_1')^{a_1}\cdots(c_n')^{a_n}\bigr)\\
&=\bigl(\Xi_\fC^\rex\ci\Psi_{\exp}\ci\phi(d)\bigr)
\bigl(\Xi_\fC^\rex(c_1')\bigr)^{a_1}\cdots(\Xi_\fC^\rex(c_n'))^{a_n}\\
&=\Psi_{\exp}\bigl(\Xi_\fC\ci\phi(d)\bigr)
\bigl(f^\sh_\rex(U)\ci\Xi_\fD^\rex(d'_1)\bigr)^{a_1}\cdots(f^\sh_\rex(U)\ci\Xi_\fD^\rex(d'_n))^{a_n}\\
&=\Psi_{\exp}\bigl(f^\sh(U)\ci\Xi_\fD(d)\bigr)
\bigl(f^\sh_\rex(U)\ci\Xi_\fD^\rex(d'_1)\bigr)^{a_1}\cdots(f^\sh_\rex(U)\ci\Xi_\fD^\rex(d'_n))^{a_n}\\
&=f^\sh_\rex(U)\ci\Xi_\fD^\rex\bigl(\Psi_{\exp}(d)(d_1')^{a_1}\cdots(d_n')^{a_n}\bigr)=f^\sh_\rex(U)\ci\Xi_\fD^\rex(d').
\end{align*}
Thus $\Xi_\fC^\rex(\phi_\rex(d'))$ is independent of the presentation, and $\Xi_\fC^\rex$ is injective as $\fC$ is semi-complete, so $\phi_\rex(d')$ is well defined. By identifying $\bfC,\bfD$ with $C^\iy$-subrings with corners of $\bO_\bX(U),\bO_\bY(V)$, and $\bs\phi=(\phi,\phi_\rex)$ with the restriction of $\bs f_\sh(U):\bO_\bY(V)\ra \bO_\bX(U)$ to these $C^\iy$-subrings, in a similar way to \eq{cc5eq7}, we see that $\bs\phi$ is a morphism in $\CRingsc$, proving the first part. The same arguments work in the interior case.
\end{proof}

\subsection{\texorpdfstring{Fibre products of $C^\iy$-schemes with corners}{Fibre products of C∞-schemes with corners}}
\label{cc57}

We use Proposition \ref{cc5prop3} to construct fibre products of $C^\iy$-schemes with corners.

\begin{thm}
\label{cc5thm9}
Suppose\/ $\bs g:\bX\ra\bZ,$ $\bs h:\bY\ra \bZ$ are morphisms in $\CSchc,$ and\/ $\bZ$ is firm. Then the fibre product\/ $\bX\t_{\bs g,\bZ,\bs h}\bY$ exists in $\CSchc,$ and is equal to the fibre product in\/ $\LCRSc$. Similarly, if\/ $\bs g:\bX\ra\bZ,$ $\bs h:\bY\ra \bZ$ are morphisms in $\CSchcin,$ and\/ $\bZ$ is firm, then the fibre product\/ $\bX\t_{\bs g,\bZ,\bs h}\bY$ exists in $\CSchcin,$ and is equal to the fibre product in\/~$\LCRScin$.

The inclusion $\CSchcin\hookra\CSchc$ preserves fibre products.
\end{thm}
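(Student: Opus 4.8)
The plan is to reduce everything to two facts: that $\LCRSc$ and $\LCRScin$ have all small limits with the inclusion $\inc\colon\LCRScin\hookra\LCRSc$ preserving them (Theorem \ref{cc5thm1}), and that a fibre product of affine $C^\iy$-schemes with corners over an affine base is again affine. Granting the second, given $\bs g\colon\bX\ra\bZ$, $\bs h\colon\bY\ra\bZ$ in $\CSchc$ with $\bZ$ firm, Theorem \ref{cc5thm1} produces the fibre product $\bW=\bX\t_{\bs g,\bZ,\bs h}\bY$ in $\LCRSc$; I will then show $\bW$ lies in $\CSchc$ by covering it with affine opens. Since $\CSchc\subseteq\LCRSc$ is a full subcategory, such a $\bW$ is automatically also the fibre product in $\CSchc$. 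The interior case is identical with $\Specc,\Gac,\CRingscsc,\LCRSc$ replaced throughout by $\Speccin,\Gacin,\CRingscscin,\LCRScin$, using the interior analogues of Proposition \ref{cc5prop3} and Theorem \ref{cc5thm6}. Finally, for the last sentence: if $\bs g,\bs h$ lie in $\CSchcin$ with $\bZ$ firm, then the fibre product in $\CSchcin$ equals that in $\LCRScin$ by what has just been proved, and applying $\inc\colon\LCRScin\hookra\LCRSc$, which preserves small limits by Theorem \ref{cc5thm1}, identifies it with the fibre product in $\LCRSc$, hence with the fibre product in $\CSchc$; so $\inc\colon\CSchcin\hookra\CSchc$ preserves fibre products.

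For the affine case, let $\bfC_1,\bfC_2,\bfD$ be $C^\iy$-rings with corners with morphisms $\bfD\ra\bfC_1$, $\bfD\ra\bfC_2$, and form the pushout $\bfE=\bfC_1\amalg_\bfD\bfC_2$ in $\CRingsc$, which exists by Theorem \ref{cc4thm3}(b). Since $\Pi_{\rm all}^{\rm sc}$ is a left adjoint (Theorem \ref{cc5thm6}(c)) it preserves colimits, so $\Pi_{\rm all}^{\rm sc}(\bfE)$ is the pushout of $\Pi_{\rm all}^{\rm sc}(\bfC_1)$ and $\Pi_{\rm all}^{\rm sc}(\bfC_2)$ over $\Pi_{\rm all}^{\rm sc}(\bfD)$ in $\CRingscsc$. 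By Theorem \ref{cc5thm6}(e), $\Specc\vert_{(\CRingscsc)^{\bf op}}$ is a right adjoint, hence preserves limits, so it carries this pushout (a colimit in $\CRingscsc$, i.e.\ a limit in the opposite category) to the fibre product $\Specc\bfC_1\t_{\Specc\bfD}\Specc\bfC_2$ in $\LCRSc$. Combined with the isomorphism $\Specc\cong\Specc\ci\Pi_{\rm all}^{\rm sc}$ from Definition \ref{cc5def10}, this gives $\Specc\bfE\cong\Specc\bfC_1\t_{\Specc\bfD}\Specc\bfC_2$ in $\LCRSc$. Thus fibre products of affine $C^\iy$-schemes with corners over an affine base are affine $C^\iy$-schemes with corners, and the same computation with $\Speccin,\Pi_{\rm in}^{\rm sc,in}$ handles the interior case.

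Now I localize to reduce the general case to the affine one. Let $w\in\bW$ project to $x\in\bX$, $y\in\bY$, $z\in\bZ$, so $\bs g(x)=\bs h(y)=z$. Since $\bZ$ is firm, Proposition \ref{cc5prop3} applied to $\bs g$ at $x$ gives affine opens $\bU_1\subseteq\bX$, $\bV_1\subseteq\bZ$ with $x\in\bU_1\subseteq\bs g^{-1}(\bV_1)$, $\bV_1\cong\Specc\bfD_1$ with $\bfD_1$ firm, and $\bs g\vert_{\bU_1}$ identified with $\Specc$ of a morphism of $C^\iy$-rings with corners into $\bfD_1$. The affine open $\bV_1$ is itself a firm $C^\iy$-scheme with corners, so we may apply Proposition \ref{cc5prop3} to the restricted morphism $\bs h\colon\bs h^{-1}(\bV_1)\ra\bV_1$ at $y$, obtaining affine opens $\bU_2\subseteq\bY$, $\bV\subseteq\bV_1$ with $y\in\bU_2\subseteq\bs h^{-1}(\bV)$ and $\bs h\vert_{\bU_2}$ identified with $\Specc$ of a morphism into $\Gac(\bV)$ firm. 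Shrinking $\bV$ to a basic open $U_c\subseteq\bV_1$ (with $c$ in the global sections of $\bV_1$) and using Lemma \ref{cc5lem2} and functoriality of localization, we may assume $\bV=\Specc\bfD$ with $\bfD$ firm (and semi-complete), and that $\bs h\vert_{\bU_2}\colon\bU_2\ra\bV$ is $\Specc\bs\phi_2$ for $\bs\phi_2\colon\bfD\ra\bfC_2$; replacing $\bU_1$ by $\bU_1\cap\bs g^{-1}(\bV)$, again a basic open of $\bU_1$, the morphism $\bs g\vert_{\bU_1}\colon\bU_1\ra\bV$ likewise becomes $\Specc\bs\phi_1$ for $\bs\phi_1\colon\bfD\ra\bfC_1$. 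By the affine case, $\bU_1\t_\bV\bU_2\cong\Specc(\bfC_1\amalg_\bfD\bfC_2)$ is an affine $C^\iy$-scheme with corners, and it is precisely the open subspace of $\bW$ lying over the open set $\bU_1\t\bU_2\subseteq\bX\t\bY$, since fibre products in $\LCRSc$ commute with restriction to open subsets. As $w$ ranges over $\bW$ these affine opens cover $\bW$, so $\bW\in\CSchc$; the interior argument is the same.

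The step I expect to be the real obstacle is the localization of the final paragraph: one must arrange a single affine open $\bV$ in $\bZ$ into which the restrictions of both $\bs g$ and $\bs h$ factor \emph{and} are simultaneously expressible as $\Specc$ of morphisms of $C^\iy$-rings with corners — this is why Proposition \ref{cc5prop3} is invoked twice (the second time over the firm affine open produced by the first), and why one must track that firmness of the base ring survives each localization (Lemma \ref{cc5lem2}) so the affine case can be re-applied. Once this patching is set up, the remaining points — that $\bU_1\t_\bV\bU_2$ is genuinely the expected open subspace of $\bW$, and the bookkeeping with $\Pi_{\rm all}^{\rm sc}$ — are routine.
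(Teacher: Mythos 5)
Your overall strategy is the paper's: form the fibre product in $\LCRSc$ (Theorem \ref{cc5thm1}), show it is covered by affine opens of the form $\Specc$ of a pushout of semi-complete $C^\iy$-rings with corners (Theorem \ref{cc5thm6}(d),(e)), and conclude by fullness of $\CSchc\subset\LCRSc$; your affine computation with $\Pi_{\rm all}^{\rm sc}$ and $\Specc\cong\Specc\ci\Pi_{\rm all}^{\rm sc}$, the interior case, and the final statement via Theorem \ref{cc5thm1} all match the paper. The gap is exactly at the step you flagged: arranging that, near $w$, both $\bs g$ and $\bs h$ are simultaneously $\Specc$ of morphisms out of one and the same semi-complete ring.

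Your two applications of Proposition \ref{cc5prop3} produce two independent targets: the first gives $\bV_1\cong\Specc\bfD_1$ and a morphism $\bfD_1\ra\bfC_1$, the second gives some affine $\bV\subseteq\bV_1$ together with its own firm semi-complete ring, say $\bfD'$, and a morphism $\bfD'\ra\bfC_2$; nothing in the statement of Proposition \ref{cc5prop3} relates $\bfD'$ to $\bfD_1$. After shrinking to a basic open $U_c$ of $\bV_1$, Lemma \ref{cc5lem2} and functoriality of localization do let you rewrite the $\bs g$-side as $\Specc$ of a morphism out of $\Pi_{\rm all}^{\rm sc}(\bfD_1(c^{-1}))$, but the $\bs h$-side is still only known to be $\Specc$ of a morphism out of (a localization of) $\bfD'$. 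You then use a single symbol $\bfD$ for both, i.e.\ you implicitly identify these two semi-complete rings because they have isomorphic spectra. That identification is not available: by Theorem \ref{cc5thm6}(a) and Example \ref{cc5ex1}, $\Specc$ on $\CRingscsc$ is faithful but not full, and a semi-complete ring is not determined by its spectrum — the exterior monoid parts are merely submonoids of $\O_Z^\rex(U_c)$ and may differ — so one gets neither an isomorphism nor even a comparison morphism over $\id_{U_c}$ with which to transport one ring-level description to the other, and iterating the shrinking just reproduces the same mismatch. The paper avoids this by fixing a single firm semi-complete $\bfF$ with $z\in\bV\cong\Specc\bfF$ at the outset and applying Proposition \ref{cc5prop3} to $\bs g$ and to $\bs h$ with this same prescribed target $(\bV,\bfF)$; this is legitimate because in the proof of Proposition \ref{cc5prop3} the target affine is chosen first and only the source is shrunk, a strengthening not visible from the bare statement you were working from. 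With that common $\bfF$, the pushout $\bfD\amalg_\bfF\bfE$ in $\CRingscsc$ and Theorem \ref{cc5thm6}(e) give the affine neighbourhood of $w$ exactly as in your affine case, and the rest of your argument goes through.
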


\begin{proof}
Let $\bs g:\bX\ra \bZ$, $\bs h:\bY\ra \bZ$ be morphisms in $\CSchc\subset\LCRSc$. By Theorem \ref{cc5thm1} there exists a fibre product $\bW=\bX\t_{\bs g,\bZ,\bs h}\bY$ in $\LCRSc$, in a Cartesian square in $\LCRSc$:
\e
\begin{gathered}
\xymatrix@C=80pt@R=15pt{ *+[r]{\bW} \ar[d]^{\bs e} \ar[r]_{\bs f} & *+[l]{\bY} \ar[d]_{\bs h} \\ *+[r]{\bX} \ar[r]^{\bs g} & *+[l]{\bZ.\!} }	
\end{gathered}
\label{cc5eq14}
\e
We will prove that $\bW$ is a $C^\iy$-scheme with corners. Then \eq{cc5eq14} is Cartesian in $\CSchc\subset\LCRSc$, so $\bW$ is a fibre product $\bX\t_{\bs g,\bZ,\bs h}\bY$ in~$\CSchc$.

Let $w\in\bW$ with $\bs e(w)=x\in\bX$, $\bs f(w)=y\in\bY$ and $\bs g(x)=\bs h(y)=z\in\bZ$. As $\bZ$ is firm we may pick an affine open neighbourhood $\bV$ of $z$ in $\bZ$ with $\bV\cong\Specc\bfF$ for a firm $C^\iy$-ring with corners $\bfF$. By Proposition \ref{cc5prop2} we may take $\bfF$ to be semi-complete.

By Proposition \ref{cc5prop3} we may choose affine open neighbourhoods $\bT$ of $x$ in $\bs g^{-1}(\bV)\subset\bX$ and $\bU$ of $y$ in $\bs h^{-1}(\bV)\subset\bY$, and semi-complete $C^\iy$-rings with corners $\bfD,\bfE$ with isomorphisms $\bT\cong\Specc\bfD$, $\bU\cong\Specc\bfE$, and morphisms $\bs\phi:\bfF\ra\bfD$, $\bs\psi:\bfF\ra\bfE$ in $\CRingscsc$ such that $\Specc\bs\phi$, $\Specc\bs\psi$ are identified with $\bs g\vert_\bT:\bT\ra\bV$, $\bs h\vert_\bU:\bU\ra\bV$ by the isomorphisms $\bT\cong\Specc\bfD$, $\bU\cong\Specc\bfE$ and~$\bV\cong\Specc\bfF$.

Set $\bS=\bs e^{-1}(\bT)\cap\bs f^{-1}(\bU)$. Then $\bS$ is an open neighbourhood of $w$ in $\bW$. Write $\bfC=\bfD\amalg_{\bs\phi,\bfF,\bs\psi}\bfE$ for the pushout in $\CRingscsc$, which exists by Theorem \ref{cc5thm6}(d), with projections $\bs\eta:\bfD\ra\bfC$, $\bs\th:\bfE\ra\bfC$. Consider the diagrams
\begin{equation*}
\xymatrix@C=80pt@R=15pt{ *+[r]{\bS} \ar[d]^{\bs e\vert_\bS} \ar[r]_{\bs f\vert_\bS} & *+[l]{\bU} \ar[d]_{\bs h\vert_\bU} \\ *+[r]{\bT} \ar[r]^{\bs g\vert_\bT} & *+[l]{\bV,\!} }	\qquad
\xymatrix@C=80pt@R=15pt{ *+[r]{\Specc\bfC} \ar[d]^{\Specc\bs\eta} \ar[r]_{\Specc\bs\th} & *+[l]{\Specc\bfE} \ar[d]_{\Specc\bs\psi} \\ *+[r]{\Specc\bfD} \ar[r]^{\Specc\bs\phi} & *+[l]{\Specc\bfF,\!} }
\end{equation*}
in $\LCRSc$. The left hand square is Cartesian as \eq{cc5eq14} is, by local properties of fibre products. The right hand square is Cartesian by Theorem \ref{cc5thm6}(e), as $\bfC=\bfD\amalg_{\bs\phi,\bfF,\bs\psi}\bfE$. The isomorphisms $\bT\cong\Specc\bfD$, $\bU\cong\Specc\bfE$, $\bV\cong\Specc\bfF$ identify the right and bottom sides of both squares. Hence the two squares are isomorphic. Thus $\bS\cong\Specc\bfC$. So each point $w$ in $\bW$ has an affine open neighbourhood, and $\bW$ is a $C^\iy$-scheme with corners, as we have to prove.

The proof for the interior case is essentially the same. Theorem \ref{cc5thm8}(c) allows us to write $\bV\cong\Speccin\bfF$ for $\bfF\in\CSchcfiin$, and Theorem \ref{cc5thm6} and Propositions \ref{cc5prop2} and \ref{cc5prop3} also work in the interior case. The last part then follows from Theorem~\ref{cc5thm1}.
\end{proof}

\begin{thm}
\label{cc5thm10}
Consider the family of full subcategories:
\e
\begin{aligned}
\CSchcfp&\subset \LCRSc, & \CSchcfg&\subset \LCRSc, \\
\CSchcfi&\subset \LCRSc, & \CSchcfiin&\subset \LCRScin, \\
\CSchcfiZ&\subset \LCRScZ, & \CSchcfitf&\subset \LCRSctf, \\
\CSchcto&\subset \LCRScsa.
\end{aligned}
\label{cc5eq15}
\e
In each case, the subcategory is closed under finite limits in the larger category. Hence, fibre products and all finite limits exist in $\CSchcfp,\ldots,\CSchcto$.
\end{thm}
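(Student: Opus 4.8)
The plan is to reduce every assertion to closure under fibre products, handle the ``firm''-type subcategories directly from the local description of fibre products in the proof of Theorem~\ref{cc5thm9}, deduce the integral/torsion-free/saturated cases by applying the coreflection functors of Theorem~\ref{cc5thm2}, and deal with $\CSchcfp$ (and $\CSchcfg$) by a presentation-counting argument. First, every one of the seven subcategories $\cC$ in \eq{cc5eq15} contains $\bs*=\Specc(\R,[0,\iy))=F_\Manc^\CSchc(*)$: the ring $(\R,[0,\iy))=\bfF^{0,0}$ is finitely presented and interior, and its nonzero submonoid $(0,\iy)$ is an abelian group, so it is simplicial; hence it lies in $\CRingscfp,\ldots,\CRingscto$. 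Since $\Specc$ is a right adjoint, $\bs*$ is terminal in $\CSchc$, and it stays terminal in each ambient category $\LCRSc,\LCRScin,\LCRScZ,\LCRSctf,\LCRScsa$ (for the last three one also checks $\Pi(\bs*)\cong\bs*$ for the relevant coreflection $\Pi$). As a category with a terminal object and all binary fibre products has all finite limits, it suffices to show each $\cC$ is closed under fibre products in its ambient category.

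So let $\bs g:\bX\ra\bZ$, $\bs h:\bY\ra\bZ$ lie in $\cC$; in every case $\bZ$ is firm. For $\cC\in\{\CSchcfp,\CSchcfg,\CSchcfi\}$, Theorem~\ref{cc5thm9} gives a fibre product $\bW=\bX\t_{\bs g,\bZ,\bs h}\bY$ in $\CSchc$ equal to the one in $\LCRSc$; for $\CSchcfiin$, likewise $\bW$ exists in $\CSchcin$ and equals the fibre product in $\LCRScin$; and for $\CSchcfiZ,\CSchcfitf,\CSchcto$ the fibre product in $\LCRScZ,\LCRSctf,\LCRScsa$ is $\Pi(\bW)$, where $\bW$ is the $\LCRScin$-fibre product and $\Pi$ is the right adjoint $\Pi_\rin^\Z$, $\Pi_\tf^\sa\ci\Pi_\Z^\tf\ci\Pi_\rin^\Z$, $\Pi_\rin^\sa$ of Theorem~\ref{cc5thm2} (using that right adjoints preserve limits and $\Pi\ci\inc\cong\Id$). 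In each case it remains to show that $\bW$, respectively $\Pi(\bW)$, lies in $\cC$; once this is known, fullness of $\cC$ in the ambient category upgrades the ambient fibre product to a fibre product in $\cC$.

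For the firm-type cases, recall from the proof of Theorem~\ref{cc5thm9} that $\bW$ is covered by affine opens $\bS\cong\Specc\bfC$ (or $\Speccin\bfC$) with $\bfC=\bfD\amalg_{\bfF}\bfE$ a pushout in $\CRingscsc$ (or $\CRingscscin$), where $\bfD,\bfE,\bfF$ arise from affine opens of $\bX,\bY,\bZ$ through Propositions~\ref{cc5prop2} and~\ref{cc5prop3}. When $\bX,\bY,\bZ$ are firm (and interior where relevant), these inputs can be taken firm (and interior): the only constructions used are localizations $\bfC\mapsto\bfC(c^{-1})$ and the semi-completion $\Pi_{\rm all}^{\rm sc}$, both of which preserve firmness and interiority by Lemma~\ref{cc5lem2} and Proposition~\ref{cc5prop2}. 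Since the pushout in $\CRingscsc$ is $\Pi_{\rm all}^{\rm sc}$ of the pushout in $\CRingsc$ (Theorem~\ref{cc5thm6}(d)) and $\Specc\cong\Specc\ci\Pi_{\rm all}^{\rm sc}$, while $\CRingscfi$ is closed under finite colimits in $\CRingsc$ (Proposition~\ref{cc4prop12}) and semi-completion preserves firmness, $\bfC$ is firm; hence $\bW$ is covered by $\Specc$ of firm rings, so $\bW\in\CSchcfi$, and $\bW\in\CSchcfiin$ when $\bX,\bY,\bZ$ are also interior. The same argument works for $\CSchcfg$, using Proposition~\ref{cc4prop11} and that semi-completion preserves finite generation (the $C^\iy$-ring completion of a finitely generated $C^\iy$-ring is finitely generated, and the monoid part of $\Pi_{\rm all}^{\rm sc}$ is generated by the images of the old generators together with exponentials). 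For $\CSchcfiZ,\CSchcfitf,\CSchcto$: the interior-firm case gives $\bW\in\CSchcfiin$, and the reflection functors $\Pi_\rin^\Z,\Pi_\Z^\tf,\Pi_\tf^\sa,\Pi_\rin^\sa$ preserve firmness on the level of rings (as in the proof of Proposition~\ref{cc4prop14}), so by Theorem~\ref{cc5thm8}(a) they carry $\CSchcfiin$ into $\CSchcfiZ$, $\CSchcfitf$, $\CSchcto$; thus $\Pi(\bW)$ lies in the required subcategory.

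The remaining, and delicate, case is $\CSchcfp$, where semi-completion need not preserve finite presentation, so the firm argument does not apply verbatim. The fix is to unwind Proposition~\ref{cc5prop3}: the affine open $\bU\cong\Specc\bfC$ of $\bX$ occurring there has $\bfC=\Pi_{\rm all}^{\rm sc}(\bfC_0)$ with $\bfC_0$ built from a finitely presented affine open of $\bX$ by localizations $\bfC\mapsto\bfC(c^{-1})$, each of which adds one generator and one relation (see~\eq{cc4eq16}) and so preserves finite presentation; hence $\Specc\bfC\cong\Specc\bfC_0$ with $\bfC_0$ finitely presented. One then wants to realise the pushout cutting out $\bS$ as a pushout of such finitely presented rings in $\CRingsc$, which is finitely presented by Proposition~\ref{cc4prop11}. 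The main obstacle is exactly here: $\Specc$ is not full (Theorem~\ref{cc5thm6}(a)), so the maps $\bs g\vert_\bT$, $\bs h\vert_\bU$ used to form the pushout in the proof of Theorem~\ref{cc5thm9} need not be $\Specc$ of morphisms between finitely presented rings, and producing such ring morphisms that induce the correct maps on spectra — after possibly shrinking the affine opens to represent the finitely many generators and relations involved over a small enough finitely presented chart — is where the real work lies. Everything else is routine bookkeeping with the closure properties already established.
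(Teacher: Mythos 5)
Most of your argument is the paper's argument: you reduce finite limits to fibre products plus the terminal object $\Specc(\R,[0,\iy))$, you obtain the firm and interior-firm cases from the local pushout description in the proof of Theorem \ref{cc5thm9} together with Proposition \ref{cc4prop12} and the fact that semi-completion preserves firmness and interiority (Proposition \ref{cc5prop2}), and you obtain $\CSchcfiZ,\CSchcfitf,\CSchcto$ by applying the coreflections $\Pi_\rin^\Z,\Pi_\Z^\tf,\Pi_\tf^\sa$, which preserve limits, preserve firmness, and act as the identity on the smaller categories. All of that matches the paper's proof.

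The gap is in the finitely presented and finitely generated cases. For $\CSchcfp$ you describe the obstacle (non-fullness of $\Specc$, so $\bs g\vert_\bT,\bs h\vert_\bU$ are not visibly $\Specc$ of morphisms between finitely presented rings) and then stop, saying this is ``where the real work lies''; an announced obstacle is not a proof, so one of the seven cases in the statement is simply missing. Your $\CSchcfg$ case has the same problem in disguise: it rests on the parenthetical claim that $\Pi_{\rm all}^{\rm sc}$ preserves finite generation of both the $C^\iy$-ring and the monoid, which is proved nowhere in the paper (Proposition \ref{cc5prop2} only records firm and interior) and which you do not justify. The paper instead handles fp and fg by exactly the same mechanism as the firm case: in the construction of Theorem \ref{cc5thm9} one takes the local models $\bfD,\bfE,\bfF$ to be finitely presented (resp.\ finitely generated), so that the pushout $\bfC=\bfD\amalg_\bfF\bfE$ is finitely presented (resp.\ finitely generated) by Proposition \ref{cc4prop11} and $\bW$ is covered by $\Specc$ of such rings; no separate lifting argument of the kind you call for is carried out. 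To complete your proposal you must either supply the shrinking/lifting argument you sketch, or argue directly that the construction of Theorem \ref{cc5thm9} can be run with finitely presented (finitely generated) local models; as written the fp case is unproven and the fg case depends on an unverified assertion.
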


\begin{proof}
In the proof of Theorem \ref{cc5thm9}, if $\bX,\bY,\bZ$ are finitely presented we may take $\bfD,\bfE,\bfF$ to be finitely presented, and then $\bfC$ is finitely presented by Proposition \ref{cc4prop11}, so $\bW=\bX\t_{\bs g,\bZ,\bs h}\bY$ is finitely presented. Hence all fibre products exist in $\CSchcfp$, and agree with fibre products in~$\LCRSc$.

As $\CSchcfp$ has a final object $\Specc(\R,[0,\iy))$, and all fibre products in a category with a final object are (iterated) fibre products, then $\CSchcfp$ is closed under finite limits in $\LCRSc$, and all such finite limits exist in $\CSchcfp$ by Theorem \ref{cc5thm1}. The same argument works for $\CSchcfg,\ab\CSchcfi,\ab\CSchcfiin$, using Proposition \ref{cc4prop12} for the latter two.

For $\CSchcfiZ,\CSchcfitf,\CSchcto$ we apply the coreflection functors $\Pi_\rin^\Z,\Pi_\Z^\tf,\Pi_\tf^\sa$ of Theorem \ref{cc5thm2}, noting that the restrictions of these map
\begin{equation*}
\xymatrix@C=35pt{ \CSchcfiin \ar[r]^{\Pi_\rin^\Z} & \CSchcfiZ \ar[r]^{\Pi_\Z^\tf} & \CSchcfitf \ar[r]^{\Pi_\tf^\sa} & \CSchcto, }	
\end{equation*}
by Theorem \ref{cc5thm8} and the fact (clear from the proof of Theorem \ref{cc5thm8}) that $\Pi_\rin^\Z,\Pi_\Z^\tf,\Pi_\tf^\sa$ preserve firmness. 

Let $J$ be a finite category and $\bs F:J\ra\CSchcfiZ$ a functor. Then a limit $\bX=\varprojlim\bs F$ exists in $\CSchcfiin$ from above, which is also a limit in $\LCRScin$. Hence $\Pi_\rin^\Z(\bX)$ is a limit $\varprojlim\Pi_\rin^\Z\ci\bs F$ in $\LCRScZ$, since $\Pi_\rin^\Z:\LCRScin\ra\LCRScZ$ preserves limits as it is a right adjoint. But $\Pi_\rin^\Z$ acts as the identity on $\LCRScZ\subset\LCRScin$, so $\Pi_\rin^\Z(\bX)$ is a limit $\varprojlim\bs F$ in $\LCRScZ$. As $\bX\in\CSchcfiin$ we have $\Pi_\rin^\Z(\bX)\in\CSchcfiZ$, so $\CSchcfiZ$ is closed under finite limits in $\LCRScZ$, and all finite limits exist in $\CSchcfiZ$ by Corollary \ref{cc5cor2}. The arguments for $\CSchcfitf,\CSchcto$ are the same.
\end{proof}

We will return to the subject of fibre products in \S\ref{cc66}--\S\ref{cc67}, using `corner functors' from Chapter \ref{cc6} as a tool to study them.

\section{Boundaries, corners, and the corner functor}
\label{cc6}

In \S\ref{cc34}, for manifolds with (g-)corners of mixed dimension, we defined {\it corner functors\/} $C:\cManc\ra\cMancin$ and $C:\cMangc\ra\cMangcin$ which are right adjoint to the inclusions $\inc:\cMancin\hookra\cManc$ and $\inc:\cMangcin\hookra\cMangc$. These encode the boundary $\pd X$ and $k$-corners $C_k(X)$ of a manifold with (g-)corners $X$ in a functorial way.

This chapter will show that analogous corner functors $C:\LCRSc\ra\LCRScin$ and $C:\CSchc\ra\CSchcin$ may be defined for local $C^\iy$-ringed spaces and $C^\iy$-schemes with corners $\bX$. This allows us to define the boundary $\pd\bX$ and $k$-corners $C_k(\bX)$ for firm $C^\iy$-schemes with corners $\bX$. We use corner functors to study existence of fibre products in categories such as~$\CSchctoex$.

\subsection{\texorpdfstring{The corner functor for $C^\iy$-ringed spaces with corners}{The corner functor for C∞-ringed spaces with corners}}
\label{cc61}

\begin{dfn}
\label{cc6def1}
Let $\bX=(X,\bO_X)$ be a local $C^\iy$-ringed space with corners. We will define the {\it corners\/} $C(\bX)$ in $\LCRScin$. As a set, we define 
\e
C(X)=\bigl\{(x,P):\text{$x\in X$, $P$ is a prime ideal in }\O_{X,x}^\rex \bigr\},
\label{cc6eq1}
\e
where prime ideals in monoids are as in Definition \ref{cc3def4}. Define $\Pi_X: C(X)\ra X$ as a map of sets by~$\Pi_X:(x,P)\mapsto x$. 

We define a topology on $C(X)$ to be the weakest topology such that $\Pi_X$ is continuous, so $\Pi_X^{-1}(U)$ is open for all $U\subset X$, and such that for all open $U\subset X$, for all elements $s'\in \O_X^\rex(U)$, then 
\e
\dot U_{s'}=\bigl\{(x,P):x\in U,\; s'_x\notin  P \bigr\}\subseteq \Pi_X^{-1}(U)\quad\text{and}\quad \ddot U_{s'}=\Pi_X^{-1}(U)\setminus\dot U_{s'}
\label{cc6eq2}
\e
are both open and closed in $\Pi_X^{-1}(U)$. Then $\Pi_X^{-1}(U)=\dot U_1=\ddot U_0$, and $\es=\ddot U_{1}=\dot U_0$ for $0,1\in \O_X^\rex(U)$. The collection $\bigl\{\dot U_{s'},\ddot U_{s'}:\text{open } U\subset X$, $s'\in \O_X^\rex(U)\bigr\}$ is a subbase for the topology.

Pull back the sheaves $\O_X,\O_X^\rex$ by $\Pi_X$ to get sheaves $\Pi_X^{-1}(\O_X),\Pi_X^{-1}(\O_X^\rex)$ on $C(X)$. For each $x\in X$ and each prime $P$ in $\O_{X,x}^\rex$, we have ideals 
\begin{equation*}
P\subset \O_{X,x}^\rex\cong(\Pi_X^{-1}(\O^\rex_{X}))_{(x,P)},\quad \langle\Phi_i(P) \rangle\subset \O_{X,x}\cong(\Pi_X^{-1}(\O_{X}))_{(x,P)}.
\end{equation*} 
Here $\langle \Phi_i(P)\rangle$ is the ideal generated by the image of $P$ under $\Phi_i:\O^\rex_{X,x}\ra\O_{X,x}$. 

We define the sheaf of $C^\iy$-rings $\O_{C(X)}$ on $C(X)$ to be the sheafification of the presheaf of $C^\iy$-rings $\Pi_X^{-1}(\O_X)/I$ on $C(X)$, where for open $U\subset C(X)$
\begin{equation*}
I(U)=\bigl\{s\in \Pi_X^{-1}(\O_X)(U):s_{(x,P)}\in \langle \Phi_i(P)\rangle \text{ for all } (x,P)\in U\bigr\}.
\end{equation*}
Let $\O^\rex_{C(X)}$ be the sheafification of the presheaf $U\!\mapsto\! \Pi_X^{-1}(\O^\rex_X)(U)/\simc$ where the equivalence relation $\simc$ is generated by the relations, for $s_1', s_2'\in  \Pi_X^{-1}(\O^\rex_X)(U)$, that $s'_1\sim s_2'$ if for each $(x,P)\in U$ either $s'_{1,x},s'_{2,x}\in P$, or there is $p\in \langle \Phi_i(P)\rangle$ such that $s'_{1,x}=\Psi_{\exp}(p)s'_{2,x}$. This is similar to quotienting the $C^\iy$-ring with corners $(\Pi_X^{-1}(\O_X)(U), \Pi_X^{-1}(\O_X^\rex)(U))$ by a prime ideal in $\Pi_X^{-1}(\O_X^\rex)(U)$, which we described in Example \ref{cc4ex4}(b), and creates a sheaf of $C^\iy$-rings with corners $\bO_{C(X)}=(\O_{C(X)},\O^\rex_{C(X)})$ on~$C(X)$. 

Lemma \ref{cc6lem1} shows $C(\bX)=(C(X),\bO_{C(X)})$ is an interior local $C^\iy$-ringed space with corners, and the stalk of $\bO_{C(X)}$ at $(x,P)$ is an interior local $C^\iy$-ring with corners isomorphic to $\bO_{X,x}/\simc_P$, using the notation of Example~\ref{cc4ex4}(b).

Define sheaf morphisms $\Pi_X^\sh:\Pi_X^{-1}(\O_X)\ra \O_{C(X)}$ and $\Pi_{X,\rex}^\sh:\Pi_X^{-1}(\O_X^\rex)\ra \O_{C(X)}^\rex$ to be the sheafifications of the projections $\Pi_X^{-1}(\O_X)\ra\Pi_X^{-1}(\O_X)/I$ and $\Pi_X^{-1}(\O_X^\rex)\ra\Pi_X^{-1}(\O_X^\rex)/\simc$. Then $\bs\Pi_\bX^\sh=(\Pi_X^\sh,\Pi_{X,\rex}^\sh):\Pi_X^{-1}(\bO_X)\ra \bO_{C(X)}$ is a morphism of sheaves of $C^\iy$-rings with corners, and $\bs\Pi_\bX=(\Pi_X,\bs\Pi_\bX^\sh)$ is a morphism $\bs\Pi_\bX:C(\bX)\ra\bX$ in~$\LCRSc$.

Let $\bs f=(f,f^\sh,f^\sh_\rex):\bX\ra\bY$ be a morphism in $\LCRSc$. We will define a morphism $C(\bs f):C(\bX)\ra C(\bY)$. On topological spaces, we define $C(f):C(X)\ra C(Y)$ by $(x,P)\mapsto (f(x),(f^{\sh}_{\rex,x})^{-1}(P))$ where $f^{\sh}_{\rex,x}:\O^\rex_{Y,f(x)}\ra\O^\rex_{X,x}$ is the stalk map of $f^\sh_\rex$. This is continuous as if $t\in \O_Y^\rex(U)$ for some open $U\subset Y$ then $C(f)^{-1}(\dot U_{t})=\dot{(f^{-1}(U))}_{f^\rex_\sh(U)(t)}$ is open, and similarly $C(f)^{-1}(\ddot U_{t})=\ddot{(f^{-1}(U))}_{f^\rex_\sh(U)(t)}$ is open. Also~$\Pi_Y\ci C(f)=f\ci \Pi_X$. 

To define the sheaf morphism $C(\bs f)^\sh:C(f)^{-1}(\bO_{C(Y)})\ra\bO_{C(X)}$, consider the diagram of presheaves on $C(X)$:
\begin{equation*}
\xymatrix@C=170pt@R=15pt{
*+[r]{C(f)^{-1}\ci\Pi_Y^{-1}(\O_Y)=\Pi_X^{-1}\ci f^{-1}(\O_Y)} \ar[r]_(0.7){\Pi_X^{-1}(f^\sh)} \ar[d] & *+[l]{\Pi_X^{-1}(\O_X)} \ar[d] \\
*+[r]{C(f)^{-1}(\Pi_Y^{-1}(\O_Y)/I_Y)} \ar[d]^{\text{sheafify}} \ar@{.>}[r]^{\Pi_X^{-1}(f^\sh)_*} & *+[l]{\Pi_X^{-1}(\O_X)/I_X} \ar[d]_{\text{sheafify}} \\
*+[r]{C(f)^{-1}(\O_{C(Y)}) } \ar@{.>}[r]^{C(f)^\sh} & *+[l]{\O_{C(X)}.\!}}
\end{equation*}
Here there is a unique morphism $\Pi_X^{-1}(f^\sh)_*$ making the upper rectangle commute as $\Pi_X^{-1}(f^\sh)$ maps $C(f)^{-1}(I_Y)\ra I_X$ by definition of $I_Y,I_X,C(f)$. Hence there is a unique morphism $C(f)^\sh$ making the lower rectangle commute by properties of sheafification. Similarly we define $C(f)^\sh_\rex:C(f)^{-1}(\O_{C(Y)}^\rex)\ra\O_{C(X)}$, and $C(\bs f)^\sh=(C(f)^\sh,C(f)^\sh_\rex):C(f)^{-1}(\bO_{C(Y)})\ra\bO_{C(X)}$ is a morphism of sheaves of $C^\iy$-rings with corners on $C(X)$, so $C(\bs f)=(C(f),C(\bs f)^\sh):C(\bX)\ra C(\bY)$ is a morphism  in $\LCRSc$. We see that~$\bs{\Pi}_\bY\ci C(\bs f)=\bs f\ci\bs{\Pi}_\bX$.

On the stalks at $(x,P)$ in $C(X)$, we have
\begin{equation*}
C(\bs f)^\sh_{(x,P)}=(\bs f^\sh_x)_*:\bO_{Y,f(x)}/\simc_{(f^\sh_{\rex,x})^{-1}(P)}\longra \bO_{X,x}/\simc_P.
\end{equation*}
For the monoid sheaf, if $s'\mapsto 0$ in the stalk, then $s'=[s'']$ for some $s''\in \O_{Y,f(x)}^\rex$, and $f^\sh_{\rex,(x,P)}(s'')_x\in P$. Then $s''_{f(x)}\in (f^\sh_{\rex,x})^{-1}(P)$, so $s''\simc_{(f^\sh_{\rex,x})^{-1}(P)} 0$, giving $s'=0$. Therefore $C(\bs f)$ is an interior morphism. One can check that $C(\bs f\ci \bs g)=C(\bs f)\ci C(\bs g)$ and $C(\bs\id_\bX)=\bs\id_{C(\bX)}$, and thus $C:\LCRSc\ra\LCRScin$ is a well defined functor. 

If we now assume $\bX$ is interior, then $\{(x,\{0\}):x\in X\}$ is contained in $C(X)$, and there is an inclusion of sets $\io_X:X\hookrightarrow C(X)$, $\io_X:x\mapsto (x,\{0\})$. The image of $X$ under $\io_X:X\hookrightarrow C(X)$ is closed in $C(X)$, since for all open $U\subset X$, 
\begin{equation*}
\io_X(U)=\bigcap\nolimits_{s'\in \O_X^\rin(U)}\dot U_{s'}
\end{equation*}
is closed in $\Pi_X^{-1}(U)$. Also, $\io_X$ is continuous, as the definition of interior implies $\io_X^{-1}(\dot U_{s'})=\bigl\{x\in U:s'_x\ne 0\in \O_{X,x}^\rex\bigr\}$ is open, and $\io_X^{-1}(\ddot U_{s'})=\bigl\{x\in U:s'_x= 0$ in $\O_{X,x}^\rex\bigr\}$ is open for all~$s'\in \O_X^\rex(U)$.

Now any $s\in \O_X(U)$ gives equivalence classes in $\Pi_X^{-1}(\O_X)(\Pi_X^{-1}(U))$, and $\Pi_X^{-1}(\O_X)(\Pi_X^{-1}(U))/\simc$, and $\O_{C(X)}(\Pi^{-1}(U))$, and $\io_X^{-1}(\O_{C(X)})(U)$. So there is a map $\O_X(U)\ra \io_X^{-1}(\O_{C(X)})(U)$, and a similar map $\O_X^\rex(U)\ra \io_X^{-1}(\O_{C(X)}^\rex)(U)$. These maps respect restriction and form morphisms of sheaves of $C^\iy$-rings with corners. The stalks of $\io_X^{-1}(\bO_{C(X)})$ are isomorphic to the stalks of $\bO_{C(X)}$, which are isomorphic to $\bO_{X,x}/\simc_{\{0\}}\cong \bO_{X,x}$ by Lemma \ref{cc6lem1}. So there is a canonical isomorphism $\bs \io_\bX^\sh:\io_X^{-1}(\bO_{C(X)})\ra\bO_X$, and $\bs \io_\bX=(\io_X,\bs \io_\bX^\sh):\bX\ra C(\bX)$ is a morphism, which is interior, as the stalk maps are isomorphisms.

Thus, for each interior $\bX$ we have defined a morphism $\bs \io_\bX:\bX\ra C(\bX)$ in $\LCRScin$, which is an inclusion of $\bX$ as an open and closed $C^\iy$-subscheme with corners in $C(\bX)$. One can show that $\bs{\Pi}_\bX\ci\bs\io_\bX=\bs\id_\bX$. Also, if $\bs f:\bX\ra\bY$ is a morphism in $\LCRScin$ one can show that~$\bs\io_\bY\ci\bs f=C(\bs f)\ci\bs \io_\bX$.
\end{dfn}

\begin{lem}
\label{cc6lem1}
In Definition\/ {\rm\ref{cc6def1},} $C(\bX)=(C(X),\bO_{C(X)})$ is an interior local $C^\iy$-ringed space with corners in the sense of Definition\/ {\rm\ref{cc5def2}}. The stalk\/ $\bO_{C(X),(x,P)}$ of\/ $\bO_{C(X)}$ at\/ $(x,P)$ in $C(X)$ is an interior local\/ $C^\iy$-ring with corners isomorphic to $\bO_{X,x}/\simc_P,$ in the notation of Example\/~{\rm\ref{cc4ex4}(b)}.
\end{lem}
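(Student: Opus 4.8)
The plan is to first compute the stalks $\bO_{C(X),(x,P)}$ explicitly, and then to deduce every assertion of the lemma from that computation together with Example \ref{cc4ex4}(b). Since sheafification does not change stalks (Definition \ref{cc2def11}), and since stalks are filtered colimits, which commute with quotients by ideals and by monoid congruences, the stalk $\O_{C(X),(x,P)}$ is the quotient of $(\Pi_X^{-1}(\O_X))_{(x,P)}$ by the germ at $(x,P)$ of the presheaf $U\mapsto I(U)$, and $\O_{C(X),(x,P)}^\rex$ is the analogous quotient of $(\Pi_X^{-1}(\O_X^\rex))_{(x,P)}$. As pullback sheaves have the same stalks, $(\Pi_X^{-1}(\O_X))_{(x,P)}\cong\O_{X,x}$ and $(\Pi_X^{-1}(\O_X^\rex))_{(x,P)}\cong\O_{X,x}^\rex$. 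So everything reduces to showing that the kernel of the natural surjection $\O_{X,x}\ra\O_{C(X),(x,P)}$ is exactly the ideal $\langle\Phi_i(P)\rangle$, and that the congruence induced on $\O_{X,x}^\rex$ is exactly the relation $\simc_P$ of Example \ref{cc4ex4}(b); this identifies $\bO_{C(X),(x,P)}$ with $\bO_{X,x}/\simc_P$.

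One inclusion in each case is immediate: any germ that dies in the stalk, or any two germs identified in the stalk, must satisfy the defining condition of $I$, resp.\ $\simc$, at the single point $(x,P)$, which is precisely membership in $\langle\Phi_i(P)\rangle$, resp.\ $\simc_P$-equivalence (here one uses that $\simc_P$ is transitive, which holds because $\langle\Phi_i(P)\rangle$ is an $\R$-algebra ideal and $P$ is prime, hence closed under multiplication by the units $\Psi_{\exp}(d)$). The converse is the main point. Given a germ $s_{(x,P)}=\sum_j a_j\,\Phi_i(p_j)$ with $p_j\in P$, I would lift the $a_j$ and $p_j$ to sections $a_j'\in\O_X(U)$, $p_j'\in\O_X^\rex(U)$ on a neighbourhood $U$ of $x$ with $s=\sum_j a_j'\,\Phi_i(p_j')$ on $\Pi_X^{-1}(U)$, and then restrict to the open neighbourhood $W=\bigcap_j\ddot U_{p_j'}$ of $(x,P)$ in $C(X)$: on $W$ we have $(p_j')_{(y,Q)}\in Q$ for every $(y,Q)$, so $s\vert_W\in I(W)$ and $s$ vanishes in the stalk. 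The same device handles the monoid relation: a relation $s_{1,x}'=\Psi_{\exp}(d)\,s_{2,x}'$ with $d\in\langle\Phi_i(P)\rangle$ lifts, after passing to a neighbourhood $W$ of $(x,P)$ of this form on which the identity $s_1'=\Psi_{\exp}(d')\,s_2'$ of sections already holds and $d'$ has the required form at each point, to a single generating $\simc$-relation on $W$; the case $s_{1,x}',s_{2,x}'\in P$ uses $W=\ddot U_{s_1'}\cap\ddot U_{s_2'}$. The crux here---that equivalences visible at the point $(x,P)$ are already visible on a neighbourhood in $C(X)$---is exactly what the topology of $C(X)$ in Definition \ref{cc6def1}, with its open-and-closed sets $\dot U_{s'},\ddot U_{s'}$, was designed to provide, and this is the step I expect to be the main obstacle.

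Granting $\bO_{C(X),(x,P)}\cong\bO_{X,x}/\simc_P$, the structural claims follow quickly. By Example \ref{cc4ex4}(b) this is a $C^\iy$-ring with corners. It is interior: if $[c_1'][c_2']=[0]$ in the quotient monoid then $c_1'c_2'\in P$ (as $P$ is an ideal and the $\Psi_{\exp}(d)$ are units), so by primality $c_1'\in P$ or $c_2'\in P$, i.e.\ $[c_1']=[0]$ or $[c_2']=[0]$; and $[0]\ne[1]$ because a prime ideal contains no units (Definition \ref{cc3def4}). It is local: $\bO_{X,x}$ is a local $C^\iy$-ring with corners with morphism $\pi:\O_{X,x}\ra\R$, each $\Phi_i(p)$ with $p\in P$ lies in $\Ker\pi$ (otherwise $\Phi_i(p)$, hence $p$, would be invertible, contradicting $P$ an ideal), so $\langle\Phi_i(P)\rangle$ is a proper ideal, and a quotient of a local $C^\iy$-ring with corners by such an ideal is again local by a direct check of Definition \ref{cc4def10} using surjectivity of the quotient map on both components. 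Hence $C(\bX)$ is a local $C^\iy$-ringed space with corners with interior stalks. To conclude it is an interior $C^\iy$-ringed space with corners it then suffices to verify Definition \ref{cc5def2}(a), the rest following by Lemma \ref{cc5lem1}: for $s'\in\O_{C(X)}^\rex(V)$, the set $\{(x,P)\in V:s'_{(x,P)}=0\}$ is open since vanishing in a stalk is a local condition, and its complement $V_{s'}$ is open because near any point of $V$ the section $s'$ is the image of a pulled-back section $t'\in\O_X^\rex(U)$, whence $V_{s'}$ agrees locally with $\dot U_{t'}$---using $[t'_y]=0$ in $\O_{X,y}^\rex/\simc_Q$ iff $t'_y\in Q$---which is open by construction of the topology on $C(X)$.
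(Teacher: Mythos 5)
Your proposal is correct and takes essentially the same route as the paper's proof: the paper likewise identifies the stalk with $\bO_{X,x}/\simc_P$ by restricting representatives to the open sets $\ddot U_{s'}$ (packaging this as a morphism $\bs t$ built from the universal property of Example \ref{cc4ex4}(b) rather than as your direct filtered-colimit computation), deduces interiority from primality of $P$ and locality by factoring the morphism to $\R$, and checks Definition \ref{cc5def2}(a) by representing sections locally by pulled-back sections and using openness of the sets $\dot U_{t'},\ddot U_{t'}$. The differences are purely presentational.
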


\begin{proof}
We first verify the openness conditions for $C(\bX)$ to be an {\it interior\/} $C^\iy$-ringed space with corners in Definition \ref{cc5def2}(a),(b). Let $U\subset C(X)$ be open and $s'\in \O_{C(X)}^\rex(U)$, and as in Definition \ref{cc5def2}(a),(b) write
\begin{equation*}
U_{s'}=\bigl\{(x,P)\in U:s'_{(x,P)}\ne 0\in \O^\rex_{C(X),(x,P)}\bigr\}\quad\text{and}\quad \hat U_{s'}=U\sm U_{s'}.\end{equation*}
Then there is an open cover $\{U_i\}_{i\in I}$ of $U$ such that $s'\vert_{U_i}=[s_i']$ for $s_i'\in\Pi^{-1}(\O^\rex_X)(U_i)$, where $s'_i\vert_{(x,P)}\notin P$ if $(x,P)\in U_{s'}\cap U_i$ and $s_i'\vert_{(x,P)}\in P$ if $(x,P)\in U_{s'}\cap\hat U_i$. The definition of the inverse image sheaf implies there is an open cover $\{W_{j}^i\}_{j\in J_i}$ of $U_i$ such that $s_i'\vert_{W_{i,j}}=[s'_{i,j}]$ for some $s'_{i,j}\in \O^\rex_X(V_{i,j})$ for open $V_{i,j}\subset X$ such that $V_{i,j}\supset \Pi(W_{i,j})$. Then $s'_{i,j}\vert_x\notin  P$ for all $(x,P)\in U_{s'}\cap W_{i,j}$, and $s'_{i,j}\vert_x\in  P$ for all $(x,P)\in\hat U_{s'}\cap W_{i,j}$. Thus
\begin{equation*}
U_{s'}\cap W_{i,j}=W_{i,j}\cap \dot{(V_{i,j})}_{s'_{i,j}}
\quad
\hat U_{s'}\cap W_{i,j}=W_{i,j}\cap \ddot{( V_{i,j})}_{s'_{i,j}} \end{equation*}
are both open in $C(X)$ by the definition of the topology on $C(X)$ involving $\dot U_{s'},\ddot U_{s'}$ in \eq{cc6eq2}. Taking the union over $i\in I$, $j\in J_i$, and using that $\bigcup_{i\in I,\;j\in J_i} W_{i,j}=U$, we see that $U_{s'},\hat U_{s'}$ are open in $U$, as we have to prove.

The stalks of $\O_{C(X)},\O_{C(X)}^\rex$ at a point $(x,P)\in C(X)$ are isomorphic to $\O_{X,x}/\langle \Phi_i(P)\rangle$ and $\O_{X,x}^\rex/\simc_P$, where $s_1'\simc_P s_2'$ in $\O_{X,x}^\rex$ if $s_1',s_2'\in P$ or there is a $p\in \langle \Phi_i(P)\rangle$ such that $\Psi_{\exp}(p)s_1'=s_2'$, as in Example \ref{cc4ex4}(b). To see this, consider that the definitions give us the following diagram, where we will show the arrow $\bs{t}$ exists and is an isomorphism
\e
\begin{gathered}
\xymatrix@C=20pt@R=20pt{*+[l]{\bO_{X,x}} \ar@{->}[r]^(0.35){\sim}_(0.35){\hat{\bs{\Pi}}^\sh_x} \ar@{>>}[d]^{}& *+[c]{\Pi^{-1}(\bO_X)_{(x,P)}}\ar@{>>}[r]_{}&*+[r]{\bO_{C(X),(x,P)}.} \\
\bO_{X,x}/\simc_P\ar@{-->>}@<-.8ex>@/^-.5pc/[rru]_{\bs{t}}^\cong }
\end{gathered}
\label{cc6eq3}
\e

To see that $\bs{t}$ exists, we use the universal property of $\bO_{X,x}/\simc_P$ as in Example \ref{cc4ex4}(b). Let $U$ be an open set in $X$ and take $s'\in \O_X^\rex(U)$ such that $s'_x\in P$. Then $s'$ maps to an equivalence class in $\O^\rex_{C(X)}(\Pi^{-1}(U))$. Consider the open set $\ddot U_{s'}=\bigl\{(x,P)\in C(X): x\in U$, $s'_x\in P\bigr\}$, then restricting to this open set, we see that our $(x,P)$ is in $\ddot U_{s'}\subset \Pi^{-1}(U)$ and so we can restrict the equivalence class of $s'$ to $\ddot U_{s'}$. In this open set however, $s'_x\in P$ for all $(x,P)\in \ddot U_{s'}$ so $s'\simc 0$, and $s'$ lies in the kernel of the composition of the top row of \eq{cc6eq3}. Then the universal property of $\bO_{X,x}/\simc_P$ says that $\bs{t}$ must exist and commute with the diagram. Also, $\bs{t}$ must be surjective as the top line is surjective. 

To see that $\bs{t}$ is injective is straightforward. For example, in the monoid case, if $[s'_{1,x}],[s'_{2,x}]\in \O_{X,x}^\rex/\simc_P$ with representatives $s'_1,s'_2\in \O^\rex_X(U)$, and if $t_\rex([s'_{1,x}])=t_\rex([s'_{2,x}])$ then $s'_{1,x}\simc s'_{2,x}$ for all $(x,P)\in V$ for some open $V\subset \Pi^{-1}(U)$. This means at every $(x,P)\in V$ then $s'_{1,x}\simc_Ps'_{2,x}$, so this must be true at our $(x,P)$, so that $[s'_{1,x}]=[s'_{2,x}]\in \O_{X,x}^\rex/\simc_P$ as required.

Now $\bO_{X,x}/\simc_P$ is interior as the complement of $P \in \O_{X,x}^\rex$ has no zero divisors. It is also local, as the unique morphism $\O_{X,x}\ra \R$ must have $P$ in its kernel so it factors through the morphism $\O_{X,x}\ra\O_{X,x}^\rex/\simc_P$ giving a unique morphism $\O_{X,x}^\rex/\simc_P\ra \R$ with the correct properties to be local. Thus $\bO_{C(X),(x,P)}$ is an interior local $C^\iy$-ring with corners for all $(x,P)\in C(X)$, so $C(\bX)$ is an interior local $C^\iy$-ringed space with corners by Definition~\ref{cc5def2}.
\end{proof}

\begin{thm}
\label{cc6thm1}
The corner functor $C:\LCRSc\ra\LCRScin$ is right adjoint to the inclusion $\inc:\LCRScin\hookra\LCRSc$. Thus we have natural, functorial isomorphisms $\Hom_\LCRScin(C(\bX),\bY)\cong\Hom_\LCRSc(\bX,\inc(\bY))$.

As $C$ is a right adjoint, it preserves limits in $\LCRSc,\LCRScin$.
\end{thm}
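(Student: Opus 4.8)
The plan is to establish the adjunction $\inc \dashv C$ by directly constructing the natural bijection
\e
\Hom_\LCRScin(C(\bX),\bY)\cong\Hom_\LCRSc(\bX,\inc(\bY)),\quad \bX\in\LCRSc,\ \bY\in\LCRScin,
\nonumber
\e
and verifying it is natural in both variables. The unit and counit are already available from Definition \ref{cc6def1}: the morphism $\bs\Pi_\bX:C(\bX)\ra\bX$ for all $\bX\in\LCRSc$, and the morphism $\bs\io_\bX:\bX\ra C(\bX)$ for $\bX\in\LCRScin$, together with the relations $\bs\Pi_\bX\ci\bs\io_\bX=\bs\id_\bX$, $\bs\Pi_\bY\ci C(\bs f)=\bs f\ci\bs\Pi_\bX$, and $\bs\io_\bY\ci\bs f=C(\bs f)\ci\bs\io_\bX$ noted there. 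So the real content is to check the triangle identities, or equivalently that the two maps
\e
\begin{gathered}
\Theta:\bs g\longmapsto \bs g\ci\bs\io_\bX:\Hom_\LCRScin(C(\bX),\bY)\longra\Hom_\LCRSc(\bX,\inc(\bY)),\\
\Lambda:\bs h\longmapsto C(\bs h)\ci\bs\io_{C(\bX)}\ \text{followed by}\ C(\bs\Pi_\bX)\text{-type reasoning}
\end{gathered}
\nonumber
\e
are mutually inverse. More concretely: given $\bs h:\bX\ra\inc(\bY)$ in $\LCRSc$, since $\bY\in\LCRScin$ we have $C(\bY)$ with $\bs\io_\bY:\bY\ra C(\bY)$ an isomorphism onto an open-and-closed subscheme (indeed $\bs\Pi_\bY\ci\bs\io_\bY=\bs\id_\bY$), so one sets $\Lambda(\bs h)=\bs\io_\bY^{-1}\ci C(\bs h)|_{\cdots}$ — but more cleanly, one checks that $C(\bs h):C(\bX)\ra C(\bY)$ composed with the projection back gives the desired $\bs g$, using that $\bY$ sits inside $C(\bY)$ as the locus of the zero prime ideal.

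First I would verify that $\Theta$ is well-defined, i.e.\ that $\bs g\ci\bs\io_\bX$ is a morphism in $\LCRSc$ (immediate, as $\bs\io_\bX$ is defined only when $\bX$ is interior, but here $\bX$ need not be interior — so actually $\Theta$ must be described differently). The correct description: given $\bs g:C(\bX)\ra\bY$, define $\Theta(\bs g)=\bs g\ci \bs\sigma_\bX$ where $\bs\sigma_\bX:\bX\ra C(\bX)$ is the section of $\bs\Pi_\bX$ picking out the \emph{improper} prime, i.e.\ sending $x\mapsto(x,\es)$? No — $\es$ is not prime. The honest approach is the one forced by the structure: $\Theta$ should simply be $\bs g\mapsto \bs\Pi_\bX \text{ is the wrong direction}$. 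Re-examining: the adjunction says $C$ is right adjoint, so the natural map goes $\Hom(C(\bX),\bY)\ra\Hom(\bX,\bY)$ only if $\bX$ is interior; in general the counit is $\bs\Pi_\bX:C(\bX)\ra\bX$ and the bijection is $\Hom_\LCRScin(\bZ,C(\bX))\cong\Hom_\LCRSc(\inc(\bZ),\bX)$ via $\bs g\mapsto\bs\Pi_\bX\ci\bs g$. So I would restate the target bijection as
\e
\Hom_\LCRScin(\bZ,C(\bX))\cong\Hom_\LCRSc(\inc\bZ,\bX),\qquad \bs g\longmapsto\bs\Pi_\bX\ci\bs g,
\nonumber
\e
with inverse sending $\bs h:\inc\bZ\ra\bX$ to $C(\bs h)\ci\bs\io_\bZ:\bZ\ra C(\bZ)\ra C(\bX)$, which makes sense since $\bZ$ is interior so $\bs\io_\bZ$ exists.

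The key steps, in order: (1) check both assignments land in the correct Hom-sets, using $\bs\Pi$ natural and $\bs\io$ natural and interior; (2) check $\bs\Pi_\bX\ci C(\bs h)\ci\bs\io_\bZ = \bs h\ci\bs\Pi_\bZ\ci\bs\io_\bZ = \bs h\ci\bs\id_\bZ=\bs h$, giving one composite identity; (3) check the reverse composite $C(\bs\Pi_\bX\ci\bs g)\ci\bs\io_\bZ=\bs g$, which expands via $C(\bs\Pi_\bX)\ci C(\bs g)\ci\bs\io_\bZ = C(\bs\Pi_\bX)\ci\bs\io_{C(\bX)}\ci\bs g$ (using naturality of $\bs\io$ applied to the interior morphism $\bs g$), and then one needs $C(\bs\Pi_\bX)\ci\bs\io_{C(\bX)}=\bs\id_{C(\bX)}$ — this last identity is the crux and must be checked at the level of topological spaces and stalks using the explicit descriptions in Definition \ref{cc6def1} and Lemma \ref{cc6lem1}; concretely, a point of $C(C(\bX))$ is $((x,P),Q)$ with $Q$ prime in $\O_{C(X),(x,P)}^\rex\cong\O_{X,x}^\rex/{\simc_P}$, the map $\bs\io_{C(\bX)}$ sends $(x,P)$ to $((x,P),\{0\})$, and $C(\bs\Pi_\bX)$ sends $((x,P),Q)$ to $(x, (\Phi^\sh)^{-1}(\text{lift of }Q))$, and one verifies the round trip is the identity using that the preimage of $\{0\}$ under the quotient $\O_{X,x}^\rex\ra\O_{X,x}^\rex/{\simc_P}$ is exactly $P$; (4) check naturality of the bijection in $\bZ$ and in $\bX$, which is routine from functoriality of $C$ and naturality of $\bs\Pi,\bs\io$; (5) conclude preservation of limits, which is a standard consequence — right adjoints preserve limits — citing e.g.\ \cite{MaMo}, and note in particular that $C$ preserves the small limits shown to exist in $\LCRSc,\LCRScin$ by Theorem \ref{cc5thm1}. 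I expect step (3), verifying $C(\bs\Pi_\bX)\ci\bs\io_{C(\bX)}=\bs\id_{C(\bX)}$ at the level of stalks and the topology on $C(C(X))$, to be the main obstacle, since it requires a careful bookkeeping of prime ideals in quotient monoids and the compatibility of the sheafifications defining $\O_{C(X)}^\rex$; the rest is formal nonsense about adjunctions plus the naturality statements already recorded in Definition \ref{cc6def1}.
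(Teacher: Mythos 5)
Your proposal is correct and follows essentially the same route as the paper: the adjunction is established via the unit $\bs\io$ and counit $\bs\Pi$ from Definition \ref{cc6def1}, their naturality properties recorded there, and the triangle identities — equivalently your hom-set bijection $\bs g\mapsto\bs\Pi_\bX\ci\bs g$, $\bs h\mapsto C(\bs h)\ci\bs\io_\bZ$, which is the correct (right-adjoint) orientation, the displayed formula in the theorem statement having its arguments swapped. The only difference is one of detail: the paper dispatches both triangle identities with the single remark that $\bs\Pi_\bX\ci\bs\io_\bX=\bs\id_\bX$, whereas you rightly isolate $C(\bs\Pi_\bX)\ci\bs\io_{C(\bX)}=\bs\id_{C(\bX)}$ as the step needing an explicit check on points and stalks, and your key computation — that the preimage of $\{0\}$ under the quotient $\O^\rex_{X,x}\ra\O^\rex_{X,x}/\simc_P$ is exactly $P$ — is precisely what makes it work.
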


\begin{proof} We describe the unit $\bs{\eta}:\Id\Ra C\ci \inc$ and counit $\bs{\ep}:\inc\ci C\Ra \Id$ of the adjunction. Here $\bs{\ep}_\bX=\bs{\Pi}_{\bX}$ for $\bX$ a $C^\iy$-scheme with corners, and $\bs{\eta}_\bX=\bs\io_\bX$ for $\bX$ an interior $C^\iy$-scheme with corners. That $\bs{\ep}$ is a natural transformation follows from $\bs{\Pi}_\bY\ci C(\bs f)=\bs f\ci\bs{\Pi}_\bX$ for morphisms $\bs f:\bX\ra\bY$ in $\LCRSc$, and that $\bs\eta$ is a natural transformation follows from $\bs\io_\bY\ci\bs f=C(\bs f)\ci\bs \io_\bX$ for morphisms $\bs f:\bX\ra\bY$ in $\LCRScin$. Finally, to prove the adjunction, we must show the following compositions are the identity natural transformations:
\begin{equation*}
\xymatrix@C=35pt{C \ar@{=>}[r]^(0.35){\bs\eta*\id_C} & C\ci \inc\ci C \ar@{=>}[r]^(0.6){\id_C*\bs\ep} & C,  }\quad
\xymatrix@C=35pt{\inc \ar@{=>}[r]^(0.33){\id_{\inc}*\bs\eta} & \inc\ci C\ci \inc \ar@{=>}[r]^(0.6){\bs\eta*\id_{\inc}} & \inc.  }
\end{equation*}
Both of these follow as~$\bs\Pi_\bX\ci\bs\io_\bX=\bs\id_\bX$.
\end{proof}

Definition \ref{cc5def3} defined subcategories $\LCRScsa\subset\cdots\subset\LCRScin$ and $\LCRScsaex\subset\cdots\subset\LCRScinex\subset\LCRSc$, where the objects $\bX$ have stalks $\bO_{X,x}$ which are saturated, torsion-free, integral, or interior. From the definition of $C(\bX)$ in Definition \ref{cc6def1}, we see that if the stalks $\bO_{X,x}$ are saturated, \ldots, interior then the stalks $\bO_{C(\bX),(x,P)}\cong \bO_{X,x}/\simc_P$ are also saturated, \ldots, interior, as these properties are preserved by quotients by prime ideals, so $C(\bX)$ is saturated, \ldots, interior, respectively. Hence from Theorem \ref{cc6thm1} we deduce:

\begin{thm}
\label{cc6thm2}
Restricting $C$ in Theorem\/ {\rm\ref{cc6thm1}} gives corner functors
\begin{align*}
C&:\LCRScsaex\longra 	\LCRScsa,&
C&:\LCRSctfex\longra 	\LCRSctf,\\
C&:\LCRScZex\longra 	\LCRScZ,&
C&:\LCRScinex\longra 	\LCRScin,
\end{align*}
which are right adjoint to the corresponding inclusions.
\end{thm}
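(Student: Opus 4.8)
The plan is to deduce Theorem~\ref{cc6thm2} from Theorem~\ref{cc6thm1} by restriction, so that essentially all the work is in checking that the corner functor $C:\LCRSc\ra\LCRScin$ carries each of $\LCRScinex,\LCRScZex,\LCRSctfex,\LCRScsaex$ into $\LCRScin,\LCRScZ,\LCRSctf,\LCRScsa$ respectively. First I would isolate the algebraic content as a lemma on quotients of interior local $C^\iy$-rings with corners: if $\bfC=(\fC,\fC_\rex)$ is integral (resp.\ torsion-free, resp.\ saturated) in the sense of Definition~\ref{cc4def15} and $P\subset\fC_\rex$ is a prime ideal, then the quotient $\bfC/\simc_P=(\fC/\langle\Phi_i(P)\rangle,\fC_\rex/\simc_P)$ of Example~\ref{cc4ex4}(b) is again integral (resp.\ torsion-free, resp.\ saturated). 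Granting this, the stalk description $\bO_{C(X),(x,P)}\cong\bO_{X,x}/\simc_P$ of Lemma~\ref{cc6lem1} shows that if every $\bO_{X,x}$ is integral/torsion-free/saturated then so is every $\bO_{C(X),(x,P)}$; combining with the characterisation in Definition~\ref{cc5def3} (an object of $\LCRScin$ lies in $\LCRScZ,\LCRSctf,\LCRScsa$ iff all its stalks do) gives $C(\bX)\in\LCRScZ,\LCRSctf,\LCRScsa$ for $\bX\in\LCRScZex,\LCRSctfex,\LCRScsaex$. The interior case $C:\LCRScinex\ra\LCRScin$ is already contained in Theorem~\ref{cc6thm1}, since $\LCRScinex$ is just $\LCRSc$ with its objects cut down to interior ones, and $C$ lands in $\LCRScin$ regardless.

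With this in hand the adjunctions follow formally. Fix one of the four cases, say the saturated one, write $\cC=\LCRScsa$ and $\cD=\LCRScsaex$, with $\inc:\cC\hookra\cD$ the inclusion appearing in the statement. Since $\cC$ is a full subcategory of $\LCRScin$ and $\cD$ a full subcategory of $\LCRSc$, for $\bX\in\cD$ and $\bY\in\cC$ we have $C(\bX)\in\cC$ by the previous paragraph, $\inc(\bY)$ is again saturated hence lies in $\cD$, and
\[
\Hom_\cC(C(\bX),\bY)=\Hom_\LCRScin(C(\bX),\bY)\cong\Hom_\LCRSc(\bX,\inc\bY)=\Hom_\cD(\bX,\inc\bY),
\]
where the middle isomorphism is Theorem~\ref{cc6thm1} and the outer equalities are fullness. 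Naturality in $\bX,\bY$ is inherited from Theorem~\ref{cc6thm1}; equivalently, one observes that the unit $\bs\io$ and counit $\bs\Pi$ of that adjunction restrict to $\cC,\cD$ (the morphisms $\bs\io_\bX:\bX\ra C(\bX)$ and $\bs\Pi_\bX:C(\bX)\ra\bX$ already live in the restricted categories, and the triangle identities hold because $\bs\Pi_\bX\ci\bs\io_\bX=\bs\id_\bX$). So $C|_\cD$ is right adjoint to $\inc:\cC\hookra\cD$; the other three cases are identical.

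The main obstacle is the monoid lemma of the first paragraph, and within it the torsion-free and saturated cases. The quotient $\bfC/\simc_P$ is governed by the face $F=\fC_\rex\sm P$ of the monoid $\fC_\rin$ (a submonoid, using that $P$ is prime and $\bfC$ interior, so that $\fC_\rin\sm P$ is closed under multiplication and contains $1$) together with the subgroup $G=\Psi_{\exp}(\langle\Phi_i(P)\rangle)\subseteq\fC_\rex^\t$ of units; one finds the interior part of $\bfC/\simc_P$ is $F/G$. Faces of integral/torsion-free/saturated monoids retain those properties, which handles the passage from $\fC_\rin$ to $F$, and the sharpening is unchanged under the $G$-quotient since $G\subseteq F^\t=\fC_\rex^\t$, which controls the sharp part of $F/G$. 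The delicate point is that $F^\t/G$ is identified with $\fC/\langle\Phi_i(P)\rangle$, an $\R$-algebra and hence torsion-free (indeed divisible) as an abelian group, and one must combine this with torsion-freeness/saturation of $F^\sh$, which is a face of $\fC_\rin^\sh$, to conclude torsion-freeness/saturation of $(F/G)^\gp=F^\gp/G$. I would prove these monoid facts using the group isomorphism $\fC_\rex^\t\cong\fC$ forced by $\bfC$ being a $C^\iy$-ring with corners (Definition~\ref{cc4def6}) together with the standard behaviour of faces and sharpenings recalled in \S\ref{cc32}. Everything else --- continuity of $\Pi_X$ and $C(f)$, functoriality, and the stalk computations --- is already supplied by Definition~\ref{cc6def1}, Lemma~\ref{cc6lem1}, and Theorem~\ref{cc6thm1}.
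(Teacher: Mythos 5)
Your proposal is correct and follows essentially the same route as the paper: the paper deduces Theorem \ref{cc6thm2} from Theorem \ref{cc6thm1} by observing that the stalks $\bO_{C(X),(x,P)}\cong\bO_{X,x}/\simc_P$ are quotients by prime ideals, asserting (without proof) that integrality, torsion-freeness and saturation are preserved by such quotients, and then restricting the adjunction. Your extra work on the monoid lemma (the face $F=\fC_\rex\sm P$, the divisible unit subgroup $G=\Psi_{\exp}(\langle\Phi_i(P)\rangle)$, and the identification $\fD_\rin\cong F/G$) correctly supplies the details the paper leaves implicit.
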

 
\subsection{\texorpdfstring{The corner functor for $C^\iy$-schemes with corners}{The corner functor for C∞-schemes with corners}}
\label{cc62}

Motivated by Theorem \ref{cc6thm1}, na\"\i vely we might expect that there exists $\ti C:\CRingsc\ra\CRingscin$ left adjoint to $\inc:\CRingscin\hookra\CRingsc$, fitting into a diagram of adjoint functors similar to~\eq{cc5eq12}:
\e
\begin{gathered}
\xymatrix@C=180pt@R=15pt{
*+[r]{(\CRingscin)^{\bf op}\,} \ar@<.5ex>[d]^{\Speccin} \ar@<.5ex>@{^{(}->}[r]^{\inc} & *+[l]{(\CRingsc)^{\bf op}} \ar@<.5ex>[l]^{\ti C} \ar@<.5ex>[d]^{\Specc} \\
*+[r]{\LCRScin\,} \ar@<.5ex>@{^{(}->}[r]^{\inc} \ar@<.5ex>[u]^{\Gacin} & *+[l]{\LCRSc.\!} \ar@<.5ex>[l]^{C} \ar@<.5ex>[u]^{\Gac} }
\end{gathered}
\label{cc6eq4}
\e
As in the proof of Theorem \ref{cc5thm8}(a) this would imply $\Speccin\ci\ti C\cong C\ci\Specc$, allowing us to deduce that $C$ maps $\CSchc\ra\CSchcin$. However, no such left adjoint $\ti C$ for $\inc$ exists by Theorem~\ref{cc4thm3}(d).

Surprisingly, we can define a `corner functor' $\ti C:\CRingsc\ra\CRingscsc$ such that $\Specc\ci\ti C$ maps to $\LCRScin\subset\LCRSc$, with $\Specc\ci\ti C\cong C\ci\Specc$, as we prove in the next definition and theorem.

\begin{dfn}
\label{cc6def2}
Let $\bfC=(\fC,\fC_\rex)$ be a $C^\iy$-ring with corners. Using the notation of Definition \ref{cc4def8}, define a $C^\iy$-ring with corners $\bs{\ti\fC}$ by
\ea
\bs{\ti\fC}=\bfC&\bigl[\al_{c'},\be_{c'}:c'\in\fC_\rex\bigr]\big/\bigl(\Phi_i(\al_{c'})+\Phi_i(\be_{c'})=1\;\> \forall c'\in\fC_\rex\bigr)
\label{cc6eq5}\\
&\bigl[\al_{1_{\fC_\rex}}\!=\!1_{\fC_\rex},\; \al_{c'}\al_{c''}\!=\!\al_{c'c''}\; \forall c',c''\!\in\!\fC_\rex,\;  c'\be_{c'}\!=\!\al_{c'}\be_{c'}\!=\!0\;  \forall c'\!\in\!\fC_\rex\bigr].
\nonumber
\ea
That is, we add two $[0,\iy)$-type generators $\al_{c'},\be_{c'}$ to $\bfC$ for each $c'\in\fC_\rex$, and impose $\R$-type relations $(\cdots)$ and $[0,\iy)$-type relations $[\cdots]$ as shown.

Let $\bs\phi:\bfC\ra\bfD$ be a morphism in $\CRingsc$, and define $\bs{\ti\fC},\bs{\ti\fD}$ from $\bfC,\bfD$ as above, writing the additional generators in $\fD_\rex$ as $\bar\al_{d'},\bar\be_{d'}$. Define a morphism $\bs{\ti\phi}:\bs{\ti\fC}\ra\bs{\ti\fD}$ by the commutative diagram
\begin{equation*}
\xymatrix@C=180pt@R=15pt{
*+[r]{\bfC\bigl[\al_{c'},\be_{c'}:c'\in\fC_\rex\bigr]} \ar@{->>}[r]_(0.7){\text{project}} \ar[d]^{\bs\phi[\al_{c'}\mapsto\bar\al_{\phi_\rex(c')},\; \be_{c'}\mapsto\bar\be_{\phi_\rex(c')},\; c'\in\fC_\rex]} & *+[l]{\bs{\ti\fC}} \ar@{..>}[d]_{\bs{\ti\phi}} \\
*+[r]{\bfD\bigl[\bar\al_{d'},\bar\be_{d'}:d'\in\fD_\rex\bigr]} \ar@{->>}[r]^(0.7){\text{project}}  & *+[l]{\bs{\ti\fD}.\!} }	
\end{equation*}
That is, we begin with $\bs\phi:\bfC\ra\bfD$, and act on the extra generators by mapping $\al_{c'}\mapsto \bar\al_{\phi_\rex(c')}$, $\be_{c'}\mapsto\bar\be_{\phi_\rex(c')}$ for all $c'\in\fC_\rex$. These map the relations in $\bs{\ti\fC}$ to the relations in $\bs{\ti\fD}$, and so descend to a unique morphism $\bs{\ti\phi}$ as shown.

Clearly mapping $\bs\phi\mapsto\bs{\ti\phi}$ takes compositions and identities to compositions and identities, so mapping $\bfC\mapsto\bs{\ti\fC}$ on objects and $\bs\phi\mapsto\bs{\ti\phi}$ on morphisms defines a functor $\CRingsc\ra\CRingsc$. For reasons explained in \S\ref{cc63}, we define $\ti C:\CRingsc\ra\CRingscsc$ to be the composition of this functor with $\Pi_{\rm all}^{\rm sc}:\CRingsc\ra\CRingscsc$ in Definition \ref{cc5def10}. That is, we define the {\it corner functor\/} $\ti C:\CRingsc\ra\CRingscsc$ to map $\bfC\mapsto \Pi_{\rm all}^{\rm sc}(\bs{\ti\fC})$ on objects, and $\bs\phi\mapsto \Pi_{\rm all}^{\rm sc}(\bs{\ti\phi})$ on morphisms, for $\bfC,\bfD,\bs\phi,\bs{\ti\fC},\bs{\ti\fD},\bs{\ti\phi}$ as above. 

For each $\bfC\in\CRingsc$, define $\bs{\ti\Pi}_\bfC:\bfC\ra\ti C(\bfC)$ to be the composition 
\e
\xymatrix@C=33pt{ \bfC\,\, \ar@{^{(}->}[r]^(0.3){\inc} & \bfC\bigl[\al_{c'},\be_{c'}:c'\in\fC_\rex\bigr] \ar[r]^(0.7){\text{project}} &  \bs{\ti\bfC} \ar[r]^(0.3){\bs\psi_{\bs{\ti\bfC}}}  & \Pi_{\rm all}^{\rm sc}(\bs{\ti\fC})=\ti C(\bfC), }
\label{cc6eq6}
\e
for $\bs\psi_{\bs{\ti\bfC}}$ as in Definition \ref{cc5def10}. This is functorial in $\bfC$, and so defines a natural transformation $\bs{\ti\Pi}:\Id\Ra\ti C$ of functors~$\CRingsc\ra\CRingsc$.

We will also write $\al_{c'},\be_{c'}$ for the images of $\al_{c'},\be_{c'}$ in $\ti C(\fC)_\rex$, and write
\e
\Prc_\bfC=\{P \subset \fC_\rex:P \text{ is a prime ideal}\}.
\label{cc6eq7}
\e\end{dfn}

\begin{thm}
\label{cc6thm3}
In Definition\/ {\rm\ref{cc6def2},} with\/ $C:\LCRSc\ra\LCRScin$ as in Definition\/ {\rm\ref{cc6def1},} for each\/ $\bfC\in\CRingsc$ there is a natural isomorphism 
\e
\bs\eta_\bfC:\Specc\ti C(\bfC)\longra C(\Specc\bfC)
\label{cc6eq8}
\e
in $\LCRSc$. So $\Specc\ti C(\bfC)$ lies in $\LCRScin\subset\LCRSc$. The following diagram commutes in {\rm$\LCRSc$:}
\e
\begin{gathered}
\xymatrix@C=80pt@R=15pt{ 
*+[r]{\Specc\ti C(\bfC)} \ar[rr]_{\bs\eta_\bfC} \ar[dr]_{\Specc\bs{\ti\Pi}_\bfC} && *+[l]{C(\Specc\bfC)} \ar[dl]^{\,\,\,\,\,\bs\Pi_{\Specc\bfC}} \\
& \Specc\bfC. }	
\end{gathered}
\label{cc6eq9}
\e

For each morphism $\bs\phi:\bfC\ra\bfD$ in $\CRingsc$ the following commutes:
\e
\begin{gathered}
\xymatrix@C=200pt@R=15pt{ 
*+[r]{\Specc\ti C(\bfD)} \ar[d]^{\Specc\ti C(\bs\phi)} \ar[r]^{\bs\eta_\bfD}_\cong & *+[l]{C(\Specc\bfD)} \ar[d]_{C(\Specc\bs\phi)} \\
*+[r]{\Specc\ti C(\bfC)} \ar[r]^{\bs\eta_\bfC}_\cong & *+[l]{C(\Specc\bfC),\!} }	
\end{gathered}
\label{cc6eq10}
\e
so $\Specc\ti C(\bs\phi)$ is a morphism in $\LCRScin\!\subset\!\LCRSc$. Thus $\bs\eta:\Specc\ci\ti C\Ra C\ci\Specc$ is a natural isomorphism of functors\/~$(\CRingsc)^{\bf op}\!\ra\!\LCRScin$.
\end{thm}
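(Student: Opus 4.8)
The plan is to construct $\bs\eta_\bfC$ by hand and then read off all its stated properties. First, since $\Specc\cong\Specc\ci\Pi_{\rm all}^{\rm sc}$ by Theorem \ref{cc5thm6} and Definition \ref{cc5def10}, we have $\Specc\ti C(\bfC)\cong\Specc\bs{\ti\fC}$ in $\LCRSc$, so it is enough to compare $\Specc\bs{\ti\fC}$ with $C(\Specc\bfC)$; write $\uX=\Spec\fC=(X,\O_X)$ and $\bX=\Specc\bfC=(X,\bO_X)$. The heart of the argument is a dictionary for $\R$-points. By the universal property of $\bs{\ti\fC}$ in \eq{cc6eq5} and Definition \ref{cc4def8}, an $\R$-point $\ti x:\ti\fC\ra\R$ is the data of an $\R$-point $x:\fC\ra\R$ together with values $\ti x(\Phi_i(\al_{c'})),\ti x(\Phi_i(\be_{c'}))\in[0,\iy)$ for all $c'\in\fC_\rex$, satisfying the images under $\ti x$ of the relations of \eq{cc6eq5}. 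The relations $\Phi_i(\al_{c'})+\Phi_i(\be_{c'})=1$ and $\al_{c'}\be_{c'}=0$ force each pair of values to be $(1,0)$ or $(0,1)$, so $\ti x$ is determined by $P':=\{c'\in\fC_\rex:\ti x(\Phi_i(\al_{c'}))=0\}$; the relations $\al_{1_{\fC_\rex}}=1_{\fC_\rex}$ and $\al_{c'}\al_{c''}=\al_{c'c''}$ make $P'$ prime-ideal-like ($c'c''\in P'$ iff $c'\in P'$ or $c''\in P'$, and $1_{\fC_\rex}\notin P'$), and $c'\be_{c'}=0$ forces $c'\notin P'$ whenever $x\ci\Phi_i(c')\ne0$. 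Using surjectivity of $\pi_{x,\rex}$ (Theorem \ref{cc4thm4}(c)) and Proposition \ref{cc4prop8} I would check such $P'$ are exactly the preimages $\pi_{x,\rex}^{-1}(P)$ of prime ideals $P$ in $\fC_{x,\rex}\cong\O_{X,x}^\rex$, giving a bijection between $\R$-points of $\bs{\ti\fC}$ and points $(x,P)$ of $C(X)$ in \eq{cc6eq1}.

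Next I would upgrade this to an isomorphism in $\LCRSc$. On topologies, the basic open sets $U_{\Phi_i(\al_{c'})}=\{(x,P):\pi_{x,\rex}(c')\notin P\}$ and $U_{\Phi_i(\be_{c'})}=\{(x,P):\pi_{x,\rex}(c')\in P\}$ of $\Specc\bs{\ti\fC}$ match the subbasic sets $\dot U_{s'},\ddot U_{s'}$ of \eq{cc6eq2}, using that sections of $\O_X^\rex$ are locally of the form $\pi_{x,\rex}(c')$; together with continuity of both projections to $X$ this gives a homeomorphism of underlying spaces. On sheaves, by Proposition \ref{cc5prop1} the stalk of $\bO_{\Specc\bs{\ti\fC}}$ at $\ti x\leftrightarrow(x,P)$ is the localization $(\bs{\ti\fC})_{\ti x}$. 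Since localization is a colimit construction (Definition \ref{cc4def9}, \eq{cc4eq16}) it commutes with the generators-and-relations construction of $\bs{\ti\fC}$: localizing at $x$ turns $\bfC$ into $\bfC_x$, and then inverting $\al_{c'}$ for $c'\notin P'$ (which become $1$, forcing $\be_{c'}=0$) and $\be_{c'}$ for $c'\in P'$ (forcing $\al_{c'}=0$ and, via $c'\be_{c'}=0$, forcing every such $c'$ to equal the zero element of the monoid) collapses $(\bs{\ti\fC})_{\ti x}$ to the Rees quotient $\bfC_x/\simc_P$ of Example \ref{cc4ex4}(b), which by Lemma \ref{cc6lem1} is the stalk of $\bO_{C(X)}$ at $(x,P)$. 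As both sheaves arise from these stalks by the sheafifications in Definitions \ref{cc5def4} and \ref{cc6def1}, and the identifications are compatible with restriction and with the maps $\pi_x$, this produces the isomorphism $\bs\eta_\bfC$ of \eq{cc6eq8}; in particular $\Specc\ti C(\bfC)\in\LCRScin$, since its stalks are interior local $C^\iy$-rings with corners by Lemma \ref{cc6lem1}.

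Finally, the triangle \eq{cc6eq9} commutes because both composites act by $(x,P)\mapsto x$ on spaces and are induced on sheaves by the canonical maps $\bfC\hookra\bs{\ti\fC}$ of \eq{cc6eq6} and $\Pi_X^{-1}(\bO_X)\ra\bO_{C(X)}$, which agree under the stalkwise identification; this is a routine diagram chase. For naturality \eq{cc6eq10}, Definition \ref{cc6def2} gives $\ti C(\bs\phi):\al_{c'}\mapsto\bar\al_{\phi_\rex(c')}$, so on $\R$-points $\Specc\ti C(\bs\phi)$ sends $\ti y\mapsto\ti y\ci\ti\phi$, which under the dictionary above sends $(y,P)$ to the point $C(\Specc\bs\phi)(y,P)$ of \eq{cc6eq1} — using naturality of the localizations $\bfC_x$ (Definition \ref{cc4def11}) and Proposition \ref{cc4prop8} to identify the prime ideals — and the sheaf parts agree by the same stalkwise computation. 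Hence $\bs\eta:\Specc\ci\ti C\Ra C\ci\Specc$ is a natural isomorphism of functors $(\CRingsc)^{\bf op}\ra\LCRScin$, and each $\Specc\ti C(\bs\phi)$ is in particular a morphism in $\LCRScin$. I expect the main obstacle to be the $\R$-point/prime-ideal dictionary together with the stalk computation: getting exactly right which subsets $P'$ of $\fC_\rex$ occur (and the interplay between $\fC_\rex$ and the localized monoid $\fC_{x,\rex}$), and then verifying that localizing $\bs{\ti\fC}$ at $\ti x$ yields precisely $\bfC_x/\simc_P$ and nothing coarser or finer.
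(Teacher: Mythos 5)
Your proposal is correct and follows essentially the same route as the paper: the $\R$-point/prime-ideal dictionary (the paper's \eq{cc6eq12}--\eq{cc6eq13} and Lemma \ref{cc6lem2}), the matching of the subbasic open sets (Lemma \ref{cc6lem3}), the stalk computation $\bs{\ti\fC}_{\ti x}\cong\bfC_x/\simc_{P_x}$ (Lemma \ref{cc6lem4}), and then the triangle \eq{cc6eq9} and naturality \eq{cc6eq10} exactly as you describe. The one point where the paper is more careful than your sketch is in passing from stalkwise identifications to an actual sheaf isomorphism: it first produces a globally defined morphism $\bs\ze_\bfC^\sh$ from the universal property of $\bO_{C(X)}$ as the quotient of $\Pi_X^{-1}(\bO_X)$ killing local sections lying in the prime at each point (applied to $\bs{\check\Pi}{}_\bX^\sh$, checked on sections of the form $\Xi_{\fC,\rex}(c')$), and only then verifies it is an isomorphism on stalks; your ``compatible with restriction'' remark is where this routine but necessary construction belongs.
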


\begin{proof}
Let $\bfC\in\CRingsc$, and define $\bs{\ti\fC}$ as in \eq{cc6eq4}, so that $\ti C(\bfC)=\Pi_{\rm all}^{\rm sc}(\bs{\ti\fC})$. Write $\bX=\Specc\bfC$ and $\bs{\ti X}=\Specc\bs{\ti\fC}$. As in \eq{cc6eq6}, there is a functorial morphism $\bs\psi_{\bs{\ti\fC}}:\bs{\ti\fC}\ra\ti C(\bfC)$ such that $\Specc\bs\psi_{\bs{\ti\fC}}:\Specc\ti C(\bfC)\ra\Specc\bs{\ti\fC}$ is an isomorphism. Write $\bs{\check\Pi}_\bfC:\bfC\ra\bs{\ti\fC}$ for the composition of the first two morphisms in \eq{cc6eq6}, so that $\bs{\ti\Pi}_\bfC=\bs\psi_{\bs{\ti\fC}}\ci\bs{\check\Pi}_\bfC$, and set $\bs{\check\Pi}_\bX=\Specc\bs{\check\Pi}_\bfC:\bs{\ti X}\ra\bX$. We will construct an isomorphism 
\e
\bs\ze_\bfC:\bs{\ti X}=\Specc\bs{\ti\fC}\longra C(\Specc\bfC)=C(\bX)
\label{cc6eq11}
\e
in $\LCRSc$, and define $\bs\eta_\bfC$ in \eq{cc6eq8} by $\bs\eta_\bfC=\bs\ze_\bfC\ci\Specc\bs\psi_{\bs{\ti\fC}}$.

First we define the isomorphism \eq{cc6eq11} at the level of points. A point of $\bs{\ti X}$ is an $\R$-algebra morphism $\ti x:\ti\fC\ra\R$. Composing with the projection $\fC[\al_{c'},\be_{c'}:c'\in\fC_\rex]\ra\ti\fC$ gives a morphism $\hat x:\fC[\al_{c'},\be_{c'}:c'\in\fC_\rex]\ra\R$. Such morphisms are in 1-1 correspondence with data $(x,a_{c'},b_{c'}:c'\in\fC_\rex)$, where $x:\fC\ra\R$ is an $\R$-algebra morphism (i.e. $x\in X$) and $a_{c'}=\hat x(\al_{c'})$, $b_{c'}=\hat x(\be_{c'})$ lie in $[0,\iy)\subseteq\R$ for all $c'\in\fC_\rex$. A morphism $\hat x:\fC[\al_{c'},\be_{c'}:c'\in\fC_\rex]\ra\R$ descends to a (unique) morphism $\ti x:\ti\fC\ra\R$ if and only if $\hat x$ maps the relations in \eq{cc6eq5} to relations in $\R$. Hence we may identify
\e
\begin{split}
\ti X\cong\bigl\{&(x,a_{c'},b_{c'}:c'\in\fC_\rex):x\in X,\;\> a_{c'},b_{c'}\in[0,\iy) \;\> \forall c'\in\fC_\rex,\\ 
&a_{c'}+b_{c'}=1\;\> \forall c'\in\fC_\rex,\;\>
a_{1_{\fC_\rex}}=1,\; a_{c'}a_{c''}=a_{c'c''}\;\> \forall c',c''\in\fC_\rex,\\
& x\ci\Phi_i(c')b_{c'}=a_{c'}b_{c'}=0\;\>  \forall c'\in\fC_\rex\bigr\}.
\end{split}
\label{cc6eq12}
\e

We can simplify \eq{cc6eq12}. Let $(x,a_{c'},b_{c'}:c'\in\fC_\rex)$ be a point in the right hand side. The equations $a_{c'}+b_{c'}=1$ and $a_{c'}b_{c'}=0$ imply that $(a_{c'},b_{c'})$ is $(1,0)$ or $(0,1)$ for each $c'$. Define $P=\{c'\in\fC_\rex:a_{c'}=0\}$. Then $a_{1_{\fC_\rex}}=1$ implies that $1_{\fC_\rex}\notin P$, and $a_{c'}a_{c''}=a_{c'c''}$ implies that if $c'\in\fC_\rex$ and $c''\in P$ then $c'c''\in P$, so $P$ is an ideal. Also $a_{c'}a_{c''}=a_{c'c''}$ implies that if $c'c''\in P$ then $c'\in P$ or $c''\in P$, so $P$ is a prime ideal. The condition $x\ci\Phi_i(c')b_{c'}=0$ then becomes $x\ci\Phi_i(c')=0$ if $c'\in P$. 

Conversely, if $P$ is a prime ideal and $x\ci\Phi_i(c')=0$ for $c'\in P$, then setting $a_{c'}=0$, $b_{c'}=1$ if $c'\in P$ and $a_{c'}=1$, $b_{c'}=0$ if $c'\in \fC_\rex\sm P$ then all the equations of \eq{cc6eq12} hold. Thus, mapping $\bigl(x,a_{c'},b_{c'}:c'\in\fC_\rex\bigr)\mapsto \bigl(x,P=\{c'\in\fC_\rex:a_{c'}=0\}\bigr)$ turns \eq{cc6eq12} into a bijection
\e
\begin{split}
\ti X\cong\bigl\{&(x,P):\text{$x\in X$, $P\in\Prc_\bfC$, $x\ci\Phi_i(c')=0$ if $c'\in P$}\bigr\}.
\end{split}
\label{cc6eq13}
\e

\begin{lem}
\label{cc6lem2}
Let\/ $x\in\bX,$ so that\/ $x:\fC\ra\R$ is an $\R$-point, with localization $\bs\pi_x:\bfC\ra\bfC_x\cong\bO_{X,x}$ giving a morphism $\pi_{x,\rex}:\fC_\rex\ra\fC_{x,\rex}\cong\O_{X,x}^\rex$. Then we have inverse bijective maps
\e
\xymatrix@C=75pt{\bigl\{P\in\Prc_\bfC:x\ci\Phi_i(c')=0\; \forall c'\in P\bigr\} \ar@<.5ex>[r]^(0.61){P\longmapsto P_x=\pi_{x,\rex}(P)} &
\Prc_{\bfC_x}\!\cong\Prc_{\bO_{X,x}}. \ar@<.5ex>[l]^(0.38){P_x\longmapsto P=\pi_{x,\rex}^{-1}(P_x)} }\!\!\!
\label{cc6eq14}
\e
\end{lem}

\begin{proof}
Let $P$ lie in the left hand side of \eq{cc6eq14}, and set $P_x=\pi_{x,\rex}(P)$. Then $1\notin P_x$ as $x\ci\Phi_i(c')=0$ for all $c'\in P$. If $c_x'\in \fC_{x,\rex}$ and $c_x''\in P_x$ then $c_x'c_x''\in P_x$ as $P$ is an ideal and $\pi_{x,\rex}$ is surjective. Thus $P_x$ is an ideal. To show $P_x$ is prime, suppose $c_x',d_x'\in \fC_{\rex,x}$ with $c_x'd_x'\in P_x$. As $\pi_{x,\rex}$ is surjective we can choose $c', d'\in \fC_\rex$ and $p'\in P$ with $\pi_{x,\rex}(c')=c_x'$, $\pi_{x,\rex}(d')=d_x'$ and $\pi_{x,\rex}(c'd')=\pi_{x,\rex}(p)$. Hence Proposition \ref{cc4prop8} gives $a',b'\in \fC_\rex$ with $a'c'd'=b'p'$ and $x\ci\Phi_i(a')\ne 0$. But $b'p'\in P$ as $P$ is an ideal, so $a'c'd'\in P$. Since $P$ is prime, one of $a',c',d'$ must lie in $P$, and $a'\notin P$ as $x\ci\Phi_i(a')\ne 0$, so one of $c',d'$ must lie in $P$. Thus one of $c'_x,d'_x$ lie in $P_x$, and $P_x$ is prime. Hence the top morphism in \eq{cc6eq14} is well defined.

Next let $P_x\in\Prc_{\bfC_x}$, and set $P=\pi_{x,\rex}^{-1}(P_x)$. It is easy to check $P$ lies in the left hand side of \eq{cc6eq14}. So the bottom morphism in \eq{cc6eq14} is well defined.

Suppose $P$ lies in the left hand side of \eq{cc6eq14}. Clearly $P\subseteq  \pi_{x,\rex}^{-1}(\pi_{x,\rex}(P))$. If $c'\in \pi_{x,\rex}^{-1}(\pi_{x,\rex}(P))$ then there exists $p'\in P$ with $\pi_{x,\rex}(c')=\pi_{x,\rex}(p')$. Proposition \ref{cc4prop8} gives $a',b'\in \fC_\rex$ with $a'c'=b'p'$ and $x\ci\Phi_i(a')\ne 0$, and $p'\in P$ implies $a'c'=b'p'\in P$, which implies $c'\in P$ as $\pi_{x,\rex}(a')\ne 0$ so $a'\notin P$. Hence $P=\pi_{x,\rex}^{-1}(\pi_{x,\rex}(P))$. If $P_x\in\Prc_{\bfC_x}$ then $P_x=\pi_{x,\rex}(\pi_{x,\rex}^{-1}(P_x))$ as $\pi_{x,\rex}$ is surjective. Thus the maps in \eq{cc6eq14} are inverse.
\end{proof}

Combining equations \eq{cc6eq1} and \eq{cc6eq13} and Lemma \ref{cc6lem2} gives a bijection of sets $\ze_\fC:\ti X\ra C(X)$, defined explicitly by, if $\ti x:\ti\fC\ra\R$ corresponds to $(x,a_{c'},b_{c'}:c'\in\fC_\rex)$ in \eq{cc6eq25}, and $P=\{c'\in\fC_\rex:a_{c'}=0\}$ is the associated prime ideal, and $P_x=\pi_{x,\rex}(P)$ in $\fC_{x,\rex}\cong\O_{X,x}^\rex$, then $\ze_\fC(\ti x)=(x,P_x)$ in $C(X)$ in \eq{cc6eq1}. Then in an analogue of \eq{cc6eq9}, on the level of sets we have
\e
\check\Pi_X=\Pi_X\ci\ze_\fC:\ti X\longra X,
\label{cc6eq15}
\e
as both sides map $\ti x\mapsto x$.

\begin{lem}
\label{cc6lem3} 
$\ze_\fC:\ti X\ra C(X)$ is a homeomorphism of topological spaces.
\end{lem}

\begin{proof} By definition of the topology on $\Spec\ti\fC$ in Definition \ref{cc2def15}, the topology on $\ti X$ is generated by open sets $\ti U_{\ti c}=\{\ti x\in\ti X:\ti x(\ti c)\ne 0\}$ for $\ti c\in\ti\fC$. As $\ti\fC$ is a quotient of $\fC[\al_{c'},\be_{c'}:c'\in\fC_\rex]$, the topology is generated by $\ti U_c$ for $c\in\fC$, and $\ti U_{\Phi_f(\al_{c'})}$, $\ti U_{\Phi_f(\be_{c'})}$ for all $c'\in\fC_\rex$ and smooth $f:[0,\iy)\ra\R$. But as $\al_{c'},\be_{c'}$ take only values 0,1 on $\ti X$ with $\al_{c'}+\be_{c'}=1$, the only possibilities for $\ti U_{\Phi_f(\al_{c'})}$, $\ti U_{\Phi_f(\be_{c'})}$ are $\dot V_{c'}=\{\ti x\in\ti X:\al_{c'}\vert_{\ti x}\ne 0\}$ and $\ddot V_{c'}=\{\ti x\in\ti X:\al_{c'}\vert_{\ti x}\ne 1\}$. So the topology on $\ti X$ is generated by $\ti U_c$ for all $c\in\fC$ and $\dot V_{c'},\ddot V_{c'}$ for all~$c'\in\fC_\rex$.

For $C(X)$ in Definition \ref{cc6def1}, the topology is generated by open sets $\Pi_X^{-1}(U_c)$ for $c\in\fC$ and subsets $\dot U_{s'},\ddot U_{s'}$ for open $U\subset X$ and $s'\in\O_X^\rex(U)$. As $\bX=\Specc\bfC$, rather than taking all such $\dot U_{s'},\ddot U_{s'}$, it is enough to take $U=X$ and $s'=\Xi_{\fC,\rex}(c')$ for all $c'\in\fC_\rex$, as all other $\dot U_{s'},\ddot U_{s'}$ can be generated from $\Pi_X^{-1}(U_c)$ and $\dot X_{\Xi_{\fC,\rex}(c')},\ddot X_{\Xi_{\fC,\rex}(c')}$. Thus, the topology on $C(X)$ is generated by $\Pi_X^{-1}(U_c)$ for all $c\in\fC$ and $\dot X_{\Xi_{\fC,\rex}(c')},\ddot X_{\Xi_{\fC,\rex}(c')}$ for all $c'\in\fC_\rex$. 

It is easy to check that $\ze_\fC$ maps $\ti U_c\mapsto\Pi_X^{-1}(U_c)$, $\dot V_{c'}\mapsto\dot X_{\Xi_{\fC,\rex}(c')}$ and $\ddot V_{c'}\mapsto\ddot X_{\Xi_{\fC,\rex}(c')}$. Therefore $\ze_\fC$ is a bijection and maps a basis for the topology of $\ti X$ to a basis for the topology of $C(X)$, so it is a homeomorphism.
\end{proof}

\begin{lem}
\label{cc6lem4} 
In sheaves of\/ $C^\iy$-rings with corners on $\ti X,$ there is a unique morphism $\bs\ze_\bfC^\sh$ making the following diagram commute:
\e
\begin{gathered}
\xymatrix@C=200pt@R=15pt{ 
*+[r]{\ze_\fC^{-1}\ci\Pi_X^{-1}(\bO_X)} \ar[d]^{\ze_\fC^{-1}(\bs\Pi_\bX^\sh)} \ar@{=}[r]_{\text{by \eq{cc6eq15}}} & *+[l]{\check\Pi_X^{-1}(\bO_X)} \ar[d]_{\bs{\check\Pi}_\bX^\sh} \\
*+[r]{\ze_\fC^{-1}(\bO_{C(X)})} \ar@{..>}[r]^{\bs\ze_\bfC^\sh} & *+[l]{\bO_{\ti X}.\!} }	
\end{gathered}
\label{cc6eq16}
\e
Furthermore, $\bs\ze_\bfC^\sh$ is an isomorphism.
\end{lem}

\begin{proof} By Definition \ref{cc6def1}, $\bs\Pi_\bX^\sh:\Pi_X^{-1}(\bO_X)\ra\bO_{C(X)}$ is the universal surjective quotient of $\Pi_X^{-1}(\bO_X)$ in sheaves of $C^\iy$-rings with corners, such that if $U\subseteq C(X)$ is open and $s'\in\Pi_X^{-1}(\O_X^\rex)(U)$ with $s'\vert_{(x,P_x)}\in P_x$ for each $(x,P_x)\in U$ then $\Pi_{X,\rex}^\sh(U)(s')=0$ in $\O_{C(X)}^\rex(U)$. 
As $\bX=\Specc\bfC$, locally on $X$ any section of $\O^\rex_\bX$ is of the form $\Xi_{\fC,\rex}(c')$ for $c'\in\fC_\rex$, so locally on $C(X)$ any section $s'$ of $\Pi_X^{-1}(\bO_X)$ is of the form $\Pi_{X,\rex}^{-1}\ci\Xi_{\fC,\rex}(c')$. Thus it is enough to verify the universal property above when $s'=\Pi_{X,\rex}^{-1}\ci\Xi_{\fC,\rex}(c')\vert_U$ for $c'\in\fC_\rex$, as it is local in $s'$ anyway. That is, $\bs\Pi_\bX^\sh:\Pi_X^{-1}(\bO_X)\ra\bO_{C(X)}$ is characterized by the property that if $c'\in\fC_\rex$ and $U\subset C(X)$ is open with $\pi_{x,\rex}(c')\in P_x\subset\fC_{x,\rex}\cong\O_{X,x}^\rex$ for all $(x,P_x)\in U$ then $\Pi_{X,\rex}^\sh(C(X))\ci\Xi_{\fC,\rex}(c')\vert_U=0$.

Since $\ze_\fC$ is a homeomorphism by Lemma \ref{cc6lem3}, the pullback $\ze_\fC^{-1}(\bs\Pi_\bX^\sh)$ in \eq{cc6eq16} has the analogous universal property. Let $U\subseteq C(X)$ and $c'\in\fC_\rex$ have the properties above, and set $\ti U=\ze_\fC^{-1}(U)$. Suppose $\ti x\in\ti U$ with $\ze_\fC(\ti x)=(x,P_x)$ in  $U\subseteq C(X)$, and let $\ti x$ correspond to $(x,a_{c''},b_{c''}:c''\in\fC_\rex)$ in \eq{cc6eq12}. By construction of $\ze_\fC$ we have $a_{c'}=0$, $b_{c'}=1$ as $\pi_{x,\rex}(c')\in P_x$. Thus the relation $c'\be_{c'}=0$ in $\ti\fC_\rex$ in \eq{cc6eq5} implies that $\ti\pi_{\ti x,\rex}(c')=0$ in $\ti\fC_{\ti x,\rex}\cong\O_{\ti X,\ti x}^\rex$. As this holds for all $\ti x\in\ti U$ we have $\check\Pi_{X,\rex}^\sh(\ti X)\ci\Xi_{\fC,\rex}(c')\vert_{\ti U}=0$. Hence the universal property of $\ze_\fC^{-1}(\bs\Pi_\bX^\sh)$ gives a unique morphism $\bs\ze_\bfC^\sh$ making \eq{cc6eq16} commute.

To show $\bs\ze_\bfC^\sh$ is an isomorphism, it is enough to show it is an isomorphism on stalks at each $\ti x\in\ti X$. Let $\ti x$ correspond to $(x,a_{c'},b_{c'}:c'\in\fC_\rex)$ in \eq{cc6eq12}, with $\ze_\fC(\ti x)=(x,P_x)$, so that $a_{c'}=0$, $b_{c'}=1$ if $\pi_{x,\rex}(c')\in P_x$ and $a_{c'}=1$, $b_{c'}=0$ otherwise. The stalk $\ze_\fC^{-1}(\bO_{C(X)})_{\ti x}$ is $\bO_{C(X),(x,P_x)}$, which is $\bfC_x/\simc_{P_x}$. The stalk $\bO_{\ti X,\ti x}$ is $\bs{\ti\fC}_{\ti x}$. By the values of $a_{c'},b_{c'}$, in \eq{cc6eq5} we know that $\al_{c'}=0$, $\be_{c'}=1$ near $\ti x$ if $\pi_{x,\rex}(c')\in P_x$, and $\al_{c'}=1$, $\be_{c'}=0$ near $\ti x$ otherwise. Hence the generators $\al_{c'},\be_{c'}$ add no new generators to $\bs{\ti\fC}_{\ti x}$, and $\bs{\ti\fC}_{\ti x}$ is a quotient of $\bfC_x$. The relations in \eq{cc6eq5} turn into relations in $\bs{\ti\fC}_{\ti x}$ according to the local values 0 or 1 of $\al_{c'},\be_{c'}$. Therefore we see that
\begin{equation*}
\bs{\ti\fC}_{\ti x}\cong\bfC_x\big/\bigl[\text{$\pi_{x,\rex}(c')=0$ if $c'\in\fC_\rex$ with $\pi_{x,\rex}(c')\in P_x$}\bigr]=\bfC_x/\simc_{P_x},
\end{equation*}
where the second step holds as $\pi_{x,\rex}$ is surjective. So we have an isomorphism of stalks $\ze_\fC^{-1}(\bO_{C(X)})_{\ti x}\cong\bO_{\ti X,\ti x}$, which is the natural map~$\bs\ze_{\bfC,\ti x}^\sh$.
\end{proof}

Lemmas \ref{cc6lem3} and \ref{cc6lem4} show $\bs\ze_\bfC=(\ze_\bfC,\bs\ze_\bfC^\sh)$ is an isomorphism $\bs{\ti X}\ra C(\bX)$, as in \eq{cc6eq11}. So $\bs\eta_\bfC=\bs\ze_\bfC\ci\Specc\bs\psi_{\bs{\ti\fC}}$ is an isomorphism in \eq{cc6eq8}. Equations \eq{cc6eq15}--\eq{cc6eq16} imply that \eq{cc6eq9} commutes. This proves the first part of Theorem~\ref{cc6thm3}.

For the second part, consider the diagram
\begin{equation*}
\xymatrix@C=110pt@R=18pt{ 
*+[r]{\Specc\ti C(\bfD)} \ar[d]^{\Specc\ti C(\bs\phi)=\Specc \Pi_{\rm all}^{\rm sc}(\bs{\ti\phi})} \ar@<.5ex>@/^.7pc/[rr]^{\bs\eta_\bfD} \ar[r]_{\Specc\bs\psi_{\bs{\ti\fD}}} & \Specc\bs{\ti\fD} \ar[r]_{\bs\ze_\bfD} \ar[d]^{\Specc\bs{\ti\phi}} & *+[l]{C(\Specc\bfD)} \ar[d]_{C(\Specc\bs\phi)} \\
*+[r]{\Specc\ti C(\bfC)} \ar@<-.5ex>@/_.7pc/[rr]_{\bs\eta_\bfC} \ar[r]^{\Specc\bs\psi_{\bs{\ti\fC}}} & \Specc\bs{\ti\fC} \ar[r]^{\bs\ze_\bfC} & *+[l]{C(\Specc\bfC).\!} }	
\end{equation*}
The left hand square commutes as $\bs\psi:\Id\Ra \Pi_{\rm all}^{\rm sc}$ is a natural transformation in Definition \ref{cc5def10}. The right hand square commutes by functoriality of $\bs\ze_\bfC$ in \eq{cc6eq11}. The semicircles commute by definition of $\bs\eta_\bfC,\bs\eta_\bfD$. Hence \eq{cc6eq10} commutes, and the theorem follows.
\end{proof}

The next proposition follows from the proof of Theorem \ref{cc6thm3}. Here $C(X)^P$ is closed as it may be defined by the equations $\al_{c'}=0$ for $c'\in P$, $\al_{c'}=1$ for $c'\in\fC_\rex\sm P$. Example \ref{cc6ex1} below shows the $C(X)^P$ need not be open.

\begin{prop}
\label{cc6prop1}
Let\/ $\bfC=(\fC,\fC_\rex)\in\CRingsc,$ and write $\bX=\Specc\bfC$. With\/ $C(X)$ as in {\rm\eq{cc6eq1},} for each\/ $P\in\Prc_\bfC,$ define
\ea
X^P&=\bigl\{x\in X:\text{$x\ci\Phi_i(c')=0$ for all\/ $c'\in P$}\bigr\},
\label{cc6eq17}\\
\begin{split}
C(X)^P&=\bigl\{(x,P_x)\in C(X):\text{$\pi_{x,\rex}(P)=P_x$ in $\fC_{x,\rex}\cong\O_{X,x}^\rex$}\bigr\}\\
&=\bigl\{(x,P_x)\in C(X):P=\pi_{x,\rex}^{-1}(P_x)\bigr\} \\
&=\bigl\{(x,P_x)\in C(X):\text{if\/ $c'\in\fC_\rex$ then $\pi_{x,\rex}(c')\in P_x$ }\\
&\qquad\qquad \text{in $\fC_{x,\rex}\cong\O_{X,x}^\rex$ if and only if\/ $c'\in P$}\bigr\}\subseteq C(X),
\end{split}
\label{cc6eq18}
\ea
where the three expressions in \eq{cc6eq18} are equivalent. Then $\Pi_X:C(X)\ra X$ restricts to a bijection $C(X)^P\ra X^P$. Also $C(X)^P$ is closed in $C(X),$ but not necessarily open, and just as sets we have
\e
C(X)=\coprod\nolimits_{P\in\Prc_\bfC}C(X)^P.
\label{cc6eq19}
\e
\end{prop}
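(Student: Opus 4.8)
The plan is to read off Proposition~\ref{cc6prop1} directly from the description of points of $\Specc\ti C(\bfC)$ established in the proof of Theorem~\ref{cc6thm3}, rather than to start from scratch. Recall from that proof that there is a homeomorphism $\ze_\fC:\ti X\ra C(X)$ where $\ti X=\Specc\bs{\ti\fC}$, and that $\ti X$ was identified in \eq{cc6eq12} with tuples $(x,a_{c'},b_{c'}:c'\in\fC_\rex)$, then simplified via \eq{cc6eq13} to pairs $(x,P)$ with $x\in X$, $P\in\Prc_\bfC$, and $x\ci\Phi_i(c')=0$ for all $c'\in P$, and finally $\ze_\fC(\ti x)=(x,\pi_{x,\rex}(P))$. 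So the map $(x,P)\mapsto (x,\pi_{x,\rex}(P))$ is a bijection from $\{(x,P):P\in\Prc_\bfC,\ x\ci\Phi_i(c')=0\ \forall c'\in P\}$ onto $C(X)$, with inverse $(x,P_x)\mapsto (x,\pi_{x,\rex}^{-1}(P_x))$ by Lemma~\ref{cc6lem2}.

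With this in hand I would argue as follows. First, fix $P\in\Prc_\bfC$ and verify the equivalence of the three descriptions of $C(X)^P$ in \eq{cc6eq18}: the first two are interchangeable by the inverse bijections of Lemma~\ref{cc6lem2} (applied pointwise at each $x$ with $(x,P_x)\in C(X)$, noting $x\ci\Phi_i(c')=0$ for $c'\in P$ is automatic once $\pi_{x,\rex}(P)=P_x$ since $\pi_{x,\rex}(c')\in P_x$ is not invertible, hence $x\ci\Phi_i(c')=\pi(\pi_{x,\rex}(c'))\cdot(\text{unit})$... more carefully, $x\ci\Phi_i(c')\ne 0$ would make $\pi_{x,\rex}(c')$ invertible in $\fC_{x,\rex}$ by Definition~\ref{cc4def10}, contradicting $\pi_{x,\rex}(c')\in P_x$), and the third is the statement that $P=\pi_{x,\rex}^{-1}(P_x)$ spelled out as a biconditional on membership. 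Second, $\Pi_X$ restricts to $C(X)^P\ra X^P$: it is injective because $(x,P_x)\in C(X)^P$ forces $P_x=\pi_{x,\rex}(P)$, which is determined by $x$; it is surjective onto $X^P$ because for $x\in X^P$ we have $x\ci\Phi_i(c')=0$ for all $c'\in P$, so $\pi_{x,\rex}(P)$ is a prime ideal of $\fC_{x,\rex}$ (this is exactly the content of the first half of Lemma~\ref{cc6lem2}), giving a point $(x,\pi_{x,\rex}(P))\in C(X)^P$ over $x$.

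Third, for closedness of $C(X)^P$ in $C(X)$: as the proposition's preamble indicates, transport the question across the homeomorphism $\ze_\fC$ to $\ti X=\Specc\bs{\ti\fC}$, where $\ze_\fC^{-1}(C(X)^P)$ consists of those $\ti x$ with $a_{c'}=0$ for $c'\in P$ and $a_{c'}=1$ for $c'\in\fC_\rex\sm P$; since $\al_{c'}$ takes only the values $0,1$ on $\ti X$ with $\al_{c'}+\be_{c'}=1$, the locus $\{a_{c'}=0\}=\ddot X_{\al_{c'}}$ in the notation of Lemma~\ref{cc6lem3}'s proof, and $\{a_{c'}=1\}=\dot X_{\al_{c'}}$, are each both open and closed in $\ti X$, so the intersection $\ze_\fC^{-1}(C(X)^P)=\bigcap_{c'\in P}\ddot X_{\al_{c'}}\cap\bigcap_{c'\in\fC_\rex\sm P}\dot X_{\al_{c'}}$ is closed. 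Alternatively one may simply note $C(X)^P=\bigcap_{c'\in P}\ddot X_{\Xi_{\fC,\rex}(c')}\cap\bigcap_{c'\notin P}\dot X_{\Xi_{\fC,\rex}(c')}$ directly in $C(X)$ using \eq{cc6eq2}. Finally \eq{cc6eq19} is immediate: every $(x,P_x)\in C(X)$ lies in exactly one $C(X)^P$, namely $P=\pi_{x,\rex}^{-1}(P_x)$, which is the disjointness-and-covering statement, so $C(X)=\coprod_{P\in\Prc_\bfC}C(X)^P$ as sets.

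I do not expect a serious obstacle here, since everything of substance was proved inside Theorem~\ref{cc6thm3} and Lemma~\ref{cc6lem2}; the only point requiring a little care is the bookkeeping in the equivalence of the three forms of \eq{cc6eq18} and the remark that $C(X)^P$ need not be open — the latter is not part of the proof but is deferred to Example~\ref{cc6ex1}, so I would just cite it. The mild subtlety is making sure that "bijection $C(X)^P\ra X^P$" is stated as a bijection of \emph{sets} only (the proposition does not claim it is a homeomorphism), which keeps the argument short.
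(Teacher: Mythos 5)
Your proposal is correct and follows essentially the same route as the paper: the paper derives the proposition from the point-level identification in the proof of Theorem \ref{cc6thm3} together with Lemma \ref{cc6lem2}, obtains closedness of $C(X)^P$ from the equations $\al_{c'}=0$ for $c'\in P$, $\al_{c'}=1$ for $c'\in\fC_\rex\sm P$ (equivalently your $\dot X_{s'},\ddot X_{s'}$ description), and defers non-openness to Example \ref{cc6ex1}, exactly as you do. Your write-up simply makes explicit the bookkeeping (e.g.\ that $\pi_{x,\rex}(P)=P_x$ forces $x\ci\Phi_i(c')=0$ for $c'\in P$ via locality of $\bfC_x$) that the paper leaves to the reader.
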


As an easy consequence of Theorem \ref{cc6thm3}, we deduce:

\begin{thm}
\label{cc6thm4}
The corner functor $C:\LCRSc\ra\LCRScin$ of\/ {\rm\S\ref{cc61}} maps $\CSchc\ra\CSchcin$. We write $C:\CSchc\ra\CSchcin$ for the restriction, and call it a \begin{bfseries}corner functor\end{bfseries}. It is right adjoint to\/~$\inc:\CSchcin\hookra\CSchc$.

As $C$ is a right adjoint, it preserves limits in $\CSchc,\CSchcin$.
\end{thm}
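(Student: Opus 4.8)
The plan is to deduce Theorem~\ref{cc6thm4} from the results already established, primarily Theorem~\ref{cc6thm3} together with Theorem~\ref{cc6thm1} and the spectrum adjunctions. The first step is to show that $C$ maps $\CSchc$ into $\CSchcin$. Let $\bX\in\CSchc$. By Definition~\ref{cc5def8} we may cover $X$ by open sets $U\subseteq X$ with $(U,\bO_X\vert_U)\cong\Specc\bfC$ for some $\bfC\in\CRingsc$. I would first check that $C$ behaves well with respect to open inclusions: if $\bU\subseteq\bX$ is open, then $C(\bU)$ is naturally isomorphic to the open $C^\iy$-ringed subspace $\bs\Pi_\bX^{-1}(\bU)=(\Pi_X^{-1}(U),\bO_{C(X)}\vert_{\Pi_X^{-1}(U)})$ of $C(\bX)$; this is immediate from the construction in Definition~\ref{cc6def1}, since the topology, the prime-ideal description of points, and the sheaves $\O_{C(X)},\O_{C(X)}^\rex$ are all defined by local formulas in terms of the stalks $\bO_{X,x}$. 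Hence $C(\bX)$ is covered by the open subsets $C(\bU_i)$ where $\bU_i\cong\Specc\bfC_i$, so it suffices to show each $C(\Specc\bfC_i)$ lies in $\CSchcin$.

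For that, I invoke Theorem~\ref{cc6thm3}: for any $\bfC\in\CRingsc$ there is a natural isomorphism $\bs\eta_\bfC:\Specc\ti C(\bfC)\xrightarrow{\ \cong\ }C(\Specc\bfC)$ in $\LCRSc$, where $\ti C(\bfC)=\Pi_{\rm all}^{\rm sc}(\bs{\ti\fC})\in\CRingscsc\subset\CRingsc$. Thus $C(\Specc\bfC)\cong\Specc\ti C(\bfC)$ is an affine $C^\iy$-scheme with corners, so it lies in $\CSchc$; and since Theorem~\ref{cc6thm3} also tells us $\Specc\ti C(\bfC)$ lies in $\LCRScin$, Theorem~\ref{cc5thm7}(b) (an object of $\CSchc$ lies in $\CSchcin$ iff it lies in $\LCRScin$) shows $C(\Specc\bfC)\in\CSchcin$. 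Applying this to each member of the open cover, and using that $C(\bX)\in\LCRScin$ by Lemma~\ref{cc6lem1} or by gluing, we conclude $C(\bX)\in\CSchcin$. Likewise, for a morphism $\bs f:\bX\ra\bY$ in $\CSchc$, the morphism $C(\bs f)$ was already shown to be interior in Definition~\ref{cc6def1}, so $C(\bs f)$ is a morphism in $\CSchcin$; functoriality ($C(\bs f\ci\bs g)=C(\bs f)\ci C(\bs g)$, $C(\bs\id)=\bs\id$) is inherited from the functor $C:\LCRSc\ra\LCRScin$. This defines the restricted functor $C:\CSchc\ra\CSchcin$.

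Finally, for the adjunction $C\dashv$ nothing --- rather, $C:\CSchc\ra\CSchcin$ right adjoint to $\inc:\CSchcin\hookra\CSchc$ --- I would simply restrict the adjunction of Theorem~\ref{cc6thm1}. For $\bX\in\CSchc$ and $\bY\in\CSchcin$, viewing both as objects of $\LCRSc$ and $\LCRScin$ respectively, Theorem~\ref{cc6thm1} gives a natural bijection $\Hom_{\LCRScin}(C(\bX),\bY)\cong\Hom_{\LCRSc}(\bX,\inc(\bY))$. Since $\CSchcin$ is a full subcategory of $\LCRScin$ and $\CSchc$ a full subcategory of $\LCRSc$, and since $C(\bX)\in\CSchcin$ by the first part, these Hom-sets coincide with $\Hom_{\CSchcin}(C(\bX),\bY)$ and $\Hom_{\CSchc}(\bX,\inc(\bY))$, so the bijection descends; naturality is inherited. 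The unit and counit are the restrictions of $\bs\io,\bs\Pi$. As a right adjoint, $C:\CSchc\ra\CSchcin$ preserves all limits existing in $\CSchc$. The main obstacle, such as it is, is the first step: one must be careful that the construction of $C(\bX)$ commutes with passing to open subschemes so that the affine case suffices, and that Theorem~\ref{cc5thm7}(b) legitimately upgrades ``$C(\Specc\bfC)\in\CSchc\cap\LCRScin$'' to ``$C(\Specc\bfC)\in\CSchcin$''. Both are straightforward given the cited results, but they are exactly where the content lies.
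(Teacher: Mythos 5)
Your proposal is correct and follows essentially the same route as the paper: cover $\bX$ by affine opens, use locality of the corner construction to identify $\bs\Pi_\bX^{-1}(\bU)\cong C(\Specc\bfC)$, apply Theorem \ref{cc6thm3} together with Theorem \ref{cc5thm7} to see each such piece is an interior (affine) $C^\iy$-scheme with corners, and then restrict the adjunction of Theorem \ref{cc6thm1} using fullness of $\CSchcin\subset\LCRScin$ and $\CSchc\subset\LCRSc$. The only cosmetic difference is that you invoke Theorem \ref{cc5thm7}(b) on the affine pieces where the paper uses part (a) directly, which amounts to the same thing.
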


\begin{proof}
Let $\bX$ lie in $\CSchc$. Then we can cover $X$ by open $\bU\subseteq\bX$ with $\bU\cong\Specc\bfC$ for $\bfC\in\CRingsc$. Since the definition of $\bs\Pi_\bX:C(\bX)\ra\bX$ is local over $\bX$, we have $\bs\Pi_\bX^{-1}(\bU)\cong C(\bU)\cong C(\Specc\bfC)$. Theorem \ref{cc6thm3} gives $C(\Specc\bfC)\cong\Specc \ti C(\bfC)$, with~$\ti C(\bfC)\in\CRingsc$. 

Since $\Specc \ti C(\bfC)$ lies in $\LCRScin\subset\LCRSc$, Theorem \ref{cc5thm7} shows that $\Specc \ti C(\bfC)\cong\Speccin\bfD$ for $\bfD\in\CRingscin$. Thus we can cover $C(\bX)$ by open $\bs\Pi_\bX^{-1}(\bU)\subseteq C(\bX)$ with $\bs\Pi_\bX^{-1}(\bU)\cong\Speccin\bfD$ for $\bfD\in\CRingscin$, so $C(\bX)$ is an interior $C^\iy$-scheme with corners. Therefore $C:\LCRSc\ra\LCRScin$ maps $\CSchc\ra\CSchcin$. The rest is immediate from Theorem~\ref{cc6thm1}.
\end{proof}

The next proposition is easy to prove by considering local models.

\begin{prop}
\label{cc6prop2}
The corner functors $C$ for $\Manc,\Mangc,\cManc,\cMangc$ in Definition\/ {\rm\ref{cc3def9},} and for $\CSchc$ in Theorem\/ {\rm\ref{cc6thm4},} commute with the functors $F_\Manc^\CSchc,\ldots,F_\cMangc^\CSchc$ in Definition\/ {\rm\ref{cc5def9}} up to natural isomorphism.
\end{prop}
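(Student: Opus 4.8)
The plan is to verify the claimed compatibility by unwinding both corner functor constructions on local models and checking they agree. Concretely, for $X$ a manifold with (g-)corners, I would compute $C(F_\Manc^\CSchc(X))$ directly using Definition \ref{cc6def1} and show it is isomorphic to $F_\cMancin^\CSchc(C(X))$ (using that $F_\Manc^\CSchc$ factors through $F_\cMancin^\CSchc$ on corners, since $C(X)$ has mixed dimension), naturally in $X$. Since everything in sight is local over $X$ --- the corners functor $C$ on $\CSchc$ is constructed locally by Theorem \ref{cc6thm4} and its proof, and the corner functor on $\Manc$ by Definition \ref{cc3def9} is also local --- it suffices to do this for the local models: $X = \R^n_k = [0,\iy)^k\t\R^{n-k}$ in the corners case, and $X = X_P$ for $P$ weakly toric in the g-corners case. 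The case of mixed dimension then follows since $C$ commutes with disjoint unions on both sides.

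First I would recall that $F_\Manc^\CSchc(\R^n_k) = \Specc\bs C^\iy(\R^n_k)$ by Theorem \ref{cc5thm5}(a), since $\R^n_k$ is a manifold with faces, and similarly $F_\Mangc^\CSchc(X_P) = \Specc\bs C^\iy(X_P)$ since $X_P$ is a manifold with g-faces by Example \ref{cc4ex6}. Then by Theorem \ref{cc6thm3} we have $C(\Specc\bs C^\iy(\R^n_k))\cong\Specc\ti C(\bs C^\iy(\R^n_k))$, and it remains to identify the right-hand side. The key computation is to determine the prime ideals of the monoid $\Ex(\R^n_k)$, or rather the relevant stalk monoids $\O^\rex_{X,x}$: for a point $x\in\R^n_k$ of depth $j$ (lying in the stratum where $x_{i_1}=\dots=x_{i_j}=0$), the stalk monoid is isomorphic to $C^\iy_x(\R^n_k)^\t\t\N^j$ up to units, and its prime ideals correspond bijectively to subsets of $\{i_1,\dots,i_j\}$, hence to the local $l$-corner components of $\R^n_k$ at $x$. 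This matches exactly the set-level description of $C(X)$ in \eq{cc3eq2} and the set-level description of $C(\Specc\bfC)$ in \eq{cc6eq1}. For the g-corners case the analogous statement is that prime ideals of the toric monoid $Q$ (with $X$ locally modelled on $X_Q\t\R^l$) correspond to faces of $Q$, which by the theory in \cite{Joyc6} correspond precisely to local corner components; I would cite the relevant facts about $M_{C(X)}$ and the monoid structure from Remark \ref{cc3rem5}(d).

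Having matched underlying topological spaces, I would check the sheaves of $C^\iy$-rings with corners agree. On the manifold side, $C_l(\R^n_k)$ near $(x,\ga)$ is a manifold with corners of dimension $n-l$, with structure sheaf $\bs C^\iy$; on the scheme side, Lemma \ref{cc6lem1} identifies the stalk $\bO_{C(X),(x,P)}$ with $\bO_{X,x}/\simc_P$, and I would check this Rees-type quotient reproduces the germs of smooth and exterior functions on $C_l(\R^n_k)$ --- essentially, quotienting $C^\iy_x(\R^n_k)$ by the ideal generated by the coordinate functions $x_{i}$ with $i\in P$ kills exactly the normal directions to the corner stratum, recovering $C^\iy_{(x,\ga)}(C_l(\R^n_k))$, while the monoid quotient sends those same $x_i$ to zero, recovering $\Ex_{(x,\ga)}(C_l(\R^n_k))$. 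One must also check the continuous maps $\Pi_X$ and the morphisms $C(f)$ correspond under the identification; the former follows from \eq{cc6eq9} and \eq{cc6eq15}, the latter from \eq{cc6eq10} together with the functoriality statement $C(g\ci f) = C(g)\ci C(f)$ on both sides and the fact (in Definition \ref{cc3def9}) that $C(f)$ sends a local corner component $\ga$ to the unique corner component $f_*(\ga)$ whose stratum meets $f$ of the $\ga$-stratum --- which should translate, under the prime-ideal description, into the statement that $C(f)(x,P) = (f(x), (f^\sh_{\rex,x})^{-1}(P))$ as in Definition \ref{cc6def1}.

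The main obstacle I expect is the bookkeeping in matching the \emph{morphism} part $C(f)$ rather than the object part, because the definition of $f_*(\ga)$ in Definition \ref{cc3def9} is geometric (tracking connected components of strata under $f$) while the definition on schemes is algebraic (pulling back prime ideals along stalk maps), and reconciling them requires a careful local analysis of how an interior or exterior b-map behaves near a corner point --- using the local normal form of smooth maps from Definition \ref{cc3def1}(b),(i), namely $f_j = F_j\cdot x_1^{a_{1,j}}\cdots x_k^{a_{k,j}}$, to see exactly which boundary defining functions of the target pull back into the prime ideal $P$. Everything else is a routine, if somewhat lengthy, verification on local models, so I would present it by treating $\R^n_k$ carefully and then indicating the modifications for $X_Q\t\R^l$, invoking the monoid machinery of \cite{Joyc6} and Remark \ref{cc3rem5}(d) to handle the g-corners bookkeeping without redoing it.
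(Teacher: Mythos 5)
Your proposal is correct and is essentially the argument the paper has in mind: the paper gives no details beyond "easy to prove by considering local models", and your plan — reduce to $\R^n_k$ (resp.\ $X_Q\t\R^l$), use Theorem \ref{cc5thm5}(a) and Theorem \ref{cc6thm3}, match prime ideals of the stalk monoids with local corner components, identify the stalks $\bO_{X,x}/\simc_P$ with germs on the corner strata, and track $\Pi_X$ and $C(f)$ via the normal form of b-maps — is exactly that local-model verification.
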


Combining Theorems \ref{cc5thm8}(b), \ref{cc6thm2} and \ref{cc6thm4} yields:

\begin{thm}
\label{cc6thm5}
Restricting $C$ in Theorem\/ {\rm\ref{cc6thm4}} gives corner functors
\begin{align*}
C&:\CSchcsaex\longra 	\CSchcsa,&
C&:\CSchctfex\longra 	\CSchctf,\\
C&:\CSchcZex\longra 	\CSchcZ,&
C&:\CSchcinex\longra 	\CSchcin,
\end{align*}
in the notation of Definition\/ {\rm\ref{cc5def11},} which are right adjoint to the corresponding inclusions. As these corner functors are right adjoints, they preserve limits.
\end{thm}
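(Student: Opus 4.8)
The plan is to assemble Theorems \ref{cc5thm7}(b), \ref{cc5thm8}(b), \ref{cc6thm2} and \ref{cc6thm4}; the argument is the same for all four pairs of categories and is almost entirely bookkeeping about interior versus exterior morphisms. Throughout, let $(\cA_\rex,\cA_\rin,\cB_\rex,\cB_\rin)$ be one of the quadruples $(\CSchcsaex,\CSchcsa,\LCRScsaex,\LCRScsa)$, $(\CSchctfex,\CSchctf,\LCRSctfex,\LCRSctf)$, $(\CSchcZex,\CSchcZ,\LCRScZex,\LCRScZ)$ or $(\CSchcinex,\CSchcin,\LCRScinex,\LCRScin)$. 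First I would pin down the objects of $\cA_\rex$: by Definition \ref{cc5def11}, $\cA_\rex$ and $\cA_\rin$ have the same objects, and $\cA_\rex$ is a full subcategory of $\CSchc$ on them (with all morphisms). Every object of $\CSchcin$ lies in $\LCRScin$, and by Theorem \ref{cc5thm8}(b) (in the saturated, torsion-free and integral cases) or Theorem \ref{cc5thm7}(b) (in the interior case, where $\cA_\rin=\CSchcin$ and $\cB_\rin=\LCRScin$), an object of $\CSchcin$ lies in $\cA_\rin$ if and only if it lies in $\cB_\rin$. Hence the objects of $\cA_\rex$ are precisely the objects of $\CSchc$ lying in $\cB_\rin\subseteq\LCRSc$, and so they all lie in $\cB_\rex$ (the same subcategory of $\LCRSc$, but with all morphisms).

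Next I would show that $C$ restricts to a functor $\cA_\rex\to\cA_\rin$. If $\bX\in\cA_\rex$ then $\bX\in\cB_\rex$ by the previous paragraph, so Theorem \ref{cc6thm2} gives $C(\bX)\in\cB_\rin$, while Theorem \ref{cc6thm4} gives $C(\bX)\in\CSchcin$; thus $C(\bX)$ is an object of $\CSchcin$ lying in $\cB_\rin$, and hence lies in $\cA_\rin$ by Theorem \ref{cc5thm8}(b) (resp.\ Theorem \ref{cc5thm7}(b) in the interior case). On morphisms, $C$ always produces interior morphisms --- this was built into Definition \ref{cc6def1}, and is exactly why Theorem \ref{cc6thm4} lands in $\CSchcin$ --- so $C$ sends a morphism of $\cA_\rex$ to an interior morphism between objects of $\cA_\rin$, that is, to a morphism of $\cA_\rin$. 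Functoriality (preservation of compositions and identities) is inherited from $C:\CSchc\to\CSchcin$, so we obtain the desired functor.

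Finally I would obtain the adjunction by restricting that of Theorem \ref{cc6thm4}. Fix $\bY\in\cA_\rin$ and $\bX\in\cA_\rex$. Because $\cA_\rex$ is full in $\CSchc$ and carries all morphisms, $\Hom_{\cA_\rex}(\inc\bY,\bX)=\Hom_{\CSchc}(\bY,\bX)$; because $\cA_\rin$ is a full subcategory of $\CSchcin$, $\Hom_{\cA_\rin}(\bY,C(\bX))=\Hom_{\CSchcin}(\bY,C(\bX))$; and Theorem \ref{cc6thm4} supplies a natural bijection $\Hom_{\CSchc}(\bY,\bX)\cong\Hom_{\CSchcin}(\bY,C(\bX))$. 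Composing these three identifications gives a natural bijection $\Hom_{\cA_\rex}(\inc\bY,\bX)\cong\Hom_{\cA_\rin}(\bY,C(\bX))$, functorial in $\bX$ and $\bY$, so $C:\cA_\rex\to\cA_\rin$ is right adjoint to $\inc:\cA_\rin\hookra\cA_\rex$. As a right adjoint it preserves all limits that exist in $\cA_\rex$, which gives the final assertion.

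I do not expect a genuine obstacle here; the only point requiring care is keeping the morphism conventions straight --- $\cA_\rin$ is a genuine full subcategory of $\CSchcin$, whereas $\cA_\rex$ is full only as a subcategory of $\CSchc$ (with all morphisms, not just interior ones) --- so that the two Hom-set identifications in the last paragraph are the correct ones, and so that the fact that $C$ sends every morphism to an interior one is what makes the restricted functor well defined.
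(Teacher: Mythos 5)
Your proof is correct and follows essentially the same route as the paper, whose entire proof is the one-line observation that the statement follows by combining Theorems \ref{cc5thm8}(b), \ref{cc6thm2} and \ref{cc6thm4}; your write-up just makes the object identifications and the restriction of the adjunction explicit (adding Theorem \ref{cc5thm7}(b) for the interior case).
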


\subsection{\texorpdfstring{The corner functor for firm $C^\iy$-schemes with corners}{The corner functor for firm C∞-schemes with corners}}
\label{cc63}

The following proposition is why we included $\Pi_{\rm all}^{\rm sc}$ in defining $\ti C$ in Definition \ref{cc6def2}. It forces relations on $\al_{c'},\be_{c'}$ in $\ti C(\bfC)=\Pi_{\rm all}^{\rm sc}(\bs{\ti\fC})$, which need not hold in~$\bs{\ti\fC}$.

\begin{prop}
\label{cc6prop3}
Let\/ $\bfC\in\CRingsc$ and write $\bfD=\ti C(\bfC)$ with morphism $\bs{\ti\Pi}_\bfC:\bfC\ra\bfD,$ so we have elements $\al_{c'},\be_{c'},\ti\Pi_{\fC,\rex}(c')$ in $\fD_\rex$ for all\/ $c'\in\fC_\rex$. Write $c'\mapsto[c'],$ $d'\mapsto[d']$ for the projections $\fC_\rex\mapsto\fC_\rex^\sh,$ $\fD_\rex\mapsto\fD_\rex^\sh$. Then:
\begin{itemize}
\setlength{\itemsep}{0pt}
\setlength{\parsep}{0pt}
\item[{\bf(a)}] If\/ $c_1',c_2'\in\fC_\rex$ with\/ $[c_1']=[c_2']$ in $\fC_\rex^\sh$ then $\al_{c_1'}=\al_{c_2'},$ $\be_{c_1'}=\be_{c_2'}$ in~$\fD_\rex$.
\item[{\bf(b)}] If\/ $c_1',\ldots,c_n'\in\fC_\rex$ and\/ $i_1,\ldots,i_n,j_1,\ldots,j_n$ are positive integers then $\al_{c_1^{\prime i_1}\cdots c_n^{\prime i_n}}=\al_{c_1^{\prime j_1}\cdots c_n^{\prime j_n}}$ and\/ $\be_{c_1^{\prime i_1}\cdots c_n^{\prime i_n}}=\be_{c_1^{\prime j_1}\cdots c_n^{\prime j_n}}$ in~$\fD_\rex$.
\item[{\bf(c)}] $\fD_\rex^\sh$ is generated by $[\al_{c'}],[\be_{c'}],[\ti\Pi_{\fC,\rex}(c')]$ for all\/ $c'\in\fC_\rex$.
\item[{\bf(d)}] If\/ $\bfC$ is firm then $\bfD$ is firm. Thus $\ti C$ maps $\CRingscfi\ra\CRingscfi$.
\end{itemize}
\end{prop}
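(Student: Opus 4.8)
The plan is to establish \textbf{(a)}--\textbf{(c)} first and then deduce \textbf{(d)} formally. Throughout write $\bfD=\ti C(\bfC)=\Pi_{\rm all}^{\rm sc}(\bs{\ti\fC})$ and $\bX=\Specc\bfC$, and recall two facts already proved: by Theorem \ref{cc6thm3} there is an isomorphism $\Specc\bfD\cong C(\bX)$ compatible with $\bs{\ti\Pi}_\bfC$ and $\bs\Pi_\bX$; and $\bfD$ is semi-complete, so $\Xi_{\fD,\rex}\colon\fD_\rex\ra(\Gac\ci\Specc\bfD)_\rex\cong\O^\rex_{C(X)}(C(X))$ is injective by Definition \ref{cc5def10}. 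Consequently, to prove an equality between elements of $\fD_\rex$ it is enough to prove the corresponding sections of $\O^\rex_{C(X)}$ agree, which may be checked on stalks $\O^\rex_{C(X),(x,P)}\cong\bO_{X,x}/\simc_P$; and by the stalk computation in the proof of Lemma \ref{cc6lem4}, the image of $\al_{c'}$ in this stalk is the identity $1$ when $\pi_{x,\rex}(c')\notin P$ and the zero element $0$ when $\pi_{x,\rex}(c')\in P$, with $\be_{c'}$ taking the complementary value. This reduction through $\Pi_{\rm all}^{\rm sc}$ is exactly what makes the identities in \textbf{(a)},\textbf{(b)} available --- as the remark before the proposition notes, they can fail in $\bs{\ti\fC}$ itself.

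For \textbf{(a)}, $[c_1']=[c_2']$ in $\fC_\rex^\sh=\fC_\rex/\fC_\rex^\t$ means $c_1'=uc_2'$ with $u\in\fC_\rex^\t$, so for every $(x,P)\in C(X)$ the elements $\pi_{x,\rex}(c_1')$ and $\pi_{x,\rex}(c_2')$ differ by the unit $\pi_{x,\rex}(u)\in\fC_{x,\rex}^\t$; since $P$ is an ideal it contains $\pi_{x,\rex}(c_1')$ if and only if it contains $\pi_{x,\rex}(c_2')$. By the stalk description, $\al_{c_1'}$ and $\al_{c_2'}$ --- and likewise $\be_{c_1'}$ and $\be_{c_2'}$ --- have equal image in every stalk, hence are equal in $\fD_\rex$ by injectivity of $\Xi_{\fD,\rex}$. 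Part \textbf{(b)} uses the same argument: at any $(x,P)$, iterated primeness of $P$ gives $\pi_{x,\rex}(c_1^{\prime i_1}\cdots c_n^{\prime i_n})=\prod_k\pi_{x,\rex}(c_k')^{i_k}\in P$ iff $\pi_{x,\rex}(c_k')\in P$ for some $k$, a condition independent of the positive exponents; so $\al_{c_1^{\prime i_1}\cdots c_n^{\prime i_n}}$, $\al_{c_1^{\prime j_1}\cdots c_n^{\prime j_n}}$ (and the corresponding $\be$'s) agree in all stalks, hence in $\fD_\rex$.

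For \textbf{(c)}, by construction $\bs{\ti\fC}$ is a colimit of free $C^\iy$-rings with corners obtained from $\bfC$ by adjoining the $[0,\iy)$-type generators $\al_{c'},\be_{c'}$ and imposing relations; running the analysis in the proofs of Propositions \ref{cc4prop10} and \ref{cc4prop12} shows that every element of $\bs{\ti\fC}_\rex$ equals $\Psi_g$ applied to elements of $\fC$, elements of $\fC_\rex$, and the $\al_{c'},\be_{c'}$, and hence that $\bs{\ti\fC}_\rex^\sh$ is generated by the classes $[\al_{c'}]$, $[\be_{c'}]$, $[\check\Pi_{\fC,\rex}(c')]$ for $c'\in\fC_\rex$, where $\bs{\check\Pi}_\bfC\colon\bfC\ra\bs{\ti\fC}$ is the canonical map. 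Applying $\Pi_{\rm all}^{\rm sc}$, Proposition \ref{cc5prop2} shows $\fD_\rex$ is generated by the image of $\bs{\ti\fC}_\rex$ together with the invertibles $\Psi_{\exp}(\fD)$, and the latter are trivial in the sharpening; so $\fD_\rex^\sh$ is generated by the $[\al_{c'}]$, $[\be_{c'}]$, $[\ti\Pi_{\fC,\rex}(c')]$ for $c'\in\fC_\rex$, proving \textbf{(c)}.

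Finally, for \textbf{(d)}, suppose $\bfC$ is firm and pick $c_1',\ldots,c_m'\in\fC_\rex$ with $[c_1'],\ldots,[c_m']$ generating $\fC_\rex^\sh$, so that by Definition \ref{cc4def14} every $c'\in\fC_\rex$ has the form $c'=\Psi_{\exp}(a)\,c_1^{\prime b_1}\cdots c_m^{\prime b_m}$ with $a\in\fC$ and $b_i\ge 0$. Then $[\ti\Pi_{\fC,\rex}(c')]=\prod_{i=1}^m[\ti\Pi_{\fC,\rex}(c_i')]^{b_i}$; moreover $[c']=[c_1^{\prime b_1}\cdots c_m^{\prime b_m}]$ in $\fC_\rex^\sh$, so \textbf{(a)} gives $\al_{c'}=\al_{c_1^{\prime b_1}\cdots c_m^{\prime b_m}}$, and \textbf{(b)} then gives $\al_{c'}=\al_{\prod_{i\in T}c_i'}$ with $T=\{i:b_i\ge 1\}$ (the empty product being $1_{\fC_\rex}$, with $\al_{1_{\fC_\rex}}=1_{\fD_\rex}$), and similarly for $\be_{c'}$. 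Hence the generating set of $\fD_\rex^\sh$ from \textbf{(c)} collapses to the finite set $\bigl\{[\al_{\prod_{i\in T}c_i'}],\,[\be_{\prod_{i\in T}c_i'}]:T\subseteq\{1,\ldots,m\}\bigr\}\cup\bigl\{[\ti\Pi_{\fC,\rex}(c_i')]:1\le i\le m\bigr\}$, so $\fD_\rex^\sh$ is finitely generated and $\bfD=\ti C(\bfC)$ is firm; thus $\ti C$ restricts to $\CRingscfi\ra\CRingscfi$. I expect the main obstacle to be \textbf{(a)}--\textbf{(b)}: one must carefully extract the precise form of $\al_{c'},\be_{c'}$ in each stalk of $\bO_{C(X)}$ from Lemma \ref{cc6lem4}, verify the elementary monoid facts about prime ideals used in the stalkwise comparison, and note that the passage from stalkwise equality to equality in $\fD_\rex$ genuinely requires semi-completeness --- which is why $\Pi_{\rm all}^{\rm sc}$ appears in the definition of $\ti C$.
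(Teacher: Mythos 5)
Your proof is correct and takes essentially the same route as the paper's: both arguments determine $\al_{c'},\be_{c'}$ by their $0/1$ values in the stalks at points $(x,P)$ (the paper works on $\bs{\ti X}=\Specc\bs{\ti\fC}$, you work on the isomorphic $C(\bX)$), use the semi-completeness of $\bfD=\Pi_{\rm all}^{\rm sc}(\bs{\ti\fC})$ to pass from stalkwise equality of sections to equality in $\fD_\rex$, prove (c) from the generators of $\bs{\ti\fC}{}_\rex$ together with Proposition \ref{cc5prop2}, and prove (d) by the same collapse of the $\al_{c'},\be_{c'}$ onto the finitely many $\al_{\prod_{i\in S}c_i'},\be_{\prod_{i\in S}c_i'}$ indexed by subsets $S$ of a finite generating set.
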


\begin{proof} Let $\bs{\ti\fC}$ be as in \eq{cc6eq5} and $\bs{\ti X}=\Specc\bs{\ti\fC}$, so that $\bfD\subseteq\bO_{\ti X}(\ti X)$, and elements of $\fD_\rex$ are global sections of the sheaf $\O_{\ti X}^\rex$ on $\ti X$. Hence $d_1',d_2'\in\fD_\rex$ have $d_1'=d_2'$ if and only if $d_1'\vert_{\ti x}=d_2'\vert_{\ti x}$ in $\O^\rex_{\ti X,\ti x}\cong\ti\fC_{\ti x,\rex}$ for all~$\ti x\in\ti X$.

In the proof of Theorem \ref{cc6thm3} we saw that if $\ti x\in\ti X$ with $\ze_\fC(\ti x)=(x,P_x)$ in $C(X)$ and $c'\in\fC_\rex$, and $P$ corresponds to $P_x$ under \eq{cc6eq14}, then $\al_{c'}\vert_{\ti x}=0$, $\be_{c'}\vert_{\ti x}=1$ in $\O^\rex_{\ti X,\ti x}\cong\ti\fC_{\ti x,\rex}$ if $c'\in P$, and $\al_{c'}\vert_{\ti x}=1$, $\be_{c'}\vert_{\ti x}=0$ otherwise. Thus, if $c_1',c_2'\in\fC_\rex$ satisfy $c_1'\in P$ if and only if $c_2'\in P$ for any prime ideal $P\subset\fC_\rex$, then $\al_{c_1'}\vert_{\ti x}=\al_{c_2'}\vert_{\ti x}$, $\be_{c_1'}\vert_{\ti x}=\be_{c_2'}\vert_{\ti x}$ for all $\ti x\in\ti X$, so $\al_{c_1'}=\al_{c_2'},$ $\be_{c_1'}=\be_{c_2'}$ in~$\fD_\rex$.

Parts (a),(b) are consequences of this criterion. For (a), as prime ideals in $\fC_\rex$ are preimages of prime ideals in $\fC_\rex^\sh$, if $[c_1']=[c_2']$ in $\fC_\rex^\sh$ then $c_1'\in P$ if and only if $c_2'\in P$ for any prime $P\subset\fC_\rex$. For (b), if $i_1,\ldots,j_n>0$ the prime property of $P$ implies that $c_1^{\prime i_1}\cdots c_n^{\prime i_n}\in P$ if and only if $c_1^{\prime j_1}\cdots c_n^{\prime j_n}\in P$.

For (c), $\ti\fC{}_\rex^\sh$ is generated by $[\al_{c'}],[\be_{c'}],[c']$ for $c'\in\fC_\rex$ by \eq{cc6eq5}, and $\psi_{\ti\fC,\rex}^\sh:\ti\fC{}_\rex^\sh\ra\fD_\rex^\sh$ is surjective as in the construction of  $\bfD=\Pi_{\rm all}^{\rm sc}(\bs{\ti\fC})$ in Proposition \ref{cc5prop2}, $\fD_\rex$ is generated in $\O_{\ti X}^\rex(X)$ by invertibles and $\ti\fC_\rex$, so (c) follows.

For (d), let $\bfC$ be firm, so that $\fC_\rex^\sh$ is finitely generated, and choose $c_1',\ldots,c_n'$ in $\fC_\rex$ such that $[c_1'],\ldots,[c_n']$ generate $\fC_\rex^\sh$. Then (a),(b) imply that there are at most $2^n$ distinct $\al_{c'}$ and $2^n$ distinct $\be_{c'}$ in $\fD_\rex$, as every $\al_{c'}$ equals $\al_{\prod_{i\in S}c_i'}$ for some subset $S\subseteq\{1,\ldots,n\}$. Hence (c) shows that $\fD_\rex^\sh$ is generated by the $2^{n+1}+n$ elements $[\al_{\prod_{i\in S}c_i'}],[\be_{\prod_{i\in S}c_i'}]$ for $S\subseteq\{1,\ldots,n\}$ and $[\ti\Pi_{\fC,\rex}(c_i')]$ for $i=1,\ldots,m$. Therefore $\bfD$ is firm.
\end{proof}

As for Theorem \ref{cc6thm4}, Theorems \ref{cc5thm8}(c), \ref{cc6thm4} and Proposition \ref{cc6prop3}(d) imply:

\begin{thm}
\label{cc6thm6}
The corner functor $C:\CSchc\ra\CSchcin$ from\/ {\rm\S\ref{cc62}} maps $\CSchcfi\ab\ra\CSchcfiin$. We call the restriction $C:\CSchcfi\ra\CSchcfiin$ a \begin{bfseries}corner functor\end{bfseries}. It is right adjoint to\/~$\inc:\CSchcfiin\hookra\CSchcfi$.

As $C$ is a right adjoint, it preserves limits in $\CSchcfi,\CSchcfiin$.
\end{thm}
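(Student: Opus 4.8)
The plan is to deduce the statement from Theorem~\ref{cc6thm4} (which gives $C:\CSchc\ra\CSchcin$ right adjoint to $\inc$), Proposition~\ref{cc6prop3}(d) (firmness is preserved by $\ti C$), and the equivalence in Theorem~\ref{cc5thm8}(c) (which detects firm interior $C^\iy$-schemes with corners inside $\CSchcfi\cap\LCRScin$), following the pattern of the proofs of Theorems~\ref{cc6thm4} and~\ref{cc6thm5}: first check $C$ maps $\CSchcfi\ra\CSchcfiin$, then observe the adjunction restricts, then conclude limits are preserved.

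For the first step, let $\bX\in\CSchcfi$. Cover $X$ by open $\bU\subseteq\bX$ with $\bU\cong\Specc\bfC$ for $\bfC$ a firm $C^\iy$-ring with corners, as in Definition~\ref{cc5def11}. Since $\bs\Pi_\bX:C(\bX)\ra\bX$ is built locally over $\bX$ (as used in the proof of Theorem~\ref{cc6thm4}), we get $\bs\Pi_\bX^{-1}(\bU)\cong C(\bU)\cong C(\Specc\bfC)$, and Theorem~\ref{cc6thm3} identifies $C(\Specc\bfC)\cong\Specc\ti C(\bfC)$, where $\ti C(\bfC)$ is firm by Proposition~\ref{cc6prop3}(d). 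Hence $C(\bX)$ is covered by affine opens $\Specc\bfD$ with $\bfD$ firm, so $C(\bX)\in\CSchcfi$. On the other hand $C(\bX)\in\CSchcin\subseteq\LCRScin$ by Theorem~\ref{cc6thm4}. Therefore $C(\bX)$ lies in $\CSchcfi\cap\LCRScin$, and the equivalence (i)$\Leftrightarrow$(ii) of Theorem~\ref{cc5thm8}(c) gives $C(\bX)\in\CSchcfiin$. The restriction $C:\CSchcfi\ra\CSchcfiin$ is then a functor, being a restriction of $C:\CSchc\ra\CSchcin$.

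For the adjunction I would invoke the general fact that a right adjoint restricts to a right adjoint between full subcategories it preserves. Here $\CSchcfiin\subseteq\CSchcin$ and $\CSchcfi\subseteq\CSchc$ are full, with $\inc(\CSchcfiin)\subseteq\CSchcfi$ by Definition~\ref{cc5def11} and $C(\CSchcfi)\subseteq\CSchcfiin$ by the previous step. Thus for $\bX\in\CSchcfi$ and $\bY\in\CSchcfiin$ the adjunction isomorphisms of Theorem~\ref{cc6thm4} restrict to natural bijections
\[
\Hom_{\CSchcfi}(\inc(\bY),\bX)=\Hom_{\CSchc}(\inc(\bY),\bX)\cong\Hom_{\CSchcin}(\bY,C(\bX))=\Hom_{\CSchcfiin}(\bY,C(\bX)),
\]
the outer equalities holding by fullness. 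Hence $C:\CSchcfi\ra\CSchcfiin$ is right adjoint to $\inc:\CSchcfiin\hookra\CSchcfi$, and the final claim follows since right adjoints preserve limits, the relevant (finite) limits existing by Theorem~\ref{cc5thm10}.

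The only point needing a little care is confirming that $C$ is genuinely local over its base, so that the affine computation $C(\Specc\bfC)\cong\Specc\ti C(\bfC)$ of Theorem~\ref{cc6thm3} propagates to open pieces of a general firm $\bX$ and shows $C(\bX)$ satisfies Definition~\ref{cc5def11}; but this locality is already established and used in the proof of Theorem~\ref{cc6thm4}, so the argument is otherwise a routine assembly of the cited results.
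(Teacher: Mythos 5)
Your proposal is correct and follows exactly the route the paper intends: the paper's proof is just the citation line "As for Theorem \ref{cc6thm4}, Theorems \ref{cc5thm8}(c), \ref{cc6thm4} and Proposition \ref{cc6prop3}(d) imply", and you have fleshed out precisely that argument — the local affine identification $C(\Specc\bfC)\cong\Specc\ti C(\bfC)$ from Theorem \ref{cc6thm3} with firmness preserved by Proposition \ref{cc6prop3}(d), interiority detected via Theorem \ref{cc5thm8}(c), and the adjunction restricting to the full subcategories. No gaps.
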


Combining Theorems \ref{cc5thm8}(c), \ref{cc6thm5}, and \ref{cc6thm6} yields:

\begin{thm}
\label{cc6thm7}
Restricting $C$ in Theorem\/ {\rm\ref{cc6thm6}} gives corner functors
\begin{align*}
C&:\CSchctoex\longra 	\CSchcto,&
C&:\CSchcfitfex\longra 	\CSchcfitf,\\
C&:\CSchcfiZex\longra 	\CSchcfiZ,&
C&:\CSchcfiinex\longra 	\CSchcfiin,
\end{align*}
in the notation of Definition\/ {\rm\ref{cc5def11},} which are right adjoint to the corresponding inclusions. As these corner functors are right adjoints, they preserve limits.
\end{thm}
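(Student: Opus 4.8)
The plan is to obtain Theorem \ref{cc6thm7} purely formally, as a restriction of the corner functor $C:\CSchcfi\ra\CSchcfiin$ from Theorem \ref{cc6thm6} to the subcategories $\CSchctoex,\ldots,\CSchcfiinex$ of $\CSchcfi$, together with their counterparts $\CSchcto,\ldots,\CSchcfiin$ inside $\CSchcfiin$. The key observation is that each of these `exterior' subcategories sits inside one of the subcategories already handled in Theorem \ref{cc6thm5}: we have $\CSchctoex\subseteq\CSchcsaex$, $\CSchcfitfex\subseteq\CSchctfex$, $\CSchcfiZex\subseteq\CSchcZex$, and $\CSchcfiinex\subseteq\CSchcinex$, with the extra hypothesis of firmness in each case. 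So the main content is: (i) that $C$ sends a firm object to a firm object, and (ii) that $C$ sends a saturated (resp.\ torsion-free, integral, interior) object to a saturated (resp.\ torsion-free, integral, interior) object. Item (ii) is exactly what Theorem \ref{cc6thm5} records, since the stalks $\bO_{C(\bX),(x,P)}\cong\bO_{X,x}/\simc_P$ inherit these properties from $\bO_{X,x}$; item (i) is the firmness preservation, which is Proposition \ref{cc6prop3}(d) at the level of $C^\iy$-rings with corners, lifted to schemes in Theorem \ref{cc6thm6}.

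The steps I would carry out are as follows. First I would recall that by Theorem \ref{cc6thm6}, $C$ maps $\CSchcfi\ra\CSchcfiin$ and is right adjoint to $\inc:\CSchcfiin\hookra\CSchcfi$. Next, for each of the four rows, I would note that an object $\bX\in\CSchctoex$ (resp.\ $\CSchcfitfex$, $\CSchcfiZex$, $\CSchcfiinex$) is by definition an object of $\CSchcfi$ whose stalks $\bO_{X,x}$ are toric, i.e.\ saturated and firm (resp.\ torsion-free firm, integral firm, interior firm). Then $C(\bX)\in\CSchcfiin$ by Theorem \ref{cc6thm6}, and its stalks $\bO_{C(\bX),(x,P)}\cong\bO_{X,x}/\simc_P$ are quotients by prime ideals, hence remain saturated (resp.\ torsion-free, integral, interior) by the same argument used for Theorem \ref{cc6thm5}, and remain firm because a quotient of a finitely generated monoid is finitely generated, so $\bO_{C(\bX),(x,P)}$ is still firm. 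For the toric/saturated case I also need that a quotient of a saturated $C^\iy$-ring with corners by a prime ideal of the monoid is again saturated, which is part of what is used in Theorem \ref{cc5thm8}(c)'s statement ``$\bX$ lies in $\CSchcto$ if and only if it lies in $\LCRScsa$'' — so toric $=$ saturated $+$ firm is stable, and $C(\bX)\in\CSchcto$. Finally, for the adjointness claims, I would invoke the standard fact that if a functor $C:\cD\ra\cD'$ is right adjoint to $\inc:\cD'\hookra\cD$, and $\cE\subseteq\cD$, $\cE'\subseteq\cD'$ are full subcategories with $C(\cE)\subseteq\cE'$ and $\inc(\cE')\subseteq\cE$, then the restriction $C|_\cE:\cE\ra\cE'$ is right adjoint to $\inc:\cE'\hookra\cE$ (the adjunction bijections $\Hom_{\cD'}(C\bX,\bY)\cong\Hom_\cD(\bX,\inc\bY)$ restrict, since all four categories are full subcategories). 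This is exactly the pattern already used to deduce Theorem \ref{cc6thm5} from Theorem \ref{cc6thm4}, and Theorem \ref{cc6thm7} from Theorem \ref{cc6thm6}'s predecessors. The last sentence, that these corner functors preserve limits, is then immediate since they are right adjoints.

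The main obstacle — really the only non-bookkeeping point — is verifying that $C$ genuinely preserves \emph{both} firmness and the monoid condition (saturated/torsion-free/integral/interior) \emph{simultaneously}, i.e.\ that toric $C^\iy$-schemes with corners go to toric ones rather than merely to firm interior ones. This reduces to the stalk-level statement that $\bfE/\simc_P$ is toric whenever $\bfE$ is toric and $P$ is a prime ideal of $\fE_\rex$. Firmness of the quotient follows from Proposition \ref{cc6prop3}(d) (or directly: the sharpening of the quotient monoid is a quotient of $\fE_\rex^\sh$, hence finitely generated); saturatedness of the quotient of a saturated monoid by the face complementary to a prime ideal is a standard fact in monoid theory of the kind recorded in \S\ref{cc32}, and is precisely what underlies the discussion of $\LCRScsa$ in Definition \ref{cc5def3} and the comment in Theorem \ref{cc5thm8}(c). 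So I would spell this reduction out carefully and then cite the relevant monoid facts and Proposition \ref{cc6prop3}(d), after which the four adjunctions and limit-preservation follow formally.
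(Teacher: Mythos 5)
Your proposal is correct and follows essentially the same route as the paper, whose proof of this theorem is precisely the combination of Theorem \ref{cc5thm8}(c) (stalk/firmness characterization of $\CSchcto,\CSchcfitf,\CSchcfiZ,\CSchcfiin$), Theorem \ref{cc6thm5} (preservation of saturated/torsion-free/integral/interior, via stalks $\bO_{C(X),(x,P)}\cong\bO_{X,x}/\simc_P$), and Theorem \ref{cc6thm6} (preservation of firmness via Proposition \ref{cc6prop3}(d)), followed by the formal restriction of the adjunction to full subcategories. Your only imprecision — calling the stalk-level toric condition the ``definition'' of $\CSchctoex$ rather than the equivalent characterization supplied by Theorem \ref{cc5thm8}(b),(c) — is harmless, since you invoke that theorem anyway.
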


For a firm $C^\iy$-ring with corners $\bfC$ with $\bX=\Specc\bfC$, we can give an alternative description of~$C(\bX)$:

\begin{thm}
\label{cc6thm8}
Let\/ $\bfC$ be a firm $C^\iy$-ring with corners and\/ $\bX=\Specc\bfC$. Then $\Prc_\bfC$ in \eq{cc6eq7} is finite. Hence in the decomposition {\rm\eq{cc6eq19},} the subsets $C(X)^P\subseteq C(X)$ are open as well as closed, and\/ \eq{cc6eq19} lifts to a disjoint union 
\e
C(\bX)=\coprod\nolimits_{P\in\Prc_\bfC}C(\bX)^P\qquad\text{in\/ $\CSchcfiin$.}
\label{cc6eq20}
\e

For each\/ $P\in\Prc_\bfC,$ Example\/ {\rm\ref{cc4ex4}(b)} defines a $C^\iy$-ring with corners $\bfC/\simc_P$ with projection $\bs\pi_P:\bfC\ra\bfC/\simc_P,$ where $\bfC/\simc_P$ is firm and interior. Write\/ $\bX{}^P=\Speccin(\bfC/\simc_P)$ and\/ $\bs\Pi^P=\Specc\bs\pi_P:\bX{}^P\ra\bX$. Then
\e
\ts C\bigl(\bs\Pi^P\bigr)\ci \bs\io_{\bX{}^P}:
\bX{}^P\longra C(\bX)
\label{cc6eq21}
\e
is an isomorphism with the open and closed\/ $C^\iy$-subscheme $C(\bX)^P\subseteq C(\bX)$. Thus~$\coprod_{P\in\Prc_\bfC}\bX{}^P\cong C(\bX)$.
\end{thm}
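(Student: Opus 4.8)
The plan is to deduce everything from Theorem \ref{cc6thm3}, which gives a natural isomorphism $\bs\eta_\bfC:\Specc\ti C(\bfC)\to C(\Specc\bfC)=C(\bX)$ compatible with the projections to $\bX$, together with the analysis of points of $\Specc\ti C(\bfC)$ carried out in its proof (equations \eqref{cc6eq12}--\eqref{cc6eq14}) and Proposition \ref{cc6prop1}. First I would establish finiteness of $\Prc_\bfC$: since $\bfC$ is firm, $\fC_\rex^\sh$ is finitely generated, say by $[c_1'],\ldots,[c_n']$; every prime ideal $P\subset\fC_\rex$ is the preimage under $\fC_\rex\to\fC_\rex^\sh$ of a prime ideal of $\fC_\rex^\sh$, and a finitely generated monoid has only finitely many prime ideals (a prime ideal is determined by the face it is the complement of, and faces correspond to subsets of the generating set closed under the relevant operations, hence there are at most $2^n$ of them). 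So $\Prc_\bfC$ is finite.

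Next I would promote the set-level decomposition \eqref{cc6eq19} to a decomposition in $\CSchcfiin$. By Proposition \ref{cc6prop1} each $C(X)^P$ is closed in $C(X)$, and it is cut out by the equations $\al_{c'}=0$ ($c'\in P$), $\al_{c'}=1$ ($c'\in\fC_\rex\sm P$) under $\bs\eta_\bfC$; since $\Prc_\bfC$ is finite, the complement $C(X)\sm C(X)^P=\bigcup_{Q\ne P}C(X)^Q$ is a finite union of closed sets, hence closed, so $C(X)^P$ is also open. Therefore $C(\bX)^P:=(C(X)^P,\bO_{C(X)}|_{C(X)^P})$ is an open and closed $C^\iy$-subscheme with corners by Lemma \ref{cc5lem3}, and $C(\bX)=\coprod_{P\in\Prc_\bfC}C(\bX)^P$ in $\CSchcfiin$, giving \eqref{cc6eq20}. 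That $C(\bX)$ and each $C(\bX)^P$ lie in $\CSchcfiin$ follows from Theorem \ref{cc6thm6}.

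Then I would identify $C(\bX)^P$ with $\bX^P=\Speccin(\bfC/\simc_P)$. Example \ref{cc4ex4}(b) constructs $\bfC/\simc_P$ together with $\bs\pi_P:\bfC\to\bfC/\simc_P$; one checks $\bfC/\simc_P$ is interior (the complement of $P$ in $\fC_\rex$ has no zero divisors, and the quotient $C^\iy$-ring modification does not introduce any) and firm (the sharpening $(\fC/\simc_P)_\rex^\sh$ is a quotient of $\fC_\rex^\sh$, hence finitely generated). The morphism \eqref{cc6eq21} is $C(\bs\Pi^P)\ci\bs\io_{\bX^P}:\bX^P\to C(\bX)$, where $\bs\Pi^P=\Specc\bs\pi_P$; by functoriality of $C$ and of $\bs\eta$ (the square \eqref{cc6eq10}) this corresponds under $\bs\eta_\bfC$, $\bs\eta_{\bfC/\simc_P}$ to a morphism built from $\ti C(\bs\pi_P)$ and the unit $\bs\io$. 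To see it lands in $C(\bX)^P$ and is an isomorphism onto it, I would work stalkwise: a point of $\bX^P$ is an $\R$-point $y:\fC/\simc_P\to\R$, equivalently an $\R$-point $x=y\ci\pi_P:\fC\to\R$ with $x\ci\Phi_i(c')=0$ for all $c'\in P$; Lemma \ref{cc6lem1} identifies the stalk $\bO_{\bX^P,y}$ with $(\bfC/\simc_P)_x=\bfC_x/\simc_{\pi_{x,\rex}(P)}$, which is exactly $\bO_{C(X),(x,P_x)}$ with $P_x=\pi_{x,\rex}(P)$, i.e.\ the stalk of $C(\bX)^P$ at the corresponding point. Matching the topologies is routine using that the topology on $\Specc$ is generated by the sets $U_c$ and that $\Pi_X:C(X)^P\to X^P$ is a bijection by Proposition \ref{cc6prop1}; matching the sheaves of $C^\iy$-rings with corners then follows since an isomorphism of local $C^\iy$-ringed spaces with corners can be checked on underlying spaces and stalks. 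Assembling the $C(\bX)^P\cong\bX^P$ over the finite set $\Prc_\bfC$ gives $\coprod_{P\in\Prc_\bfC}\bX^P\cong C(\bX)$, completing the proof.

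The main obstacle I anticipate is the last step: verifying carefully that \eqref{cc6eq21} is precisely the inclusion of $C(\bX)^P$, rather than merely abstractly isomorphic to it. One must track how the generators $\al_{c'},\be_{c'}$ specialize (to $0$ or $1$) on the stratum indexed by $P$, using the point-level description \eqref{cc6eq12}--\eqref{cc6eq13} and the stalk computation in Lemma \ref{cc6lem4}, and confirm that the resulting local ring is the quotient $\bfC_x/\simc_{P_x}$ rather than something larger — here the role of $\Pi_{\rm all}^{\rm sc}$ in the definition of $\ti C$ (Proposition \ref{cc6prop3}) is essential, since it forces $\bs{\ti\fC}$ to have the right stalks. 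Everything else is bookkeeping with finite disjoint unions of open-and-closed subschemes.
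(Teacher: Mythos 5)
Your proof is correct, and the first half (finiteness of $\Prc_\bfC$ from firmness, hence each $C(X)^P$ open as well as closed, lifting \eqref{cc6eq19} to \eqref{cc6eq20}) is the same argument as the paper's. For the identification of $\bX{}^P$ with $C(\bX)^P$ you take a genuinely different route: the paper works inside $\ti C(\bfC)$, cutting $\bY{}^P=\bs\eta_\bfC^{-1}(C(\bX)^P)$ out of $\Specc\ti C(\bfC)$ by the finitely many equations $\al_{c'}=0$ or $1$ (finiteness via Proposition \ref{cc6prop3}(d)) and then simplifying the presentation as in \eqref{cc6eq22} to obtain $\bY{}^P\cong\Specc(\bfC/\simc_P)$, only afterwards checking on points and stalks that this isomorphism is \eqref{cc6eq21}; you instead analyse the morphism \eqref{cc6eq21} directly, showing it is a homeomorphism onto $C(X)^P$ on underlying spaces and an isomorphism on stalks, which avoids all the bookkeeping with the generators $\al_{c'},\be_{c'}$ but shifts the real work onto one algebraic fact: that the stalk map induced by \eqref{cc6eq21} at a point $y$ of $\bX{}^P$ over $x\in X$ is the canonical morphism $\bfC_x/\simc_{P_x}\ra(\bfC/\simc_P)_y$, and that this is an isomorphism, i.e.\ that localization at an $\R$-point commutes with the Rees quotient by $P$. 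This is true and provable by comparing universal properties (the composite $\bfC\ra\bfC_x\ra\bfC_x/\simc_{P_x}$ kills $P$ and inverts everything nonvanishing at $x$, and conversely $\bfC\ra\bfC/\simc_P\ra(\bfC/\simc_P)_y$ inverts the images of all such elements since $x$ vanishes on $\Phi_i(P)$, giving mutually inverse maps), but as written you assert the two stalks are ``exactly'' equal rather than proving the induced map is an isomorphism; you should include that argument, together with the easy check that the preimage of $\{0\}$ under $\fC_{x,\rex}\ra(\fC/\simc_P)_{y,\rex}$ is exactly $P_x$, so that on points \eqref{cc6eq21} sends $y\mapsto(x,P_x)\in C(X)^P$. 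With that supplied, your stalkwise argument and the paper's presentation-level computation each establish the theorem; the paper's version has the advantage of exhibiting the isomorphism explicitly at the level of $C^\iy$-rings with corners, while yours is shorter and makes clearer that the content is local.
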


\begin{proof} First note that prime ideals $P\subset\fC_\rex$ are in 1-1 correspondence with prime ideals $P^\sh$ in $\fC_\rex^\sh$, which is finitely generated as $\bfC$ is firm. If $[c_1'],\ldots,[c_n']$ are generators of $\fC_\rex^\sh$ then $P^\sh=\bigl\langle\{[c_1'],\ldots,[c_n']\}\cap P^\sh\bigr\rangle$, so there are at most $2^n$ such prime ideals $P^\sh$. Hence $\Prc_\bfC$ is finite. Thus in \eq{cc6eq19}, $C(X)$ is the disjoint union of {\it finitely many\/} closed sets $C(X)^P$, so the $C(X)^P$ are also open. Equation \eq{cc6eq20} follows.

Write $\bfD=\ti C(\bfC)$ as in Proposition \ref{cc6prop3} and $\bY=\Specc\bfD$, so Theorem \ref{cc6thm3} gives an isomorphism $\bs\eta_\bfC:\bY\ra C(\bX)$. Fix $P\in\Prc_\bfC$, and write $\bY{}^P=\bs\eta_\bfC^{-1}(C(\bX)^P)$, so that $\bY{}^P$ is open and closed in $\bY$ with $\bs\eta_\bfC\vert_{\bY{}^P}:\bY{}^P\ra C(\bX)^P$ an isomorphism. The proof of Theorem \ref{cc6thm3} implies that $\bY{}^P$ may be defined as a $C^\iy$-subscheme of $\bY$ by the equations $\al_{c'}=0$, $\be_{c'}=1$ for $c'\in P$ and $\al_{c'}=1$, $\be_{c'}=0$ for $c'\in\fC_\rex\sm P$, where this is only finitely many equations, as there are only finitely many distinct $\al_{c'}$ in $\fD_\rex$ by the proof of Proposition \ref{cc6prop3}(d). Thus we see that
\ea
&\bY{}^P\cong\Specc\bigl(\bfD/[\text{$\al_{c'}=0$, $\be_{c'}=1$, $c'\in P$, $\al_{c''}=1$, $\be_{c''}=0$, $c''\in\fC_\rex\sm P$}]\bigr)
\nonumber\\
&=\Specc\bigl(\Pi_{\rm all}^{\rm sc}(\bs{\ti\fC})/[\text{$\al_{c'}=0$, $\be_{c'}=1$, $c'\in P$, $\al_{c''}=1$, $\be_{c''}=0$, $c''\in\fC_\rex\sm P$}]\bigr)
\nonumber\\
&\cong\Specc\ci \Pi_{\rm all}^{\rm sc}\bigl(\bs{\ti\fC}/[\text{$\al_{c'}=0$, $\be_{c'}=1$, $c'\in P$, $\al_{c''}=1$, $\be_{c''}=0$, $c''\in\fC_\rex\sm P$}]\bigr)
\nonumber\\
&=\Specc\bigl(
\bfC\bigl[\al_{c'},\be_{c'}:c'\in\fC_\rex\bigr]\big/\bigl(\Phi_i(\al_{c'})+\Phi_i(\be_{c'})=1\;\> \forall c'\in\fC_\rex\bigr)
\nonumber\\
&\quad\bigl[\al_{1_{\fC_\rex}}\!=\!1_{\fC_\rex},\; \al_{c'}\al_{c''}\!=\!\al_{c'c''}\; \forall c',c''\!\in\!\fC_\rex,\;  c'\be_{c'}\!=\!\al_{c'}\be_{c'}\!=\!0\;  \forall c'\!\in\!\fC_\rex 
\nonumber\\
&\quad\text{$\al_{c'}=0$, $\be_{c'}=1$, $c'\in P$, $\al_{c''}=1$, $\be_{c''}=0$, $c''\in\fC_\rex\sm P$}\bigr]\bigr)
\nonumber\\
&\cong\Specc\bigl(\bfC/[c'=0,\; c'\!\in\!P]\bigr)=\Specc(\bfC/\simc_P)=\bX{}^P,
\label{cc6eq22}
\ea
where in the third step we use that $\Pi_{\rm all}^{\rm sc}$ commutes with applying relations after composing with $\Specc$, in the fourth that $\Specc\ci \Pi_{\rm all}^{\rm sc}\cong\Specc$ and \eq{cc6eq5}, and in the fifth that adding generators $\al_{c'},\be_{c'}$ and setting them to 0 or 1 is equivalent to adding no generators, and the only remaining effective relations are $c'\be_{c'}=0$ when $c'\in P$, so that $\be_{c'}=1$, giving $c'=0$.

Composing the inverse of \eq{cc6eq22} with $\bs\eta_\bfC\vert_{\bY{}^P}$ gives a morphism $\bs\th_P:\bX{}^P\ra C(\bX)$ which is an isomorphism with $C(\bX)^P\subset C(\bX)$. As $\bfC/\simc_P$ is interior, $\bX{}^P$ is interior, so \eq{cc6eq21} is well defined. One can check, from the actions on points and on stalks $\bO_{X^P,x},\bO_{C(X),(x,P_x)}$, that \eq{cc6eq21} is $\bs\th_P$. The theorem follows.
\end{proof}

We explain by example how Theorem \ref{cc6thm8} goes wrong in the non-firm case.

\begin{ex}
\label{cc6ex1}
Write $\bs{[0,\iy)}=F_\Manc^\CSchc([0,\iy))=\Specc\bs C^\iy([0,\iy))$ as an affine $C^\iy$-scheme with corners. Proposition \ref{cc6prop2} gives $C(\bs{[0,\iy)})\cong\bs{\{*\}}\amalg\bs{[0,\iy)}$, writing $\pd[0,\iy)=\{*\}$, a single point.

Let $S$ be an infinite set. Consider $\bX=\prod_{s\in S}\bs{[0,\iy)}_s$, the product in $\CSchc$ of a copy $\bs{[0,\iy)}_s$ of $\bs{[0,\iy)}$ for each $s$ in $S$. Since $\Specc$ preserves limits as it is a right adjoint, we have $\bX=\Specc\bfC$ for~$\bfC=\bigot_{\iy}^{s\in S}\bs C^\iy([0,\iy))_s$.

As $C:\CSchc\ra\CSchcin$ preserves limits by Theorem \ref{cc6thm4} we have
\e
\ts C(\bX)=C\bigl(\prod_{s\in S}\bs{[0,\iy)}_s\bigr)\cong\prod_{s\in S}\bigl(\bs{\{*\}}_s\amalg\bs{[0,\iy)}_s\bigr).
\label{cc6eq23}
\e
Just as a set, we may decompose the right hand side of \eq{cc6eq23} as
\e
\ts C(X)\cong\coprod_{T\subseteq S}\bigl(\prod_{t\in T}\{*\}_t\t\prod_{s\in S\sm T}[0,\iy)_s\bigr)=:\coprod_{T\subseteq S}C(X)^T.
\label{cc6eq24}
\e
One can show that subsets $T\subseteq S$ are in natural 1-1 correspondence with prime ideals $P\subset\fC_\rex$, and \eq{cc6eq24} is the decomposition \eq{cc6eq19} for~$\bX=\Specc\bfC$.

Since $S$ is an infinite set, the subsets $C(X)^T\subset C(X)$ in \eq{cc6eq24} are closed, but not open, in \eq{cc6eq24}. This is because $C(X)$ has the product topology, as the forgetful functor $\CSchc\ra\Top$ preserves limits, and an infinite product of topological spaces $\prod_{s\in S}X_s$ has a weaker topology than one might expect: if $U_s\subseteq X_s$ is open for $s\in S$ then $\prod_{s\in S}U_s$ is generally {\it not\/} open in $\prod_{s\in S}X_s$, unless $U_s=X_s$ for all but finitely many~$s\in S$.
 
Although Theorem \ref{cc6thm8} makes sense when $\bfC$ is not firm and $\Prc_\bfC$ is infinite, it is false in general, as $C(\bX)\cong\coprod_P\bX{}^P$ fails already for topological spaces.
\end{ex}

\subsection{\texorpdfstring{The sheaves of monoids $\check M^\rex_{C(X)},\check M^\rin_{C(X)}$ on $C(\bX)$}{The sheaves of monoids Mᵉˣ,Mⁱⁿ on C(X)}}
\label{cc64}

As in Remark \ref{cc3rem5}(d), for a manifold with g-corners $X$, in \cite[\S 3.6]{Joyc6} we defined the {\it comonoid bundle\/} $M_{C(X)}^\vee\ra C(X)$, a local system of toric monoids on $C(X)$ with the property that if $(x,\ga)\in C(X)^\ci$ and $M_{C(X)}^\vee\vert_{(x,\ga)}=P$ then $X$ near $x$ is locally diffeomorphic to $X_P\t\R^{\dim X-\rank P}$ near $(\de_0,0)$. If $X$ is a manifold with corners then $M_{C(X)}^\vee$ has fibre $\N^k$ over $C_k(X)$. We now generalize these ideas to firm $C^\iy$-schemes with corners.

\begin{dfn}
\label{cc6def3}
Let $\bX=(X,\bO_X)$ be a local $C^\iy$-ringed space with corners, so that Definition \ref{cc6def1} defines the corners $C(\bX)=(C(X),\bO_{C(X)})$ in $\LCRScin$, with a morphism $\bs\Pi_\bX:C(\bX)\ra\bX$. We have a sheaf of monoids $\Pi_X^{-1}(\O_X^\rex)$ on $C(X)$, with stalks $(\Pi_X^{-1}(\O^\rex_{X}))_{(x,P)}=\O_{X,x}^\rex$ at $(x,P)$ in $C(X)$, and we have a natural prime ideal $P\subset (\Pi_X^{-1}(\O^\rex_{X}))_{(x,P)}$ in the stalk at each~$(x,P)\in C(X)$.

Define a presheaf of monoids $\cP \check M^\rex_{C(X)}$ on $C(X)$ by, for each open $U\subseteq C(X)$,
\begin{align*}
\cP \check M^\rex_{C(X)}(U)=\Pi_X^{-1}(\O_X^\rex)(U)\big/\bigl[s=1:s\in\Pi_X^{-1}(\O_X^\rex)(U),\; s\vert_{(x,P)}\notin P&\\ \text{for all $(x,P)\in U$}&\bigr].
\end{align*}
For open $V\subseteq U\subseteq C(X)$, the restriction map $\rho_{UV}:\cP \check M^\rex_{C(X)}(U)\!\ra\!\cP \check M^\rex_{C(X)}(V)$ is induced by $\rho_{UV}:\Pi_X^{-1}(\O_X^\rex)(U)\ra\Pi_X^{-1}(\O_X^\rex)(V)$. This gives a well defined presheaf $\cP \check M^\rex_{C(X)}$ valued in $\Mon$. Write $\check M^\rex_{C(X)}$ for its sheafification. 

The surjective projections $\Pi_X^{-1}(\O_X^\rex)(U)\ra \cP \check M^\rex_{C(X)}(U)$ for open $U\subseteq C(X)$ induce a surjective morphism $\Pi_{\check M^\rex_{C(X)}}:\Pi_X^{-1}(\O_X^\rex)\ra \check M^\rex_{C(X)}$. Local sections $s$ of $\Pi_X^{-1}(\O_X^\rex)$ lying in $\O_{X,x}^\rex\sm P$ at each $(x,P)$ have $\pi(s)=1$ in $\check M^\rex_{C(X)}$, and $\check M^\rex_{C(X)}$ is universal with this property. The stalk of $\check M^\rex_{C(X)}$ at $(x,P)$ is 
\e
(\check M^\rex_{C(X)})_{(x,P)}\cong \O_{X,x}^\rex\big/\bigl[s=1:s\in \O_{X,x}^\rex\sm P\bigr].
\label{cc6eq25}
\e

Now suppose that $\bX$ is interior. Then as $\bO_{X,x}$ is interior for $x\in X$, the monoid $\O_{X,x}^\rex$ has no zero divisors, and $\O_{X,x}^\rin=\O_{X,x}^\rex\sm\{0\}$ is also a monoid. Write $\check M^\rin_{C(X)}\subset\check M^\rex_{C(X)}$ for the subsheaf of sections of $\check M^\rex_{C(X)}$ which are nonzero at every $(x,P)\in C(X)$. Then $\check M^\rin_{C(X)}$ is also a sheaf of monoids on $C(X)$, with stalk $(\check M^\rin_{C(X)})_{(x,P)}=(\check M^\rex_{C(X)})_{(x,P)}\sm\{0\}$ at each~$(x,P)\in C(X)$.

Next let $\bs f:\bX\ra\bY$ be a morphism in $\LCRSc$, so that $C(\bs f):C(\bX)\ra C(\bY)$ is a morphism in $\LCRScin$. Consider the diagram:
\e
\begin{gathered}
\!\!\!\xymatrix@C=60pt@R=19pt{ *+[r]{C(f)^{-1}(\O_{C(Y)}^\rex)} \ar[d]^{C(f)^\sh_\rex} & {\begin{subarray}{l} \ts C(f)^{-1}\ci \Pi_Y^{-1}(\O_Y^\rex) = \\ \ts \quad \Pi_X^{-1}\!\ci\! f^{-1}\!\ci\! (\O_Y^\rex)\end{subarray}} \ar@{->>}[l]^(0.47){\raisebox{-13pt}{$\st C(f)^{-1}(\Pi_{Y,\rex}^\sh)$}} \ar@{->>}[r]_(0.47){\raisebox{-10pt}{$\st C(f)^{-1}(\Pi_{\check M^\rex_{C(Y)}})$}} \ar[d]^(0.55){\Pi_X^{-1}(f_\rex^\sh)} & *+[l]{C(f)^{-1}(\check M^\rex_{C(Y)})} \ar@{..>}[d]_{\check M^\rex_{C(f)}} \\ 
*+[r]{\O_{C(X)}^\rex}  & {\Pi_X^{-1}(\O_X^\rex)} \ar@{->>}[l]_{\Pi_{X,\rex}^\sh} \ar@{->>}[r]^{\Pi_{\check M^\rex_{C(X)}}}  & *+[l]{\check M^\rex_{C(X)}.\!} }\!\!\!
\end{gathered}
\label{cc6eq26}
\e
Here $C(f)^{-1}(\Pi_{\check M^\rex_{C(Y)}}):C(f)^{-1}\ci \Pi_Y^{-1}(\O_Y^\rex)\twoheadrightarrow C(f)^{-1}(\check M^\rex_{C(Y)})$ is the quotient sheaf of monoids which is universal for morphisms $C(f)^{-1}\ci \Pi_Y^{-1}(\O_Y^\rex)\ra M$, for $M$ a sheaf of monoids on $C(X)$, such that a local section $s$ of $C(f)^{-1}\ci \Pi_Y^{-1}(\O_Y^\rex)$ is mapped to $1$ in $M$ if $s\vert_{(x,P)}\in\O_{Y,y}^\rex\sm Q$ at each $(x,P)\in C(X)$ with $C(f)(x,P)=(y,Q)$ in $C(Y)$.

Now taking $M=\check M^\rex_{C(X)}$, the morphism $\Pi_{\check M^\rex_{C(X)}}\ci\Pi_X^{-1}(f_\rex^\sh):C(f)^{-1}\ci \Pi_Y^{-1}(\O_Y^\rex)\ra \check M^\rex_{C(X)}$ has the property required. This is because $C(\bs f)$ is interior, so $C(f)^\sh_\rex$ maps $\O_{C(Y),(y,Q)}^\rex\sm\{0\}\ra \O_{C(X),(x,P)}^\rex\sm\{0\}$, and thus $\Pi_X^{-1}(f_\rex^\sh)$ maps $\O_{Y,y}^\rex\sm Q\ra \O_{X,x}^\rex\sm P$, and $\Pi_{\check M^\rex_{C(X)}}$ maps $\O_{X,x}^\rex\sm P\ra 1$. Hence there is a unique morphism $\check M^\rex_{C(f)}:C(f)^{-1}\ci \Pi_Y^{-1}(\O_Y^\rex)\ra\check M^\rex_{C(X)}$ of sheaves of monoids on $C(X)$ making \eq{cc6eq26} commute.

If $\bs g:\bY\ra\bZ$ is another morphism in $\LCRSc$, then composing \eq{cc6eq26} for $\bs f,\bs g$ vertically we see that $\check M^\rex_{C(g\ci f)}=\check M^\rex_{C(f)}\ci C(f)^{-1}(\check M^\rex_{C(g)})$. So the $\check M^\rex_{C(X)},\ab\check M^\rex_{C(f)}$ are contravariantly functorial.
\end{dfn}

\begin{prop}
\label{cc6prop4}
Let\/ $\bX$ be a firm $C^\iy$-scheme with corners. Then $\check M^\rex_{C(X)}$ in Definition\/ {\rm\ref{cc6def3}} is a \begin{bfseries}locally constant\end{bfseries} sheaf of monoids on $C(X),$ and the stalks\/ $(\check M^\rex_{C(X)})_{(x,P)}$ for\/ $(x,P)\in C(X)$ are finitely generated sharp monoids, with zero elements not equal to the identity. If\/ $\bX$ is interior, the same holds for\/~$\check M^\rin_{C(X)},$ but without with zero elements.
\end{prop}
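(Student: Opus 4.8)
The plan is to reduce to the affine, firm case and then exploit the explicit decomposition of $C(\bU)$ from Theorem \ref{cc6thm8} together with the description of the localization congruence in Proposition \ref{cc4prop8}. Since $\bX$ is firm, first I would cover $X$ by open $U\subseteq X$ with $\bU=(U,\bO_X\vert_U)\cong\Specc\bfC$ for $\bfC=(\fC,\fC_\rex)$ firm, taking $\bfC$ semi-complete by Proposition \ref{cc5prop2} (which preserves firmness). By Theorem \ref{cc6thm8} the set $\Prc_\bfC$ of prime ideals of $\fC_\rex$ is finite, $C(\bU)=\coprod_{P\in\Prc_\bfC}C(\bU)^P$ with each $C(\bU)^P$ open and closed in $C(\bU)$, hence open in $C(X)$, and $C(\bs\Pi^P)\ci\bs\io_{\bU^P}$ identifies $C(\bU)^P$ with $\bU^P:=\Speccin(\bfC/\simc_P)$, where $\bfC/\simc_P$ is firm and interior. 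Under this identification $\Pi_X\vert_{C(\bU)^P}$ becomes $\bs\Pi^P=\Specc\bs\pi_P\colon\bU^P\ra\bU$, since $\bs\Pi_\bU\ci(C(\bs\Pi^P)\ci\bs\io_{\bU^P})=\bs\Pi^P\ci\bs\Pi_{\bU^P}\ci\bs\io_{\bU^P}=\bs\Pi^P$. As the $C(\bU)$ cover $C(X)$, it then suffices to show that $\check M^\rex_{C(X)}\vert_{C(\bU)^P}$ is the constant sheaf with fibre a finitely generated sharp monoid having a zero element distinct from the identity, and likewise for $\check M^\rin_{C(X)}$ when $\bX$ is interior.

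Next I would set $M_P:=\fC_\rex/(\fC_\rex\sm P)$, the quotient monoid of $\fC_\rex$ by the face $\fC_\rex\sm P$ (the complement of a prime ideal is a face by Definition \ref{cc3def4}). Composing $\Xi_{\fC,\rex}\colon\fC_\rex\ra\O_X^\rex(U)$ with pullback and then with the universal surjection $\Pi_{\check M^\rex_{C(X)}}$ of Definition \ref{cc6def3} gives a morphism of sheaves of monoids $\underline{\fC_\rex}\ra\check M^\rex_{C(X)}\vert_{C(\bU)^P}$. By Proposition \ref{cc6prop1}, every $(y,Q_y)\in C(\bU)^P$ satisfies $P=\pi_{y,\rex}^{-1}(Q_y)$, so $c'\notin P$ forces $\Xi_{\fC,\rex}(c')\vert_{(y,Q_y)}=\pi_{y,\rex}(c')\notin Q_y$, and by the defining property of $\check M^\rex_{C(X)}$ such a $c'$ maps to $1$; hence the morphism factors through $\underline{M_P}\ra\check M^\rex_{C(X)}\vert_{C(\bU)^P}$. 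To see this is an isomorphism I would compare stalks: by \eq{cc6eq25} the stalk at $(x,P_x)$ is $\O_{X,x}^\rex/[s=1:s\in\O_{X,x}^\rex\sm P_x]\cong\fC_{x,\rex}/[s=1:s\in\fC_{x,\rex}\sm P_x]$, and since $\pi_{x,\rex}\colon\fC_\rex\ra\fC_{x,\rex}$ is surjective (Theorem \ref{cc4thm4}(c)) with fibres described by Proposition \ref{cc4prop8} — namely $c_1'$ and $c_2'$ have equal image iff there are $a',b'\in\fC_\rex$ with $a'c_1'=b'c_2'$, $\Phi_i(a')-\Phi_i(b')$ in the ideal $I$ of \eq{cc2eq3}, and $x\ci\Phi_i(a')\ne0$, the last condition forcing $a',b'\notin P$ — the congruence on $\fC_\rex$ obtained by combining this with the relations $[s]=1$ for $s\notin P_x$ is exactly the congruence defining $M_P$. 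Thus the stalk map $M_P\ra(\check M^\rex_{C(X)})_{(x,P_x)}$ is an isomorphism, so $\check M^\rex_{C(X)}\vert_{C(\bU)^P}\cong\underline{M_P}$.

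It then remains to record the properties of $M_P$. The face $\fC_\rex\sm P$ contains $\fC_\rex^\t$, so $M_P$ is a quotient of $\fC_\rex^\sh$, hence finitely generated as $\bfC$ is firm; moreover quotienting a monoid by a face gives a sharp monoid, since if $[m]$ were invertible in $M_P$ then $mn\in\fC_\rex\sm P$ for some $n$, and the face property forces $m\in\fC_\rex\sm P$, i.e.\ $[m]=1$. Every $C^\iy$-ring with corners has a zero element $0_{\fC_\rex}$, which lies in every prime ideal (an ideal absorbs multiplication by $0_{\fC_\rex}$), so $0_{\fC_\rex}\in P$ and $[0_{\fC_\rex}]$ is a zero element of $M_P$ with $[0_{\fC_\rex}]\ne[1]$ (equality would give $f'=0_{\fC_\rex}\in\fC_\rex\sm P$, contradicting $0_{\fC_\rex}\in P$). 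This proves the statement for $\check M^\rex_{C(X)}$. When $\bX$ is interior, I would instead take $\bfC$ firm and interior (Theorem \ref{cc5thm8}(c)); then $\fC_\rin=\fC_\rex\sm\{0_{\fC_\rex}\}$ has no zero divisors, so $[c']=[0_{\fC_\rex}]$ in $M_P$ forces $c'=0_{\fC_\rex}$, and $M_P\sm\{0\}$ — which is the stalk of $\check M^\rin_{C(X)}$ by Definition \ref{cc6def3} — equals the image of $\fC_\rin$, a submonoid of the sharp monoid $M_P$ (hence sharp) that is a quotient of $\fC_\rin^\sh$ (hence finitely generated); this gives the interior statement.

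The step I expect to be the main obstacle is the stalk computation in the second paragraph: one must check, uniformly over all $(x,P_x)\in C(\bU)^P$, that performing the localization congruence of Proposition \ref{cc4prop8} and then imposing $[s]=1$ for $s\notin P_x$ is exactly the single congruence on $\fC_\rex$ generated by $\{c'=1:c'\in\fC_\rex\sm P\}$, so that the pointwise isomorphisms genuinely assemble from the one sheaf morphism $\underline{M_P}\ra\check M^\rex_{C(X)}\vert_{C(\bU)^P}$. Granting Theorem \ref{cc6thm8} and the monoid facts of \S\ref{cc32}, the remaining points — finiteness of $\Prc_\bfC$, sharpness and finite generation of $M_P$, and the behaviour of zero elements — are routine.
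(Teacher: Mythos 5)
Your proof is correct and follows essentially the same route as the paper's: localize to an affine firm chart $\Specc\bfC$, use the open and closed decomposition of Theorem \ref{cc6thm8}, and identify $\check M^\rex_{C(X)}$ over each piece $C(X)^P$ with the constant sheaf whose fibre is $\fC_\rex/[c'=1:c'\in\fC_\rex\sm P]$, the stalk comparison resting on surjectivity of $\pi_{x,\rex}$ (Theorem \ref{cc4thm4}(c)) and the description of its congruence. The only cosmetic difference is that you quote Proposition \ref{cc4prop8} where the paper appeals directly to the construction of $\fC_{x,\rex}$ by inverting elements $c'$ with $x\ci\Phi_i(c')\ne 0$ and trivializing $\Psi_{\exp}(I)$ --- both classes lie in the face $\fC_\rex\sm P$, so the two arguments coincide.
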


\begin{proof} As the proposition is local in $\bX$, it is sufficient to prove it in the case that $\bX=\Specc\bfC$ for $\bfC$ a firm $C^\iy$-ring with corners. Then Theorem \ref{cc6thm8} defines an isomorphism $\coprod_{P\in\Prc_\bfC}\bX{}^P\ra C(\bX)$. 

We will prove that for $P\in\Prc_\bfC$, the restriction of $\check M^\rex_{C(X)}$ to the image $C(\bX)^P$ of $\bX{}^P$ under \eq{cc6eq21} is the constant sheaf with fibre the monoid
\e
\fC_\rex\big/\bigl[c'=1:c'\in \fC_\rex\sm P\bigr]\cong \fC_\rex^\sh/\bigl[c''=1:c''\in \fC^\sh_\rex\sm P^\sh\bigr].
\label{cc6eq27}
\e
Here $\fC_\rex^\sh=\fC_\rex/\fC_\rex^\t$ and $P^\sh=P/\fC_\rex^\t$, and the two sides of \eq{cc6eq27} are isomorphic as $\fC_\rex^\t\subseteq\fC_\rex\sm P$, so quotienting by $\fC_\rex^\t$ commutes with setting all elements of $\fC_\rex\sm P$ or $\fC^\sh_\rex\sm P^\sh$ equal to 1. Equation \eq{cc6eq27} is finitely generated as $\fC^\sh_\rex$ is, since $\bfC$ is firm. Also \eq{cc6eq27} is the disjoint union of $\{1\}$, and the image of $P$, which is an ideal in \eq{cc6eq27}, and so contains no units. Hence \eq{cc6eq27} is sharp. The image of $0_{\fC_\rex}$ is a zero element not equal to the identity.

Suppose $P\in\Prc_\bfC$, and $x'\in\bX{}^P$ maps to $(x,P_x)\in C(\bX)$ under \eq{cc6eq21}, where $P_x\subset\O_{X,x}^\rex=\fC_{x,\rex}$ is a prime ideal, with $P_x=\pi_{x,\rex}(P)\subsetneq\fC_{x,\rex}$ and $P=\pi_{x,\rex}^{-1}(P_x)\subsetneq\fC_\rex$ by Lemma \ref{cc6lem2}. Consider the diagram of monoids:
\e
\begin{gathered}
\xymatrix@C=160pt@R=15pt{
*+[r]{\fC_\rex} \ar@{->>}[r]_(0.15){\text{project}} \ar@{->>}[d]^{\pi_{x,\rex}} & *+[l]{\fC_\rex\big/\bigl[c'=1:c'\in \fC_\rex\sm P\bigr]=\eq{cc6eq27}} \ar@{..>>}[d]_{(\pi_{x,\rex})_*} \\
*+[r]{\fC_{x,\rex}} \ar@{->>}[r]^(0.15){\text{project}} & *+[l]{\fC_{x,\rex}\big/\bigl[c''=1:c''\in \fC_{x,\rex}\sm P_x\bigr]=(\check M^\rex_{C(X)})_{(x,P_x)}.\!} }
\end{gathered}
\label{cc6eq28}
\e
As $P=\pi_{x,\rex}^{-1}(P_x)$, if $c'\in \fC_\rex\sm P$ then $c''=\pi_{x,\rex}(c')\in \fC_{x,\rex}\sm P_x$, so there is a unique, surjective morphism $(\pi_{x,\rex})_*$ making \eq{cc6eq28} commute. 

We claim that $(\pi_{x,\rex})_*$ is an isomorphism. To see this, note that as in \S\ref{cc46}, $\fC_{x,\rex}$ may be obtained from $\fC_\rex$ by inverting all elements $c'\in\fC_\rex$ with $x\ci\Phi_i(c')\ne 0$, and setting $c''=1$ for all $c''\in\Psi_{\exp}(I)$, where $I\subset\fC$ is the ideal vanishing near $x$. Here $c'\in\fC_\rex$ with $x\ci\Phi_i(c')\ne 0$ and $c''\in\Psi_{\exp}(I)$ both lie in the set $\fC_\rex\sm P$ of elements set to 1 in the right hand of \eq{cc6eq28}. Also first inverting $c'$ and then setting $c'=1$ is equivalent to just setting~$c'=1$. 

Thus $(\pi_{x,\rex})_*$ is an isomorphism. So the fibre of $(\check M^\rex_{C(X)})_{(x,P_x)}$ at each $(x,P_x)$ in the image of $\bX{}^P$ is naturally isomorphic to \eq{cc6eq27}, which is independent of $(x,P_x)$. It easily follows that $\check M^\rex_{C(X)}$ is constant with fibre \eq{cc6eq27} on the image $\bX{}^P$. The analogue for $\check M^\rin_{C(X)}$ when $\bX$ is interior is immediate.
\end{proof}

\begin{rem}
\label{cc6rem1}
If $\bX$ lies in $\LCRSc$ or $\CSchc$, but not in $\CSchcfi$, then $\check M^\rex_{C(X)}$ in Definition \ref{cc6def3} may not be locally constant.

For example, suppose $\bfC\in\CRingsc$ is not firm, and $\fC_\rex$ has infinitely many prime ideals $P$, and $\bX=\Specc\bfC$. Then \eq{cc6eq19} decomposes $C(X)$ into infinitely many disjoint closed subsets $C(X)^P$, which need not be open. The proof of Proposition \ref{cc6prop4} shows that $\check M^\rex_{C(X)}\vert_{C(X)^P}$ is constant on $C(X)^P$ with value \eq{cc6eq27} for each $P$. But if the $C(X)^P$ are not open, as happens in Example \ref{cc6ex1}, then in general $\check M^\rex_{C(X)}$ is not locally constant.
\end{rem}

Proposition \ref{cc6prop4} implies that if $\bX$ is a firm $C^\iy$-scheme with corners then
\begin{equation*}
C(\bX)=\coprod_{\begin{subarray}{l}\text{iso. classes $[M]$ of finitely generated}\\ \text{sharp monoids $M$ with zero elements}\end{subarray}}C_M(\bX),
\end{equation*}
where $C_M(\bX)\subseteq C(\bX)$ is the open and closed $C^\iy$-subscheme of points $(x,P_x)$ in $C(\bX)$ with~$(\check M^\rex_{C(X)})_{(x,P_x)}\cong M$.

\begin{ex}
\label{cc6ex3}
In Theorem \ref{cc6thm8}, let $\bfC=\bs C^\iy(\R^n_k)$, so that $\bX=\Specc\bfC=F_\Manc^\CSchc(\R^n_k)$. Take $P$ to be the prime ideal $P=\an{x_1,\ldots,x_k}$ in $\fC_\rex$, so that \eq{cc6eq21} maps $\bX{}^P$ to the corner stratum $\{x_1=\cdots=x_k=0\}$ of $\R^n_k$ in $C(\bX)$. The proof of Proposition \ref{cc6prop4} shows that $\check M^\rex_{C(X)}$ on $\bX{}^P$ is the constant sheaf with fibre \eq{cc6eq27}. In this case $\bfC$ is interior with
\e
\fC_\rin=\bigl\{x_1^{a_1}\cdots x_k^{a_k}\exp f:a_i\in\N,\; f\in C^\iy(\R^n_k)\bigr\},\;\> \fC_\rex=\fC_\rin\amalg\{0\}.
\label{cc6eq29}
\e
Since $\fC_\rex\sm P=\fC_\rex^\t=\bigl\{\exp f:f\in C^\iy(\R^n_k)\bigr\}$, equation \eq{cc6eq27} is $\fC_\rex/\fC_\rex^\t=\fC^\sh_\rex\cong\N^k\amalg\{0\}$. So $\check M^\rex_{C(X)}$ has fibre $\N^k\amalg\{0\}$, and $\check M^\rin_{C(X)}$ has fibre $\N^k$, on~$\bX{}^P$.

Now let $Y$ be a manifold with corners, and $\bY=F_\Manc^\CSchc(Y)$. Then Proposition \ref{cc6prop2} says that $C(\bY)\cong F_\cManc^\CSchc(C(Y))$. Let $(y,\ga)\in C_k(Y)^\ci$. Then $y\in S^k(Y)$, and $Y$ near $y$ is locally diffeomorphic to $\R^n_k$ near 0, such that the local $k$-corner component $\ga$ of $Y$ at $y$ is identified with $\{x_1=\cdots=x_k=0\}$ in $\R^n_k$. Hence $C(\bY)$ and $M^\rex_{C(Y)},M^\rin_{C(Y)}$ near $(y,\ga)$ are locally isomorphic to $C(\bX),\check M^\rex_{C(X)},\check M^\rin_{C(X)}$ near $\bigl(0,\{x_1=\cdots=x_k=0\}\bigr)$. Therefore $\check M^\rex_{C(Y)},\check M^\rin_{C(Y)}$ have fibres $\N^k\amalg\{0\},\N^k$ on $C_k(Y)^\ci$, and hence on $C_k(Y)$, as $C_k(Y)^\ci$ is dense in $C_k(Y)$ and $\check M^\rex_{C(Y)},\check M^\rin_{C(Y)}$ are locally constant by Proposition~\ref{cc6prop4}.

As in Remark \ref{cc3rem5}(d), in \cite[\S 3.6]{Joyc6} we define the {\it comonoid bundle\/} $M_{C(Y)}^\vee\ra C(Y)$, which has fibre $\N^k$ over $C_k(Y)$. It is easy to check that the sheaf of continuous sections of $M_{C(Y)}^\vee$ is canonically isomorphic to~$\check M^\rin_{C(Y)}$.
\end{ex}

\begin{ex}
\label{cc6ex4}
We can generalize Example \ref{cc6ex3} to manifolds with g-corners. Let $Q$ be a toric monoid and $l\ge 0$, and set $\bfC=\bs C^\iy(X_Q\t\R^l)$, so that $\bX=\Specc\bfC=F_\Mangc^\CSchc(X_Q\t\R^l)$. Take $P$ to be the prime ideal of functions in $\fC_\rex$ vanishing on $\{\de_0\}\t\R^l$, so that \eq{cc6eq21} maps $\bX{}^P$ to the corner stratum $\{\de_0\}\t\R^l$ in $C(\bX)$. The analogue of \eq{cc6eq29} is
\begin{equation*}
\fC_\rin=\bigl\{\la_q\exp f:q\in Q,\; f\in C^\iy(X_Q\t\R^l)\bigr\},\;\> \fC_\rex=\fC_\rin\amalg\{0\},	
\end{equation*}
for $\la_q:X_Q\ra[0,\iy)$ as in Definition \ref{cc3def5}, so $\fC_\rin^\sh\cong Q$ and $\fC_\rex^\sh\cong Q\amalg\{0\}$, and $\check M^\rex_{C(X)},\check M^\rin_{C(X)}$ have fibres $Q\amalg\{0\},Q$ on~$\bX{}^P$.

Now let $Y$ be a manifold with g-corners, and $\bY=F_\Manc^\CSchc(Y)$, so that $C(\bY)\cong F_\cManc^\CSchc(C(Y))$ by Proposition \ref{cc6prop2}. Let $(y,\ga)\in C(Y)^\ci$. Then $Y$ near $y$ is locally diffeomorphic to $X_Q\t\R^l$ near $(\de_0,0)$ for some toric monoid $Q$ and $l\ge 0$, such that the local corner component $\ga$ of $Y$ at $y$ is identified with $\{\de_0\}\t\R^l$ in $X_Q\t\R^l$. Hence $\check M^\rex_{C(Y)},\check M^\rin_{C(Y)}$ have fibres $Q\amalg\{0\},Q$ at~$(y,\ga)$.

As in Remark \ref{cc3rem5}(d), the comonoid bundle $M_{C(Y)}^\vee\ra C(Y)$ of \cite[\S 3.6]{Joyc6} also has fibre $Q$ at $(y,\ga)$. It is easy to check that the sheaf of continuous sections of $M_{C(Y)}^\vee$ is canonically isomorphic to~$\check M^\rin_{C(Y)}$.
\end{ex}

\subsection{\texorpdfstring{The boundary $\pd\bX$ and $k$-corners $C_k(\bX)$}{The boundary ∂X and k-corners Cᵏ(X)}}
\label{cc65}

If $X$ is a manifold with (g-)corners then the corners $C(X)$ from \S\ref{cc34} has a decomposition $C(X)=\coprod_{k=0}^{\dim X}C_k(X)$ with $C_0(X)\cong X$ and $C_1(X)=\pd X$. We generalize this to firm (interior) $C^\iy$-schemes with corners~$\bX$.

\begin{dfn}
\label{cc6def4}
The {\it dimension\/} $\dim M$ in $\N\amalg\{\iy\}$ of a monoid $M$ is the maximum length $d$ (or $\iy$ if there is no maximum) of a chain of prime ideals $\emptyset\subsetneq P_1\subsetneq P_2\subsetneq \cdots\subsetneq P_d\subsetneq M$. If $M$ is finitely generated then $\dim M<\iy$. If $M$ is toric then~$\dim M=\dim_\R (M\ot_\N\R)$.

Let $\bX$ be a firm $C^\iy$-scheme with corners. For each $k=0,1,\ldots,$ define the $k$-{\it corners\/} $C_k(\bX)\subseteq C(\bX)$ to be the $C^\iy$-subscheme of $(x,P)\in C(\bX)$ with $\dim(\check M^\rex_{C(X)})_{(x,P)}=k+1$. Here $(x,P)\mapsto\dim(\check M^\rex_{C(X)})_{(x,P)}$ is a locally constant function $C(X)\ra\N$ by Proposition \ref{cc6prop4}, so $C_k(\bX)$ is open and closed in $C(\bX)$. Also $\dim(\check M^\rex_{C(X)})_{(x,P)}\ge 1$ as $(\check M^\rex_{C(X)})_{(x,P)}$ has at least one prime ideal $(\check M^\rex_{C(X)})_{(x,P)}\sm\{1\}$, since $(\check M^\rex_{C(X)})_{(x,P)}$ is sharp and $(\check M^\rex_{C(X)})_{(x,P)}\sm\{1\}\ne\es$ as $0\ne 1$. Hence $C(\bX)=\coprod_{k\ge 0}C_k(\bX)$. 

We define the {\it boundary\/} $\pd\bX$ to be $\pd\bX=C_1(\bX)\subset C(\bX)$.

If $\bX$ is interior then $\dim(\check M^\rex_{C(X)})_{(x,P)}=\dim(\check M^\rin_{C(X)})_{(x,P)}+1$, as chains of ideals in $(\check M^\rin_{C(X)})_{(x,P)}$ lift to chains in $(\check M^\rex_{C(X)})_{(x,P)}$, plus $\{0\}$. Hence $C_k(\bX)$ is the $C^\iy$-subscheme of $(x,P)\in C(\bX)$ with $\dim(\check M^\rin_{C(X)})_{(x,P)}=k$.

For $\bX$ interior, if $(\check M^\rin_{C(X)})_{(x,P)}\ne\{1\}$ then $(\check M^\rin_{C(X)})_{(x,P)}\sm\{1\}$ is prime, so $\dim(\check M^\rin_{C(X)})_{(x,P)}>0$. But $(\check M^\rin_{C(X)})_{(x,P)}=\{1\}$ if and only if $P=\{0\}$, as $0\ne p\in P$ descends to $[p]\ne 1$ in $(\check M^\rin_{C(X)})_{(x,P)}$ since $\O_{X,x}^\rex$ has no zero divisors. Hence $C_0(\bX)=\{(x,\{0\}):x\in\bX\}\subseteq C(\bX)$. It is now easy to see that $\bs\io_\bX:\bX\ra C(\bX)$ in Definition \ref{cc6def1} is an isomorphism~$\bs\io_\bX:\bX\ra C_0(\bX)$.
\end{dfn}

\begin{ex}
\label{cc6ex5}
Let $X$ be a manifold with (g-)corners and $\bX=F_\Mangc^\CSchc(X)$, which is a firm interior $C^\iy$-scheme with corners. As in Proposition \ref{cc6prop2} there is a natural isomorphism $C(\bX)\cong F_\cMangc^\CSchc(C(X))$. It is easy to check that this identifies $C_k(\bX)\cong F_\Mangc^\CSchc(C_k(X))$ for each $k=0,\ldots,\dim X$.
\end{ex}

\begin{ex}
\label{cc6ex6}
If $\bX$ lies in $\CSchcfi$ but not $\CSchcfiin$ then $C_0(\bX)$ can contain $(x,P)$ with $P\ne\{0\}$. For instance, if $\bX=\Specc\bigl(\R[x,y]/[xy=0]\bigr)$ then $\bigl((0,0),\an{x}\bigr)$ and $\bigl((0,0),\an{y}\bigr)$ lie in $C_0(\bX)$. Also $\{0\}$ is not prime in $\O_{X,(0,0)}^\rex$, so $\bigl((0,0),\{0\}\bigr)\notin C(\bX)$. Clearly $\bX\not\cong C_0(\bX)$, as only one of the two is interior.
\end{ex}

\subsection{Fibre products and corner functors}
\label{cc66}

The authors' favourite categories of `nice' $C^\iy$-schemes with corners generalizing $\Mangcin,\Mangc$ are $\CSchcto,\CSchctoex$. Theorem \ref{cc5thm10} says that fibre products and finite limits exist in $\CSchcto$. Now we address the question of when fibre products exist in $\CSchctoex$, or more generally in $\CSchcfiinex$, or in categories like those in \eq{cc5eq10} in which the objects are interior, but the morphisms need not be interior. We use corner functors as a tool to do this.

\begin{thm}
\label{cc6thm9}
Let\/ $\bs g:\bX\ra\bZ,$ $\bs h:\bY\ra\bZ$ be morphisms in $\CSchctoex$. Consider the question of whether a fibre product\/ $\bW=\bX\t_{\bs g,\bZ,\bs h}\bY$ exists in\/ $\CSchctoex,$ with projections $\bs e:\bW\ra\bX,$ $\bs f:\bW\ra\bY$.
\begin{itemize}
\setlength{\itemsep}{0pt}
\setlength{\parsep}{0pt}
\item[{\bf(a)}] If\/ $\bW$ exists then the following is a bijection of sets:
\e
(e,f):W\longra\bigl\{(x,y)\in X\t Y:g(x)=h(y)\bigr\}.
\label{cc6eq30}
\e
Note that this need not hold for fibre products in\/~$\CSchcto$.
\item[{\bf(b)}] If\/ $\bW$ exists then\/ $\bW$ is isomorphic to an open and closed\/ $C^\iy$-subscheme $\bW'$ of\/ $C(\bX)\t_{C(\bs g),C(\bZ),C(\bs h)}C(\bY),$ where the fibre product is taken in $\CSchcto,$ using\/ $C(\bs g),C(\bs h)$ interior, and exists by Theorem\/~{\rm\ref{cc5thm10}}.
\item[{\bf(c)}] Writing\/ $C(X)\t_{C(Z)}C(Y)$ for the topological space of\/ $C(\bX)\t_{C(\bZ)}C(\bY),$ we have a natural continuous map
\e
\begin{split}
&\xymatrix@C=36pt{ C(X)\t_{C(Z)}C(Y) \ar[r] & \bigl\{(x,y)\in X\t Y:g(x)=h(y)\bigr\}, } \\[-5pt]
&\xymatrix@C=70pt{ w' \ar@{|->}[r]^(0.38){(\Pi_{C(X)},\Pi_{C(Y)})} & \bigl((x,P_x),(y,P_y)\bigr) \ar@{|->}[r]^(0.6){\Pi_X\t\Pi_Y} &  (x,y). }
\end{split}
\label{cc6eq31}
\e
If there does not exist an open and closed subset\/ $W'\subset C(X)\t_{C(Z)}C(Y)$ such that\/ \eq{cc6eq31} restricts to a bijection on $W',$ then no fibre product\/ $\bW=\bX\t_{\bs g,\bZ,\bs h}\bY$exists in $\CSchctoex$.
\item[{\bf(d)}] Suppose $\bs g,\bs h$ are interior, and a fibre product\/ $\bW=\bX\t_{\bs g,\bZ,\bs h}\bY,$ $\bs e,\bs f$ exists in\/ $\CSchcto$. Then $\bW$ is also a fibre product in $\CSchctoex$ if and only if the following diagram is Cartesian in {\rm$\CSchcto$:}
\e
\begin{gathered}
\xymatrix@C=100pt@R=15pt{ *+[r]{C(\bW)} \ar[d]^{C(\bs e)} \ar[r]_{C(\bs f)} & *+[l]{C(\bY)} \ar[d]_{C(\bs h)} \\ 
*+[r]{C(\bX)} \ar[r]^{C(\bs g)} & *+[l]{C(\bZ),} }	
\end{gathered}
\label{cc6eq32}
\e
that is, if\/ $C(\bW)\cong C(\bX)\t_{C(\bs g),C(\bZ),C(\bs h)}C(\bY)$ in $\CSchcto$.
\end{itemize}

The analogues hold for\/ $\CSchcfiinex,\CSchcfiZex,$ and\/ $\CSchcfitfex,$ except that\/ \eq{cc6eq30} is a bijection for fibre products in\/~$\CSchcfiin$.
\end{thm}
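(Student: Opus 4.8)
The strategy is to exploit two structural facts established earlier: first, that the corner functor $C:\CSchc\ra\CSchcin$ is right adjoint to $\inc:\CSchcin\hookra\CSchc$ (Theorems \ref{cc6thm4}, \ref{cc6thm7}), so $C$ preserves limits; second, that in $\CSchcto$ (and its firm cousins) all fibre products exist by Theorem \ref{cc5thm10}, and $C$ maps $\CSchctoex\ra\CSchcto$. The key idea for part (d), which I regard as the heart of the theorem, is the adjunction computation
\begin{align*}
\Hom_{\CSchctoex}(\bW',\bX\t_\bZ\bY)&\cong \Hom_{\CSchctoex}(\bW',\bX)\t_{\Hom(\bW',\bZ)}\Hom_{\CSchctoex}(\bW',\bY)\\
&\cong \Hom_{\CSchcto}(C(\bW'),C(\bX))\t_{\cdots}\Hom_{\CSchcto}(C(\bW'),C(\bY))\\
&\cong \Hom_{\CSchcto}\bigl(C(\bW'),C(\bX)\t_{C(\bZ)}C(\bY)\bigr),
\end{align*}
valid for any interior $\bW'$, since every morphism into an interior object from $\bW'$ in $\CSchctoex$ is the same as a morphism in $\CSchcin$ precomposed with $\bs\io$, and $\Hom_{\CSchcto}(C(\bW'),-)\cong\Hom_{\CSchctoex}(\bW',-)$ by the adjunction together with $\bW'$ interior. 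Thus a candidate object $\bW$ in $\CSchctoex$ is a fibre product there iff $C(\bW)$ with morphisms $C(\bs e),C(\bs f)$ represents the same functor as $C(\bX)\t_{C(\bZ)}C(\bY)$ in $\CSchcto$, which is precisely the Cartesian condition \eqref{cc6eq32}. This proves (d).

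For part (b): if $\bW$ exists in $\CSchctoex$, apply the adjunction display above with $\bW'$ replaced by an arbitrary interior test object, to get a morphism $\bW\ra C(\bX)\t_{C(\bZ)}C(\bY)$ in $\CSchctoex$ (equivalently in $\CSchcto$ after applying $C$, but $\bW$ is interior so this is a genuine morphism into an interior object). One checks using the counit $\bs\Pi$ and the universal property of $\bW$ that this morphism is a monomorphism whose image is open and closed: locally everything is of the form $\Specc$ of a toric $C^\iy$-ring with corners, and the decomposition \eqref{cc6eq20} of $C(\Specc\bfC)$ into finitely many open-and-closed pieces $C(\bX)^P$ from Theorem \ref{cc6thm8} forces any subobject arising this way to be a union of such pieces; I would identify $\bW'$ as the union of those components on which the composite to $X\t_Z Y$ is bijective. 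Part (a) follows from (b): the topological space of $C(\bX)\t_{C(\bZ)}C(\bY)$ maps by \eqref{cc6eq31} to $\{(x,y):g(x)=h(y)\}$, and on the open-closed piece $\bW'$ the universal property forces this restriction to be a bijection — concretely, because at the level of $\R$-points an interior morphism out of $\bW$ factoring $\bs g\ci\bs e=\bs h\ci\bs f$ is determined by its image in $X\t_Z Y$, and conversely the pointwise fibre product of interior $C^\iy$-schemes realizes each such pair. Part (c) is then the contrapositive packaging of (a)–(b): if no open-closed $W'\subset C(X)\t_{C(Z)}C(Y)$ makes \eqref{cc6eq31} a bijection, then no $\bW$ can exist, since by (b) any such $\bW$ would supply one.

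The main obstacle I anticipate is part (b): showing the natural morphism $\bW\hookra C(\bX)\t_{C(\bZ)}C(\bY)$ is an isomorphism onto an \emph{open and closed} $C^\iy$-subscheme, rather than merely a monomorphism or locally closed immersion. The tools for this are Theorem \ref{cc6thm8} (finiteness of $\Prc_\bfC$ and the open-closed decomposition \eqref{cc6eq20} in the firm case — and toric implies firm), Proposition \ref{cc6prop4} (local constancy of $\check M^\rex_{C(X)}$), and the explicit local model $C(\Specc\bfC)\cong\coprod_P \Speccin(\bfC/{\simc_P})$; I would argue locally over $\bZ$, pick affine toric charts, use Proposition \ref{cc5prop3} to write $\bs g,\bs h$ locally as $\Specc$ of morphisms of toric $C^\iy$-rings with corners, compute $C(\bX)\t_{C(\bZ)}C(\bY)$ via pushouts of such rings (using that $\Specc$ on complete/semi-complete rings takes pushouts to fibre products, Theorem \ref{cc5thm6}(e)), and check directly that the image of $\bW$ is a sub-coproduct of components indexed by compatible triples of prime ideals $(P_X,P_Y,P_Z)$. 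The firm hypothesis is exactly what makes this index set finite and the pieces open. For the analogues in $\CSchcfiinex,\CSchcfiZex,\CSchcfitfex$ one repeats the argument verbatim, using Theorem \ref{cc5thm8} and the coreflection functors $\Pi_\rin^\Z,\Pi_\Z^\tf$ from Theorem \ref{cc5thm2} to stay inside the relevant subcategory, and noting that for these the pointwise bijection \eqref{cc6eq30} holds already in the \emph{interior} category $\CSchcfiin$ because there fibre products agree with those in $\LCRScin$ by Theorem \ref{cc5thm9}, which are computed on underlying sets.
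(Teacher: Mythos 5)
Your part (d) is essentially the paper's argument in functorial form, and it works once one slip is repaired: the adjunction of Theorem \ref{cc6thm7} gives $\Hom_{\CSchctoex}(\bW',\bX)\cong\Hom_{\CSchcto}(\bW',C(\bX))$, \emph{not} $\Hom_{\CSchcto}(C(\bW'),C(\bX))$, and it is not true that a morphism between interior objects of $\CSchctoex$ is an interior morphism precomposed with $\bs\io$; the correct correspondence is $\bs a\leftrightarrow\bs b$ with $\bs a=\bs\Pi_\bX\ci\bs b$. With $\bW'$ in place of $C(\bW')$ your Yoneda computation is exactly what the paper does by hand (construct $\bs a=\bs\Pi_\bW\ci\bs b$ from the Cartesian square \eq{cc6eq32}, with uniqueness from the adjunction), so (d) is fine.

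The genuine gaps are in (b) and (a). In (b) you try to prove directly that the canonical morphism $\bW\ra C(\bX)\t_{C(\bZ)}C(\bY)$ is a monomorphism with open and closed image, but the key step --- that the decomposition \eq{cc6eq20} ``forces any subobject arising this way to be a union of such pieces'' --- is not a valid inference: the image of a monomorphism need not be a union of open-and-closed components, the components of the fibre product are not the $C(\bX)^P$ themselves, and nothing in your sketch establishes injectivity, openness or closedness; likewise ``the union of those components on which the composite to $X\t_ZY$ is bijective'' is not a well-defined recipe. The missing idea is one of the two facts you quoted at the outset: $C:\CSchctoex\ra\CSchcto$ is a right adjoint (Theorem \ref{cc6thm7}), hence preserves fibre products, so $C(\bW)\cong C(\bX)\t_{C(\bZ)}C(\bY)$ outright; and $\bs\io_\bW:\bW\ra C(\bW)$ is an isomorphism onto $C_0(\bW)$, which is open and closed in $C(\bW)$ by Definition \ref{cc6def4} (toric implies firm and interior). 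That is the entire proof of (b) --- no affine charts, semi-complete rings or prime-ideal bookkeeping are needed. For (a), deducing it from (b) does not work as described: knowing $\bW\cong\bW'$ open and closed in the corner fibre product does not tell you that \eq{cc6eq31} restricts to a bijection on $W'$ --- that is precisely what must be proved, and your sentence about $\R$-points of interior morphisms establishes neither injectivity nor surjectivity. The paper proves (a) independently of (b): apply the universal property of $\bW$ in $\CSchctoex$ to morphisms from the point $\bs *$, which biject with points of the underlying spaces; this is where exterior morphisms are essential, and why the claim fails in $\CSchcto$, since interior morphisms $\bs *\ra\bs{[0,\iy)}$ only see $(0,\iy)$. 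Your part (c) (contrapositive of (a)--(b)) and your remarks on the $\CSchcfiinex,\CSchcfiZex,\CSchcfitfex$ analogues and on why \eq{cc6eq30} holds for fibre products in $\CSchcfiin$ are in line with the paper.
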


\begin{proof} For (a), suppose $\bW$ exists, and apply the universal property of fibre products for morphisms from the point $\bs{*}$ into $\bW,\bX,\bY,\bZ$. There is a 1-1 correspondence between morphisms $w:\bs *\ra\bW$ in $\CSchctoex$ and points $w'=w(*)\in\bW$. Using this, the universal property gives the bijection~\eq{cc6eq30}.

This argument does not work for fibre products in $\CSchcto$, as (necessarily interior) morphisms $w:\bs *\ra\bW$ need not correspond to points of $\bW$. For example, morphisms $w:\bs *\ra\bs{[0,\iy)}$ in $\CSchcto$ correspond to points of $(0,\iy)$ not $[0,\iy)$. Equation \eq{cc6eq30} is not a bijection for the fibre products in $\CSchcto$ in Examples \ref{cc6ex8}--\ref{cc6ex9}. This also holds for $\CSchcfiZex$ and~$\CSchcfitfex$.

The inclusion $\inc:\CSchcfiin\hookra\CSchcfiinex$ preserves fibre products by Theorems \ref{cc5thm9} and \ref{cc5thm10}. Thus a fibre product in $\CSchcfiin$ is also a fibre product in $\CSchcfiinex$, and \eq{cc6eq30} is a bijection. 

For (b), suppose $\bW$ exists. As $C:\CSchctoex\ra\CSchcto$ preserves limits by Theorem \ref{cc6thm7}, we have $C(\bW)\cong C(\bX)\t_{C(\bs g),C(\bZ),C(\bs h)}C(\bY)$. But $\bs\io_\bW:\bW\ra C(\bW)$ is an isomorphism with an open and closed $C^\iy$-subscheme $C_0(\bW)\subseteq C(\bW)$ by Definition \ref{cc6def4}. Part (c) follows from~(a),(b).

For (d), let $\bs g,\bs h$ be interior and $\bW$ be a fibre product in $\CSchcto$. If $\bW$ is a fibre product in $\CSchctoex$ then \eq{cc6eq32} is Cartesian as $C:\CSchctoex\ra\CSchcto$ preserves limits by Theorem \ref{cc6thm7}, proving the `only if' part. 

Suppose \eq{cc6eq32} is Cartesian. Let $\bs c:\bV\ra\bX$ and $\bs d:\bV\ra\bY$ be morphisms in $\CSchctoex$ with $\bs g\ci\bs c=\bs h\ci\bs d$. Consider the diagram:
\begin{equation*}
\xymatrix@!0@C=70pt@R=25pt{ 
\bV \ar@/_2.2pc/[ddddr]_(0.6){\bs c} \ar@/_1pc/[ddrrr]^(0.3){\bs d} \ar@/_2pc/[dddrr]_(0.4){C(\bs c)\ci\bs\io_\bV\!\!\!}_(0.25){\rin\!\!\!} \ar@/^.8pc/[drrrr]^(0.7){C(\bs d)\ci\bs\io_\bV}^(0.5)\rin \ar@{..>}@/^.2pc/[drr]^(0.8){\bs b}^(0.6)\rin \ar@{..>}@/_.2pc/[ddr]^(0.8){\bs a} \\ 
&& *+[r]{C(\bW)} \ar[dl]_(0.3){\bs\Pi_\bW} \ar[dd]^(0.28){C(\bs e)}^(0.65)\rin (0.7)\ \ar[rr]_{C(\bs f)}^\rin && *+[l]{C(\bY)} \ar[dl]^{\bs\Pi_\bY} \ar[dd]_{C(\bs h)}^\rin \\ 
& *+[r]{\bW} \ar[dd]^{\bs e} \ar[rr]_(0.25){\bs f} && *+[l]{\bY} \ar[dd]_(0.75){\bs h} \\
&& *+[r]{C(\bX)} \ar[dl]_{\bs\Pi_\bX} \ar[rr]^(0.7){C(\bs g)}^(0.3)\rin && *+[l]{C(\bZ)} \ar[dl]^{\bs\Pi_\bZ} \\
& *+[r]{\bX} \ar[rr]^{\bs g} && *+[l]{\bZ.\!\!} }	
\end{equation*}
Here morphisms labelled `$\rin$' are interior.

The morphisms $C(\bs c)\ci\bs\io_\bV:\bV\ra C(\bX)$, $C(\bs d)\ci\bs\io_\bV:\bV\ra C(\bY)$ are interior with $C(\bs g)\ci C(\bs c)\ci\bs\io_\bV=C(\bs h)\ci C(\bs d)\ci\bs\io_\bV$ as $\bs g\ci\bs c=\bs h\ci\bs d$. So the Cartesian property of \eq{cc6eq32} gives unique interior $\bs b:\bV\ra C(\bW)$ with $C(\bs e)\ci\bs b=C(\bs c)\ci\bs\io_\bV$ and $C(\bs f)\ci\bs b=C(\bs d)\ci\bs\io_\bV$. Set $\bs a=\bs\Pi_\bW\ci\bs b:\bV\ra\bW$. Then
\begin{equation*}
\bs e\ci\bs a=\bs e\ci\bs\Pi_\bW\ci\bs b=\bs\Pi_\bX\ci C(\bs e)\ci\bs b=\bs\Pi_\bX\ci C(\bs c)\ci\bs\io_\bV=\bs c\ci\bs\Pi_\bV\ci\bs\io_\bV=\bs c,
\end{equation*}
and similarly $\bs f\ci\bs a=\bs d$. We claim that $\bs a:\bV\ra\bW$ is unique with $\bs e\ci\bs a=\bs c$ and $\bs f\ci\bs a=\bs d$, which shows that $\bW=\bX\t_{\bs g,\bZ,\bs h}\bY$ in $\CSchctoex$. To see this, note that as $C$ is right adjoint to $\inc:\CSchcto\hookra\CSchctoex$, there is a 1-1 correspondence between morphisms $\bs a:\bV\ra\bW$ in $\CSchctoex$ and morphisms $\bs b:\bV\ra C(\bW)$ in $\CSchcto$ with $\bs a=\bs\Pi_\bW\ci\bs b$. But $\bs e\ci\bs a=\bs c$, $\bs f\ci\bs a=\bs d$ imply that $C(\bs e)\ci\bs b=C(\bs c)\ci\bs\io_\bV$, $C(\bs f)\ci\bs b=C(\bs d)\ci\bs\io_\bV$, and $\bs b$ is unique under these conditions as above. This proves the `if' part.
\end{proof}

Here are three examples:

\begin{ex}
\label{cc6ex7}
Define manifolds with corners $X=[0,\iy)^2$, $Y=*$ a point, and $Z=[0,\iy)$, and exterior maps $g:X\ra Z$, $h:Y\ra Z$ by $g(x_1,x_2)=x_1x_2$ and $h(*)=0$. Let $\bX,\bY,\bZ,\bs g,\bs h$ be the images of $X,\ldots,h$ under $F_\Manc^\CSchc$. As $\bX,\bY,\bZ$ are firm, a fibre product $\bs{\ti W}=\bX\t_{\bs g,\bZ,\bs h}\bY$ exists in $\CSchcfi$, with $\bs{\ti W}=\Specc\bfC$, where $\bfC=\R[x_1,x_2]/[x_1x_2=0]$. Here $\bfC,\bs{\ti W}$ are not interior, as the relation $x_1x_2=0$ is not of interior type. In Theorem \ref{cc6thm9} we have
\begin{align*}
C(X)\t_{C(Z)}C(Y)&\cong ([0,\iy)\t\{0\})\amalg (\{0\}\t[0,\iy)) \amalg \{(0,0)\},\\
\{(x,y)\in X\t Y:g(x)=h(y)\bigr\}&=\bigl\{(x_1,x_2)\in[0,\iy)^2:x_1x_2=0\bigr\},
\end{align*}
where the map \eq{cc6eq31} takes $(x_1,x_2)\mapsto(x_1,x_2)$. Considering a neighbourhood of $(0,0)$ we see that no such open and closed subset $W'$ exists in Theorem \ref{cc6thm9}(c). Hence no fibre product $\bW=\bX\t_\bZ\bY$ exists in $\CSchcfiinex$, or $\CSchctoex$, or in any category like those in \eq{cc5eq10} in which the objects are interior, but the morphisms need not be interior.
\end{ex}

The next two examples are `b-transverse', but not `c-transverse', in the sense of Definition \ref{cc6def5} below.

\begin{ex}
\label{cc6ex8}
Define manifolds with corners $X=Y=[0,\iy)$, $Z=[0,\iy)^2$, and interior maps $g:X\ra Z$, $h:Y\ra Z$ by $g(x)=(x,x)$ and $h(y)=(y,y^2)$. Let $\bX,\bY,\bZ,\bs g,\bs h$ be the images of $X,\ldots,h$ under $F_\Mancin^\CSchcin$. Then $\bs g:\bX\ra\bZ$, $\bs h:\bY\ra\bZ$ are morphisms in $\CSchcto$, so a fibre product $\bs{\ti W}=\bX\t_{\bs g,\bZ,\bs h}\bY$ exists in $\CSchcto$ by Theorem \ref{cc5thm10}. 

Using the proof of Theorem \ref{cc5thm10} we can show that $\bs{\ti W}=\Speccin\bfC$, where
\e
\bfC=\Pi_\rin^\sa\bigl(\R[x,y]/[x=y,\; x=y^2]\bigr)\cong\R[x,y]/[x=y=1]\cong\bs C^\iy(*),
\label{cc6eq33}
\e
so $\bs{\ti W}$ is a single ordinary point, which maps to $x=1$ in $\bX$ and $y=1$ in $\bY$. This is because on applying $\Pi_\rin^\Z$ in Theorem \ref{cc4thm5}, $x=y$, $x=y^2$ force~$x=y=1$.

In contrast, the fibre product $\bs{\hat W}=\bX\t_\bZ\bY$ in $\CSchcfiin$ is $\bs{\hat W}=\Speccin\bigl(\R[x,y]/[x=y$, $x=y^2]\bigr)$, which is two points, an ordinary point at $x=y=1$, and another at $x=y=0$ with a non-toric corner structure. Theorems \ref{cc5thm9}--\ref{cc5thm10} imply that $\bs{\hat W}$ is also the fibre product in~$\CSchcfi$.

For the fibre product $\bW=\bX\t_\bZ\bY$ in $\CSchctoex$, we have
\begin{equation*}
C(\bX)\t_{C(\bZ)}C(\bY) \cong\bs{\{(1,1)\}}\amalg\bs{\{(0,0)\}}\qquad\text{in $\CSchcto$,}
\end{equation*}
which is two ordinary points, $(1,1)$ from $C_0(\bX)\t_{C_0(\bZ)}C_0(\bY)\cong\bs{\ti W}$, and $(0,0)$ from $C_1(\bX)\t_{C_2(\bZ)}C_1(\bY)\cong \bs*\t_{\bs *}\bs *$. (If we had formed the fibre product in $\CSchcfiin$ there would have been a third, non-toric point over $(0,0)$ in $C_0(\bX)\t_{C_0(\bZ)}C_0(\bY)\cong\bs{\hat W}$.) Also
\begin{equation*}
\{(x,y)\in X\t Y:g(x)=h(y)\bigr\}=\bigl\{(1,1),(0,0)\bigr\}.
\end{equation*}
Thus $W'=C(X)\t_{C(Z)}C(Y)$ satisfies the condition in Theorem \ref{cc6thm9}(c), and in fact $\bW=C(\bX)\t_{C(\bZ)}C(\bY) \cong\bs{\{(1,1)\}}\amalg\bs{\{(0,0)\}}$ is a fibre product in $\CSchctoex$. Hence in this case fibre products $\bX\t_\bZ\bY$ exist in $\CSchcto$ and $\CSchctoex$, but are different, so $\inc:\CSchcto\hookra\CSchctoex$ does not preserve limits, in contrast to the last part of Theorem \ref{cc5thm9}. The same holds for $\inc:\CSchcfiZ\hookra\CSchcfiZex$ and $\inc:\CSchcfitf\hookra\CSchcfitfex$, as the fibre products $\bs{\ti W},\bW$ are the same in these categories.
\end{ex}

\begin{ex}
\label{cc6ex9}
Define manifolds with corners $X=[0,\iy)\t\R$, $Y=[0,\iy)$ and $Z=[0,\iy)^2$, and interior maps $g:X\ra Z$, $h:Y\ra Z$ by $g(x_1,x_2)=(x_1,x_1e^{x_2})$ and $h(y)=(y,y)$. Let $\bX,\bY,\bZ,\bs g,\bs h$ be the images of $X,\ldots,h$ under $F_\Manc^\CSchc$. Then $\bs g:\bX\ra\bZ$, $\bs h:\bY\ra\bZ$ are morphisms in $\CSchcto$, so a fibre product $\bs{\ti W}=\bX\t_{\bs g,\bZ,\bs h}\bY$ exists in $\CSchcto$ by Theorem \ref{cc5thm10}. As for \eq{cc6eq33} we find that $\bs{\ti W}=\Speccin\bfC$, where
\begin{align*}
\bfC&=\Pi_\rin^\sa\bigl(\R[x_1,x_2,y]/[x_1=y,\; x_1e^{x_2}=y]\bigr)\cong\R[x_1,x_2,y]/[x_1=y,\; x_2=0]
\nonumber\\
&\cong\R[y]\cong\bs C^\iy([0,\iy)),
\end{align*}
so $\bs{\ti W}\cong\bs{[0,\iy)}$. Similarly, we have a fibre product in $\CSchcto$
\begin{equation*}
C(\bX)\t_{C(\bZ)}C(\bY) \cong\bs{\bigl\{(y,0,y):y\in[0,\iy)\bigr\}}\amalg\bs{\bigl\{(0,x_2,0):x_2\in\R\bigr\}},
\end{equation*}
 where the first component is $C_0(\bX)\t_{C_0(\bZ)}C_0(\bY)\cong\bs{\ti W}$, and the second is $C_1(\bX)\t_{C_2(\bZ)}C_1(\bY)$. Also
\begin{align*}
\{(x,y)\in X\t Y:g(x)=h(y)\bigr\}=\bigl\{&(x_1,x_2,y)\in[0,\iy)\t\R\t[0,\iy):\\
&\text{either $x_1=y$, $x_2=0$ or $x_1=y=0$}\bigr\}.
\end{align*}
Considering a neighbourhood of $(0,0,0)$ we see from Theorem \ref{cc6thm9}(c) that no fibre product $\bW=\bX\t_\bZ\bY$ exists in $\CSchctoex$. Thus in this case a fibre product $\bX\t_\bZ\bY$ exists in $\CSchcto$, but not in~$\CSchctoex$.
 \end{ex}

\subsection{\texorpdfstring{B- and c-transverse fibre products in $\Mangcin,\CSchcto$}{B- and c-transverse fibre products in Manᵍᶜⁱⁿ, C∞Schᶜᵗᵒ}}
\label{cc67}

\subsubsection{Results on b- and c-transverse fibre products from \cite{Joyc6}}
\label{cc671}

In \cite[Def.~4.24]{Joyc6} we define transversality for manifolds with g-corners:

\begin{dfn}
\label{cc6def5}
Let $g:X\ra Z$ and $h:Y\ra Z$ be morphisms in $\Mangcin$ or $\cMangcin$. Then:
\begin{itemize}
\setlength{\itemsep}{0pt}
\setlength{\parsep}{0pt}
\item[(a)] We call $g,h$ {\it b-transverse\/} if ${}^bT_xg\op{}^bT_yh:{}^bT_xX\op{}^bT_yY\ra{}^bT_zZ$ is surjective for all $x\in X$ and $y\in Y$ with~$g(x)=h(y)=z\in Z$.
\item[(b)] We call $g,h$ {\it c-transverse\/} if they are b-transverse, and for all $x\in X$ and $y\in Y$ with $g(x)=h(y)=z\in Z$, the linear map ${}^b\ti N_xg\op{}^b\ti N_yh:{}^b\ti N_xX\op{}^b\ti N_yY\ra{}^b\ti N_zZ$ is surjective, and the submonoid
\end{itemize}
\begin{equation*}
\bigl\{(\la,\mu)\!\in\! \ti M_xX\!\t\! \ti M_yY:\text{$\ti M_xg(\la)\!=\!\ti M_yh(\mu)$ in $\ti M_zZ$}\bigr\}\!\subseteq\! \ti M_xX\!\t \!\ti M_yY
\end{equation*}
\begin{itemize}
\setlength{\itemsep}{0pt}
\setlength{\parsep}{0pt}
\item[]is not contained in any proper face $F\subsetneq\ti M_xX\t \ti M_yY$ of $\ti M_xX\t\ti M_yY$.
\end{itemize}
\end{dfn}

Here are the main results on b- and c-transversality~\cite[Th.s 4.26--4.28]{Joyc6}:

\begin{thm} Let\/ $g:X\ra Z$ and\/ $h:Y\ra Z$ be b-transverse (or c-transverse) morphisms in $\Mangcin$. Then $C(g):C(X)\ra C(Z)$ and\/ $C(h):C(Y)\ra C(Z)$ are also b-transverse (or c-transverse) morphisms in\/~$\cMangcin$.
\label{cc6thm10}
\end{thm}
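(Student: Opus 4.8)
The plan is to reduce both assertions to pointwise statements and then transfer them along the exact sequence of b-tangent bundles on the corners. Note first that $C(g),C(h)$ are interior morphisms in $\cMangcin$ by Definition~\ref{cc3def9}, so b- and c-transversality make sense for them. Fix $(x,\ga)\in C(X)$ and $(y,\de)\in C(Y)$ with $C(g)(x,\ga)=C(h)(y,\de)=(z,\ep)\in C(Z)$; then $g(x)=h(y)=z$, and say $(x,\ga)\in C_j(X)$. The central tool is the exact sequence \eq{cc3eq3} on $C(X)$, which at $(x,\ga)$ reads $0\to {}^bN_{C(X)}\vert_{(x,\ga)}\to {}^bT_xX\to {}^bT_{(x,\ga)}C(X)\to 0$, with ${}^bN_{C(X)}\vert_{(x,\ga)}$ of dimension $j$; similarly on $C(Y),C(Z)$. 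Using functoriality of b-tangent bundles under the interior maps $g,h,C(g),C(h)$ (Remark~\ref{cc3rem5}(c)) together with $\Pi_Z\ci C(g)=g\ci\Pi_X$ and $\Pi_Z\ci C(h)=h\ci\Pi_Y$, these sequences assemble, after pulling back along $\Pi_X,\Pi_Y,\Pi_Z$ and $C(g),C(h)$, into morphisms of exact sequences whose middle maps are ${}^bT_xg,{}^bT_yh$ and whose right-hand maps are ${}^bT_{(x,\ga)}C(g),{}^bT_{(y,\de)}C(h)$. I would establish this compatibility either by extracting it from \cite[\S3.6]{Joyc6}, or by working in the local models $X_P\t\R^{\dim X-j}$ of Remark~\ref{cc3rem2}, where ${}^bTX$, ${}^bN_{C(X)}$, ${}^bT(C(X))$ and all the relevant maps are given explicitly in coordinates as in Example~\ref{cc3ex5}.

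\emph{b-transverse case.} Identify ${}^bT_{(x,\ga)}C(X)\cong {}^bT_xX/{}^bN_{C(X)}\vert_{(x,\ga)}$, and likewise for $Y,Z$, writing $q_X,q_Y,q_Z$ for the resulting quotient (hence surjective) maps. Since $g,h$ are b-transverse, ${}^bT_xg\op {}^bT_yh:{}^bT_xX\op {}^bT_yY\to {}^bT_zZ$ is surjective. Post-composing with $q_Z$ gives a surjection onto ${}^bT_{(z,\ep)}C(Z)$, and by the commuting ladder of the previous paragraph this surjection equals $\bigl({}^bT_{(x,\ga)}C(g)\op {}^bT_{(y,\de)}C(h)\bigr)\ci(q_X\op q_Y)$. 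A surjective map that factors through $q_X\op q_Y$ forces the second factor to be surjective, so ${}^bT_{(x,\ga)}C(g)\op {}^bT_{(y,\de)}C(h)$ is surjective. As $(x,\ga),(y,\de)$ were arbitrary, $C(g),C(h)$ are b-transverse in $\cMangcin$.

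\emph{c-transverse case.} The b-transversality of $C(g),C(h)$ is already done, so it remains to verify, at each relevant triple of points, surjectivity of ${}^b\ti N_{(x,\ga)}C(g)\op {}^b\ti N_{(y,\de)}C(h)$ onto ${}^b\ti N_{(z,\ep)}C(Z)$, and the non-degeneracy of the fibre-product submonoid of $\ti M_{(x,\ga)}C(X)\t\ti M_{(y,\de)}C(Y)$ required by Definition~\ref{cc6def5}(b). The plan is structurally the same: in the local model $X\sim X_P\t\R^a$ near $x$, the stratum $C_j(X)$ near $(x,\ga)$ is again a local model $X_{P'}\t\R^b$, where $P'$ is built combinatorially from $P$ by passing to the face of $P$ singled out by $\ga$ (encoding the remaining corner directions at $(x,\ga)$) and splitting off a free summand coming from the interior directions of $C_j(X)^\ci\cong S^j(X)$; this is the g-corners analogue of the coordinate description in Example~\ref{cc3ex5}. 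Under this identification, $\ti M_{(x,\ga)}C(X)$ is a face of $\ti M_xX$, $\ti M_{(x,\ga)}C(g)$ is the restriction of $\ti M_xg$, and ${}^b\ti N_{(x,\ga)}C(X)$ fits into an exact sequence of the same shape as \eq{cc3eq3}. Surjectivity of the ${}^b\ti N$-maps for $C(g),C(h)$ then follows from that for $g,h$ exactly as in the previous paragraph (with the reduced b-normal sequence in place of \eq{cc3eq3}), and the monoid non-degeneracy for $C(g),C(h)$ is deduced from that for $g,h$ by a combinatorial argument about faces of weakly toric monoids and their fibre products.

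The main obstacle is the monoid part of the c-transverse case. One has to pin down precisely how $\ti M_{(x,\ga)}C(X)$, $\ti M_{(x,\ga)}C(g)$, $\ti M_{(z,\ep)}C(Z)$ arise from $\ti M_xX$, $\ti M_xg$, $\ti M_zZ$ — the corner functor simultaneously restricts to a face (the choice of $\ga,\ep$) and adjoins free summands (interior directions of corner strata) — and then show that ``the submonoid $\{(\la,\mu):\ti M_{(x,\ga)}C(g)(\la)=\ti M_{(y,\de)}C(h)(\mu)\}$ is not contained in a proper face of $\ti M_{(x,\ga)}C(X)\t\ti M_{(y,\de)}C(Y)$'' is inherited from the corresponding statement for $g,h$. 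The subtlety is that a proper face of the smaller monoid $\ti M_{(x,\ga)}C(X)$ need not be cut out by a proper face of $\ti M_xX$, so one cannot transfer the condition naively face by face; instead I would argue directly with the fibre-product monoids, reducing via the local models $X_P\t\R^a$ to a self-contained statement about weakly toric monoids, their faces, and fibre products, which can be checked by hand. A secondary, purely organizational point — the passage between $\cMangcin$ and its dimension strata $C_j(X)$ — is handled by observing that every condition in Definition~\ref{cc6def5} is pointwise, so it suffices to work stratum by stratum.
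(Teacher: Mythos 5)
First, a point of comparison: this theorem is not proved in the paper at all --- it is quoted verbatim from \cite[Th.~4.26]{Joyc6}, so your attempt can only be judged on its own terms and against that reference. Your b-transverse half is sound in outline. The single input you defer --- that the sequence \eq{cc3eq3} is compatible with the corner functor, i.e.\ that ${}^bT_xg$ carries ${}^bN_{C(X)}\vert_{(x,\ga)}$ into ${}^bN_{C(Z)}\vert_{(z,\ep)}$ for $\ep=g_*(\ga)$ and induces ${}^bT_{(x,\ga)}C(g)$ on the quotients --- is true and can be verified in the local models of Remark~\ref{cc3rem2} (at the point $(x,\ga)$ the terms of ${}^bT_xg$ with smooth coefficients vanish against the boundary coordinates, leaving the monoid-weight matrix, whose columns indexed by directions normal to $\ga$ land in directions normal to $\ep$ by the definition of $g_*$); it is of the kind constructed in \cite[\S 3.6]{Joyc6}. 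Granting that ladder, your factorization argument for surjectivity is correct, and the same factorization does handle the ${}^b\ti N$ part of c-transversality.

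The genuine gap is the one you yourself label ``the main obstacle'': the monoid half of the c-transverse case. Definition~\ref{cc6def5}(b) requires that the fibre-product submonoid of $\ti M_{(x,\ga)}C(X)\t\ti M_{(y,\de)}C(Y)$ lie in no proper face, and your proposal reduces this to (i) an identification, not actually carried out, of $\ti M_{(x,\ga)}C(X)$ and $\ti M_{(x,\ga)}C(g)$ in terms of faces of $\ti M_xX$ and restrictions of $\ti M_xg$, and (ii) an unspecified ``combinatorial argument about faces of weakly toric monoids and their fibre products''. Step (ii) is precisely the substantive content of the c-transverse assertion: as you correctly note, a proper face of $\ti M_{(x,\ga)}C(X)\t\ti M_{(y,\de)}C(Y)$ need not be induced by a proper face of $\ti M_xX\t\ti M_yY$, which is exactly why a naive transfer fails --- and your sketch stops at that point, asserting only that the statement ``can be checked by hand''. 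Without formulating and proving that face-inheritance lemma (this is where the argument in \cite{Joyc6} does its real work), the c-transverse half of the theorem is not established by your proposal; as written it is a plan for a proof of that half rather than a proof.
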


\begin{thm} Let\/ $g:X\ra Z$ and\/ $h:Y\ra Z$ be b-transverse morphisms in $\Mangcin$. Then a fibre product\/ $W=X\t_{g,Z,h}Y$ exists in $\Mangcin,$ with\/ $\dim W=\dim X+\dim Y-\dim Z$. Explicitly, we may write
\begin{equation*}
W^\ci=\bigl\{(x,y)\in X^\ci\t Y^\ci:\text{$g(x)=h(y)$ in $Z^\ci$}\bigr\},
\end{equation*}
and take $W$ to be the closure $\ov{W^\ci}$ of\/ $W^\ci$ in $X\t Y,$ and then $W$ is an embedded submanifold of\/ $X\t Y,$ and\/ $e:W\ra X$ and\/ $f:W\ra Y$ act by $e:(x,y)\mapsto x$ and\/~$f:(x,y)\mapsto y$.
\label{cc6thm11}
\end{thm}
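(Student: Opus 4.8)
The plan is to prove the statement locally, using the description of manifolds with g-corners via weakly toric monoids from \S\ref{cc33}. On the interiors there is nothing subtle: since $I_X\colon{}^bTX\ra TX$ is an isomorphism over $X^\ci$, the maps ${}^bT_xg,{}^bT_yh$ agree with the ordinary derivatives for $x\in X^\ci$, $y\in Y^\ci$, so b-transversality of $g,h$ says exactly that $g\vert_{X^\ci}\colon X^\ci\ra Z^\ci$ and $h\vert_{Y^\ci}\colon Y^\ci\ra Z^\ci$ are transverse maps of manifolds without boundary. Hence $W^\ci=\{(x,y)\in X^\ci\t Y^\ci:g(x)=h(y)\}$ is a (possibly empty) embedded submanifold of $X^\ci\t Y^\ci$ of dimension $\dim X+\dim Y-\dim Z$. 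We set $W=\ov{W^\ci}$ inside $X\t Y$ with the subspace topology, and let $e,f$ be the coordinate projections. It then remains to: (i) exhibit, near every boundary point of $W$, a g-chart whose restriction to $W^\ci$ is the fixed manifold structure above; and (ii) verify the universal property of the fibre product in $\Mangcin$.

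For (i), fix $(x_0,y_0)\in W$ with $g(x_0)=h(y_0)=z_0$. Using Remark \ref{cc3rem2}, choose g-charts identifying neighbourhoods of $x_0,y_0,z_0$ with neighbourhoods of the vertices in $X_P\t\R^a$, $X_Q\t\R^b$, $X_R\t\R^c$ for toric monoids $P,Q,R$ with $\rank P+a=\dim X$, $\rank Q+b=\dim Y$, $\rank R+c=\dim Z$. Since $g$ is interior, for each generator $r$ of $R$ the pullback $\la_r\ci g$ is an interior function near $x_0$, hence of the form $\la_{\mu(r)}\cdot\exp(f_r)$ for some $\mu(r)\in P$ and smooth $f_r$; multiplicativity of $r\mapsto\la_r\ci g$ forces $\mu\colon R\ra P$ to be a monoid morphism — it is the transpose on b-cotangent fibres $R\ot_\N\R\ra P\ot_\N\R$ of ${}^bT_{x_0}g$ — and forces $r\mapsto f_r$ to be additive. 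After a further change of coordinates absorbing the smooth data $f_r$ into the Euclidean factors, we may assume $g$ is, near $x_0$, a fibrewise product of the toric morphism $X_\mu\colon X_P\ra X_R$ induced by $\mu$ with a smooth map of Euclidean factors; likewise $h$ is built from a monoid morphism $\nu\colon R\ra Q$. Thus, set-theoretically, $W$ near $(x_0,y_0)$ is $\bigl(X_P\t_{X_\mu,X_R,X_\nu}X_Q\bigr)$ times a transverse fibre product of manifolds without boundary, the latter again being a manifold of the expected dimension.

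The toric piece is handled algebraically. Since $\Hom(-,[0,\iy))$ converts pushouts of monoids to limits of spaces, the set $X_P\t_{X_R}X_Q$ is $X_{P\amalg_R Q}$, for $P\amalg_R Q$ the pushout in $\Mon$. The key input is that b-transversality of $\mu,\nu$, i.e.\ injectivity of $R\ot_\N\R\ra(P\ot_\N\R)\op(Q\ot_\N\R)$, forces the weakly toric monoid $S$ obtained from $P\amalg_R Q$ by integralization and saturation to have $\rank S=\rank P+\rank Q-\rank R$, and forces the natural map $X_S\ra X_P\t X_Q$ to be a closed embedding onto $\ov{W^\ci}$, whose open part $X_S^\ci=\Hom(S^\gp,\R)$ is precisely $W^\ci$. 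This identifies $W$ near $(x_0,y_0)$ with $X_S$ times a manifold without boundary, giving a g-chart; its interior is $W^\ci$, so all such charts restrict to the fixed structure on $W^\ci$ and are pairwise compatible, and together with $W^\ci\subset X^\ci\t Y^\ci$ they cover $W$. Hence $W\in\Mangcin$ with $\dim W=\dim X+\dim Y-\dim Z$, it is embedded in $X\t Y$, its interior is $W^\ci$, and $e,f$ are interior, being coordinate projections in each chart. For (ii), given interior $c\colon V\ra X$, $d\colon V\ra Y$ with $g\ci c=h\ci d$, interiority gives $(c,d)(V^\ci)\subseteq W^\ci$, so $(c,d)(V)\subseteq\ov{(c,d)(V^\ci)}\subseteq W$; smoothness and interiority of $(c,d)\colon V\ra W$ is a local check in the charts above, and uniqueness follows since $(e,f)\colon W\ra X\t Y$ is injective with image $\{(x,y):g(x)=h(y)\}$. (Alternatively, one could form $F_\Mangcin^\CSchcin(X)\t_{F(Z)}F(Y)$ in $\CSchcto$ by Theorem \ref{cc5thm10} and show b-transversality makes it a manifold with g-corners, but this reduces to the same local computation.)

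The main obstacle is the algebraic heart of step (i): establishing the local normal form for interior morphisms, and above all showing that b-transversality makes the monoid pushout $P\amalg_R Q$, after integralization and saturation, weakly toric of the correct rank and embed as a closed submodel of $X_P\t X_Q$. This is exactly where the hypothesis is used, and it rests on the structure theory of weakly toric monoids and toric morphisms developed in \cite[\S 3--\S 4]{Joyc6}.
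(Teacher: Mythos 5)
First, a point of context: the paper does not prove this theorem at all. It is quoted, together with Theorems \ref{cc6thm10} and \ref{cc6thm12}, from \cite[Th.s 4.26--4.28]{Joyc6}, so the only proof to compare with is the one in \cite{Joyc6}, which analyses the equations $g(x)=h(y)$ directly in local coordinates. Your proposal attempts a genuine proof, but it has a real gap at its central step: the claim that, after a change of g-charts, an interior map $g$ near $x_0$ may be assumed to be a fibrewise product of the model toric map $X_\mu:X_P\ra X_R$ with a smooth map of Euclidean factors. This local normal form is false. Take $g:[0,\iy)\t\R\ra[0,\iy)^2$, $g(x,y)=(x,x e^y)$, near a point $(0,y_0)$, so $P=\N$, $a=1$, $R=\N^2$, $c=0$, and $\mu:\N^2\ra\N$ sends both generators to $1$. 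Any map of your product form would be $(p,u)\mapsto X_\mu(p)=(p,p)$, since the target has no Euclidean factor, and this is not injective on the interior; but $g$ is injective on $X^\ci$, and injectivity on the interior is invariant under arbitrary g-chart changes on source and target. So no such coordinates exist: the smooth factors $\exp(f_r)$ genuinely couple the toric and Euclidean directions and cannot be absorbed.

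Nor can b-transversality of the pair $(g,h)$ rescue the step: $h=\id_Z$ is b-transverse to every interior $g$ (and then $W\cong X$ is the graph of $g$), so your argument would still require the normal form for an arbitrary interior map. The consequence is that the local identification of $W$ with $X_S\t(\text{Euclidean fibre product})$, which is the heart of your step (i), does not follow from what you have written; the proof in \cite[\S 4]{Joyc6} instead keeps the exponential factors and uses b-transversality to straighten the solution set of $g(x)=h(y)$ itself, via an implicit-function-type argument, rather than normalizing $g$ and $h$ separately. (Two smaller remarks: passing from the pushout $P\amalg_R Q$ to its integralization and saturation can change $\Hom(-,[0,\iy))$, so the identification of $X_S$ with $\ov{W^\ci}$ indeed needs the argument you defer; and since you defer that ``algebraic heart'' to \cite[\S 3--\S 4]{Joyc6}, which is the source in which the theorem is proved, the proposal as it stands is closer to a reduction to the cited proof than an independent one.)
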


\begin{thm} Suppose $g:X\ra Z$ and\/ $h:Y\ra Z$ are c-transverse morphisms in $\Mangcin$. Then a fibre product\/ $W=X\t_{g,Z,h}Y$ exists in $\Mangc,$ with\/ $\dim W=\dim X+\dim Y-\dim Z$. Explicitly, we may write
\begin{equation*}
W=\bigl\{(x,y)\in X\t Y:\text{$g(x)=h(y)$ in $Z$}\bigr\},
\end{equation*}
and then $W$ is an embedded submanifold of\/ $X\t Y,$ and\/ $e:W\ra X$ and\/ $f:W\ra Y$ act by\/ $e:(x,y)\mapsto x$ and\/ $f:(x,y)\mapsto y$. This $W$ is also a fibre product in $\Mangcin,$ and agrees with that in Theorem\/~{\rm\ref{cc6thm11}}.

Furthermore, the following is Cartesian in both\/ $\cMangc$ and\/ $\cMangcin\!:$
\e
\begin{gathered}
\xymatrix@R=13pt@C=90pt{ *+[r]{C(W)} \ar[r]_{C(f)} \ar[d]^{C(e)} & *+[l]{C(Y)}
\ar[d]_{C(h)} \\ *+[r]{C(X)} \ar[r]^{C(g)} & *+[l]{C(Z).\!\!} }
\end{gathered}
\label{cc6eq34}
\e
Equation \eq{cc6eq34} has a grading-preserving property, in that if\/ $(w,\be)\in C_i(W)$ with\/ $C(e)(w,\be)=(x,\ga)\in C_j(X),$ and\/ $C(f)(w,\be)=(y,\de)\in C_k(Y),$ and\/ $C(g)(x,\ga)\ab=C(h)(y,\de)=(z,\ep)\in C_l(Z),$ then\/ $i+l=j+k$. Hence
\begin{equation*}
C_i(W)\cong \ts\coprod_{j,k,l\ge 0: i=j+k-l} C_j^l(X)\t_{C(g)\vert_{\cdots},C_l(Z),C(h)\vert_{\cdots}}C_k^l(Y),
\end{equation*}
where $C_j^l(X)=C_j(X)\cap C(g)^{-1}(C_l(Z))$ and\/ $C_k^l(Y)=C_k(Y)\cap C(h)^{-1}(C_l(Z)),$ open and closed in $C_j(X),C_k(Y)$. When $i=1,$ this gives a formula for~$\pd W$.
\label{cc6thm12}
\end{thm}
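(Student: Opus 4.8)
The plan is to construct $W$ directly as the subset $\{(x,y)\in X\t Y:g(x)=h(y)\}$ of $X\t Y$, equip it with a manifold-with-g-corners structure, and then use the corner functor together with its adjointness to $\inc:\cMancin\hookra\cManc$ to promote the pointwise fibre product to a genuine categorical fibre product in $\Mangc$. First I would dispose of the interior part: since $g,h$ are in particular b-transverse, Theorem \ref{cc6thm11} gives a fibre product $\ti W=\ov{W^\ci}$ in $\Mangcin$, where $W^\ci=\{(x,y)\in X^\ci\t Y^\ci:g(x)=h(y)\}$, with $\dim\ti W=\dim X+\dim Y-\dim Z$. It then remains to identify $\ti W$, as a topological space and as a manifold with g-corners, with the full set $W$. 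This is local near a point $(x,y)\in W$ with $g(x)=h(y)=z$, where I would pass to local models $X\cong X_{Q_x}\t\R^a$, $Y\cong X_{Q_y}\t\R^b$, $Z\cong X_{Q_z}\t\R^c$ about $x,y,z$ as in Remark \ref{cc3rem2}, in which $g,h$ are determined up to higher-order terms by the monoid morphisms $\ti M_xg,\ti M_yh$ and by the linear maps ${}^b\ti N_xg,{}^b\ti N_yh$ on the $\R$-factors. The two clauses of c-transversality in Definition \ref{cc6def5}(b) say precisely that ${}^b\ti N_xg\op{}^b\ti N_yh$ is surjective and that the fibre product monoid $P:=\{(\la,\mu)\in\ti M_xX\t\ti M_yY:\ti M_xg(\la)=\ti M_yh(\mu)\}$ lies in no proper face of $\ti M_xX\t\ti M_yY$. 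I would check that $P$ is then weakly toric of rank $\rank\ti M_xX+\rank\ti M_yY-\rank\ti M_zZ$, and that the g-corners inverse function theorem of \cite{Joyc6} identifies $W$ near $(x,y)$ with a neighbourhood of a vertex in $X_P\t\R^{a+b-c}$, with $W^\ci$ dense in it. This gives $W$ the structure of a manifold with g-corners, shows $W=\ti W$, and makes $e,f$ the evident coordinate projections.

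Next I would show the corner square \eq{cc6eq34} is Cartesian. By Theorem \ref{cc6thm10}, $C(g),C(h)$ are c-transverse in $\cMangcin$, so by Theorem \ref{cc6thm11} (applied to mixed-dimension objects) a fibre product $C(X)\t_{C(Z)}C(Y)$ exists in $\cMangcin$, cut out as the closure of its interior inside $C(X)\t C(Y)$. I would prove that the natural morphism $C(W)\ra C(X)\t_{C(Z)}C(Y)$ coming from $C(e),C(f)$ is a diffeomorphism. Pointwise this amounts to: a local corner component $\be$ of $W$ at $(x,y)$ is the same data as a compatible pair $\ga$ of $X$ at $x$ and $\de$ of $Y$ at $y$ with $C(g)(x,\ga)=C(h)(y,\de)$; in the local monoid picture above, local corner components correspond to faces of $P$, $\ti M_xX$, $\ti M_yY$, and the claim reduces to showing faces of $P$ biject with pairs of faces of $\ti M_xX,\ti M_yY$ with matching images in $\ti M_zZ$ — which follows from the monoid fibre product description together with the no-proper-face condition for the top component. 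Since the smooth structures on $C(W)$ and $C(X)\t_{C(Z)}C(Y)$ are both determined by these local pictures, the bijection is a diffeomorphism in $\cMangcin$, hence in $\cMangc$. The grading property $i+l=j+k$ is then a rank count: at $(w,\be)\in C_i(W)$ mapping to $(x,\ga)\in C_j^l(X)$, $(y,\de)\in C_k^l(Y)$, $(z,\ep)\in C_l(Z)$, the monoid at $(w,\be)$ is the fibre product over the monoid at $(z,\ep)$ of those at $(x,\ga),(y,\de)$, so by the rank formula $i=j+k-l$. Decomposing $C_i(W)$ into its components over the $(j,k,l)$ with $i=j+k-l$ gives the displayed formula, and $i=1$ specializes it to $\pd W$.

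Finally, to upgrade $W$ from a fibre product in $\cMangcin$ to one in $\cMangc$ (and hence, restricting to fixed dimension, in $\Mangc$), I would run the adjunction argument in the style of the proof of Theorem \ref{cc6thm9}(d): given $c:V\ra X$, $d:V\ra Y$ in $\cMangc$ with $g\ci c=h\ci d$, the morphisms $C(c)\ci\io_V$ and $C(d)\ci\io_V$ into $C(X),C(Y)$ are interior and agree over $C(Z)$, so the Cartesian property of \eq{cc6eq34} in $\cMangcin$ produces a unique interior $b:V\ra C(W)$, whence $a:=\Pi_W\ci b:V\ra W$ satisfies $e\ci a=c$, $f\ci a=d$, and is unique with these properties because $C$ is right adjoint to $\inc:\cMangcin\hookra\cManc$ (Remark \ref{cc3rem4}) and $\Pi_W\ci\io_W=\id_W$. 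The hard part, I expect, is the local monoid analysis that underlies both the manifold-with-g-corners structure on $W$ and the identification $C(W)\cong C(X)\t_{C(Z)}C(Y)$: one must verify that the fibre product monoid $P$ is weakly toric of the expected rank and that its faces correspond exactly to compatible pairs of faces, and it is here that c-transversality, as opposed to mere b-transversality, is doing all the work.
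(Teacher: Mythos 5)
This theorem is not proved in the paper at all: it is quoted as background from \cite[Th.~4.28]{Joyc6}, alongside Theorems \ref{cc6thm10} and \ref{cc6thm11}, so there is no in-paper proof to compare against. Judged on its own terms, your outline follows what is essentially the strategy of \cite{Joyc6}, and your final step --- deducing the universal property for non-interior morphisms from the Cartesian property of \eq{cc6eq34} in $\cMangcin$, using the adjunction of Remark \ref{cc3rem4} and $\Pi_W\ci\io_W=\id_W$ --- is exactly the argument this paper itself runs at the scheme level in the proof of Theorem \ref{cc6thm9}(d). You have also located correctly where c-transversality does its work: the no-proper-face condition forces the monoid $P=\{(\la,\mu):\ti M_xg(\la)=\ti M_yh(\mu)\}$ to have rank $j+k-l$ (an element of $P$ lying in no proper face of $\ti M_xX\t\ti M_yY$, together with surjectivity of ${}^b\ti N_xg\op{}^b\ti N_yh$, forces $P^{\rm gp}$ to equal the group fibre product), which is what makes $(x,y)$ a limit of points of $W^\ci$ and hence $W=\ti W$.

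Some caveats you should repair in a full write-up. (i) There is a duality slip: $\ti M_xX$ is the fibre of the monoid bundle $M_{C(X)}$ at the deepest corner, i.e.\ the \emph{dual} of the weakly toric monoid $Q_x$ in the local model $X_{Q_x}\t\R^a$, so the local model for $W$ is $X_{P^\vee}\t\R^{a+b-c}$ (equivalently $X$ of a saturation of the pushout $Q_x\oplus_{Q_z}Q_y$), not $X_P$. (ii) The face-matching argument identifying $C(W)$ with $C(X)\t_{C(Z)}C(Y)$ cannot rest only on ``the no-proper-face condition for the top component'': one needs the condition at every compatible pair of local corner components, which is precisely what Theorem \ref{cc6thm10} (c-transversality of $C(g),C(h)$) provides, so the identification should be routed through that result rather than through a purely combinatorial face bijection for $P$ alone. (iii) You establish \eq{cc6eq34} Cartesian only in $\cMangcin$, whereas the statement also asserts it in $\cMangc$; this needs a further step, e.g.\ applying the fixed-dimension result componentwise to the c-transverse maps $C(g),C(h)$ and matching the two fibre products. (iv) The genuine analytic content --- that near each $(x,y)$ the set $W$ is an embedded submanifold with the stated local model, which requires putting $g,h$ into monomial-times-positive normal form and an inverse-function-theorem argument for g-corners --- is deferred to \cite{Joyc6}; that is legitimate in a sketch, but it is where most of the actual proof lives.
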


\subsubsection{$F_\Mangcin^\CSchcto,F_\Mangc^\CSchctoex$ preserve b-, c-transverse fibre products}
\label{cc672}

We generalize the last part of Theorem \ref{cc2thm4} to the corners case.

\begin{thm}
\label{cc6thm13}
{\bf(a)} The functor $F_\Mangcin^\CSchcto:\Mangcin\ra\CSchcto$ takes b- and c-transverse fibre products in $\Mangcin$ to fibre products in $\CSchcto$.
\smallskip

\noindent{\bf(b)} The functor $F_\Mangc^\CSchctoex:\Mangc\ra\CSchctoex$ takes c-transverse fibre products in $\Mangc$ to fibre products in $\CSchctoex$.
\end{thm}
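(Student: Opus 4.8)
The plan is to prove part (a) first, by a local computation, and then deduce part (b) from part (a) using the corner functors together with the criterion of Theorem~\ref{cc6thm9}(d). Throughout, `fibre product in $\Mangcin$' for a b-transverse pair means the one constructed in Theorem~\ref{cc6thm11}, and for a c-transverse pair the one in Theorem~\ref{cc6thm12}, which agree when both notions apply.

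For part (a), let $g:X\ra Z$, $h:Y\ra Z$ be b-transverse (resp.\ c-transverse) in $\Mangcin$, with fibre product $W=X\t_{g,Z,h}Y$, and write $\bX=F_\Mangcin^\CSchcto(X)$, etc. Applying $F_\Mangcin^\CSchcto$ to the Cartesian square in $\Mangcin$ gives a commutative square in $\CSchcto$, hence a canonical morphism $\bs\Phi:F_\Mangcin^\CSchcto(W)\ra\bW'$, where $\bW'=\bX\t_{\bs g,\bZ,\bs h}\bY$ is the fibre product in $\CSchcto$; this exists by Theorem~\ref{cc5thm10}, since $\bZ$ is toric and hence firm. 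I would show $\bs\Phi$ is an isomorphism locally. Fix $w\in W$ over $x\in X$, $y\in Y$, $z\in Z$, and by Remark~\ref{cc3rem2} choose local models $X\approx X_P\t\R^a$, $Y\approx X_Q\t\R^b$, $Z\approx X_R\t\R^c$ near the relevant vertices, for toric monoids $P,Q,R$, with $g,h$ interior. On one side, the explicit local normal forms of interior maps and of b- and c-transverse fibre products of such models from \cite{Joyc6} give $W\approx X_S\t\R^d$ near $w$, for an explicit toric monoid $S$ built from $P,Q,R$ (the saturated pushout of $R\ra P$, $R\ra Q$, the c-transverse case also recording the extra corner strata). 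On the other side, following Proposition~\ref{cc5prop3} and the proof of Theorem~\ref{cc5thm9}, $\bW'$ near $w$ is $\Speccin\ci\Pi_\rin^\sa\bigl(\bfD\amalg_\bfF\bfE\bigr)$, with $\bfD=\bs C^\iy_\rin(X_P\t\R^a)$, $\bfE=\bs C^\iy_\rin(X_Q\t\R^b)$, $\bfF=\bs C^\iy_\rin(X_R\t\R^c)$ and the pushout taken in $\CRingscin$.

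The heart of the proof is a monoid/$C^\iy$-ring computation matching these two descriptions. By the g-corners case of Example~\ref{cc4ex7}, $\bfD$ is a toric, saturated $C^\iy$-ring with corners whose interior monoid is $\In(X_P\t\R^a)$ with sharpening $\cong P$, and similarly for $\bfE,\bfF$; one checks that $\bs C^\iy_\rin(g),\bs C^\iy_\rin(h)$ induce $R\ra P$, $R\ra Q$ on sharpenings compatibly with these identifications. Hence the $C^\iy$-ring part of $\bfD\amalg_\bfF\bfE$ is the pushout of the $C^\iy$-ring parts and the sharpened interior monoid is $P\amalg_RQ$, so after $\Pi_\rin^\sa$ the sharpening becomes the saturation $S$ of $P\amalg_RQ$, yielding a canonical isomorphism $\Pi_\rin^\sa(\bfD\amalg_\bfF\bfE)\cong\bs C^\iy_\rin(X_S\t\R^d)$ and hence $\bW'\cong F_\Mangcin^\CSchcto(X_S\t\R^d)$ near $w$, matching the local model of $W$ from \cite{Joyc6}. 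In the b-transverse case one verifies that the ring-level $\Pi_\rin^\sa$ cuts $\Speccin(\bfD\amalg_\bfF\bfE)$ down precisely to the closure $\ov{W^\ci}$ (using Theorem~\ref{cc6thm11}), and in the c-transverse case that the local isomorphisms are compatible with the corner strata (using Theorem~\ref{cc6thm12}). These local isomorphisms agree on overlaps and with $\bs\Phi$ — they induce $(x,y)\mapsto(x,y)$ on interiors and are the evident maps on stalks — so by full faithfulness of $F_\Mangcin^\CSchcto$ (Theorem~\ref{cc5thm5}(c)) they glue to prove $\bs\Phi$ an isomorphism. The argument goes through verbatim for $X,Y,Z$ of mixed dimension, giving the result for~$F_\cMangcin^\CSchcto$.

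For part (b), let $g:X\ra Z$, $h:Y\ra Z$ be c-transverse in $\Mangcin$, so that $W=X\t_{g,Z,h}Y$ exists in $\Mangc$, agrees with the fibre product in $\Mangcin$, and has \eq{cc6eq34} Cartesian in $\cMangcin$, by Theorem~\ref{cc6thm12}. Since c-transverse implies b-transverse, part (a) gives $\bW:=F_\Mangcin^\CSchcto(W)\cong\bX\t_{\bs g,\bZ,\bs h}\bY$ in $\CSchcto$. By Theorem~\ref{cc6thm9}(d), applied with $\bs g,\bs h$ interior, $\bW$ is also a fibre product in $\CSchctoex$ provided \eq{cc6eq32} is Cartesian in $\CSchcto$, i.e.\ $C(\bW)\cong C(\bX)\t_{C(\bs g),C(\bZ),C(\bs h)}C(\bY)$. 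Now Proposition~\ref{cc6prop2} gives natural isomorphisms $C(\bW)\cong F_\cMangcin^\CSchcto(C(W))$ and likewise for $\bX,\bY,\bZ$, compatibly with $C(\bs g),C(\bs h)$; by Theorem~\ref{cc6thm10} the morphisms $C(g),C(h)$ are c-transverse, hence b-transverse, in $\cMangcin$; and by Theorem~\ref{cc6thm12} the Cartesian square \eq{cc6eq34} exhibits $C(W)$ as their fibre product in $\cMangcin$. Applying the mixed-dimension form of part (a) to this b-transverse fibre product and transporting along the isomorphisms of Proposition~\ref{cc6prop2} yields exactly \eq{cc6eq32} Cartesian, completing the proof. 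The step I expect to be the main obstacle is the local computation in part (a): reconciling the abstract object $\Pi_\rin^\sa(\bfD\amalg_\bfF\bfE)$ (and its semicomplete replacement from the proof of Theorem~\ref{cc5thm9}) with the geometric local model $X_S\t\R^d$ of \cite{Joyc6}, in particular showing in the b-transverse case that ring-level saturation reproduces precisely the closure $\ov{W^\ci}$ rather than the full set $\{(x,y):g(x)=h(y)\}$, and keeping track of the extra corner strata in the c-transverse case.
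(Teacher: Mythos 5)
Your overall architecture matches the paper's: for (a) you reduce to the local models $X_P\t\R^k$ of Remark \ref{cc3rem2} (which are manifolds with g-faces, hence affine with $\bX\cong\Speccin\bs C^\iy_\rin(X)$ by Theorem \ref{cc5thm5}(a)), compare the toric pushout of the $\bs C^\iy_\rin(\cdot)$'s with $\bs C^\iy_\rin(W)$, and for (b) you argue exactly as the paper does, via Proposition \ref{cc6prop2}, Theorems \ref{cc6thm10}--\ref{cc6thm12}, the mixed-dimension form of (a), and Theorem \ref{cc6thm9}(d); part (b) is fine as written.

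The gap is in the central step of (a). You infer the isomorphism $\Pi_\rin^\sa(\bfD\amalg_\bfF\bfE)\cong\bs C^\iy_\rin(X_S\t\R^d)$ from knowing the underlying $C^\iy$-ring of the pushout and the (saturation of the) sharpened monoid $P\amalg_R Q$. This inference is invalid: a $C^\iy$-ring with corners is not determined by its underlying $C^\iy$-ring plus the sharpening of its monoid, and, more seriously, $\Pi_\rin^\sa$ changes the underlying $C^\iy$-ring, so the ring of $\Pi_\rin^\sa(\bfD\amalg_\bfF\bfE)$ is not the $C^\iy$-ring pushout you start from. Example \ref{cc6ex8} shows this concretely: the $C^\iy$-ring pushout there sees two points, but the toric fibre product is a single ordinary point, and it is exactly the mixed monoid--ring relations imposed by saturation that delete the extra point; in general these relations are what cut $\Speccin$ of the pushout down to $\ov{W^\ci}$ rather than $\bigl\{(x,y):g(x)=h(y)\bigr\}$. (Even your intermediate claim that the sharpened monoid of the pushout is $P\amalg_R Q$ is not something the paper establishes and would itself need proof.) The paper instead presents the toric pushout explicitly as $\Pi_\rin^\sa$ of a quotient of $\bs C^\iy_\rin(X\t Y)$ by the relations $(g\ci\pi_X)^*(c')=(h\ci\pi_Y)^*(c')$ for $c'\in\In(Z)$, computes the effect of $\Pi_\rin^\sa$ on this presentation (equations \eq{cc6eq38}--\eq{cc6eq39}), and then proves the comparison morphism to $\bs C^\iy_\rin(W)$ is surjective (smooth and interior functions on the embedded submanifold $W\subset X\t Y$ extend, using \cite[Prop.~3.14]{Joyc6} locally and the global models to control multiplicity functions) and injective (the kernel is precisely the ideal, respectively monoid equivalence, generated by those relations, using that $W$ is cut out b-transversely). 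This extension-and-ideal argument is the real content of (a); your proposal correctly flags it as the main obstacle but does not supply it, and the monoid-sharpening computation you offer in its place cannot substitute for it.
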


\begin{proof} For (a), suppose $g:X\ra Z$ and $h:Y\ra Z$ are b-transverse in $\Mangcin$, and let $W=X\t_{g,Z,h}Y$ be the fibre product in $\Mangcin$ given by Theorem \ref{cc6thm11}, with projections $e:W\ra X$ and $f:W\ra Y$. Write $\bW,\ldots,\bs h$ for the images of $W,\ldots,h$ under $F_\Mangcin^\CSchcto$. Let $\bs{\ti W}=\bX\t_{\bs g,\bZ,\bs h}\bY$ be the fibre product in $\CSchcto$, with projections $\bs{\ti e}:\bs{\ti W}\ra\bX$, $\bs{\ti f}:\bs{\ti W}\ra\bY$, which exists by Theorem \ref{cc5thm10}. As $\bs g\ci\bs e=\bs h\ci\bs f$, the universal property of $\bs{\ti W}$ gives a unique morphism $\bs b:\bW\ra\bs{\ti W}$ with $\bs e=\bs{\ti e}\ci\bs b$ and $\bs f=\bs{\ti f}\ci\bs b$. We must prove $\bs b$ is an isomorphism.

First note that as morphisms $\bs b:\bW\ra\bs{\ti W}$ form a sheaf, this question is local in $\bs{\ti W}$ and hence in $\bX,\bY,\bZ$, so it is enough to prove it over small open neighbourhoods of points $x\in\bX$, $y\in\bY$, $z\in\bZ$ with $\bs g(x)=\bs h(y)=z$. Thus we may replace $X,Y,Z$ by small open neighbourhoods of $x,y,z$ in $X,Y,Z$, or equivalently, by small open neighbourhoods of $(\de_0,0)$ in the local models $X_Q\t\R^l$ for manifolds with g-corners, where $Q$ is a toric monoid, as in Remark \ref{cc3rem2}. This also replaces $W$ by a small open neighbourhood of $w=(x,y)$ in $W$. As small open balls about $(\de_0,0)$ in $X_Q\t\R^l$ are diffeomorphic to $X_Q\t\R^l$, this means we can suppose that $W,\ldots,Z$ are of the form $X_Q\t\R^l$. So we can take
\e
W\cong X_P\t\R^k,\quad X\cong X_Q\t\R^l,\quad Y\cong X_R\t\R^m,\quad Z\cong X_S\t\R^n,
\label{cc6eq35}
\e
for toric monoids $P,Q,R,S$ and $k,l,m,n\ge 0$, where $e,f,g,h:(\de_0,0)\mapsto(\de_0,0)$.

Then $W,X,Y,Z$ are manifolds with g-faces, so by Theorem \ref{cc5thm5}(a) we may take $\bW=\Speccin\bs C^\iy_\rin(W)$, and similarly for $\bX,\bY,\bZ$. Then $\bs e=\Speccin (e^*,e_\rex^*)$, where $e^*:C^\iy(X)\ra C^\iy(W)$, $e_\rex^*:\In(X)\amalg\{0\}\ra \In(W)\amalg\{0\}$ are the pullbacks, and similarly for $\bs f,\bs g,\bs h$. Form a commutative diagram in~$\CRingscto$:
\begin{equation*}
\xymatrix@C=50pt@R=15pt{
& \bs C^\iy_\rin(Y) \ar[dr]_{\bs\psi} \ar@/^.7pc/[drr]^{(f^*,f_\rex^*)} \\
\bs C^\iy_\rin(Z) \ar@/^.5pc/[ur]^{(h^*,h_\rex^*)} \ar@/_.5pc/[dr]_{(g^*,g_\rex^*)} && \bfC \ar[r]^(0.3){\bs\be} & \bs C^\iy_\rin(W), \\
& \bs C^\iy_\rin(X) \ar[ur]^{\bs\phi} \ar@/_.7pc/[urr]_{(e^*,e_\rex^*)} }
\end{equation*}
where $\bfC$ is the pushout $\bs C^\iy_\rin(X)\amalg_{\bs C^\iy_\rin(Z)}\bs C^\iy_\rin(Y)$ in $\CRingscto$, which exists by Proposition \ref{cc4prop14}, and $\bs\phi,\bs\psi$ are the projections to the pushout, and $\bs\be$ exists by the universal property of pushouts. Since $\Speccin:(\CRingscto)^{\bf op}\ra\CSchcto$ preserves limits, it follows that $\bs{\ti W}=\Speccin\bfC$ and $\bs b=\Speccin\bs\be$, and it is enough to prove that $\bs\be$ is an isomorphism.

The pushout $\bfC$ is equivalent to a coequalizer diagram in~$\CRingscto$:
\e
\xymatrix@C=25pt{ \bs C^\iy_\rin(Z) \ar@<.5ex>[rrr]^(0.4){(g^*,g_\rex^*)\ci\bs i_{\bs C^\iy_\rin(X)}} \ar@<-.5ex>[rrr]_(0.4){(h^*,h_\rex^*)\ci\bs i_{\bs C^\iy_\rin(Y)}} &&& \bs C^\iy_\rin(X) \ot_\iy^{\rm to}\bs C^\iy_\rin(Y) \ar[rr] && \bfC }
\label{cc6eq36}
\e 
by properties of colimits, for $\bs C^\iy_\rin(X) \ot_\iy^{\rm to}\bs C^\iy_\rin(Y)$ the coproduct in $\CRingscto$. Now one can show that the coproduct $\bs C^\iy_\rin(X) \ot_\iy\bs C^\iy_\rin(Y)$ in $\CRingsc$ is $\bs C^\iy_\rin(X\t Y)$. Since this is toric, it coincides with the coproduct in $\CRingscto$. Thus we may replace \eq{cc6eq36} by the coequalizer diagram in~$\CRingscto$:
\e
\xymatrix@C=32pt{ \bs C^\iy_\rin(Z) \ar@<.5ex>[rrr]^(0.45){((g\ci\pi_X)^*,(g\ci\pi_X)_\rex^*)} \ar@<-.5ex>[rrr]_(0.45){((h\ci\pi_Y)^*,(h\ci\pi_Y)_\rex^*)} &&& \bs C^\iy_\rin(X\t Y) \ar[rr] && \bfC. }
\label{cc6eq37}
\e
We deduce that
\e
\bfC\cong\Pi_\rin^\sa\bigl(\bs C^\iy_\rin(X\t Y)\big/\bigl[(g\ci\pi_X)^*(c')=(h\ci\pi_Y)^*(c'),\;\; c'\in \In(Z) \bigr]\bigr).
\label{cc6eq38}
\e
Here the term $\bs C^\iy_\rin(X\t Y)/[\cdots]$ is the coequalizer of \eq{cc6eq37} in $\CRingsc$. There is no need to also impose $\R$-type relations $(g\ci\pi_X)^*(c)=(h\ci\pi_Y)^*(c)$ for $x\in C^\iy(Z)$, as these are implied by $(g\ci\pi_X)^*(c')=(h\ci\pi_Y)^*(c')$ for $c'=\Psi_{\exp}(c)$ in $\In(X)$. The proof of Theorem \ref{cc4thm3}(b) then implies that $\Pi_\rin^\sa(\cdots)$ is the coequalizer of \eq{cc6eq37} in~$\CRingscto$.

In \eq{cc6eq38}, we think of $\Pi_\rin^\sa$ as mapping $\CRingscfiin\ra\CRingscto$. It does not change $\bs C^\iy_\rin(X\t Y)$ as this is already toric, but it can add extra relations. From the construction of $\Pi_\rin^\sa$ in the proof of Theorem \ref{cc4thm5}, we can prove that
\e
\begin{split}
\bfC\cong\bs C^\iy_\rin(X\t Y)\big/\bigl[\text{$c_1'=c_2'$: $c_1',c_2'\in \In(X\t Y)$, for some $c'\in\In(Z)$}&\\
\text{and $n\ge 1$ we have $c_1'(g\ci\pi_X)^*(c')^n=c_2'(h\ci\pi_Y)^*(c')^n$}&\bigr].
\end{split}
\label{cc6eq39}
\e
Here imposing relations $c_1'=c_2'$ if $c_1'(g\ci\pi_X)^*(c')=c_2'(h\ci\pi_Y)^*(c')$ for $c'\in\In(Z)$ has the effect of making \eq{cc6eq39} integral, that is, it applies $\Pi_\rin^\Z$. (Note that taking $c_1'=(h\ci\pi_Y)^*(c')$ and $c_2'=(g\ci\pi_X)^*(c')$ recovers the relations in \eq{cc6eq38}, but $c_1'(g\ci\pi_X)^*(c')=c_2'(h\ci\pi_Y)^*(c')$ also includes relations in the integral completion.) Then forcing $c_1'=c_2'$ when $c_1'(g\ci\pi_X)^*(c')^n=c_2'(h\ci\pi_Y)^*(c')^n$ for $n\ge 1$ also makes \eq{cc6eq39} torsion-free and saturated, that is, it applies $\Pi_\tf^\sa\ci\Pi_\Z^\tf$, so overall we apply~$\Pi_\rin^\sa=\Pi_\tf^\sa\ci\Pi_\Z^\tf\ci\Pi_\rin^\Z$.

Using \eq{cc6eq39}, we now show that $\bs\be=(\be,\be_\rex):\bfC\ra\bs C^\iy_\rin(W)$ is an isomorphism, where $W$ is an embedded submanifold in $X\t Y$ as in Theorem \ref{cc6thm11}. To show that $\be,\be_\rex$ are surjective, it is enough to show that the restriction maps $C^\iy(X\t Y)\ra C^\iy(W)$, $\In(X\t Y)\ra\In(W)$ are surjective. That is, all smooth or interior maps $W\ra\R,[0,\iy)$ should extend to $X\t Y$. Locally near $(x,y)$ in $W,X\t Y$ this follows from extension properties of smooth maps on $X_P\t\R^l$ in \cite[Prop.~3.14]{Joyc6}, and globally it follows from \eq{cc6eq35}, which ensures there are no obstructions from multiplicity functions $\mu_{c'}$ as in Remark~\ref{cc3rem3}.

To show $\be$ (or $\be_\rex$) is injective, we need to show that if $c_1,c_2\in C^\iy(X\t Y)$ (or $c'_1,c'_2\in \In(X\t Y)$), then $c_1\vert_W=c_2\vert_W$ (or $c_1'\vert_W=c_2'\vert_W$) if and only if $c_1-c_2$ lies in the ideal $I$ in $C^\iy(X\t Y)$ generated by the relations in \eq{cc6eq39} (or $c_1'\simc c_2'$, where $\simc$ is the equivalence relation on $\In(X\t Y)$ generated by the relations in \eq{cc6eq39}). Since $W$ is an embedded submanifold defined as a subset of $X\t Y$ by the b-transverse relations $(g\ci\pi_X)^*(c')=(h\ci\pi_Y)^*(c')$ for $c'\in\In(Z)$, this is indeed true, though proving it carefully requires a little work on ideals in $C^\iy(X\t Y)$. Therefore $\bs\be$ is an isomorphism, completing part~(a). 

For (b), suppose $g:X\ra Z$ and $h:Y\ra Z$ are c-transverse in $\Mangcin$, which implies $g,h$ b-transverse, and let $W=X\t_{g,Z,h}Y$ be the fibre product in both $\Mangcin$ and $\Mangc$ given by Theorems \ref{cc6thm11}--\ref{cc6thm12}. Then \eq{cc6eq34} is Cartesian in $\Mangcin$ by Theorem \ref{cc6thm12}. Write $\bW,\ldots,\bs h$ for the images of $W,\ldots,h$ under $F_\Mangcin^\CSchcto$. Then part (a), Proposition \ref{cc6prop2}, and Theorem \ref{cc6thm7} imply that $\bW=\bX\t_\bZ\bY$ in $\CSchcto$, and \eq{cc6eq32} is Cartesian in $\CSchcto$. Hence Theorem \ref{cc6thm9}(d) says $\bW=\bX\t_\bZ\bY$ in $\CSchctoex$, and the theorem follows.	
\end{proof}

\begin{ex}
\label{cc6ex10}
Let $X=Y=Z=[0,\iy)$, and $g:X\ra Z$, $h:Y\ra Z$ map $g(x)=x^2$, $h(y)=y^2$. Then $g,h$ are b- and c-transverse, and the fibre product $W=X\t_{g,Z,h}Y$ in $\Mangcin$ and $\Mangc$ is $W=[0,\iy)$ with projections $e:W\ra X$, $f:W\ra Y$ mapping $e(w)=w$, $f(w)=w$.

Let $\bW,\ldots,\bs h$ be the images of $W,\ldots,h$ under $F_\Manc^\CSchc$. Then Theorem \ref{cc6thm13} implies that $\bW=\bX\t_{\bs g,\bZ,\bs h}\bY$ in $\CSchcto$ and $\CSchctoex$. But what about fibre products $\bX\t_{\bs g,\bZ,\bs h}\bY$ in other categories?

As $\bX,\bY,\bZ=\Speccin\bs C_\rin^\iy([0,\iy))$ are affine, we can show that the fibre product $\bs{\hat W}=\bX\t_\bZ\bY$ in $\CSchcfiin$ is $\Speccin\bfC$, where $\bfC=\R[x,y]/[x^2=y^2]$. Then $\fC_\rin^\sh\cong \Z\t\Z_2$, identifying $[x]=(1,0)$ and $[y]=(1,1)$ in $\Z\t\Z_2$. We see that $\bfC$ is firm and integral, but not torsion-free (as $\fC_\rin^\sh$ has torsion $\Z_2$) or saturated (as $x,y\in\fC_\rin$ with $x\ne y$ but $x^2=y^2$). Thus $\bs{\hat W}$ is also firm and integral, but not torsion-free or saturated, and~$\bW\not\cong\bs{\hat W}$.

This shows that $F_\Mangcin^\CSchcZ:\Mangcin\!\ra\!\CSchcZ$ and $F_\Mangcin^\CSchcfiin:\Mangcin\!\ra\!\CSchcfiin$ need not preserve b-transverse fibre products. Example \ref{cc6ex8} above is similar, but $\bs{\hat W}$ is not integral. One conclusion is that in Theorem \ref{cc6thm13} it is essential 
to work in the categories $\CSchcto,\CSchctoex$ of {\it toric\/} $C^\iy$-schemes with corners, rather than larger categories such as $\CSchcfitf,\ab\CSchcfiZ,\ab\CSchcfiin$ discussed in \S\ref{cc56}--\S\ref{cc57}. Thus, $\CSchcto,\CSchctoex$ may be regarded as natural generalizations of~$\Mangcin,\Mangc$.
\end{ex}

\subsubsection{Restriction to manifolds with corners}
\label{cc673}

There is an easy way to restrict Theorem \ref{cc6thm13} to $\Mancin,\Manc$:

\begin{dfn}
\label{cc6def6}
Let $g:h\ra Z$ and $h:Y\ra Z$ be morphisms in $\Mancin$. Following the second author \cite[\S 2.5.4]{Joyc8}, we call $g,h$ {\it sb-transverse\/} or {\it sc-transverse\/} (short for {\it strictly b-transverse\/} or {\it strictly c-transverse\/}) if they are b-transverse or c-transverse in $\Mangcin$, respectively, and the fibre product $W=X\t_{g,Z,h}Y$ in $\Mangcin$ is a manifold with corners, not just g-corners. In \cite[\S 2.5.4]{Joyc8} we explain the sb- and sc-transverse conditions explicitly. Theorems \ref{cc6thm11}--\ref{cc6thm12} imply that sb- and sc-transverse fibre products exist in $\Mancin$ and $\Manc$, respectively.
\end{dfn}

Then Theorem \ref{cc6thm13} implies:

\begin{cor}
\label{cc6cor1}
{\bf(a)} The functor $F_\Mancin^\CSchcto:\Mancin\ra\CSchcto$ takes sb- and sc-transverse fibre products in $\Mancin$ to fibre products in $\CSchcto$.
\smallskip

\noindent{\bf(b)} The functor $F_\Manc^\CSchctoex:\Manc\ra\CSchctoex$ takes sc-transverse fibre products in $\Manc$ to fibre products in~$\CSchctoex$.
\end{cor}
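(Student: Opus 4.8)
\textbf{Proof plan for Corollary \ref{cc6cor1}.}

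The plan is to deduce both parts directly from Theorem \ref{cc6thm13} together with the fact that $F_\Mancin^\CSchcto$ and $F_\Manc^\CSchctoex$ factor through the inclusions $\Mancin\hookra\Mangcin$, $\Manc\hookra\Mangc$ and the functors $F_\Mangcin^\CSchcto$, $F_\Mangc^\CSchctoex$, as noted in Definition \ref{cc5def9} (the functors on manifolds with corners are restrictions of those on manifolds with g-corners). So the essential point is a comparison of universal properties: a fibre product computed in $\Mancin$ (resp.\ $\Manc$) is also a fibre product in $\Mangcin$ (resp.\ $\Mangc$) whenever the former happens to be a manifold with corners, since $\Mancin\subset\Mangcin$ and $\Manc\subset\Mangc$ are full subcategories by Definition \ref{cc3def6}.

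For part (a): suppose $g:X\ra Z$, $h:Y\ra Z$ are sb- or sc-transverse in $\Mancin$. By Definition \ref{cc6def6} they are b- or c-transverse in $\Mangcin$, and the fibre product $W=X\t_{g,Z,h}Y$ produced by Theorems \ref{cc6thm11}--\ref{cc6thm12} lies in $\Mancin\subset\Mangcin$. Since $\Mancin$ is a full subcategory of $\Mangcin$ and $W$ together with $e:W\ra X$, $f:W\ra Y$ satisfies the universal property of the fibre product in $\Mangcin$ for test objects in $\Mangcin$, it in particular satisfies it for test objects in $\Mancin$; hence $W$ is the fibre product $X\t_{g,Z,h}Y$ in $\Mancin$. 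Now apply $F_\Mangcin^\CSchcto$: Theorem \ref{cc6thm13}(a) gives that $F_\Mangcin^\CSchcto(W)\cong F_\Mangcin^\CSchcto(X)\t_{C^\iy\text{-corners}}F_\Mangcin^\CSchcto(Y)$ in $\CSchcto$. Since $F_\Mancin^\CSchcto$ is the restriction of $F_\Mangcin^\CSchcto$ to $\Mancin$, and $W,X,Y,Z$ all lie in $\Mancin$, this is exactly the statement that $F_\Mancin^\CSchcto$ takes the fibre product $W$ to the fibre product in $\CSchcto$.

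Part (b) is entirely parallel: if $g,h$ are sc-transverse in $\Manc$, then by Definition \ref{cc6def6} they are c-transverse in $\Mangc$ and the fibre product $W$ from Theorem \ref{cc6thm12} lies in $\Manc\subset\Mangc$; fullness of $\Manc\subset\Mangc$ upgrades $W$ to the fibre product in $\Manc$, and Theorem \ref{cc6thm13}(b) applied to $F_\Mangc^\CSchctoex$, restricted to $\Manc$, yields the claim. I expect no serious obstacle here: the only thing to check carefully is that the universal-property transfer along a full subcategory inclusion is legitimate in the presence of the interiority restrictions, i.e.\ that a fibre product diagram in $\Mangcin$ with all four objects in $\Mancin$ really is a fibre product diagram in $\Mancin$ — but this is immediate from fullness, since $\Hom_\Mancin(\bV,-)=\Hom_\Mangcin(\bV,-)$ for $\bV\in\Mancin$. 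Thus the corollary is a formal consequence of Theorem \ref{cc6thm13} and Definition \ref{cc6def6}.
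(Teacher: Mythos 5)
Your proposal is correct and is essentially the paper's argument: the paper treats the corollary as an immediate consequence of Definition \ref{cc6def6} (sb-/sc-transverse means b-/c-transverse with the g-corner fibre product landing in $\Manc$, so by fullness of $\Mancin\subset\Mangcin$, $\Manc\subset\Mangc$ it is also the fibre product there) together with Theorem \ref{cc6thm13} and the fact that $F_\Mancin^\CSchcto$, $F_\Manc^\CSchctoex$ are restrictions of $F_\Mangcin^\CSchcto$, $F_\Mangc^\CSchctoex$. You have merely spelled out the universal-property transfer that the paper leaves implicit.
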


\section{Modules, and sheaves of modules}
\label{cc7}

Next we discuss modules over $C^\iy$-rings with corners, sheaves of $\bO_X$-modules on $C^\iy$-schemes with corners $\bX$, and (b-)cotangent modules and sheaves.

\subsection{\texorpdfstring{Modules over $C^\iy$-rings with corners, (b-)cotangent modules}{Modules over C∞-rings with corners, (b-)cotangent modules}}
\label{cc71} 

In \S\ref{cc22} we discussed modules over $C^\iy$-rings. Here is the corners analogue:

\begin{dfn}
\label{cc7def1}
Let $\bfC=(\fC,\fC_\rex)$ be a $C^\iy$-ring with corners. A {\it module\/ $M$ over\/} $\bfC$, or $\bfC$-{\it module}, is a module over $\fC$ regarded as a commutative $\R$-algebra as in Definition \ref{cc2def7}, and morphisms of $\bfC$-modules are morphisms of $\R$-algebra modules. Then $\bfC$-modules form an abelian category, which we write as
$\bfCmod$.

The basic theory of \S\ref{cc22} extends trivially to the corners case. So if $\bs\phi=(\phi,\phi_\rex):\bfC\ra\bfD$ is a morphism in $\CRingsc$ then using $\phi:\fC\ra\fD$ we get functors $\bs\phi_*:\bfCmod\ra\bfDmod$ mapping $M\mapsto M\ot_\fC\fD$, and $\bs\phi^*:\bfDmod\ra\bfCmod$ mapping~$N\mapsto N$.
\end{dfn}

One might expect that modules over $\bfC=(\fC,\fC_\rex)$ should also include some kind of monoid module over $\fC_\rex$, but we do not do this.

\begin{ex}
\label{cc7ex1}
Let $X$ be a manifold with corners (or g-corners) and $E\ra X$ be a vector bundle, and write $\Ga^\iy(E)$ for the vector space of smooth sections $e$ of $E$. This is a module over $C^\iy(X)$, and hence over the $C^\iy$-rings with corners $\bs C^\iy(X)$ and $\bs C^\iy_\rin(X)$ from Example~\ref{cc4ex3}.
\end{ex}

Section \ref{cc22} discussed cotangent modules of $C^\iy$-rings, the analogues of cotangent bundles of manifolds. As in \S\ref{cc35} manifolds with corners $X$ have cotangent bundles $T^*X$ which are functorial over smooth maps, and b-cotangent bundles ${}^bT^*X$, which are functorial only over interior maps. In a similar way, for a $C^\iy$-ring with corners $\bfC$ we will define the {\it cotangent module\/} $\Om_\bfC$, and if $\bfC$ is interior we will also define the {\it b-cotangent module\/}~${}^b\Om_\bfC$.

\begin{dfn}
\label{cc7def2}
Let $\bfC=(\fC,\fC_\rex)$ be a $C^\iy$-ring with corners. Define the {\it cotangent module\/} $\Om_\bfC$ of $\bfC$ to be the cotangent module $\Om_\fC$ of \S\ref{cc22}, regarded as a $\bfC$-module. If $\bs\phi=(\phi,\phi_\rex):\bfC\ra\bfD$ is a morphism in $\CRingsc$ then from $\Om_\phi$ in \S\ref{cc22} we get functorial morphisms $\Om_{\bs\phi}:\Om_\bfC\ra\Om_\bfD=\bs\phi^*(\Om_\bfD)$ in $\bfCmod$ and $(\Om_\phi)_*:\bs\phi_*(\Om_\bfC)=\Om_\bfC\ot_\fC\fD\ra\Om_\bfD$ in $\bfDmod$.
\end{dfn}

\begin{dfn}
\label{cc7def3} 
Let $\bfC=(\fC,\fC_\rex)$ be an interior $C^\iy$-ring with corners, so that $\fC_\rex=\fC_\rin\amalg\{0_{\fC_\rex}\}$ with $\fC_\rin$ a monoid. Let $M$ be a $\bfC$-module. A {\it b-derivation} is a map $\d_\rin:\fC_\rin\ra M$ satisfying the following corners analogue of \eq{cc2eq4}: for any interior $g:[0,\iy)^n\ra[0,\iy)$ and elements $c'_1,\ldots,c'_n\in\fC_\rin$, then
\e
\d_\rin\bigl(\Psi_g(c'_1,\ldots,c'_n)\bigr)-\ts\sum\limits_{i=1}^n\Phi_{g^{-1}x_i\frac{\pd g}{\pd x_i}}(c'_1,\ldots,c'_n)\cdot \d_\rin c'_i=0.
\label{cc7eq1}
\e
Here $g$ interior implies that $g^{-1}x_i\frac{\pd g}{\pd x_i}:[0,\iy)^n\ra\R$ is smooth.

It is easy to show that \eq{cc7eq1} implies:
\begin{itemize}
\setlength{\itemsep}{0pt}
\setlength{\parsep}{0pt}
\item[(i)] $\d_\rin:\fC_\rin\ra M$ is a monoid morphism, where $M$ is a monoid over addition.
\item[(ii)] $\d=\d_\rin\ci\Psi_{\exp}:\fC\ra M$ is a $C^\iy$-derivation in the sense of Definition \ref{cc2def8}.
\item[(iii)] $\d_\rin\ci\Psi_{\exp}(\Phi_i(c'))=\Phi_i(c')\cdot\d_\rin c'$ for all $c'\in\fC_\rin$. 
\end{itemize}
As interior $g:[0,\iy)^n\ra[0,\iy)$ are of the form $g=x_1^{a_1}\cdots x_n^{a_n}\exp f(x_1,\ldots x_n)$, with $\Psi_g(c_1',\ldots,c_n')=c_1^{\prime a_1}\cdots c_n^{\prime a_n}\Psi_{\exp}(\Phi_f(c_1',\ldots,c_n'))$ for $c_1',\ldots,c_n'\in\fC_\rin$, we can show that (i)--(iii) are equivalent to~\eq{cc7eq1}.

We call such a pair $(M,\d_\rin)$ a {\it b-cotangent module\/} for $\bfC$ if it has the universal property that for any b-derivation $\d'_\rin:\fC_\rin\ra M'$, there exists a unique morphism of $\bfC$-modules ${}^b\la:M\ra M'$ with $\d'_\rin={}^b\la\ci\d_\rin$. 

There is a natural construction for a b-cotangent module: we take $M$ to be the quotient of the free $\bfC$-module with basis of symbols $\d_\rin c'$ for $c'\in\fC_\rin$ by the $\fC$-submodule spanned by all expressions of the form \eq{cc7eq1}. Thus b-cotangent modules exist, and are unique up to unique isomorphism. When we speak of `the' b-cotangent module, we mean that constructed above, and we write it as~$\d_{\bfC,\rin}:\fC_\rin\ra{}^b\Om_\bfC$.

Since $\d_{\bfC,\rin}\ci\Psi_{\exp}:\fC\ra{}^b\Om_\bfC$ is a $C^\iy$-derivation, the universal property of $\Om_\fC=\Om_\bfC$ in \S\ref{cc22} implies that there is a unique $\bfC$-module morphism $I_\bfC:\Om_\bfC\ra{}^b\Om_\bfC$ with~$\d_{\bfC,\rin}\ci\Psi_{\exp}=I_\bfC\ci\d_\fC:\fC\ra{}^b\Om_\bfC$.

Let $\bs\phi:\bfC\ra\bfD$ be a morphism in $\CRingscin$. Then we have a monoid morphism $\phi_\rin:\fC_\rin\ra\fD_\rin$. Regarding ${}^b\Om_\bfD$ as a $\bfC$-module using $\phi:\fC\ra\fD$, then $\d_{\bfD,\rin}\ci\phi_\rin:\fC_\rin\ra{}^b\Om_\bfD$ becomes a b-derivation. Thus by the universal property of ${}^b\Om_\bfC$, there exists a unique $\bfC$-module morphism ${}^b\Om_{\bs\phi}:{}^b\Om_\bfC\ra{}^b\Om_\bfD$ with $\d_{\bfD,\rin}\ci\phi_\rin={}^b\Om_{\bs\phi}\ci\d_{\bfC,\rin}$. This then induces a morphism of $\bfD$-modules $({}^b\Om_{\bs\phi})_*:\bs\phi_*({}^b\Om_\bfC)={}^b\Om_\bfC\ot_\fC\fD\ra{}^b\Om_\bfD$. If $\bs\phi:\bfC\ra\bfD$, $\bs\psi:\bfD\ra\bfE$ are morphisms in $\CRingscin$ then~${}^b\Om_{\bs\psi\ci\bs\phi}={}^b\Om_{\bs\psi}\ci{}^b\Om_{\bs\phi}:{}^b\Om_\bfC\ra{}^b\Om_\bfE$.
\end{dfn}

\begin{rem}
\label{cc7rem1}
{\bf(a)} We may think of $\d_\rin c'$ above as $\d(\log c')$. As in Example \ref{cc7ex2}(b), if $X$ is a manifold with corners and $c':X\ra[0,\iy)$ is interior, then $\d(\log c')$ is well defined section of ${}^bT^*X$. This holds even at $\pd X$ where we allow $c'=0$ and $\log c'$ is undefined, as ${}^bT^*X$ is spanned by $x_1^{-1}\d x_1,\ab\ldots,\ab x_k^{-1}\d x_k,\ab\d x_{k+1},\ab\ldots,\ab\d x_n$ in local coordinates $(x_1,\ldots,x_n)\in\R^n_k$, and the factors $x_1^{-1},\ldots,x_k^{-1}$ control the singularities of~$\d\log c'$.
\smallskip

\noindent{\bf(b)} In Definition \ref{cc7def3} we could have omitted the condition that $\bfC$ be interior, and considered monoid morphisms $\d_\rex:\fC_\rex\ra M$ such that $\d=\d_\rex\ci\Psi_{\exp}:\fC\ra M$ is a $C^\iy$-derivation and $\d_\rex\ci\Psi_{\exp}(\Phi_i(c'))=\Phi_i(c')\d_\rex c'$ for all $c'\in\fC_\rex$. However, since $c'\cdot 0_{\fC_\rex}=0_{\fC_\rex}$ for all $c'\in\fC_\rex$ we would have $\d_\rex c'+\d_\rex 0_{\fC_\rex}=\d_\rex 0_{\fC_\rex}$ in $M$, so $\d_\rex c'=0$, and this modified definition would give ${}^b\Om_\bfC=0$ for any $\bfC$. To get a nontrivial definition we took $\bfC$ to be interior, and defined $\d_\rin$ only on~$\fC_\rin=\fC_\rex\sm\{0_{\fC_\rex}\}$.

If $\bfC$ lies in the image of $\Pi_\CRingsc^\CRingscin:\CRingsc\hookra\CRingscin$ from Definition \ref{cc4def5} then ${}^b\Om_\bfC=0$, since $\fC_\rin$ then contains a zero element $0_{\fC_\rin}$ with $c'\cdot 0_{\fC_\rin}=0_{\fC_\rin}$ for all~$c'\in\fC_\rin$.

If $\d_\rin:\fC_\rin\ra M$ is a b-derivation then it is a morphism from a monoid to an abelian group, and so factors through $\pi^\gp:\fC_\rin\ra(\fC_\rin)^\gp$. This suggests that b-cotangent modules may be most interesting for {\it integral\/} $C^\iy$-rings with corners, as in \S\ref{cc47}, for which $\pi^\gp:\fC_\rin\ra(\fC_\rin)^\gp$ is injective.
\end{rem}

\subsection{(B-)cotangent modules for manifolds with (g-)corners}
\label{cc72}

\begin{ex}{\bf(a)} Let $X$ be a manifold with corners. Then $\Ga^\iy(T^*X)$ is a module over the $\R$-algebra $C^\iy(X)$, and so over the $C^\iy$-rings with corners $\bs C^\iy(X)$ and $\bs C^\iy_\rin(X)$ from Example \ref{cc4ex3}. The exterior derivative $\d:C^\iy(X)\ra\Ga^\iy(T^*X)$ is a $C^\iy$-derivation, and there is a unique morphism $\la:\Om_{\bs C^\iy(X)}\ra\Ga^\iy(T^*X)$ such that $\d=\la\ci\d$.
\smallskip

\noindent{\bf(b)} Let $X$ be a manifold with corners (or g-corners), with b-co\-tang\-ent bundle ${}^bT^*X$ as in \S\ref{cc35}. Example \ref{cc4ex3}(b) defines a $C^\iy$-ring with corners $\bs C^\iy_\rin(X)=\bfC$ with $\fC_\rin=\In(X)$, the monoid of interior maps $c':X\ra[0,\iy)$. We have a $C^\iy(X)$-module $\Ga^\iy({}^bT^*X)$. Define $\d_\rin:\In(X)\ra \Ga^\iy({}^bT^*X)$ by
\e
\d_\rin(c')=c^{\prime -1}\cdot {}^b\d c'={}^b\d(\log c'),
\label{cc7eq2}
\e
where ${}^b\d=I_X^*\ci\d$ is the composition of the exterior derivative $\d:C^\iy(X)\ra\Ga^\iy(T^*X)$ with the projection $I_X^*:T^*X\ra{}^bT^*X$. Here \eq{cc7eq2} makes sense on the interior $X^\ci$ where $c'>0$, but has a unique smooth extension over~$X\sm X^\ci$.

We can now show that $\d_\rin:\In(X)\ra \Ga^\iy({}^bT^*X)$ is a b-derivation in the sense of Definition \ref{cc7def3}, so there is a unique morphism ${}^b\la:{}^b\Om_{\bs C_\rin^\iy(X)}\ra \Ga^\iy({}^bT^*X)$ such that $\d_\rin={}^b\la \ci \d_\rin$.
\label{cc7ex2}
\end{ex}

\begin{thm}
\label{cc7thm1}
{\bf(a)} Let\/ $X$ be a manifold with faces, as in Definition\/ {\rm\ref{cc3def8},} such that\/ $\pd X$ has finitely many connected components. Then $\Ga^\iy(T^*X)$ is the cotangent module of\/ $C^\iy(X)$. That is, $\la$ from Example\/ {\rm\ref{cc7ex2}(a)} is an isomorphism.

\smallskip

\noindent{\bf(b)} Let $X$ be a manifold with faces, or a manifold with g-faces, as in Definition\/ {\rm\ref{cc4def12},} such that\/ $\pd X$ has finitely many connected components. Then $\Ga^\iy({}^bT^*X)$ is the b-cotangent module of\/ $\bs C^\iy_\rin(X)$. That is, ${}^b\la$ from Example\/ {\rm\ref{cc7ex2}(b)} is an isomorphism.
\end{thm}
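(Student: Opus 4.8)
The plan is to reduce both parts to the corresponding statements about manifolds, using the description of $C^\iy(X)$ as functions and the known extension/partition-of-unity results, and to handle the subtlety that the universal objects $\Om_\fC$ and ${}^b\Om_\bfC$ are built from \emph{all} elements of $\fC$ or $\fC_\rin$, not just generators. For part (a), I would first recall from Example~\ref{cc2ex5} and \cite[Ex.~5.4]{Joyc9} that the claim is classical when $X$ is a manifold without boundary; the point is to push this through the boundary/corners. Since $X$ is a manifold with faces with finitely many boundary faces $F_1,\ldots,F_n$, one can choose boundary-defining interior functions $x_1,\ldots,x_n\in C^\iy(X)$ (with $x_i\ge 0$, vanishing to order $1$ on $F_i$ and nowhere else), together with enough functions $y_1,\ldots,y_m$ to give local coordinates transverse to the faces, so that $x_1,\ldots,x_n,y_1,\ldots,y_m$ generate $C^\iy(X)$ as a $C^\iy$-ring in the sense that $C^\iy(X)$ is a quotient $\fF^{n+m}/I$ (this uses that $X$ embeds as a closed submanifold-with-faces of $\R^{n+m}_n$ via these functions, plus Seeley extension). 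Then by Example~\ref{cc2ex6}(b), $\Om_{C^\iy(X)}$ is the cokernel of a map $\ban{\d\ti x_b}\ot_\R C^\iy(X)\ra\ban{\d x_i,\d y_j}\ot_\R C^\iy(X)$, and one identifies this cokernel with $\Ga^\iy(T^*X)$ by checking that the relations $I$ imposing "$X\subset\R^{n+m}_n$" become exactly the relations cutting out $T^*X$ inside the trivial bundle $T^*\R^{n+m}\vert_X$. The verification that $\la$ is an isomorphism then follows from surjectivity of the restriction map $C^\iy(\R^{n+m}_n)\ra C^\iy(X)$ (partition of unity / extension) and a local computation with Hadamard's Lemma.

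For part (b), I would run the parallel argument with the b-structure. Using the same defining functions $x_1,\ldots,x_n\in\In(X)$ and $y_1,\ldots,y_m\in C^\iy(X)$, the key structural fact (Example~\ref{cc4ex7}) is that as a monoid $\In(X)\cong C^\iy(X)\t\N^n$, with the $\N^n$ factor generated by $x_1,\ldots,x_n$ and the $C^\iy(X)$ factor being $\Psi_{\exp}(C^\iy(X))$; the finiteness of the boundary faces is what makes this a finitely-generated picture. Hence $\bs C^\iy_\rin(X)$ is a finitely presented interior $C^\iy$-ring with corners, and ${}^b\Om_{\bs C^\iy_\rin(X)}$ can be computed from a presentation: by the definition of b-cotangent module in Definition~\ref{cc7def3}, every $\d_\rin c'$ for $c'=\Psi_{\exp}(f)x_1^{a_1}\cdots x_n^{a_n}$ reduces, via properties (i)--(iii), to $\d(f)+\sum_i a_i\,\d_\rin x_i$, so ${}^b\Om_{\bs C^\iy_\rin(X)}$ is generated over $C^\iy(X)$ by $\d_\rin x_1,\ldots,\d_\rin x_n$ together with $\d y_1,\ldots,\d y_m$ (via $I_\bfC\ci\d_\fC$), modulo the relations coming from the defining ideal of $X$ inside the local model. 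On the other side, $\Ga^\iy({}^bT^*X)$ is locally free with basis $x_1^{-1}\d x_1,\ldots,\d y_m$ by Remark~\ref{cc3rem5}(a), and $\d_\rin x_i\mapsto x_i^{-1}\,{}^b\d x_i$ under ${}^b\la$ by \eqref{cc7eq2}. So one checks ${}^b\la$ is surjective (again from surjectivity of restriction of interior functions, which holds because the multiplicity obstructions of Remark~\ref{cc3rem3} are unconstrained when we can freely prescribe vanishing orders on the finitely many faces) and injective (the relations in the presentation of ${}^b\Om_{\bs C^\iy_\rin(X)}$ match exactly the relations defining ${}^bT^*X$ as a subbundle). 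The g-faces case works identically, with the local model $X_P$ replacing $\R^n_k$ and $\In(X_P)^\sh\cong P$ by Remark~\ref{cc3rem5}(c), using Example~\ref{cc4ex6}.

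The main obstacle I anticipate is the \emph{injectivity} of $\la$ and ${}^b\la$, i.e.\ showing no extra relations appear in $\Om_{C^\iy(X)}$ or ${}^b\Om_{\bs C^\iy_\rin(X)}$ beyond those visible locally. Concretely, one must show that if a $C^\iy(X)$-linear combination $\sum_\al f_\al\,\d c_\al$ (or $\sum_\al f_\al\,\d_\rin c'_\al$) maps to zero in $\Ga^\iy(T^*X)$ (resp.\ $\Ga^\iy({}^bT^*X)$), then it is already zero as a consequence of the defining relations \eqref{cc2eq4} (resp.\ \eqref{cc7eq1}). This is a "globalization" problem: locally it is Hadamard's Lemma, but one needs to patch local solutions using a partition of unity on $X$ while staying inside the module generated by the relation terms, and for the interior case one must also track that the log-derivative relations behave well — precisely the step where the "manifold with (g-)faces" and "finitely many boundary components" hypotheses are essential, since they guarantee the relevant $C^\iy$-ring (resp.\ interior $C^\iy$-ring) with corners is finitely presented and its cotangent module is finitely presented by Example~\ref{cc2ex6}(b). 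I expect this to occupy the bulk of the proof, whereas surjectivity and the identification of generators are routine.
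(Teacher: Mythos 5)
Your strategy for part (a), and for the with-faces case of (b), is essentially the paper's: choose functions generating the sharpened monoid (boundary-defining functions, one per face) together with a Whitney-type collection $y_j$ to get an interior closed embedding $F:X\hookra\R^l_k$ respecting corner strata, use surjectivity of $F^*$ to present $C^\iy(X)$ (resp. $\bs C^\iy_\rin(X)$) as a quotient of the model, compute $\Om$ (resp. ${}^b\Om$) from that presentation as a cokernel, and identify the relations submodule with $\Ga^\iy(\Ker(\d F^*))$ (resp. $\Ga^\iy(\Ker({}^b\d F^*))$). One comment on your anticipated ``main obstacle'': the globalization step is much shorter than you expect. Since $0\ra\Ker(\d F^*)\ra F^*(T^*\R^l_k)\ra T^*X\ra 0$ is a split exact sequence of vector bundles, injectivity of $\la$ reduces by a diagram chase to the statement that the bundle $\Ker(\d F^*)$ is generated over $C^\iy(X)$ by the global sections $F^*(\d f)$ for $f$ vanishing on $F(X)$, which follows at once from the (b-)transversality of the cut-out; there is no delicate patching ``inside the module generated by the relation terms''.

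The genuine gap is the g-faces case of (b), which you declare ``works identically''. Two things fail there. First, the splitting $\In(X)\cong C^\iy(X)\t\N^n$ of Example \ref{cc4ex7}, with one monoid generator per boundary face, is special to manifolds with faces: for g-faces $C^\iy_\rin(X)^\sh$ is only a toric monoid, and its finite generation must be proved (the paper injects it into $\N^N$ and notes the image is cut out by linear equations coming from g-corner strata of codimension $\ge 3$; cf.\ Remark \ref{cc4rem4}, which shows not every multiplicity function on $\pd X$ is realized by an interior function). In particular your reduction of every $\d_\rin c'$ to $\d f+\sum_ia_i\,\d_\rin x_i$ with $x_i$ boundary-defining is not available, and the global argument still embeds into $\R^l_k$ with $k$ the number of monoid generators, not into a local model $X_P$. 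Second, and consequently, the presentation of $\bs C^\iy_\rin(X)$ as a quotient of $\bs C^\iy_\rin(\R^l_k)$ then needs $[0,\iy)$-type relations $g=h$ with $g,h\in C^\iy_\rin(\R^l_k)$ in addition to $\R$-type relations $f=0$ (this is the difference between \eq{cc7eq3} and \eq{cc7eq4}), and correspondingly $\Ker({}^b\d F^*)$ is spanned by $F^*({}^b\d\exp(f))$ together with $F^*({}^b\d g-{}^b\d h)$. Your sketch only accounts for ideal-type relations, so as written the surjectivity-of-$\mu$/injectivity-of-${}^b\la$ step does not close in the g-faces case; fixing it requires exactly the extra monoid-relation bookkeeping the paper carries out.
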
 

\begin{proof} The first part of the proof is common to both (a) and (b). Let $X$ be a manifold with faces, or with g-faces, and suppose $\pd X$ has finitely many connected components, which we write $\pd_1X,\ldots,\pd_NX$. We will show that the sharpened monoid $C^\iy_\rin(X)^\sh$ is finitely generated. 

Write $[c']\in C^\iy_\rin(X)^\sh$ for the $C^\iy_\rin(X)^\t$-orbit of $c'\in C^\iy_\rin(X)$. Each such $c'$ vanishes to some order $a_i\in\N$ on $\pd_iX$ for $i=1,\ldots,N$. The map $C^\iy_\rin(X)\ra\N^N$, $c'\mapsto(a_1,\ldots,a_N)$ is a monoid morphism, which descends to a morphism $C^\iy_\rin(X)^\sh\ra\N^N$, $[c']\mapsto(a_1,\ldots,a_N)$, as $\N^N$ is sharp. 

It is easy to show that this morphism $C^\iy_\rin(X)^\sh\ra\N^N$ is injective. If $X$ is a manifold with faces it is also surjective, so $C^\iy_\rin(X)^\sh\cong\N^N$. If $X$ is a manifold with g-faces, the image of $C^\iy_\rin(X)^\sh$ is the submonoid of $\N^N$ satisfying some linear equations coming from the higher-dimensional g-corner strata. Therefore $C^\iy_\rin(X)^\sh$ is a toric monoid, and so finitely generated.

Choose $x_1,\ldots,x_k\in C^\iy_\rin(X)$ whose images $[x_1],\ldots,[x_k]$ generate $C^\iy_\rin(X)^\sh$. Following the proof of the Whitney Embedding Theorem, we may choose a smooth map $(x_{k+1},\ldots,x_l):X\ra\R^{l-k}$ for some $l$ with $l-k\ge 2\dim X$ which is a closed embedding (in a weak sense, in which the inclusion $[0,\iy)\hookra\R$ counts as an embedding). Then $F:=(x_1,\ldots,x_k,x_{k+1},\ldots,x_l):X \hookra[0,\iy)^k\t\R^{l-k}=\R^l_k$ is an interior closed embedding in a stronger sense, which embeds the corner strata of $X$ into corner strata of~$\R^l_k$. 

Here as $X$ has (g-)faces, by Proposition \ref{cc4prop9} and Definition \ref{cc4def12} the natural morphism $\bs\la_x:(\bs C^\iy(X)){}_{x_*}\ra\bs C_x^\iy(X)$ is an isomorphism for all $x\in X$, so as $[x_1],\ldots,[x_k]$ generate $C^\iy_\rin(X)^\sh$, they also generate $C^\iy_{x,\rin}(X)^\sh$ for each $x\in X$. Geometrically this means that $X$ near $x$ is nicely embedded in $\R^l_k$ near $F(x)$, in such a way that the corner strata of $X$ near $x$ all come from corner strata of $\R^l_k$ near $F(x)$. Write $\ti X=F(X)$, as a closed, embedded submanifold of~$\R^l_k$.

Thus we have a morphism $F^*:\bs C^\iy_\rin(\R^l_k)\ra\bs C^\iy_\rin(X)$ in $\CRingscin$. We claim that $F^*$ is surjective. To see this, note that $F^*$ is surjective on $C^\iy$-rings as it is a closed embedding, so every smooth function $F(X)\ra\R$ extends to a smooth function $\R^l_k\ra\R$ in the usual way. Also $(F^*_\rin)^\sh:C^\iy_\rin(\R^l_k)^\sh\cong \N^k\ra C^\iy_\rin(X)^\sh$ is surjective as it maps $(a_1,\ldots,a_k)\mapsto[x_1^{a_1}\cdots x_k^{a_k}]$, where $[x_1],\ldots,[x_k]$ generate $C^\iy_\rin(X)^\sh$. Surjectivity of $F^*_\rin:C^\iy_\rin(\R^l_k)\ra C^\iy_\rin(X)$ follows.

Therefore $\bs C^\iy_\rin(X)$ is isomorphic to the quotient of $\bs C^\iy_\rin(\R^l_k)$ by relations generating the kernel of $F^*_\rin$, or equivalently, by equations defining $\ti X$ as a submanifold of $\R^l_k$. If $X$ is a manifold with faces then relations of type $f=0$ for $f\in C^\iy(\R^l_k)$ are sufficient, so in the notation of Definition \ref{cc4def8} we have
\e
\bs C^\iy_\rin(X)\cong \bs C^\iy_\rin(\R^l_k)\big/\bigl(f=0: f\in C^\iy(\R^l_k),\; f\vert_{\ti X}=0\bigr).
\label{cc7eq3}
\e
If $X$ is a manifold with g-faces we may also need relations of type $g=h$ for $g,h\in C^\iy_\rin(\R^l_k)$, giving
\e
\begin{split}
\bs C^\iy_\rin(X)\cong \bs C^\iy_\rin(\R^l_k)\big/&\bigl(f=0: f\in C^\iy(\R^l_k),\; f\vert_{\ti X}=0\bigr)\\
&\bigl[g=h:g,h\in C^\iy_\rin(\R^l_k),\;g\vert_{\ti X}=h\vert_{\ti X}\bigr].
\end{split}
\label{cc7eq4}
\e

We now prove part (a), so suppose $X$ is a manifold with faces and $\pd X$ has finitely many boundary components, so that \eq{cc7eq3} holds. Consider the commutative diagram of $C^\iy(X)$-modules with exact rows:
\e
\begin{gathered}
\xymatrix@C=1.5pt@R=15pt{
0 \ar[rr] && K\!=\!\Ker(\Om_{F^*}) \ar@{->>}[d]^\mu \ar[rrr] &&& {\begin{subarray}{l}\ts \Om_{\bs C^\iy_\rin(\R^k_l)}\!\ot_{C^\iy(\R^k_l)}\!C^\iy(X)\!= \\ \ts \;\> \an{\d x_1,\ldots,\d x_l}\!\ot_\R\! C^\iy(X)\end{subarray}} \ar[d]_\cong \ar[rrr]_(0.68){\Om_{F^*}} &&&   \Om_{\bs C^\iy_\rin(X)} \ar[d]_\la \ar[rr] && 0 \\
0 \ar[rr] && \Ga^\iy(\Ker(\d F^*)) \ar[rrr] &&& {\begin{subarray}{l}\ts \qquad \Ga^\iy(F^*(T^*\R^k_l))= \\ \ts \an{\d x_1,\ldots,\d x_l}\!\ot_\R\! C^\iy(X)\end{subarray}} \ar[rrr]^(0.63){\Ga^\iy(\d F^*)} &&&   \Ga^\iy(T^*X) \ar[rr] && 0.\! }\!\!\!\!\!\!\!\!\!
\end{gathered}
\label{cc7eq5}
\e
Here as $F^*:C^\iy(\R^k_l)\ra C^\iy(X)$ is surjective, $\Om_{F^*}$ is surjective, and the top row is exact. As the exact sequence $0\ra\Ker(\d F^*)\ra F^*(T^*\R^k_l)\ra T^*X\ra 0$ of vector bundles on $X$ splits, the bottom row is exact. It is easy to check that $\Om_{\bs C^\iy_\rin(\R^k_l)}=\Om_{C^\iy(\R^k_l)}=\an{\d x_1,\ldots,\d x_l}\ot_\R C^\iy(\R^k_l)$ satisfies the universal property for cotangent modules. Since $\Ga^\iy(T^*\R^k_l)=\an{\d x_1,\ldots,\d x_l}_{C^\iy(\R^k_l)}$, we see that the two centre terms are $\an{\d x_1,\ldots,\d x_l}\ot_\R C^\iy(X)$, and are isomorphic. Clearly the right hand square commutes. Hence by exactness there is a unique map $\mu:K\ra \Ga^\iy(\Ker(\d F^*))$ making the diagram commute.

As the right hand square of \eq{cc7eq5} commutes, with the central column an isomorphism, we see that $\la$ is surjective. To prove that $\la$ is injective, and hence an isomorphism, it is enough to show that $\mu$ is surjective. Since $\ti X$ is cut out transversely in $\R^k_l$ by equations $f=0$ for $f\in C^\iy(\R^l_k)$ with $f\vert_{\ti X}=0$, as in \eq{cc7eq3}, we see that the vector bundle $\Ker(\d F^*)\ra X$ is spanned over $C^\iy(X)$ by sections $F^*(\d f)$ for $f\in C^\iy(\R^l_k)$ with $f\vert_{\ti X}=0$. But then $(\d f)\ot 1$ lies in $K=\Ker(\Om_{F^*})$ with $F^*(\d f)=\mu((\d f)\ot 1)$, so the image of $\mu$ contains a spanning set, and $\mu$ is surjective. This proves~(a).

For (b), allow $X$ to have faces or g-faces, so that \eq{cc7eq4} holds, and replace \eq{cc7eq5} by the commutative diagram with exact rows
\begin{equation*}
\xymatrix@C=3.7pt@R=15pt{
0 \ar[rr] && {}^bK\!=\!\Ker({}^b\Om_{F^*}) \ar@{->>}[d]^{{}^b\mu} \ar[rrr] &&& {\begin{subarray}{l}\ts {}^b\Om_{\bs C^\iy_\rin(\R^k_l)}\!\ot_{C^\iy(\R^k_l)}\!C^\iy(X) \\ \ts =\!\langle x_1^{-1}\d x_1,\ldots,x_k^{-1}\d x_k,\\ \ts \d x_{k+1},\ldots,\d x_l\rangle\!\ot_\R\! C^\iy(X)\!\!\!\!\!\end{subarray}} \ar[d]_\cong \ar[rrr]_(0.68){{}^b\Om_{F^*}} &&&   {}^b\Om_{\bs C^\iy_\rin(X)} \ar[d]_{{}^b\la} \ar[rr] && 0 \\
0 \ar[rr] && \Ga^\iy(\Ker({}^b\d F^*)) \ar[rrr] &&& {\begin{subarray}{l}\ts  \Ga^\iy(F^*({}^bT^*\R^k_l)) \\ \ts =\!\langle x_1^{-1}\d x_1,\ldots,x_k^{-1}\d x_k,\\ \ts \d x_{k+1},\ldots,\d x_l\rangle\!\ot_\R\! C^\iy(X)\!\!\!\!\!\!\!\!\end{subarray}} \ar[rrr]^(0.59){\Ga^\iy({}^b\d F^*)} &&&   \Ga^\iy({}^bT^*X) \ar[rr] && 0.\! }
\end{equation*}
Here ${}^b\Om_{\bs C^\iy_\rin(\R^k_l)}=\an{x_1^{-1}\d x_1,\ldots,x_k^{-1}\d x_k,\d x_{k+1},\ldots,\d x_l}\ot_\R C^\iy(\R^k_l)$ satisfies the universal property for b-cotangent modules. The argument above gives a unique map ${}^b\mu$ making the diagram commute, and shows ${}^b\la$ is surjective. To prove ${}^b\la$ is injective, and hence an isomorphism, it is enough to show that ${}^b\mu$ is surjective.

Since $\ti X$ is cut out b-transversely in $\R^k_l$ by equations $f=0$ for $f\in C^\iy(\R^l_k)$ with $f\vert_{\ti X}=0$, and (if $X$ has g-faces) equations $g=h$ for $g,h\in C^\iy_\rin(\R^l_k)$ with $g\vert_{\ti X}=h\vert_{\ti X}$, we see that the vector bundle $\Ker({}^b\d F^*)\ra X$ is spanned over $C^\iy(X)$ by sections $F^*({}^b\d \exp(f))$ for $f\in C^\iy(\R^l_k)$ with $f\vert_{\ti X}=0$ and $F^*({}^b\d g-{}^b\d h)$ for $g,h\in C^\iy_\rin(\R^l_k)$ with $g\vert_{\ti X}=h\vert_{\ti X}$. But then ${}^b\d \exp(f)\ot 1$ and $({}^b\d g-{}^b\d h)\ot 1$ lie in ${}^bK=\Ker({}^b\Om_{F^*})$ with $F^*({}^b\d \exp(f))={}^b\mu({}^b\d \exp(f)\ot 1)$ and $F^*({}^b\d g-{}^b\d h)={}^b\mu(({}^b\d g-{}^b\d h)\ot 1)$, so the image of ${}^b\mu$ contains a spanning set, and ${}^b\mu$ is surjective. This proves~(b).
\end{proof}

\begin{rem}
\label{cc7rem2}
The first author \cite[Prop.s 4.7.5 \& 4.7.9]{Fran} proves by a different method that if we allow $\pd X$ to have infinitely many connected components, then $\la$ is still an isomorphism in Theorem \ref{cc7thm1}(a), and ${}^b\la$ is surjective in (b).
\end{rem}

\subsection{Some computations of b-cotangent modules}
\label{cc73}

The next proposition generalizes Example~\ref{cc2ex6}:

\begin{prop}
\label{cc7prop1}
{\bf(a)} Let $A,A_\rin$ be sets and\/ $\bfF^{A,A_\rin}$ the free interior $C^\iy$-ring with corners from Definition\/ {\rm\ref{cc4def7},} with generators $x_a\in\fF^{A,A_\rin},$ $y_{a'}\in\fF^{A,A_\rin}_\rin$ for $a\in A,$ $a'\in A_\rin$. Then there is a natural isomorphism
\e
{}^b\Om_{\bfF^{A,A_\rin}}\cong\ban{\d_\rin\Psi_{\exp}(x_a),\d_\rin y_{a'}:a\in A,\; a'\in A_\rin}_\R\ot_\R\fF^{A,A_\rin}.
\label{cc7eq6}
\e

\noindent{\bf(b)} Suppose $\bfC$ is defined by a coequalizer diagram \eq{cc4eq13} in $\CRingscin$. Then writing $(x_a)_{a\in A},(y_{a'})_{a'\in A_\rin}$ and\/ $(\ti x_b)_{b\in B},(\ti y_{b'})_{b'\in B_\rin}$ for the generators of\/ $\bfF^{A,A_\rin}$ and\/ $\bfF^{B,B_\rin},$ we have an exact sequence
\e
\xymatrix@C=25pt{{\begin{subarray}{l}\ts  \bigl\langle \d_\rin\Psi_{\exp}(\ti x_b),\; b\in B,\\ \ts \d_\rin \ti y_{b'},\; b'\!\in\! B_\rin\bigr\rangle_\R\!\ot_\R\!\fC \end{subarray}} 
\ar[r]^{\ga} & {\begin{subarray}{l}\ts  \bigl\langle \d_\rin\Psi_{\exp}(x_a),\; a\in A,\\ \ts \d_\rin y_{a'},\; a'\!\in\! A_\rin\bigr\rangle_\R\!\ot_\R\!\fC \end{subarray}} \ar[r]^(0.7)\de & {}^b\Om_\bfC \ar[r] & 0 }
\label{cc7eq7}
\e
in\/ $\bfCmod,$ where if\/ $\bs\al=(\al,\al_\rex)$ and\/ $\bs\be=(\be,\be_\rex)$ in \eq{cc4eq13} map
\e
\begin{aligned}
\al&:\ti x_b\!\mapsto\! e_b\bigl((x_a)_{a\in A},(y_{a'})_{a'\in A_\rin}\bigr), & \al_\rex&:\ti y_{b'}\!\mapsto\! g_{b'}\bigl((x_a)_{a\in A},(y_{a'})_{a'\in A_\rin}\bigr), \\
\be&:\ti x_b\!\mapsto\! f_b\bigl((x_a)_{a\in A},(y_{a'})_{a'\in A_\rin}\bigr),& \be_\rex&:\ti y_{b'}\!\mapsto\! h_{b'}\bigl((x_a)_{a\in A},(y_{a'})_{a'\in A_\rin}\bigr),
\end{aligned}
\label{cc7eq8}
\e
for\/ $e_b,f_b,g_{b'},h_{b'}$ depending only on finitely many\/ $x_a,y_{a'},$ then\/ $\ga,\de$ are given by
\e
\begin{split}
\ga(\d_\rin\Psi_{\exp}(\ti x_b))&=\sum_{a\in A}\begin{aligned}[t]&\phi\bigl(\ts\frac{\pd e_b}{\pd x_a}\bigl((x_a)_{a\in A},(y_{a'})_{a'\in A_\rin}\bigr)\\
&-\ts\frac{\pd f_b}{\pd x_a}\bigl((x_a)_{a\in A},(y_{a'})_{a'\in A_\rin}\bigr)\bigr)\d_\rin\Psi_{\exp}(x_a)\end{aligned}
\\
&+\sum_{a'\in A_\rin}\begin{aligned}[t]&\phi\bigl(\ts y_{a'}\frac{\pd e_b}{\pd y_{a'}}\bigl((x_a)_{a\in A},(y_{a'})_{a'\in A_\rin}\bigr)\\
&-\ts y_{a'}\frac{\pd f_b}{\pd y_{a'}}\bigl((x_a)_{a\in A},(y_{a'})_{a'\in A_\rin}\bigr)\bigr)\d_\rin y_{a'},\end{aligned}
\\
\ga(\d_\rin \ti y_{b'})&=\sum_{a\in A}\begin{aligned}[t]&\phi\bigl(\ts g_{b'}^{-1}\frac{\pd g_{b'}}{\pd x_a}\bigl((x_a)_{a\in A},(y_{a'})_{a'\in A_\rin}\bigr)\\
&-\ts h_{b'}^{-1}\frac{\pd h_{b'}}{\pd x_a}\bigl((x_a)_{a\in A},(y_{a'})_{a'\in A_\rin}\bigr)\bigr)\d_\rin\Psi_{\exp}(x_a)\end{aligned}
\\
&+\sum_{a'\in A_\rin}\begin{aligned}[t]&\phi\bigl(\ts y_{a'}g_{b'}^{-1}\frac{\pd g_{b'}}{\pd y_{a'}}\bigl((x_a)_{a\in A},(y_{a'})_{a'\in A_\rin}\bigr)\\
&-\ts y_{a'}h_{b'}^{-1}\frac{\pd h_{b'}}{\pd y_{a'}}\bigl((x_a)_{a\in A},(y_{a'})_{a'\in A_\rin}\bigr)\bigr)\d_\rin y_{a'},\end{aligned}
\\
\de(\d_\rin\Psi_{\exp}(x_a))&=\d_\rin\ci\phi_\rin(\Psi_{\exp}(x_a)),\qquad \de(\d_\rin y_{a'})=\d_\rin\ci \phi_\rin(y_{a'}).
\end{split}
\label{cc7eq9}
\e
Hence if\/ $\bfC$ is finitely generated (or finitely presented) in the sense of Definition\/ {\rm\ref{cc4def13},} then ${}^b\Om_\bfC$ is finitely generated (or finitely presented).
\end{prop}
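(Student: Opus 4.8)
The plan is to prove parts (a) and (b) in parallel with the earlier treatment of ordinary cotangent modules in Example~\ref{cc2ex6}, using the b-derivation characterization of Definition~\ref{cc7def3} in place of the $C^\iy$-derivation characterization of Definition~\ref{cc2def8}. First I would establish (a). The free interior $C^\iy$-ring with corners $\bfF^{A,A_\rin}$ has $\fF^{A,A_\rin}_\rin$ generated as a monoid by the invertibles $\Psi_{\exp}(\fF^{A,A_\rin})$ together with the $y_{a'}$, so a b-derivation $\d_\rin':\fF^{A,A_\rin}_\rin\ra M$ is determined by its values on $\Psi_{\exp}(x_a)$ (equivalently, by the $C^\iy$-derivation $\d'=\d_\rin'\ci\Psi_{\exp}$, which by the universal property of $\Om_{\fF^A}\cong\an{\d x_a}_\R\ot_\R\fF^A$ of Example~\ref{cc2ex6}(a) is determined by the $\d' x_a$) and on the $y_{a'}$, and these values may be chosen freely. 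This is exactly the universal property asserting that the right-hand side of \eqref{cc7eq6}, with the obvious b-derivation sending $\Psi_{\exp}(x_a)\mapsto \d_\rin\Psi_{\exp}(x_a)$ and $y_{a'}\mapsto\d_\rin y_{a'}$, is a b-cotangent module; uniqueness up to unique isomorphism then gives the stated isomorphism. The small technical point to verify here is that conditions (i)--(iii) of Definition~\ref{cc7def3} really are satisfied by this candidate map, which is the content of the chain-rule computation $g = x_1^{a_1}\cdots x_n^{a_n}\exp f$ noted after \eqref{cc7eq1}.

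Next I would prove (b). Apply the functor ${}^b\Om_{(-)}$ and base change to the coequalizer diagram \eqref{cc4eq13} presenting $\bfC$. Functoriality of b-cotangent modules under interior morphisms (Definition~\ref{cc7def3}) gives, after tensoring with $\fC$, a complex
\e
\xymatrix@C=22pt{{}^b\Om_{\bfF^{B,B_\rin}}\ot_{\fF^{B,B_\rin}}\fC \ar[r]^{\ga} & {}^b\Om_{\bfF^{A,A_\rin}}\ot_{\fF^{A,A_\rin}}\fC \ar[r]^(0.68)\de & {}^b\Om_\bfC \ar[r] & 0,}
\label{cc7eqPropProof}
\e
with $\de$ induced by $\bs\phi$ and $\ga = ({}^b\Om_{\bs\al})_* - ({}^b\Om_{\bs\be})_*$. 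Substituting the identifications from (a) turns the two left-hand terms into the free modules displayed in \eqref{cc7eq7}, and the explicit formulae \eqref{cc7eq9} for $\ga,\de$ come from computing ${}^b\Om_{\bs\al},{}^b\Om_{\bs\be}$ on the generators $\Psi_{\exp}(\ti x_b),\ti y_{b'}$ via \eqref{cc7eq8}: the $x_a$-derivatives appear through $\d_\rin\Psi_{\exp}$ and the chain rule for $\Phi$, while the $y_{a'}$-derivatives appear with the characteristic factor $y_{a'}\frac{\pd}{\pd y_{a'}}$ (for the $e_b,f_b$ case) or $g_{b'}^{-1}y_{a'}\frac{\pd}{\pd y_{a'}}$ (for the $g_{b'},h_{b'}$ case), exactly matching Definition~\ref{cc7def3}. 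The part requiring genuine argument is \emph{exactness} of \eqref{cc7eqPropProof} at the middle term: surjectivity of $\de$ is immediate since $\bs\phi$ is surjective (Proposition~\ref{cc4prop6}), but exactness in the middle needs the universal property of ${}^b\Om_\bfC$ together with the fact that $\bfC$ is the colimit of $\bfF^{B,B_\rin}\rra\bfF^{A,A_\rin}$. Concretely, one checks that a b-derivation $\fC_\rin\ra M$ is the same data as a b-derivation $\fF^{A,A_\rin}_\rin\ra M$ (equivalently an $\fC$-module map out of ${}^b\Om_{\bfF^{A,A_\rin}}\ot\fC$) that annihilates the image of $\ga$; this is a formal consequence of $\bfC$ being a coequalizer and of b-derivations being a representable-functor-type condition. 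I expect this middle-exactness verification — bookkeeping the equivalence between b-derivations on the colimit and compatible b-derivations on the presenting free object, and matching it with $\Coker\ga$ — to be the main obstacle, though it is routine in spirit, paralleling \cite[Prop.~5.6]{Joyc9}.

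Finally, the finiteness conclusion is immediate from the exact sequence: if $\bfC$ is finitely generated then we may take $A,A_\rin$ finite, so the middle term of \eqref{cc7eq7} is a finitely generated $\fC$-module and hence so is its quotient ${}^b\Om_\bfC$; if $\bfC$ is finitely presented we may take $A,A_\rin,B,B_\rin$ all finite, so \eqref{cc7eq7} exhibits ${}^b\Om_\bfC$ as the cokernel of a map between finite rank free $\fC$-modules, i.e.\ finitely presented. One auxiliary point worth recording in passing is that all the free interior $C^\iy$-rings with corners and the morphisms $\bs\al,\bs\be,\bs\phi$ here are interior, so that b-cotangent modules and their functoriality are actually defined throughout; this is guaranteed by Definition~\ref{cc4def7} and the interiority statements in Definition~\ref{cc4def8}.
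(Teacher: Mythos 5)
Your route is viable but genuinely different from the paper's. The paper proves (b) directly from the explicit construction of ${}^b\Om_\bfC$ in Definition \ref{cc7def3}: it first shows that if $S\subseteq\fC_\rin$ generates $\fC_\rin$ under the operations $\Psi_g$, then ${}^b\Om_\bfC$ is presented by the symbols $\d_\rin s$, $s\in S$, subject to the relations \eq{cc7eq1} with arguments in $S$; it then converts the $\R$-type generators $\phi(x_a)$ into pairs of $[0,\iy)$-type generators $\phi_\rin\ci\Psi_{\exp}(\pm x_a)$ (with $\d_\rin\Psi_{\exp}(x_a)+\d_\rin\Psi_{\exp}(-x_a)=0$), observes that the remaining relations all come from $\bfF^{B,B_\rin}$ via $\bs\al,\bs\be$, and reads off ${}^b\Om_\bfC\cong\Coker\ga$; part (a) is then the special case $B=B_\rin=\es$. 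You instead prove (a) first by exhibiting the universal property of the right-hand side of \eq{cc7eq6}, and deduce (b) by functoriality plus the statement that b-derivations out of $\fC_\rin$ correspond to b-derivations out of $\fF^{A,A_\rin}_\rin$ equalizing $\bs\al,\bs\be$. What your approach buys is conceptual clarity (it parallels \cite[Prop.~5.6]{Joyc9} at the level of universal properties rather than presentations); what the paper's buys is that every step is an explicit manipulation of generators and relations, so no auxiliary representability statement is needed.

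Two caveats. First, the step you call ``a formal consequence of $\bfC$ being a coequalizer and of b-derivations being a representable-functor-type condition'' is the real content of (b), and it is not formal as stated: nothing in the paper corepresents b-derivations in $\CRingscin$. To make it precise you must either construct a square-zero-type extension $\bfC\ltimes M$ in $\CRingscin$ (ring part $\fC\op M$, monoid part $(\fC_\rin\t M)\amalg\{0\}$, with $C^\iy$-operations twisted by $\Phi_{\pd f/\pd x_i}$ and $\Phi_{y_j\,\pd f/\pd y_j}$) and check that sections of $\bfC\ltimes M\ra\bfC$ are exactly b-derivations, so that $\Hom$ out of the coequalizer \eq{cc4eq13} is the expected equalizer; or argue by hand that the set of pairs $(u,v)$ with $D(u)=D(v)$ contains the congruence generated by $\al_\rin=\be_\rin$ (using \eq{cc7eq1}, surjectivity of $\bs\phi$ from Proposition \ref{cc4prop6}, and the fact that coefficients act on $M$ through $\phi$), together with the extra identifications coming from Definition \ref{cc4def6}(i) (here one uses Definition \ref{cc7def3}(iii) to see $D(\epsilon)=0$ when $\epsilon$ is invertible with $\Phi_i(\epsilon)=1$). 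Either is routine but is genuine work, comparable in length to the paper's own argument. Second, a minor slip in (a): the parenthetical appeal to $\Om_{\fF^A}$ from Example \ref{cc2ex6}(a) is not the relevant object, since $\d'=\d_\rin'\ci\Psi_{\exp}$ is a derivation of $\fF^{A,A_\rin}$, not of $\fF^A$; the correct justification that values on $\Psi_{\exp}(x_a)$ and $y_{a'}$ may be prescribed freely is the unique global factorization $c'=\prod_{a'}y_{a'}^{k_{a'}}\cdot\Psi_{\exp}(f)$ of elements of $\fF^{A,A_\rin}_\rin$, as in \eq{cc4eq4}, followed by checking \eq{cc7eq1} for the resulting map. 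Your finiteness conclusion and the derivation of the formulae \eq{cc7eq9} from \eq{cc7eq8} are fine.
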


\begin{proof} Part (a) follows from (b) with $B=B_\rin=\es$, so it is enough to prove (b).

For (b), first note that by Definition \ref{cc7def3}, we have
\e
{}^b\Om_\bfC=\ban{\d_\rin c':c'\in \fC_\rin}_\R\ot_\R\fC\big/\an{\text{all relations \eq{cc7eq1}}}.
\label{cc7eq10}
\e
Now let $S\subseteq\fC_\rin$ be a generating subset, that is, every $c'\in\fC_\rin$ may be written as $c'=\Psi_g(s_1,\ldots,s_n)$ for some interior $g:[0,\iy)^n\ra[0,\iy)$ and $s_1,\ldots,s_n\in S$. For each $c'\in\fC_\rin\sm S$, choose interior $g^{c'}:[0,\iy)^{n^{c'}}\ra[0,\iy)$ and $s_1^{c'},\ldots,s_{n^{c'}}^{c'}\in S$ with $c'=\Psi_{g^{c'}}(s_1^{c'},\ldots,s_{n^{c'}}^{c'})$, using the Axiom of Choice. Then \eq{cc7eq1} gives
\e
\d_\rin c'=\ts\sum\limits_{i=1}^{n^{c'}}\Phi_{(g^{c'})^{-1}x_i\frac{\pd g^{c'}}{\pd x_i}}(s_1^{c'},\ldots,s_{n^{c'}}^{c'})\cdot \d_\rin s^{c'}_i.
\label{cc7eq11}
\e
Hence ${}^b\Om_\bfC$ is spanned over $\fC$ by $\d_\rin s$ for $s\in S$, and we may write ${}^b\Om_\bfC$ as $\an{\d_\rin s:s\in S}_\R\ot_\R\fC/($relations$)$. The relations required are the result of taking all relations \eq{cc7eq1} in \eq{cc7eq10}, and using \eq{cc7eq11} to substitute for all terms $\d_\rin c'$ for $c'\in\fC_\rin\sm S$, so that the resulting relation is a finite $\fC$-linear combination of $\d_\rin s$ for $s\in S$. A calculation using Definition \ref{cc4def2}(ii) shows that this relation is of the form \eq{cc7eq1} with $c_1',\ldots,c_n'\in S$. Hence
\e
{}^b\Om_\bfC\cong\ban{\d_\rin c':c'\in S}_\R\ot_\R\fC\big/\ban{\text{all relations \eq{cc7eq1} with $c_1',\ldots,c_n'\in S$}}.
\label{cc7eq12}
\e

Now let $\bfC$ be as in \eq{cc4eq13}. Then $\bfC$ has $\R$ type generators $\phi(x_a)$ for $a\in A$ and $[0,\iy)$ type generators $\phi_\rin(y_{a'})$ for $a'\in A_\rin$. But \eq{cc7eq12} only allows for $[0,\iy)$ type generators. To convert $\R$ type generators to $[0,\iy)$ type generators observe that, in a similar way to the proof of Proposition \ref{cc4prop3}, any smooth function $\R\ra\R$ or $\R\ra[0,\iy)$ may be extended to a smooth function $[0,\iy)^2\ra\R$ or $[0,\iy)^2\ra[0,\iy)$ by mapping $t\in\R$ to $(x,y)=(e^t,e^{-t})\in[0,\iy)^2$, which then satisfies $xy=1$. Thus, we may replace $\R$ type generators $\phi(x_a)\in\fC$ by pairs of $[0,\iy)$ type generators $\phi_\rin\ci\Psi_{\exp}(x_a),\phi_\rin\ci\Psi_{\exp}(-x_a)$ in $\fC_\rin$, satisfying the relation $(\phi_\rin\ci\Psi_{\exp}(x_a))\cdot(\phi_\rin\ci\Psi_{\exp}(-x_a))=1$. But applying $\d_\rin$ to this relation yields $\d_\rin(\phi_\rin\ci\Psi_{\exp}(x_a))+\d_\rin(\phi_\rin\ci\Psi_{\exp}(-x_a))=0$. Hence by taking $S=\bigl\{\phi_\rin\ci\Psi_{\exp}(\pm x_a):a\in A\bigr\}\cup\bigl\{\phi_\rin(y_{a'}):a'\in A_\rin\bigr\}$ in \eq{cc7eq12}, we see that
\e
{}^b\Om_\bfC\!\cong\!\frac{\ban{\d_\rin(\phi_\rin\ci\Psi_{\exp}(\pm x_a)):a\in A,\;\>
\d_\rin(\phi_\rin(y_{a'})):a'\in A_\rin}_\R\ot_\R\fC}{\begin{subarray}{l}\ts\bigl\langle\d_\rin(\phi_\rin\ci\Psi_{\exp}(x_a))+\d_\rin(\phi_\rin\ci\Psi_{\exp}(-x_a))=0,\;\> a\in A,\\ \ts\text{ other relations \eq{cc7eq1} with $c_i'=\phi_\rin\ci\Psi_{\exp}(\pm x_a),\phi_\rin(y_{a'})$}\bigr\rangle\end{subarray}}\,.
\label{cc7eq13}
\e

Clearly we can omit the generator $\d_\rin\phi_\rin\ci\Psi_{\exp}(-x_a)$ and 
the first relation on the bottom of \eq{cc7eq13} for $a\in A$. We can also omit the `$\phi_\rin$' in $\d_\rin(\phi_\rin\ci\Psi_{\exp}(\pm x_a))$, $
\d_\rin(\phi_\rin(y_{a'}))$, as these are only formal symbols anyway. Furthermore, in $\fF^{A,A_\rin}_\rin$ in \eq{cc4eq13}, the only relations between the generators $\Psi_{\exp}(\pm x_a),$ $a\in A$, $y_{a'}$, $a'\in A_\rin$, are generated by $\Psi_{\exp}(x_a)\cdot \Psi_{\exp}(-x_a)=1$ for $a\in A$. Hence in \eq{cc7eq13}, the `other relations' must all come from $\bfF^{B,B_\rin}$. But the relations in $\fF^{A,A_\rin}_\rin$ from $\bfF^{B,B_\rin}$ are generated by $\al_\rin(\Psi_{\exp}(\pm\ti x_b))=\be_\rin(\Psi_{\exp}(\pm\ti x_b))$, $b\in B$, and $\al_\rin(\ti y_{b'})=\be_\rin(\ti y_{b'})$, $b'\in B_\rin$. Putting all this together and using \eq{cc7eq8} yields
\e
{}^b\Om_\bfC\!\cong\!\frac{\ban{\d_\rin\Psi_{\exp}(x_a):a\in A,\;\>
\d_\rin y_{a'}:a'\in A_\rin}_\R\ot_\R\fC}{\begin{subarray}{l}\ts\bigl\langle
\d_\rin\ci \Psi_{\exp}\bigl(e_b\bigl((x_a)_{a\in A},(y_{a'})_{a'\in A_\rin}\bigr)\bigr)\\
\ts\quad -\d_\rin\ci \Psi_{\exp}\bigl(f_b\bigl((x_a)_{a\in A},(y_{a'})_{a'\in A_\rin}\bigr)\bigr)=0,\; b\in B,
\\ \ts \;\d_\rin\bigl(g_{b'}\bigl((x_a)_{a\in A},(y_{a'})_{a'\in A_\rin}\bigr)\bigr)\\
\ts \quad -\d_\rin\bigl(h_{b'}\bigl((x_a)_{a\in A},(y_{a'})_{a'\in A_\rin}\bigr)\bigr)=0,\; b'\in B_\rin\bigr\rangle\end{subarray}}\,.
\label{cc7eq14}
\e

By \eq{cc7eq1} and \eq{cc7eq9}, the relations in \eq{cc7eq14} are $\ga(\d_\rin\Psi_{\exp}(\ti x_b))=0$, $b\in B$, and $\ga(\d_\rin \ti y_{b'})=0$, $b'\in B_\rin$. Hence \eq{cc7eq14} shows that ${}^b\Om_\bfC\cong\Coker\ga$. Since this isomorphism is realized by mapping
$\d_\rin\Psi_{\exp}(x_a)\mapsto\d_\rin\ci\phi_\rin(\Psi_{\exp}(x_a)$, $\d_\rin y_{a'}\mapsto\d_\rin\ci \phi_\rin(y_{a'})$, which is $\de$ in \eq{cc7eq9}, we see that \eq{cc7eq7} is exact, proving~(b).
\end{proof}

We can compute the b-cotangent module of an interior $C^\iy$-ring with corners $\bfC$ modified by generators and relations, as in~\S\ref{cc45}.

\begin{prop}
\label{cc7prop2}
Suppose $\bfC\in\CRingscin,$ and we are given sets $A,\ab A_\rin,\ab B,\ab B_\rin,$ so that\/ $\bfC(x_a:a\in A)[y_{a'}:a'\in A_\rin]\in\CRingscin,$ and elements $f_b\in\fC(x_a:a\in A)[y_{a'}:a'\in A_\rin]$ and\/ $g_{b'},h_{b'}\in\fC(x_a:a\in A)[y_{a'}:a'\in A_\rin]_\rin$ for $b\in B$ and\/ $b'\in B_\rin$. Define $\bfD\in\CRingscin$ by
\e
\bfD=\bigl(\bfC(x_a:a\!\in\! A)[y_{a'}:a'\!\in\! A_\rin]\bigr)\big/\bigl(f_b\!=\!0,\,\, b\!\in\! B\bigr)\bigl[g_{b'}\!=\!h_{b'},\,\, b'\!\in\! B_\rin\bigr],
\label{cc7eq15}
\e
in the notation of Definition\/ {\rm\ref{cc4def8},} with natural morphism $\bs\pi:\bfC\ra\bfD$. Then we have an exact sequence in {\rm$\bfDmod$:}
\e
\text{\begin{small}$
\xymatrix@C=7.5pt{
{\begin{subarray}{l}\ts
\an{\d_\rin\Psi_{\exp}(\ti x_b),\, b\!\in\! B}_\R\!\ot_\R\!\fD \\
\ts \op\an{\d_\rin(\ti y_{b'}),\, b'\!\in\! B_\rin}_\R\!\ot_\R\!\fD\end{subarray}}  
\ar[rr]^(0.48){\al} && {\begin{subarray}{l}\ts {}^b\Om_\bfC\ot_\fC\fD\op \\
\ts\an{ \d_\rin\Psi_{\exp}(x_a),\, a\!\in\! A}_\R\!\ot_\R\!\fD \\
\ts \op\an{\d_\rin(y_{a'}),\, a'\!\in\! A_\rin}_\R\!\ot_\R\!\fD
\end{subarray}} \ar[rr]^(0.73)\be && {}^b\Om_\bfD \ar[r] & 0. }$\end{small}}
\label{cc7eq16}
\e
Here  we regard $\al$ as mapping~to
\begin{align*}
&{}^b\Om_{\bfC(x_a:a\in A)[y_{a'}:a'\in A_\rin]}\ot_{\fC(x_a:a\in A)[y_{a'}:a'\in A_\rin]}\fD={}^b\Om_{\bfC\ot_\iy\bfF^{A,A_\rin}}\ot_{\fC\ot_\iy\fF^{A,A_\rin}}\fD\\
&\cong\bigl({}^b\Om_\bfC\ot_\fC(\fC\ot_\iy\fF^{A,A_\rin})\op{}^b\Om_{\bfF^{A,A_\rin}}\ot_{\fF^{A,A_\rin}}(\fC\ot_\iy\fF^{A,A_\rin})\bigr)\ot_{\fC\ot_\iy\fF^{A,A_\rin}}\fD\\
&\cong{}^b\Om_\bfC\ot_\fC\fD\op{}^b\Om_{\bfF^{A,A_\rin}}\ot_{\fF^{A,A_\rin}}\fD\\
&\cong{}^b\Om_\bfC\ot_\fC\fD\op \an{p_a,\, a\in A}_\R\!\ot_\R\!\fD\op \an{q_{a'},\, a'\in A_\rin}_\R\!\ot_\R\!\fD,
\end{align*}
using Proposition\/ {\rm\ref{cc7prop1}(a)} in the last step, and then\/ $\al,\be$ act by
\e
\al=\begin{pmatrix}
\d_\rin\Psi_{\exp}(\ti x_b)\longmapsto \d_\rin\ci\Psi_{\exp}(f_b) \\ 
\d_\rin(\ti y_{b'})\longmapsto \d_\rin(g_{b'})-\d_\rin(h_{b'})
\end{pmatrix},\qquad
\be=\begin{pmatrix}
({}^b\Om_{\bs\pi})_* \\
\id \\ 
\id
\end{pmatrix}.
\label{cc7eq17}
\e
\end{prop}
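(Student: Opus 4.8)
The plan is to derive \eq{cc7eq16} from Proposition \ref{cc7prop1}(b), applied to compatible coequalizer presentations of $\bfC$ and $\bfD$. First I would choose, using Proposition \ref{cc4prop6} for $\CRingscin$, a coequalizer presentation $\bfF^{B',B'_\rin}\rra\bfF^{A',A'_\rin}\to\bfC$. Writing $\bfE=\bfC(x_a:a\in A)[y_{a'}:a'\in A_\rin]=\bfC\ot_\iy\bfF^{A,A_\rin}$ as in Definition \ref{cc4def8}, combining this presentation with the free object $\bfF^{A,A_\rin}$ as in the proof of Proposition \ref{cc4prop10} exhibits $\bfE$ as a coequalizer $\bfF^{B',B'_\rin}\rra\bfF^{A'\amalg A,\,A'_\rin\amalg A_\rin}\to\bfE$; then composing with the coequalizer \eq{cc4eq15} defining $\bfD=\bfE/(f_b\!=\!0:b\!\in\! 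B)[g_{b'}\!=\!h_{b'}:b'\!\in\! B_\rin]$, and merging the two coequalizers into one as in the proof of Proposition \ref{cc4prop10}, exhibits $\bfD$ as a coequalizer $\bfF^{B'\amalg B,\,B'_\rin\amalg B_\rin}\rra\bfF^{A'\amalg A,\,A'_\rin\amalg A_\rin}\to\bfD$ in $\CRingscin$, whose second map restricts on the $\bfF^{A',A'_\rin}$-part to the presentation of $\bfC$.

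Next I would apply Proposition \ref{cc7prop1}(b) to the presentations of $\bfC$, $\bfE$ and $\bfD$. Write $\langle B'\rangle_\fD,\langle A'\rangle_\fD,\langle B\rangle_\fD,\langle A\rangle_\fD$ for the free $\fD$-modules on the corresponding families of presentation symbols $\d_\rin\Psi_{\exp}(\ti x_\bullet),\d_\rin\ti y_\bullet,\d_\rin\Psi_{\exp}(x_\bullet),\d_\rin y_\bullet$. For $\bfC$, Proposition \ref{cc7prop1}(b) gives ${}^b\Om_\bfC=\Coker\ga^\bfC$ with $\ga^\bfC:\langle B'\rangle_\fC\ra\langle A'\rangle_\fC$; tensoring with $\fD$ over $\fC$, which is right exact, gives ${}^b\Om_\bfC\ot_\fC\fD=\Coker\bigl(\langle B'\rangle_\fD\ra\langle A'\rangle_\fD\bigr)$. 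For $\bfE$ and $\bfD$, Proposition \ref{cc7prop1}(b) presents ${}^b\Om_\bfE\ot_\fE\fD$ and ${}^b\Om_\bfD$ as cokernels of maps $\langle B'\rangle_\fD\ra\langle A'\rangle_\fD\op\langle A\rangle_\fD$ and $\langle B'\rangle_\fD\op\langle B\rangle_\fD\ra\langle A'\rangle_\fD\op\langle A\rangle_\fD$. The key observation is that the $\langle B'\rangle_\fD$-component of each of these maps, being built from the defining relations of the presentation of $\bfC$ and hence involving only the $A'$-symbols, lands in the summand $\langle A'\rangle_\fD$. Computing the cokernels in two stages --- first by the $\langle B'\rangle_\fD$-part, then by the $\langle B\rangle_\fD$-part --- and using that cokernels commute with the direct sum $\langle A'\rangle_\fD\op\langle A\rangle_\fD$, I obtain the splitting ${}^b\Om_\bfE\ot_\fE\fD\cong({}^b\Om_\bfC\ot_\fC\fD)\op\langle\d_\rin\Psi_{\exp}(x_a):a\in A\rangle_\R\ot_\R\fD\op\langle\d_\rin y_{a'}:a'\in A_\rin\rangle_\R\ot_\R\fD$ asserted in the statement (the last two summands identified with the free part of ${}^b\Om_{\bfF^{A,A_\rin}}$ via Proposition \ref{cc7prop1}(a)), and ${}^b\Om_\bfD\cong\Coker\bigl(\langle B\rangle_\fD\ra({}^b\Om_\bfC\ot_\fC\fD)\op\langle\d_\rin\Psi_{\exp}(x_a)\rangle_\fD\op\langle\d_\rin y_{a'}\rangle_\fD\bigr)$, which is the right-exact sequence \eq{cc7eq16}. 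Inspecting the explicit formulas \eq{cc7eq9} for the $\langle B\rangle$-part of $\ga$, specialised to our data ($e_b=f_b$, the other $\R$-type relation image $0$, and the $[0,\iy)$-type relation images $g_{b'},h_{b'}$), and using Definition \ref{cc7def3} with $\d=\d_\rin\ci\Psi_{\exp}$ a $C^\iy$-derivation, one recognises them as the chain-rule expansions of $\d_\rin\Psi_{\exp}(f_b)$ and $\d_\rin g_{b'}-\d_\rin h_{b'}$, giving $\al$ as in \eq{cc7eq17}.

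It remains to identify $\be$. By functoriality of b-cotangent modules under interior morphisms, the surjection ${}^b\Om_\bfE\ot_\fE\fD\ra{}^b\Om_\bfD$ occurring as the second map of the cokernel presentation is $({}^b\Om_{\bs\pi_\bfE})_*$ for the projection $\bs\pi_\bfE:\bfE\ra\bfD$. Restricting along the splitting above, and using that $\bs\pi$ factors as $\bfC\hookra\bfE\xrightarrow{\bs\pi_\bfE}\bfD$ while $\bs\pi_\bfE$ sends the free generators $x_a,y_{a'}$ to their images in $\bfD$, gives $\be=\bigl(({}^b\Om_{\bs\pi})_*,\id,\id\bigr)^{\mathrm t}$, as claimed; the relation $\be\ci\al=0$ then holds because $\bs\pi_\bfE$ sends $f_b$ to $0$ and identifies $g_{b'}$ with $h_{b'}$, so $\d_\rin\Psi_{\exp}(f_b)$ and $\d_\rin g_{b'}-\d_\rin h_{b'}$ are sent to $\d_\rin(1)=0$ in ${}^b\Om_\bfD$. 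I expect the main obstacle to be the bookkeeping in this two-stage cokernel argument --- in particular, verifying carefully that the $\langle B'\rangle_\fD$-component of the map for $\bfD$ meets only $\langle A'\rangle_\fD$, and matching the formulas \eq{cc7eq9} with the chain-rule expansions above --- rather than any conceptual difficulty. A more self-contained alternative would prove exactness of \eq{cc7eq16} directly from universal properties: $\be$ is surjective because $\fD_\rin$ is generated by $\pi_\rin(\fC_\rin)$, $\pi_\rin\Psi_{\exp}(\pm x_a)$ and $\pi_\rin(y_{a'})$; $\be\ci\al=0$ as above; and $\Ker\be\subseteq\Im\al$ follows by constructing, via the universal property of ${}^b\Om_\bfD$, a b-derivation $\fD_\rin\ra\Coker\al$ that is inverse to the induced surjection ${}^b\Om_\bfD\ra\Coker\al$.
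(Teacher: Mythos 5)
Your proposal is correct and follows essentially the same route as the paper: extend a coequalizer presentation of $\bfC$ to one of $\bfD$ by adjoining the generators $x_a,y_{a'}$ and the relations $f_b=0$, $g_{b'}=h_{b'}$ (lifting them to the free object as in Proposition \ref{cc4prop10}), apply Proposition \ref{cc7prop1}(b) together with right-exactness of $-\ot_\fC\fD$, and read off $\al,\be$ from the explicit formulas \eq{cc7eq9}. The paper organizes the final bookkeeping as a commutative diagram with exact columns and a snake-lemma chase rather than your two-stage cokernel computation, but that is only a presentational difference, not a different argument.
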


\begin{proof} Proposition \ref{cc4prop6}(b) gives a coequalizer diagram for $\bfC$ in $\CRingscin$:
\e
\xymatrix@C=70pt{  \bfF^{D,D_\rin} \ar@<.1ex>@/^.2pc/[r]^{\bs\ga} \ar@<-.1ex>@/_.2pc/[r]_{\bs\de} & \bfF^{C,C_\rin} \ar[r]^{\bs\phi} & \bfC. }
\label{cc7eq18}
\e
We can extend this to a coequalizer diagram for $\bfD$:
\e
\xymatrix@C=50pt{  \bfF^{B\amalg D,B_\rin\amalg D_\rin} \ar@<.1ex>@/^.2pc/[r]^{\bs\ep} \ar@<-.1ex>@/_.2pc/[r]_{\bs\ze} & \bfF^{A\amalg C,A_\rin\amalg C_\rin} \ar[r]^{\bs\psi} & \bfD. }
\label{cc7eq19}
\e
Here $\bs\ep,\bs\ze$ act as $\bs\ga,\bs\de$ do on the generators from $D,D_\rin$, and as $\ep(\ti x_b)=\ti f_b$, $\ep_\rin(\ti y_{b'})=\ti g_{b'}$, $\ze(\ti x_b)=0$, $\ze_\rin(\ti y_{b'})=\ti h_{b'}$ on the generators from $B,B_\rin$, where $\ti f_b,\ti g_{b'},\ti h_{b'}$ are lifts of $f_b,g_{b'},h_{b'}$ along the surjective morphism $\bfF^{A\amalg C,A_\rin\amalg C_\rin}\ra\bfC(x_a:a\in A)[y_{a'}:a'\in A_\rin]$ induced by $\bs\phi$. Also $\bs\psi$ acts as $\bs\phi$ does on the generators from $C,C_\rin$, and as the identity on generators $x_a,y_{a'}$ from $A,A_\rin$.

Thus the effect of enlarging $\bfF^{C,C_\rin}$ to $\bfF^{A\amalg C,A_\rin\amalg C_\rin}$ is to enlarge $\bfC$ to $\bfC(x_a:a\in A)[y_{a'}:a'\in A_\rin]$, and the effect of enlarging $ \bfF^{D,D_\rin}$ to $\bfF^{B\amalg D,B_\rin\amalg D_\rin}$ is to quotient $\bfC(x_a:a\in A)[y_{a'}:a'\in A_\rin]$ by extra relations $\ep(\ti x_b)=\ze(\ti x_b)$ for $b\in B$, which means $f_b=0$, and $\ep_\rin(\ti y_{b'})=\ze_\rin(\ti y_{b'})$ for $b'\in B_\rin$, which means $g_{b'}=h_{b'}$. So the result is $\bfD$ in~\eq{cc7eq15}.

Now consider the commutative diagram:
\begin{equation*}
\text{\begin{footnotesize}$
\xymatrix@!0@C=54pt@R=18pt{
&&& *+[l]{\begin{subarray}{l}\ts  \bigl\langle \d_\rin\Psi_{\exp}(\ti v_d),\; d\in D,\\ \ts \d_\rin \ti w_{d'},\; d'\!\in\! D_\rin\bigr\rangle_\R\!\ot_\R\!\fD \end{subarray}} \ar[dddd]^(0.45){\bigl(\begin{smallmatrix}  0 \\  * \end{smallmatrix}\bigr)} \ar[rr]^(0.28){\bigl(\begin{smallmatrix}  0 \\  \id \end{smallmatrix}\bigr)}
&& {\begin{subarray}{l} \ts \bigl\langle \d_\rin\Psi_{\exp}(\ti x_b),\; b\in B,\\ \ts \d_\rin \ti y_{b'},\; b'\!\in\! B_\rin\bigr\rangle_\R\!\ot_\R\!\fD \op \\
\ts  \bigl\langle \d_\rin\Psi_{\exp}(\ti v_d),\; d\in D,\\ \ts \d_\rin \ti w_{d'},\; d'\!\in\! D_\rin\bigr\rangle_\R\!\ot_\R\!\fD
\end{subarray}} \ar[dddd]^(0.45){\bigl(\begin{smallmatrix}  * & 0 \\ * & * \end{smallmatrix}\bigr)} \\
\\
\\
\\
&&&
*+[l]{\begin{subarray}{l}\ts  \bigl\langle \d_\rin\Psi_{\exp}(x_a),\; a\in A,\\ \ts \d_\rin y_{a'},\; a'\!\in\! A_\rin\bigr\rangle_\R\!\ot_\R\!\fD \\
\ts  \bigl\langle \d_\rin\Psi_{\exp}(v_c),\; c\in C,\\ \ts \d_\rin w_{c'},\; c'\!\in\! C_\rin\bigr\rangle_\R\!\ot_\R\!\fD \end{subarray}} \ar[ddd]^(0.55){\bigl(\begin{smallmatrix}  0 & * \\ \id & 0 \end{smallmatrix}\bigr)} \ar@{=}[rr] &&
{\begin{subarray}{l}\ts  \bigl\langle \d_\rin\Psi_{\exp}(x_a),\; a\in A,\\ \ts \d_\rin y_{a'},\; a'\!\in\! A_\rin\bigr\rangle_\R\!\ot_\R\!\fD \\
\ts  \bigl\langle \d_\rin\Psi_{\exp}(v_c),\; c\in C,\\ \ts \d_\rin w_{c'},\; c'\!\in\! C_\rin\bigr\rangle_\R\!\ot_\R\!\fD \end{subarray}} \ar[ddd]^(0.6){(* \, *)}
\\
\\
\\
*+[r]{\begin{subarray}{l}\ts
\an{\d_\rin\Psi_{\exp}(\ti x_b),\, b\!\in\! B}_\R\!\ot_\R\!\fD \\
\ts \op\an{\d_\rin(\ti y_{b'}),\, b'\!\in\! B_\rin}_\R\!\ot_\R\!\fD\end{subarray}}  
\ar[rrr]^(0.61){\al} &&& {\begin{subarray}{l}\ts {}^b\Om_\bfC\ot_\fC\fD\op \\
\ts\an{ \d_\rin\Psi_{\exp}(x_a),\, a\!\in\! A}_\R\!\ot_\R\!\fD \\
\ts \op\an{\d_\rin(y_{a'}),\, a'\!\in\! A_\rin}_\R\!\ot_\R\!\fD
\end{subarray}} \ar[dd] \ar[rr]^(0.7){\be} && {}^b\Om_\bfD \ar[dd] \ar[r] & *+[l]{0} 
\\
\\
&&& 0 && 0.\!\! }$\end{footnotesize}}
\end{equation*}
The middle column is the direct sum of $-\ot_\fC\fD$ applied to \eq{cc7eq7} from Proposition \ref{cc7prop1}(b) for \eq{cc7eq18} and the identity map on $\an{\d_\rin\Psi_{\exp}(x_a),\d_\rin y_{a'}}_\R\ot_\R\fD$, and the right hand column is \eq{cc7eq7} for \eq{cc7eq19}. Thus the columns are exact. Some diagram-chasing in exact sequences and the snake lemma then shows the third row is exact. But this is \eq{cc7eq16}, so the proposition follows.
\end{proof}

Here is an analogue of Proposition~\ref{cc2prop7}:

\begin{prop}
\label{cc7prop3}
Let\/ $\bfC$ be an interior $C^\iy$-ring with corners and\/ $A\subseteq\fC$, $A_\rin\subseteq\fC_\rin$ be subsets, and write  $\bfD=\bfC(a^{-1}:a\in A)[a^{\prime -1}:a'\in A_\rin]$ for the localization in Definition\/ {\rm\ref{cc4def9},} with projection $\bs\pi:\bfC\ra\bfD$. Then $({}^b\Om_{\bs\pi})_*:{}^b\Om_\bfC\ot_\fC\fD\ra{}^b\Om_\bfD$ is
an isomorphism of\/ $\bfD$-modules.	
\end{prop}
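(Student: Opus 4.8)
The plan is to mimic the proof of Proposition \ref{cc2prop7} for ordinary cotangent modules, using the characterization of the localization $\bfD = \bfC(a^{-1}:a\in A)[a^{\prime -1}:a'\in A_\rin]$ by generators and relations from \eqref{cc4eq16}, and the exact sequence for b-cotangent modules under adding generators and relations from Proposition \ref{cc7prop2}. Write $B = A$, $B_\rin = A_\rin$ and apply \eqref{cc4eq16}, so that $\bfD$ is obtained from $\bfC$ by adjoining $\R$-type generators $x_a$ ($a\in A$) and $[0,\iy)$-type generators $y_{a'}$ ($a'\in A_\rin$), then imposing relations $a\cdot x_a = 1$ of $\R$-type and $a'\cdot y_{a'} = 1$ of $[0,\iy)$-type. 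Feeding this presentation into Proposition \ref{cc7prop2} (with $f_a = a\cdot x_a - 1$ and $g_{a'} = a'\cdot y_{a'}$, $h_{a'} = 1$) gives an exact sequence computing ${}^b\Om_\bfD$ in terms of ${}^b\Om_\bfC\ot_\fC\fD$ and the free $\fD$-modules on the $\d_\rin\Psi_{\exp}(x_a)$, $\d_\rin y_{a'}$, modulo the images of $\d_\rin\Psi_{\exp}(\ti x_a)$ and $\d_\rin \ti y_{a'}$ under $\al$ in \eqref{cc7eq17}.

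The key computation is then to show that this cokernel collapses onto ${}^b\Om_\bfC\ot_\fC\fD$, with the map $({}^b\Om_{\bs\pi})_*$ being the resulting isomorphism. For the $[0,\iy)$-type localizations this is the cleanest: the relation $a'\cdot y_{a'} = 1$ gives, on applying $\d_\rin$ (which is a monoid morphism to the abelian group ${}^b\Om_\bfD$ by Definition \ref{cc7def3}(i)), the identity $\d_\rin(\bs\pi_\rin(a')) + \d_\rin(y_{a'}) = \d_\rin(1) = 0$, so $\d_\rin(y_{a'}) = -\d_\rin(\bs\pi_\rin(a'))$ already lies in the image of ${}^b\Om_\bfC\ot_\fC\fD$. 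For the $\R$-type generators, the relation $a\cdot x_a = 1$ is of $\R$-type, so one applies $\d_\rin\ci\Psi_{\exp}$: writing $\Psi_{\exp}(a\cdot x_a) = \Psi_{\exp}(a)\cdot\Psi_{\exp}(x_a)$ and using that $\d_\rin$ is a monoid morphism, one gets $\d_\rin\Psi_{\exp}(a) + \d_\rin\Psi_{\exp}(x_a) = \d_\rin\Psi_{\exp}(1) = 0$, so $\d_\rin\Psi_{\exp}(x_a) = -\d_\rin\Psi_{\exp}(a)$ lies in the image of ${}^b\Om_\bfC\ot_\fC\fD$ as well. Hence every generator of the free summands is, modulo the relations produced by $\al$, already accounted for by ${}^b\Om_\bfC\ot_\fC\fD$, so $\be$ in \eqref{cc7eq16} restricted to the ${}^b\Om_\bfC\ot_\fC\fD$ summand is surjective; its component is $({}^b\Om_{\bs\pi})_*$, giving surjectivity of $({}^b\Om_{\bs\pi})_*$.

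For injectivity of $({}^b\Om_{\bs\pi})_*$ I would track the exact sequence \eqref{cc7eq16} more carefully: the kernel of $\be$ is the image of $\al$, and one must check that intersecting this image with the ${}^b\Om_\bfC\ot_\fC\fD$ summand consists exactly of the submodule that has to be killed for $({}^b\Om_{\bs\pi})_*$ to be injective — i.e.\ that there is no "extra" relation among elements of ${}^b\Om_\bfC\ot_\fC\fD$ beyond those already present. Concretely, one checks that the only relations $\al$ imposes are $\d_\rin\Psi_{\exp}(x_a) = -\d_\rin\Psi_{\exp}(\bs\pi(a)) \cdot (\text{unit})$ and $\d_\rin(y_{a'}) = -\d_\rin(\bs\pi_\rin(a'))$ (after multiplying through by the now-invertible $\bs\pi_\rin(a)$, $\bs\pi_\rin(a')$ to clear denominators, exactly as in Proposition \ref{cc2prop7}), which purely eliminate the free generators and impose nothing new on ${}^b\Om_\bfC\ot_\fC\fD$. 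The main obstacle I expect is this bookkeeping in \eqref{cc7eq16}: making sure the $\fD$-linear combinations appearing in $\al(\d_\rin\Psi_{\exp}(\ti x_a))$ and $\al(\d_\rin \ti y_{a'})$ — which via \eqref{cc7eq9} involve derivatives of $a\cdot x_a$ and $a'\cdot y_{a'}$ — really do only express the free generators in terms of $\d_\rin\Psi_{\exp}(\bs\pi(a))$, $\d_\rin(\bs\pi_\rin(a'))$ after inverting $\bs\pi_\rin(a)$, $\bs\pi_\rin(a')$, and that no component of $\al$ lands nontrivially inside ${}^b\Om_\bfC\ot_\fC\fD$ in a way that would force a further quotient. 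Once this is verified, splitting the exact sequence \eqref{cc7eq16} shows $({}^b\Om_{\bs\pi})_*$ is an isomorphism, and functoriality of ${}^b\Om$ identifies it with the map in the statement. An alternative, perhaps cleaner, route would be to verify directly that $\bigl({}^b\Om_\bfC\ot_\fC\fD,\ \d_{\bfD,\rin}\bigr)$ satisfies the universal property of Definition \ref{cc7def3} for $\bfD$: any b-derivation $\d'_\rin:\fD_\rin\ra M'$ restricts along $\bs\pi_\rin$ to a b-derivation on $\fC_\rin$, hence factors through ${}^b\Om_\bfC$, and then one shows the induced map extends uniquely over $\fD$ because $\fD_\rin$ is generated over $\bs\pi_\rin(\fC_\rin)$ by the invertible elements $\bs\pi_\rin(a)^{-1}$, $\bs\pi_\rin(a')^{-1}$ whose $\d_\rin$ is forced to be $-\d_\rin\Psi_{\exp}(\bs\pi(a))$, $-\d_\rin(\bs\pi_\rin(a'))$ — this avoids \eqref{cc7eq16} entirely and is the version I would ultimately write up.
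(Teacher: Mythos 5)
Correct, and this is essentially the paper's own proof: the paper likewise presents $\bfD$ by \eq{cc4eq16}, applies Proposition \ref{cc7prop2}, and observes that $\al$ in \eq{cc7eq16} carries the relation summand isomorphically onto the generator summand (the component of $\al(\d_\rin\Psi_{\exp}(\ti x_a))$ along $\d_\rin\Psi_{\exp}(x_a)$ is multiplication by the invertible element $\pi(a)$, and that of $\al(\d_\rin \ti y_{a'})$ along $\d_\rin y_{a'}$ is $1$), so the image of $\al$ meets ${}^b\Om_\bfC\ot_\fC\fD$ only in zero and exactness forces the top component $({}^b\Om_{\bs\pi})_*$ of $\be$ to be an isomorphism --- which is precisely the injectivity bookkeeping you flag. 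The universal-property alternative you sketch at the end is not the paper's route (and would still need care in defining the candidate b-derivation on $\fD_\rin$ and proving uniqueness of the extension), but the argument you actually wrote out coincides with the paper's.
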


\begin{proof} We may write $\bfD$ by adding generators and relations to $\bfC$ as in \eq{cc4eq16}, where we add an $\R$-generator $x_a$ and $\R$-relation $a\cdot x_a=1$ for each $a\in A$, and a $[0,\iy)$-generator $y_{a'}$ and $[0,\iy)$-relation $a'\cdot y_{a'}=1$ for each $a'\in A_\rin$. Then Proposition \ref{cc7prop2} gives an exact sequence \eq{cc7eq16} computing ${}^b\Om_\bfD$. But in this case the contributions from the generators $x_a,y_{a'}$ in the middle of \eq{cc7eq16} are exactly cancelled by the contributions from the relations $a\cdot x_a=1$, $a'\cdot y_{a'}=1$ on the left of \eq{cc7eq16}. That is, $\al$ in \eq{cc7eq16} induces an isomorphism from the $\ti x_b,\ti y_{b'}$ terms to the $x_a,y_{a'}$ terms. So by exactness we see that the top component of $\be$ in \eq{cc7eq16}, which is 
$({}^b\Om_{\bs\pi})_*:{}^b\Om_\bfC\ot_\fC\fD\ra{}^b\Om_\bfD$, is an isomorphism.
\end{proof}

We show that the reflection functors $\Pi_\rin^\Z,\Pi_\Z^\tf,\Pi_\tf^\sa,\Pi_\rin^\sa$ in Theorem \ref{cc4thm5} do not change b-cotangent modules, in a certain sense.

\begin{prop}
\label{cc7prop4}
Theorem\/ {\rm\ref{cc4thm5}} constructs $\Pi_\rin^\Z:\CRingscin\ra\CRingscZ$ left adjoint to $\inc:\CRingscZ\hookra\CRingscin$. For each\/ $\bfC\in\CRingscin,$ there is a natural surjective morphism $\bs\pi:\bfC\ra\Pi_\rin^\Z(\bfC)=:\bfD$ in $\CRingscin,$ the unit of the adjunction. Thus Definition\/ {\rm\ref{cc7def3}} gives a morphism $({}^b\Om_{\bs\pi})_*:\bs\pi_*({}^b\Om_\bfC)={}^b\Om_\bfC\ot_\fC\fD\ra{}^b\Om_\bfD$ in $\bfDmod$. Then $({}^b\Om_{\bs\pi})_*$ is an isomorphism.

The analogue holds for the other functors $\Pi_\Z^\tf,\Pi_\tf^\sa,\Pi_\rin^\sa$ in Theorem\/~{\rm\ref{cc4thm5}}.
\end{prop}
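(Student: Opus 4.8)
The plan is to use the explicit construction of $\Pi_\rin^\Z$ (and its siblings) in the proof of Theorem \ref{cc4thm5} as a directed colimit of a tower $\bfC = \bfC^0 \to \bfC^1 \to \cdots$, where each step $\bs\al^n : \bfC^n \to \bfC^{n+1}$ imposes only \emph{interior-type} relations of the form $c' = c''$ with $c',c'' \in \fC^n_\rin$ satisfying $\pi^\gp(c') = \pi^\gp(c'')$ (respectively $\pi^{\rm tf}(c') = \pi^{\rm tf}(c'')$, or relations $n_{c'}\cdot s_{c'} = c'$ introducing $[0,\iy)$-generators $s_{c'}$, for $\Pi_\Z^\tf$, $\Pi_\tf^\sa$). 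First I would reduce to showing that each single step $\bs\al^n$ induces an isomorphism $({}^b\Om_{\bs\al^n})_* : {}^b\Om_{\bfC^n} \ot_{\fC^n} \fC^{n+1} \to {}^b\Om_{\bfC^{n+1}}$, and then pass to the colimit. The colimit passage is routine: $\Pi_\rin:\CRingscin\to\Sets$ (enhanced to $\bar\Pi_\rin$ to $\Mon$) preserves directed colimits by Theorem \ref{cc4thm3}(a), hence so does $\fC\mapsto\fC_\rin$ and $\fC\mapsto\fC$, and from the construction of ${}^b\Om$ in Definition \ref{cc7def3} as a quotient of a free module on the symbols $\d_\rin c'$, $c'\in\fC_\rin$, one checks that ${}^b\Om$ commutes with directed colimits of interior $C^\iy$-rings with corners along interior morphisms; tensoring up to $\fD$ along the colimit then gives $({}^b\Om_{\bs\pi})_* = \underrightarrow\lim ({}^b\Om_{\bs\al^{n-1}\ci\cdots\ci\bs\al^0})_*$, a colimit of isomorphisms.

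For a single step, the key input is Proposition \ref{cc7prop2}, which computes the b-cotangent module of a $C^\iy$-ring with corners modified by generators and relations. For the $\Pi_\rin^\Z$ and $\Pi_\Z^\tf$ steps, where $\bfC^{n+1}$ is $\bfC^n$ with relations $[c'=c'': c',c'' \text{ with } \pi^\gp(c')=\pi^\gp(c'')]$ and no new generators, the exact sequence \eqref{cc7eq16} reads
\[
\textstyle\bigoplus_{(c',c'')} \an{\d_\rin\ti y_{(c',c'')}}_\R\ot_\R\fC^{n+1} \xrightarrow{\ \al\ } {}^b\Om_{\bfC^n}\ot_{\fC^n}\fC^{n+1} \xrightarrow{\ ({}^b\Om_{\bs\al^n})_*\ } {}^b\Om_{\bfC^{n+1}} \longrightarrow 0,
\]
so it suffices to show $\al$ is the zero map, i.e.\ that $\d_\rin c' = \d_\rin c''$ in ${}^b\Om_{\bfC^n}\ot_{\fC^n}\fC^{n+1}$ whenever $\pi^\gp(c')=\pi^\gp(c'')$. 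By Remark \ref{cc7rem1}(b), a b-derivation factors through $\pi^\gp:\fC^n_\rin\to(\fC^n_\rin)^\gp$; applying this to the universal b-derivation $\d_{\bfC^n,\rin}$ composed with $-\ot_{\fC^n}\fC^{n+1}$ gives exactly $\d_\rin c' = \d_\rin c''$. For $\Pi_\tf^\sa$, where the step adds a $[0,\iy)$-generator $s_{c'}$ and relation $n_{c'}\cdot s_{c'}=c'$ (with $n_{c'}\ge 2$), \eqref{cc7eq16} has one new generator $\d_\rin s_{c'}$ and one new relation contributing $\d_\rin(n_{c'}\cdot s_{c'}) - \d_\rin c' = n_{c'}\,\d_\rin s_{c'} - \d_\rin c'$; since $n_{c'}$ is invertible in $\R\subseteq\fC^{n+1}$, this relation lets us solve $\d_\rin s_{c'} = n_{c'}^{-1}\d_\rin c'$, so $\al$ in \eqref{cc7eq16} is injective onto the new generator summand, and exactness gives that $({}^b\Om_{\bs\al^n})_*$ restricted to the old part ${}^b\Om_{\bfC^n}\ot_{\fC^n}\fC^{n+1}$ is an isomorphism. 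Finally $\Pi_\rin^\sa = \Pi_\tf^\sa\ci\Pi_\Z^\tf\ci\Pi_\rin^\Z$, so the composite case follows by functoriality of $({}^b\Om_{-})_*$ (Definition \ref{cc7def3}) and the chain rule ${}^b\Om_{\bs\psi\ci\bs\phi} = {}^b\Om_{\bs\psi}\ci{}^b\Om_{\bs\phi}$.

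The main obstacle I anticipate is the directed-colimit step: one must verify carefully that ${}^b\Om$ commutes with the directed colimit $\bfD = \underrightarrow\lim_n \bfC^n$ along the interior morphisms $\bs\al^n$, and that this commutation is compatible with the base-change $-\ot_{\fC^n}\fC^{n+1}$ in the sense that the natural map $\underrightarrow\lim_n\bigl({}^b\Om_{\bfC^n}\ot_{\fC^n}\fD\bigr) \to {}^b\Om_\bfD$ is an isomorphism. This requires knowing both that tensor products commute with directed colimits (standard) and that the presentation of ${}^b\Om_{\bfC^n}$ by generators $\d_\rin c'$ and relations \eqref{cc7eq1} passes to the colimit without collapse — which follows because directed colimits of modules are exact and $\fC^n_\rin \to \fD_\rin$ is a directed system of monoids with colimit $\fD_\rin$. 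Everything else is bookkeeping with the exact sequences of Proposition \ref{cc7prop2} and the factorization property of b-derivations through $\pi^\gp$.
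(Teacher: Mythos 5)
Your treatment of the individual steps of the tower $\bfC=\bfC^0\ra\bfC^1\ra\cdots$ matches the paper's: both arguments feed each stage into Proposition \ref{cc7prop2} and show the relation map $\al$ in \eq{cc7eq16} either vanishes (for $\Pi_\rin^\Z,\Pi_\Z^\tf$) or is an isomorphism onto the new-generator summand with invertible coefficients $n_{c'}$ (for $\Pi_\tf^\sa$), and the paper's cancellation $\d_\rin g_{b'}+\d_\rin i_{b'}=\d_\rin h_{b'}+\d_\rin i_{b'}$ is just the explicit form of your ``b-derivations factor through $\pi^\gp$'' observation. Where you genuinely diverge is the assembly over the tower: you propose to prove that ${}^b\Om(-)\ot\fD$ commutes with the directed colimit $\bfD=\underrightarrow{\lim}\,\bfC^n$, which is true but is exactly the extra lemma you flag as the main obstacle (one must check that the presentation of ${}^b\Om_\bfD$ by generators $\d_\rin d'$ and relations \eq{cc7eq1} is the colimit of the stagewise presentations, compatibly with base change). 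The paper avoids this entirely: having shown inductively that ${}^b\Om_\bfC\ot_\fC\fD\cong{}^b\Om_{\bfC^n}\ot_{\fC^n}\fD$ for all $n$, it re-presents $\bfD$ as the single quotient $\bfC/[\ti g_{b'}=\ti h_{b'},\ b'\in\coprod_n B_\rin^n]$ by all relations lifted to $\bfC$, and applies Proposition \ref{cc7prop2} once to that presentation, using the stagewise isomorphisms to see that every $\d_\rin\ti g_{b'}-\d_\rin\ti h_{b'}$ already vanishes in ${}^b\Om_\bfC\ot_\fC\fD$; so $\al=0$ and $\be=({}^b\Om_{\bs\pi})_*$ is an isomorphism. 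Adopting that trick would let you delete your colimit-commutation step altogether; conversely, your route, once the filtered-colimit lemma is written out, is somewhat more general (it would apply to any tower of maps each inducing an isomorphism on b-cotangent modules).

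One small repair: for the $\Pi_\Z^\tf$ step the relations are $\pi^{\rm tf}(c')=\pi^{\rm tf}(c'')$, not $\pi^\gp(c')=\pi^\gp(c'')$, and factoring through $\pi^\gp$ alone does not give $\d_\rin c'=\d_\rin c''$. You need the further remark that the target is a $\fC$-module, hence an $\R$-vector space and torsion-free: from $c'^{\,n}=c''^{\,n}$ (equivalently, torsion difference in the groupification, using integrality) one gets $n\,\d_\rin c'=n\,\d_\rin c''$ and then divides by $n$. This is exactly how the paper handles that case, and it is a one-line fix to your argument rather than a structural problem.
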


\begin{proof} The definition of $\bfD=\Pi_\rin^\Z(\bfC)$ in the proof of Theorem \ref{cc4thm5} involves inductively constructing a sequence $\bfC=\bfC^0\twoheadrightarrow\bfC^1 \twoheadrightarrow\bfC^2\twoheadrightarrow\cdots$ and setting $\bfD=\underrightarrow\lim_{n=0}^\iy\bfC^n$. Here $\bfC^{n+1}=\bfC^n/[g_{b'}=h_{b'}$, $b'\in B^n_\rin]$, where the relations are $g_{b'}=h_{b'}$ if $g_{b'},h_{b'}\in \fC^n_\rin$ with $\pi^\gp(g_{b'})=\pi^\gp(h_{b'})$. This is equivalent to the existence of $i_{b'}\in\fC^n_\rin$ with $g_{b'}i_{b'}=h_{b'}i_{b'}$. But then in ${}^b\Om_{\bfC^n}$ we have
\begin{equation*}
\d_\rin g_{b'}+\d_\rin i_{b'}=\d_\rin(g_{b'}i_{b'})=\d_\rin(h_{b'}i_{b'})=\d_\rin h_{b'}+\d_\rin i_{b'},
\end{equation*}
so $\d_\rin g_{b'}=\d_\rin h_{b'}$. Applying Proposition \ref{cc7prop2} with $\bfC^n$ in place of $\bfC$ and $A=A_\rin=B=\es$ gives an exact sequence in \hbox{$\fC^{n+1}$-mod:}
\begin{equation*}
\xymatrix@C=14pt{
\an{\d_\rin(\ti y_{b'}),\, b'\!\in\! B^n_\rin}_\R\!\ot_\R\!\fD 
\ar[rr]^(0.57){\al} && {}^b\Om_{\bfC^n}\ot_{\fC^n}\fC^{n+1} \ar[rr]^(0.6)\be && {}^b\Om_{\bfC^{n+1}} \ar[r] & 0. }
\end{equation*}
Here $\al$ maps $\d_\rin(\ti y_{b'})\mapsto \d_\rin g_{b'}-\d_\rin h_{b'}=0$ by \eq{cc7eq17}, so $\al=0$, and $\be$ is an isomorphism. Applying $-\ot_{\fC^{n+1}}\fD$ shows that the natural map ${}^b\Om_{\bfC^n}\ot_{\fC^n}\fD \ra{}^b\Om_{\bfC^{n+1}}\ot_{\fC^{n+1}}\fD$ is an isomorphism. Thus by induction on $n$ and $\bfC^0=\bfC$ we see that ${}^b\Om_{\bfC}\ot_{\fC}\fD\cong{}^b\Om_{\bfC^n}\ot_{\fC^n}\fD$ for all~$n\ge 0$.

Now $\bfD\cong\bfC/[\ti g_{b'}=\ti h_{b'}$, $b'\in\coprod_{n\ge 0} B^n_\rin]$, where the relations $\ti g_{b'}=\ti h_{b'}$ are lifts to $\bfC$ of all relations $g_{b'}=h_{b'}$ in $\bfC^{n+1}=\bfC^n/[g_{b'}=h_{b'}$, $b'\in B^n_\rin]$ for all $n\ge 0$. Since $\d_\rin g_{b'}=\d_\rin h_{b'}$ in ${}^b\Om_{\bfC^n}$, and hence in ${}^b\Om_{\bfC^n}\ot_{\fC^n}\fD$, and ${}^b\Om_{\bfC}\ot_{\fC}\fD\cong{}^b\Om_{\bfC^n}\ot_{\fC^n}\fD$, we see that $\d_\rin\ti g_{b'}=\d_\rin\ti h_{b'}$ in ${}^b\Om_{\bfC}\ot_{\fC}\fD$ for all such $\ti g_{b'},\ti h_{b'}$. Thus in \eq{cc7eq16} for $\bfC,\bfD$ with $A,A_\rin,B=\es$ and $B_\rin=\coprod_{n\ge 0} B^n_\rin$ we again have $\al=0$, so $\be=({}^b\Om_{\bs\pi})_*:{}^b\Om_\bfC\ot_\fC\fD\ra{}^b\Om_\bfD$ is an isomorphism.

For $\Pi_\Z^\tf$ a similar proof works. In this case $\bfC^{n+1}=\bfC^n/[g_{b'}=h_{b'}$, $b'\in B^n_\rin]$, with relations $g_{b'}=h_{b'}$ if $g_{b'},h_{b'}\in \fC^n_\rin$ with $\pi^\tf(g_{b'})=\pi^\tf(h_{b'})$, which (as $\bfC^n$ is integral) is equivalent to $g_{b'}^n=h_{b'}^n$ for some $n\ge 1$. But then $n\,\d_\rin g_{b'}=\d_\rin(g_{b'}^n)=\d_\rin(h_{b'}^n)=n\,\d_\rin h_{b'}$, so $\d_\rin g_{b'}=\d_\rin h_{b'}$, and the rest of the proof is unchanged.

For $\Pi_\tf^\sa$, now $\bfC^{n+1}=\bfC^n[y_{b'}:b'\in B_\rin^n]/[y_{b'}^{n_{b'}}=h_{b'}$, $b'\in B^n_\rin]$, for $h_{b'}$ in $\fC^n_\rin\subseteq(\fC^n_\rin)^\gp$ and $n_{b'}\ge 2$ for which there exists (unique) $c''\in (\fC^n_\rin)^\gp\sm \fC^n_\rin$ with $n_{b'}\cdot c''=h_{b'}$. That is, we introduce generators $y_{b'}$ and relations $y_{b'}^{n_{b'}}=h_{b'}$ in pairs. Since $y_{b'}^{n_{b'}}=h_{b'}$ implies that $n_{b'}\cdot\d_\rin y_{b'}=\d_\rin h_{b'}$, again \eq{cc7eq16} implies that ${}^b\Om_{\bfC^n}\ot_{\fC^n}\fC^{n+1} \ra{}^b\Om_{\bfC^{n+1}}$ is an isomorphism, and the rest of the argument is similar. As $\Pi_\rin^\sa=\Pi_\tf^\sa\ci\Pi_\Z^\tf\ci\Pi_\rin^\Z$, the result for $\Pi_\rin^\sa$ follows.
\end{proof}

\subsection{B-cotangent modules of pushouts}
\label{cc74}

The next theorem generalizes Theorem~\ref{cc2thm1}.

\begin{thm}
\label{cc7thm2}
Suppose we are given a pushout diagram in\/ $\CRingscin,\ab\CRingscZ,\ab\CRingsctf,$ or\/~{\rm$\CRingscsa$:}
\e
\begin{gathered}
\xymatrix@C=60pt@R=14pt{ *+[r]{\bfC} \ar[r]_{\bs\be} \ar[d]^{\bs\al} & *+[l]{\bfE} \ar[d]_{\bs\de} \\
*+[r]{\bfD} \ar[r]^{\bs\ga} & *+[l]{\bfF,\!} }
\end{gathered}
\label{cc7eq20}
\e
so that\/ $\bfF=\bfD\amalg_\bfC\bfE$. Then the following sequence of\/
$\fF$-modules is exact:
\e
\xymatrix@C=16pt{ {}^b\Om_\bfC\ot_{\fC,\ga\ci\al}\fF
\ar[rrr]^(0.5){({}^b\Om_{\bs\al})_*\op -({}^b\Om_{\bs\be})_*} &&&
{\raisebox{5pt}{$\begin{subarray}{l}\ts {}^b\Om_\bfD\ot_{\fD,\ga}\fF \op\\
\ts\,\,\,{}^b\Om_\bfE\ot_{\fE,\de}\fF \end{subarray}$}}
\ar[rrr]^(0.59){({}^b\Om_{\bs\ga})_*\op({}^b\Om_{\bs\de})_*} &&& {}^b\Om_\bfF \ar[r] & 0. }\!\!\!
\label{cc7eq21}
\e
Here\/ $({}^b\Om_{\bs\al})_*:{}^b\Om_\bfC\ot_{\fC,\ga\ci\al}\fF\ra
{}^b\Om_\bfD\ot_{\fD,\ga}\fF$ is induced by\/ ${}^b\Om_{\bs\al}:{}^b\Om_\bfC\ra
{}^b\Om_\bfD,$ and so on. Note the sign of\/ $-({}^b\Om_{\bs\be})_*$
in\/~\eq{cc7eq21}.
\end{thm}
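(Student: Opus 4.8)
The plan is to imitate the proof of Theorem \ref{cc2thm1} (and of Proposition \ref{cc7prop2}), reducing everything to presentations by generators and relations and then chasing a diagram, and finally transferring from $\CRingscin$ to the subcategories $\CRingscZ,\CRingsctf,\CRingscsa$ using the reflection functors. First I would handle the case of $\CRingscin$. Using Proposition \ref{cc4prop6} together with the machinery of Definition \ref{cc4def8} (applied to present objects lying under $\bfC$, rather than under the initial object), write
\[\bfD\cong\bigl(\bfC(x_d:d\in D)[y_{d'}:d'\in D_\rin]\bigr)\big/(f_e=0:e\in E)[g_{e'}=h_{e'}:e'\in E_\rin],\]
with $\bs\al:\bfC\to\bfD$ the natural morphism, and likewise $\bfE\cong\bigl(\bfC(x_p:p\in P)[y_{p'}:p'\in P_\rin]\bigr)/(f_q=0:q\in Q)[g_{q'}=h_{q'}:q'\in Q_\rin]$ with $\bs\be:\bfC\to\bfE$, where one chooses the $f_e,g_{e'},h_{e'}$ to involve only the $\bfC$- and $D$-generators and the $f_q,g_{q'},h_{q'}$ only the $\bfC$- and $P$-generators. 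Then, exactly as in the proof of Proposition \ref{cc4prop10}, general properties of colimits show that the pushout $\bfF=\bfD\amalg_\bfC\bfE$ is obtained from $\bfC$ by adjoining \emph{all} the generators $x_d,x_p,y_{d'},y_{p'}$ and imposing \emph{all} the relations $f_e=0$, $f_q=0$, $g_{e'}=h_{e'}$, $g_{q'}=h_{q'}$.

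Next I would apply Proposition \ref{cc7prop2} to the presentations of $\bfD$, of $\bfE$, and of $\bfF$ over $\bfC$, obtaining right-exact sequences computing ${}^b\Om_\bfD$, ${}^b\Om_\bfE$, ${}^b\Om_\bfF$ in terms of ${}^b\Om_\bfC$ and the free modules on the generators and relations. Tensoring the first two up to $\fF$ (which preserves right-exactness) and taking the direct sum, I get a right-exact sequence presenting $({}^b\Om_\bfD\ot_\fD\fF)\op({}^b\Om_\bfE\ot_\fE\fF)$ with middle term
\[\bigl({}^b\Om_\bfC\ot_\fC\fF\bigr)\op\bigl({}^b\Om_\bfC\ot_\fC\fF\bigr)\op\ban{\d_\rin\Psi_{\exp}(x_d),\,\d_\rin y_{d'}}_\R\!\ot_\R\!\fF\op\ban{\d_\rin\Psi_{\exp}(x_p),\,\d_\rin y_{p'}}_\R\!\ot_\R\!\fF.\]
I would then compute the cokernel of the map $({}^b\Om_{\bs\al})_*\op-({}^b\Om_{\bs\be})_*$ of \eq{cc7eq21}, i.e.\ the pushout $({}^b\Om_\bfD\ot_\fD\fF)\amalg_{{}^b\Om_\bfC\ot_\fC\fF}({}^b\Om_\bfE\ot_\fE\fF)$: identifying the two copies of ${}^b\Om_\bfC\ot_\fC\fF$ (with the sign, as in the pushout convention $m\mapsto(m,-m)$) collapses the displayed module to one copy of ${}^b\Om_\bfC\ot_\fC\fF$ plus the two free summands, and the surviving relations are precisely the images of the $\bfD$- and $\bfE$-relations. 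Comparing this with Proposition \ref{cc7prop2} applied to the combined presentation of $\bfF$ over $\bfC$ — using that the $\bfD$-relations involve no $\bfE$-generators and conversely, so that the relation map for $\bfF$ is block-diagonal and equals the "sum" of those for $\bfD$ and $\bfE$ — the cokernel is exactly ${}^b\Om_\bfF$, with the quotient map identified with $({}^b\Om_{\bs\ga})_*\op({}^b\Om_{\bs\de})_*$. A short diagram chase (of the kind at the end of the proof of Proposition \ref{cc7prop2}, or the snake lemma applied to the evident $3$-column diagram) upgrades this identification of cokernels to exactness of \eq{cc7eq21} at its middle term, and surjectivity onto ${}^b\Om_\bfF$ is then immediate.

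Finally, for the subcategories: if \eq{cc7eq20} is a pushout in $\CRingscZ$, $\CRingsctf$, or $\CRingscsa$, then $\bfF\cong\Pi_\rin^\Z(\bfF')$, $\Pi_\Z^\tf\ci\Pi_\rin^\Z(\bfF')$, or $\Pi_\rin^\sa(\bfF')$ respectively, where $\bfF'=\bfD\amalg_\bfC\bfE$ is the pushout in $\CRingscin$, by the reflection property in Theorem \ref{cc4thm5}. Since $\bfC,\bfD,\bfE$ already lie in the subcategory their b-cotangent modules are unchanged, and Proposition \ref{cc7prop4} says the natural map ${}^b\Om_{\bfF'}\ot_{\fF'}\fF\to{}^b\Om_\bfF$ is an isomorphism; so applying $-\ot_{\fF'}\fF$ to the already-established exact sequence for $\bfF'$ and invoking this isomorphism yields \eq{cc7eq21} in the subcategory. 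I expect the main obstacle to be the bookkeeping of the middle step: carefully matching the free generating and relation modules across the several applications of Proposition \ref{cc7prop2}, verifying rigorously that the relation map for the combined presentation of $\bfF$ really decomposes as the sum of those for $\bfD$ and $\bfE$, and tracking the sign of $-({}^b\Om_{\bs\be})_*$ consistently through the pushout computation.
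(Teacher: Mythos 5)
Your proposal is correct and follows essentially the same route as the paper: reduce to generators-and-relations presentations, apply the presentation formula for b-cotangent modules, identify ${}^b\Om_\bfF$ as the relevant cokernel by a diagram chase, and then transfer to $\CRingscZ,\CRingsctf,\CRingscsa$ exactly as the paper does, via the reflection functors of Theorem \ref{cc4thm5} together with Proposition \ref{cc7prop4}. The only difference is one of packaging: you present $\bfD,\bfE,\bfF$ relative to $\bfC$ and invoke Proposition \ref{cc7prop2}, whereas the paper uses absolute coequalizer presentations over free objects (Propositions \ref{cc4prop10} and \ref{cc7prop1}(b)) and the diagram \eq{cc7eq26}; since Proposition \ref{cc7prop2} is itself deduced from Proposition \ref{cc7prop1}(b), the two arguments are interchangeable.
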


\begin{proof} First suppose \eq{cc7eq20} is a pushout in $\CRingscin$. By ${}^b\Om_{\bs\psi\ci\bs\phi}={}^b\Om_{\bs\psi}\ci{}^b\Om_{\bs\phi}$ in Definition \ref{cc7def3} and commutativity of \eq{cc7eq20} we have
${}^b\Om_{\bs\ga}\ci{}^b\Om_{\bs\al}={}^b\Om_{\bs\ga\ci\bs\al}={}^b\Om_{\bs\de\ci\bs\be}={}^b\Om_{\bs\de}\ci
{}^b\Om_{\bs\be}:{}^b\Om_\bfC\ra{}^b\Om_\bfF$. Tensoring with $\fF$ then gives
$({}^b\Om_{\bs\ga})_*\ci({}^b\Om_{\bs\al})_*=({}^b\Om_{\bs\de})_*\ci({}^b\Om_{\bs\be})_*:
{}^b\Om_\bfC\ot_\fC\fF\ra{}^b\Om_\bfF$. As the composition of morphisms in
\eq{cc7eq21} is $({}^b\Om_{\bs\ga})_*\ci({}^b\Om_{\bs\al})_*-({}^b\Om_{\bs\de})_*\ci
({}^b\Om_{\bs\be})_*$, this implies \eq{cc7eq21} is a complex.

Apply Proposition \ref{cc4prop10} to \eq{cc7eq20}, using Proposition \ref{cc4prop6}(b) to choose coequalizer presentations \eq{cc4eq22}--\eq{cc4eq24} for $\bfC,\bfD,\bfE$ in $\CRingscin$, so we have a presentation \eq{cc4eq25} for $\bfF$. Thus Proposition \ref{cc7prop1}(b) gives exact sequences
\ea
\xymatrix@C=15pt{ {\begin{subarray}{l}\ts  \bigl\langle \d_\rin\Psi_{\exp}(\ti s_b),\; b\in B,\\ \ts \d_\rin \ti t_{b'},\; b'\!\in\! B_\rin\bigr\rangle_\R\!\ot_\R\!\fF \end{subarray}} 
\ar[r]^{\ep_1} & {\begin{subarray}{l}\ts \bigl\langle \d_\rin\Psi_{\exp}(s_a),\; a\in A,\\ \ts \d_\rin t_{a'},\; a'\!\in\! A_\rin\bigr\rangle_\R\!\ot_\R\!\fF \end{subarray}} \ar[r]^(0.64){\ze_1} & {}^b\Om_\bfC\ot_\fC\fF \ar[r] & 0, }
\label{cc7eq22}\\
\xymatrix@C=15pt{{\begin{subarray}{l}\ts  \bigl\langle \d_\rin\Psi_{\exp}(\ti u_d),\; d\in D,\\ \ts \d_\rin \ti v_{b'},\; d'\!\in\! D_\rin\bigr\rangle_\R\!\ot_\R\!\fF \end{subarray}} 
\ar[r]^{\ep_2} & {\begin{subarray}{l}\ts  \bigl\langle \d_\rin\Psi_{\exp}(u_c),\; c\in C,\\ \ts \d_\rin v_{c'},\; c'\!\in\! C_\rin\bigr\rangle_\R\!\ot_\R\!\fF \end{subarray}} \ar[r]^(0.64){\ze_2} & {}^b\Om_\bfD\ot_\fD\fF \ar[r] & 0, }
\label{cc7eq23}\\
\xymatrix@C=15pt{{\begin{subarray}{l}\ts  \bigl\langle \d_\rin\Psi_{\exp}(\ti w_f),\; f\in F,\\ \ts \d_\rin \ti x_{f'},\; f'\!\in\! F_\rin\bigr\rangle_\R\!\ot_\R\!\fF \end{subarray}} 
\ar[r]^{\ep_3} & {\begin{subarray}{l}\ts  \bigl\langle \d_\rin\Psi_{\exp}(w_e),\; e\in E,\\ \ts \d_\rin x_{e'},\; e'\!\in\! E_\rin\bigr\rangle_\R\!\ot_\R\!\fF \end{subarray}} \ar[r]^(0.64){\ze_3} & {}^b\Om_\bfE\ot_\fE\fF \ar[r] & 0, }
\label{cc7eq24}\\
\xymatrix@C=22pt{{\begin{subarray}{l}\ts  \bigl\langle \d_\rin\Psi_{\exp}(\ti y_h),\; h\in H,\\ \ts \d_\rin \ti z_{h'},\; h'\!\in\! H_\rin\bigr\rangle_\R\!\ot_\R\!\fF \end{subarray}} 
\ar[r]^{\ep_4} & {\begin{subarray}{l}\ts  \bigl\langle \d_\rin\Psi_{\exp}(y_g),\; g\in G,\\ \ts \d_\rin z_{g'},\; g'\!\in\! G_\rin\bigr\rangle_\R\!\ot_\R\!\fF \end{subarray}} \ar[r]^(0.7){\ze_4} & {}^b\Om_\bfF \ar[r] & 0, }
\label{cc7eq25}
\ea
where for \eq{cc7eq22}--\eq{cc7eq24} we have tensored \eq{cc7eq7} for
$\bfC,\bfD,\bfE$ with $\fF$, and we write $s_1,\ldots,\ti z_{h'}$ for the generators in $\fF^{A,A_\rin},\ldots,\fF^{H,H_\rin}$ as shown.

Now consider the diagram
\e
\begin{gathered}
\text{\begin{footnotesize}$
\!\!\!\!\!\!\!\!\!\!\!\!\xymatrix@!0@C=36pt@R=60pt{
*+[r]{\qquad\!\!\!\begin{subarray}{l}\ts \bigl\langle \d_\rin\Psi_{\exp}(s_a),\; a\in A,\\ \ts \d_\rin t_{a'},\; a'\!\in\! A_\rin\bigr\rangle_\R\!\ot_\R\!\fF \op \\
\ts  \bigl\langle \d_\rin\Psi_{\exp}(\ti u_d),\; d\in D,\\ \ts \d_\rin \ti v_{b'},\; d'\!\in\! D_\rin\bigr\rangle_\R\!\ot_\R\!\fF\op \\
\ts  \bigl\langle \d_\rin\Psi_{\exp}(\ti w_f),\; f\in F,\\ \ts \d_\rin \ti x_{f'},\; f'\!\in\! F_\rin\bigr\rangle_\R\!\ot_\R\!\fF
\end{subarray}}
\ar[rrrr]_(0.68){\begin{subarray}{l} \ep_4= \\ \left(\begin{subarray}{l} \phantom{-}\th_1 \,\, \ep_2 \,\, 0 
\\ -\th_2 \,\, 0 \,\, \ep_3 \end{subarray}\right)\end{subarray}}
\ar[d]^(0.73){\left(\ze_1 \,\, 0 \,\, 0 \right)} &&&&
*+[r]{\begin{subarray}{l}\ts  \bigl\langle \d_\rin\Psi_{\exp}(u_c),\; c\in C,\\ \ts \d_\rin v_{c'},\; c'\!\in\! C_\rin\bigr\rangle_\R\!\ot_\R\!\fF \op \\
\ts  \bigl\langle \d_\rin\Psi_{\exp}(w_e),\; e\in E,\\ \ts \d_\rin x_{e'},\; e'\!\in\! E_\rin\bigr\rangle_\R\!\ot_\R\!\fF
\end{subarray}}
 \ar[rrr]_(0.75){\ze_4}
\ar[d]^(0.55){\left(\begin{subarray}{l} \ze_2 \,\, 0 \\ 0 \,\,
\ze_3\end{subarray}\right)} &&& {}^b\Om_\bfF \ar[r]
\ar@{=}[d]^{{}\,\,\id_{{}^b\Om_\bfF}} & 0 
\\
{{}^b\Om_\bfC\ot_\fC\fF}
\ar[rrrr]^(0.5){\left(\begin{subarray}{l} ({}^b\Om_{\bs\al})_*\\
-({}^b\Om_{\bs\be})_*\end{subarray}\right)} &&&&
{\begin{subarray}{l}\ts {}^b\Om_\bfD\ot_\fD\fF\, \op\\
\ts{}^b\Om_\bfE\ot_\fE\fF \end{subarray}}
\ar[rrr]^(0.55){\left(({}^b\Om_{\bs\ga})_*\,\, ({}^b\Om_{\bs\de})_*\right)} &&&
{}^b\Om_\bfF \ar[r] & 0. }\!\!\!\!\!\!\!\!\!\!\!\!\!\!$\end{footnotesize}}
\end{gathered}
\label{cc7eq26}
\e
Here the top row is \eq{cc7eq25}, except that we have substituted for $G,G_\rin,H,H_\rin$ in terms of $A,\ldots,F_\rin$ as in \eq{cc4eq25}, and labelled the corresponding generators as in \eq{cc7eq22}--\eq{cc7eq24}. Then $\ep_4$ in \eq{cc7eq25} is written in matrix form in \eq{cc7eq26} as shown, where $\th_1,\th_2$ are induced by the morphism $\bfF^{A,A_\rin}\ra\bfF^{C,C_\rin}\ot_\iy\bfF^{E,E_\rin}\cong\bfF^{C\amalg E,C_\rin\amalg E_\rin}=\bfF^{G,G_\rin}$ lifting $\bfF^{A,A_\rin}\ra\bfD\ot_\iy\bfE$ chosen during the proof of Proposition \ref{cc4prop10}. The bottom line is the complex~\eq{cc7eq21}.

The left hand square of \eq{cc7eq26} commutes as $\ze_2\ci\ep_2=\ze_3\ci\ep_3=0$ by
exactness of \eq{cc7eq23}--\eq{cc7eq24} and
$\ze_2\ci\th_1=({}^b\Om_{\bs\al})_*\ci\ze_1$, $\ze_3\ci\th_2=({}^b\Om_{\bs\be})_*\ci\ze_1$ follow from the definition of $\th_1,\th_2$. The right hand square commutes as $\ze_4$ and $({}^b\Om_{\bs\ga})_*\ci\ze_2$ map $ 
\d_\rin\Psi_{\exp}(u_c)\mapsto \d_\rin\Psi_{\exp}\ci\ga\ci\chi(u_c)$ and $ 
\d_\rin v_{c'}\mapsto \d_\rin\ga_\rin\ci\chi_\rin(v_{c'})$, and similarly for $({}^b\Om_{\bs\de})_*\ci\ze_3$. Hence \eq{cc7eq26} commutes. The columns are surjective since
$\ze_1,\ze_2,\ze_3$ are surjective as \eq{cc7eq22}--\eq{cc7eq24} are
exact and identities are surjective.

The bottom right morphism $\bigl(({}^b\Om_{\bs\ga})_*\,({}^b\Om_{\bs\de})_*\bigr)$ in
\eq{cc7eq26} is surjective as $\ze_4$ is and the right hand square
commutes. Also surjectivity of the middle column implies that it
maps $\Ker\ze_4$ surjectively onto $\Ker\bigl(({}^b\Om_{\bs\ga})_*\,
({}^b\Om_{\bs\de})_*\bigr)$. But $\Ker\ze_4=\Im\ep_4$ as the top row is
exact, so as the left hand square commutes we see that
$\bigl(({}^b\Om_{\bs\al})_*\, -({}^b\Om_{\bs\be})_*\bigr){}^T$ surjects onto
$\Ker\bigl(({}^b\Om_{\bs\ga})_*\, ({}^b\Om_{\bs\de})_*\bigr)$, and the bottom row of
\eq{cc7eq26}, which is \eq{cc7eq21}, is exact. This proves the theorem for~$\CRingscin$.

Next suppose \eq{cc7eq20} is a pushout diagram in $\CRingscZ$. Let $\bs{\ti\fF}=\bfD\amalg_\bfC\bfE$ be the pushout in $\CRingscin$, which exists by Theorem \ref{cc4thm3}(b), with projections $\bs{\ti\ga}:\bfD\ra\bs{\ti\fF}$, $\bs{\ti\de}:\bfE\ra\bs{\ti\fF}$. Since $\Pi_\rin^\Z:\CRingscin\ra\CRingscZ$ in Theorem \ref{cc4thm5} preserves colimits as it is a left adjoint, and acts as the identity on $\bfC,\bfD,\bfE$ as they are integral, we see that $\bfF\cong \Pi_\rin^\Z(\bs{\ti\fF})$. The natural projection $\bs\pi:\bs{\ti\fF}\ra\Pi_\rin^\Z(\bs{\ti\fF})\cong\bfF$ is the morphism induced by \eq{cc7eq20} and the pushout property of $\bs{\ti\fF}$, so that $\bs\ga=\bs\pi\ci\bs{\ti\ga}$ and~$\bs\de=\bs\pi\ci\bs{\ti\de}$.

The first part implies that \eq{cc7eq21} with $\bs{\ti\fF},\bs{\ti\ga},\bs{\ti\de}$ in place of $\bfF,\bs\ga,\bs\de$ is exact. Applying $-\ot_{\ti\fF}\fF$ to this shows the following is exact:
\e
\xymatrix@C=14pt{ {}^b\Om_\bfC\!\ot_{\fC,\ga\ci\al}\!\fF
\ar[rrr]^(0.52){({}^b\Om_{\bs\al})_*\op -({}^b\Om_{\bs\be})_*} &&&
{\raisebox{5pt}{$\begin{subarray}{l}\ts {}^b\Om_\bfD\!\ot_{\fD,\ga}\!\fF \op\\
\ts\,\,\,{}^b\Om_\bfE\ot_{\fE,\de}\fF \end{subarray}$}}
\ar[rrr]^(0.54){({}^b\Om_{\bs{\ti\ga}})_*\op({}^b\Om_{\bs{\ti\de}})_*} &&& {}^b\Om_{\bs{\ti\fF}}\!\ot_{\ti\fF}\!\fF \ar[r] & 0. }\!\!\!
\label{cc7eq27}
\e
Now Proposition \ref{cc7prop4} shows that $({}^b\Om_{\bs\pi})_*:{}^b\Om_{\bs{\ti\fF}}\ot_{\ti\fF}\fF \ra{}^b\Om_\bfF$ is an isomorphism. Combining this with \eq{cc7eq27} and $({}^b\Om_{\bs\ga})_*=({}^b\Om_{\bs\pi})_*\ci({}^b\Om_{\bs{\ti\ga}})_*$, $({}^b\Om_{\bs\ga})_*=({}^b\Om_{\bs\pi})_*\ci({}^b\Om_{\bs{\ti\ga}})_*$ as $\bs\ga=\bs\pi\ci\bs{\ti\ga}$, $\bs\de=\bs\pi\ci\bs{\ti\de}$ shows that \eq{cc7eq21} is exact. The proofs for $\CRingsctf,\CRingscsa$ are the same, using $\Pi_\Z^\tf,\Pi_\tf^\sa$ in Theorem~\ref{cc4thm5}.
\end{proof}

\subsection{\texorpdfstring{Sheaves of $\bO_X$-modules, and (b-)cotangent sheaves}{Sheaves of OX-modules, and (b-)cotangent sheaves}}
\label{cc75} 

We define sheaves of $\bO_X$-modules on a $C^\iy$-ringed space with corners, as in~\S\ref{cc26}.

\begin{dfn}
\label{cc7def4}
Let $(X,\bO_X)$ be a $C^\iy$-ringed space with corners. A {\it sheaf of\/ $\bO_X$-modules}, or simply an $\bO_X$-{\it module}, $\cE$ on $X$, is a sheaf of $\O_X$-modules on $X$, as in Definition \ref{cc2def20}, where $\O_X$ is the sheaf of $C^\iy$-rings in $\bO_X$. A {\it morphism of sheaves of\/ $\bO_X$-modules\/} is a morphism of sheaves of $\O_X$-modules. Then $\bO_X$-modules form an abelian category, which we write as $\bOXmod$. An $\bO_X$-module $\cE$ is called a {\it vector bundle of rank\/} $n$ if we may cover $X$ by open $U\subseteq X$ with $\cE\vert_U\cong\O_X\vert_U\ot_\R\R^n$.
\end{dfn}

Pullback sheaves are defined analogously to Definition~\ref{cc2def21}.

\begin{dfn} 
\label{cc7def5}
Let $\bs f=(f,f^\sh,f^\sh_\rex):(X,\O_X,\O_{X}^\rex)\ra(Y,\O_Y,\O_Y^\rex)$ be a morphism in $\CRSc$, and $\cE$ be an $\bO_Y$-module. Define the {\it pullback\/} $\bs f^*(\cE)$ by $\bs f^*(\cE)=\uf^*(\cE)$, the pullback in Definition \ref{cc2def21} for $\uf=(f,f^\sh):(X,\O_X)\ra(Y,\O_Y)$. If $\phi:\cE\ra\cF$ is a morphism of $\bO_Y$-modules we have a morphism of $\bO_X$-modules~$\bs f^*(\phi)=\uf^*(\phi)=f^{-1}(\phi)\ot\id_{\O_X}:\bs f^*(\cE)\ra\bs f^*(\cF)$.
\end{dfn}

\begin{ex}
\label{cc7ex3}
Let $X$ be a manifold with corners, or with g-corners, and $E\ra X$ a vector bundle. Write $\bX=F_\Mangc^\CSchc(X)$ for the associated $C^\iy$-scheme with corners. For each open set $U\subseteq X$, write $\cE(U)=\Ga^\iy(E\vert_U)$, as a module over $\O_X(U)=C^\iy(U)$, and for open $V\subseteq U\subseteq X$ define $\rho_{UV}:\cE(U)\ra\cE(V)$ by $\rho_{UV}(e)=e\vert_V$. Then $\cE$ is the $\bO_X$-module of smooth sections of~$E$.

If $F\ra X$ is another vector bundle with $\bO_X$-module $\cF$, then vector bundle morphisms $E\ra F$ are in natural 1-1 correspondence with $\bO_X$-module morphisms $\cE\ra\cF$. If $f:W\ra X$ is a smooth map of manifolds with (g-)corners and $\bs f=F_\Mangc^\CSchc:\bW\ra\bX$ then the vector bundle $f^*(E)\ra W$ corresponds naturally to the $\bO_W$-module~$\bs f^*(\cE)$.
\end{ex}

\begin{dfn}
\label{cc7def6}
Let $\bX=(X,\bO_X)$ be a $C^\iy$-ringed space with corners, and $\uX=(X,\O_X)$ the underlying $C^\iy$-ringed space. Define the {\it cotangent sheaf\/ $T^*\bX$ of\/} $\bX$ to be the cotangent sheaf $T^*\uX$ of $\uX$ from Definition~\ref{cc2def22}.

If $U\subseteq X$ is open then we have an equality of $\bO_X\vert_U$-modules
\begin{equation*}
T^*\bU=T^*(U,\bO_X\vert_U)=T^*\bX\vert_U.
\end{equation*}

Let $\bs f=(f,f^\sh,f^\sh_\rex):\bX\ra\bY$ be a morphism of $C^\iy$-ringed spaces. Define $\Om_{\bs f}:\bs f^*(T^*\bY)\ra T^*\bX$ to be a morphism of the cotangent sheaves by $\Om_{\bs f}=\Om_\uf$, where~$\uf=(f,f^\sh):(X,\O_X)\ra(Y,\O_Y)$.
\end{dfn}

\begin{dfn}
\label{cc7def7}
Let $\bX=(X,\bO_X)$ be an interior local $C^\iy$-ringed space with corners. For each open $U\subset X$, let $\d_{U,\rin}:\O_X^\rin(U)\ra{}^b\Om_{\bfC_U}$ be the b-cotangent module associated to the interior $C^\iy$-ring with corners $\bfC_U=(\O_X(U),\O_X^\rin(U)\amalg \{0\})$, where $\amalg$ is the disjoint union. Here $\O_X^\rin(U)$ is the set of all elements of $\O^\rex_X(U)$ that are non-zero in each stalk $\O_{X,x}^\rex$ for all $x\in U$. Note that $\O_X^\rin$ is a sheaf of monoids on $X$. 

For each open $U\subseteq X$, the b-cotangent modules ${}^b\Om_{\bfC_U}$ define a presheaf $\bs{\cP}{}^bT^*\bX$ of $\bO_X$-modules, with restriction map ${}^b\Om_{\rho_{UV}}:{}^b\Om_{\bfC_U}\ra {}^b\Om_{\bfC_V}$ defined in Definition \ref{cc7def3}. Denote the sheafification of this presheaf the \/{\it b-cotangent sheaf}\/ ${}^bT^*\bX$ of $\bX$. Properties of sheafification imply that, for each open set $U\subseteq X$, there is a canonical morphism ${}^b\Om_{\bfC_U}\ra {}^bT^*\bX(U)$, and we have an equality of $\bO_X\vert_U$-modules
${}^bT^*(U,\bO_X\vert_U)={}^bT^*\bX\vert_U$. Also, for each $x\in X$, the stalk ${}^bT^*\bX\vert_x\cong {}^b\Om_{\bO_{X,x}}$, where ${}^b\Om_{\bO_{X,x}}$ is the b-cotangent module of the interior local $C^\iy$-ring with corners~$\bO_{X,x}$. 

For a morphism $\bs f=(f,f^\sh,f^\sh_\rex):\bX\ra\bY$ of interior local $C^\iy$-ringed spaces, we define the morphism of b-cotangent sheaves $ {}^b\Om_{\bs f}:\bs f^*({}^bT^*\bY)\ra {}^bT^*\bX$ by firstly noting that $\bs f^*({}^bT^*\bY)$ is the sheafification of the presheaf $\bs{\cP}(\bs f^*({}^bT^*\bY))$ acting by
\begin{equation*}
U\longmapsto\bs{\cP}(\bs f^*({}^bT^*\bY))(U)=
\ts\lim_{V\supseteq f(U)}{}^b\Om_{\bO_Y(V)}\ot_{\bO_Y(V)}\bO_X(U),
\end{equation*}
as in Definition \ref{cc2def22}. Then, following Definition \ref{cc2def22}, define a morphism of presheaves $\bs{\cP}{}^b\Om_{\bs f}:\bs{\cP}(\bs f^*({}^bT^*\bY))\ra\bs{\cP} T^*\bX$ on $X$ by
\begin{equation*}
(\bs{\cP}{}^b\Om_{\bs f})(U)=\ts\lim_{V\supseteq f(U)}({}^b\Om_{\bs{\rho}_{f^{-1}(V)\,U}\ci \bs f_\sh(V)})_*,
\end{equation*}
where $({}^b\Om_{\bs{\rho}_{f^{-1}(V)\,U}\ci \bs f_\sh(V)})_*:{}^b\Om_{\bO_Y(V)}\!\ot_{\O_Y(V)}\!\O_X(U)\!\ra\!{}^b\Om_{\bO_X(U)}\!=\!(\bs{\cP}{}^b T^*\bX)(U)$ is constructed as in Definition \ref{cc2def8} from the $C^\iy$-ring with corners morphisms $\bs f_\sh(V):\bO_Y(V)\ra\bO_X(f^{-1}(V))$ from $\bs f_\sh:\bO_Y\ra f_*(\bO_X)$ corresponding to $\bs f^\sh$ in $\bs f$ as in \eq{cc2eq8}, and $\bs{\rho}_{f^{-1}(V)\,U}:\bO_X(f^{-1}(V))\ra\bO_X(U)$ in $\bO_X$. Define ${}^b\Om_{\bs f}:\bs f^*({}^bT^*\bY)\ra {}^bT^*\bX$ to be the induced morphism of the associated sheaves.
\end{dfn}

We prove an analogue of Theorem~\ref{cc2thm5}(e):

\begin{prop}
\label{cc7prop5}
Let\/ $\bfC\in\CRingscin,$ and set\/ $\bX=\Speccin\bfC$. Then there is a canonical isomorphism ${}^bT^*\bX\cong\MSpec{}^b\Om_\bfC$ in $\bOXmod$.	
\end{prop}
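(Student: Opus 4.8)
The plan is to imitate the proof of Theorem \ref{cc2thm5}(e), which identifies $T^*\uX$ with $\MSpec\Om_\fC$ for a $C^\iy$-scheme $\uX=\Spec\fC$, but working with the monoid-based b-cotangent construction instead of the ordinary cotangent module. Throughout write $\bX=(X,\bO_X)=\Speccin\bfC$, so that by Proposition \ref{cc5prop1} the stalk $\bO_{X,x}$ is canonically $\bfC_x$, the localization of $\bfC$ at the $\R$-point $x$. The key point is that both ${}^bT^*\bX$ and $\MSpec{}^b\Om_\bfC$ are sheafifications of explicit presheaves of $\bO_X$-modules, so it suffices to produce a morphism between the presheaves and then check it is a stalkwise isomorphism.

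First I would construct the presheaf morphism. For open $U\subseteq X$ there is a natural $C^\iy$-ring with corners morphism $\bfC\ra\bO_X(U)$ (restriction of $\bs\Xi_\bfC$ composed with $\rho_{XU}$), which is interior since $\bX$ is interior; applying $(-)_*$ and the functoriality of $\d_{-,\rin}$ from Definition \ref{cc7def3} gives an $\bO_X(U)$-module morphism ${}^b\Om_\bfC\ot_\fC\bO_X(U)\ra{}^b\Om_{\bfC_U}$, where $\bfC_U=(\O_X(U),\O_X^\rin(U)\amalg\{0\})$ is the interior $C^\iy$-ring with corners of Definition \ref{cc7def7}. The left-hand side is exactly the presheaf $\cP\MSpec{}^b\Om_\bfC(U)$ of Definition \ref{cc2def23} (adapted with ${}^b\Om$ in place of a general module), and the right-hand side maps canonically to ${}^bT^*\bX(U)$ by Definition \ref{cc7def7}. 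These morphisms are compatible with restriction, so they sheafify to a morphism $\th:\MSpec{}^b\Om_\bfC\ra{}^bT^*\bX$ in $\bOXmod$.

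Next I would check $\th$ is an isomorphism on stalks. By Definition \ref{cc7def7} the stalk ${}^bT^*\bX\vert_x\cong{}^b\Om_{\bO_{X,x}}\cong{}^b\Om_{\bfC_x}$, and by the standard computation of stalks of $\MSpec$ (as in the proof of Theorem \ref{cc2thm5}) the stalk $(\MSpec{}^b\Om_\bfC)\vert_x\cong{}^b\Om_\bfC\ot_\fC\bfC_x$. So the claim reduces to showing that the natural map ${}^b\Om_\bfC\ot_\fC\bfC_x\ra{}^b\Om_{\bfC_x}$ induced by the localization morphism $\bs\pi_x:\bfC\ra\bfC_x$ is an isomorphism. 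But $\bfC_x=\bfC(c^{-1}:c\in\fC,\,x(c)\ne 0)$ by Definition \ref{cc4def11}, which is a localization of $\bfC$ at a set of $\R$-type elements, with no $[0,\iy)$-type elements inverted; hence Proposition \ref{cc7prop3} (with $A=\{c\in\fC:x(c)\ne 0\}$ and $A_\rin=\es$) gives precisely that $({}^b\Om_{\bs\pi_x})_*:{}^b\Om_\bfC\ot_\fC\bfC_x\ra{}^b\Om_{\bfC_x}$ is an isomorphism of $\bfC_x$-modules. Tracking through the identifications shows this coincides with the stalk map $\th_x$, so $\th$ is an isomorphism. Finally I would note naturality: one should check the isomorphism is compatible with restriction to open subsets $U\subseteq X$, which follows from $\Speccin\bfC(c^{-1})\cong\bU_c$ (Lemma \ref{cc5lem2}) and uniqueness of the b-cotangent module, but this is routine.

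The main obstacle I expect is the bookkeeping in the stalk identification: one must carefully verify that the stalk of the sheafified presheaf $\cP\MSpec{}^b\Om_\bfC$ really is ${}^b\Om_\bfC\ot_\fC\bfC_x$ and not something larger (using that tensor product commutes with the directed colimit defining $\bfC_x$, and that $\bfC_x$ is a filtered colimit of localizations $\bfC(c^{-1})$), and that under the isomorphism of Proposition \ref{cc5prop1} the map $\th_x$ is genuinely the localization map $({}^b\Om_{\bs\pi_x})_*$ rather than some twist of it. All the hard analytic content — that localization at $\R$-type generators does not change the b-cotangent module — is already packaged in Proposition \ref{cc7prop3}, so no new computation is needed beyond assembling these pieces.
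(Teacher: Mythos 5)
Your proposal is correct and follows essentially the same route as the paper: both construct the presheaf morphism from $\bs\Xi_\bfC$ composed with restriction, sheafify, identify the stalk map with $({}^b\Om_{\bs\pi_x})_*:{}^b\Om_\bfC\ot_\fC\fC_x\ra{}^b\Om_{\bfC_x}$, and conclude by Proposition \ref{cc7prop3}. One small correction: by \eq{cc4eq17} the localization $\bfC_x$ inverts not only the $\R$-type elements $c\in\fC$ with $x(c)\ne 0$ but also the $[0,\iy)$-type elements $c'\in\fC_\rex$ with $x\ci\Phi_i(c')\ne 0$, so you should apply Proposition \ref{cc7prop3} with $A_\rin=\{c'\in\fC_\rin:x\ci\Phi_i(c')\ne 0\}$ rather than $A_\rin=\es$; since that proposition covers localizations at both types of elements, your argument is unaffected.
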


\begin{proof} By Definitions \ref{cc2def23} and \ref{cc7def7}, $\MSpec{}^b\Om_\bfC$ and ${}^bT^*\bX$ are sheafifications of presheaves $\cP\MSpec{}^b\Om_\bfC,\cP {}^bT^*\bX$, where for open $U\subseteq X$ we have
\begin{equation*}
\cP\MSpec{}^b\Om_\bfC(U)={}^b\Om_\bfC\ot_\fC\O_X(U)\quad\text{and}\quad \cP{}^bT^*\bX(U)={}^b\Om_{\bO^\rin_X(U)},
\end{equation*}
writing $\bO^\rin_X(U)=(\O_X(U),\O_X^\rin(U)\amalg\{0\})$. We have morphisms $\bs\Xi_\bfC:\bfC\ra\bO^\rin_X(X)$ in $\CRingscin$ from Definition \ref{cc5def6}, and restriction $\bs\rho_{XU}:\bO^\rin_X(X)\ra\bO^\rin_X(U)$ from $\bO_X$, and so as in Definition \ref{cc7def2} a morphism of $\O_X(U)$-modules $\cP\rho(U):=(\bs\rho_{XU}\ci\bs\Xi_\bfC)_*:{}^b\Om_\bfC\ot_\fC\O_X(U)\ra{}^b\Om_{\bO^\rin_X(U)}$. This defines a morphism of presheaves $\cP\rho:\cP\MSpec{}^b\Om_\bfC\ra\cP{}^bT^*\bX$, and so sheafifying induces a morphism~$\rho:\MSpec{}^b\Om_\bfC\ra{}^bT^*\bX$.

The induced morphism on stalks at $x\in X$ is $\rho_x=(\bs\pi_x)_*:{}^b\Om_\bfC\ot_\fC\fC_x\ra{}^b\Om_{\bfC_x}$, where $\bs\pi_x:\bfC\ra\bfC_x\cong\bO_{X,x}$ is projection to the local $C^\iy$-ring with corners $\bfC_x$. But $\bfC_x$ is the localization \eq{cc4eq17}, so Proposition \ref{cc7prop3} says $(\bs\pi_x)_*$ is an isomorphism. Hence $\rho:\MSpec\Om_\fC\ra T^*\uX$ is a sheaf morphism which induces isomorphisms on stalks at all $x\in X$, so $\rho$ is an isomorphism.
\end{proof}

The next theorem shows that $T^*\bX,{}^bT^*\bX$ are good analogues of the \hbox{(b-)}\nobreak co\-tang\-ent bundles of manifolds with (g-)corners.

\begin{thm}
\label{cc7thm3}
{\bf(a)} Let\/ $X$ be a manifold with corners, and\/ $\bX=F_\Manc^\CSchc(X)$ the associated\/ $C^\iy$-scheme with corners. Then there is a natural, functorial isomorphism between the cotangent sheaf\/ $T^*\bX$ of\/ $\bX,$ and the $\bO_X$-module\/ $\bs{T^*X}$ associated to the vector bundle $T^*X\ra X$ in Example\/~{\rm\ref{cc7ex3}}.
\smallskip

\noindent{\bf(b)} Let\/ $X$ be a manifold with corners, or with g-corners, and\/ $\bX=F_\Mangc^\CSchc(X)$ the associated\/ $C^\iy$-scheme with corners. Then there is a natural, functorial isomorphism between the b-cotangent sheaf\/ ${}^bT^*\bX$ of\/ $\bX,$ and the $\bO_X$-module $\bs{{}^bT^*X}$ associated to the vector bundle ${}^bT^*X\ra X$ in Example\/~{\rm\ref{cc7ex3}}.
\end{thm}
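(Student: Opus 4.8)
The plan is to reduce the global statement about sheaves on $\bX$ to the local statements about modules over $C^\iy$-rings with corners proved in Theorem~\ref{cc7thm1}. Since $X$ is a manifold with (g-)corners, it can be covered by open sets $U\subseteq X$ which are themselves manifolds with (g-)faces, as noted in Example~\ref{cc4ex6}, and by further shrinking we may assume each such $U$ has $\pd U$ with finitely many connected components. On each such $U$ we have $\bO_X\vert_U\cong\Speccin\bs C^\iy_\rin(U)$ (for part~(b)) or $\bO_X\vert_U\cong\Specc\bs C^\iy(U)$ (for part~(a)) by Theorem~\ref{cc5thm5}(a). So it suffices to construct the isomorphisms over such $U$, compatibly with restriction to smaller opens, and then glue.

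For part~(a): over $U$ as above, Theorem~\ref{cc2thm5}(e) (in the form stated there, or its obvious transfer to $C^\iy$-rings with corners via Definition~\ref{cc7def2}, which just takes $\Om_\bfC=\Om_\fC$) gives $T^*\bU\cong\MSpec\Om_{C^\iy(U)}$, and Theorem~\ref{cc7thm1}(a) identifies $\Om_{C^\iy(U)}\cong\Ga^\iy(T^*U)$ via the map $\la$ of Example~\ref{cc7ex2}(a). Under $\MSpec$ this sends the module $\Ga^\iy(T^*U)$ to the $\bO_X\vert_U$-module $\bs{T^*U}$ of sections of $T^*U$; one checks $\MSpec(\Ga^\iy(T^*U))\cong\bs{T^*U}$ directly, since $\Ga^\iy(T^*U)\ot_{C^\iy(U)}C^\iy(V)\cong\Ga^\iy(T^*V)$ for $V\subseteq U$ open and the sheafification recovers the sheaf of sections. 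Naturality under restriction follows from uniqueness in the universal property of cotangent modules, so these local isomorphisms glue to a global isomorphism $T^*\bX\cong\bs{T^*X}$. Functoriality under morphisms $\bs f=F_\Mangc^\CSchc(f)$ follows from the compatibility of $\Om_{\bs f}$ with $\d f^*$ already recorded at the level of modules in Example~\ref{cc2ex5} and Definition~\ref{cc7def6}, again checked on a cover.

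For part~(b): the argument is parallel, using Proposition~\ref{cc7prop5} in place of Theorem~\ref{cc2thm5}(e) to get ${}^bT^*\bU\cong\MSpec{}^b\Om_{\bs C^\iy_\rin(U)}$, then Theorem~\ref{cc7thm1}(b) to identify ${}^b\Om_{\bs C^\iy_\rin(U)}\cong\Ga^\iy({}^bT^*U)$ via the map ${}^b\la$ of Example~\ref{cc7ex2}(b) — this is where the hypothesis that $U$ is a manifold with (g-)faces with finitely many boundary components is essential — and finally $\MSpec(\Ga^\iy({}^bT^*U))\cong\bs{{}^bT^*U}$ as an $\bO_X\vert_U$-module, using $\Ga^\iy({}^bT^*U)\ot_{C^\iy(U)}C^\iy(V)\cong\Ga^\iy({}^bT^*V)$ (which holds since ${}^bT^*U\vert_V\cong{}^bT^*V$ and ${}^bT^*U$ is a vector bundle). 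The local isomorphisms are canonical, so compatible with restriction, hence glue. Functoriality under interior $\bs f=F_\Mangcin^\CSchcin(f)$ follows because ${}^b\Om_{\bs f}$ is built from the module-level morphisms ${}^b\Om_{\bs f_\sh(V)}$ of Definition~\ref{cc7def7}, which correspond to ${}^b\d f^*$ of Remark~\ref{cc3rem5}(b),(c) under ${}^b\la$ by the universal property of ${}^b\Om$.

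\textbf{Main obstacle.} The genuinely new content — identifying the $C^\iy$-ring-with-corners (b-)cotangent module with the bundle sections — has already been done in Theorem~\ref{cc7thm1}, so what remains is essentially bookkeeping. The one step requiring a little care is checking that $\MSpec$ applied to $\Ga^\iy(E)$ for a vector bundle $E\to U$ genuinely recovers the $\bO_X$-module of sections $\bs E$ of Example~\ref{cc7ex3}, i.e.\ that the presheaf $V\mapsto\Ga^\iy(E)\ot_{C^\iy(U)}C^\iy(V)$ sheafifies to the sheaf of smooth local sections of $E$; this is standard but worth stating. The only subtlety specific to corners is ensuring that the cover of $X$ by manifolds with (g-)faces with finitely many boundary components actually exists and is fine enough — this is guaranteed by Example~\ref{cc4ex6} together with local compactness — so that Theorem~\ref{cc7thm1}(b) applies on each chart; globally $X$ itself need not be a manifold with (g-)faces and $\pd X$ may have infinitely many components, but we never need Theorem~\ref{cc7thm1} globally, only on charts.
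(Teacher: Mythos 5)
Your proof is correct and follows essentially the same strategy as the paper: cover $X$ by small open $U$ that are manifolds with (g-)faces whose boundaries have finitely many connected components, identify the (b-)cotangent module of $\bs C^\iy(U)$ or $\bs C^\iy_\rin(U)$ with $\Ga^\iy(T^*U)$ or $\Ga^\iy({}^bT^*U)$ via Theorem \ref{cc7thm1}, and glue using sheafification. The only difference is that the paper avoids your detour through $\Specc$/$\MSpec$ (Theorem \ref{cc5thm5}(a), Theorem \ref{cc2thm5}(e), Proposition \ref{cc7prop5}, and the check that $\MSpec\Ga^\iy(E)\cong\bs E$): since $\O_X(U)=C^\iy(U)$ by Definition \ref{cc5def9}, the presheaves whose sheafifications define $T^*\bX$ and ${}^bT^*\bX$ in Definitions \ref{cc7def6}--\ref{cc7def7} are literally $U\mapsto\Om_{C^\iy(U)}$ and $U\mapsto{}^b\Om_{\bs C^\iy_\rin(U)}$, so Theorem \ref{cc7thm1} already gives the presheaf-level isomorphism with $\bs{T^*X}$, respectively $\bs{{}^bT^*X}$, on such $U$, and one sheafifies directly.
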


\begin{proof} For (a), every $x\in X$ has arbitrarily small open neighbourhoods $x\in U\subseteq X$ such that $U$ is a manifold with faces, and $\pd U$ has finitely many boundary components (for example, we can take $U\cong\R^n_k$). Then we have
\begin{equation*}
\cP T^*\uX(U)=\Om_{\O_X(U)}=\Om_{C^\iy(U)}\cong \Ga^\iy(T^*X\vert_U)=\bs{T^*X}(U).
\end{equation*}
Here in the first step $\cP T^*\uX$ is the presheaf in Definition \ref{cc2def22}, whose sheafification is $T^*\bX=T^*\uX$ by Definition \ref{cc7def6}. In the second step $\O_X(U)=C^\iy(U)$ by Definition \ref{cc5def9}. The third step holds by Theorem \ref{cc7thm1}(a), and the fourth by Example \ref{cc7ex3}. Hence we have natural isomorphisms $\cP T^*\uX(U)\cong\bs{T^*X}(U)$ for arbitrarily small open neighbourhoods $U$ of each $x\in X$. These isomorphisms are compatible with the restriction morphisms in $\cP T^*\uX$ and $\bs{T^*X}$. Thus properties of sheafification give a canonical isomorphism $T^*\bX\cong\bs{T^*X}$. Part (b) is proved in the same way, using Definition \ref{cc7def7}, Example \ref{cc4ex6}, and Theorem~\ref{cc7thm1}(b).
\end{proof}

Here is a corners analogue of Theorem \ref{cc2thm6}. It is proved following the proof of Theorem \ref{cc2thm6} in \cite[\S 5.6]{Joyc9}, but using as input the identity ${}^b\Om_{\bs\psi\ci\bs\phi}={}^b\Om_{\bs\psi}\ci{}^b\Om_{\bs\phi}$ in Definition \ref{cc7def3} for (a), and Theorem \ref{cc7thm2} for (b). We restrict to {\it firm\/} $C^\iy$-schemes with corners so that Proposition \ref{cc5prop3} applies.

\begin{thm}
\label{cc7thm4}
{\bf(a)} Let\/ $\bs f:\bX\ra\bY$ and\/ $\bs g:\bY\ra\bZ$ be
morphisms in $\CSchcfiin$. Then in $\bOXmod$ we have
\begin{equation*}
{}^b\Om_{\bs g\ci\bs f}={}^b\Om_{\bs f}\ci \bs f^*({}^b\Om_{\bs g}):(\bs g\ci\bs f)^*(T^*\bZ)\longra T^*\bX.
\end{equation*}
 
\noindent{\bf(b)} Suppose we are given a Cartesian square in any of\/ $\CSchcfiin,$ $\CSchcfiZ,$ $\CSchcfitf$ or\/~{\rm $\CSchcto$:}
\begin{equation*}
\xymatrix@C=80pt@R=14pt{ *+[r]{\bW} \ar[r]_{\bs f} \ar[d]^{\bs e} & *+[l]{\bY} \ar[d]_{\bs h} \\
*+[r]{\bX} \ar[r]^{\bs g} & *+[l]{\bZ,\!} }
\end{equation*}
so that\/ $\bW=\bX\t_\bZ\bY$. Then the following is exact in $\bOWmod\!:$
\e
\xymatrix@C=13pt{ (\bs g\ci\bs e)^*({}^bT^*\bZ)
\ar[rrrr]^(0.5){\bs e^*({}^b\Om_{\bs g})\op -\bs f^*({}^b\Om_{\bs h})} &&&&
{\raisebox{5pt}{$\begin{subarray}{l}\ts \;\>\bs e^*({}^bT^*\bX) \\ \ts {}\op\bs f^*({}^bT^*\bY)\end{subarray}$}} \ar[rr]^(0.55){{}^b\Om_{\bs e}\op {}^b\Om_{\bs f}}
&& {}^bT^*\bW \ar[r] & 0.}
\label{cc7eq28}
\e
\end{thm}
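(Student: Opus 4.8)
The plan is to follow closely the proof of Theorem~\ref{cc2thm6} in \cite[\S 5.6]{Joyc9}, replacing cotangent modules $\Om_\fC$ by b-cotangent modules ${}^b\Om_\bfC$ throughout, and using Theorem~\ref{cc7thm2} in place of Theorem~\ref{cc2thm1} as the algebraic input. For part~(a), I would argue as follows: both ${}^b\Om_{\bs g\ci\bs f}$ and ${}^b\Om_{\bs f}\ci\bs f^*({}^b\Om_{\bs g})$ are morphisms $(\bs g\ci\bs f)^*({}^bT^*\bZ)\ra{}^bT^*\bX$ in $\bOXmod$, so it is enough to check they agree on stalks at each $x\in\bX$. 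The stalk of ${}^bT^*\bX$ at $x$ is ${}^b\Om_{\bO_{X,x}}$ by Definition~\ref{cc7def7}, and the stalk maps of $\bs f,\bs g$ give $C^\iy$-ring-with-corners morphisms $\bO_{Z,g(f(x))}\ra\bO_{Y,f(x)}\ra\bO_{X,x}$. The identity ${}^b\Om_{\bs\psi\ci\bs\phi}={}^b\Om_{\bs\psi}\ci{}^b\Om_{\bs\phi}$ in Definition~\ref{cc7def3}, after tensoring with $\bO_{X,x}$, gives exactly the desired equality of stalk maps.

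For part~(b), I would first reduce to the affine case. The statement is local on $\bW,\bX,\bY,\bZ$, and the categories $\CSchcfiin,\CSchcfiZ,\CSchcfitf,\CSchcto$ all consist of firm objects, so by Proposition~\ref{cc5prop3} I can choose affine open neighbourhoods of any point of $\bW$ together with its images, on which $\bs g,\bs h$ are of the form $\Specc\bs\phi,\Specc\bs\psi$ for morphisms $\bs\phi:\bfF\ra\bfD$, $\bs\psi:\bfF\ra\bfE$ in $\CRingscscin$ with $\bfF$ firm. Then $\bW$ restricted to the corresponding open set is $\Speccin\bfC$ where $\bfC=\bfD\amalg_{\bfF}\bfE$ is the pushout taken in the appropriate one of $\CRingscin,\CRingscZ,\CRingsctf,\CRingscsa$ — this is precisely the content of the construction in the proof of Theorem~\ref{cc5thm9} (which in the toric/integral/torsion-free cases also uses the reflection functors $\Pi_\rin^\Z,\Pi_\Z^\tf,\Pi_\tf^\sa$ and Theorem~\ref{cc5thm8}). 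On this affine piece, Proposition~\ref{cc7prop5} identifies ${}^bT^*\bX\cong\MSpec{}^b\Om_\bfD$, and similarly for $\bY,\bZ,\bW$, compatibly with pullbacks (using that $\MSpec$ commutes with $\ot$ and with $f^{-1}$). Theorem~\ref{cc7thm2} gives the exact sequence \eqref{cc7eq21} of $\bfC$-modules
\[
\xymatrix@C=16pt{ {}^b\Om_\bfF\ot_{\fF}\fC
\ar[rr] &&
{}^b\Om_\bfD\ot_{\fD}\fC \op {}^b\Om_\bfE\ot_{\fE}\fC
\ar[rr] && {}^b\Om_\bfC \ar[r] & 0, }
\]
and since $\MSpec:\bfCmod\ra\bOWmod$ is an exact functor of abelian categories (Definition~\ref{cc2def23}), applying it yields the exact sequence \eqref{cc7eq28} on the affine open set. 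As these local exact sequences are canonical and compatible with restrictions, they glue to give \eqref{cc7eq28} globally on $\bW$.

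The main obstacle I expect is bookkeeping rather than a genuine difficulty: one must verify that the isomorphism $\bW\cong\Speccin\bfC$ with $\bfC$ the pushout in the \emph{relevant} subcategory is compatible with the b-cotangent sheaf identification $\MSpec{}^b\Om_\bfC$, i.e.\ that replacing the naive pushout in $\CRingscin$ by its reflection $\Pi_\rin^\Z(-)$, $\Pi_\Z^\tf(-)$ or $\Pi_\rin^\sa(-)$ does not disturb the exact sequence. This is exactly where Proposition~\ref{cc7prop4} is needed: it says the canonical morphism $\bs\pi:\bfC'\ra\Pi(\bfC')$ induces an isomorphism $({}^b\Om_{\bs\pi})_*:{}^b\Om_{\bfC'}\ot_{\fC'}\fC\ra{}^b\Om_{\bfC}$, so the exact sequence computed from the $\CRingscin$-pushout, after tensoring up to $\bfC$, agrees with the one for the reflected pushout. (Indeed this compatibility is already built into the proof of Theorem~\ref{cc7thm2} for the $\CRingscZ,\CRingsctf,\CRingscsa$ cases.) The only other point requiring a little care is checking that the signs and the direction of the maps in \eqref{cc7eq28} match those in \eqref{cc7eq21} under $\MSpec$ and the pullback identifications, which is routine.
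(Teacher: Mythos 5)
Your proposal is correct and follows essentially the same route as the paper: the paper proves this result precisely by transplanting the proof of Theorem~\ref{cc2thm6} from \cite[\S 5.6]{Joyc9}, using the identity ${}^b\Om_{\bs\psi\ci\bs\phi}={}^b\Om_{\bs\psi}\ci{}^b\Om_{\bs\phi}$ of Definition~\ref{cc7def3} as the input for part~(a) and Theorem~\ref{cc7thm2} for part~(b), with firmness imposed exactly so that Proposition~\ref{cc5prop3} supplies the local affine models you invoke. Your additional details (the stalkwise check in~(a), the local identification of $\bW$ with $\Speccin$ of the pushout in the relevant category of $C^\iy$-rings with corners, exactness of $\MSpec$, and the use of Propositions~\ref{cc7prop4} and~\ref{cc7prop5}) are the expected bookkeeping and raise no difficulties.
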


\begin{ex}
\label{cc7ex4}
Define morphisms $g:X\ra Z$, $h:Y\ra Z$ in $\Mancin$ by $X=[0,\iy)$, $Y=*$, $Z=\R$, $g(x)=x^n$ for some $n>0$, and $h(*)=0$. Write $\bX,\bY,\bZ,\bs g,\bs h$ for the images of $X,\ldots,h$ under $F_\Mancin^\CSchcto$. Write $\bW=\bX\t_{\bs g,\bZ,\bs h}\bY$ for the fibre product in $\CSchcto$, which exists by Theorem \ref{cc5thm10} (it is also the fibre product in $\CSchcfitf,\CSchcfiZ$ and~$\CSchcfiin$).

Now ${}^bT^*X=\an{x^{-1}\d x}_\R$, ${}^bT^*Y=0$ and ${}^bT^*Z=\an{\d z}_\R$, where ${}^b\Om_g$ maps $\d z\mapsto nx^n\cdot x^{-1}\d x$. So by Theorem \ref{cc7thm3}(b), ${}^bT^*\bX,{}^bT^*\bZ$ are trivial line bundles, such that ${}^b\Om_{\bs g}$ is multiplication by $nx^n$ on $\bX$, and ${}^b T^*\bY=0$. Since $\bW$ is defined by $x^n=0$ in $\bX$, we see that $\bs e^*({}^b\Om_{\bs g})=0$ on $\bW$. Thus \eq{cc7eq28} implies that ${}^bT^*\bW\cong\bs e^*({}^bT^*\bX)$, so ${}^bT^*\bW$ is a rank 1 vector bundle on~$\bW$.

We might have hoped that if $\bW$ is an interior $C^\iy$-scheme with corners such that ${}^bT^*\bW$ is a vector bundle, and $\bW$ satisfies a few other obvious necessary conditions (e.g.\ $\bW$ should be Hausdorff, second countable, toric, locally finitely presented) then $\bW$ should be a manifold with g-corners. However, in this example ${}^bT^*\bW$ is a vector bundle, but $\bW$ is not a manifold with g-corners, and there do not seem to be obvious conditions to impose which exclude~$\bW$.
\end{ex}

\subsection{(B-)cotangent sheaves and the corner functor}
\label{cc76}

If $X$ is a manifold with (g-)corners, then as in \cite[\S 3.5]{Joyc6} and Remark \ref{cc3rem5}(d) we have an exact sequence of vector bundles on $C(X)$:
\e
\smash{\xymatrix@C=18pt{ 0 \ar[r] & {}^bT^*(C(X)) \ar[rr]^(0.45){{}^b\pi^*_T} && \Pi_X^*({}^bT^*X) \ar[rr]^(0.5){{}^bi^*_T} && {}^bN^*_{C(X)} \ar[r] & 0. }}
\label{cc7eq29}
\e
Note that ${}^b\pi^*_T$ is {\it not\/} the morphism ${}^b\d\Pi_X^*$ associated to $\Pi_X:C(X)\ra X$: this is not defined as $\Pi_X$ is not interior, and if it were it would map in the opposite direction. Both ${}^b\pi^*_T,{}^bi^*_T$ are specially constructed using the geometry of corners.

We will construct an analogue of \eq{cc7eq29} for (firm, interior) $C^\iy$-schemes with corners. First we define an analogue for (interior) $C^\iy$-rings with corners.

\begin{dfn}
\label{cc7def8}
Let $\bfC$ be an interior $C^\iy$-ring with corners, and $P\in\Pr_\bfC$ be a prime ideal in $\fC_\rex$. Then Example \ref{cc4ex4}(b) defines a $C^\iy$-ring with corners $\bfD=\bfC/\simc_P$, which is also interior, with (non-interior) projection $\bs\pi_P:\bfC\ra\bfD$. Writing $P_\rin=P\sm\{0\}\subset\fC_\rin$, define a monoid
\e
\fC_\rin^P=\fC_\rin\big/\bigl[c'=1:c'\in \fC_\rin\sm P_\rin\bigr]\cong \fC_\rin^\sh/\bigl[c''=1:c''\in \fC^\sh_\rin\sm P_\rin^\sh\bigr],
\label{cc7eq30}
\e
with projection $\la^P:\fC_\rin\ra \fC_\rin^P$. We will construct an exact sequence in~$\bfDmod$:
\e
\xymatrix@C=13pt{
{}^b\Om_\bfD \ar[rrr]^(0.4){{}^b\pi_{\bfC,P}} &&& {}^b\Om_\bfC \ot_\fC\fD \ar[rrr]^{{}^bi_{\bfC,P}} &&& \fC_\rin^P\ot_\N\fD \ar[rr] && 0.
}
\label{cc7eq31}
\e

Beware of a possible confusion here: in \eq{cc7eq30} we regard $\fC_\rin^P$ as a monoid under multiplication, with identity 1, but in \eq{cc7eq31} it is better to think of $\fC_\rin^P$ as a monoid under addition, so that the identity $1\in\fC_\rin^P$ is mapped to 0 in the $\bfD$-module $\fC_\rin^P\ot_\N\fD$, i.e. $1\ot 1=0$. This arises as for b-cotangent modules we think of $\d_\rin=\d\ci\log$, where $\log$ maps multiplication to addition and~$1\mapsto 0$. 

As in Example \ref{cc4ex4}(b), $\fD_\rex=\fC_\rex/\simc_P$, where for $c_1',c_2'\in\fC_\rex$ we have $c_1'\simc_P c_2'$ if either $c_1',c_2'\in P$ or $c_1'=\Psi_{\exp}(c)c_2'$ for $c$ in the ideal $\an{\Phi_i(P)}$ in $\fC$ generated by $\Phi_i(P)$. Thus $\fC_\rin\sm P_\rin$ is mapped surjectively to $\fD_\rin=\fD_\rex\sm\{0\}$. 

Define a map $\d'_\rin:\fD_\rin\ra{}^b\Om_\bfC \ot_\fC\fD$ by $\d'_\rin[c']=(\d_\rin c')\ot 1$, where $c'\in\fC_\rin\sm P_\rin$, and $[c']\in\fD_\rin$ is its $\simc_P$-equivalence class, and $\d_\rin c'\in{}^b\Om_\bfC$, so that $(\d_\rin c')\ot 1$ lies in ${}^b\Om_\bfC \ot_\fC\fD$. To show this is well defined, note that if $c_1',c_2'\in \fC_\rin\sm P_\rin$ with $[c_1']=[c_2']$ then $c_1'=\Psi_{\exp}(c)c_2'$ for $c\in\an{\Phi_i(P)}$. We may write $c=a_1\Phi_i(b_1')+\cdots+a_n\Phi_i(b_n')$ for $a_1,\ldots,a_n\in\fC$ and $b_1',\ldots,b_n'\in P$. Hence
\begin{equation*}
\d_\rin c_1'=\Psi_{\exp}(c)\cdot\d_\rin c_2'+\ts\sum_{j=1}^nc_2'\Phi_i(b_j')\bigl(\d_\rin\Psi_{\exp}(a_j)+a_j\cdot\d_\rin b_j'\bigr)
\end{equation*}
in ${}^b\Om_\bfC$, using Definition \ref{cc7def3}. Applying $-\ot 1:{}^b\Om_\bfC\ra{}^b\Om_\bfC \ot_\fC\fD$, we have $\pi_P(\Psi_{\exp}(c))=1$ as $\fD=\fC/I$ with $c\in I$, and $\pi_P(c_2'\Phi_i(b_j'))=0$ as $b_j'\in P$ so $c_2'\Phi_i(b_j')\in\an{\Phi_i(P)}$. Hence $\d_\rin c_1'\ot 1=1\cdot (\d_\rin c_2'\ot 1)+0=\d_\rin c_2'\ot 1$, so $\d'_\rin$ is well defined. As $\d_\rin$ is a b-derivation, $\d_\rin'$ is too. Thus by the universal property of ${}^b\Om_\bfD$ there is a unique morphism ${}^b\pi_{\bfC,P}:{}^b\Om_\bfD\ra{}^b\Om_\bfC \ot_\fC\fD$ in $\bfDmod$ with $\d'_\rin={}^b\pi_{\bfC,P}\ci\d_\rin$, as required in~\eq{cc7eq31}.

Next, define a map $\d''_\rin:\fC_\rin\ra\fC_\rin^P\ot_\N\fD$ by $\d''_\rin(c')=\la^P(c')\ot 1$. Regard $\fC_\rin^P\ot_\N\fD$ as a $\bfC$-module via $\pi_P:\fC\ra\fD$, with trivial $\fC$-action on the $\fC_\rin^P$ factor, so that $c\cdot(\bar c\ot d)=\bar c\ot (\pi_P(c)d)$ for $c\in\fC$, $d\in\fD$ and $\bar c\in\fC_\rin^P$. We claim that $\d''_\rin$ is a b-derivation. To prove this, let $g:[0,\iy)^n\ra[0,\iy)$ be interior and $c'_1,\ldots,c'_n\in\fC_\rin$. Write $g(x_1,\ldots,x_n)=x_1^{a_1}\cdots x_n^{a_n}\exp(f(x_1,\ldots,x_n))$, for $a_1,\ldots,a_n\ge 0$ and $f:[0,\iy)^n\ra\R$ smooth. Then
\ea
&\d_\rin''\bigl(\Psi_g(c'_1,\ldots,c'_n)\bigr)=\d_\rin''\bigl(c_1^{\prime a_1}\cdots c_n^{\prime a_n}\Psi_{\exp}(f(x_1,\ldots,x_n))\bigr)
\nonumber\\
&=\la^P\bigl(c_1^{\prime a_1}\cdots c_n^{\prime a_n}\Psi_{\exp}(f(x_1,\ldots,x_n))\bigr)\ot 1
\label{cc7eq32}\\
&=\ts\sum_{j=1}^na_j\la^P(c_j')\ot 1+\la^P(\Psi_{\exp}(f(x_1,\ldots,x_n))\bigr)\ot 1=\ts\sum_{j=1}^na_j\d_\rin''(c_j')+0,
\nonumber
\ea
using in the third step that $\la^P$ is a monoid morphism, and in the fourth that $\la^P(\Psi_{\exp}(f(x_1,\ldots,x_n)))=1$ by \eq{cc7eq30} as $\Psi_{\exp}(f(x_1,\ldots,x_n))\in\fC_\rin\sm P_\rin$, so $\la^P(\Psi_{\exp}(f(x_1,\ldots,x_n)))\ot 1=0$ as $-\ot 1$ maps $1\mapsto 0$. Now
\begin{equation*}
g^{-1}x_j\frac{\pd g}{\pd x_j}=a_j+x_j\frac{\pd f}{\pd x_j}(x_1,\ldots,x_n).
\end{equation*}
Hence
\e
\Phi_{g^{-1}x_i\frac{\pd g}{\pd x_i}}(c'_1,\ldots,c'_n)=a_j+\Phi_i(c_j')\Phi_{\frac{\pd f}{\pd x_j}}(c_1',\ldots,c_n').
\label{cc7eq33}
\e
Combining \eq{cc7eq32} with $\pi_P$ applied to \eq{cc7eq33} we see that
\ea
\d_\rin''\bigl(\Psi_g(c'_1,\ldots,c'_n)\bigr)&=\ts\sum_{j=1}^n\pi_P\bigl(\Phi_{g^{-1}x_j\frac{\pd g}{\pd x_j}}(c'_1,\ldots,c'_n)\bigr)\cdot \d''_\rin c'_j
\label{cc7eq34}\\
&-\ts\sum_{j=1}^n\pi_P\bigl(\Phi_i(c_j')\bigr)\pi_P\bigl(\Phi_{\frac{\pd f}{\pd x_j}}(c_1',\ldots,c_n')\bigr)\cdot\d_\rin''c_j'.
\nonumber
\ea
For each $j=1,\ldots,n$, if $c_j'\in P$ then $\Phi_i(c_j')\in\an{\Phi_i(P)}$ so $\pi_P(\Phi_i(c_j'))=0$, and if $c_j'\notin P$ then $\la^P(c_j')=1$ so $\d_\rin''c_j'=0$. Hence the second line of \eq{cc7eq34} is zero. So $\d_\rin''$ satisfies \eq{cc7eq1}, and is a b-derivation.

Thus there exists a unique morphism ${}^bi'_{\bfC,P}:{}^b\Om_\bfC\ra\fC_\rin^P\ot_\N\fD$ in $\bfCmod$ with $\d_\rin''={}^bi'_{\bfC,P}\ci\d_\rin$, by the universal property of ${}^b\Om_\bfC$. Applying $-\ot_\fC\fD$ to ${}^bi'_{\bfC,P}$ and noting that $\fD\ot_\fC\fD\cong\fD$ as $\pi_P:\fC\ra\fD$ is surjective gives a morphism ${}^bi_{\bfC,P}:{}^b\Om_\bfC\ot_\fC\fD\ra\fC_\rin^P\ot_\N\fD$, as required in \eq{cc7eq31}. Proposition \ref{cc7prop6} shows \eq{cc7eq31} is an exact sequence.

Now suppose also that $\bs\phi:\bfC\ra\bfE$ is a morphism in $\CRingscin$, and $Q\subset \fE_\rex$ is a prime ideal with $P=\phi_\rex^{-1}(Q)$, and write $\bfF=\bfE/\simc_Q$. As $\phi_\rex(P)\subseteq Q$ there is a unique morphism $\bs\psi:\bfD\ra\bfF$ with $\bs\psi\ci\bs\pi_P=\bs\pi_Q\ci\bs\phi:\bfC\ra\bfF$. From the definitions, it is not difficult to show that the following diagram commutes:
\e
\begin{gathered}
\xymatrix@C=13pt@R=18pt{
{}^b\Om_\bfD\ot_\fD\fF \ar[d]^{({}^b\Om_{\bs\psi})_*} \ar[rrr]_(0.45){{}^b\pi_{\bfC,P}\ot\id} &&& {}^b\Om_\bfC \ot_\fC\fF  \ar[d]^{{}^b\Om_{\bs\phi}\ot\id} \ar[rrr]_{{}^bi_{\bfC,P}\ot\id} &&& \fC_\rin^P\ot_\N\fF  \ar[d]^{(\phi_\rin)_*\ot\id} \ar[rr] && 0 \\
{}^b\Om_\bfF \ar[rrr]^(0.4){{}^b\pi_{\bfE,Q}} &&& {}^b\Om_\bfE \ot_\fE\fF \ar[rrr]^{{}^bi_{\bfE,Q}} &&& \fE_\rin^Q\ot_\N\fF \ar[rr] && 0.\! }
\end{gathered}
\label{cc7eq35}
\e

\end{dfn}

\begin{prop}
\label{cc7prop6}
Equation\/ \eq{cc7eq31} is an exact sequence.
\end{prop}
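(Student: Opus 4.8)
The plan is to verify exactness of \eqref{cc7eq31} at the three places, reducing everything to the presentation of b-cotangent modules by generators and relations from Definition \ref{cc7def3} and Propositions \ref{cc7prop1}--\ref{cc7prop2}. First I would handle surjectivity of ${}^bi_{\bfC,P}$: the target $\fC_\rin^P\ot_\N\fD$ is generated over $\fD$ by elements $\la^P(c')\ot 1$ for $c'\in\fC_\rin$, and by construction ${}^bi_{\bfC,P}((\d_\rin c')\ot 1)={}^bi'_{\bfC,P}(\d_\rin c')\ot 1=\d''_\rin(c')=\la^P(c')\ot 1$, so the image of ${}^bi_{\bfC,P}$ contains a generating set, hence ${}^bi_{\bfC,P}$ is surjective. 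Next, exactness at the middle spot: ${}^bi_{\bfC,P}\ci{}^b\pi_{\bfC,P}=0$ because on generators $\d_\rin[c']$ of ${}^b\Om_\bfD$ (for $c'\in\fC_\rin\sm P_\rin$) it sends $\d_\rin[c']\mapsto (\d_\rin c')\ot 1\mapsto \la^P(c')\ot 1=1\ot 1=0$, as $\la^P(c')=1$ for $c'\notin P_\rin$. For the reverse inclusion $\Ker {}^bi_{\bfC,P}\subseteq\Im {}^b\pi_{\bfC,P}$, I would use a concrete presentation of ${}^b\Om_\bfC\ot_\fC\fD$ by generators $\d_\rin c'$, $c'\in\fC_\rin$, and relations \eqref{cc7eq1}, split the generators into those with $c'\in\fC_\rin\sm P_\rin$ (which map to generators $\d_\rin[c']$ of ${}^b\Om_\bfD$ under a natural splitting, and lie in $\Im{}^b\pi_{\bfC,P}$) and those with $c'\in P_\rin$ (on which ${}^bi_{\bfC,P}$ is, up to the relations, the identity onto $\fC_\rin^P\ot_\N\fD$); a diagram-chase, analogous to the cokernel computations in the proofs of Propositions \ref{cc7prop1} and \ref{cc7prop2}, then shows any element killed by ${}^bi_{\bfC,P}$ is a combination of $\d_\rin c'$ with $c'\in\fC_\rin\sm P_\rin$ modulo relations, hence lies in $\Im{}^b\pi_{\bfC,P}$.

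Concretely, I would reduce to a generators-and-relations model. Choose a surjection $\bfF^{A,A_\rin}\twoheadrightarrow\bfC$ (Proposition \ref{cc4prop6}), so that ${}^b\Om_\bfC$ is a quotient of $\langle\d_\rin\Psi_{\exp}(x_a),\d_\rin y_{a'}\rangle_\R\ot_\R\fC$ by Proposition \ref{cc7prop1}. Lift the prime ideal $P$ and split the generators $y_{a'}$ into two families according to whether their images lie in $P_\rin$ or not; since $P$ is a face's complement in the monoid sense, this splitting is compatible with the monoid structure after quotienting. The quotient $\bfD=\bfC/\simc_P$ has $\fD_\rin$ generated by the images of the $y_{a'}$ with $[y_{a'}]\notin P$, and ${}^b\Om_\bfD\ot_\fD\fD$ is the corresponding quotient; meanwhile $\fC_\rin^P$ is generated by the images of the $y_{a'}$ with $[y_{a'}]\in P$, with the others set to $1$. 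Then ${}^b\pi_{\bfC,P}$ is (on generators) the inclusion of the "$\notin P$" part and ${}^bi_{\bfC,P}$ is the projection to the "$\in P$" part, and exactness is then a formal consequence of how the relations \eqref{cc7eq1} distribute across this splitting — this is exactly the kind of argument run in the proof of Proposition \ref{cc7prop1}(b), adapted to keep track of the monoid ideal $P$.

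The main obstacle I anticipate is the middle exactness, specifically controlling the relations \eqref{cc7eq1} that mix generators in $P_\rin$ with generators outside $P_\rin$. A relation $\Psi_g(c_1',\ldots,c_n')$ with some $c_i'\in P_\rin$ and others not contributes terms to both summands, and I must check that, after applying ${}^bi_{\bfC,P}$, the "$\in P$"-part of such a relation is precisely what is needed so that the kernel is generated by the image of ${}^b\pi_{\bfC,P}$ together with the relations already present in ${}^b\Om_\bfD$. The computation \eqref{cc7eq32}--\eqref{cc7eq34} in Definition \ref{cc7def8} already does the key bookkeeping — it shows the cross-terms vanish in $\fC_\rin^P\ot_\N\fD$ because $\pi_P(\Phi_i(c_j'))=0$ when $c_j'\in P$ and $\la^P(c_j')\ot 1 = 0$ when $c_j'\notin P$ — so I expect the exactness proof to assemble cleanly from that identity, using that $\pi_P$ and $\la^P$ are surjective and that $\fD\ot_\fC\fD\cong\fD$. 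Finally, the commutativity of \eqref{cc7eq35} (functoriality) follows by the same uniqueness arguments from the universal properties of ${}^b\Om_\bfC$, ${}^b\Om_\bfD$, tracing generators through both routes.
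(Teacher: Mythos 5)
Your proposal is correct and takes essentially the same route as the paper's proof: after checking the composition vanishes and ${}^bi_{\bfC,P}$ is surjective on generators, the paper reduces to a coequalizer presentation $\bfF^{B,B_\rin}\rra\bfF^{A,A_\rin}\ra\bfC$, pulls $P$ back to prime ideals $Q=\phi_\rex^{-1}(P)$ and $R=\al_\rex^{-1}(Q)=\be_\rex^{-1}(Q)$ so that both the generators $A_\rin$ and the relation-generators $B_\rin$ split into their ``in $P$'' and ``not in $P$'' parts, and then runs precisely the diagram chase you outline. The only detail worth noting is that the third column (the $\fC_\rin^P\ot_\N\fD$ part) is handled by observing that the quotient monoids form a coequalizer in $\Mon$ and that $-\ot_\N\fD$ preserves coequalizers, which supplies the exactness you attribute to ``how the relations distribute.''
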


\begin{proof} Work in the situation of Definition \ref{cc7def8}. Then ${}^b\Om_\bfD$ is generated over $\fD$ by $\d_\rin[c']$ for $c'\in\fC_\rin\sm P_\rin$. But
\begin{align*}
{}^bi_{\bfC,P}\ci{}^b\pi_{\bfC,P}(\d_\rin[c'])&={}^bi_{\bfC,P}\ci\d'_\rin[c']={}^bi_{\bfC,P}((\d_\rin c')\ot 1)={}^bi'_{\bfC,P}(\d_\rin c')\\
&=\d''_\rin(c')=\la^P(c')\ot 1=1\ot 1=0.
\end{align*}
Hence ${}^bi_{\bfC,P}\ci{}^b\pi_{\bfC,P}=0$, so \eq{cc7eq31} is a complex.

By Proposition \ref{cc4prop6}(b), $\bfC$ fits into a coequalizer diagram in $\CRingscin$:
\e
\xymatrix@C=50pt{  \bfF^{B,B_\rin} \ar@<.1ex>@/^.3pc/[r]^{\bs\al} \ar@<-.1ex>@/_.3pc/[r]_{\bs\be} & \bfF^{A,A_\rin} \ar[r]^{\bs\phi} & \bfC. }
\label{cc7eq36}
\e
Define prime ideals $Q=\phi_\rex^{-1}(P)\subset\fF^{A,A_\rin}_\rex$ and
\begin{equation*}
R=\al_\rex^{-1}(Q)=(\phi_\rex\ci\al_\rex)^{-1}(P)=(\phi_\rex\ci\be_\rex)^{-1}(P)=\be_\rex^{-1}(Q)	\subset\fF^{B,B_\rin}_\rex.
\end{equation*}
As $\bfF^{A,A_\rin},\bfF^{B,B_\rin}$ are free, prime ideals in $\fF^{A,A_\rin}_\rex,\fF^{B,B_\rin}_\rex$ are parametrized by subsets of $A_\rin,B_\rin$. Thus there are unique subsets $\dot A_\rin\subseteq A_\rin$, $\dot B_\rin\subseteq B_\rin$ with $Q=\an{y_{a'}:a'\in\dot A_\rin}$ and $R=\an{\ti y_{b'}:b'\in\dot B_\rin}$. Set $\ddot A_\rin=A_\rin\sm \dot A_\rin$ and $\ddot B_\rin=B_\rin\sm \dot B_\rin$. Then
\begin{align*}
\bfF^{A,A_\rin}/\simc_Q&=\bfF^{A,A_\rin}/[y_{a'}=0:a'\in\dot A_\rin]\cong \bfF^{A,\ddot A_\rin},\\
\bfF^{B,B_\rin}/\simc_R&=\bfF^{B,B_\rin}/[\ti y_{b'}=0:b'\in\dot B_\rin]\cong \bfF^{B,\ddot B_\rin}.
\end{align*}
Thus equation \eq{cc7eq36} descends to a coequalizer diagram for $\bfD$:
\e
\xymatrix@C=50pt{  \bfF^{B,\ddot B_\rin} \ar@<.1ex>@/^.3pc/[r]^{\bs\ga} \ar@<-.1ex>@/_.3pc/[r]_{\bs\de} & \bfF^{A,\ddot A_\rin} \ar[r]^{\bs\psi} & \bfD. }
\label{cc7eq37}
\e

Consider the diagram in $\bfDmod$:
\ea
\begin{gathered}
\text{\begin{footnotesize}$\xymatrix@!0@C=29pt@R=25pt{
{\begin{subarray}{l}\ts  \bigl\langle \d_\rin\Psi_{\exp}(\ti x_b),\, b\!\in\! B,\\ \ts \d_\rin \ti y_{b'},\, b'\!\in\!\ddot B_\rin\bigr\rangle_\R\!\ot_\R\!\fD \end{subarray}} \ar[dd]^{\bs\ga_*-\bs\de_*} \ar[rrrr]^(0.48){\inc} &&&& {\begin{subarray}{l}\ts  \bigl\langle \d_\rin\Psi_{\exp}(\ti x_b),\, b\!\in\! B,\\ \ts \d_\rin \ti y_{b'},\, b'\!\in\! B_\rin\bigr\rangle_\R\!\ot_\R\!\fD \end{subarray}}  \ar[dd]^{\bs\al_*-\bs\be_*} \ar[rrrr]^(0.48){\text{project}} &&&& {\begin{subarray}{l}\ts  \bigl\langle\d_\rin \ti y_{b'},\, b'\!\in\!\dot B_\rin\bigr\rangle_\R\!\ot_\R\!\fD \end{subarray}}  \ar[dd]^{\bs\al_*-\bs\be_*} \ar[rr] && 0 \\
\\
{\begin{subarray}{l}\ts  \bigl\langle \d_\rin\Psi_{\exp}(x_a),\, a\!\in\! A,\\ \ts \d_\rin y_{a'},\, a'\!\in\!\ddot A_\rin\bigr\rangle_\R\!\ot_\R\!\fD \end{subarray}} \ar[dd]^{\bs\psi_*} \ar[rrrr]^(0.48){\inc} &&&& {\begin{subarray}{l}\ts  \bigl\langle \d_\rin\Psi_{\exp}(x_a),\, a\!\in\! A,\\ \ts \d_\rin y_{a'},\, a'\!\in\! A_\rin\bigr\rangle_\R\!\ot_\R\!\fD \end{subarray}} \ar[dd]^{\bs\phi_*} \ar[rrrr]^(0.48){\text{project}} &&&& {\begin{subarray}{l}\ts  \bigl\langle \d_\rin y_{a'},\, a'\!\in\!\dot A_\rin\bigr\rangle_\R\!\ot_\R\!\fD \end{subarray}} \ar[dd]^{\d_\rin y_{a'}\mapsto [y_{a'}]} \ar[rr] && 0 \\
\\
{}^b\Om_\bfD \ar[d] \ar[rrrr]^(0.5){{}^b\pi_{\bfC,P}} &&&& {}^b\Om_\bfC \ot_\fC\fD  \ar[d] \ar[rrrr]^{{}^bi_{\bfC,P}} &&&& \fC_\rin^P\ot_\N\fD  \ar[d] \ar[rr] && 0 \\
0 &&&& 0 &&&& 0.\!\!}$\end{footnotesize}}
\end{gathered}
\nonumber
\\[-15pt]
\label{cc7eq38}
\ea
Here the first column is \eq{cc7eq7} for \eq{cc7eq37}, and so is exact. The maps $\bs\ga_*,\bs\de_*,\bs\psi_*$ are defined as in \eq{cc7eq8}--\eq{cc7eq9} using $\bs\ga,\bs\de,\bs\psi$. The second column is $-\ot_\fC\fD$ applied to \eq{cc7eq7} for \eq{cc7eq36} in the same way, and so is exact. 

For the third column of \eq{cc7eq38}, equation \eq{cc7eq36} and the definition of $Q,R$ induce a coequalizer diagram in~$\Mon$:
\begin{equation*}
\xymatrix@C=40pt{  {\begin{subarray}{l}\ts \fF^{B,B_\rin}_\rin\big/\bigl[\ti y'=1:\\ \ts \ti y'\in \fF^{B,B_\rin}_\rin\!\sm\! R_\rin\bigr]\end{subarray}}
 \ar@<.1ex>@/^.3pc/[r]^{(\al_\rin)_*} \ar@<-.1ex>@/_.3pc/[r]_{(\be_\rin)_*} & {\begin{subarray}{l}\ts \fF^{A,A_\rin}_\rin\big/\bigl[y'=1:\\ \ts y'\!\in\! \fF^{A,A_\rin}_\rin\sm Q_\rin\bigr]\end{subarray}} \ar[r]^{(\phi_\rin)_*} & {\begin{subarray}{l}\ts  \fC_\rin^P\!=\!\fC_\rin\big/\bigl[c'\!=\!1\!:\\ \ts c'\in \fC_\rin\sm P_\rin\bigr].\end{subarray}} }
\end{equation*}
Applying $-\ot_\N\fD$, which preserves coequalizers, gives an exact sequence 
\begin{equation*}
\xymatrix@C=18pt{  {\begin{subarray}{l}\ts \fF^{B,B_\rin}_\rin\big/\bigl[\ti y'=1:\\ \ts \ti y'\in \fF^{B,B_\rin}_\rin\!\sm\! R_\rin\bigr]\!\ot_\N\!\fD\end{subarray}}
\ar[rr]^(0.45){(\al_\rin)_*-(\be_\rin)_*} && {\begin{subarray}{l}\ts \fF^{A,A_\rin}_\rin\big/\bigl[y'=1:\\ \ts y'\!\in\! \fF^{A,A_\rin}_\rin\sm Q_\rin\bigr]\!\ot_\N\!\fD\end{subarray}} \ar[r]^(0.6){(\phi_\rin)_*} & \fC_\rin^P\!\ot_\N\!\fD \ar[r] & 0. }
\end{equation*}
But this is isomorphic to the third column of \eq{cc7eq38}, which is therefore exact.

The top two squares of \eq{cc7eq38} commute by the relation between \eq{cc7eq36} and \eq{cc7eq37}. The bottom two squares commute as they are \eq{cc7eq35} for $\bs\phi:\bfF^{A,A_\rin}\ra\bfC$, using \eq{cc7eq6}. Thus \eq{cc7eq38} commutes. The columns are all exact, and the top two rows are obviously exact as $A_\rin=\dot A_\rin\amalg\ddot A_\rin$, $B_\rin=\dot B_\rin\amalg\ddot B_\rin$. The third row is a complex as above. Therefore by some standard diagram-chasing in exact sequences, the third row, which is \eq{cc7eq31}, is exact.
\end{proof}

\begin{ex}
\label{cc7ex5}
Set $\bfC=\R(x)[y]/[y=y^2$, $e^xy=y]$, and let $P=\an{y}\subset\fC_\rex$. Then $\bfD=\bfC/\simc_P\cong\R(x)$. Proposition \ref{cc7prop2} implies that
\begin{equation*}
{}^b\Om_\bfC\!\cong\!\frac{\an{\d_\rin\Psi_{\exp}(x),\d_\rin y}_\R\ot_\R\fC}{\d_\rin y\!=\!2\d_\rin y, \;\d_\rin\Psi_{\exp}(x)\!+\!\d_\rin y\!=\!\d_\rin y}\!=\!0,\;\> {}^b\Om_\bfD\!\cong\!\an{\d_\rin\Psi_{\exp}(x)}_\R\ot_\R\fD.
\end{equation*}
Also \eq{cc7eq30} implies that $\fC_\rin^P=\{[1],[y]\}$ with $[y]^2=[y]$, so $\fC_\rin^P\ot_\N\fD=0$. Thus in this case \eq{cc7eq31} becomes the exact sequence~$\fD\ra 0\ra 0\ra 0$.
\end{ex}

Note in particular that in Example \ref{cc7ex5}, in contrast to \eq{cc7eq29}, if we add `$0\longra$' to the left of \eq{cc7eq31}, it may no longer be exact. However, this is exact if we also assume $\bfC$ is toric.

\begin{prop}
\label{cc7prop7}
In Definition\/ {\rm\ref{cc7def8},} suppose\/ $\bfC$ is toric. Then we may extend\/ \eq{cc7eq31} to an exact sequence in\/~{\rm$\bfDmod$:}
\e
\xymatrix@C=9pt{
0 \ar[rr] && {}^b\Om_\bfD \ar[rrr]^(0.4){{}^b\pi_{\bfC,P}} &&& {}^b\Om_\bfC \ot_\fC\fD \ar[rrr]^{{}^bi_{\bfC,P}} &&& \fC_\rin^P\ot_\N\fD \ar[rr] && 0.
}
\label{cc7eq39}
\e
\end{prop}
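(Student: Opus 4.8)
The plan is to show that, when $\bfC$ is toric, the map ${}^b\pi_{\bfC,P}:{}^b\Om_\bfD\ra{}^b\Om_\bfC\ot_\fC\fD$ from \eqref{cc7eq31} is injective; combined with Proposition~\ref{cc7prop6}, this gives the exact sequence \eqref{cc7eq39}. The strategy is to reduce to the free case and produce an explicit left inverse (a retraction) for ${}^b\pi_{\bfC,P}$ after tensoring with $\fD$. First I would recall from the proof of Proposition~\ref{cc7prop6} that a coequalizer presentation \eqref{cc7eq36} of $\bfC$ in $\CRingscin$ descends to the coequalizer presentation \eqref{cc7eq37} of $\bfD$, with $A_\rin=\dot A_\rin\amalg\ddot A_\rin$ and $B_\rin=\dot B_\rin\amalg\ddot B_\rin$, where the prime ideal $Q=\phi_\rex^{-1}(P)$ corresponds to $\dot A_\rin\subseteq A_\rin$. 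Because $\bfC$ is toric, hence firm and saturated, I would choose the presentation \eqref{cc7eq36} so that $A,A_\rin$ are finite and the generators $y_{a'}$, $a'\in A_\rin$, map to a system of generators of $\fC_\rin^\sh$; saturatedness of $\fC_\rin$ (equivalently $\bfD$ being toric, by \eqref{cc7eq30}, since $P^\sh$ is a face) is the point that will let us split the relevant monoid extensions.

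The key steps, in order, are: (1) In the free case $\bfC=\bfF^{A,A_\rin}$, $\bfD=\bfF^{A,\ddot A_\rin}$, Proposition~\ref{cc7prop1}(a) gives ${}^b\Om_\bfC=\langle\d_\rin\Psi_{\exp}(x_a),\d_\rin y_{a'}:a\in A,a'\in A_\rin\rangle_\R\ot_\R\fC$ and similarly for $\bfD$, so ${}^b\Om_\bfC\ot_\fC\fD$ is the free $\fD$-module on $\d_\rin\Psi_{\exp}(x_a)$ ($a\in A$), $\d_\rin y_{a'}$ ($a'\in\ddot A_\rin$), $\d_\rin y_{a'}$ ($a'\in\dot A_\rin$); here ${}^b\pi_{\bfC,P}$ is the obvious inclusion of the first two blocks into all three, which is split injective, with the retraction killing $\d_\rin y_{a'}$ for $a'\in\dot A_\rin$. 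Likewise ${}^bi_{\bfC,P}$ identifies with the projection onto $\langle \d_\rin y_{a'}:a'\in\dot A_\rin\rangle_\R\ot_\R\fD\cong\fC_\rin^P\ot_\N\fD$. Thus \eqref{cc7eq39} is exact in the free case, and moreover split. (2) For general toric $\bfC$, assemble the diagram \eqref{cc7eq38} from the proof of Proposition~\ref{cc7prop6}, whose three columns are exact (the first and second being instances of \eqref{cc7eq7}), and whose top two rows are split exact; add a top row of zeros. The free case shows the top two rows of the augmented diagram are split by maps compatible with \eqref{cc7eq38}, and I would check directly that ${}^b\pi_{\bfC,P}$ is surjected onto precisely the kernel of ${}^bi_{\bfC,P}$ by the middle column restricted to $\ker$, while no new kernel appears — i.e. a diagram chase showing the bottom row stays injective on the left once the top rows are split. (3) Transport the splitting down: a cocycle in $\ker({}^b\pi_{\bfC,P})$ lifts to the middle-left term of \eqref{cc7eq38}, and using exactness of columns plus the splitting of the top-left square one traces it back to $0$.

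The main obstacle I expect is Step~(2)–(3): unlike the situation in Theorem~\ref{cc7thm2}, here one needs that the torsion-free/saturated hypothesis genuinely enters, because Example~\ref{cc7ex5} shows injectivity of ${}^b\pi_{\bfC,P}$ fails without it. Concretely, the delicate point is that the relations $\bs\al_\rin-\bs\be_\rin$ coming from \eqref{cc7eq36} must not, after quotienting by $P$, force any nonzero element of ${}^b\Om_\bfD$ into the image of $\d_\rin y_{a'}$, $a'\in\dot A_\rin$; this is where I would use that $\bfC$ toric implies $\fC_\rin^\sh$, and hence $\fC_{\rin}^\sh/[c''=1:c''\notin P^\sh]$, is a \emph{toric} monoid with $P^\sh$ a face, so that the short exact sequence of monoids $0\ra(\text{face complement})\ra\fC_\rin^\sh\ra\fC_\rin^P\ra0$ splits, giving a genuine $\fC$-linear retraction of ${}^b\pi_{\bfC,P}$ after $-\ot_\fC\fD$, built from the splitting monoid morphism and $\d_\rin$. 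The rest — checking compatibility of this retraction with the morphisms in \eqref{cc7eq35}, and that it is independent of the chosen presentation — is routine, handled exactly as in the proofs of Propositions~\ref{cc7prop2}, \ref{cc7prop6}.
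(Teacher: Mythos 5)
Your overall strategy --- deduce \eq{cc7eq39} from Proposition \ref{cc7prop6} by proving ${}^b\pi_{\bfC,P}$ injective, with toricness entering through a splitting --- is reasonable, and it is genuinely different from the paper's proof. The paper instead takes a coequalizer presentation of $\bfC$ in $\CRingscto$ chosen with $\md{A_\rin}$, then $\md{B_\rin}$, then $\md{\dot B_\rin}$ minimal, shows that minimality forces the rational sequences \eq{cc7eq42} and \eq{cc7eq44} to be exact (redundant $[0,\iy)$-type relations can be traded for $\R$-type ones), and then chases the enlarged diagram \eq{cc7eq45}, whose right-hand column is exact by \eq{cc7eq44}. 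As written, however, your argument has a genuine gap at its central step, and an intermediate step that cannot work as described.

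First, steps (2)--(3): the columns of \eq{cc7eq38} are only right exact (they are presentations), so no formal chase using splitness of the top rows plus ``exactness of columns'' can yield injectivity of ${}^b\pi_{\bfC,P}$; Example \ref{cc7ex5} has all of these formal features and ${}^b\pi_{\bfC,P}$ is not injective there, so toricness must enter through a concrete left-exactness input, as it does in the paper via \eq{cc7eq44} --- and once you have a retraction these steps are superfluous anyway. Second, and more seriously, the retraction is asserted rather than constructed, and it does not follow directly from your monoid splitting: an element of $\fC_\rin^\sh$ need not factor as (element of the face $F^\sh=\fC_\rin^\sh\sm P^\sh$) times (element of $s(\fC_\rin^P)$) --- such a factorization exists only in $(\fC_\rin^\sh)^\gp$ --- so one cannot define $r(\d_\rin c'\ot 1)$ as ``$\d_\rin$ of the face part of $c'$''. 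What completes your plan is to construct a b-derivation $\de:\fC_\rin\ra{}^b\Om_\bfD$ with $\de(c')=\d_\rin[c']$ for all $c'\in\fC_\rin\sm P_\rin$ (where $[c']$ is the image in $\fD_\rin$), and then obtain $r$ from the universal property of ${}^b\Om_\bfC$ after applying $-\ot_\fC\fD$. Here toricness is used twice: integrality and freeness of $(\fC_\rin^\sh)^\gp$ give a monoid splitting $\fC_\rin\cong\Psi_{\exp}(\fC)\t\fC_\rin^\sh$, and saturation makes $(F^\sh)^\gp$ a saturated subgroup of the finitely generated free group $(\fC_\rin^\sh)^\gp$, hence a direct summand, giving a group projection onto $(F^\sh)^\gp$; one then sends the unit factor through $\d_\rin\ci\Psi_{\exp}\ci\pi_P$ and the $(F^\sh)^\gp$-component through $\d_\rin$ of the image in $\fD_\rin$ of a lift, and verifies \eq{cc7eq1}, say via Definition \ref{cc7def3}(i)--(iii), where (iii) holds because $\pi_P\ci\Phi_i$ kills $P$. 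These verifications are the real content and must be written out; note also that only this group-level splitting is needed, whereas your stronger claim that $\fC_\rin^\sh\ra\fC_\rin^P$ splits as monoids, though true for toric monoids, itself requires a cone-theoretic argument you have not supplied.
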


\begin{proof} We will explain how to modify the proof of Proposition \ref{cc7prop6}. Firstly, as $\bfC$ is toric we may take \eq{cc7eq36} to be a coequalizer diagram in $\CRingscto$ rather than in $\CRingscin$, which implies $A_\rin,B_\rin$ are finite. We choose it so that $\md{A_\rin}$ is as small as possible. This implies that we have a 1-1 correspondence 
\e
\begin{split}
\phi_\rin^\sh:A_\rin&\cong\bigr\{[y_{a'}]:a'\in A_\rin\bigr\}\,{\buildrel\cong\over\longra}\\
&\bigl\{[c']\in \fC_\rin^\sh\sm\{1\}:\text{$[c']\ne [c_1'][c_2']$ for $[c_1'],[c_2']\in\fC_\rin^\sh\sm\{1\}$}\bigr\},
\end{split}
\label{cc7eq40}
\e
where the second line is the unique minimal set of generators of~$\fC_\rin^\sh$. 

Having fixed $A_\rin$, we choose \eq{cc7eq36} so that $\md{B_\rin}$ is also as small as possible, that is, we define $\bfC$ using the fewest possible $[0,\iy)$-type relations in $\CRingscto$. From \eq{cc7eq36} we may form diagrams
\begin{gather}
\xymatrix@C=50pt{  \N^{B_\rin}=(\fF^{B,B_\rin}_\rin)^\sh \ar@<.1ex>@/^.3pc/[r]^{\al_\rin^\sh} \ar@<-.1ex>@/_.3pc/[r]_{\be_\rin^\sh} & \N^{A_\rin}=(\fF^{A,A_\rin}_\rin)^\sh \ar[r]^(0.63){\phi_\rin^\sh} & \fC_\rin^\sh, }
\label{cc7eq41}\\
\xymatrix@C=16pt{  0 \ar[r] & {\!\begin{subarray}{l}\ts \qquad\Q^{B_\rin}= \\ \ts(\fF^{B,B_\rin}_\rin)^\sh\!\ot_\N\!\Q\end{subarray}\!} \ar[rr]^(0.48){\al_\rin^\sh-\be_\rin^\sh} && {\!\begin{subarray}{l}\ts \qquad\Q^{A_\rin}= \\ \ts(\fF^{A,A_\rin}_\rin)^\sh\!\ot_\N\!\Q\end{subarray}\!} \ar[rr]^(0.57){\phi_\rin^\sh} && \fC_\rin^\sh\!\ot_\N\!\Q \ar[r] & 0. }
\label{cc7eq42}
\end{gather}
Here \eq{cc7eq41} is a coequalizer in the category of toric monoids $\Mon_{\bf to}$, as \eq{cc7eq36} is in $\CRingscto$. This implies \eq{cc7eq42} is exact at the third and fourth terms. 

Choosing $\md{B_\rin}$ as small as possible also forces \eq{cc7eq42} to be exact at the second term. Suppose for a contradiction that $0\ne(y_{b'}:b'\in B_\rin)\in\Ker(\al_\rin^\sh-\be_\rin^\sh)$ in \eq{cc7eq42}. By rescaling we can suppose all $y_{b'}$ lie in $\Z\subset\Q$. Write $\al_\rin(y_{b'})=g_{b'}\in \fF^{A,A_\rin}_\rin$ and $\be_\rin(y_{b'})=h_{b'}\in \fF^{A,A_\rin}_\rin$ for $b'\in B_\rin$. Then $(y_{b'}:b'\in B_\rin)\in\Ker(\al_\rin^\sh-\be_\rin^\sh)$ implies that in $(\fF^{A,A_\rin}_\rin)^\sh$ we have
\begin{equation*}
\ts\prod\limits_{b'\in B_\rin:y_{b'}>0}[g_{b'}]^{y_{b'}}\prod\limits_{b'\in B_\rin:y_{b'}<0}[h_{b'}]^{-y_{b'}}=\prod\limits_{b'\in B_\rin:y_{b'}>0}[h_{b'}]^{y_{b'}}\prod\limits_{b'\in B_\rin:y_{b'}<0}[g_{b'}]^{-y_{b'}}.
\end{equation*}

Therefore there is a unique $f\in \fF^{A,A_\rin}$ such that in $\fF^{A,A_\rin}_\rin$ we have
\e
\ts\prod\limits_{b'\in B_\rin:y_{b'}>0\!\!\!\!\!\!\!}g_{b'}^{y_{b'}}\prod\limits_{b'\in B_\rin:y_{b'}<0\!\!\!\!\!\!\!}h_{b'}^{-y_{b'}}=\Psi_{\exp}(f)\prod\limits_{b'\in B_\rin:y_{b'}>0\!\!\!\!\!\!\!}h_{b'}^{y_{b'}}\prod\limits_{b'\in B_\rin:y_{b'}<0\!\!\!\!\!\!\!}g_{b'}^{-y_{b'}}.
\label{cc7eq43}
\e
As $g_{b'}=h_{b'}$ in $\fC_\rin$, and $\bfC$ is toric, \eq{cc7eq43} implies that $\phi_\rin\ci\Psi_{\exp}(f)=1$ in $\fC_\rin$, so $\phi(f)=0$ in $\fC$. Conversely, if we impose the relation $f=0$ then the other relations involved in \eq{cc7eq43} become dependent. Pick $b'\in B_\rin$ with $y_{b'}\ne 0$. Then we may modify $\fF^{B,B_\rin}_\rin,\bs\al,\bs\be$ in \eq{cc7eq36}, replacing $B,B_\rin$ by $B\amalg\{\ti b\},B_\rin\sm\{b\}$, where $\bs\al,\bs\be$ act by $\al(x_{\ti b})=f$ and $\be(x_{\ti b})=0$. That is, we replace the $[0,\iy)$-type relation $g_{b'}=h_{b'}$ by the $\R$-type relation $f=0$. The modified equation \eq{cc7eq36} is still a coequalizer diagram in $\CRingscto$, but we have decreased $\md{B_\rin}$, a contradiction. Thus \eq{cc7eq42} is exact, and applying $-\ot_\Q\R$ to it is also exact.

Observe that the 1-1 correspondence \eq{cc7eq40} for $A_\rin$ also determines $\dot A_\rin,\ddot A_\rin$    uniquely, as $\dot A_\rin$ is identified with the intersection of the second line of \eq{cc7eq40} with $P_\rin^\sh$. With $\md{A_\rin},\md{B_\rin}$ fixed and minimal, we choose \eq{cc7eq36} so that $\md{\dot B_\rin}$ is as small as possible, or equivalently $\md{\ddot B_\rin}$ is as large as possible. In \eq{cc7eq42} we have $\Q^{A_\rin}=\Q^{\dot A_\rin}\op\Q^{\ddot A_\rin}$ and $\Q^{B_\rin}=\Q^{\dot B_\rin}\op\Q^{\ddot B_\rin}$, with $\Ker\phi_\rin^\sh\subset\Q^\rin$. The definitions imply that $(\al_\rin^\sh-\be_\rin^\sh)(\Q^{\ddot B_\rin})\subseteq \Q^{\ddot A_\rin}\cap\Ker\phi_\rin^\sh$. Choosing $\md{\ddot B_\rin}$ as large as possible forces $(\al_\rin^\sh-\be_\rin^\sh)(\Q^{\ddot B_\rin})=\Q^{\ddot A_\rin}\cap\Ker\phi_\rin^\sh$. Taking the quotient of \eq{cc7eq42} by the subsequence generated by $\Q^{\ddot A_\rin},\Q^{\ddot B_\rin}$ gives an exact sequence
\e
\xymatrix@C=23pt{  0 \ar[r] & \Q^{\dot B_\rin} \ar[rr]^(0.48){\dot\al_\rin^\sh-\dot\be_\rin^\sh} && \Q^{\dot A_\rin} \ar[rr]^(0.45){\dot\phi_\rin^\sh} && \fC_\rin^P\ot_\N\Q \ar[r] & 0, }
\label{cc7eq44}
\e
where exactness at $\Q^{\dot B_\rin}$ holds as~$(\al_\rin^\sh-\be_\rin^\sh)(\Q^{\ddot B_\rin})=\Q^{\ddot A_\rin}\cap\Ker\phi_\rin^\sh$.

Now consider the following commutative diagram extending~\eq{cc7eq38}:
\ea
\begin{gathered}
\text{\begin{footnotesize}$\xymatrix@!0@C=33.2pt@R=25pt{
&&&&&&&& 0 \ar[d] \\
0 \ar[rr] && {\begin{subarray}{l}\ts  \bigl\langle \d_\rin\Psi_{\exp}(\ti x_b),\, b\!\in\! B,\\ \ts \d_\rin \ti y_{b'},\, b'\!\in\!\ddot B_\rin\bigr\rangle_\R\!\ot_\R\!\fD\!\! \end{subarray}} \ar[dd]^{\bs\ga_*-\bs\de_*} \ar[rrr] &&& {\begin{subarray}{l}\ts  \bigl\langle \d_\rin\Psi_{\exp}(\ti x_b),\, b\!\in\! B,\\ \ts \d_\rin \ti y_{b'},\, b'\!\in\! B_\rin\bigr\rangle_\R\!\ot_\R\!\fD\!\! \end{subarray}}  \ar[dd]^{\bs\al_*-\bs\be_*} \ar[rrr] &&& {\begin{subarray}{l}\ts  \bigl\langle\d_\rin \ti y_{b'},\, b'\!\in\!\dot B_\rin\bigr\rangle_\R\!\ot_\R\!\fD \end{subarray}}  \ar[dd]^{\bs\al_*-\bs\be_*} \ar[rr] && 0 \\
\\
0 \ar[rr] &&  {\begin{subarray}{l}\ts  \bigl\langle \d_\rin\Psi_{\exp}(x_a),\, a\!\in\! A,\\ \ts \d_\rin y_{a'},\, a'\!\in\!\ddot A_\rin\bigr\rangle_\R\!\ot_\R\!\fD\!\! \end{subarray}} \ar[dd]^{\bs\psi_*} \ar[rrr] &&& {\begin{subarray}{l}\ts  \bigl\langle \d_\rin\Psi_{\exp}(x_a),\, a\!\in\! A,\\ \ts \d_\rin y_{a'},\, a'\!\in\! A_\rin\bigr\rangle_\R\!\ot_\R\!\fD\!\! \end{subarray}} \ar[dd]^{\bs\phi_*} \ar[rrr] &&& {\begin{subarray}{l}\ts  \bigl\langle \d_\rin y_{a'},\, a'\!\in\!\dot A_\rin\bigr\rangle_\R\!\ot_\R\!\fD \end{subarray}} \ar[dd]^{\d_\rin y_{a'}\mapsto [y_{a'}]} \ar[rr] && 0 \\
\\
0 \ar[rr] &&  {}^b\Om_\bfD \ar[d] \ar[rrr]^(0.5){{}^b\pi_{\bfC,P}} &&& {}^b\Om_\bfC \ot_\fC\fD  \ar[d] \ar[rrr]^{{}^bi_{\bfC,P}} &&& \fC_\rin^P\ot_\N\fD  \ar[d] \ar[rr] && 0 \\
&& 0 &&& 0 &&& 0,\!\!}$\end{footnotesize}}
\end{gathered}
\nonumber
\\[-15pt]
\label{cc7eq45}
\ea
where we have added `$0\ra$' to the rows and third column of \eq{cc7eq38}. The first and second rows are clearly exact. As for \eq{cc7eq38}, the first and second columns of \eq{cc7eq45} come from \eq{cc7eq7} for \eq{cc7eq37} and \eq{cc7eq36}. In this case \eq{cc7eq36}--\eq{cc7eq37} are coequalizers in $\CRingscto$, not in $\CRingscin$. But applying Proposition \ref{cc7prop4} as in the last part of the proof of Theorem \ref{cc7thm2}, we can show that Proposition \ref{cc7prop1} also holds for coequalizers in $\CRingscto$. Thus the first and second columns of \eq{cc7eq45} are exact. The third column is exact as it is $-\ot_\Q\fD$ applied to \eq{cc7eq44}. Therefore by some standard diagram-chasing in exact sequences, the third row, which is \eq{cc7eq39}, is exact.
\end{proof}

Next we generalize \eq{cc7eq29} to $C^\iy$-schemes with corners. We restrict to {\it firm\/} interior $C^\iy$-schemes with corners $\CSchcfiin$ so that we can use Theorem \ref{cc6thm8} and Proposition \ref{cc6prop4}, though one can also prove slightly weaker results for $\LCRScin$ and~$\CSchcin$.

\begin{dfn}
\label{cc7def9}
Let $\bfC$ be a firm, interior $C^\iy$-ring with corners, and write $\bX=\Speccin\bfC$. Let $P\in\Pr_\bfC$ be a prime ideal in $\fC_\rex$. Then Example \ref{cc4ex4}(b) defines a $C^\iy$-ring with corners $\bfD=\bfC/\simc_P$, which is also firm and interior, with projection $\bs\pi_P:\bfC\ra\bfD$. Theorem \ref{cc6thm8} shows there is an open and closed $C^\iy$-subscheme $C(\bX)^P$ in $C(\bX)$ with an isomorphism $\bY=\Speccin\bfD\ra C(\bX)^P$.

As $\bX$ is interior, Definition \ref{cc6def3} defines a sheaf of monoids
$\check M^\rin_{C(X)}$ on $C(X)$. The proof of Proposition \ref{cc6prop4} shows that $\check M^\rin_{C(X)}\vert_{C(X)^P}$ is a constant sheaf on $C(X)^P$ with fibre $\fC_\rin^P$ in \eq{cc7eq30}.

Consider the diagram in $\bO_{C(X)^P}\text{-mod}\cong\bOYmod$:
\e
\begin{gathered}
\xymatrix@!0@C=36pt@R=40pt{
\MSpec{}^b\Om_\bfD \ar[d]^\cong \ar[rrr]_(0.45){\raisebox{-10pt}{$\st\MSpec{}^b\pi_{\bfC,P}$}} &&& \MSpec({}^b\Om_\bfC \!\ot_\fC\!\fD)\ar[d]^\cong \ar[rrr]_{\raisebox{-10pt}{$\st\MSpec{}^bi_{\bfC,P}$}} &&& \MSpec(\fC_\rin^P\!\ot_\N\!\fD) \ar[d]^\cong \ar[rr] && 0 \\
{}^bT^*C(\bX)^P \ar[rrr]^(0.4){{}^b\pi_\bX\vert_{C(X)^P}} &&& \bs\Pi_\bX^*({}^bT^*\bX)\vert_{C(X)^P} \ar[rrr]^(0.48){{}^bi_\bX\vert_{C(X)^P}} &&& \check M^\rin_{C(X)}\!\ot_\N\!\O_{C(X)^P} \ar[rr] && 0.\! }\!\!\!\!\!\!\!\!
\end{gathered}
\label{cc7eq46}
\e
Here the top row is the exact functor $\MSpec:\bfDmod\ra\bOYmod$ from Definition \ref{cc2def23} applied to \eq{cc7eq31}, and so is exact. The isomorphism in the left column comes from Proposition \ref{cc7prop5} for $\bfD$, as $\Speccin\bfD\cong C(\bX)^P$. The isomorphism in the middle column is $\bs\Pi_\bX^*$ applied to the isomorphism $\MSpec{}^b\Om_\bfC\cong{}^bT^*\bX$ from Proposition \ref{cc7prop5}. The isomorphism in the right hand column holds as $\check M^\rin_{C(X)}$ is the constant sheaf on $C(X)^P$ with fibre $\fC_\rin^P$, and $\MSpec\fD\cong\O_{C(X)^P}$. 

Thus there exist unique morphisms ${}^b\pi_\bX\vert_{C(X)^P},{}^bi_\bX\vert_{C(X)^P}$ making \eq{cc7eq46} commute, and the bottom line of \eq{cc7eq46} is exact. Since $C(\bX)=\coprod_{P\in\Prc_\bfC}C(\bX)^P$ by \eq{cc6eq20}, there is a unique exact sequence in $\bO_{C(X)}\text{-mod}$: 
\e
\xymatrix@C=4pt{
{}^bT^*C(\bX) \ar[rrr]^(0.45){{}^b\pi_\bX} &&& \bs\Pi_\bX^*({}^bT^*\bX) \ar[rrr]^(0.34){{}^bi_\bX} &&& {}^b\check N_{C(X)}\!=\!\check M^\rin_{C(X)}\!\ot_\N\!\O_{C(X)} \ar[rr] && 0, }\!\!
\label{cc7eq47}
\e
which restricts to the bottom line of \eq{cc7eq46} on $C(X)^P$ for each $P$. Here we define ${}^b\check N_{C(X)}=\check M^\rin_{C(X)}\ot_\N\O_{C(X)}$. As $\check M^\rin_{C(X)}$ is a locally constant sheaf of finitely generated monoids, ${}^b\check N_{C(X)}$ is a finite rank locally free sheaf, that is, a vector bundle on $C(\bX)$, which we call the {\it b-conormal bundle\/} of~$C(\bX)$.

We have constructed an exact sequence \eq{cc7eq47} in $\bO_{C(X)}\text{-mod}$ whenever $\bfC$ is a firm, interior $C^\iy$-ring with corners and $\bX=\Speccin\bfC$. We now claim that if $\bX\in\CSchcfiin$ then there is a unique exact sequence \eq{cc7eq47} in $\bO_{C(X)}\text{-mod}$, which restricts to the sequence constructed above on $C(\bU)$ for any open $\bU\subseteq\bX$ with $\bU\cong\Speccin\bfC$ for some~$\bfC\in\CRingscfiin$. 

To see this, it is enough to show that the morphisms $({}^b\pi_\bX)_1,({}^bi_\bX)_1$ and $({}^b\pi_\bX)_2,({}^bi_\bX)_2$ constructed above on affine open subsets $C(\bs U_1),C(\bs U_2)\subset C(\bX)$ agree on overlaps $C(\bs U_1)\cap C(\bs U_2)$ for any two such $\bs U_1,\bs U_2\subset\bX$ with $\bs U_i\cong\Speccin\bfC_i$ for $i=1,2$. Further, it is enough to show these morphisms agree on stalks at each $(x,P_x)\in C(\bs U_1)\cap C(\bs U_2)$. But the restriction of \eq{cc7eq47} to the stalk at $(x,P_x)$ is equation \eq{cc7eq31} for the local $C^\iy$-rings with corners $\bfC_{1,x}\cong\bO_{X,x}\cong \bfC_{2,x}$ and $\bfD_{1,x}\cong\bO_{C(X),(x,P_x)}\cong \bfD_{2,x}$. As this depends only on $\bO_{X,x},\bO_{C(X),(x,P_x)}$, it is independent of the choice of $\bfC_1,\bfC_2$, as we want.

If we suppose that $\bX\in\CSchcto$ then we can use \eq{cc7eq39} instead of \eq{cc7eq31}, and so get an extended exact sequence in~$\bO_{C(X)}\text{-mod}$: 
\e
\xymatrix@C=9.5pt{
0 \ar[rr] && {}^bT^*C(\bX) \ar[rrr]^(0.45){{}^b\pi_\bX} &&& \bs\Pi_\bX^*({}^bT^*\bX) \ar[rrr]^(0.56){{}^bi_\bX} &&& {}^b\check N_{C(X)} \ar[rr] && 0. }\!\!
\label{cc7eq48}
\e
Applying $\MSpec$ to Example \ref{cc7ex5} shows that if $\bX$ is not toric, then \eq{cc7eq48} may not be exact at the second term.

Now let $\bs f:\bX\ra\bY$ be a morphism in $\CSchcfiin$. Consider the diagram
\e
\begin{gathered}
\xymatrix@C=3pt@R=20pt{
C(\bs f)^*({}^bT^*C(\bY)) \ar[d]^{{}^b\Om_{C(\bs f)}} \ar[rrr]_(0.45){\raisebox{-10pt}{$\st C(\bs f)^*({}^b\pi_\bY)$}} &&& {\begin{subarray}{l}\ts C(\bs f)^*\!\ci\!\bs\Pi_\bY^*({}^bT^*\bY)\!=\\ \ts \quad \bs\Pi_\bY^*\!\ci\!\bs f^*({}^bT^*\bY)\end{subarray}} \ar[d]^{\bs\Pi_\bX^*({}^b\Om_{\bs f})} \ar[rrr]_(0.56){\raisebox{-10pt}{$\st C(\bs f)^*({}^bi_\bY)$}} &&& C(\bs f)^*({}^b\check N_{C(Y)}) \ar[d]^{{}^b\check N_{C(f)}} \ar[rr] && 0 \\
{}^bT^*C(\bX) \ar[rrr]^(0.45){{}^b\pi_\bX} &&& \bs\Pi_\bX^*({}^bT^*\bX) \ar[rrr]^(0.5){{}^bi_\bX} &&& {}^b\check N_{C(X)} \ar[rr] && 0,\! }\!\!\!\!\!\!\!\!\!\!\!
\end{gathered}
\label{cc7eq49}
\e
where we define
\begin{align*}
{}^b\check N_{C(f)}=\check M^\rex_{C(f)}\ot_\N\id:C(\bs f)^*({}^b\check N_{C(Y)})\cong C(f)^{-1}(\check M^\rex_{C(Y)})\ot_\N\O_{C(X)}&\\
\longra {}^b\check N_{C(X)}=\check M^\rin_{C(X)}\ot_\N\O_{C(X)}&,
\end{align*}
for $\check M^\rex_{C(f)}$ as in \eq{cc6eq26}. Over affine subsets of $\bX,\bY$ and $C(\bX),C(\bY)$, equation \eq{cc7eq49} may be identified with $\MSpec$ applied to \eq{cc7eq35}, using \eq{cc7eq46}. Thus \eq{cc7eq49} commutes on affine open subsets covering $C(\bX)$, and hence it commutes on $C(\bX)$. So the exact sequence \eq{cc7eq47} is functorial over~$\CSchcfiin$.
\end{dfn}

Let $X$ be a manifold with corners, and set $\bX=F_\Mancin^\CSchcto(X)$ as in Definition \ref{cc5def9}. Proposition \ref{cc6prop2} says $C(\bX)\cong F_\cManc^\CSchcto(C(X))$. Theorem \ref{cc7thm3}(b) identifies ${}^bT^*\bX,{}^bT^*C(\bX)$ with the sheaves of sections of ${}^bT^*X,{}^bT^*C(X)$. Example \ref{cc6ex3} identifies the sheaf of sections of $M_{C(X)}^\vee$ with $\check M^\rin_{C(X)}$. As ${}^bN_{C(X)}^*=M_{C(X)}^\vee\ot_\N\R$ on $C(X)$ and ${}^b\check N_{C(X)}=\check M^\rin_{C(X)}\ot_\N\O_{C(X)}$ on $C(\bX)$, the sheaf of sections of ${}^bN_{C(X)}^*$ is identified with ${}^b\check N_{C(X)}$. By comparing definitions, one can show that all these isomorphisms identify the exact sequences \eq{cc7eq29} on $C(X)$ and \eq{cc7eq48} on $C(\bX)$. The analogue holds for manifolds with g-corners.

\section{Further generalizations}
\label{cc8}

We discuss three directions in which Chapters \ref{cc4}--\ref{cc7} can be generalized.

\subsection{\texorpdfstring{$C^\iy$-stacks with corners}{C∞-stacks with corners}}
\label{cc81}

In classical algebraic geometry, schemes are generalized to (Deligne--Mumford or Artin) stacks, as in Olsson \cite{Olss}, Laumon and Moret-Bailly \cite{LaMo}, and de Jong \cite{Jong}. This is useful as many moduli spaces naturally have the structure of stacks, but not schemes. Stacks over a field $\K$ form a 2-category~$\Sta_\K$.

The second author \cite[\S 6--\S A]{Joyc9} extended the theory of $C^\iy$-schemes in \cite[\S 2--\S 5]{Joyc9} to $C^\iy$-{\it stacks}, including {\it Deligne--Mumford\/ $C^\iy$-stacks}, which form 2-categories $\DMCSta\subset\CSta$. One reason this is interesting is that the category of manifolds $\Man$ generalizes to the 2-category of {\it orbifolds\/} $\Orb$, which are roughly Deligne--Mumford stacks in manifolds. The full embedding $\Man\hookra\CSch$ generalizes to a full embedding~$\Orb\hookra\DMCSta$.

Thus, it seems an obvious project to generalize our theory of $C^\iy$-schemes with corners to 2-categories $\DMCStac\subset\CStac$ of (Deligne--Mumford) $C^\iy$-stacks with corners, with a full embedding $\Orbc\hookra\DMCStac$ of the 2-category of orbifolds with corners.

Now most of the generalization from $\CSch$ to $\DMCSta\subset\CSta$ in \cite[\S 6--\S A]{Joyc9} is an exercise in the theory of stacks,  which depends only on a few properties of $C^\iy$-schemes:
\begin{itemize}
\setlength{\itemsep}{0pt}
\setlength{\parsep}{0pt}
\item[(i)] We define a {\it Grothendieck topology\/} $\cJ$ on $\CSch$, making $(\CSch,\cJ)$ into a {\it site}, by the usual open sets and open covers for topological spaces. 

This is a reasonable definition as affine $C^\iy$-schemes are Hausdorff, and every open subset of a $C^\iy$-scheme is a $C^\iy$-subscheme. So there are `enough' ordinary open sets in $C^\iy$-schemes, and we do not need an exotic notion of `open set' to define stacks, as for the \'etale or smooth topologies in ordinary algebraic geometry.
\item[(ii)] Objects and morphisms in $\CSch$ can be glued on covers, and form a sheaf. Thus $(\CSch,\cJ)$ is a {\it subcanonical site}.
\item[(iii)] It is convenient, but not essential, that all fibre products exist in~$\CSch$.
\end{itemize}
Parts (i),(ii) work for $\CSchc$ and $\CSchcin$. For (iii), in parts of the theory requiring fibre products, we should restrict to subcategories such as $\CSchcfi$ or $\CSchcfiin$, in which fibre products exist by Theorem~\ref{cc5thm10}.

Thus we may define 2-categories $\DMCStac\subset\CStac$ of (Deligne--Mumford) $C^\iy$-stacks with corners by following \cite[\S 6--\S 7]{Joyc9} with only cosmetic changes. The material of \S\ref{cc56} gives interesting 2-subcategories such as toric and interior $\DMCStacto\subset\DMCStacin\subset\DMCStac$. Theorem \ref{cc5thm10} implies that fibre products exist in 2-categories of $C^\iy$-stacks with corners corresponding to categories in \eq{cc5eq15}, such as $\DMCStacto,\DMCStacfiin,\DMCStacfi$. Most of the material of \S\ref{cc71}--\S\ref{cc75} also generalizes immediately, following \cite[\S 8]{Joyc9}. Extending Chapter \ref{cc6} and \S\ref{cc76} to stacks will require some work.

\subsection{\texorpdfstring{$C^\iy$-rings and $C^\iy$-schemes with a-corners}{C∞-rings and C∞-schemes with a-corners}}
\label{cc82}

Our notion of $C^\iy$-ring with corners started with the categories $\Mancin\subset\Manc$ of manifolds with corners defined in \S\ref{cc31}, and the full subcategories $\Euccin\subset\Mancin$, $\Eucc\subset\Manc$ with objects $\R^n_k=[0,\iy)^k\t\R^{n-k}$ for~$0\le k\le n$.

As we explained in Remark \ref{cc3rem1}, there are several non-equivalent definitions of categories of manifolds with corners in the literature. So we can consider alternative theories of $C^\iy$-rings and $C^\iy$-schemes with corners starting with one of these categories instead of $\Manc$. In particular:
\begin{itemize}
\setlength{\itemsep}{0pt}
\setlength{\parsep}{0pt}
\item[(a)] If we use the categories $\bf{Man^f_{in}}\subset\bf{Man^f}$ of {\it manifolds with faces\/} in Definition \ref{cc3def8}, following Melrose \cite{Melr1,Melr2,Melr3}, we do not change $\Euccin\subset\Eucc$, so the theory is unchanged.
\item[(b)] The category $\Mancst$ of manifolds with corners and {\it strongly smooth maps}, from Definition \ref{cc3def2} and \cite{Joyc1}, gives a theory which can be embedded as subcategories $\CRings^{\bf c}_{\bf st}\subset\CRingsc$, $\CSch^{\bf c}_{\bf st}\subset\CSchc$, as for the embeddings $\CRingscin\subset\CRingsc$, $\CSchcin\subset\CSchc$, since $\Mancst\subset\Manc$. The theory also seems less interesting (roughly, $\fC_{\text{\rm st-ex}}$ is a set, compared to $\fC_\rex$ being a monoid).
\item[(c)] The category $\Mancwe$ of manifolds with corners and {\it weakly smooth maps}, from Definition \ref{cc3def2}, Cerf \cite[\S I.1.2]{Cerf}, and other authors, yields a theory which is essentially equivalent to ordinary $C^\iy$-schemes. As weakly smooth maps have no compatibility with boundaries, and no notion of `interior', corner functors in Chapter \ref{cc6} and b-cotangent modules in Chapter \ref{cc7} no longer work.
\item[(d)] The second author \cite{Joyc7} defined the category $\Manac$ of {\it manifolds with analytic corners}, or {\it manifolds with a-corners}, which we explain next.
\end{itemize}

We recall the definition of manifolds with a-corners from~\cite[\S 3]{Joyc7}:

\begin{dfn}
\label{cc8def1}
We write $\lb 0,\iy)$ to mean $[0,\iy)$, but the notation emphasizes that $\lb 0,\iy)$ has a different kind of boundary (an {\it a-boundary\/}) at 0, giving a different notion of smooth function on $\lb 0,\iy)$. We write other intervals such as $\lb 0,1\rb=[0,1]$ in the same way. For $0\le k\le m$ we write $\R^{k,m}=\lb 0,\iy)^k\t\R^{m-k}$. 

Let $U\subseteq\R^{k,m}$ be open and $f:U\ra\R$ be continuous. Write points of $U$ as $(x_1,\ldots,x_m)$ with $x_1,\ldots,x_k\in\lb 0,\iy)$ and $x_{k+1},\ldots,x_m\in\R$. The {\it b-derivative\/} of $f$ (if it exists) is a map ${}^b\pd f:U\ra\R^m$, written ${}^b\pd f=({}^b\pd_1f,\ldots,{}^b\pd_mf)$ for ${}^b\pd_if:U\ra\R$, where by definition
\e
{}^b\pd_if(x_1,\ldots,x_m)=\begin{cases} 0, & x_i=0, \;\> i=1,\ldots,k, \\
x_i\frac{\pd f}{\pd x_i}(x_1,\ldots,x_m), & x_i>0, \;\> i=1,\ldots,k, \\
\frac{\pd f}{\pd x_i}(x_1,\ldots,x_m), & i=k+1,\ldots,m. 
\end{cases}
\label{cc8eq1}
\e
We say that ${}^b\pd f$ {\it exists\/} if \eq{cc8eq1} is well defined, that is, if $\frac{\pd f}{\pd x_i}$ exists on $U\cap\{x_i>0\}$ if $i=1,\ldots,k$, and $\frac{\pd f}{\pd x_i}$ exists on $U$ if $i=k+1,\ldots,m$.

We can iterate b-derivatives (if they exist), to get maps ${}^b\pd^lf:U\ra\bigot^l\R^m$ for $l=0,1,\ldots,$ by taking b-derivatives of components of ${}^b\pd^jf$ for $j=0,\ldots,l-1$.
\begin{itemize}
\setlength{\itemsep}{0pt}
\setlength{\parsep}{0pt}
\item[(i)] We say that $f$ is {\it roughly differentiable}, or {\it r-differentiable}, if ${}^b\pd f$ exists and is a continuous map ${}^b\pd f:U\ra\R^m$.
\item[(ii)] We say that $f$ is {\it roughly smooth}, or {\it r-smooth}, if ${}^b\pd^lf:U\ra\bigot^l\R^m$ is r-differentiable for all $l=0,1,\ldots.$
\item[(iii)] We say that $f$ is {\it analytically differentiable}, or {\it a-differentiable}, if it is r-differentiable and for any compact subset $S\subseteq U$ and $i=1,\ldots,k$, there exist positive constants $C,\al$ such that
\begin{equation*}
\bmd{{}^b\pd_if(x_1,\ldots,x_m)}\le Cx_i^\al\qquad\text{for all $(x_1,\ldots,x_m)\in S$.}
\end{equation*}
\item[(iv)] We say that $f$ is {\it analytically smooth}, or {\it a-smooth}, if ${}^b\pd^lf:U\ra\bigot^l\R^m$ is a-differentiable for all $l=0,1,\ldots.$
\end{itemize}

One can show that $f$ is a-smooth if for all $a_1,\ldots,a_m\in\N$ and for any compact subset $S\subseteq U$, there exist positive constants $C,\al$ such that
\begin{align*}
\left\vert\frac{\pd^{a_1+\cdots+a_m}}{\pd x_1^{a_1}\cdots\pd x_m^{a_m}}f(x_1,\ldots,x_m)\right\vert\le C \prod_{i=1,\ldots,k:\, a_i>0} x_i^{\al-a_i} \\
\text{for all $(x_1,\ldots,x_m)\in S$ with $x_i>0$ if $i=1,\ldots,k$ with $a_i>0$,}
\end{align*}
where continuous partial derivatives must exist at the required points.

If $f,g:U\ra\R$ are a-smooth (or r-smooth) and $\la,\mu\in\R$ then $\la f+\mu g
$ and $fg:U\ra\R$ are a-smooth (or r-smooth). Thus, the set $C^\iy(U)$ of a-smooth functions $f:U\ra\R$ is an $\R$-algebra, and in fact a $C^\iy$-ring.

If $I\subseteq\R$ is an open interval, such as $I=(0,\iy)$, we say that a map $f:U\ra I$ is {\it a-smooth}, or just {\it smooth}, if it is a-smooth as a map $f:U\ra\R$.
\end{dfn}

\begin{dfn}
\label{cc8def2}
Let $U\subseteq\R^{k,m}$ and $V\subseteq \R^{l,n}$ be open, and $f=(f_1,\ldots,f_n):U\ra V$ be a continuous map, so that $f_j=f_j(x_1,\ldots,x_m)$ maps $U\ra\lb 0,\iy)$ for $j=1,\ldots,l$ and $U\ra\R$ for $j=l+1,\ldots,n$. Then we say:
\begin{itemize}
\setlength{\itemsep}{0pt}
\setlength{\parsep}{0pt}
\item[(a)] $f$ is {\it r-smooth\/} if $f_j:U\ra\R$ is r-smooth in the sense of Definition \ref{cc8def1} for $j=l+1,\ldots,n$, and every $u=(x_1,\ldots,x_m)\in U$ has an open neighbourhood $\ti U$ in $U$ such that for each $j=1,\ldots,l$, either:
\begin{itemize}
\setlength{\itemsep}{0pt}
\setlength{\parsep}{0pt}
\item[(i)] we may uniquely write $f_j(\ti x_1,\ldots,\ti x_m)=F_j(\ti x_1,\ldots,\ti x_m)\cdot\ti x_1^{a_{1,j}}\cdots\ti x_k^{a_{k,j}}$ for all $(\ti x_1,\ldots,\ti x_m)\in\ti U$, where $F_j:\ti U\ra(0,\iy)$ is r-smooth as in Definition \ref{cc8def1}, and $a_{1,j},\ldots,a_{k,j}\in[0,\iy)$, with $a_{i,j}=0$ if $x_i\ne 0$; or 
\item[(ii)] $f_j\vert_{\smash{\ti U}}=0$.
\end{itemize}
\item[(b)] $f$ is {\it a-smooth\/} if $f_j:U\ra\R$ is a-smooth in the sense of Definition \ref{cc8def1} for $j=l+1,\ldots,n$, and every $u=(x_1,\ldots,x_m)\in U$ has an open neighbourhood $\ti U$ in $U$ such that for each $j=1,\ldots,l$, either:
\begin{itemize}
\setlength{\itemsep}{0pt}
\setlength{\parsep}{0pt}
\item[(i)] we may uniquely write $f_j(\ti x_1,\ldots,\ti x_m)=F_j(\ti x_1,\ldots,\ti x_m)\cdot\ti x_1^{a_{1,j}}\cdots\ti x_k^{a_{k,j}}$ for all $(\ti x_1,\ldots,\ti x_m)\in\ti U$, where $F_j:\ti U\ra(0,\iy)$ is a-smooth as in Definition \ref{cc8def1}, and $a_{1,j},\ldots,a_{k,j}\in[0,\iy)$, with $a_{i,j}=0$ if $x_i\ne 0$; or 
\item[(ii)] $f_j\vert_{\smash{\ti U}}=0$.
\end{itemize}
\item[(c)] $f$ is {\it interior\/} if it is a-smooth, and case (b)(ii) does not occur.
\item[(d)] $f$ is an {\it a-diffeomorphism} if it is an a-smooth bijection with a-smooth inverse.
\end{itemize}
\end{dfn}

\begin{dfn}
\label{cc8def3}
We define the category $\Manac$ of {\it manifolds with a-corners\/} $X$ and {\it a-smooth maps\/} $f:X\ra Y$ following the definition of $\Manc$ in Definition \ref{cc3def2}, but using {\it a-charts\/} $(U,\phi)$ with $U\subseteq\R^{k,m}$ open, where a-charts $(U,\phi)$, $(V,\psi)$ are {\it compatible\/} if $\psi^{-1}\ci\phi$ is an a-diffeomorphism between open subsets of $\R^{k,m},\R^{l,m}$, in the sense of Definition \ref{cc8def2}(d). We also define an a-smooth map $f:X\ra Y$ to be {\it interior\/} if it is locally modelled on interior maps between open subsets of $\R^{k,m},\R^{l,n}$, in the sense of Definition \ref{cc8def2}(c), and we write $\Manacin\subset\Manac$ for the subcategory with interior morphisms.

In \cite[\S 3.5]{Joyc7} the second author also defines the category $\Mancac$ of {\it manifolds with corners and a-corners\/}, which are locally modelled on $\R_l^{k,m}=\lb 0,\iy)^k\t[0,\iy)^l\t\R^{m-k-l}$, with smooth functions defined as in Definitions \ref{cc8def1}--\ref{cc8def2} for the $\lb 0,\iy)$ factors, and as in Definition \ref{cc3def1} for the $[0,\iy)$ factors. There are full embeddings $\Manc\hookra\Mancac$ and $\Manac\hookra\Mancac$. For $X\in\Mancac$, the boundary $\pd X=\pd^{\rm c} X\amalg\pd^{\rm ac}X$ decomposes into the ordinary boundary $\pd^{\rm c} X$ and the a-boundary~$\pd^{\rm ac}X$.
\end{dfn}

\begin{rem}
\label{cc8rem1}
{\bf(a)} Manifolds with a-corners are significantly different to manifolds with corners. Even the simplest examples $\lb 0,\iy)$ in $\Manac$ and $[0,\iy)$ in $\Manc$ have different smooth structures. For example, $x^\al:\lb 0,\iy)\ra\R$ is a-smooth for all $\al\in[0,\iy)$, but $x^\al:[0,\iy)\ra\R$ is only smooth for~$\al\in\N$.
\smallskip

\noindent{\bf(b)} Here are two of the reasons for introducing manifolds with a-corners in~\cite{Joyc7}.

Firstly, boundary conditions for p.d.e.s are usually of two types:
\begin{itemize}
\setlength{\itemsep}{0pt}
\setlength{\parsep}{0pt}
\item[(i)] Boundary `at finite distance'. For example, solve $\frac{\pd u}{\pd x^2}+\frac{\pd u}{\pd y^2}=f$ on the closed unit disc $D^2\subset\R^2$ with boundary condition $u\vert_{\pd D^2}=g$.
\item[(ii)] Boundary `at infinite distance', or `of asymptotic type'. For example, solve $\frac{\pd u}{\pd x^2}+\frac{\pd u}{\pd y^2}=f$ on $\R^2$ with asymptotic condition $u=O((x^2+y^2)^\al)$, $\al<0$.
\end{itemize}
We argue in \cite{Joyc7} that it is natural to write type (i) using ordinary manifolds with corners, but type (ii) using manifolds with a-corners. In the example, we would compactify $\R^2$ to a manifold with a-corners $\bar\R{}^2$ by adding a boundary circle $\cS^1$ at infinity, and then $u$ extends to a-smooth~$\bar u:\bar\R{}^2\ra\R$.

Secondly, and related, several important areas of geometry involve forming moduli spaces $\oM$ of some geometric objects, such that $\oM$ is a manifold with corners, or a more general space such as a Kuranishi space with corners in Fukaya et al.\ \cite{FOOO,FuOn}, where the `boundary' $\pd\oM$ parametrizes singular objects included to make $\oM$ compact. Examples include moduli spaces of Morse flow lines \cite{AuBr}, of $J$-holomorphic curves with boundary in a Lagrangian \cite{FOOO}, and of $J$-holomorphic curves with cylindrical ends in Symplectic Field Theory~\cite{EGH}.

We argue in \cite{Joyc7} that the natural smooth structure to put on $\oM$ in such moduli problems is that of a manifold with a-corners (or Kuranishi space with a-corners, etc.). This resolves a number of technical problems in these theories.
\end{rem}

We can now write down a new version of our theory, starting instead with the full subcategories $\Eucacin\subset\Manacin$, $\Eucac\subset\Manac$ with objects $\R^{k,m}=\lb 0,\iy)^k\t\R^{m-k}$ for $0\le k\le m$. This yields categories of ({\it pre\/}) $C^\iy$-{\it rings with a-corners\/} $\PCRingsac, \CRingsac$ and $C^\iy$-{\it schemes with a-corners\/} $\CSchac$. We write $\bfC\in\PCRingsac$ as $(\fC,\fC_\aex)$ satisfying the analogue of Definition \ref{cc4def2}, but for the operations $\Phi_f,\Psi_g$ we take $f:\R^m\t\lb 0,\iy)^n\ra\R$ and $g:\R^m\t\lb 0,\iy)^n\ra\lb 0,\iy)$ to be a-smooth.

Much of Chapters \ref{cc4}--\ref{cc7} generalizes immediately to the a-corners case, with only obvious changes. Here are some important differences, though:
\begin{itemize}
\setlength{\itemsep}{0pt}
\setlength{\parsep}{0pt}
\item[(a)] There should be a category $\Man^{\bf gac}$ of `manifolds with generalized a-corners', which relates to $\Manac$ as $\Mangc$ relates to $\Manc$. But no such category has been written down yet. So, for the present, we do not generalize the material on manifolds with g-corners to the a-corners case.
\item[(b)] The special classes of  $C^\iy$-rings with corners $\bfC=(\fC,\fC_\rex)$ in \S\ref{cc47}, and hence of $C^\iy$-schemes with corners in \S\ref{cc56}, relied on treating $\fC_\rex$ or $\fC_\rin$ as monoids, and imposing conditions such as $\fC_\rex^\sh$ finitely generated.

For (interior) $C^\iy$-rings with a-corners, $\fC_\aex$ (and $\fC_\ain$) are still monoids. However, requiring $\fC_\aex^\sh$ to be finitely generated is not a useful condition. For example, if $\bfC=\bs C^\iy(\lb 0,\iy))$ then by mapping $[x^\al]\mapsto e^{-\al}$, $[0]\mapsto 0$ we may identify $\fC_\aex^\sh\cong [0,1]$, $\fC_\ain^\sh\cong (0,1]$ as monoids under multiplication, and $[0,1],(0,1]$ are not finitely generated monoids.

Instead, we should regard $\fC_\aex$ and $\fC_\ain$ as $[0,\iy)$-{\it modules}. That is, $\fC_\aex$ and $\fC_\ain$ are monoids under multiplication, with operations $c\mapsto c^\al$ for $c$ in $\fC_\aex$ or $\fC_\ain$ and $\al\in[0,\iy)$ satisfying $(cd)^\al=c^\al d^\al$, $(c^\al)^\be=c^{\al\be}$, and $c^0=1$. The operation $c\mapsto c^\al$ is $\Psi_{x^\al}:\fC_\aex\ra \fC_\aex$. Monoids are to abelian groups as $[0,\iy)$-modules are to real vector spaces.
\item[(c)] Continuing (b), we should for example define $\bfC\in\CRingsac$ to be {\it firm\/} if $\fC_\aex^\sh$ is a {\it finitely generated\/ $[0,\iy)$-module}, that is, there is a surjective $[0,\iy)$-module morphism $[0,\iy)^n\ra\fC_\aex^\sh$. Key properties of firm $C^\iy$-schemes with corners such as Proposition \ref{cc5prop3}, and hence Theorem \ref{cc5thm9}, then extend to the a-corners case.
\item[(d)] Continuing (b), to define the corners $C(\bX)$ in Definition \ref{cc6def1} in the a-corners case, for points $(x,P)\in C(X)$ we should take $P$ to be a prime ideal in $\O_{X,x}^\aex$ considered as a $[0,\iy)$-module, not as a monoid.
\end{itemize}
We leave it to the interested reader to work out the details.

\subsection{Derived manifolds and orbifolds with corners}
\label{cc83}

It is well known that classical Algebraic Geometry, studying schemes and stacks, has been extended to Derived Algebraic Geometry by Lurie \cite{Luri}, To\"en and Vezzosi \cite{Toen,ToVe} and others, studying derived schemes and derived stacks. It is less well known that classical Differential Geometry, studying manifolds and orbifolds, has been extended to Derived Differential Geometry, the study of {\it derived manifolds\/} and {\it derived orbifolds}.

The subject began with a short section in Lurie \cite[\S 4.5]{Luri}, where he sketched how to define an $\iy$-category of {\it derived\/ $C^\iy$-schemes}, including derived manifolds. Lurie's student David Spivak \cite{Spiv} worked out the details of this, defining an $\iy$-category of derived manifolds. Simplifications and extensions of Spivak's theory were given by Borisov and Noel \cite{Bori,BoNo} and the second author~\cite{Joyc3,Joyc4}. 

Actually, one could argue that Fukaya and Ono's {\it Kuranishi spaces\/} \cite{FOOO,FuOn} (see also \cite{Joyc5,Joyc8,Joyc10} for the second author's improved, `derived' 2-categorical definition of Kuranishi spaces) are a prototype of derived orbifolds, which predate the invention of Derived Algebraic Geometry. To understand the relationship, observe that there are two ways to define ordinary manifolds:
\begin{itemize}
\setlength{\itemsep}{0pt}
\setlength{\parsep}{0pt}
\item[(A)] A manifold is a Hausdorff, second countable topological space $X$ equipped with a sheaf $\O_X$ of $\R$-algebras (or $C^\iy$-rings) such that $(X,\O_X)$ is locally isomorphic to $\R^n$ with its sheaf of smooth functions~$\O_{\R^n}$.
\item[(B)] A manifold is a Hausdorff, second countable topological space $X$ equipped with a maximal atlas of charts $\{(U_i,\phi_i):i\in I\}$.
\end{itemize}
If we try to define derived manifolds by generalizing approach (A), we get some kind of derived $C^\iy$-scheme, as in \cite{Bori,BoNo,Joyc3,Joyc4,Luri,Spiv}; if we try to generalize (B), we get something like Kuranishi spaces in~\cite{FOOO,FuOn,Joyc5,Joyc8,Joyc10}.

Derived manifolds and orbifolds are interesting for many reasons, including:
\begin{itemize}
\setlength{\itemsep}{0pt}
\setlength{\parsep}{0pt}
\item[(a)] Much of classical Differential Geometry extends nicely to the derived case.
\item[(b)] Many mathematical objects are naturally derived manifolds, for example:
\begin{itemize}
\setlength{\itemsep}{0pt}
\setlength{\parsep}{0pt}
\item[(i)] The solution set of $f_1(x_1,\ldots,x_n)=\cdots=f_k(x_1,\ldots,x_n)=0$, where $x_1,\ldots,x_n$ are real variables and $f_1,\ldots,f_k$ are smooth functions.
\item[(ii)] (Non-transverse) intersections $X\cap Y$ of submanifolds $X,Y\subset Z$.
\item[(iii)] Moduli spaces $\cM$ of solutions of nonlinear elliptic equations on compact manifolds. Also, if we consider moduli spaces $\cM$ for nonlinear equations which are elliptic modulo symmetries, and restrict to objects with finite automorphism groups, then $\cM$ is a derived orbifold.
\end{itemize}
\item[(c)] A compact, oriented derived manifold (or orbifold) $\bX$ has a {\it virtual class\/} $[\bX]_\vir$ in (Steenrod or \v Cech) homology $H_{\vdim\bX}(X,\Z)$ (or $H_{\vdim\bX}(X,\Q)$), which has deformation invariance properties. Combining this with (b)(iii), we can use derived orbifolds as tools in enumerative invariant theories such as Gromov--Witten invariants in Symplectic Geometry.
\end{itemize}

It is obviously desirable to extend Derived Differential Geometry to theories of {\it derived manifolds with corners}, and {\it derived orbifolds with corners}. This will be done by the second author in \cite{Joyc4} by approach (A) and derived $C^\iy$-schemes with corners, using the theory of this book. See \cite{Joyc5,Joyc8} for a parallel treatment using approach (B) and Kuranishi spaces with corners.

Just as derived manifolds/orbifolds have applications in enumerative invariant theories (when one considers moduli spaces without boundary, which have virtual classes), so derived manifolds/orbifolds with corners have applications in Floer-type theories and Fukaya categories (when one considers moduli spaces with boundary and corners, which have virtual chains). An eventual goal of the second author's Derived Differential Geometry project \cite{Joyc3,Joyc4,Joyc5,Joyc8,Joyc10} is to rewrite and extend the foundations of large areas of Symplectic Geometry.

\medskip

\noindent{\small\sc The Mathematical Institute, Radcliffe
Observatory Quarter, Woodstock Road, Oxford, OX2 6GG, U.K.

\noindent E-mails: {\tt kelli.francis-staite@maths.ox.ac.uk, joyce@maths.ox.ac.uk.}}

\end{document}